\documentclass[11pt]{article}
\UseRawInputEncoding
\usepackage{bbm}
\usepackage{amssymb, amsthm, amsmath, amscd}
\usepackage{tikz-cd}
\newtheorem{thm}{Theorem}[section]
\newtheorem{lem}[thm]{Lemma}
\newtheorem{prop}[thm]{Proposition}
\newtheorem{cor}[thm]{Corollary}
\theoremstyle{definition}

\theoremstyle{definition}
\newtheorem{df}[thm]{Definition}
\theoremstyle{definition}
\newtheorem{rem}[thm]{Remark}

\theoremstyle{definition}

\renewcommand{\phi}{\varphi}

\newcommand{\N}{\mathbb{N}}

\newcommand{\Q}{\mathbb{Q}}
\newcommand{\R}{\mathbb{R}}
\newcommand{\C}{\mathbb{C}}

\numberwithin{equation}{section}

\newcommand{\Aff}{\operatorname{Aff}}

\newcommand{\id}{\operatorname{id}}

\newcommand{\cpc}{c.p.c.~map}

\newcommand{\hm}{homomorphism}
\newcommand{\dt}{\delta}
\newcommand{\ep}{\varepsilon}

\newcommand{\1}{\mbox{\large \bf 1}}

\newcommand{\la}{\langle}
\newcommand{\ra}{\rangle}
\newcommand{\andeqn}{\,\,\,{\rm and}\,\,\,}
\newcommand{\rforal}{\,\,\,{\rm for\,\,\,all}\,\,\,}
\newcommand{\CA}{$C^*$-algebra}
\newcommand{\SCA}{$C^*$-subalgebra}

\newcommand{\af}{{\alpha}}
\newcommand{\bt}{{\beta}}
\newcommand{\dist}{{\rm dist}}

\newcommand{\diag}{{\rm diag}}

\newcommand{\wilog}{without loss of generality}
\newcommand{\Wlog}{Without loss of generality}

\newcommand{\beq}{\begin{eqnarray}}
\newcommand{\eneq}{\end{eqnarray}}
\newcommand{\tforal}{\,\,\,\text{for\,\,\,all}\,\,\,}
\newcommand{\tand}{\,\,\,\text{and}\,\,\,}

\usepackage{amsfonts}
\usepackage{mathrsfs}
\usepackage{textcomp}
\usepackage[all]{xy}

\newcommand{\Her}{\mathrm{Her}}
\newcommand{\Cu}{\mathrm{Cu}}

\newcommand{\Zjs}{\mathcal Z}
\newcommand{\F}{\mathcal F}
\newcommand{\supp}{\operatorname{supp}}

\title{\CA s of Essentially Tracially Rank One}
\author{Huaxin Lin}
\date{}

\begin{document}

\maketitle

\begin{abstract}
We introduce the notion of essential tracial rank at most one for
simple \CA s. We show that separable simple \CA s with essential
tracial rank at most one have several regularity properties, including
stable rank one, strict comparison, and tracial approximate
divisibility. A finite amenable simple \CA\ with this property is
$\mathcal Z$-stable.  We also prove the converse. Thus this property
gives a new characterization of separable simple finite amenable
$\mathcal Z$-stable \CA s and shows that these algebras admit a
one-dimensional tracial approximation. In the unital real-rank-zero
case, this approximation can be strengthened to show that the algebra
has tracial rank zero. This completes a long-standing project. As a
byproduct, we prove that every tracial state on a unital separable
simple finite amenable \CA\ is quasidiagonal and that the algebra itself
is quasidiagonal. Consequently, every separable simple finite amenable
\CA\ of finite nuclear dimension has finite decomposition rank. None
of these results requires the UCT.

\end{abstract}

\section{Introduction}

The last two decades have witnessed rapid progress in the classification of
separable amenable simple \CA s. For large classes of such algebras,
the Elliott invariant, consisting of ordered $K$-theory together with
tracial data and the natural pairing between them, has been shown to
be a complete invariant. In the stably finite setting, the combination
of amenability, $\mathcal Z$-stability, and the Universal
Coefficient Theorem has led to classification results covering both
unital and stably projectionless  simple \CA s; see, for example,  \cite{GLNicbs}, \cite{GLNI}, \cite{GLNII}, \cite{EGLNII}, \cite{TWW}, \cite{CETWW},
\cite{GL2024}, and
the references therein (see also more recent article \cite{CGSTW}). These results confirm much of the original
Elliott classification program within the UCT class.

An important ingredient in the existing classification theory is
tracial approximation. 
In its earlier forms, tracial approximation approximately decomposes
the algebra into two orthogonal parts: one part is tracially small,
while the other is approximately contained in a subhomogeneous
$C^*$-algebra whose base space has dimension at most one (see \cite{LinTAF}, \cite{Lintr2001}, \cite{EGLNs}). In more general
formulations, such a decomposition is obtained only after tensoring
with a UHF algebra (see \cite{Wlocal} and \cite{Lininv}).
This type of approximation is extremely
useful: it provides access to stable uniqueness theorems, allows one
to calculate invariants, and plays an essential role in inductive
constructions.

There is, however, a limitation inherent in this form of approximation.
The subhomogeneous algebra occupies the tracially large part of the
decomposition and consequently carries most of the structural
information used in the approximation. In particular, its $K$-theory determines the pairing between
$K$-theory and traces.

 Thus, although tracial approximation by low-dimensional
subhomogeneous algebras is a powerful classification tool, it is not
$K$-theoretically neutral. Its use is closely tied to the construction
and control of $K$-theoretic data, and this is one of the places where
the UCT naturally enters the existing classification arguments.
Moreover, rationalized tracial approximation and tracial
approximation itself need not coincide (in the unital case, for example). 

The purpose of this paper is to study a different form of tracial
approximation. We seek a genuinely one-dimensional approximation that
captures the tracial and local analytic structure of the algebra
without prescribing its $K$-theory or, in particular, the corresponding  pairing.  We therefore relax two requirements
in the usual tracial approximation. First, we do not require the two
approximating parts to be orthogonal. Consequently, the pairing between
$K$-theory and traces is no longer dictated by the tracially large
$C^*$-subalgebra. Second, we require the good part to be  the image of 
a standard one-dimensional cone, which is a  contractible $C^*$-algebra with trivial $K$-theory.

This raises a basic question: is such a one-dimensional approximation
an additional regularity hypothesis, or is it an abstract approximation
property already possessed by $\mathcal Z$-stable algebras? One of the
main results of this paper shows that, under amenability, this property
is automatic. More precisely:

\begin{thm}\label{TA}
Let $A$ be a separable simple finite amenable \CA.
Then  $A$ is ${\cal Z}$-stable if and only if $ETR(A)\le 1.$
\end{thm}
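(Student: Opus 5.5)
The plan has two directions. The definition of $ETR(A)\le 1$ is not yet stated at this point; I take it to mean the following. For every finite set $\F\subset A$, every $\ep>0$ and every nonzero $a\in A_+$, there are an order zero c.p.c.\ map $\phi: C_0((0,1])\otimes M_n\to A$ (the image of a standard one-dimensional cone) and a c.p.c.\ map $\psi: A\to A$ with three properties: $x\approx_\ep \psi(x)+\phi(\gamma(x))$ for $x\in\F$, where $\gamma$ is c.p.c.\ and approximately multiplicative on $\F$; the defect $\psi(1)$ (or a positive element carrying $\psi$) is Cuntz-dominated by $a$; and the parts need not be orthogonal.

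\textbf{$ETR(A)\le 1\Rightarrow \Zjs$-stable.}
1. From the approximation, derive tracial approximate divisibility. Inside $C_0((0,1])\otimes M_n$ one can cut matrix units finely. The defect is tracially small, so for each $k$ one obtains an order zero map $M_k\to A$ whose image almost commutes with $\F$ and whose complement is $\lesssim a$.
2. Prove strict comparison. Because $C_0((0,1])\otimes M_n$ has trivial $K$-theory and is projectionless, its Cuntz semigroup is governed by ranks. Comparison transfers to $A$ up to the small defect, which is absorbed using simplicity and the element $a$.
3. Combine finiteness, amenability and strict comparison (equivalently, the tracially large order zero maps from step 1, i.e.\ tracial $\Zjs$-absorption) with the Hirshberg--Orovitz / Matui--Sato type theorem: a simple separable amenable \CA\ that is tracially $\Zjs$-absorbing is $\Zjs$-stable. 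In the nonunital case I would pass to hereditary subalgebras, or use the Castillejos--Evington and Lin versions.

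\textbf{$\Zjs$-stable $\Rightarrow ETR(A)\le 1$.}
1. $A$ is amenable, simple, finite and $\Zjs$-stable, so it has finite nuclear dimension by Castillejos--Evington--Tikuisis--White--Winter, and every quasitrace is a trace.
2. Write $\Zjs$ as an inductive limit of dimension drop algebras $Z_{p,q}$. Approximate $A\cong A\otimes\Zjs$ on $\F$ using first-factor elements $x\otimes 1$. Use an order zero map $C_0((0,1])\otimes M_p\to Z_{p,q}$ (the portion of $Z_{p,q}$ near the endpoint where the fibre is $M_p$), together with a complementary cone piece near the other endpoint.
3. The main obstacle is making the good part a single cone $C_0((0,1])\otimes M_n$ that carries $\F$ approximately multiplicatively, while the remainder is tracially small. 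I would first try the following. Use the nuclear dimension decomposition $\sum_i \phi_i\circ\psi_i$ into order zero maps from finite-dimensional algebras, so each $\phi_i$ extends to a cone. Then use $\Zjs$-stability and strict comparison (via uniform property $\Gamma$ and a tracially large projection-free partition of unity in $A_\omega\cap A'$) to tracially concentrate onto one colour, making the other colours $\lesssim a$. This is exactly the place where non-orthogonality is needed. If the colours cannot be made orthogonal, I let the remaining colours be $\psi$, controlled in trace via property $\Gamma$ and then in Cuntz order via strict comparison.
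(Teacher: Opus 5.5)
There is a genuine gap in each direction. It starts with the definition. Definition~\ref{D1} has no approximately multiplicative $\gamma$ and no defect map $\psi$. It asks for three things:
\begin{itemize}
\item an approximately central $e\in A_+^{\bf 1}$ with $e\lesssim a$;
\item a fully normalized order-zero $\theta\colon D\to\overline{(1-e)A(1-e)}$ with $(1-e)^{1/2}x(1-e)^{1/2}$ norm-close to $\theta(D)$ (the paper stresses that this compression is not approximately multiplicative);
\item the nondegeneracy condition $t(d_{e_A})\ge\kappa_{e_A}$ for $t\in T(D)$, which your version drops.
\end{itemize}
That last condition is what yields a faithful trace and stable finiteness (Theorem~\ref{Ltrace}). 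These feed Lemma~\ref{Pnot=0} and hence all the comparison results. Without it, purely infinite algebras satisfy the approximation (Remark~\ref{Rd1}). So as written you are proving an equivalence for a different notion. (The statement also needs $A$ non-elementary, as in Theorem~\ref{MMCC}: $M_n$ satisfies Definition~\ref{D1} with $e=0$, $\theta=\id$, but is not $\mathcal Z$-stable.)

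\textbf{ETR$\le1\Rightarrow\mathcal Z$-stable.} Your step~1 fails as stated. The relative commutant of $\iota\otimes D$ in $C_0((0,1])\otimes D$ is $C_0((0,1])\otimes Z(D)$, which is commutative. So no cutting of matrix units inside the cone produces an $M_k$ approximately commuting with approximants $\theta(d_x)$ that nearly generate $\theta(D)$. The paper reverses your steps~1 and~2:
\begin{itemize}
\item First it proves strict comparison, stable rank one and tracial approximate oscillation zero (Corollary~\ref{Cstrictcomp}, using Proposition~\ref{Prank} for almost divisibility).
\item Then $Q_A=\ell^\infty(A)/N_{cu}(A)$ has real rank zero, so the constant-sequence image of the cone is replaced by a finite-dimensional algebra (Lemma~\ref{Lfinitesp}).
\item Property (TM) then gives unital copies of $M_n$ in the corners of that algebra's minimal projections (Lemma~\ref{LTM}). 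These are spread by matrix units so they commute with it.
\item Lifting and a diagonal argument give (TAD-3) (Theorem~\ref{Tdiv}).
\end{itemize}
Your step~3 then matches Theorem~\ref{MMCC}.

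\textbf{$\mathcal Z$-stable$\Rightarrow$ETR$\le1$.} This is the serious gap, and your sketch lacks the needed idea. Tensoring with $Z_{p,q}$ only produces cones in the $\mathcal Z$ factor, so the elements $x\otimes 1$ are never approximated by a finite-dimensional model.

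In step~3, nothing makes all but one colour tracially small. A nuclear-dimension decomposition gives no control over how trace is distributed among the colours. Property~$\Gamma$ partitions of unity cut every colour proportionally; they do not move tracial mass between colours. CPoU-gluing of the hyperfinite fibres only gives approximations in uniform $2$-norm. ETR1, by contrast, needs \emph{norm} approximation after compressing by a tracially small, approximately central $e$.

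That single-colour norm approximation is the whole content of this direction. Its unital strengthening even yields quasidiagonality of all traces without the UCT (Corollary~\ref{cor13.12}), so it cannot come from a soft concentration argument.

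The missing idea in the paper is classification of maps into uniform tracial sequence algebras:
\begin{itemize}
\item Pass to $A_0=\Her(a)$ with continuous scale, and choose a simple unital AF algebra $B$ with $T(B)\cong T(A_0)$.
\item Use the Carri\'on--Evington--Tikuisis--White existence theorem to get $h\colon A_0\to Q_B$ and $\phi\colon B\to Q_{A_0}$ with prescribed trace maps.
\item Since $B$ is AF, the composite $\psi\colon A_0\to Q_{A_0}$ factors approximately through finite-dimensional $D_n\subset B$. The order-zero maps come from projective cone lifts of $\phi|_{D_n}$ (Lemma~\ref{LmainZ}).
\item The CETW uniqueness theorem makes $\psi$ unitarily equivalent to the diagonal embedding. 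The unitary is lifted as an exponential.
\item The trace-kernel errors are cut down by quasicentral approximate units (Lemma~\ref{Lidealsep}), and the order-zero maps are renormalized.
\item Strict comparison turns the tracially null $e_n'$ into $e\lesssim a$ (Proposition~\ref{Pertep}).
\item $\kappa_{e_A}$ close to $1$ comes from choosing $e_A$ with $\tau(e_A)$ close to $1$ on the compact set $T(A_0)$.
\end{itemize}
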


\vspace{-0.1in}

\noindent
(See Definition \ref{D1} and Definition \ref{DD1}). 

Theorem \ref{TA} is a combination of  Theorem \ref{MMCC} and Theorem \ref{TmainZ}.

We now describe the approximation property more precisely. Its local
form is called property ETR1. Let $A$ be a simple $C^*$-algebra and
fix a strictly positive element $e_A$. Given a finite subset
$\mathcal F\subset A$, a number $\ep>0,$
a non-zero positive element $a\in A_+$, property ETR1 provides a positive
element  $e\in A$ with $0\le e\le 1,$ a finite-dimensional \CA\,  $D$, and a
fully normalized order-zero \cpc\,
\[
 \theta\colon D\longrightarrow\overline{(1-e)A(1-e)}.
\]
Such a map is induced by a
homomorphism
\[
 C_0((0,1])\otimes D\longrightarrow A.
\]
The algebra $C_0((0,1])\otimes D$ is one-dimensional and contractible.
The element $e$ approximately commutes with $\mathcal F$, the
compressions
\[
 (1-e)^{1/2}x(1-e)^{1/2}\rforal
 x\in\mathcal F,
\]
are uniformly close to $\theta(D)$, and $e$ is Cuntz subequivalent to
$a$. Thus, the part which may not be approximated  by the one-dimensional model can be
made arbitrarily small in  trace. In this sense, images of one-dimensional
cones are dense in the uniform tracial $L^2$-norm. The significance of
Theorem~1.1 is that this tracial approximation property is automatic
for finite, separable, simple, amenable, $\mathcal Z$-stable
$C^*$-algebras. Establishing a result of this form has been a
long-standing goal of the author.

There is, however, one additional nondegeneracy requirement though. The compression
\[
 (1-e)^{1/2}e_A(1-e)^{1/2}
\]
is required to be close to $\theta(d_{e_A})$ for a positive contraction
$d_{e_A}\in D$ such that
\[
 t(d_{e_A})\geq\kappa_{e_A}
 \qquad\text{for all }t\in T(D),
\]
where $\kappa_{e_A}>0$ is independent of $\mathcal F$, $\varepsilon$,
and $a$. This condition prevents the order-zero approximation from
disappearing (as the cone is contractible).
The tracially large part of
$A$ is therefore genuinely detected by the one-dimensional model,
even though the model is not intended to determine the $K$-theory of
$A$ since the cone has zero K-theory (see Remark \ref{R1} for further discussion).

However, there is a further stronger  non-degenerate condition introduced later in the paper 
(Section 13).
In the unital case, Theorem \ref{TA}  in fact holds with this stronger
condition. This of course comes no surprise as compact spaces usually behave
better than non-compact spaces. This additional control leads to the following result.


\begin{thm}\label{TB}
Let $A$ be a unital separable simple finite amenable 
${\cal Z}$-stable \CA\, with real rank zero.  Then $A$ has tracial rank zero.
\end{thm}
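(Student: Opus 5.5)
The plan is to use the strengthened, unital form of Theorem \ref{TA}. By Theorem \ref{TA} (via Theorem \ref{TmainZ}), $A$ has $ETR(A)\le 1$. In the unital case it in fact satisfies the stronger non-degeneracy condition of Section 13. That condition says the compression $(1-e)^{1/2}1_A(1-e)^{1/2}$ is close to $\theta(1_D)$ with $\theta$ fully normalized, so $\theta(1_D)$ is almost a projection on the tracially large part. Fix a finite subset $\mathcal F$, $\ep>0$ and $a\in A_+\setminus\{0\}$. The goal is Lin's definition of tracial rank zero: a projection $p$ and a finite-dimensional $C^*$-subalgebra $F$ with $1_F=p$ such that
\begin{itemize}
\item $\|px-xp\|<\ep$ and $\mathrm{dist}(pxp,F)<\ep$ for $x\in\mathcal F$;
\item $1-p\lesssim a$.
\end{itemize}

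\emph{Step 1.} Pass from the order-zero map to a homomorphism. The order-zero map $\theta\colon D\to \overline{(1-e)A(1-e)}$ corresponds to a homomorphism $\phi\colon C_0((0,1])\otimes D\to A$ with $\theta(d)=\phi(\iota\otimes d)$. Since $A$ has real rank zero, so does every hereditary subalgebra $\overline{\phi(\iota\otimes q)A\phi(\iota\otimes q)}$ for minimal projections $q$ of $D$. I would use the approximate unitary equivalence of the matrix units to transport this to all of $\phi(\iota\otimes e_{ii})$. Inside $\overline{\phi(\iota\otimes e_{11})A\phi(\iota\otimes e_{11})}$, real rank zero lets me approximate $f_\delta(\phi(\iota\otimes e_{11}))$ by a projection $p_{11}$; here $f_\delta$ is a continuous function that is $0$ on $[0,\delta/2]$ and $1$ on $[\delta,1]$. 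Conjugating by the partial isometries coming from $\phi(\iota\otimes e_{i1})$ then gives a system of matrix units $\{p_{ij}\}$. This yields a projection $p=\sum_i p_{ii}$ and a finite-dimensional subalgebra $F\cong D$ such that $\theta(d)$ is close to $\theta(1)^{1/2}\psi(d)\theta(1)^{1/2}$ on the relevant elements, where $\psi\colon D\to F$ is the resulting isomorphism. The standard perturbation lemmas for order-zero maps, combined with spectral cutoffs, handle this.

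\emph{Step 2.} Control the remainder. Because $(1-e)^{1/2}1(1-e)^{1/2}=1-e$ is close to $\theta(1_D)$, the function $1-e$ is almost a projection except on a spectral region. So $1-p$ is dominated, in the Cuntz semigroup, by $e$ plus a spectral piece $g_\delta(\theta(1_D))$, where $g_\delta$ is a cutoff function supported near $[0,\delta]$. Two facts bound this piece. First, the stronger Section 13 condition ensures that the spectral mass of $\theta(1_D)$ near $0$ is small in every trace. Second, strict comparison and stable rank one, both established earlier for $ETR(A)\le1$, then give $g_\delta(\theta(1_D))\lesssim a_2$ for a prescribed $a_2$. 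Before applying ETR1, I would split $a$ into orthogonal nonzero pieces $a_1\perp a_2$ with $a_1\oplus a_2\lesssim a$, and take $e\lesssim a_1$. Approximate commutation of $p$ with $\mathcal F$ follows from $e$ approximately commuting with $\mathcal F$: then $1-e$, hence $\theta(1)$, hence its functional calculus $p\approx f_\delta(\theta(1))$, approximately commute with $\mathcal F$. Compressions $pxp$ are close to $F$ because $(1-e)^{1/2}x(1-e)^{1/2}$ is near $\theta(D)$ and $p$ is near its support.

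\emph{Main obstacle.} The crux is Step 2. The order-zero map has spectrum filling $(0,1]$, and the nondegeneracy condition in Theorem \ref{TA} only keeps traces of $\theta(1)$ bounded below. This does not make the low part of the spectrum tracially small. I would try to get this smallness first from the unital strong condition of Section 13, chosen with tolerance depending on $a$. Failing that, I would use $\mathcal Z$-stability (tracial approximate divisibility) to replace $\theta$ by $\theta\otimes$ a ``flattened'' order-zero map from a dimension-drop model, whose spectral measure near $0$ is uniformly small. The real-rank-zero projection lift would then go through with uniform tracial estimates.
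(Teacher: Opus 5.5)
Your overall architecture is the paper's. You use the unital strong form of Theorem \ref{TmainZ} (Corollary \ref{CorZ}: sETR1 with $e_A=1_A$, in fact $d_e=1_D$), use real rank zero to replace the cone by finite-dimensional data, cut out a projection $p$, and compare $1-p$ with $e$; this is Theorem \ref{TMRR0}. There is, however, a genuine gap in how you obtain $\|px-xp\|<\ep$ and $\operatorname{dist}(pxp,F)<\ep$.

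\emph{The claim $p\approx f_\delta(\theta(1_D))$ is false in general.} $\theta(1_D)$ is within $\ep$ of $1-e$, and nothing in the ETR1 data prevents its spectrum from filling $[0,1]$ (compare the construction for $\mathcal Z$ in the appendix). Then no projection is norm-close to $f_\delta(\theta(1_D))$. Real rank zero only gives a projection that acts approximately as a unit on $f_\delta(\cdot)$, so even your Step 1 phrasing (``approximate $f_\delta(\phi(\iota\otimes e_{11}))$ by a projection'') is not literally available.

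\emph{Approximate commutation does not pass to spectral projections.} $\chi_{[1/4,1]}(h)$ of an $h$ that almost commutes with $x$ need not almost commute with $x$. Cutting at a gap in the spectrum of a finite-spectrum approximant does not help either. The width of that gap, and hence the modulus of continuity of the cut-off, is not controlled by the tolerance you fixed before applying ETR1.

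The missing idea is Lemma \ref{lem12.5a-cut}, which gets both local estimates from a support argument rather than from functional calculus.
\begin{enumerate}
\item Via Lemma \ref{Lfinitesp}, replace $\widetilde\theta$ by a homomorphism $H$ with finite-dimensional range $C$. Put $h=H(\iota\otimes 1_D)$, which is central in $C$, and $c=H(\iota\otimes d_0)$, so that $\gamma h\le c\le 2h$.
\item Put $b=1-e$. Then $b$ is close to $c$, and $b^{1/2}xb^{1/2}$ is close to some $y_x\in C$.
\item Let $p=\chi_{[1/4,1]}(h)$, which is central in $C$. Then $a=(pcp)^{-1/2}\in pC$ has $\|a\|\le 2/\sqrt{\gamma}$, and $p=c^{1/2}pa$.
\item Hence $(1-p)xp=(1-p)xc^{1/2}pa\approx(1-p)b^{1/2}xb^{1/2}pa^2\approx(1-p)y_xpa^2=0$.
\item Likewise $pxp=apc^{1/2}xc^{1/2}pa\approx apy_xpa\in pC$.
\end{enumerate}
All errors here are controlled by $\gamma$ alone. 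The lower bound $c\ge\gamma h$ is where the strong nondegeneracy is genuinely used.

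Conversely, what you call the main obstacle is not one. On $q=1-p$ one has $qcq\le 2qhq\le q/2$, so $q(1-b)q\ge(1/2-\delta)q$, and hence $q\lesssim 1-b=e\lesssim a$ directly. You need no tracial smallness of the low spectrum, no strict comparison or stable rank one, no splitting $a_1\oplus a_2\lesssim a$, and no dimension-drop flattening.
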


This result was one of the starting points of the present work.
None of the results described here requires the UCT. We also
characterize finite, separable, simple, amenable, $\mathcal Z$-stable
$C^*$-algebras by a weaker tracial-rank-zero property; see
Theorem~\ref{prop12.5a}. This property may be viewed as a form of ``tracial rank
zero'' in the quotient of the sequence algebra by the trace-kernel
ideal. It is closely related to the quasidiagonality question in
\cite[Question 8.8]{BBSTWW}. 

\begin{thm}\label{TT}
Let $A$ be a unital separable simple amenable \CA.
Then every tracial state is quasidiagonal. Moreover,  when $T(A)\not=\emptyset,$
 $A$ is quasidiagonal.
\end{thm}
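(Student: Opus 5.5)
We reduce the statement to the $\mathcal Z$-stable case and then use the one-dimensional tracial approximation of Theorem \ref{TA}. Let $A$ be unital, separable, simple and amenable, and let $\tau\in T(A)$. Then $A$ is stably finite, and $A\otimes\mathcal Z$ is unital, separable, simple, finite, amenable and $\mathcal Z$-stable. The tracial state $\tau\otimes\tau_{\mathcal Z}$ restricts to $\tau$ under the unital embedding $a\mapsto a\otimes 1$, and quasidiagonality of traces passes to restrictions along unital embeddings. It therefore suffices to show that every tracial state on $B:=A\otimes\mathcal Z$ is quasidiagonal.

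By Theorem \ref{TA}, $ETR(B)\le 1$; in the unital case we also have the stronger non-degenerate form from Section 13. The plan is to use this property to produce, for each finite $\mathcal F\subset B$ and $\ep>0$, a c.p.c.\ map $\psi\colon B\to M_n$ that is $\ep$-multiplicative on $\mathcal F$ and satisfies $|\mathrm{tr}\circ\psi(x)-\tau(x)|<\ep$. Choose $a\in B_+$ with $d_t(a)<\ep$ for all $t$, which is possible by strict comparison and divisibility. Property ETR1 then gives $e\lesssim a$, a finite-dimensional $D$, and an order-zero map $\theta\colon D\to\overline{(1-e)B(1-e)}$ that approximates $(1-e)^{1/2}\mathcal F(1-e)^{1/2}$.

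Write $\theta=\pi(\iota\otimes\cdot)$ with $\pi\colon C_0((0,1])\otimes D\to B$. The composition $t\circ\pi$ is a trace on the cone algebra, so it corresponds to a measure on $(0,1]$ with values in $T(D)$. Evaluating at points of $(0,1]$ gives finite-dimensional representations $\mathrm{ev}_s\otimes\id_D$, and any measure can be approximated by finitely supported ones. These points must be taken away from $0$; this is exactly what the stronger Section 13 non-degeneracy is for, since it lets us discard a small tracial neighbourhood of $0$. Averaging these finitely many evaluations, with rational weights realized by multiplicities, yields a $*$-homomorphism $C_0((0,1])\otimes D\to M_n$. We then need a c.p.\ lift $B\to C_0((0,1])\otimes D$, approximately multiplicative on $\mathcal F$, via $x\mapsto$ the preimage of the approximant of $(1-e)^{1/2}x(1-e)^{1/2}$. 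Nuclearity of $B$ (Choi--Effros) makes this lift c.p.c.\ on the operator system spanned by $\mathcal F$. The lift is only approximately multiplicative up to an error controlled by $\|[e,x]\|$ and by the tracial size of $e$, that is, in $\|\cdot\|_{2,\tau}$. This is the standard criterion for quasidiagonality of $\tau$: approximate multiplicativity in the $2$-norm, after which one upgrades to norm approximate multiplicativity using the Brown/Gabe characterization of quasidiagonal traces via amenable traces. $\tau$ is amenable since $B$ is nuclear.

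We expect the main obstacle to be this upgrade from trace-norm to operator-norm approximate multiplicativity without the UCT. Here we would use that the cone $C_0((0,1])\otimes D$ is quasidiagonal and contractible, hence homotopic to zero. The resulting maps into ultraproducts $\prod M_n/\bigoplus M_n$ therefore lift, so $\tau$ factors through a trace on $\prod M_n/\bigoplus M_n$ along an injective ($B$ simple) c.p.c.\ order-zero-then-corrected homomorphism. This shows $\tau$ is quasidiagonal.

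For the second statement, when $T(A)\ne\emptyset$ pick any $\tau$. The homomorphism $B\to\prod M_n/\bigoplus M_n$ obtained above is nonzero because it is unital, and it is injective because $B$ is simple. It has a c.p.c.\ lift, so $B$ is quasidiagonal, and hence $A\subset B$ is quasidiagonal.
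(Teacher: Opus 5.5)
Your reduction to $B=A\otimes\mathcal Z$ and your final step (a faithful quasidiagonal trace on a simple unital algebra gives quasidiagonality of the algebra) match the paper. The core step, however, has a genuine gap, located exactly where you expected trouble.

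The map $x\mapsto\iota\otimes d_x$ into the cone, and its composites with point evaluations, is not approximately multiplicative in operator norm, for two separate reasons. First, $d\mapsto\iota\otimes d$ is only order zero, so $\theta(d)\theta(d')=\widetilde\theta(\iota^2\otimes dd')$ rather than $\widetilde\theta(\iota\otimes dd')$. Second, the compression $x\mapsto(1-e)^{1/2}x(1-e)^{1/2}$ fails to be multiplicative by terms like $(1-e)^{1/2}xey(1-e)^{1/2}$. These are only tracially small, because $e$ is only Cuntz-small and $\theta(D)$ is not orthogonal to $e$; the paper stresses exactly this non-orthogonality.

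So what you get is approximate multiplicativity in $\|\cdot\|_{2,\tau}$ with $\mathrm{tr}\circ\psi\approx\tau$. By Brown's characterization, such maps exist for every amenable trace, so this carries no information beyond nuclearity. Upgrading from $2$-norm to operator-norm approximate multiplicativity \emph{is} the quasidiagonality problem. No Brown/Gabe result performs this upgrade without the UCT; the Tikuisis--White--Winter and Gabe theorems both use it.

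Contractibility of the cone does not rescue the argument. Projectivity lets you lift homomorphisms defined \emph{on} the cone, but $B$ does not map into the cone multiplicatively. The only homomorphism your data produce therefore lands in the tracial ($2$-norm) ultraproduct of matrices, and lifting that to $\prod M_n/\bigoplus M_n$ is exactly what has to be proved.

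The missing idea is to make the error terms vanish exactly by working in the trace-kernel quotient $Q_B=\ell^\infty(B)/N_{cu}(B)$, and to produce a genuine projection there.

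1. Since $B$ has strict comparison and tracial approximate oscillation zero (Corollary \ref{Cstrictcomp}), $Q_B$ has real rank zero. Lemma \ref{Lfinitesp} then replaces the cone image by a finite-dimensional subalgebra $C\subset Q_B$.

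2. The sETR1 non-degeneracy (Corollary \ref{CorZ}, giving $d_e\ge(1-s)1_D$) yields $\gamma h\le c\le 2h$, with $h=H(\iota\otimes 1_D)$ central in $C$.

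3. Lemma \ref{lem12.5a-cut} then shows that the projection $p=\chi_{[1/4,1]}(h)$ commutes with $\overline{j_B}(\mathcal F)$ up to $\varepsilon$ in norm. It also compresses $\overline{j_B}(\mathcal F)$ into $pC$ up to $\varepsilon$ in norm, and satisfies $1-p\lesssim\overline{j_B}(a)$ (Theorem \ref{prop12.5a}). This spectral cut, not pushing evaluation points away from $0$, is what the non-degeneracy is really for.

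4. Finally (Corollary \ref{cor13.12}), take the limit trace $\overline{\tau}_\omega$ on $Q_B$. The GNS representation of $pQ_Bp$ and the trace-preserving conditional expectation onto the finite-dimensional algebra $\pi(pC)$ give c.p.c.\ maps $x\mapsto\mathbb{E}(\pi(p\overline{j_B}(x)p))$ into finite-dimensional algebras. These maps are approximately multiplicative in operator norm, because the commutation and compression estimates above hold in norm. Their traces approximate $\tau$ because $\overline{\tau}_\omega(1-p)$ is small.
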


Under the UCT, quasidiagonality of traces was proved by Tikuisis, White, and Winter in  \cite{TWW}. 
 Theorem \ref{TT} gives an affirmative answer to the Blackadar--Kirchberg quasidiagonality problem
 \cite[Problem~VII]{STW} in the unital simple case. It also answers \cite[Problem~IX(1)]{STW}
in that case. If $A$ is additionally ${\cal Z}$-stable, then $\operatorname{dr}(A)\leq1$  by \cite[Theorem~B]{CETWW}. Thus Theorem \ref{TT}, together with \cite[Theorem B]{CETWW}, confirms 
what is called in \cite[the paragraph following Question 8.8]{BBSTWW} the strongest form of the Toms--Winter conjecture on the stably finite side: 
every finite, simple, separable, unital, nuclear, ${\cal Z}$-stable C*-algebra has finite decomposition rank. We record the following consequence, which generalizes \cite[Corollary 8.6]{CETWW}.


\begin{cor}\label{CMC}
Let \(A\) be a non-elementary unital separable finite amenable 
simple \(C^*\)-algebra. Then the following are equivalent:
\begin{enumerate}
\item \(\operatorname{ETR}(A)\leq 1\);
\item \(A\cong A\otimes\mathcal Z\);
\item \(\dim_{\mathrm{nuc}}(A)<\infty\);
\item \(\operatorname{dr}(A)<\infty\).
\end{enumerate}
When these conditions hold,
\[
   \dim_{\mathrm{nuc}}(A)\leq 1
   \quad\text{and}\quad
   \operatorname{dr}(A)\leq 1.
\]
More precisely,
\[
\operatorname{dr}(A)=\dim_{\mathrm{nuc}}(A)=
\begin{cases}
0, & \text{if \(A\) is AF},\\
1, & \text{otherwise}.
\end{cases}
\]
\end{cor}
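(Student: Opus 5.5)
The plan is to assemble the corollary from Theorem \ref{TA}, Theorem \ref{TT} and known results. The equivalence (1)$\Leftrightarrow$(2) is Theorem \ref{TA}, since $A$ is separable, simple, finite and amenable. For (2)$\Rightarrow$(4) and (2)$\Rightarrow$(3): since $A$ is unital, finite, simple and $\mathcal Z$-stable, it is stably finite. Haagerup's theorem, or the fact that exact $\mathcal Z$-stable simple algebras with strict comparison are stably finite, then gives $T(A)\neq\emptyset$. By Theorem \ref{TT} every tracial state of $A$ is quasidiagonal, and \cite[Theorem~B]{CETWW} yields $\operatorname{dr}(A)\le 1$. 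Since $\dim_{\mathrm{nuc}}\le \operatorname{dr}$ always, we also get $\dim_{\mathrm{nuc}}(A)\le 1$. The implication (4)$\Rightarrow$(3) is trivial. For (3)$\Rightarrow$(2), Winter's theorem applies: a separable, simple, unital, non-elementary algebra of finite nuclear dimension is $\mathcal Z$-stable. This closes the cycle and proves the bounds $\dim_{\mathrm{nuc}}(A)\le1$ and $\operatorname{dr}(A)\le1$.

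For the exact values I will split into cases. If $A$ is AF, then $\operatorname{dr}(A)=\dim_{\mathrm{nuc}}(A)=0$ by definition, since AF algebras are exactly those with decomposition rank (equivalently nuclear dimension) zero for separable algebras. Conversely, if $\dim_{\mathrm{nuc}}(A)=0$, then $A$ is AF by the Winter--Zacharias characterization; that $\operatorname{dr}(A)=0$ forces AF follows likewise, or from $\dim_{\mathrm{nuc}}\le\operatorname{dr}$. So if $A$ is not AF, both invariants are at least $1$, and combined with the upper bounds both equal $1$.

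The substantive input, and the only delicate point, is verifying the hypotheses of \cite[Theorem~B]{CETWW}: $A$ must be separable, simple, unital, nuclear and $\mathcal Z$-stable, and all traces must be quasidiagonal. Nonemptiness of $T(A)$ is needed so that Theorem \ref{TT} has content. I would check it directly from finiteness together with $\mathcal Z$-stability, via stable finiteness and the existence of a quasitrace, which is a trace by exactness. Everything else is bookkeeping with standard results.
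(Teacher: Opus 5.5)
Your proposal is correct and follows essentially the same route as the paper's proof: (1)$\Leftrightarrow$(2) from the main equivalence (the paper cites Theorem \ref{TZsetr}, which has the same content here because $A$ is assumed finite), then (3)$\Rightarrow$(2) via Winter's theorem, (2)$\Rightarrow$(3),(4) via Theorem \ref{TT} and \cite[Theorem~B]{CETWW}, and the AF/non-AF dichotomy from the zero-dimensional characterizations. Your explicit check that $T(A)\neq\emptyset$ (stable finiteness, then Blackadar--Handelman quasitraces, then Haagerup) is a point the paper leaves implicit. However, your first phrasing of it, via ``exact $\mathcal Z$-stable simple algebras with strict comparison are stably finite'', does not by itself produce a trace. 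Replace it with the chain you give at the end of your proposal.
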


The flexibility of property ETR1 comes at a technical cost.  In contrast
to the projection-based decompositions used in earlier tracial
approximation theories, the element $e$ above is only a positive
contraction.  Consequently, $1-e$ is not the complementary projection of
$e$, and the image $\theta(D)\subset\overline{(1-e)A(1-e)}$ need not be
orthogonal to the part controlled by $e$.  In particular, 
the map $a\mapsto (1-e)^{1/2}a(1-e)^{1/2}$    need not be approximately multiplicative. 
These difficulties appear,
for example, in the proofs of Theorem \ref{Ltrace} and Proposition \ref{Phered}, as well as 
that of Proposition \ref{prop:continuous-scale-improvement}. At the same
time, this lack of orthogonality is precisely what allows property
ETR1 to hold in such generality: it avoids the need for sufficiently
many projections or for a prescribed subhomogeneous model and
therefore also applies in projectionless settings.


The paper is organized as follows. Section~2 records notation and
preliminary results concerning traces, the Cuntz semigroup, continuous
scale, and sequence algebras. Section~3 introduces property ETR1.
Section~4 shows that, in the definition of ETR1,  the matrix sizes of the finite-dimensional
approximating algebras may be chosen arbitrarily large. Section~5 proves
the existence of faithful traces for \CA s with property ETR1. Sections~6 and~7 establish the
hereditary and Morita-invariance properties needed to define essentially
tracial rank at most one. Section~8 studies affine functions and the
rank map for \CA s with property ETR1.  Section~9 proves that simple \CA s with property ETR1 have tracial approximate divisibility and deduces
$\mathcal Z$-stability. Section~10 proves that every separable 
finite simple amenable $\mathcal Z$-stable \CA\, has property ETR1 (Theorem \ref{TmainZ}).
Section 11 studies the constant associated with the prescribed strictly positive element, and Section 12 establishes several approximation and permanence results. In Section 13, we show that unital, separable, simple, finite, amenable, ${\cal Z}$-stable \CA s have the stronger tracial approximation property sETR1. We then prove Theorems \ref{TB} and \ref{TT} and obtain Corollary \ref{CMC}. The final section gives a direct verification of property ETR1 for the Jiang--Su algebra.

\bigskip

{\bf Acknowledgements}  

This research has been carried out on and off since the 2020 pandemic. A preliminary version was reported at the Canadian Operator Symposium (COSy), June 1--5, 2026, at Carleton University, Ottawa.

The original version of one direction of Theorem 1.1 used the existence of selective ultrafilters, which may require the Continuum Hypothesis or ${\mathfrak{p}}={\mathfrak{c}}.$ The author would like to thank Narutaka Ozawa, who convinced the author during a conversation at COSy 2026 that none of these extra axioms are needed.   Indeed we subsequently remove this condition. 

The author began experimenting with the use of OpenAI ChatGPT 5.6 Sol in August 2026. The example in Section 14 was an AI experiment; it was subsequently edited and revised.
The AI assistance in this research is still at an early stage, but the author would like to acknowledge that it significantly improved the current version.

This work is supported by grants from SIMIS. 

  
\section{Preliminaries}

\begin{df}\label{Da1}
Let $A$ be a \CA. 

$\bullet$  Denote by $A^{\bf 1}$ the unit ball of $A.$ 

$\bullet$ $A_{s.a.}$ is the set of self-adjoint elements of $A,$ and $A_+$ is the  set of 
positive elements of $A.$    

$ \bullet$ Suppose that ${\cal G}\subset A.$ Then ${\cal G}_+={\cal G}\cap A_+.$

$\bullet$ $A_{s.a.}^{\bf 1}=A^{\bf 1}\cap A_{s.a.}$ and $A_+^{\bf 1}=A^{\bf 1}\cap A_+.$

$\bullet$  ${\rm Ped}(A)$ is the Pedersen ideal of $A.$ 

$\bullet$ $T(A)$ is the tracial state space of $A$ and $QT(A)$ the set of normalized 2-quasitraces.

\end{df}

\begin{df}\label{DtildeQA}
Let $A$ be a \CA. A densely defined 2-quasitrace is a 2-quasitrace defined on ${\rm Ped}(A\otimes {\cal K})$ 
(see \cite[Definition II.1.1]{BH}).  Denote by $\widetilde{QT}(A)$ the set of densely defined 2-quasitraces on 
$A\otimes {\cal K}.$  In what follows  we will always identify $A$ with $A\otimes e_{1,1}$ whenever it is convenient. 

We endow $\widetilde{QT}(A)$ with the topology in which a net $\{\tau_i\}$ converges to $\tau$ if 
$\{\tau_i(a)\}$ converges to $\tau(a)$ for all $a\in {\rm Ped}(A\otimes {\cal K})$ (see also (4.1) on page 985 of \cite{ERS}).

Note that, for each $a\in (A\otimes {\cal K})_+$ and $\ep>0,$ $f_\ep(a)\in {\rm Ped}(A\otimes {\cal K})_+.$
Define, for each $\tau\in \widetilde{QT}(A),$
\beq
&&\widehat{a}(\tau):=\tau(a):=\lim_{\ep\to 0}\tau(af_\ep(a))\andeqn\\
&&\widehat{[a]}(\tau):=d_\tau(a):=\lim_{\ep\to 0}\tau(f_\ep(a)).
\eneq
Denote by $\widetilde{T}(A)$ the subset of $\widetilde{QT}(A)$ consisting of traces.
\end{df}

\begin{df}\label{DQTGamma}
Let $A$ be a \CA\, with $\widetilde{QT}(A)\setminus \{0\}\not=\emptyset.$
Denote by $L(\widetilde{QT}(A))$ the family of continuous real-valued functions $f$ on 
$\widetilde{QT}(A)$ such that $f(\af \tau)=\af f(\tau)$ and 
$f(\tau+t)=f(\tau)+f(t)$ for all  $\af\in \R_+$ and for all $\tau, t\in \widetilde{QT}(A).$ 

Let $S\subset \widetilde{QT}(A)$ be a convex subset. Set 
\beq
\Aff_+(S)&=&\{f|_S: f\in L(\widetilde{QT}(A)), \,f(\tau)>0\,\, {\rm if}\,\, \tau\in S\setminus \{0\}\}\cup \{0\},\\
{\rm LAff}_+(S)&=& \{f: S\to [0,\infty]: \exists \{f_n\}\subset \Aff_+(S),\, f_n\nearrow f\}.
\eneq

Let $A$ be a simple \CA. Define $\Gamma: {\rm Cu}(A)\to {\rm LAff}_+(\widetilde{QT}(A))$ by
$\Gamma([a])(\tau)=\widehat{[a]}(\tau)=d_\tau(a)$ for all $\tau\in \widetilde{QT}(A).$ 

In case that $A$ is simple and $A={\rm Ped}(A),$ $\Gamma$ also induces
a canonical map $\Gamma: {\rm Cu}(A)\to {\rm LAff}_+(\overline{QT(A)}^w),$ where 
$\overline{QT(A)}^w$ is the weak *-closure of $QT(A).$ 

Let $a\in {\rm Ped}(A)$ and $\tau\in \widetilde{QT}(A).$  Put $\|a\|_{_{2, \tau}}=\tau(a^*a)^{1/2}.$
If $S\subset  \widetilde{QT}(A),$ put
\beq
\|a\|_{_{2, S}}=\sup_{\tau\in S} \|a\|_{_{2, \tau}}=\sup_{\tau\in S}\tau(a^*a)^{1/2}.
\eneq

See \cite[Definition 2.13]{FLosc}.
\end{df}

\begin{df}
Let $A$ be a \CA.  Recall that an element $a\in A_+\setminus \{0\}$ is {\it strongly uniformly full}  in $A,$ 
if there are $M(a)>0$ and an integer $N(a)\in \N$ such that, for any 
$b\in A_+$ with $\|b\|\le 1,$ there are $x_1, x_2,...,x_{N(a)}\in A$ with $\|x_i\|\le M(a),$ $1\le i\le N(a)$
such that
\beq
\sum_{i=1}^{N(a)} x_i^* ax_i=b
\eneq
(see \cite[Definition 5.5]{EGLNs}). 

$\bullet$ If this happens, we say $a$ is $M(a)$-$N(a)$-strongly uniformly full. 

$\bullet$ By \cite[Proposition 5.6]{EGLNs},  every non-zero positive element in  a $\sigma$-unital algebraically simple \CA\, 
$A$ is strongly uniformly full in $A.$ 

 Let $A$  be a $\sigma$-unital algebraically simple \CA\, 
and $a\in A_+\setminus \{0\}.$ 

$\bullet$ Set 
\beq
{\rm SUF}(a)=\inf\{ M(a)N(a): a \,\,\, {\rm is}\,\, {M(a){-}N(a)-}{\rm strongly\,\, uniformly\,\, full}\,\,\}.
\eneq

\end{df}
From the definition, one immediately obtains the following:

\begin{prop}\label{Pfullrank}
Let $A$ be a $\sigma$-unital algebraically simple \CA\, and $a\in A_+\setminus\{0\}.$
Then, for any $\ep>0,$ $a$ is $M(a)$-$N(a)$-strongly uniformly full in $A$ for some 
$M(a)>0$ and $N(a)\in \N$ such that
\beq
M(a)N(a)\le {\rm SUF}(a)+\ep.
\eneq

\end{prop}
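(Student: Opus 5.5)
The statement follows directly from the definition of ${\rm SUF}(a)$ as an infimum; the only work is to check that the infimum is over a nonempty set of finite numbers. First, by \cite[Proposition 5.6]{EGLNs}, every nonzero positive element of a $\sigma$-unital algebraically simple \CA\ is strongly uniformly full. So there is at least one pair $(M(a),N(a))$ with $M(a)>0$ and $N(a)\in\N$ for which $a$ is $M(a)$-$N(a)$-strongly uniformly full. Hence the set
\[
S=\{M N: a \text{ is } M\text{-}N\text{-strongly uniformly full}\}
\]
is a nonempty subset of $(0,\infty)$. Its infimum ${\rm SUF}(a)$ is therefore a finite number with $0\le {\rm SUF}(a)<\infty$.

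Next, fix $\ep>0$. By the definition of infimum, ${\rm SUF}(a)+\ep$ is not a lower bound for $S$. So some element of $S$ is strictly less than ${\rm SUF}(a)+\ep$. That is, there are $M(a)>0$ and $N(a)\in\N$ such that $a$ is $M(a)$-$N(a)$-strongly uniformly full and
\[
M(a)N(a)<{\rm SUF}(a)+\ep .
\]
This gives the required inequality, in fact with strict inequality.

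There is no real obstacle here. The one point to watch is finiteness of ${\rm SUF}(a)$, which is exactly what \cite[Proposition 5.6]{EGLNs} provides. Without it, the infimum of an empty set would be $+\infty$ and the statement would be vacuous or false. Nothing else is needed. In particular, no monotonicity in $M$ or $N$ is used: enlarging $M$ or $N$ preserves strong uniform fullness (pad with $x_i=0$, or use the same $x_i$ under a larger norm bound), but the argument does not need this.
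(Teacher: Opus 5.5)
Your proof is correct and matches the paper, which gives no separate argument and simply says the statement follows immediately from the definition of ${\rm SUF}(a)$ as an infimum. Like the paper, you rely on \cite[Proposition 5.6]{EGLNs} (cited in the bullet just before the statement) to ensure the set of admissible products $M(a)N(a)$ is nonempty, so ${\rm SUF}(a)$ is finite and the infimum argument applies.
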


$\bullet$ In what follows, when we say $a$ is $M(a)$-$N(a)$-strongly uniformly full in $A,$
we {\it always} assume that $M(a)N(a)\le {\rm SUF}(a)+1.$

\begin{df}\label{Dgenerating}
Let $D$ be a finite dimensional \CA. We may write 
$D=M_{r_1}\oplus M_{r_2}\oplus \cdots \oplus M_{r_k},$ where each $r_s$ is a positive integer 
and $M_{r_s}$ is the \CA\, of $r_s\times r_s$ matrices over $\C,$ $s=1,2,...,k.$ 
Let $\{e_{i,j}^{s}\}$ be a system of matrix units  for $r_s\times r_s$ matrices.
In what follows, by {\it standard} generating set of $D,$  we mean elements 
of the form
\beq
(\overbrace{e_{i_1,j_1}^{1},0,...,0}^k), (\overbrace{0, e_{i_2,j_2}^{2}, 0,...,0}^k), ..., 
(\overbrace{0,...,0, e_{i_k, j_k}^{k}}^k),
\eneq
together with $1_{D},$ the identity of $D.$
\end{df}

\begin{df}\label{dfg}
Throughout this paper, denote by $\iota$ the identity function on $[0,1].$
In particular $\iota\in C_0((0,1]).$

Let $0<\ep<1.$ Define  $f_\ep, {\bar f_\ep}, g_\ep \in C^b((0,\infty))$ as follows 
\beq
&&f_\ep(t)=\begin{cases}   0\,\, \,  & t\in [0,\ep/2];\\
                                      {t-\ep/2\over{\ep/2}} \,\, & t\in (\ep/2, \ep];\\
                                      1 & t\in (\ep, \infty)\end{cases}, \hspace{0.2in}  \,\, 
                                      {\bar f}_\ep(t)=\begin{cases}  {t\over{\ep}} & t\in [0, \ep];\\
                                                                             1  & t\in (\ep, \infty).\end{cases}\andeqn\\
                                                                             &&g_\ep=\begin{cases} 0 & t\in [0, \ep/2];\\
                                                                                                               {(t-\ep/2)\over{1-\ep/2}} &t\in (\ep/2, 1];\\
                                                                                                              1&t\in (1, \infty).\end{cases}.
\eneq
 Note ${f_\ep}|_{(0,1]}, { {\bar f}_\ep}|_{(0,1]}, {g_\ep}|_{(0,1]} \in C_0((0,1]).$
 Sometimes we also write $f_\ep, {\bar f}_\ep$ and $g_\ep$ for their restriction to $(0,1].$
\end{df}

\begin{rem}\label{Lrlambda}
Note that, for any $t\in [0,1],$
\beq\label{Lrlambda-1}
t\ge \ep f_\ep(t).
\eneq

(i) If $t\in [0, \ep/2],$ then $f_\ep(t)=0,$ so $\ep f_\ep(t)=0\le t.$

(ii) If $t\in (\ep/2,\ep),$ then  
$$
\ep f_\ep(t)=2t-\ep.
$$ 
Since $t<\ep,$
$$
\ep f_\ep(t)=2t-\ep<t.
$$

(iii) If $t\in [\ep,1],$ then $f_\ep(t)=1,$ so $\ep f_\ep(t)=\ep\le t.$

Thus, in all cases, 
$$
\ep f_\ep(t)\le t.
$$
\end{rem}

The following result is well known. We record it here for the reader's convenience.

\begin{lem}\label{LMn}
Let $A$ be a non-elementary simple \CA, let $a\in A_+\setminus \{0\}$ and $n\in \N.$ 
Then there is  a non-zero \hm\, $\phi: C_0((0,1])\otimes M_n\to \Her(a)\cap {\rm Ped}(A)$ 
\end{lem}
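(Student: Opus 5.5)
Since $A$ is simple and non-elementary, the hereditary \SCA\ $B:=\Her(a)$ is also simple and non-elementary. A non-zero hereditary subalgebra of a simple non-elementary algebra cannot be elementary: otherwise it would contain a minimal projection, and so would $A$, forcing $A$ to be elementary. Moreover ${\rm Ped}(B)={\rm Ped}(A)\cap B$ contains $f_\ep(b)$ for any $b\in B_+$ and $\ep>0$. So it suffices to find $n$ mutually orthogonal, mutually Cuntz-equivalent non-zero positive elements of $B$, implemented by partial-isometry-type elements, and then cut them down with $f_\ep$ so that everything lands in the Pedersen ideal.

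\textbf{Step 1: orthogonal non-zero positive elements.} In a simple non-elementary \CA\ every non-zero hereditary subalgebra contains a positive element with infinite spectrum; otherwise $B$ would be finite dimensional (equivalently, contain a minimal projection), which contradicts non-elementarity. Choose $b\in B_+$, $\|b\|=1$, with infinite spectrum. Then we can pick continuous functions $g_1,\dots,g_n$ with disjoint supports in $(0,1]$ such that $c_i:=g_i(b)\ne0$. These are mutually orthogonal elements of $B_+$.

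\textbf{Step 2: make them equivalent.} By simplicity there are $x_i\in A$ with $x_i^*c_1x_i$ close to, and not orthogonal to, $c_i$. Applying \cite[Lemma 2.3 / R{\o}rdam-type]{} arguments gives non-zero $d_i\in\Her(c_i)_+$ with $d_i\lesssim c_1$. I would instead use the standard iterative argument: start from $c_1$ and find, inside $\Her(c_1)$, a non-zero $d\in\Her(c_1)_+$ and elements $z_i$ with $z_i^*z_i= d$ and $z_iz_i^*\in\Her(c_i)$. This uses the fact that for non-zero $p,q$ in a simple algebra there is a non-zero $y$ with $y^*y\in\Her(p)$ and $yy^*\in\Her(q)$ (take $y=p^{1/2}wq^{1/2}\ne0$ for suitable $w$). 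Iterating this $n-1$ times, and shrinking $d$ at each stage by passing to hereditary subalgebras generated by $y^*y$, yields a single non-zero $d$ together with $z_2,\dots,z_n$ satisfying $z_i^*z_i=d$ and $z_iz_i^*\in\Her(c_i)$.

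\textbf{Step 3: build the homomorphism.} Put $z_1=d^{1/2}$ and, taking polar decompositions in $A^{**}$, write $z_i=v_id^{1/2}$. Set $w_{ij}=v_i\,h(d)\,v_j^*$ for a suitable functional-calculus function $h$. The orthogonality of the $\Her(c_i)$ makes $\{v_id v_j^*\}$ satisfy the matrix-unit relations twisted by $d$. Hence $\iota\otimes e_{ij}\mapsto v_i f(d) v_j^*$, with $f=f_{\ep}(\cdot)\,\cdot$ (or simply $\iota\otimes e_{ij}\mapsto v_i\,d\,v_j^*$ after normalizing $\|d\|=1$), defines an order-zero map $M_n\to B$ whose associated homomorphism $\phi\colon C_0((0,1])\otimes M_n\to B$ is non-zero. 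Replacing $d$ by $(d-\ep)_+$ with $\ep<\|d\|$ forces the image of $\phi$ into ${\rm Ped}(B)\subset{\rm Ped}(A)$: each $v_i(d-\ep)_+v_j^*$ lies in the algebraic ideal generated by $f_{\ep/2}(d)$, and that ideal sits in the Pedersen ideal.

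\textbf{Main obstacle.} The hardest part is Step 2, namely getting one common $d$ that is simultaneously equivalent into all $n$ orthogonal pieces, while keeping the $v_i$ honestly in $A$ (products $v_i g(d)$ with $g(0)=0$ lie in $A$). The remedy is the repeated shrinking: one replaces $d$ at stage $k$ by an element of $\Her(z_k^*z_k)$ and transports the earlier $z_j$ accordingly. The verification that these products remain in $A$ and respect the orthogonality relations is routine but needs care.
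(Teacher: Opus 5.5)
Your proof is correct and is essentially the paper's argument: produce $n$ mutually orthogonal non-zero positive elements inside $\Her(a)$, find a single non-zero $d$ with $z_i^*z_i=d$ and $z_iz_i^*\in\Her(c_i)$, and set $\phi(\iota\otimes e_{ij})=z_iz_j^*$. For the middle step the paper simply quotes \cite[Lemma 2.3]{Lincs91}, which your iteration reproves, and it takes an element with spectrum $[0,1]$ from \cite{AS} where you use an element with infinite spectrum. The remaining difference is where the Pedersen ideal is handled. The paper works from the start inside $\Her(f_{1/4}(a))$, which already lies in ${\rm Ped}(A)$, whereas you cut down by $(d-\ep)_+$ at the end. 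Both work, and for your version only the inclusion ${\rm Ped}(B)\subset{\rm Ped}(A)$ is needed, not the equality you state.
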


\begin{proof}
We may assume that $\|a\|=1.$ Choose $a_1=f_{1/4}(a)$ and $B=\Her(a_1).$
Then $B\subset {\rm Ped}(A)$ and  is a non-elementary simple \CA.  By \cite[p.~67]{AS}, there is an element 
$y\in B_+$ with ${\rm sp}(y)=[0,1].$ Therefore, one may choose $n$ non-zero mutually orthogonal 
elements $z_1,z_2,...,z_n\in B_+.$ It follows  from \cite[Lemma 2.3]{Lincs91} that there is $b\in B_+$
with $\|b\|=1$ and $x_i\in B$ such that 
\beq
x_i^*x_i=b\andeqn x_ix_i^*\in \Her(z_i),\,\,\, 1\le i\le n.
\eneq
Define $\phi: C_0((0,1])\otimes M_n\to B$ by 
\beq
\phi(\iota\otimes e_{i,j})=x_ix_j^*,\,\,\, 1\le i,j\le n.
\eneq
One then verifies that $\phi$ indeed defines a \hm. 
\end{proof}

\begin{df}\label{def:Cuntz-null}
Let $A$ be a $C^*$-algebra and let
$a\in A_+\setminus\{0\}$. A bounded  sequence
$\{e_n\}\subset A_+^1$ is said to be \emph{Cuntz small
relative to $a$} if, for every $\lambda>0$, there is
$N\in\mathbb N$ such that
\[
(e_n-\lambda)_+\lesssim a,
\qquad n\geq N.
\]
A  bounded sequence  $\{a_n\}$ of $A$ is said to be \emph{Cuntz null}
if $\{a_n^*a_n\}$ 
is Cuntz
small relative to every element of
$A_+\setminus\{0\}$ (see \cite[Definition 3.1]{FLL}).

A  bounded sequence $\{a_n\}\subset A_+$ is said to be {\it truly Cuntz-null} 
if, for any finite subset ${\cal F}\subset A_+\setminus \{0\},$ there exists $n_0\in \N$ 
such that, for any $n\ge n_0,$ $a_n\lesssim d$ for all $d\in {\cal F}$ (see \cite[Definition 2.14]{FLosc}).
\end{df}

\begin{rem}\label{RCuntznull}

\noindent
(1) It follows from \cite[Lemma 4.3]{FLL}  that, if $A$ is a non-elementary separable simple \CA, then
$A$ admits at least one (non-zero) Cuntz-null sequence.  

(2) Every non-elementary simple \CA\, with strict comparison admits   a non-zero truly Cuntz-null sequence
$\{z_n\}$ with $\|z_n\|=1$ ($n\in \N$).  

To see this,  let 
$A$  be a non-elementary simple \CA.    If $A$ is purely infinite, then any sequence $\{z_n\}$ with 
$\|z_n\|=1$ is a truly Cuntz-null sequence. Therefore we may assume that $\widetilde{QT}(A)\not=\emptyset.$ 

Choose $a_0\in {\rm Ped}(A)_+$ with $\|a_0\|=1.$
 Put $a_1=f_{1/4}(a_0)$ and $a_2=f_{1/2}(a_0).$ 
 Let $S=\{\tau\in \widetilde{QT}(A):  \tau(a_1)=1\}.$ $S$ is compact and a basis for $\widetilde{QT}(A).$
 
 Let $B=\Her(a_2).$   Define $z_1:=a_2.$   So $\|z_1\|=1.$ Since $a_1a_2=a_2,$ we have 
 $d_\tau(z_a)=d_\tau(a_2)\le d_\tau(a_1)=1$ for all $\tau\in S.$
  Suppose we obtain $z_1, z_2, ..., z_n\in B$ such 
 that $\|z_i\|=1$ ($1\le i\le n$) and 
 \beq
 d_\tau(z_{i+1})\le d_\tau(z_i)/(i+1)\rforal \tau\in S,\,\,\, 1\le i\le n-1.
 \eneq 
 By applying Lemma \ref{LMn},  we obtain a non-zero \hm\, $\phi_n: C_0((0,1])\otimes M_{n+1}\to
 \Her(f_{1/2}(z_n)).$ Choose $z_{n+1}=\phi_n(f(\iota)\otimes e_{1,1})$ for some $f\in C_0((0,1])_+$
 with $\|f\|=1$ such that $\|z_{n+1}\|=1.$ 
 Then  the construction gives
 \beq
 d_\tau(z_{n+1})\le d_\tau(z_n)/(n+1)\rforal \tau\in S.
 \eneq
Let $b\in A_+\setminus \{0\}.$ Since $A$ is simple,  we have 
$\inf_{\tau\in S} d_\tau(b)=\sigma>0.$ Choose $n_0\in \N$ such that $1/n_0<\sigma.$ 
Consequently,  for all $n\ge n_0,$ 
\beq\nonumber
d_\tau(z_n)\le 1/n!<\sigma <d_\tau(b)\rforal \tau\in S.
\eneq
Since $A$ has  strict comparison, we have $z_n \lesssim b.$
Thus $\{z_n\}$ forms a non-zero truly Cuntz-null sequence.
\end{rem}

\begin{df}\label{DGamma}
Let $\{A_n\}$ be a sequence of \CA s.
Define 
\beq
&&\prod_n A_n=l^\infty(\{A_n\})=\{\{a_n\}: a_n\in A_n, \sup_n\|a_n\|<\infty\}\andeqn\\
&&\bigoplus_{n=1}^\infty A_n=c_0(\{A_n\})=\{\{a_n\}\in l^\infty: \lim_{n\to\infty}\|a_n\|=0\}.
\eneq
In case that $A_n=A$ for all $n\in \N,$ 
we may write $\prod_n A$ for $\prod_n A_n,$ $l^\infty(A)$ for $l^\infty(\{A_n\}),$ and 
$\bigoplus_{n=1}^\infty A=\bigoplus_{n=1}^\infty A_n$ and $c_0(A)=c_0(\{A_n\}).$ 

Denote $j_A: A\to l^\infty(A)$  by 
$j_A(a)=\{a_n\},$ where $a_n=a$ for all $n\in \N,$ the constant sequence.

Denote by $N_{cu}(A)$ or just $N_{cu}$ the set of all Cuntz-null sequences in $l^\infty(A).$ 
By \cite[Proposition 3.5]{FLL}, if $A$ has no one-dimensional hereditary \CA s, then $N_{cu}(A)$ is 
a (closed  two--sided) ideal of $l^\infty(A).$  Suppose that $A$ is non-elementary, simple and admits  a (non-zero) Cuntz-null sequence $\{z_n\}$ with $\|z_n\|=1$ ($n\in \N$). Then 
$c_0(A)\subsetneqq N_{cu}(A).$  Denote by 
$\Pi_{cu}: l^\infty(A)\to l^\infty(A)/N_{cu}(A)$ the quotient map (when $N_{cu}(A)$ is an ideal).

We often write $Q_A=l^\infty(A)/N_{cu}(A).$

Denote by ${\overline{j_A}}: A\to l^\infty(A)/N_{cu}$ the composition $\Pi\circ j_A,$
 and 
 $$\overline{j_A}(A)^\perp=\{b\in l^\infty(A)/N_{cu}(A): b\overline{j_A}(a)=\overline{j_A}(a)b=0\rforal a\in A\}.$$

\end{df}

\begin{df}\label{DI-S}
Let $A$ be a \CA\, and $S\subset \widetilde{QT}(A)$ be a compact convex subset.
Define 
\beq
I_S=\{\{a_n\}\in l^\infty(A):  \lim_{n\to\infty} \sup_{\tau\in S}\{\tau(a_n^*a_n)\}=0\}.
\eneq

If $A$ is a non-elementary algebraically simple \CA\, with strict comparison,  then
\beq\label{A2.17}
 I_{\overline{QT(A)}^w}=N_{cu}(A)
 \eneq
 (see  the proof of \cite[Proposition 3.8]{FLL}, in particular, separability is not needed for \eqref{A2.17}).
 
 Denote by $q_A: l^\infty(A)\to l^\infty(A)/I_{\overline{QT(A)}^w}$ the quotient map.
\end{df}

\begin{df}\label{Dcontsc}
Let $A$ be a  non-elementary $\sigma$-unital simple \CA. Let $\{e_n\}$ be an approximate identity
for $A$ such that $e_{n+1}e_n=e_ne_{n+1}=e_n,$ $n\in \N.$
We say $A$ has continuous scale, if for any $a\in A_+\setminus \{0\},$ there exists $n_0\in \N$ such that
\beq
e_m-e_n\lesssim a \rforal m>n\ge n_0.
\eneq
The definition does not depend on the choice of $\{e_n\}$  (see \cite[Definition 2.1]{Lincs91} and 
\cite[Definition 2.5]{Lincs04}). 
\end{df}

\begin{rem}\label{Rcs}
Suppose that $A$ also has strict comparison with a strictly positive element $e_A.$  Then $A$ has 
continuous scale if and only if $\widehat{[e_A]}$  is continuous on $\widetilde{QT}(A)$ (see \cite[Theorem 2.19]{FLosc}, and also \cite[Theorem 5.3 and Proposition 5.4]{EGLNs}).  The same references  also state that this is equivalent 
to the condition that $QT(A)$ is compact.

Let $a\in {\rm Ped}(A)_+.$  If $\widehat{[a]}$ is continuous, we may write $\omega(a)=0$
(see \cite[Definition A1]{EGLNkk0} and the last line of \cite[Definition 4.1]{FLosc} on page 15).
\end{rem}

\begin{lem}\label{Lsequnececuntz}

(1) Let $A$ be a \CA\, and  $\{a_n\}, \{b_n\}, \{c_n\}\in l^\infty(A)_+.$ 
Suppose that $a_n\lesssim b_n$ and $c_nb_n=b_n=b_nc_n$ for all $n\in \N.$ 
Then 
\beq
\{a_n\}\lesssim \{c_n\}\in l^\infty(A).
\eneq

(2) Let $A$ be a $\sigma$-unital non-elementary simple \CA\, with continuous scale (see \cite[Definition 2.1]{Lincs04}
and \cite{Lincs91}), and let 
$\{e_n\}$ be an approximate identity 
for $A$ such that $e_{n+1}e_n=e_n$ for all $n\in \N.$
Then, for any $a\in A_+\setminus\{0\},$ there exists $n_0\in \N$ such 
that
\beq
1_{Q_A}-\overline{j_A}(e_n)\lesssim \overline{j_A}(a)\rforal n\ge n_0.
\eneq

\end{lem}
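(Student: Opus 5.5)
For part (1), I will work with explicit witnesses of Cuntz subequivalence. The standard characterization says $a_n\lesssim b_n$ means: for every $\ep>0$ there is $x\in A$ with $(a_n-\ep)_+=x^*b_nx$. This holds in any \CA, with $x$ living in $A$. The difficulty is that the norms of these witnesses need not be uniformly bounded in $n$. The hypothesis $c_nb_n=b_n$ is exactly what fixes this.

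Fix $\ep>0$ and $n$. Choose $y_n\in A$ with $\|y_n^*b_ny_n-(a_n-\ep/2)_+\|$ small. Then, by the standard lemma (R\o rdam), there is $z_n$ with
\[
(a_n-\ep)_+=z_n^*(b_n-\dt)_+z_n
\]
for suitably small $\dt$. Replace $b_n$ by $f_\dt(b_n)$; note that $c_nf_\dt(b_n)=f_\dt(b_n)$, since $c_n$ acts as a unit on $\Her(b_n)$. Set $w_n=f_\dt(b_n)^{1/2}$. We obtain, with $v_n:=w_n u_n$ for a suitable $u_n$,
\[
(a_n-\ep)_+=v_n^*v_n,\qquad v_nv_n^*\in \Her(b_n).
\]
Here $\|v_n\|^2=\|(a_n-\ep)_+\|\le \sup_n\|a_n\|$, so the sequence $\{v_n\}$ is bounded. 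Since $v_nv_n^*\in\Her(b_n)$ and $c_nb_n=b_n=b_nc_n$, we have $c_nv_n=v_n$. Hence
\[
v_n^*c_nv_n=v_n^*v_n=(a_n-\ep)_+ .
\]
The key estimate is that $\|v_n\|$ is controlled by $\|a_n\|$ alone. I will get it from the polar decomposition: take $(a_n-\ep)_+^{1/2}=$ the modulus, and obtain $v_n$ from $b_n$-side witnesses via $v_n=\lim (b_n^{1/k}) y$-type approximants inside $\Her(b_n)$. One could also pass to $A^{**}$, but the clean route is the Kirchberg–R\o rdam fact that $(a-\ep)_+\lesssim b$ yields some $v$ with $v^*v=(a-\ep)_+$ and $vv^*\in\Her(b)$. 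With the bounded sequence $V=\{v_n\}\in l^\infty(A)$, we get $(\{a_n\}-\ep)_+=V^*\{c_n\}V$ for every $\ep$, so $\{a_n\}\lesssim\{c_n\}$.

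For part (2), continuous scale gives $n_0$ such that $e_m-e_n\lesssim a_1$ for all $m>n\ge n_0$, where I will apply it to a smaller element $a_1:=f_{\eta}(a)$ (or to $a$ itself). I need to show that $1_{Q_A}-\overline{j_A}(e_n)$ is dominated by $\overline{j_A}(a)$. The unit $1_{Q_A}$ is the image of $1$ of $l^\infty(\tilde A)$, or rather of a multiplier-type unit. I will check that $Q_A$ is unital: by continuous scale, $\{e_m\}_m$ (indexed along the sequence) is a unit modulo $N_{cu}$, because $\{e_{m+1}-e_m\}$ is Cuntz null. So $1_{Q_A}=\Pi_{cu}(\{e_{m}\}_m)$, and
\[
1_{Q_A}-\overline{j_A}(e_n)=\Pi_{cu}(\{e_m-e_n\}_{m}) .
\]
For $m>n+1$, write $e_m-e_n$. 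We have $e_m-e_n\lesssim$ an element $b_m$ and we need a common $c$. Take $a_2=f_{\eta/2}(a)$, so that $a_2f_\eta(a)=f_\eta(a)$. Apply continuous scale with $f_\eta(a)$ in place of $a$ to get $e_m-e_n\lesssim f_\eta(a)$. Then part (1) with $b_m=f_\eta(a)$ and $c_m=a_2$ gives $\{e_m-e_n\}_m\lesssim j_A(a_2)\lesssim j_A(a)$ in $l^\infty(A)$. Pushing through $\Pi_{cu}$ finishes the proof; the finitely many $m\le n+1$ are irrelevant, since $c_0(A)\subset N_{cu}$.

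The main obstacle is the uniform norm bound in part (1). I expect to handle it via the $v^*v=(a-\ep)_+$, $vv^*\in\Her(b)$ characterization. A secondary point is justifying the identification of $1_{Q_A}$ in part (2).
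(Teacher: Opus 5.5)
Your proposal is correct and is essentially the paper's proof. In (1) you use exact witnesses $(a_n-\ep)_+=v_n^*v_n$ with $v_nv_n^*\in\Her(b_n)$, which bound $\|v_n\|^2\le\|a_n\|$ automatically, where the paper uses approximate witnesses $\|a_n-x_n^*x_n\|<\ep$ with $x_nx_n^*\in\Her(b_n)$; in both versions $c_n$ acts as a common unit on $\Her(b_n)$, and that is what carries the relation over to $l^\infty(A)$. Part (2) is the same cut-down followed by (1): you take $f_\eta(a)$ and $f_{\eta/2}(a)$ where the paper takes $(a-r)_+$ and $f_{r/2}(a)$.

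The one thing to repair is your stated reason for $1_{Q_A}=\Pi_{cu}(\{e_m\})$. Cuntz-nullness of $\{e_{m+1}-e_m\}$ does not suffice: it can hold in $M_{2^\infty}\otimes\mathcal K$, which does not have continuous scale, and there $\Pi_{cu}(\{e_m\})$ is not a unit. Instead, use the full condition $e_k-e_m\lesssim d$ for all $k>m\ge n_0$. For each $m$, choose $k=k(m)$ with $\|(1-e_k)x_m\|$ small. This shows that $\{(1-e_m)x_m\}$ and $\{x_m(1-e_m)\}$ lie in $N_{cu}(A)$ for every bounded $\{x_m\}$. The paper itself only calls this step standard.
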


\begin{proof}

(1) 

We may assume that $\|a_n\|\le M_a$ and $\|b_n\|\le M_b$ for all $n\in \N$
for some $M_a, M_b\ge 0.$
Fix $1>\ep>0.$ 
For each $n\in \N,$ there exists $x_n\in A$ such that
\beq
\|a_n-x_n^*x_n\|<\ep\andeqn x_nx_n^*\in\Her(b_n).
\eneq
Then, for all $n\in \N,$
\beq
\|x_n^*x_n\|\le \|a_n\|+\ep \le M_a+1.
\eneq
It follows that $x=\{x_n\}\in l^\infty(A).$
Put $a=\{a_n\}, b=\{b_n\}$ and $c=\{c_n\}\in l^\infty(A).$
Then 
\beq
cxx^*=xx^*=xx^*c\andeqn
\|a-x^*x\|<\ep
\eneq
It follows that $xx^*\in \Her(c)=\Her(\{c_n\}).$  
Hence 
\beq
\{a_n\}\lesssim \{c_n\}\in l^\infty(A).
\eneq

(2) 

Since $A$ has continuous scale, it is standard to check that
$\Pi_{cu}(\{e_n\})=1_{Q_A}.$ 

Choose $r=\|a\|/2$ and $b=(a-r)_+.$ Then $f_{r/2}(a)b=b=bf_{r/2}(a).$
Since $A$ has continuous scale, 
there is $n_0\in \N$ such that
\beq
e_m-e_n\lesssim b\rforal m>n\ge n_0.
\eneq
It follows from (1) that 
\beq
\{e_{n+m}-e_{n}\}\lesssim  \overline{j_A}(f_{r/2}(a))\rforal n\ge n_0.
\eneq
It follows that 
\beq
1_{Q_A}-\overline{j_A}(e_{n})\lesssim  \overline{j_A}(a)\rforal n\ge n_0.
\eneq
\end{proof}

\begin{lem}\label{lem:absorb-tail}
 Let $A$ be a $\sigma$-unital simple $C^*$-algebra with continuous
scale. Let $a\in A_+\setminus\{0\}$, let $q\in(Q_A)_+^1$, and put
$
c=1_{Q_A}-q.
$
Suppose that
$
q\lesssim \overline{j_A}(a_0)
$
for some $a_0\in A_+\setminus\{0\}$.

Choose $a_1\in A_+\setminus\{0\}$ such that
\[
a_0\oplus a_1\lesssim a.
\]
Then there is a positive contraction $g\in A$ such that
\[
1_{Q_A}-\overline{j_A}(g)\lesssim \overline{j_A}(a_1).
\]
Moreover, for every $g\in A_+^{\bf 1}$ satisfying
\[
        1_{Q_A}-j_A(g)\precsim j_A(a_1),
\]
put
\[
b=c^{1/2}\overline{j_A}(g)c^{1/2}\,\,
\andeqn\,\,
q_1=1_{Q_A}-b.
\]
Then
\[
q_1\lesssim \overline{j_A}(a)
\tand
1_{Q_A}-q_1
=c^{1/2}\overline{j_A}(g)c^{1/2}.
\]
\end{lem}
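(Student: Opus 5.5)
The plan is to split $q_1$ into two positive pieces, one dominated by $q$ and one by $1_{Q_A}-\overline{j_A}(g)$, and then use subadditivity of Cuntz comparison. Throughout, the relation $1_{Q_A}-j_A(g)\precsim j_A(a_1)$ in the second part is read in $Q_A$, i.e.\ as $1_{Q_A}-\overline{j_A}(g)\lesssim \overline{j_A}(a_1)$. I also assume, as is implicit in the use of $Q_A$ and of continuous scale, that $A$ is non-elementary, so that $N_{cu}(A)$ is an ideal.

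\emph{Existence of $g$.} Fix an approximate identity $\{e_n\}$ of $A$ with $e_{n+1}e_n=e_n$. Lemma \ref{Lsequnececuntz}(2), applied to $a_1\in A_+\setminus\{0\}$, gives $n_0$ such that
\[
1_{Q_A}-\overline{j_A}(e_n)\lesssim \overline{j_A}(a_1)\qquad (n\ge n_0).
\]
Take $g=e_{n_0}$; it is a positive contraction in $A$. No further work is needed here.

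\emph{The identity.} The formula $1_{Q_A}-q_1=c^{1/2}\overline{j_A}(g)c^{1/2}$ is just the definition of $q_1$. Since $q\in (Q_A)_+^{\bf 1}$, we have $0\le c\le 1_{Q_A}$, so $c^{1/2}$ makes sense.

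\emph{The comparison.} Writing $1_{Q_A}=q+c$, we get
\[
q_1=q+c-c^{1/2}\overline{j_A}(g)c^{1/2}=q+c^{1/2}\bigl(1_{Q_A}-\overline{j_A}(g)\bigr)c^{1/2}.
\]
Both summands are positive. Put $h=1_{Q_A}-\overline{j_A}(g)\ge 0$.
\begin{itemize}
\item With $x=c^{1/2}h^{1/2}$, we have $c^{1/2}hc^{1/2}=xx^*\sim x^*x=h^{1/2}ch^{1/2}$.
\item Since $c\le 1$, $h^{1/2}ch^{1/2}\le h$, and so $c^{1/2}hc^{1/2}\lesssim h\lesssim \overline{j_A}(a_1)$.
\end{itemize}
Next, use the standard fact that $u+v\lesssim u\oplus v$ for positive $u,v$ in any \CA. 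This gives, in $M_2(Q_A)$,
\[
q_1\lesssim q\oplus c^{1/2}hc^{1/2}\lesssim \overline{j_A}(a_0)\oplus\overline{j_A}(a_1)=(\overline{j_A}\otimes\id_{M_2})(a_0\oplus a_1).
\]
Finally, $\overline{j_A}\otimes \id_{M_2}$ is a \hm, and \hm s preserve Cuntz subequivalence. Hence $a_0\oplus a_1\lesssim a$ implies $(\overline{j_A}\otimes\id_{M_2})(a_0\oplus a_1)\lesssim \overline{j_A}(a)$, and therefore $q_1\lesssim \overline{j_A}(a)$.

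There is no serious obstacle. The only points needing care are:
\begin{itemize}
\item the bookkeeping between $j_A$ and $\overline{j_A}$ noted above;
\item justifying $c^{1/2}hc^{1/2}\lesssim h$ via the $xx^*\sim x^*x$ trick;
\item transporting the matrix comparison $a_0\oplus a_1\lesssim a$ from $A$ to $Q_A$ through the \hm\ $\overline{j_A}$.
\end{itemize}
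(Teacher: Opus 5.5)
Your proof is correct and follows the same route as the paper. You obtain $g$ from Lemma \ref{Lsequnececuntz}(2), write $q_1=q+c^{1/2}(1_{Q_A}-\overline{j_A}(g))c^{1/2}$, and conclude by subadditivity $q_1\lesssim \overline{j_A}(a_0)\oplus\overline{j_A}(a_1)\lesssim\overline{j_A}(a)$. You also spell out two steps the paper leaves implicit: why $c^{1/2}(1_{Q_A}-\overline{j_A}(g))c^{1/2}\lesssim 1_{Q_A}-\overline{j_A}(g)$, and how the homomorphism $\overline{j_A}$ carries $a_0\oplus a_1\lesssim a$ over to $Q_A$.
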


\begin{proof}

It follows from (2) of Lemma \ref{Lsequnececuntz} that there is $g\in A_+^{\bf 1}$ such that 
\[
1_{Q_A}-\overline{j_A}(g)\lesssim \overline{j_A}(a_1).
\]
Now suppose that $g\in \in A_+^{\bf 1}$ satisfies the condition above.
Put
\[
r=c^{1/2}(1_{Q_A}-\overline{j_A}(g))c^{1/2}.
\]
Then
\[
r\lesssim1_{Q_A}-\overline{j_A}(g)
\lesssim \overline{j_A}(a_1).
\]
Moreover,
\[
\begin{aligned}
c^{1/2}\overline{j_A}(g)c^{1/2}
&=c^{1/2}\big(1_{Q_A}-(1_{Q_A}-\overline{j_A}(g))\big)c^{1/2}
=c-r.
\end{aligned}
\]
Consequently,
\[
\begin{aligned}
q_1
&=1_{Q_A}-c^{1/2}j_A(g)c^{1/2}
=1_{Q_A}-(c-r)\\
&=(1_{Q_A}-c)+r
=q+r.
\end{aligned}
\]
It follows that
\[
q_1=q+r\lesssim q\oplus r.
\]
Using
\[
q\lesssim \overline{j_A}(a_0)
\quad\text{and}\quad
r\lesssim \overline{j_A}(a_1),
\]
we obtain
\[
q_1
\lesssim \overline{j_A}(a_0)\oplus \overline{j_A}(a_1)
\lesssim \overline{j_A}(a).
\]

Finally, by the definition of $q_1$,
\[
1_{Q_A}-q_1
=c^{1/2}\overline{j_A}(g)c^{1/2}.
\]
\end{proof}

The following  result is well known.

\begin{lem}\label{Lfinitesp}
Let $A$ be a  \CA\, of real rank zero and let 
$\phi: C:=C_0((0,1])\otimes D\to A$ be a \hm, where $D$ is a finite dimensional \CA.
Let $e=\phi(\iota\otimes 1_D).$ 
Then, for any $\ep>0$ and any  finite subset ${\cal F}\subset C_0((0,1])\otimes D,$ 
there exists a \hm\, $H: C\to \Her(e)$ with finite dimensional range 
such that
\beq\label{Lfinitesp-1}
&&\|H(f)-\phi(f)\|<\ep\tforal f\in {\cal F},\tand\\\label{Lfinitesp-2}
&&\sup_{x\in D^{\bf 1}}\|H(\iota\otimes x)-\phi(\iota\otimes x)\|<\ep.
\eneq
\end{lem}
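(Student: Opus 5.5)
The plan is to reduce to a single matrix block, use real rank zero to replace the positive contraction $\phi(\iota\otimes e_{1,1})$ by one with finite spectrum, and then rebuild the homomorphism from that element.

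\textbf{Reduction.} Write $D=\bigoplus_{s=1}^k M_{r_s}$. The homomorphism $\phi$ is determined by the order-zero map $x\mapsto \phi(\iota\otimes x)$. Its restrictions to the blocks $C_0((0,1])\otimes M_{r_s}$ have mutually orthogonal ranges, and $\Her(e)$ contains the orthogonal hereditary subalgebras $\Her(\phi(\iota\otimes 1_{M_{r_s}}))$. So it suffices to treat $D=M_r$ and take the direct sum at the end. Since $C_0((0,1])\otimes D$ is generated by $\iota\otimes D$, and a homomorphism is contractive, for a finite subset ${\cal F}$ we only need \eqref{Lfinitesp-2} with a smaller tolerance. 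Indeed, each $f\in{\cal F}$ is within a small distance of a polynomial (without constant term) in finitely many elements $\iota\otimes e_{i,j}$, and $\|H(g)-\phi(g)\|$ for such polynomials is controlled by the errors on the generators. The uniform estimate over $D^{\bf 1}$ then follows from the estimate on matrix units, with a constant $r^2$.

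\textbf{Construction for $D=M_r$.} Put $a=\phi(\iota\otimes e_{1,1})$, a positive contraction. Let $v\in A^{**}$ be the partial isometries from the polar decompositions of $x_i=\phi(\iota^{1/2}\otimes e_{i,1})$, so that $x_i^*x_i=a$ and $x_ix_j^*=\phi(\iota\otimes e_{i,j})$.

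1. Since $\Her(a)$ has real rank zero (hereditary subalgebras inherit it), there is a positive element $b\in\Her(a)$ with finite spectrum such that $\|a-b\|<\delta$. Concretely, take $b=\sum_m \lambda_m p_m$ with $p_m$ mutually orthogonal projections in $\Her(a)$ and $0<\lambda_m\le 1$. I would get $b$ by approximating the spectral projections of $a$ using the standard fact that in real rank zero one has projections $p$ with $f_{\eta}(a)\approx f_\eta(a)p\approx p\le$ the support of $f_{\eta/2}(a)$, iterated over a grid of levels. This is essentially the characterization of real rank zero that self-adjoint elements are approximable by elements with finite spectrum; the approximant can be chosen in $\Her(a)$, and its small eigenvalues can be dropped.

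2. Transport $b$ through the $x_i$. For $\eta>0$ small, consider $y_i=x_i g(a)$ where $g(t)=t^{-1/2}$ on $[\eta,1]$ (a continuous cut-off). Then $w_i=y_i p_m$ satisfy $w_i^*w_j\approx \delta_{ij}p_m$, provided $p_m$ lives, up to small error, under $f_\eta(a)$. Perturb the $w_i$ to exact partial isometries by functional calculus on $w_i^*w_i$ and a standard Gram--Schmidt type correction. This yields a system of matrix units $\{p^{(m)}_{i,j}\}$ inside $\Her(\phi(\iota\otimes 1))=\Her(e)$ with $p^{(m)}_{1,1}=p_m$.

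3. Define $H(f\otimes e_{i,j})=\sum_m f(\lambda_m)p^{(m)}_{i,j}$. This is a homomorphism with finite-dimensional range, and $H(\iota\otimes e_{i,j})=\sum_m\lambda_m p^{(m)}_{i,j}\approx x_i b^{\,}\! x_j^{*}$ after the normalization, which in turn is approximately $x_i g(a) a\, g(a)x_j^* \cdot$(correction) $\approx \phi(\iota\otimes e_{i,j})$.

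\textbf{Main obstacle.} The delicate point is step 2: making the transported elements exactly matrix units while keeping the error uniform, and handling the low part of the spectrum of $a$, where $g(a)$ is unbounded. I would cut at level $\eta$ with $\eta\ll\ep$, discard the eigenvalues below $\eta$ (they contribute at most $\eta$ in norm), and choose $\delta\ll\eta^{2}$ so that the errors $\|g(a)\|^2\delta\le\delta/\eta$ from step 1 stay small. The perturbation of almost-matrix-units to exact ones is standard semiprojectivity of $M_r\otimes\C^{\ell}$. Alternatively, one can use that $C_0((0,1])\otimes M_r$ is projective: approximate $a$ by $b$ in $\Her(a)$, then lift the order-zero structure via the approximate relations. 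That route needs the same $\eta$ bookkeeping.
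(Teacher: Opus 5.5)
Your outline is sound, and its skeleton is the paper's. You reduce to one block $D=M_r$, use real rank zero of $\Her(a)$ to replace $a=\phi(\iota\otimes e_{1,1})$ by $b=\sum_l t_lp_l\in\Her(a)$ with finite spectrum, transport the $p_l$ to the other matrix positions, and define $H$ by evaluation at the $t_l$.

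The difference is in the transport. The paper does it exactly. With $\pi$ the supporting homomorphism of $x\mapsto\phi(\iota\otimes x)$, it sets $\rho_l(e_{i,j})=\pi(e_{i,1})p_l\pi(e_{1,j})$. These are genuine matrix units in $\Her(e)$ because $p_l\in\Her(a)$. Then $\phi(f)=\sum_{i,j}\pi(e_{i,1})f_{i,j}(a)\pi(e_{1,j})$ and $H(f)=\sum_{i,j}\pi(e_{i,1})f_{i,j}(b)\pi(e_{1,j})$. Both estimates follow from continuity of functional calculus and $\|H(\iota\otimes x)-\phi(\iota\otimes x)\|\le\|a-b\|\,\|x\|$.

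The partial isometries $v_i$ with $x_i=v_ia^{1/2}$, which you introduce and then never use, are exactly these $\pi(e_{i,1})$, since $x_i=\pi(e_{i,1})a^{1/2}$. From $x_i^*x_j=\delta_{ij}a$ you get $v_i^*v_j=\delta_{ij}s(a)$, where $s(a)$ is the support projection. So $v_ip_lv_j^*$ is already an exact system of matrix units lying in $A$. Using them removes your Step~2 entirely, including what you call the main obstacle.

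Your approximate route through $y_i=x_ig(a)$ can be completed, but two points need repair.

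First, appealing to semiprojectivity of $M_r\otimes\C^{\ell}$ is circular. Here $\ell$, the number of eigenvalues of $b$, is known only after $\delta$ is fixed, while the tolerance in such a perturbation result depends on $\ell$. Use instead the exact relation $y_i^*y_j=\delta_{ij}(g^2\iota)(a)$, which also makes any Gram--Schmidt step unnecessary. Take $P=\sum_{t_l\ge 2\eta}p_l$, choose $g$ with $g^2\iota=1$ on $[\eta,1]$, and put $X=P(g^2\iota)(a)P$. Then $\|P-X\|\le\delta/\eta$, so $X$ is invertible in $PAP$ once $\delta<\eta$. The elements $S_i=y_iPX^{-1/2}$ satisfy $S_i^*S_j=\delta_{ij}P$ exactly, and $S_ip_lS_j^*$ are matrix units in $\Her(e)$, with no dependence on $\ell$.

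Second, discarding the eigenvalues below $2\eta$ is harmless only because $\|y_i\|^2=\|(g^2\iota)(a)\|\le1$ (take, e.g., $g(t)=\max\{t,\eta\}^{-1/2}$). The crude factor $\|g(a)\|^2\sim 1/\eta$ that you use in your error count would turn their contribution of size $2\eta$ into an error of order one.
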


\begin{proof}
Write
$
D=\bigoplus_{k=1}^{m}M_{r_k},
$
and let
\[
\varphi_k:C_0((0,1])\otimes M_{r_k}\longrightarrow A
\]
be the restriction of $\varphi$ to the $k$-th summand. Put
\[
e_k=\varphi_k(\iota\otimes 1_{M_{r_k}})
\quad\text{and}\quad
B_k={\rm Her}(e_k)
\]
(recall that $\iota: [0,1]\to [0,1]$ is the identity function; see  Definition \ref{dfg}).
We may omit those summands on which $\varphi_k=0$. The algebras
$B_1,\ldots,B_m$ are mutually orthogonal hereditary
$C^*$-subalgebras of ${\rm Her}(e)$. It therefore suffices to prove
the result for each summand separately and then take the direct sum
of the resulting homomorphisms.

Thus, suppose that $D=M_r$, put
\[
B={\rm Her}\big(\varphi(\iota\otimes 1_{M_r})\big),
\]
and fix a system of matrix units $\{e_{i,j}\}_{i,j=1}^{r}$ for
$M_r$. For every $f\in{\cal F}$, write
\[
f=\sum_{i,j=1}^{r}f_{i,j}\otimes e_{i,j},
\qquad f_{i,j}\in C_0((0,1]).
\]
Let ${\cal F}_0$ be the finite subset of $C_0((0,1])$ consisting of
all the functions $f_{i,j}$ occurring in these expressions.

Choose $\delta>0$ such that $\delta<\epsilon$ and
\beq\label{Lfinitesp-9}
\|g(c)-g(d)\|<\frac{\epsilon}{r^2},
\qquad g\in{\cal F}_0,                                
\eneq
whenever $c$ and $d$ are positive contractions in a $C^*$-algebra
satisfying $\|c-d\|<\delta$.

Put
\[
\theta(x)=\varphi(\iota\otimes x)\rforal x\in M_r,
\]
and let $\pi:M_r\to B^{**}$ be the supporting homomorphism of
$\theta$. Thus
\[
\theta(x)=\pi(x)\theta(1_{M_r})
         =\theta(1_{M_r})\pi(x),\qquad x\in M_r.
\]
Set
$
a=\theta(e_{11}).
$

Hence there is a positive contraction of finite spectrum
\[
b=\sum_{l=1}^{N}t_lp_l\in{\rm Her}(a),
\qquad 0<t_l\leq1,
\]
where $p_1,\ldots,p_N$ are mutually orthogonal projections, such
that
\[
\|a-b\|<\delta.
\]

For $l=1,\ldots,N$, define
\[
\rho_l(e_{i,j})=\pi(e_{i,1})p_l\pi(e_{1j}),
\qquad 1\leq i,j\leq r.
\]
Since $p_l\in{\rm Her}(a)$, these elements belong to $B$ and form a
system of matrix units. Thus
\[
\rho_l:M_r\longrightarrow B
\]
is a homomorphism. Moreover, the ranges of
$\rho_1,\ldots,\rho_N$ are mutually orthogonal.

Define
\[
H(f)=\sum_{l=1}^{N}\rho_l\big(f(t_l)\big),
\qquad f\in C_0((0,1])\otimes M_r.
\]
Then $H:C_0((0,1])\otimes M_r\to B$ is a homomorphism with
finite-dimensional range.

For
$
f=\sum_{i,j=1}^{r}f_{i,j}\otimes e_{i,j},
$
we have
\[
\varphi(f)
 =\sum_{i,j=1}^{r}
   \pi(e_{i,1})f_{i,j}(a)\pi(e_{1j}),
\]
whereas
\[
H(f)
 =\sum_{i,j=1}^{r}
   \pi(e_{i,1})f_{i,j}(b)\pi(e_{1j}).
\]
Therefore, by  \eqref{Lfinitesp-9}, for every $f\in{\cal F},$
\[
\|H(f)-\varphi(f)\|
\leq
  \sum_{i,j=1}^{r}\|f_{i,j}(b)-f_{i,j}(a)\|  
<r^2\frac{\epsilon}{r^2}
 =\epsilon.
\]

Finally, for $x\in M_r$, the usual matrix-corner identification
gives
\[
\|H(\iota\otimes x)-\varphi(\iota\otimes x)\|
 \leq \|b-a\|\,\|x\|.
\]
Consequently,
\[
\sup_{x\in M_r^1}
\|H(\iota\otimes x)-\varphi(\iota\otimes x)\|
<\epsilon.
\]

Applying this construction to each summand $M_{r_j}$ gives
homomorphisms
\[
H_j:C_0((0,1])\otimes M_{r_j}\longrightarrow B_j
\]
with finite-dimensional ranges. Since the $B_j$ are mutually
orthogonal, their direct sum
\[
H=\bigoplus_{j=1}^{m}H_j:
C_0((0,1])\otimes D\longrightarrow {\rm Her}(e)
\]
is a homomorphism with finite-dimensional range and satisfies \eqref{Lfinitesp-1} and \eqref{Lfinitesp-2}.
\end{proof}

Next, and for the rest of the paper, we will identify $a\in A$ with $a\otimes e_{1,1}$ in $M_n(A)$
for a fixed system of matrix units $\{e_{i,j}\}_{1\le i,j\le n}.$

\begin{lem}\label{lem:diagonal-contractive-factorization}
Let \(A\) be a \(C^*\)-algebra, let \(e\in A_+^1\), and put
\[
        c=\operatorname{diag}(e,\ldots,e)\in M_n(A).
\]
Suppose that \(a\in A_+^1\) and \(x\in M_n(A)\) satisfy
$
        x^*x=a
        \tand
        cxx^*=xx^*c=xx^*.
$
Then, for every \(0<\varepsilon<1\), there exist
\(z_1,\ldots,z_n\in A^{\bf 1}\) such that  
\[
        ez_i=z_i,\qquad 1\leq i\leq n,
\tand\,\,\,
        f_\varepsilon(a)
        =\sum_{i=1}^n z_i^*z_i
        =\sum_{i=1}^n z_i^*ez_i.
\]
\end{lem}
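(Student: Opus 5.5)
The plan is to reduce the matrix statement to a statement about a single column of $x$, and then rescale by a continuous function of $a$ so that $a$ is replaced by $f_\varepsilon(a)$. No approximation is needed: the identities should hold exactly.

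\textbf{Step 1: $x$ has only a first column.} Write $x=(x_{i,j})\in M_n(A)$. Since we identify $a$ with $a\otimes e_{1,1}$, the hypothesis reads $x^*x=a\otimes e_{1,1}$. Let $p=1\otimes e_{1,1}$ in the unitization of $M_n(A)$. Then
\[
\bigl(x(1-p)\bigr)^*\bigl(x(1-p)\bigr)=(1-p)(a\otimes e_{1,1})(1-p)=0,
\]
so $x(1-p)=0$, i.e.\ $x_{i,j}=0$ for $j\ge 2$. Put $y_i=x_{i,1}\in A$, $1\le i\le n$. Reading off the $(1,1)$ entry of $x^*x$ gives
\[
\sum_{i=1}^n y_i^*y_i=a .
\]

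\textbf{Step 2: $e$ acts as a left unit on each $y_i$.} Work in the unitization of $M_n(A)$. From $cxx^*=xx^*=xx^*c$ we get
\[
\bigl((1-c)x\bigr)\bigl((1-c)x\bigr)^*=(1-c)xx^*(1-c)=0,
\]
hence $(1-c)x=0$, i.e.\ $cx=x$. Since $c=\operatorname{diag}(e,\ldots,e)$, the $(i,1)$ entry of $cx$ is $ey_i$. Therefore $ey_i=y_i$ for every $i$, and this is an identity in $A$ itself.

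\textbf{Step 3: rescale to get $f_\varepsilon(a)$.} Define $h$ on $[0,1]$ by $h(t)=(f_\varepsilon(t)/t)^{1/2}$ for $t>0$ and $h(0)=0$. Since $f_\varepsilon$ vanishes on $[0,\varepsilon/2]$, $h$ is continuous with $h(0)=0$, so $h(a)\in A$; in fact $h\in C_0((0,1])$. Set
\[
z_i=y_ih(a),\qquad 1\le i\le n .
\]
Then:
\begin{itemize}
\item $ez_i=ey_ih(a)=y_ih(a)=z_i$.
\item $\sum_i z_i^*z_i=h(a)\,a\,h(a)=(h^2\iota)(a)=f_\varepsilon(a)$.
\item $z_i^*ez_i=z_i^*z_i$ by the previous identity, which gives the second equality.
\item Each $z_i^*z_i\le f_\varepsilon(a)\le 1$, so $\|z_i\|\le 1$ and $z_i\in A^{\bf 1}$.
\end{itemize}

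The only step requiring care is Step 2, where one passes from the operator identity $cxx^*=xx^*$ to the entrywise statement $ey_i=y_i$ via the unitization. Even there, the $C^*$-identity $\|w\|^2=\|ww^*\|$ settles it immediately, so I do not anticipate a real obstacle.
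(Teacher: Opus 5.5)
Your proposal is correct and follows essentially the same route as the paper: derive $cx=x$ from $(cx-x)(cx-x)^*=0$, set $v=xk_\varepsilon(a)$ with $k_\varepsilon=(f_\varepsilon(t)/t)^{1/2}$, and read off $ez_i=z_i$ and $\sum_i z_i^*z_i=f_\varepsilon(a)$ from the entries of the column $v$. Your Step 1 also supplies the justification, via $x(1-p)=0$, for the paper's unproved remark that one may assume $x$ is a column.
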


\begin{proof}
Note that we may assume that $x$ is a column in $M_{n,1}(A).$
The assumptions imply
\[
 \begin{aligned}
 (cx-x)(cx-x)^*
 &=cxx^*c-cxx^*-xx^*c+xx^*
 =0.
 \end{aligned}
\]
Hence \(cx=x\).

Define \(k_\varepsilon\in C([0,1])\) by
\[
 k_\varepsilon(t)=
 \begin{cases}
 \displaystyle
 \left(\frac{f_\varepsilon(t)}{t}\right)^{1/2},
       &t>0,\\[3mm]
 0,     &t=0.
 \end{cases}
\]
Since \(f_\varepsilon\) vanishes on a neighborhood of zero,
\(k_\varepsilon\) is continuous.  Put
\[
        v=xk_\varepsilon(a)\in M_{n,1}(A).
\]
Since \(x^*x=a\),
\[
 \begin{aligned}
 v^*v
 &=k_\varepsilon(a)x^*xk_\varepsilon(a)
 =k_\varepsilon(a)a k_\varepsilon(a)
 =f_\varepsilon(a)\le 1.
 \end{aligned}
\]
Moreover,
\[
        cv=cxk_\varepsilon(a)=v.
\]

Write
\[
        v=
        \begin{pmatrix}
        z_1\\
        \vdots\\
        z_n
        \end{pmatrix}.
\]
The equality \(cv=v\) gives
\[
        ez_i=z_i,\qquad 1\leq i\leq n.
\]

  Finally, since \(ez_i=z_i\),
\[
 \sum_{i=1}^n z_i^*ez_i
 =\sum_{i=1}^n z_i^*z_i =v^*v
 =f_\varepsilon(a)\le 1.
\]
In particular, $\|z_i^*z_i\|\le 1,$ $1\le i\le n.$
\end{proof}


\section{Essentially tracial rank one}

\begin{df}\label{Dtheta}

Let $D$ be a finite dimensional \CA. We may write 
$$D=M_{r_1}\oplus M_{r_2}\oplus \cdots \oplus M_{r_k}.$$
Suppose that $A$ is a \CA\, and $\theta: D\to A$ is an order zero \cpc.
By \cite[Corollary 4.1]{WZ09},  there is a \hm\, $\tilde \theta: C_0((0,1])\otimes D\to A$ 
such that
\beq\label{31}
\tilde \theta(\iota\otimes d)=\theta(d)\rforal d\in D.
\eneq

We say $\theta$ is {\it fully normalized} if $\|\theta|_{M_{r_j}}\|=1,$ $1\le j\le k.$

Put $C=C_0((0,1])\otimes D\cong C_0((0,1], D).$ 
Note that $\tilde \theta(C)\cong \bigoplus_{i=1}^k C_0((X_i\setminus \{0\}), M_{r_i}),$ where $X_i$  is a compact 
subset of $[0,1].$ Note that if $X_i=\{0\},$ then 
$C_0((X_i\setminus \{0\})=\{0\}.$   If $\theta$ is fully normalized, by \eqref{31}, $1\in X_i,$ $1\le i\le k.$ 

If $f\in C_0((X_i\setminus \{0\}), M_{r_i})$ and $1\in X_i,$ then 
$\pi_{1,i}(f)= f(1)$  is a \hm\, from $C_0((X_i\setminus \{0\}), M_{r_i})\to M_{r_i}.$ 
If $1\not\in X_i, $ then we define $\pi_{1,i}=0.$ Let 
$$
D_\theta=\bigoplus_{1\in X_i} M_{r_i}.
$$
Note also  that if $1\in X_i,$ then $\|\theta(x)\|=\|\tilde \theta(\iota\otimes x)\|=\|x\|$ for all $x\in M_{r_i}$
($1\le i\le k$).
Throughout this paper, we define
$\pi_1=\bigoplus_{1\in X_i}\pi_{1,i}: \tilde\theta(C)\to D_\theta$ which  is a \hm.
We call it an evaluation map at $1.$  

Note that if $\theta$ is a  fully normalized order zero \cpc, then $D_\theta=D.$ 
When $\theta$ is fully normalized,  one has $\pi_1: \tilde \theta(C)\to D$
defined by 
\beq
\pi_1(f)=f(1)\rforal  f\in \bigoplus_{i=1}^k C_0((X_i\setminus \{0\}), M_{r_i})
\eneq
is a quotient map. 
In particular,  we may write 
\beq
\pi_1(\widetilde\theta(\iota \otimes d))=d\rforal  d\in D.
\eneq
Therefore a fully normalized order zero \cpc\, is isometric.
%
\end{df}

The following observation will be used later.

\begin{lem}\label{Lproduct}
Let $D=D_1\oplus D_2\oplus \cdots \oplus D_n$ be  a finite dimensional \CA\, 
 and $A$ be a 
\CA.  Suppose that $\theta: D\to A$  is a fully normalized order zero \cpc.
Then
\beq
\|ab-ba\|=\|\theta(a)\theta(b)-\theta(b)\theta(a)\| \rforal a,\, b\in D.
\eneq 
\end{lem}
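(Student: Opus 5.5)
The plan is to pass to the homomorphism $\tilde\theta: C=C_0((0,1])\otimes D\to A$ of Definition \ref{Dtheta}, with $\tilde\theta(\iota\otimes d)=\theta(d)$. Let $\pi_1:\tilde\theta(C)\to D$ be the evaluation-at-$1$ quotient map described there. Since $\theta$ is fully normalized, $1\in X_i$ for every $i$. Hence $\pi_1$ is a surjective homomorphism with $\pi_1(\theta(d))=d$ for all $d\in D$.

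\textbf{The inequality $\|ab-ba\|\le\|\theta(a)\theta(b)-\theta(b)\theta(a)\|$.} This direction is immediate. Since $\pi_1$ is a homomorphism, hence contractive,
\[
ab-ba=\pi_1\bigl(\theta(a)\theta(b)-\theta(b)\theta(a)\bigr),
\]
and taking norms gives the bound.

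\textbf{The reverse inequality.} Here I use that $\theta$ has order zero, so the product $\theta(a)\theta(b)$ can be expressed through $\tilde\theta$. Since $\iota\otimes a$ and $\iota\otimes b$ lie in $C$, which is a commutative-in-the-first-variable tensor product,
\[
\theta(a)\theta(b)=\tilde\theta\bigl((\iota\otimes a)(\iota\otimes b)\bigr)=\tilde\theta(\iota^2\otimes ab).
\]
Therefore
\[
\theta(a)\theta(b)-\theta(b)\theta(a)=\tilde\theta\bigl(\iota^2\otimes(ab-ba)\bigr).
\]
Because $\tilde\theta$ is a homomorphism, it is contractive, so
\[
\|\tilde\theta(\iota^2\otimes(ab-ba))\|\le\|\iota^2\otimes(ab-ba)\|=\|\iota^2\|_\infty\,\|ab-ba\|=\|ab-ba\|.
\]
Combining the two inequalities gives equality.

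The only point needing care is the identity $\theta(a)\theta(b)=\tilde\theta(\iota^2\otimes ab)$. It holds because $\tilde\theta$ is multiplicative on $C$. A second check is that $\pi_1$ is well defined on all of $\tilde\theta(C)$, including on products. This follows from the identification $\tilde\theta(C)\cong\bigoplus_i C_0(X_i\setminus\{0\},M_{r_i})$ with $1\in X_i$: under it, $\tilde\theta(\iota^2\otimes x)$ corresponds to the function $t\mapsto t^2x$, whose value at $1$ is $x$. I therefore expect no real obstacle; the direct-sum decomposition $D=D_1\oplus\cdots\oplus D_n$ plays no special role beyond full normalization on each matrix summand.
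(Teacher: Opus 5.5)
Your proof is correct and is essentially the paper's argument. The paper writes $\theta(a)\theta(b)-\theta(b)\theta(a)=\pi_\theta(ab-ba)\theta(1_D)^2$ via the supporting homomorphism, and then notes that $d\mapsto\pi_\theta(d)\theta(1_D)^2$ is a fully normalized order-zero map, hence isometric; this map is exactly your $d\mapsto\tilde\theta(\iota^2\otimes d)$, and you verify its isometry directly from the contractivity of $\tilde\theta$ and of the evaluation map $\pi_1$.
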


\begin{proof}
By \cite[Theorem 3.3]{WZ09}, write $\theta(d)=\pi_\theta(d) \theta(1_D)$
for all $d\in D,$ where $\pi: D\to M(C)\cap \{\theta(1_D)\}'$ is a \hm\, and $C:=C^*(\theta(D))\subset A.$ 
Define $\Theta: D\to C$ by $\Theta(d)=\pi_\theta(d)\theta(1_D)^2$ for all $d\in D.$
Then $\|\Theta|_{D_i}\|=\|\theta(1_{D_i})^2\|=\|\theta(1_{D_i})\|^2=1,$ $1\le i\le n.$
Hence $\Theta$ is a fully normalized order-zero \cpc. 
Therefore
\beq
 \|\theta(b)\theta(a)
 -\theta(a)\theta(b)\|
 &=&
\| \pi_\theta
 \bigl(
 ba-ab
 \bigr)\theta(1_D)^2\|\\
 &=&
 \|\Theta
 \bigl(
ba-ab
 \bigr)\|=\|ba-ab\|.
\eneq
\end{proof}

\begin{df}\label{D1}
Let $A$ be a simple \CA\, admitting  a strictly 
positive element $e_A$ with  $\|e_A\|=1.$ 
The \CA\, $A$   is said to have property ETR1,
if  there exists $0<\mathfrak{\kappa}_{e_A}\le 1$ satisfying the following:

For any  $\ep>0,$  any finite subset ${\cal F}\subset A$ 
containing $e_A,$ and
%
%
%
%
 $a\in A_+\setminus \{0\},$  
there is  
a positive element $e\in A^{\bf 1},$ 
a finite dimensional \CA\, $D$ and 
a fully normalized  order-zero \cpc\, 
$\theta: D\to  \overline{(1-e)A(1-e)}$
such that

(1)  $\|ex-xe\|<\ep\tforal x\in {\cal F},$

(2)   
 $(1-e)x\in_{\ep} \theta(D)$ and  $0\le d_e\le 1$ in $D$ such that
\beq\label{D1-(2)}
\|\theta(d_e)-(1-e)^{1/2}e_A(1-e)^{1/2}\|<\ep\andeqn t(d_e)\ge \mathfrak{\kappa}_{e_A}
\eneq
for all $t\in T(D).$

%
%
%

\vspace{0.08in}
(3) $e\lesssim a.$



%
\end{df}

\begin{rem}\label{R1}

(i) Let $\tilde \theta: C:=C_0((0,1])\otimes D\to A$ be the \hm\, 
induced by $\theta.$ 
Let $\pi_1: \tilde \theta(C)\to D$ be the quotient map defined by $\pi_1(\tilde \theta(\iota\otimes d))=d$ for all $d\in D,$ where $\iota$ is defined in Definition \ref{dfg}. Then  the second part of \eqref{D1-(2)} may be replaced by
\beq
t\circ \pi_1(\tilde\theta(\iota\otimes d_e))\ge \mathfrak{\kappa}_{e_A}\rforal t\in T(D).
\eneq

(ii) Note that $\|[(1-e), x]\|<\ep\rforal x\in {\cal F}.$ 
Since one can choose any $\ep>0,$ it
is easy to see  that in Definition \ref{D1} one may also require that

(2') $\|[(1-e)^{1/2},\, x]\|<\ep \rforal x\in {\cal F}.$

Also  in (2),
$(1-e)x\in_\ep \theta(D)$ may be replaced by

($2_a$) $(1-e)^{1/2} x(1-e)^{1/2}\in_\ep \theta(D).$

(iii) Moreover,  if $A$ has property ETR1, and if we assume that  ${\cal F}\subset A^{\bf 1},$ we may choose, for each $x\in {\cal F},$
an element $d_x\in D^{\bf 1}$ such that
\beq
\|(1-e)^{1/2}x(1-e)^{1/2} -\theta(d_x)\|<\ep.
\eneq 

To see this, we may apply ETR1 for $\ep/8$ (instead of $\ep$) first.  For each $x\in {\cal F},$ choose $c_x\in D$ such 
that
$$
\|(1-e)^{1/2}x(1-e)^{1/2}-\theta(c_x)\|<{\ep\over{8}}.
$$
Note $\|(1-e)^{1/2}x(1-e)^{1/2}\|\le 1.$ 
Since $\theta$ is isometric as it is fully normalized, we have
\beq
\|c_x\|\le 1+{\ep\over{8}}.
\eneq
Let $d_x:={c_x\over{\max\{1, |c_x\|\}}}.$ Then  $\|d_x\|\le 1$ and 
\beq
\|(1-e)^{1/2}x(1-e)^{1/2}-\theta(d_x)\|<\ep.
\eneq

(iv) Furthermore, if $y\in {\cal F},$ $0\le y\le 1,$   in the definition of ETR1, with $\ep/3$
(in place of $\ep$),
we may assume that  there exists $d_y\in D$ with 
$0\le d_y\le 1$ such that
\beq
\|(1-e)^{1/2}y(1-e)^{1/2}-\theta(d_y)\|<\ep.
\eneq

To see this,  let us assume that there is  $z_y\in D$ such that
\[
        \|(1-e)^{1/2}y(1-e)^{1/2}-\theta(z_y)\|<\ep/3,
\]
and replace $z_y$ by $(z_y+z_y^*)/2.$   So we may 
assume that $z_y$ is selfadjoint.  Since $y \ge 0$,
order-zero functional calculus and the isometry of $\theta$
give
\[
 \|(z_y)_-\|
 =\|\theta((z_y)_-)\|
 =\|(\theta(z_y))_-\|
 <\ep/3.
\]
Moreover,
\[
        \|z_y\|=\|\theta(z_y)\|<1+\ep/3.
\]
Define
\[
        d_y=\frac{(z_y)_+}{1+\ep/3}\in D_+^{\bf 1}.
\]
Then
\beq\nonumber
 &&\hspace{-0.8in}\|(1-e)^{1/2}y(1-e)^{1/2}-\theta(d_y)\|
 \leq
 \|(1-e)^{1/2}y(1-e)^{1/2}-\theta(z_y)\|\\\label{13.7.2}
 &&\hspace{0.8in}+\|\theta((z_y)_-)\|
 +\left\|
 \theta((z_y)_+)-\frac{\theta((z_y)_+)}{1+\ep/3}
 \right\| 
 <3(\ep/3)=\ep.                                       
\eneq

\end{rem}

\begin{rem}\label{Rea}
As it stands, the definition depends on the choice of the strictly positive element $e_A.$
Later, in  Corollary \ref{CeA}, we will show that the definition does not depend on the choice of $e_A.$

\end{rem}

\begin{rem}\label{D1R3}
Tracial rank,  and in particular  tracial rank zero, was first introduced in \cite{LinTAF} and \cite{Lintr2001} in 1998.
Generalized tracial rank one for unital case was introduced  in \cite{GLNI}. The general case, including the stably projectionless  case, appeared in 
\cite[Definition 8.1]{EGLNs}. The constant $\kappa_{e_A}$ is similar to
 $\mathfrak{f}_a$ in \cite[Definition 8.1]{EGLNs}. Its role  has been recognized  in \cite{EGLNs}. 
 
 Let $C:=C_0((0,1])\otimes D.$
 Consider the $\tilde\theta: C\to \overline{(1-e)A(1-e)}$ in  Remark \ref{R1} (i). 
 Since $\tilde\theta(C)$ is amenable, one obtains 
 a \cpc\, $\Theta: \overline{(1-e)A(1-e)}\to \tilde\theta(C)$ such that
 \beq
 \|\Theta((1-e)^{1/2}x(1-e)^{1/2})-(1-e)^{1/2}x(1-e)^{1/2}\|<\ep\rforal x\in {\cal F}.
 \eneq
 Put $E: A\to \overline{(1-e)A(1-e)}$ by $E(a)=\overline{(1-e)^{1/2}a(1-e)^{1/2}}$ for all $a\in A.$ 
 $E$ is a \cpc. 
 Define $\psi_C: A\to \tilde\theta(C)$ by  $\psi_C=\Theta\circ E.$ 
 
 In \cite[Theorem 8.1]{EGLNs} as well as all previous cases, maps $\phi$ and $\psi$ are orthogonal,
 where $\psi$ in \cite[Theorem 8.1]{EGLNs} plays the same role as $\psi_C$ just defined.
 Note that in Definition \ref{D1} $e$ is not orthogonal to $\psi_C(C).$  In \cite[Theorem 8.1]{EGLNs} 
both $\psi$ and $\phi$ are approximately multiplicative, 
which plays a critical role in all later use.  But $\psi_C$ does not have this important feature. 

For classification purposes, one has also to introduce the notion of ``rationalized tracial rank one". 
But this notion mysteriously disappears when simple \CA s are stably projectionless.  
In this article we will show that for property ETR1, one does not need to tensor $A$ with a UHF-algebra
(see Theorem \ref{TmainZ}).

It should be mentioned that the idea of using a positive contraction instead of a projection 
to obtain a large good \SCA\, was first communicated to the author by  N.C. Phillips 
two decades ago. Publication versions of this idea appeared  later (see \cite{Ph2014}, \cite{ABP}).
See also \cite{FLsigma}, where  ``essentially tracial" approximations were studied. 
\end{rem}

When $A$ is unital, Definition \ref{D1} can be simplified (see Proposition \ref{Asefimprov}).

\begin{rem}\label{Rd1}

(i) For any unital purely infinite simple \CA\, $B,$ let $A=B\otimes {\cal K}.$
Denote by $\{e_{i,j}\}$ the matrix units for ${\cal K}.$
Let ${\cal F}\subset A$ be any finite subset, 
$e_A=\sum_{n=1}^\infty (1_B\otimes e_{nn})/2^n$ and  $0<\ep<1.$  Choose any $a\in A_+\setminus \{0\}.$ 
There is $N\in \N$ such that  $2^{-N}<\ep/2$ and, with $E=\sum_{n=1}^Ne_{n,n},$ 
\beq
(1-E)x\approx_{\ep/2} x(1-E)\andeqn \|(1-E)x(1-E)\|<\ep\rforal x\in {\cal F}\cup \{e_A\}.
\eneq
Choose a positive element $h\in (1-E)A(1-E)$ with ${\rm sp}(h)=[0,1]$ and define  $D=\C,$ 
$\theta: D\to (1-E)A(1-E)$ by $\theta(\lambda)=\lambda h$  for $\lambda\in \C.$
Then $\theta$ is fully normalized. 
Thus

(1) $\|Ex-xE\|<\ep$ for all $x\in {\cal F}\cup\{e_A\},$

(2) For any $x\in {\cal F}\cup \{e_A\}$ choose $y_x=0,$  in particular, $d_{e_A}=0.$ Then 
$$
\|(1-E)^{1/2}x(1-E)^{1/2}-y_x\|<\ep.
$$

Since $A$ is purely infinite, 

(3)  $E\lesssim a.$

This example shows that the second part  of \eqref{D1-(2)}  in Definition \ref{D1} is important.

(ii) The trace condition in (2) of Definition \ref{D1} implies that $\|d_e\|\ge \mathfrak{\kappa}_{eA}.$
It follows that $\|(1-e)^{1/2}e_A(1-e)^{1/2}\|\ge \mathfrak{\kappa}_{e_A}-\ep.$
Please also see  Corollary \ref{R|x|>}.
\end{rem}

\begin{df}\label{Dsfinite}
Let $A$ be a $\sigma$-unital \CA. Then $A$ is said to be stably finite if, for any two non-zero orthogonal elements 
$a, b\in ({\rm Ped}(A\otimes {\cal K}))_+,$ $a+b\not\lesssim a.$ 
\end{df}

\begin{rem}\label{Rstablyfinite}
Suppose that $A$ is a $\sigma$-unital  non-elementary simple \CA\, which is stably finite.
Let  $a_0, a_1,a_2,...,a_n\in A_+$ be finitely many non-zero positive elements.
By \cite[Lemma 2.3]{Lincs91}, there is $b_0\in \Her(a_0)_+\setminus 
\{0\}$ such that $b_0\lesssim a_i$ ($i=1,2,...,n$).
Moreover, $\Her(b_0)$ has at least two non-zero mutually orthogonal elements 
$b_1, b_2.$ Since $A$ is stably finite, $a_i\not\lesssim b_1.$   Indeed, otherwise
$a_i\lesssim b_1.$ We would have 
\[
a_i\otimes b_2\lesssim b_1\oplus b_2\lesssim b_0\lesssim a_i,
\]
contradicting stable finiteness.
\end{rem}

\begin{lem}\label{L8-perturb-1}
Let $A$ be a \CA\,  and let $x, y\in A.$ If $\|x-y\|<r,$ then
\beq
(x^*x-r^2)_+\lesssim y^*y.
\eneq
\end{lem}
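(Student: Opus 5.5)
The plan is to reduce to the standard fact that, for positive $a,b$ with $\|a-b\|<r$, one has $(a-r)_+\lesssim b$. This fact is usually proved via Kirchberg--R{\o}rdam, by writing $(a-r)_+=z^*bz$ for some contraction-type element $z$. We apply it with $a=x^*x$, $b=y^*y$. The difficulty is that $\|x^*x-y^*y\|$ is controlled only by $\|x-y\|(\|x\|+\|y\|)$, which is not $\le r^2$, so a direct substitution fails. Instead we work through the other product: set $a'=xx^*$ and $b'=yy^*$. We then use that $(x^*x-r^2)_+\sim (xx^*-r^2)_+$ and $y^*y\sim yy^*$ in the Cuntz semigroup, via the polar-decomposition identities $x f(x^*x)x^*=(\cdot f)(xx^*)$.

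The cleanest route avoids norms of products altogether. Factor $x=y+(x-y)$ and use the operator inequality
\[
x^*x\le (1+\delta)\,y^*y+(1+\delta^{-1})(x-y)^*(x-y),
\]
which is not sharp enough either. So I would follow R{\o}rdam's approach with the polar-type element instead.

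Step 1: put $s=\|x-y\|<r$. By the Kirchberg--R{\o}rdam lemma, $(x^*x-r^2)_+\lesssim (|x|-r)_+$ up to Cuntz equivalence, since $(t^2-r^2)_+$ and $(t-r)_+$ have the same support for $t\ge 0$; hence they are Cuntz equivalent as functions of $|x|$. Step 2: reduce to showing $(|x|-r)_+\lesssim |y|$, equivalently $\lesssim y^*y$. Work in $A^{**}$ (or use a standard approximation) with $x=v|x|$. Consider $c=v^*y$, so that $\|\,|x|-c\,\|=\|v^*(x-y)\|\le s$. Step 3: writing $c_h=(c+c^*)/2$, the bound $\|\,|x|-c_h\|\le s<r$ and the positive-element version give $(|x|-r)_+\lesssim (c_h)_+$.

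The main obstacle is the final link, $(c_h)_+\lesssim y^*y$. Since $c=v^*y$ has range projection related to $y$, I would instead bypass $c_h$ and apply Kirchberg--R{\o}rdam's Lemma 2.2 in its non-self-adjoint form. That form says: if $\|a-z\|<r$ with $a\ge0$, then there is a contraction $d$ with $(a-r)_+=d z d^*$-type factorization through $z$. Concretely, I would write
\[
(|x|-r)_+=g(|x|)\,|x|\,g(|x|)
\]
with $g$ supported on $[r,\infty)$. Replacing $|x|=v^*x$ by $v^*y$ yields an approximant $g(|x|)v^*y$, whose modulus squared is dominated by $\|g\|^2 y^*y$ up to a unitary-type conjugation. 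To remove the $A^{**}$ dependence, I would replace $v$ by $x h(|x|)$ with $h(t)=g(t)/t$, which lies in $A$. Then $w:=y\,h(|x|)^{1/2}\cdots$ satisfies $w^*w\approx (|x|-r)_+$ with error below the threshold. Finishing with the standard fact "$\|p-w^*w\|<\eta\Rightarrow (p-\eta)_+\lesssim ww^*\sim w^*w$", together with $ww^*\in\overline{yAy^*}$ so that $ww^*\lesssim yy^*\sim y^*y$, and letting the margins $r-s$ absorb the errors, completes the argument.
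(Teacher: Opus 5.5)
Your reductions are harmless but do not reach the point where the proof has to happen. The equivalence $(x^*x-r^2)_+\sim(|x|-r)_+$ holds, and so does the bound $\|\,|x|-v^*y\|\le s$ in $A^{**}$. The decisive step is the sentence defining $w:=y\,h(|x|)^{1/2}\cdots$ and asserting that ``the margins $r-s$ absorb the errors,'' and that assertion is not justified. Two earlier ingredients also fail. The ``non-self-adjoint form'' of Kirchberg--R{\o}rdam's Lemma 2.2 is not an available result. Step 3 lives in $A^{**}$, where Cuntz comparison is of no use.

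For elements of the form $w=yk(|x|)$, the available estimate is $\|w^*w-k(|x|)x^*xk(|x|)\|\le s\|k\|\bigl(2\|xk(|x|)\|+s\|k\|\bigr)$. This bound is attained, for instance by scalars. It is exactly the $\|x^*x-y^*y\|$-type loss you flagged at the start. For example, take $k(t)=\chi(t)/t$ with $\chi$ a cutoff supported in $(r,\infty)$. Then $k(|x|)x^*xk(|x|)=\chi(|x|)^2$, and the error bound is $(s/r)(2+s/r)$, which is below $1$ only when $s<(\sqrt2-1)r$. Meanwhile, deducing $(|x|-r)_+\lesssim ww^*$ from $\|p-w^*w\|<\eta$ needs $(p-\eta)_+\gtrsim(|x|-r)_+$, which ties $\eta$ to a margin that can be tiny relative to $s$. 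So the two-sided norm approximation you sketch does not close the argument when $s$ is close to $r$.

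The missing idea, which is what the paper uses, is a one-sided estimate. Put $d=(x^*x-r^2)_+$. Since $t(t-r^2)_+\ge r^2(t-r^2)_+$, we have $d^{1/2}x^*xd^{1/2}\ge r^2 d$. Hence, in a faithful representation, every vector $\xi$ satisfies $\|yd^{1/2}\xi\|\ge\|xd^{1/2}\xi\|-\|(x-y)d^{1/2}\xi\|\ge(r-s)\|d^{1/2}\xi\|$, that is, $d^{1/2}y^*yd^{1/2}\ge(r-s)^2d$. This operator inequality gives $d\lesssim d^{1/2}y^*yd^{1/2}\sim ydy^*\le\|d\|\,yy^*$, so $d\lesssim yy^*\sim y^*y$, with no error term to absorb.

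Your element $c=v^*y$ already carries this information. On the spectral subspace of $|x|$ for $[r,\infty)$ one has $\|c\xi\|\ge(r-s)\|\xi\|$, and always $\|c\xi\|\le\|y\xi\|$. But you need to use this as a lower bound rather than feed it into a norm-perturbation lemma.
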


\begin{proof}
Let $\dt=\|x-y\|<r$ and $d=(x^*x-r^2)_+.$ 

If $d=0,$ there is nothing to prove. Assume $d\not=0.$ Let $w=d^{1/2}.$ Since $w\in C^*(x^*x),$ it commutes with $|x|=(x^*x)^{1/2}.$  One computes that
\beq
wx^*xw=d^{1/2}x^*xd^{1/2}\ge r^2 d=r^2 w^2.
\eneq
Representing $A$ faithfully on a Hilbert space, one has, for each $\xi\in H,$
\beq
\|yw\xi\|\ge \|xw\xi\|-\|(x-y)w\xi\|\ge r\|w\xi\|-\dt\|w\xi\|=(r-\dt)\|w\xi\|.
\eneq
Hence 
\beq
\la wy^*yw\xi, \xi\ra\ge (r-\dt)^2\la w^2\xi,\xi\ra.
\eneq
Hence
\beq
w^*y^*yw\ge (r-\dt)^2 w^2.
\eneq
It follows that 
\beq
(x^*x-r^2)_+=d=w^2\lesssim  w^*y^*yw=d^{1/2}y^*yd^{1/2}\sim ydy^*\le yy^*\sim y^*y.
\eneq 
\end{proof}

\begin{lem}\label{Pnot=0}
Let $A$ be a non-elementary  algebraically simple \CA\, which is stably finite.  
Suppose that $e_A\in A$ is a strictly positive element with $\|e_A\|=1.$
Then  $A$ has the following property:

For any  $\ep>0,$ any finite subset  ${\cal F}\subset A$ and $s\in A_+\setminus \{0\},$
   there exists $a\in \Her(s)_+\setminus \{0\}$ such that the following holds:
   if $0\le  e\le 1$ and $e\lesssim a,$ then
   \beq
   \|(1-e)x\|\ge \|x\|-\ep\tforal x\in {\cal F}.
   \eneq
   
\end{lem}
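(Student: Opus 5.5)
The plan is to argue by contraposition for each $x\in{\cal F}$ separately. If $\|(1-e)x\|<\|x\|-\ep$, this should force a fixed nonzero positive element, depending only on $x$ and $\ep$, to be Cuntz below $e$. We then choose $a\in \Her(s)$ so small that none of these finitely many elements lies below $a$. Such an $a$ is supplied by Remark \ref{Rstablyfinite}, and this is the only place where stable finiteness is used.

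First, the element attached to $x$. Elements $x\in{\cal F}$ with $\|x\|\le\ep$ are trivial, so assume $\|x\|>\ep$. Put $y=xx^*$ and $\lambda=\|x\|$. Choose $g\in C_0((0,\infty))_+$ with $0\le g\le 1$, $g(\lambda^2)=1$, and support in $[(\lambda-\ep/2)^2,\lambda^2]$. Set $h_x=g(y)$, so that $\|h_x\|=1$. Let
$$
r=\frac{\lambda-\ep}{\lambda-\ep/2}<1,
$$
fix $r<r'<1$, and define $c_x=(h_x-r'^2)_+$. Since $\|h_x\|=1>r'^2$, we have $c_x\neq 0$.

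The key estimate is the following. Suppose $0\le e\le 1$ and $\|(1-e)x\|<\lambda-\ep$. Then $(1-e)y(1-e)\le(\lambda-\ep)^2$. On the support of $g$ we have $y\ge(\lambda-\ep/2)^2$, and $h_x$ commutes with $y$. Functional calculus then gives $y\ge(\lambda-\ep/2)^2 h_x^2$; replacing $g$ by $g^2$ in the construction if necessary, this reads $y\ge(\lambda-\ep/2)^2 h_x$. Conjugating by $(1-e)$ yields
$$
(1-e)h_x(1-e)\le r^2,\qquad\text{that is,}\qquad \|h_x^{1/2}-eh_x^{1/2}\|=\|(1-e)h_x^{1/2}\|\le r<r'.
$$
Now apply Lemma \ref{L8-perturb-1} with $h_x^{1/2}$ in the role of $x$ and $eh_x^{1/2}$ in the role of $y$. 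It gives
$$
c_x=(h_x-r'^2)_+\lesssim h_x^{1/2}e^2h_x^{1/2}\sim e\,h_x\,e\le\|h_x\|e^2\lesssim e .
$$
So if also $e\lesssim a$, we get $c_x\lesssim a$.

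Next, the choice of $a$. Apply Remark \ref{Rstablyfinite} to $s$ and the finitely many nonzero elements $c_x$, $x\in{\cal F}$ (here $\sigma$-unitality follows from algebraic simplicity together with the existence of $e_A$). This produces $b_0\in\Her(s)_+\setminus\{0\}$ and a nonzero $b_1\in\Her(b_0)\subset\Her(s)$ such that $c_x\not\lesssim b_1$ for every $x$. Take $a=b_1$. By the previous paragraph, $\|(1-e)x\|<\|x\|-\ep$ together with $e\lesssim a$ would give $c_x\lesssim a$, a contradiction. Hence $\|(1-e)x\|\ge\|x\|-\ep$ for all $x\in{\cal F}$.

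The main technical point is the operator inequality $y\ge(\lambda-\ep/2)^2h_x$, whose role is to transfer the norm bound on $(1-e)x$ to a bound on $(1-e)h_x^{1/2}$. It is handled by the functional-calculus choice of $g$ (using $g^2$ if needed), after which Lemma \ref{L8-perturb-1} does the rest.
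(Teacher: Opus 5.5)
Your proof is correct and follows the paper's strategy. For each $x$ with $\|x\|>\ep$ you produce a nonzero element that Lemma \ref{L8-perturb-1} forces below $e$ whenever $\|(1-e)x\|<\|x\|-\ep$, and you then choose $a$ via Remark \ref{Rstablyfinite} so that none of these elements lies below $a$.

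The detour through $h_x=g(xx^*)$ is unnecessary. The paper applies Lemma \ref{L8-perturb-1} directly to $x$ and $ex$, using $\|x-ex\|=\|(1-e)x\|<\|x\|-\ep/2$, to get $(x^*x-(\|x\|-\ep/2)^2)_+\lesssim x^*e^2x\sim exx^*e\lesssim e$. There is also a harmless slip: as stated, no continuous $g$ on $(0,\infty)$ can have $g(\lambda^2)=1$ with support in $[(\lambda-\ep/2)^2,\lambda^2]$. Only the values of $g$ on ${\rm sp}(xx^*)\subset[0,\lambda^2]$ matter, so it suffices to require $0\le g\le 1$, $g=0$ on $[0,(\lambda-\ep/2)^2]$, and $g(\lambda^2)=1$.
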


\begin{proof}
Fix  $\ep>0,$ a finite subset ${\cal F}\subset A$ and $s\in A_+\setminus \{0\}.$  
We may assume that ${\cal F}\subset A^{\bf 1},$ \wilog.
Choose  ${\cal F}_0=\{x\in {\cal F}: \|x\|>\ep\}.$
If $x\in {\cal F}\setminus {\cal F}_0,$ then 
\beq
\|(1-e)x\|\ge 0\ge \|x\|-\ep.
\eneq

For each $x\in {\cal F}_0,$ put $r_x=\|x\|-{\ep\over{2}}.$
Then $0<r_x<\|x\|.$ 
Define $d_x=(x^*x-r_x^2)_+.$ Then $d_x\not=0.$
%
By Remark \ref{Rstablyfinite}, 
there exists $a\in \Her(s)_+\setminus \{0\}$ such that  
\beq\label{Lnot=0-5}
d_x\not\lesssim a\rforal x\in {\cal F}_0.
\eneq

Now let 
$0\le e\le 1$ such that
%
 $e\lesssim a.$
 
 Suppose, toward a contradiction, that for some $x\in {\cal F},$
 \beq
 \|(1-e)x\|<\|x\|-\ep.
 \eneq
Then $x\in {\cal F}_0.$  Hence
\beq
\|(1-e)x\|<r_x.
\eneq
Set $y=ex.$  Then
\beq
\|x-y\|=\|(1-e)x\|<r_x.
\eneq
By Lemma \ref{L8-perturb-1}
we obtain 
\beq
d_x=(x^*x-r_x^2)\lesssim (ex)^*(ex)=x^*e^2x\sim exx^*e
\lesssim e\lesssim a.
\eneq
This contradicts \eqref{Lnot=0-5}. So
\beq
\|(1-e)x\|\ge \|x\|-\ep\rforal x\in {\cal F}.
\eneq

\end{proof}

\begin{rem}\label{Rs10}
Let $A$ be an algebraically simple \CA. Suppose that $A$ has property ETR1 and 
$A$ is stably finite. Then, by Lemma \ref{Pnot=0}, in  addition to (1)--(3) in Definition   \ref{D1}, we may also assume that

(2b)   $\|(1-e)x\|\ge \|x\|-\ep\rforal x\in {\cal F}$ 

as well as  (with Remark \ref{R1} in mind),

(2c) $\|(1-e)^{1/2}x(1-e)^{1/2}\|\ge \|x\|-\ep\rforal  x\in {\cal F}.$ 
\end{rem}

The following is well known. We record it  here for the convenience.

\begin{lem}\label{Lsqrt}
If
\(b,c\) are positive contractions in a  \(C^*\)-algebra and
$
        \delta=\|b-c\|,
$
then
\[
        \|b^{1/2}-c^{1/2}\|\leq \delta^{1/2}.
\]
\end{lem}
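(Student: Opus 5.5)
The plan is to reduce the inequality to an operator inequality for the square roots and then take norms. Everything happens inside the commutative world of a single positive element, using operator monotonicity of $t\mapsto t^{1/2}$ on $[0,\infty)$. Put $\delta=\|b-c\|$. If $\delta=0$ there is nothing to prove, so assume $\delta>0$. Working in the unitization (or in $B(H)$ after a faithful representation), we have
\[
b\le c+\delta\cdot 1 \andeqn c\le b+\delta\cdot 1 .
\]
By operator monotonicity of the square root,
\[
b^{1/2}\le (c+\delta)^{1/2}.
\]

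The key step is a scalar estimate transferred to $c$ by functional calculus. For $t\ge 0$,
\[
(t+\delta)^{1/2}\le t^{1/2}+\delta^{1/2},
\]
since squaring the right side gives $t+\delta+2(t\delta)^{1/2}\ge t+\delta$. Applying continuous functional calculus to the single positive element $c$, this gives $(c+\delta)^{1/2}\le c^{1/2}+\delta^{1/2}\cdot 1$. Combining the two displays,
\[
b^{1/2}-c^{1/2}\le \delta^{1/2}\cdot 1 .
\]
Interchanging $b$ and $c$ yields $c^{1/2}-b^{1/2}\le \delta^{1/2}\cdot 1$.

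To conclude, note that $b^{1/2}-c^{1/2}$ is selfadjoint and satisfies
\[
-\delta^{1/2}\cdot 1\le b^{1/2}-c^{1/2}\le \delta^{1/2}\cdot 1,
\]
so its norm is at most $\delta^{1/2}$. This holds in the unitization, and the norm is the same in the original algebra.

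The only nontrivial ingredient is the operator monotonicity of $t^{1/2}$, that is, $0\le x\le y$ implies $x^{1/2}\le y^{1/2}$. This is standard (L\"owner--Heinz), and I would simply cite it. If a self-contained argument were wanted, I would use the integral representation
\[
t^{1/2}=\frac{1}{\pi}\int_0^\infty \frac{t}{t+s}\,s^{-1/2}\,ds
\]
together with the monotonicity of $x\mapsto x(x+s)^{-1}=1-s(x+s)^{-1}$, which follows from the fact that inversion reverses order. The contractivity hypothesis is not really needed beyond guaranteeing that the elements are positive and bounded.
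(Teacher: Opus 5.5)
Your proof is correct and follows the paper's argument essentially verbatim: pass to the unitization, use $b\le c+\delta 1$ and operator monotonicity of the square root, transfer the scalar inequality $(s+\delta)^{1/2}\le s^{1/2}+\delta^{1/2}$ to $c$ by functional calculus, and then symmetrize to get $-\delta^{1/2}1\le b^{1/2}-c^{1/2}\le \delta^{1/2}1$. Your optional self-contained justification of operator monotonicity via the integral representation is also correct, but it is not needed, since the paper simply invokes operator monotonicity of the square root.
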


\begin{proof}
Working in the unitization $\widetilde{A},$ if necessary, we may assume that $A$ is unital.
We have 
\[
        b\leq c+\delta 1.
\]
Since the square-root function is operator monotone, we have
\[
        b^{1/2}\leq (c+\delta 1)^{1/2}.
\]
Moreover, by continuous functional calculus and the scalar inequality
\[
        (s+\delta)^{1/2}\leq s^{1/2}+\delta^{1/2}
        \qquad (s\geq 0),
\]
we obtain
\[
        (c+\delta 1)^{1/2}
        \leq c^{1/2}+\delta^{1/2}1.
\]
Consequently,
\[
        b^{1/2}-c^{1/2}\leq\delta^{1/2}1.
\]
Interchanging \(b\) and \(c\) gives
\[
        c^{1/2}-b^{1/2}\leq\delta^{1/2}1.
\]
Therefore,
\[
 -\delta^{1/2}1
 \leq b^{1/2}-c^{1/2}
 \leq\delta^{1/2}1,
\]
and hence
\[
        \|b^{1/2}-c^{1/2}\|\leq\delta^{1/2}.
\]
\end{proof}

\begin{prop}\label{Pertep}
Let $A$ be a $\sigma$-unital algebraically simple \CA\,
with a strictly positive element $e_A$ with $\|e_A\|=1.$ 
Let $1>\kappa_{e_A}>0.$ 

 Then $A$ has property ETR1 associated with $e_A$ and 
 $\kappa_{e_A}$   if and only if the following hold.

For any  $\ep>0,$  any finite subset ${\cal F}\subset A$ 
containing $e_A,$ and  
 $a\in A_+\setminus \{0\},$  

there is  
a positive element $e\in A^{\bf 1},$ 
a finite dimensional \CA\, $D$ and 
a fully normalized  order-zero \cpc\, 
$\theta: D\to  \overline{(1-e)A(1-e)}$
  such that

(1)  $\|ex-xe\|<\ep\tforal x\in {\cal F},$

(2)   
 $(1-e)x\in_{\ep} \theta(D),$ $(1-e)^{1/2}x(1-e)^{1/2}\in_\ep \theta(D)$ and  $0\le d_e\le 1$ in $D$ such that
\beq\label{D1-(2)-a}
\|\theta(d_e)-(1-e)^{1/2}e_A(1-e)^{1/2}\|<\ep\andeqn t(d_e)\ge \mathfrak{\kappa}_{e_A}
\eneq
for all $t\in T(D).$

\vspace{0.08in}
(3) $(e-\ep)_+\lesssim a.$

 \end{prop}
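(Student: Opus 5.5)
The forward direction is immediate. If $A$ has property ETR1, then condition (1) coincides with Definition \ref{D1}(1). Condition (2) follows from Definition \ref{D1}(2) together with Remark \ref{R1}(ii)($2_a$): apply the definition with a smaller tolerance so that both $(1-e)x$ and $(1-e)^{1/2}x(1-e)^{1/2}$ are within $\ep$ of $\theta(D)$. Condition (3) holds because $(e-\ep)_+\lesssim e\lesssim a$.

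For the converse, the plan is to replace $e$ by a functional-calculus perturbation $e_1$ that is genuinely Cuntz below $a$, and to transport all the approximations to $e_1$. Given $\ep$, ${\cal F}$ and $a$, apply the hypothesis with $\dt\ll\ep$ in place of $\ep$. This yields $e$, $D$, $\theta$ and $d_e$ with $(e-\dt)_+\lesssim a$. Put
\[
e_1=(1-\dt)^{-1}(e-\dt)_+ .
\]
Then $e_1\in A_+^{\bf 1}$, $e_1\sim(e-\dt)_+\lesssim a$, and $\|e_1-e\|\le 2\dt$. Consequently (1) holds for $e_1$ with $\ep$ once $4\dt<\ep$, and (3) of Definition \ref{D1} holds for $e_1$.

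The difficulty is that $\theta$ takes values in $\overline{(1-e)A(1-e)}$, not in $\overline{(1-e_1)A(1-e_1)}$. The latter hereditary subalgebra is in fact larger: $1-e_1=h(e)$ for a function $h$ with $h(t)\ge 1-t$ on $[0,1]$, so $1-e\lesssim 1-e_1$ in the unitization. More precisely, $h(t)=0$ only at $t=1$, which forces $\overline{(1-e)A(1-e)}\subset\overline{(1-e_1)A(1-e_1)}$, since both are the hereditary subalgebras of $A$ determined by the spectral subspace of $e$ away from $1$. So the same $\theta$ serves as a fully normalized order-zero map into $\overline{(1-e_1)A(1-e_1)}$.

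It remains to check the approximation estimates. Using Lemma \ref{Lsqrt}, we have
\[
\|(1-e_1)^{1/2}-(1-e)^{1/2}\|\le (2\dt)^{1/2}.
\]
Hence, for $x\in{\cal F}$, including $x=e_A$,
\[
\|(1-e_1)^{1/2}x(1-e_1)^{1/2}-(1-e)^{1/2}x(1-e)^{1/2}\|\le 2(2\dt)^{1/2}\|x\|,
\]
and similarly $\|(1-e_1)x-(1-e)x\|\le 2\dt\|x\|$. Choosing $\dt$ with $\dt+2(2\dt)^{1/2}\max_{x\in{\cal F}}\|x\|<\ep$, both $(1-e_1)x\in_\ep\theta(D)$ and $\|\theta(d_e)-(1-e_1)^{1/2}e_A(1-e_1)^{1/2}\|<\ep$ hold. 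The same $d_e$ is kept, so the trace bound $t(d_e)\ge\kappa_{e_A}$ survives unchanged.

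The main obstacle I expect is the hereditary-subalgebra inclusion, that is, verifying rigorously that $\theta$ still lands in $\overline{(1-e_1)A(1-e_1)}$. I would prove it by writing $1-e=k(e)(1-e_1)$ with $k(t)=(1-t)/h(t)$, which is continuous and bounded on $[0,1]$. Here $k(1)$ is defined as a limit: near $t=1$ we have $h(t)=(1-t)/(1-\dt)$, so $k(t)\to 1-\dt$. This identity shows that $(1-e)A(1-e)\subset(1-e_1)A(1-e_1)$ after closure, and the rest of the argument is routine bookkeeping of $\dt$.
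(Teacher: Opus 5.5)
Your proposal is correct and follows essentially the paper's argument: the paper also replaces $e_0$ by $(e_0-\eta)_+$ (without your harmless rescaling by $(1-\dt)^{-1}$), notes that $1-e_0\le 1-e$ so that $\theta$ still maps into the larger hereditary subalgebra $\overline{(1-e)A(1-e)}$, and then transports the commutator and approximation estimates via Lemma \ref{Lsqrt} while keeping $D$ and $d_e$ unchanged. The one item to tidy is the commutator estimate in (1). Since ${\cal F}$ is not assumed to lie in the unit ball, you need $\dt\,(1+4\max_{x\in{\cal F}}\|x\|)<\ep$ rather than just $4\dt<\ep$.
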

\begin{proof}
The ``only if"  part is clear (using also ($2_a$) of Remark \ref{R1}). We will prove the ``if" part.

To show $A$ has  property ETR1 associated with the given $e_A$ and $\kappa_{e_A},$
let \(\varepsilon>0\), let
\(\mathcal F\subset A\) be a finite subset containing \(e_A\), and let
\(a\in A_+\setminus\{0\}\). Put
\[
        M=\max\bigl(\{1\}\cup\{\|x\|:x\in\mathcal F\}\bigr).
\]
Choose
\[
 0<\eta<
 \min\left\{
 1,\,
 \frac{\varepsilon}{2(1+2M)},\,
 \frac{\varepsilon}{4},\,
 \frac{\varepsilon^2}{64M^2}
 \right\}.
\]
Apply the assumed condition with \(\eta\) in place of
\(\varepsilon\). We obtain \(e_0\in A_+^{\bf 1}\), a
finite-dimensional \(C^*\)-algebra \(D\), a fully normalized
order-zero c.p.c.\ map
\[
        \theta\colon D\longrightarrow
        \overline{(1-e_0)A(1-e_0)},
\]
and \(d_{e_0}\in D_+\), \(0\leq d_{e_0}\leq 1\), such that
\beq\nonumber
&&        \|e_0x-xe_0\|<\eta\,\,\,
\andeqn\\\label{ep-6}
&&        (1-e_0)x,\, (1-e_0)^{1/2} x(1-e_0)^{1/2}\in_\eta\theta(D)
        \rforal x\in\mathcal F\cup \{e_A\},
\\\nonumber
 &&\left\|
 \theta(d_{e_0})-(1-e_0)^{1/2}e_A(1-e_0)^{1/2}
 \right\|<\eta,
\\\nonumber
&&        t(d_{e_0})\geq\kappa_{e_A}
        \quad\quad\hspace{1in}\text{for all }t\in T(D),
\andeqn\\
 &&       (e_0-\eta)_+\lesssim a.
\eneq

Set
\[
        e=(e_0-\eta)_+.
\]
Then
\[
        0\leq e\leq e_0\leq 1,
        \qquad
        \|e-e_0\|\leq\eta,
\]
and, by the last inequality above,
\beq\label{ep-5}
        e\lesssim a.
\eneq
Moreover, \(e\leq e_0\) implies \(1-e_0\leq1-e\), and hence
\[
 \overline{(1-e_0)A(1-e_0)}
 \subseteq
 \overline{(1-e)A(1-e)}.
\]
Therefore, we may regard \(\theta\) as a fully normalized order-zero
c.p.c.\ map
\[
        \theta\colon D\longrightarrow
        \overline{(1-e)A(1-e)}.
\]

For every \(x\in\mathcal F\), we have
\[
\begin{aligned}
 \|ex-xe\|
 &\leq \|e_0x-xe_0\|
       +\|(e-e_0)x-x(e-e_0)\| \\
 &<\eta+2\eta\|x\| 
 \leq (1+2M)\eta
 <\varepsilon.
\end{aligned}
\]
This verifies  (1) in  property ETR1.

For \(x\in\mathcal F\), choose \(y_x\in\theta(D)\) such that
\[
        \|(1-e_0)x-y_x\|<\eta.
\]
Then
\[
\begin{aligned}
 \|(1-e)x-y_x\|
 &\leq \|(1-e_0)x-y_x\|
       +\|(e_0-e)x\| 
 <\eta+\eta\|x\| 
 \leq (1+M)\eta
 <\varepsilon.
\end{aligned}
\]
Consequently,
\[
        (1-e)x\in_\varepsilon\theta(D)
        \quad\text{for all }x\in\mathcal F.
\]
%
By Lemma \ref{Lsqrt}
we obtain
\[
\begin{aligned}
 \bigl\|(1-e)^{1/2}-(1-e_0)^{1/2}\bigr\|
 &\leq
 \bigl\|(1-e)-(1-e_0)\bigr\|^{1/2} \\
 &=\|e-e_0\|^{1/2}
 \leq\eta^{1/2}.
\end{aligned}
\]
Now write
\[
\begin{aligned}
 &(1-e)^{1/2}x(1-e)^{1/2}
 -(1-e_0)^{1/2}x(1-e_0)^{1/2} \\
 &\quad=
 \bigl((1-e)^{1/2}-(1-e_0)^{1/2}\bigr)
 x(1-e)^{1/2} \\
 &\qquad\quad+
 (1-e_0)^{1/2}x
 \bigl((1-e)^{1/2}-(1-e_0)^{1/2}\bigr).
\end{aligned}
\]
Since $0\le e,e_0\le 1,$ 
it
follows that, for all $x\in {\cal F}\cup\{e_A\},$ 
\beq\nonumber
 &&\hspace{-0.6in}\bigl\|
 (1-e)^{1/2}x(1-e)^{1/2}
 -(1-e_0)^{1/2}x(1-e_0)^{1/2}
 \bigr\| \\\label{ep-8}
 &&\quad\quad\hspace{0.4in}\leq
 2M\bigl\|(1-e)^{1/2}-(1-e_0)^{1/2}\bigr\|
 \leq 2M\eta^{1/2}.
\eneq
Since $2M\eta^{1/2}+\eta<\ep,$ by \eqref{ep-6} and  \eqref{ep-8},
\beq
(1-e)^{1/2}x(1-e)^{1/2}\in_\ep \theta(D).
\eneq
In particular,
\[
\begin{aligned}
 &\left\|
 \theta(d_{e_0})-(1-e)^{1/2}e_A(1-e)^{1/2}
 \right\|  
 <\eta+2M\eta^{1/2}
 <\varepsilon.
\end{aligned}
\]
Finally,
\[
        t(d_{e_0})\geq\kappa_{e_A}
        \quad\text{for all }t\in T(D)
\]
is unchanged. Thus \(e,D,\theta\), and \(d_{e_0}\) satisfy all the
conditions in the definition of property \({\rm ETR}1\), including
\(e\lesssim a\). This proves the proposition.
\end{proof}

\section{Rank of $D$}

The purpose of this section  is to present Proposition \ref{Prank}.

\begin{lem}\label{Cclose2}
Let  $D
$ be a finite dimensional \CA.
For any $\ep>0$ and 
any finite subset ${\cal F}\subset D,$  
 there exists $\dt>0$  and a finite subset ${\cal G}\subset D^{\bf 1}$ 
 satisfying the following:
 If $A$ is a \CA, $B\subset A$ is a  \SCA\,  and
 $\phi:D\to A$ is a \cpc\, 
such that, for any $a, b\in {\cal G}_+$ and $ab=0,$ 
  \beq\label{Cclose2-001}
  \|\phi(a)\phi(b)\|<\dt
  \eneq
  and
 \beq\label{Cclose2-002}
 \phi(g)\in_\dt B\rforal g\in {\cal G},
 \eneq
 then there exists an order-zero \cpc\, $\psi: D\to B$ such that
 \beq
 \|\phi(f)-\psi(f)\|<\ep\tforal f\in {\cal F}.
 \eneq
 \end{lem}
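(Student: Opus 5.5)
We argue by contradiction, using the semiprojectivity of the cone $C_0((0,1])\otimes D$ (equivalently, the fact that order-zero maps from a finite dimensional algebra are defined by finitely many relations that are stable, cf.\ \cite[Corollary 4.1]{WZ09} and Loring's projectivity of cones over finite dimensional algebras). Fix $D=M_{r_1}\oplus\cdots\oplus M_{r_k}$ and let ${\cal G}$ be the standard generating set of Definition \ref{Dgenerating}, enlarged to contain the positive elements $e^s_{i,i}$, their sums, and enough positive elements of the unit ball so that every element of ${\cal F}$ (rescaled into $D^{\bf 1}$) is a fixed linear combination of elements of ${\cal G}$; the positive orthogonal pairs in ${\cal G}_+$ then include $e^s_{i,i}\perp e^{s'}_{j,j}$ for $(s,i)\neq(s',j)$. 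Suppose the lemma fails for this ${\cal G}$. Then there are $C^*$-algebras $A_n\supset B_n$ and c.p.c.\ maps $\phi_n: D\to A_n$ with $\|\phi_n(a)\phi_n(b)\|<1/n$ for orthogonal $a,b\in{\cal G}_+$ and $\phi_n(g)\in_{1/n}B_n$ for $g\in{\cal G}$, but with no order-zero $\psi_n: D\to B_n$ that is $\ep$-close to $\phi_n$ on ${\cal F}$.

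Pass to $\prod_n A_n/\bigoplus_n A_n$ and its $C^*$-subalgebra $\prod_n B_n/\bigoplus_n B_n$. The induced map $\Phi: D\to \prod A_n/\bigoplus A_n$ is c.p.c., takes ${\cal G}$ (hence all of $D$, since ${\cal G}$ spans $D$ and $\Phi$ is linear) into $\prod B_n/\bigoplus B_n$, and satisfies $\Phi(a)\Phi(b)=0$ for all orthogonal pairs in ${\cal G}_+$. The key algebraic step is to show that this forces $\Phi$ to be order zero. For a single block $M_r$ I would use the orthogonality $\Phi(e_{i,i})\Phi(e_{j,j})=0$ for $i\neq j$ together with complete positivity (the Stinespring/Schwarz argument of \cite[Lemma 3.? ]{WZ09}-type: a c.p.c.\ map on $M_r$ sending the orthogonal minimal projections $e_{i,i}$ and also, after including in ${\cal G}$ the projections onto the vectors $(e_i\pm e_j)/\sqrt2$, $(e_i\pm \sqrt{-1}e_j)/\sqrt2$ and their orthogonal complements, to orthogonal elements, is order zero). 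Orthogonality between different blocks handles $\Phi(1_{M_{r_s}})\Phi(1_{M_{r_{s'}}})=0$. This choice of ${\cal G}$ (positive contractions, namely rank-one projections of a fixed finite set of orthonormal bases and their orthogonal pairs) is where I would be careful: it suffices since orthogonality preservation on these bases gives, via the structure theorem \cite[Theorem 3.3]{WZ09}, a supporting homomorphism.

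Having $\Phi: D\to \prod B_n/\bigoplus B_n$ order zero, it corresponds to a homomorphism $C_0((0,1])\otimes D\to \prod B_n/\bigoplus B_n$. Since $C_0((0,1])\otimes D$ is projective (cones over finite dimensional algebras are projective, Loring), this lifts to a homomorphism into $\prod_n B_n$, i.e.\ a sequence of homomorphisms $C_0((0,1])\otimes D\to B_n$, giving order-zero c.p.c.\ maps $\psi_n: D\to B_n$ with $\psi_n(f)-\phi_n(f)\to0$ for $f\in{\cal F}$. This contradicts the choice of $\phi_n$ for large $n$, and $\dt=1/n$ works. The main obstacle is the middle step: verifying that approximate orthogonality on the chosen finite set ${\cal G}_+$ genuinely forces exact order zero in the limit; the liftings and the ultraproduct bookkeeping are routine.
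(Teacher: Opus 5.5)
Your architecture is the paper's. You argue by contradiction, pass to $\prod_n A_n/\bigoplus_n A_n$, and note that the limit map $\Phi$ lands in $\prod_n B_n/\bigoplus_n B_n$ because ${\cal G}$ spans $D$. You then show $\Phi$ is order zero and lift it through projectivity of $C_0((0,1])\otimes D$; the paper uses \cite[Proposition 1.24]{WII} for exactly this lifting.

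The real difference is how you reach exact order zero in the limit. The paper negates the statement along an increasing sequence ${\cal G}_n$ with dense union in $D^{\bf 1}$ and simply asserts that $\Phi$ is order zero. You fix one explicit finite ${\cal G}$, which requires a rigidity fact. The step you flag as the main obstacle does hold with your choice, and the verification is short:
\begin{itemize}
\item On a block $M_r$, Stinespring gives $\Phi(e_{ij})=V_i^*V_j$. For a unit vector $u$, this gives $\Phi(uu^*)=W_u^*W_u$ with $W_u=\sum_i\bar u_iV_i$.
\item If $a^*abb^*=0$, then $((ab)^*ab)^2=b^*(a^*abb^*)a^*ab=0$, so $ab=0$. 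Hence $\Phi(uu^*)\Phi(vv^*)=0$ if and only if $W_uW_v^*=0$.
\item Orthogonality on $e_{ii},e_{jj}$ gives $V_iV_j^*=0$ for $i\neq j$.
\item Orthogonality on the projections onto $(e_i+e_j)/\sqrt2$ and $(e_i-e_j)/\sqrt2$ then gives $\frac12(V_iV_i^*-V_jV_j^*)=0$. So all $V_iV_i^*$ equal a common $T$.
\item Therefore $W_uW_v^*=\bigl(\sum_i\bar u_iv_i\bigr)T=0$ whenever $u\perp v$. Spectral decomposition of orthogonal $a,b\in(M_r)_+$ then gives $\Phi(a)\Phi(b)=0$.
\item Across blocks, $\Phi(e^s_{ii})\Phi(e^{s'}_{jj})=0$ yields $\Phi(1_{M_{r_s}})\Phi(1_{M_{r_{s'}}})=0$. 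This suffices, because $\Phi(a)\le\|a\|\,\Phi(1_{M_{r_s}})$ for $a\in(M_{r_s})_+$.
\end{itemize}
Several of your additions are unnecessary: the $\sqrt{-1}$-projections, the orthogonal complements, and the extra elements spanning ${\cal F}$. The matrix units already span $D$, so $\|\phi_n(f)-\psi_n(f)\|\to0$ for every $f\in D$.

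Your route buys a test set ${\cal G}$ that depends only on $D$, not on $\varepsilon$ or ${\cal F}$. It also makes explicit what the paper's argument leaves implicit: the test sets must contain enough orthogonal pairs, and density of $\bigcup_n{\cal G}_n$ in $D^{\bf 1}$ does not by itself ensure that.
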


\begin{proof}
Suppose that the lemma is false.   Then,  there are 
finite subset ${\cal F}_0\subset D$ and $\ep_0>0,$ for every $n\in \N,$ there would exist a \CA\, $A_n,$ a \SCA\, $B_n\subset A_n$ and a \cpc\, $\phi_n: D\to A_n,$  and an increasing 
sequence ${\cal G}_1\subset {\cal G}_2\subset \cdots \subset D^{\bf 1}$ with $\overline{\cup_n {\cal G}_n}=D^{\bf 1}$ such that  if $a, b\in {\cal G}_n,$ then $ab, ba\in {\cal G}_{n+1},$ 
\beq\label{Cclose-4}
&&{\rm dist}(\phi_n(x),\, B_n)<1/n\rforal x\in {\cal G}_n\andeqn\\\label{Cclose-5}
&&\|\phi_n(x)\phi_n(y)\|<1/n\rforal x, y\in ({\cal G}_n)_+
\eneq
and $xy=0,$ but there is no order-zero \cpc\,
$\psi_n:D\to B_n$ satisfying
\beq
\|\phi_n(x)-\psi_n(x)\|<\ep_0\rforal x\in {\cal F}.
\eneq
Equivalently, for every order-zero \cpc\, $\psi: D\to B_n,$
\beq\label{Cclose-415}
\max_{x\in {\cal F}}\|\phi_n(x)-\psi(x)\|\ge \ep_0.
\eneq
Then put 
\beq
A_\infty=l^\infty(\{A_n\})/c_0(\{A_n\})\andeqn B_\infty= l^\infty(\{B_n\})/c_0(\{B_n\}).
\eneq
Define  $j: l^\infty(\{B_n\})\to l^\infty(\{A_n\})$ to be the natural embedding. 
Let $q_A: l^\infty(\{A_n\})\to l^\infty(\{A_n\})/c_0(\{A_n\})$ be the quotient map. 
If $q_A\circ j(\{b_n\})=0,$ then $\{b_n\}\in c_0(\{A_n\})$ which implies $\{b_n\}\in c_0(\{B_n\}).$
The converse also holds. Therefore $j$ induces 
an embedding ${\bar j}: l^\infty(\{B_n\})/c_0(\{B_n\})\to l^\infty(\{A_n\})/c_0(\{A_n\}).$ 

Define $\bar \Phi:  D\to l^\infty(\{A_n\})/c_0(\{A_n\})$ by $\bar \Phi(x)=q_A(\{\phi_n(x)\})$ for all $x\in D.$
By \eqref{Cclose-4} and \eqref{Cclose-5},  ${\bar \Phi}$ is an order-zero \cpc\, and $\bar \Phi(D)\subset {\bar j}( l^\infty(\{B_n\})/c_0(\{B_n\}).$
So ${\bar j}^{-1}\circ\bar \Phi: D\to l^\infty(\{B_n\})/c_0(\{B_n\})$ is an order - zero \cpc. 
By \cite[Proposition 1.24]{WII}, there is an order zero \cpc\, $\Phi: D\to l^\infty(\{B_n\})$ such that 
\beq
q_A\circ  j\circ \Phi={\bar \Phi}. 
\eneq
Write $\Phi=\{\Phi_n\},$ where $\Phi_n: D\to B_n$ is an order-zero \cpc.
We then obtain that
\beq
\lim_{n\to\infty}\|\phi_n(x)-\Phi_n(x)\|=0\rforal x\in D.
\eneq
This contradicts \eqref{Cclose-415}.
\end{proof}

\begin{lem}\label{Lstable2}
Let $D$ be a finite dimensional \CA\, and ${\cal F}\subset D$ be a finite  subset.
Then, for any $\ep>0,$ there is $\dt>0$  and  a finite subset ${\cal E}\subset D^{\bf 1}$   such that, if $A$ is a \CA,
$\phi: D\to A$ is an order - zero \cpc,
and  $a\in A$ such that $\|a\|\le 1,$ 
\beq\label{Cclose2-1}
&&\|a\phi(f)-\phi(f)a\|<\dt, \,\,\|a^*\phi(f)-\phi(f)a^*\|<\dt \andeqn\\\label{Cclose2-2}
&&a^*\phi(f)a\in_\dt B,
\eneq
where $B\subset A$ 
is a \SCA, 
for all $f\in {\cal E},$ 
there is an order zero \cpc\,  $\psi: D\to   B$
such that
\beq
\|a^*\phi(x) a-\psi(x)\|<\ep\rforal x\in {\cal F}.
\eneq

\end{lem}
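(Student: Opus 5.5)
We reduce Lemma \ref{Lstable2} to Lemma \ref{Cclose2}, applied to the c.p.c.\ map
\[
\phi_a: D\to A,\qquad \phi_a(x)=a^*\phi(x)a .
\]
Fix $\ep>0$ and ${\cal F}$. Let $\dt_0>0$ and ${\cal G}\subset D^{\bf 1}$ be the constant and finite set supplied by Lemma \ref{Cclose2} for $\ep$ and ${\cal F}$. We will take ${\cal E}$ to be ${\cal G}$ together with $1_D$ and all products $ab$, $a^{1/2}$, $b^{1/2}$ for $a,b\in{\cal G}_+$. The number $\dt\le\dt_0$ will be chosen small relative to $\dt_0$.

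The map $\phi_a$ is completely positive and contractive, since $\|a\|\le 1$. Hypothesis \eqref{Cclose2-002} of Lemma \ref{Cclose2} for $\phi_a$ is exactly \eqref{Cclose2-2}, because ${\cal G}\subset{\cal E}$ and $\dt\le\dt_0$. It therefore remains to verify \eqref{Cclose2-001}: for $g,h\in{\cal G}_+$ with $gh=0$ we need $\|\phi_a(g)\phi_a(h)\|<\dt_0$.

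This estimate is the one real step. Write
\[
\phi_a(g)\phi_a(h)=a^*\phi(g)\,aa^*\,\phi(h)a .
\]
Since $\phi$ is order zero and $gh=0$, we have $\phi(g)\phi(h)=0$, and more precisely $\phi(g^{1/2})\phi(h^{1/2})=0$. The aim is to move $aa^*$ past $\phi(h^{1/2})$. Factor $\phi(h)$ through the structure theorem \cite[Theorem 3.3]{WZ09}; more simply, we only need approximate commutation of $a$ and $a^*$ with $\phi(h^{1/2})$, which is guaranteed by \eqref{Cclose2-1} because $h^{1/2}\in{\cal E}$. Concretely,
\[
\phi(g)aa^*\phi(h)\approx \phi(g)\,aa^*\,\phi(h^{1/2})\phi(h^{1/2})' ,
\]
where we use the order-zero identity $\phi(h)=\pi_\phi(h^{1/2})\phi(1_D)\phi(h^{1/2})$-type factorisations. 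This introduces a factor $\phi(1_D)$, and $1_D\in{\cal E}$. Commuting $aa^*$ past $\phi(h^{1/2})$ costs $2\dt$. We are then left with a product containing $\phi(g)\phi(h^{1/2})$, up to the supporting homomorphism, which vanishes.

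To avoid handling $\pi_\phi$ directly, a cleaner route is to use $\phi(g)=\phi(g^{1/2})^{\,\cdot}$ via order-zero functional calculus, $f(\phi)(g)$ with $f(t)=t^{1/2}$ \cite{WZ09}. This gives $\phi(h)=\phi^{1/2}(h^{1/2})^2$, where $\phi^{1/2}$ is again order zero. We then need \eqref{Cclose2-1} for $\phi^{1/2}$, and that follows from \eqref{Cclose2-1} for $\phi$ by polynomial approximation of $t^{1/2}$ on $[0,1]$. Since $a$ approximately commutes with $\phi(1_D)$ and with $\phi(h)$, it approximately commutes with $\phi^{1/2}(h^{1/2})$ up to an error $\omega(\dt)\to0$. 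Hence
\[
\|\phi_a(g)\phi_a(h)\|\le \|a^*\phi(g)\phi^{1/2}(h^{1/2})\,aa^*\,\phi^{1/2}(h^{1/2})a\|+2\omega(\dt)=2\omega(\dt),
\]
because $\phi(g)\phi^{1/2}(h^{1/2})=0$ by orthogonality of the supports of $g$ and $h$.

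Choosing $\dt$ so that $2\omega(\dt)<\dt_0$, Lemma \ref{Cclose2} yields an order-zero c.p.c.\ map $\psi:D\to B$ with $\|a^*\phi(x)a-\psi(x)\|<\ep$ for all $x\in{\cal F}$. The main obstacle is making the modulus $\omega$ uniform over all algebras $A$. This holds because the commutator bound passes from $c$ to $c^{1/2}$ for positive contractions via polynomial approximation, with constants independent of $A$.
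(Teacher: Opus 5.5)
Your proof is correct and follows the paper's route: apply Lemma \ref{Cclose2} to $x\mapsto a^*\phi(x)a$, with \eqref{Cclose2-002} coming directly from \eqref{Cclose2-2} and approximate orthogonality coming from commuting $aa^*$ past $\phi(h)$ and then using $\phi(g)\phi(h)=0$. The detour through $\phi(h)^{1/2}=\phi^{1/2}(h^{1/2})$ is unnecessary, as is the muddled $\pi_\phi$ factorisation (which you rightly abandon): \eqref{Cclose2-1} at $h\in{\cal G}\subset{\cal E}$ already gives $\|aa^*\phi(h)-\phi(h)aa^*\|<2\dt$, hence $\|\phi(g)aa^*\phi(h)\|<2\dt$, with no modulus $\omega$ needed.
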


\begin{proof}
We apply Lemma \ref{Cclose2}.
Fix $\ep>0$ and a finite subset ${\cal F}.$
Let ${\cal G}\subset D^{\bf 1}$ be the finite subset and $\dt_0>0$ (as $\dt$)  given by Lemma \ref{Cclose2}. 

 Choose $\eta,\delta>0$ so that
\beq
 \qquad 2\delta+4\eta<\dt_0
\label{eq:choices}
\eneq
and let ${\cal E}\subset D^{\bf 1}$ be a finite $\eta$-dense set containing ${\cal G}.$ 
 Define
\[
 \theta(x)=a^*\varphi(x)a\rforal  x\in D.
\]
Then $\theta$ is a \cpc.  If $x\in D^{\bf 1}$ and $f\in {\cal E}$ satisfies
$\lVert x-f\rVert<\eta$, then  \eqref{Cclose2-1} and
\eqref{Cclose2-2} give
\beq\label{eq:extended-comm}
&& \lVert[a,\varphi(x)]\rVert,
 \ \lVert[a^*,\varphi(x)]\rVert<\delta+2\eta,
 \\\label{eq:extended-dist}
 &&\dist(\theta(x),B)<\delta+\eta<\dt_0.
\eneq

Finally, let $x,y\in D$ be positive contractions with $xy=0$.
Since $\varphi$ has order zero, $\varphi(x)\varphi(y)=0$. 
Therefore, by
\eqref{eq:extended-comm}, we obtain
\begin{align*}
 \lVert\theta(x)\theta(y)\rVert
 &=\lVert a^*\varphi(x)aa^*\varphi(y)a\rVert\\
 &<2(\delta+2\eta)<\dt_0.
\end{align*}
Thus \eqref{Cclose2-001} and \eqref{Cclose2-002} hold, and the
stability statement yields an order-zero  \cpc\,
$\psi\colon D\to B$ satisfying
\[
 \lVert a^*\varphi(x)a-\psi(x)\rVert<\varepsilon,
 \qquad x\in{\cal F}.
\]

\end{proof}

\begin{lem}\label{Lfullness}
Let $C$ be a separable \CA. Then there exists a sequence $\{z_j\}_{j=1}^\infty\in C_+^{\bf 1}\setminus \{0\}$
satisfying the following:

For any \CA\, $A$ and any \hm\, $\pi: C\to A,$ and every $c\in C_+^{\bf 1}$ such that
$\pi(c)\not=0,$ there exists $j\in \N$ such that 
\beq
z_j\lesssim c\andeqn \pi(z_j)\not=0.
\eneq
\end{lem}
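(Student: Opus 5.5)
The plan is to build the sequence from a countable dense subset of the positive unit sphere of $C$ via functional calculus cut-downs. Since $C$ is separable, fix a countable dense subset $\{c_k\}_{k\in\N}$ of $C_+^{\bf 1}$. For each $k\in\N$ and each rational $r\in(0,1)\cap\Q$, put $z_{k,r}=f_{r}(c_k)$ with $f_r$ as in Definition \ref{dfg}, and discard those which are zero. Re-enumerate the countable family $\{z_{k,r}\}$ as $\{z_j\}_{j=1}^\infty$; each $z_j$ is a non-zero positive contraction (adding, if needed, a single fixed nonzero element to make the family infinite). The point is that the family does not depend on $A$ or $\pi$.

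Now let $\pi:C\to A$ be a homomorphism and $c\in C_+^{\bf 1}$ with $\pi(c)\neq 0$. Put $\sigma=\|\pi(c)\|>0$. Choose a rational $r$ with $0<r<\sigma/4$, and choose $k$ with $\|c-c_k\|<\delta$, where $\delta<r/4$ is small. Then Cuntz subequivalence follows from the standard perturbation fact: if $\|c-c_k\|<\delta$ then $(c_k-\delta)_+\lesssim c$ (this is the Kirchberg--R{\o}rdam lemma; alternatively one may use Lemma \ref{L8-perturb-1} with $x=c_k^{1/2}$, $y=c^{1/2}$ and Lemma \ref{Lsqrt} to control $\|c_k^{1/2}-c^{1/2}\|\le\delta^{1/2}$, giving $(c_k-\delta')_+\lesssim c$ with $\delta'=(\delta^{1/2}+\text{small})^2$). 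Since $f_r(c_k)$ lies in the hereditary subalgebra generated by $(c_k-r/2)_+$, and $\delta'<r/2$, we get $z_{k,r}=f_r(c_k)\lesssim (c_k-\delta')_+\lesssim c$. So one has to arrange the choice of $\delta$ relative to $r$ (e.g.\ $\delta<r^2/16$ so that $\delta'<r/2$).

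Next, the nonvanishing. Since $\pi$ is a homomorphism, $\pi(f_r(c_k))=f_r(\pi(c_k))$ and $\|\pi(c_k)-\pi(c)\|\le\delta$, so $\|\pi(c_k)\|\ge\sigma-\delta>r$. Because $f_r(t)=1$ for $t\ge r$ and $\pi(c_k)$ is positive with norm $>r$, its spectrum contains a point $\ge r$, hence $\|f_r(\pi(c_k))\|=1\ne0$. In particular $z_{k,r}\ne0$, so it was not discarded and equals some $z_j$, finishing the argument.

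The only delicate step is the quantitative Cuntz comparison $(c_k-\delta')_+\lesssim c$ from norm closeness, and matching $\delta'$ with the cut-off $r$; everything else is bookkeeping. I would invoke the standard result (R{\o}rdam, $\|a-b\|<\delta\Rightarrow (a-\delta)_+\lesssim b$) directly, which removes the need for the square-root detour and allows simply $\delta<r/2$.
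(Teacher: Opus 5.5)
Your proof is correct and follows essentially the same route as the paper: a countable dense set in $C_+^{\bf 1}$ cut down at rational levels, R{\o}rdam's perturbation lemma ($\|a-b\|<\delta\Rightarrow(a-\delta)_+\lesssim b$) for $z_j\lesssim c$, and $\pi$ commuting with functional calculus for $\pi(z_j)\neq0$. The only differences are cosmetic. You use $f_r(c_k)$ where the paper uses $(y_m-r)_+$. You also get nonvanishing from the norm estimate $\|\pi(c_k)\|>r$, whereas the paper routes this through the Cuntz comparison $(\pi(c)-3\epsilon)_+\lesssim(\pi(y_m)-2\epsilon)_+$.
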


\begin{proof}
Choose a dense sequence $\{y_m\}$ in $C_+^{\bf 1}.$ Consider the countable family
$$
{\cal Z}=\{(y_m-r)_+: m\in \N,\, r\in \Q\cap (0,1)\}\setminus \{0\}.
$$
Re-enumerate ${\cal Z}$ as $\{z_j\}_{j=1}^\infty.$

Let $\pi: C\to A$ be a \hm\, and $c\in C_+^{\bf 1}$ such that $\pi(c)\not=0.$
Choose $0<\ep<1/2$ such that
\beq\label{Lfullness-3}
(\pi(c)-4\ep)_+\not=0.
\eneq
Choose $m\in \N$ such that
\beq
\|y_m-c\|<\ep.
\eneq
Then, by \cite[Proposition 2.2]{RrII}, 
\beq
(y_m-\ep)_+\lesssim c.
\eneq
We also have 
\beq
\|\pi(y_m)-\pi(c)\|<\ep.
\eneq
It follows that 
\beq
(\pi(c)-3\ep)_+\lesssim (\pi(y_m)-2\ep)_+.
\eneq
By \eqref{Lfullness-3}, 
\beq
(\pi(c)-3\ep)_+\not=0.
\eneq
Hence 
\beq\label{Lfullness-10}
(\pi(y_m)-2\ep)_+\not=0.
\eneq
Then, since $\pi((y_m-2\ep)_+)=(\pi(y_m)-2\ep)_+,$ we conclude that  
\beq
\pi((y_m-2\ep)_+)\not=0.
\eneq
Choose $r\in \Q$ such that $\ep<r<2\ep.$
Then 
\beq
(y_m-r)_+\lesssim (y_m-\ep)_+\lesssim c \andeqn \pi((y_m-r)_+)=(\pi(y_m)-r)_+.
\eneq
Since $r<2\ep,$ \eqref{Lfullness-10} implies that
\beq
(\pi(y_m)-r)_+\not=0.
\eneq
Now $(y_m-r)_+\in {\cal Z}.$ We may write $z_j=(y_m-r)_+$ for some $j\in \N.$
Finally, we have 
\beq
z_j\lesssim c\andeqn \pi(z_j)\not=0.
\eneq
\end{proof}

\begin{prop}\label{Prank}
Let $A$ be an algebraically simple, non-elementary \CA\,  with property ETR1. Then, in Definition \ref{D1}, for every prescribed $n\in\mathbb N,$  the finite-dimensional \CA\, $D$   may additionally be chosen so that every irreducible representation of $D$ has dimension at least $n.$
\end{prop}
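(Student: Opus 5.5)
Since $A$ is non-elementary and simple, the plan is to refine the one-dimensional model by tensoring with a large matrix algebra sitting inside the good part. The idea: apply ETR1 to get $e,D,\theta$, then replace $\theta$ by an order-zero map $\theta'\colon D\otimes M_n\to \overline{(1-e')A(1-e')}$ whose image approximately contains $\theta(D)$. Every irreducible representation of $D\otimes M_n$ has dimension at least $n$. Because $A$ need not be $\mathcal Z$-stable yet, I would not try to embed $M_n$ into a relative commutant directly. Instead, I would exploit the freedom in the small element $e$: the small part may be enlarged by any element Cuntz below $a$.

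First apply Definition \ref{D1} with a much smaller tolerance $\eta$, the set $\mathcal F$, and an element $a_0$ with $a_0\oplus a_0\lesssim a$ (possible by Lemma \ref{LMn}). This produces $e_0$, $D=\bigoplus M_{r_s}$, $\theta$ and $d_{e_0}$. Write each block via $\tilde\theta\colon C_0((0,1])\otimes M_{r_s}\to A$. For each $s$, put $h_s=\tilde\theta(g\otimes e^{s}_{11})$ with $g$ a function supported near $1$, which is nonzero by full normalization. Lemma \ref{LMn} then gives a nonzero homomorphism $\phi_s\colon C_0((0,1])\otimes M_n\to \Her(h_s)$.

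The key step is to write each $r_s\times r_s$ block as approximately $M_{r_s}\otimes M_n$ up to a remainder. I would decompose $M_{r_s}=M_{k_s n}\oplus(\text{remainder})$ only at the level of traces. Concretely, I would use a Cuntz-null-type argument inside $\Her(\tilde\theta(\iota\otimes e^s_{11}))$: find orthogonal positive elements $c_1,\dots,c_n$, mutually equivalent through the $\phi_s$ partial isometries, filling $\tilde\theta(\iota\otimes e^s_{11})$ up to an element $r_s$. Here $r_s$ must be so small that $\sum_s\tilde\theta(1\otimes r_s)$-type pieces are Cuntz below $a_0$. Then define $\theta'$ on $M_{r_s}\otimes M_n$ by $e^s_{ij}\otimes e_{kl}\mapsto \pi_\theta(e^s_{i1})\,x_kx_l^*\,\pi_\theta(e^s_{1j})$, where $\pi_\theta$ is the supporting homomorphism. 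The new small element $e$ is $e_0$ plus a function of the remainder, which is Cuntz below $a_0\oplus a_0\lesssim a$.

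The main obstacle is exactly this filling step. We need the complement of an $M_n$-divisible subalgebra in the corner to be tracially and Cuntz small, which is a form of tracial divisibility not yet available. I would first try an iterative halving argument: repeatedly apply Lemma \ref{LMn} inside the leftover corner. After enough steps the leftover is Cuntz below $a_0$, because the leftover's dimension function decreases geometrically as in Remark \ref{RCuntznull}(2). That remark relies on strict comparison, which we may not have; if so, I would instead apply ETR1 again inside the leftover hereditary subalgebra, using the Cuntz-smallness condition (3) of Definition \ref{D1}.

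Finally, verify conditions (1)--(3) for the new data. For condition (2), the compressions $(1-e)^{1/2}x(1-e)^{1/2}$ stay $\ep$-close to $\theta'(D\otimes M_n)$ by Lemma \ref{Lstable2}. Nondegeneracy is preserved because $d_{e}\otimes 1_{M_n}$ together with the trace estimate gives $t(d)\ge\kappa_{e_A}-O(\eta)$. Adjusting $\kappa$, or taking $\eta$ small relative to a fixed slack, gives the statement with the same constant up to replacing $\kappa_{e_A}$ by $\kappa_{e_A}/2$. I expect this constant bookkeeping to be acceptable, since Definition \ref{D1} only asks for some $\kappa_{e_A}$.
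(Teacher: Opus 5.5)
Your proposal has a genuine gap at the filling step, and in your approach that step is the whole content of the proposition.

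Condition (2) of Definition \ref{D1} is a norm condition. To pass from $\theta(D)$ to $\theta'(D\otimes M_n)$ you therefore need $\theta'(d\otimes 1_n)$ to be norm-close to $\theta(d)$. That means $\sum_k x_kx_k^*$ must fill $\tilde\theta(\iota\otimes e^s_{11})$ in norm, up to a remainder that is Cuntz below $a_0$. The remainder must also be absorbable into a new $e$ that still almost commutes with $\mathcal F$.

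Lemma \ref{Lstable2} does not give this: it only perturbs an approximately order-zero compression into a subalgebra. Nothing forces a remainder sitting inside $\Her(\theta(1_D))$ to almost commute with $\mathcal F$ either.

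Neither of your two mechanisms produces the filling.
\begin{itemize}
\item Lemma \ref{LMn} yields \emph{some} nonzero copy of $C_0((0,1])\otimes M_n$ in a hereditary subalgebra, with no lower bound on the proportion it occupies. Iterating it therefore need not shrink the leftover. In Remark \ref{RCuntznull}(2) it is the new pieces that shrink geometrically, which is the opposite of filling.
\item Reapplying ETR1 inside the leftover gives another finite-dimensional algebra whose blocks may be $1\times 1$. That is exactly the problem you started with.
\end{itemize}
What you actually need is a tracial divisibility property. In the paper, divisibility (Proposition \ref{LLdiv}, and through it the divisibility parts of Corollary \ref{Cstrictcomp} and Theorem \ref{Tdiv}) is deduced from this very proposition, so your route is circular.

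The missing idea is that $D$ need not be changed at all. The requirement $t(d_e)\geq\kappa_{e_A}$ for \emph{every} $t\in T(D)$, so on every block, together with algebraic simplicity, already forces every block of the $D$ produced by ETR1 to have size at least $n$. This holds once the finite set is suitably enlarged and the tolerance is small. The paper's argument runs as follows.
\begin{enumerate}
\item Take a nonzero order-zero map $\psi\colon M_n\to A$, from Lemma \ref{LMn} with $\psi(x)=\pi_\varphi(x)\varphi(1_{M_n})^{1/3}$.
\item Set $c=\psi(e_{11})$ and use algebraic simplicity to write $e_A=\sum_{i=1}^m v_i^*cv_i$ exactly.
\item Put $c$, $v_i$, $v_i^*$, $e_A^{1/3}$ and $\psi(\mathcal G_0)$ into the finite set, and use a tolerance small compared with $\kappa_{e_A}^3$.
\item With $s=(1-e)^{1/2}$, Lemma \ref{Lstable2} now does the right job: it turns $s\psi(\cdot)s$ into an order-zero map $\Psi\colon M_n\to\tilde\theta(C_0((0,1])\otimes D)$.
\item Take $z\in\theta(D)_+$ close to $se_A^{1/3}s$ and $y_i\in\theta(D)$ close to $sv_is$. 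Then $z^3\approx\sum_i y_i^*\Psi(e_{11})y_i$.
\item Since $z\gtrsim se_A^{1/3}s\geq se_As\approx\theta(d_e)$, evaluation at $1$ on the $j$th block gives ${\rm tr}_{r_j}(\pi_{1,j}(z))>\kappa_{e_A}/2$, hence $\|\pi_{1,j}(z)\|^3>\kappa_{e_A}^3/8$.
\item If $\pi_{1,j}\circ\Psi=0$, step 5 would instead force $\|\pi_{1,j}(z^3)\|<\kappa_{e_A}^3/16$, a contradiction.
\end{enumerate}
Each $\pi_{1,j}\circ\Psi\colon M_n\to M_{r_j}$ is therefore a nonzero order-zero map, so $r_j\geq n$. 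This also keeps the original constant $\kappa_{e_A}$, whereas your version would degrade it.
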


\begin{proof}
We assume that $A$ has property ETR1 associated with a strictly positive element $e_A$ of $A$ 
with $\|e_A\|=1$ and a constant $0<\mathfrak{\kappa}_{e_A}\le 1.$

Fix $\varepsilon>0$, a finite subset $\mathcal F\subset A$ containing 
$e_A,$  and fix an element $a\in A_+\setminus\{0\}$, and $n\in\mathbb N$.
We will show that the finite-dimensional algebra
\[
D=\bigoplus_{j=1}^k M_{r_j}
\]
appearing in Definition \ref{D1} may be chosen so that $r_j\geq n$
for every $j$.

\Wlog,  we may assume that $\|a\|=1.$
By Lemma \ref{LMn}, there is a nonzero homomorphism
$
H\colon C_0((0,1])\otimes M_n\longrightarrow A.
$
Define
\[
\varphi(x):=H(\iota\otimes x),
\qquad x\in M_n.
\]
Then $\varphi\colon M_n\to A$ is a nonzero order-zero c.p.c.\ map.

Set $b=\phi(e_{11})$ and $c=b^{1/3}.$  

Let $\pi_\varphi$ be the supporting homomorphism of $\varphi$, so that
$\varphi(x)=\pi_\varphi(x)\varphi(1_{M_n})$. Define
\beq\nonumber
\psi(x)=\pi_\varphi(x)\varphi(1_{M_n})^{1/3},
\qquad x\in M_n.
\eneq
Then $\psi$ is an order-zero c.p.c.\ map and
\beq\nonumber
\psi(e_{11})=\varphi(e_{11})^{1/3}=c.
\eneq

 Since $c\not=0$ and $A$ is 
algebraically simple, there are $v_1,v_2,..., v_m\in A$ such that 
\beq
e_A=\sum_{i=1}^m v_i^*cv_i.
\eneq
Set
\beq
R=\max \{1, \|v_1\|, \|v_2\|, ..., \|v_m\|\}.
\eneq
Before we apply  property ETR1, let us choose certain constants and a finite subset.
Set $\eta={\mathfrak{\kappa}_{e_A}^3\over{16}}$ and $\rho>0$ such that
\beq
0<\rho<{\eta\over{4mR(R+1)}}.
\eneq
To apply Lemma  \ref{Lstable2}, choose a finite subset ${\cal G}_0
\subset M_n^{\bf 1}$ (in place of ${\cal G}$) and 
$\dt_{oz}>0$ (in place of $\dt$) corresponding 
to $\rho$ (in place of $\ep$) and ${\cal F}_0=\{e_{i,j}: 1\le i,j\le n\},$ a system of matrix units for $M_n,$ 
required  by Lemma \ref{Lstable2}. \Wlog, we may assume that ${\cal F}_0\subset {\cal G}_0.$

Next choose $\eta_s>0$ satisfying the following:

whenever $0\le s\le 1$ satisfying 
\beq
\|[s, c]\|<\eta_s, \,\,\|[s, v_i]\|<\eta_s \andeqn \|[s,e_A^{1/3}]\|<\eta_s,
\eneq
one has 
\beq\label{Prank-7}
\|(se_A^{1/3} s)^3-\sum_{i=1}^m (sv_is)^*(scs)(sv_is)\|<\eta/4.
\eneq

Choose $\dt>0$ such that
\beq\label{Prank-10}
0<\dt <\min\{\ep, \eta_s,  {\dt_{oz}\over{(n^2+1)}}, {\mathfrak{\kappa}_{e_A}\over{6}},     {\eta\over{ 4(8+m(2R+1))}}\}.
\eneq

Set
\beq
{\cal F}_1={\cal F}\cup\{ e_A^{1/3}, c, v_l, v_l^*, \psi(g): 1\le l\le m,\, g\in {\cal G}_0\}.
\eneq

Using Remark \ref{R1} (ii)--(iv), 
applying  property ETR1 with $\dt,$ ${\cal F}_1$ and $a.$
We obtain $0\le e\le 1,$ a finite dimensional \CA\, 
\beq
D=M_{r_1}\oplus M_{r_2}\oplus \cdots \oplus M_{r_k},
\eneq
and a fully normalized  order zero \cpc\, $\theta: D\to A$ such that

(i) $\|[e,\, x]\|<\dt \andeqn \|[(1-e)^{1/2},\, x]\|<\dt \rforal x\in {\cal F}_1$ (see Remark \ref{R1});

(ii) $(1-e)^{1/2} x(1-e)^{1/2}\in_{\dt} \theta(D)$ and  $0\le d_e\le 1$ in $D$ such that
\beq
\|\theta(d_e)-(1-e)^{1/2}e_A(1-e)^{1/2}\|<\dt\andeqn t\circ \pi_1(\tilde \theta(\iota \otimes d_e))\ge \mathfrak{\kappa}_{e_A}
\eneq
for all $t\in T(D),$ where $\tilde \theta: C:=C_0((0,1])\otimes D\to A$ is the \hm\, 
induced by $\theta,$ $\pi_1: \tilde \theta(C)\to D$ is the quotient map defined by $\pi_1(\tilde \theta(\iota\otimes d))=d$ for all $d\in D;$

(iii) $ e\lesssim a.$

Put $c_e=\tilde \theta(\iota\otimes d_e)$ and $s=(1-e)^{1/2}.$ 
Then
\beq\label{Prank-12}
\|c_e- se_As\|<\dt\andeqn 
t\circ \pi_1(c_e)\ge \mathfrak{\kappa}_{e_A} \rforal t\in T(D).
\eneq
Since $\psi({\cal G}_0)\subset {\cal F}_1,$ by  (i), 
\beq
\|[\psi(x), s]\|<\dt\le \dt_{oz}\rforal x\in  {\cal G}_0.
\eneq
Moreover, 
\beq
{\rm dist}(s\psi(g)s, \theta(D))<\dt\le \dt_{oz}\rforal g\in {\cal G}_0.
\eneq
By the choice of $\dt_{oz},$ applying Lemma \ref{Lstable2},  one obtains an order zero \cpc\, $\Psi: M_n\to \tilde\theta(C)$ such that
\beq
\|\Psi(e_{i,j})-s\psi(e_{i,j})s\|<\rho,\,\,\, 1\le i, j\le n.
\eneq
In particular,
\beq\label{Prank-17}
\|\Psi(e_{1,1})-scs\|<\rho.
\eneq
For each $i,$ choose $y_i\in \theta(D)\subset\tilde \theta(C)$ satisfying 
\beq\label{Prank-18}
\|y_i-sv_is\|<\dt.
\eneq
Moreover,
\[
\|y_i\|
\leq \|sv_i s\|+\delta
\leq R+\delta.
\]
Since $\delta<\kappa_{e_A}/6\leq 1/6$ and $R\geq1$, we have
\[
(R+\delta)^2\leq R(R+1).
\]
Since $e_A^{1/3}\in {\cal F}_1,$ by Remark \ref{D1} (iv), one may choose $z\in \theta(D)_+^{\bf 1}$ such that
\beq\label{Prank-n56}
\|z-se_A^{1/3}s\|<2\dt
\eneq
Recall that $\dt<\eta_s,$
by (i),
\beq
\|[s,\, c]\|<\eta_s,\,\, \|[s, \,v_i]\|<\eta_s,\andeqn \|[s, \, e_A^{1/3}]\|<\eta_s.
\eneq
Note that 
\beq
{\eta\over{4}}+8\dt+m(2R+1)\dt+mR(R+1)\rho<\eta.
\eneq
By  \eqref{Prank-n56},
\eqref{Prank-7}, \eqref{Prank-18}, \eqref{Prank-17},  and \eqref{Prank-18} again,
 one gets
\beq\label{Prank-20}
z^3&\approx_{6\dt} &(se_A^{1/3}s)^3\approx_{\eta/4} \sum_{i=1}^m (sv_is)^* (scs)(sv_is)\\
&\approx_{m(2R+1)\dt+mR(R+1)\rho}&\sum_{i=1}^m y_i^*\Psi(e_{1,1})y_i.
\eneq
Hence
\beq\label{Prank-n59}
\|z^3-\sum_{i=1}^m y_i^*\Psi(e_{1,1})y_i\|<\eta.
\eneq
Now write 
$\tilde \theta(C)=\bigoplus_{j=1}^k C_0(X_j\setminus \{0\}, M_{r_j}),$
where $ X_j\subset [0,1]$ is compact.  Because $\theta$ is fully normalized, $1\in  X_j$ ($1
\le j\le k$). 
We also write
\beq\label{Prank-19+}
z=z_1\oplus z_2\oplus\cdots \oplus z_k, \,\,\, c_e=c_{e,1}\oplus c_{e,2}\oplus \cdots \oplus c_{e,k},
\eneq
and $\Psi=\Psi_1\oplus \cdots \Psi_k.$   
For each $j$, define
\[
\pi_{1,j}:\widetilde\theta(C)\longrightarrow M_{r_j}
\]
by
\[
\pi_{1,j}(f_1\oplus\cdots\oplus f_k)=f_j(1).
\]
Thus $\pi_{1,j}$ vanishes on every summand except the $j$-th.

Since $0\le e_A\le e_A^{1/3},$ we have 
\beq\label{Prank-19}
se_As\le se_A^{1/3}s.
\eneq
Together with \eqref{Prank-n56}, \eqref{Prank-19} and \eqref{Prank-12}, we have
\beq\label{Prank-21}
z\ge se_A^{1/3}s-2\dt 1_{\tilde A} \ge se_As-2\dt 1_{\tilde A}\ge c_e-3\dt 1_{\tilde A}.
\eneq
Since $z, c_e\in \widetilde\theta(C),$  the resulting inequality $z\ge c_e-3\dt 1$
holds in $\widetilde\theta(C)^\sim.$

Fix $j,$ and let $tr_{r_j}$ be the normalized trace on $M_{r_j}.$ Define 
$\tau_j\in T(D)$ by
\beq
\tau_j(d_1, d_2,...,d_k)={\rm  tr}_{r_j}(d_j).
\eneq
By \eqref{Prank-12},  
\beq\label{Prank-22}
{\rm tr}_{r_j}(\pi_{1,j}(c_{e}))\ge \mathfrak{\kappa}_{e_A}.
\eneq 
By \eqref{Prank-19+}, \eqref{Prank-21} and \eqref{Prank-22} (as well as \eqref{Prank-10}), we compute that
\beq\label{Prank-23}
{\rm tr}_{r_j}(\pi_{1,j}(z))\ge \mathfrak{\kappa}_{e_A}-3\dt>{\mathfrak{\kappa}_{e_A}/2}. 
\eneq
We claim that $\pi_{1,j}\circ \Psi: M_n\to M_{r_j}$ is nonzero for each $1\le j\le k.$
Suppose to the contrary, that $\pi_{1,j}\circ \Psi=0$ for some $j.$  Then 
\beq
\pi_{1,j}\circ \Psi(e_{11})=0.
\eneq
Applying $\pi_{1,j}$ to \eqref{Prank-n59},  
we obtain 
that
\beq\label{Prank-24}
\|\pi_{1,j}(z_j^3)\|<\eta.
\eneq
Since $\pi_{1,j}(z_j)\ge 0,$ 
by \eqref{Prank-23}, we have 
\beq
\|\pi_{1,j}(z)\|\ge {\rm tr}_{r_j}(\pi_{1,j}(z))>{\mathfrak{\kappa}_{e_A}\over{2}}.
\eneq
Hence
\beq
\|\pi_{1,j}(z)^3\|=\|\pi_{1,j}(z)\|^3>{\mathfrak{\kappa}_{e_A}^3\over{8}}.
\eneq
Recall that $\eta={\mathfrak{\kappa}_{e_A}^3\over{16}}.$   This contradicts \eqref{Prank-24} proving the claim.
In other words, we now have that $\pi_{1,j}\circ \Psi: M_n\to M_{r_j}$ is a nonzero order zero \cpc. Hence 
\beq
r_j\ge n.
\eneq
But this holds for each $j.$ In other words
\beq
\min_j r_j\ge n.
\eneq

\end{proof}

\section{Traces}

The following folklore lemma follows from a standard weak*
compactness argument. See, for example, the
proof of \cite[Lemma 9.4]{LinJFA07} for the separable case 
and, for the nonunital argument ensuring
that the limiting trace is nonzero, the proof of
\cite[Proposition 8.6]{EGLNs} for the  argument in the non-unital case, ensuring
that the limiting trace is nonzero.

The following statement does not require
separability; we record it here for convenience.

\begin{lem}\label{LLtrace}
Let $A$ be a $C^*$-algebra, let ${\cal F}\subset A$ be a finite
subset, and let $e_A\in A_+\setminus\{0\}$ with $\|e_A\|\leq1$.
For every $\epsilon>0$ and every $0<\sigma\leq1$, there exist a
finite subset ${\cal G}\subset A$ and $\delta>0$ satisfying the
following condition:

If $\omega\in A^*$ is a positive linear functional such that
\[
\|\omega\|\leq1,
\qquad
\omega(e_A)\geq\sigma,
\tand\,
|\omega(xy)-\omega(yx)|<\delta
\quad\text{for all }x,y\in{\cal G},
\]
then there exists  a trace $\tau$ on $A$  such that
\[
\|\tau\|\leq1,
\qquad
\tau(e_A)\geq\sigma,
\tand
|\tau(x)-\omega(x)|<\epsilon
\quad\text{for all }x\in{\cal F}.
\]
If $A$ is unital and $\omega$ is a state, we may require that $\tau$ is a tracial state.
\end{lem}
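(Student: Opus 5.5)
We argue by contradiction, using weak$^*$ compactness; this is the standard argument from \cite[Lemma 9.4]{LinJFA07} and \cite[Proposition 8.6]{EGLNs}. Fix ${\cal F}$, $\epsilon>0$ and $0<\sigma\le 1$, and suppose no pair $({\cal G},\delta)$ works. Since $A$ need not be separable, we index by the directed set $\Lambda$ of pairs $\lambda=({\cal G},\delta)$, where ${\cal G}\subset A$ is finite and $\delta>0$. The order is $({\cal G},\delta)\le({\cal G}',\delta')$ iff ${\cal G}\subset{\cal G}'$ and $\delta'\le\delta$. For each $\lambda$ we choose a positive functional $\omega_\lambda$ with
\[
\|\omega_\lambda\|\le 1,\qquad \omega_\lambda(e_A)\ge\sigma,\qquad |\omega_\lambda(xy)-\omega_\lambda(yx)|<\delta\ \text{ for } x,y\in{\cal G},
\]
such that no trace $\tau$ with $\|\tau\|\le1$ and $\tau(e_A)\ge\sigma$ satisfies $|\tau(x)-\omega_\lambda(x)|<\epsilon$ for all $x\in{\cal F}$.

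The positive part of the unit ball of $A^*$ is weak$^*$ compact (Banach--Alaoglu), so a subnet $\omega_{\lambda_\mu}$ converges weak$^*$ to a positive functional $\tau$ with $\|\tau\|\le1$. Because $e_A$ is a fixed element, $\tau(e_A)=\lim\omega_{\lambda_\mu}(e_A)\ge\sigma$. In particular $\tau\neq0$, and this is exactly the non-unital point where the hypothesis $\omega(e_A)\ge\sigma$ is used: without it, mass could escape and the limit could vanish.

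To see that $\tau$ is tracial, fix $x,y\in A$ and $\eta>0$. Eventually $\lambda_\mu\ge(\{x,y\},\eta)$, so $|\omega_{\lambda_\mu}(xy)-\omega_{\lambda_\mu}(yx)|<\eta$. Passing to the limit gives $|\tau(xy)-\tau(yx)|\le\eta$, hence $\tau(xy)=\tau(yx)$. Weak$^*$ convergence on the finite set ${\cal F}$ also gives some $\mu$ with $|\tau(x)-\omega_{\lambda_\mu}(x)|<\epsilon$ for all $x\in{\cal F}$. This contradicts the choice of $\omega_{\lambda_\mu}$.

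For the unital addendum, we also impose that each $\omega_\lambda$ is a state. Then $\tau(1)=\lim\omega_{\lambda_\mu}(1)=1$, so $\tau$ is a tracial state.

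The main obstacle would be showing that the limiting trace is nonzero in the non-unital case. Here it is disposed of at once, because $e_A\in A$ and the lower bound $\sigma$ persists in the weak$^*$ limit. The only other care needed is using nets rather than sequences, since separability is not assumed.
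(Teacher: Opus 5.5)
Your proposal is correct and follows essentially the same argument as the paper: you argue by contradiction with a net indexed by (finite set, tolerance) pairs, use weak$^*$ compactness of the positive part of the unit ball of $A^*$, and pass $\omega(e_A)\ge\sigma$ and approximate traciality to the limit, with the unital case handled by $\tau(1)=\lim\omega_{\lambda_\mu}(1)=1$. The only cosmetic difference is that the paper indexes tolerances by $1/n$ instead of arbitrary $\delta>0$.
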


\begin{proof}
Suppose, to the contrary, that the conclusion does not hold. Let
$\Lambda$ be the directed set consisting of pairs
\[
\lambda=({\cal G},n),
\]
where ${\cal G}\subset A$ is finite and $n\in{\mathbb N}$. We order
$\Lambda$ by
\[
({\cal G}_1,n_1)\leq({\cal G}_2,n_2)
\]
if
\[
{\cal G}_1\subset{\cal G}_2
\quad\text{and}\quad
n_1\leq n_2.
\]

For every $\lambda=({\cal G},n)\in\Lambda$, there is a positive
linear functional $\omega_\lambda\in A^*$ such that
\beq\label{LLtrace-51}
\|\omega_\lambda\|\leq1,
\qquad
\omega_\lambda(e_A)\geq\sigma,
\andeqn
|\omega_\lambda(xy)-\omega_\lambda(yx)|<\frac1n
\quad\text{for all }x,y\in{\cal G},
\eneq
but there is no positive trace $\tau\in A^*$ satisfying
\beq\label{LLtrace-52}
\|\tau\|\leq1,
\qquad
\tau(e_A)\geq\sigma,
\tand
|\tau(x)-\omega_\lambda(x)|<\epsilon
\quad\text{for all }x\in{\cal F}.
\eneq

The positive part of the closed unit ball of $A^*$ is weak*
compact. 
By weak* compactness, there exist a directed set $\Lambda'$, a
cofinal order-preserving map
\[
\varphi\colon\Lambda'\longrightarrow\Lambda,
\]
and a positive linear functional $\omega\in A^*$ such that
\[
\omega_{\varphi(\mu)}\longrightarrow\omega
\]
in the weak* topology. 
Clearly,
$
|\omega\|\leq1.
$
Moreover, by \eqref{LLtrace-51},
\[
\omega(e_A)\geq\sigma.
\]

Write
\[
\varphi(\mu)=({\cal G}_\mu,n_\mu),
\qquad \mu\in\Lambda'.
\]
We claim that $\omega$ is a trace. Fix arbitrary $x,y\in A$ and
$\eta>0$. Choose $n_0\in{\mathbb N}$ such that
$
\frac1{n_0}<\eta.
$
Put
\[
\lambda_0=(\{x,y\},n_0)\in\Lambda.
\]
Since $\varphi$ is cofinal, there exists $\mu_0\in\Lambda'$ such that
\[
\varphi(\mu)\geq\lambda_0
\quad\text{whenever }\mu\geq\mu_0.
\]
Therefore, for every $\mu\geq\mu_0$,
\[
\{x,y\}\subset{\cal G}_\mu
\quad\text{and}\quad
n_\mu\geq n_0.
\]
It follows that
\[
\left|
\omega_{\varphi(\mu)}(xy)
-\omega_{\varphi(\mu)}(yx)
\right|
<
\frac1{n_\mu}
\leq\frac1{n_0}
<\eta.
\]
Passing to the limit gives
\[
|\omega(xy)-\omega(yx)|\leq\eta.
\]
Since $\eta>0$ was arbitrary,
\[
\omega(xy)=\omega(yx).
\]
Since $x,y\in A$ were arbitrary, $\omega$ is a trace.

By weak* convergence and the finiteness of ${\cal F}$, for all
sufficiently large $\mu\in\Lambda'$,
\beq\label{LLtrace-53}
|\omega(x)-\omega_{\varphi(\mu)}(x)|<\epsilon
\quad\text{for all }x\in{\cal F}.
\eneq
But $\omega$ is a positive trace satisfying
\[
\|\omega\|\leq1
\quad\text{and}\quad
\omega(e_A)\geq\sigma.
\]
Thus, taking $\tau=\omega$, \eqref{LLtrace-53} contradicts the choice of
$\omega_{\varphi(\mu)}$ in  \eqref{LLtrace-51} -- \eqref{LLtrace-52}. This proves the first part of the lemma.
The second part also follows as the state space of a unital \CA\, is compact.
\end{proof}

\begin{thm}\label{Ltrace}
Let $A$ be a separable algebraically simple \CA\, with property ETR1. 
Then $A$ has a faithful trace. Consequently, $A$ is stably finite.
\end{thm}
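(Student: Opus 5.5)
The plan is to produce tracial functionals on $A$ from the finite-dimensional algebras $D$ supplied by property ETR1. These functionals will be approximately tracial on larger and larger finite sets, and the constant $\kappa_{e_A}$ will keep them from degenerating. Lemma \ref{LLtrace} then upgrades them to a genuine trace $\tau$ with $\tau(e_A)\ge \sigma>0$. Faithfulness is automatic from simplicity: the ideal $\{x:\tau(x^*x)=0\}$ is a two-sided ideal not containing $e_A$, hence it is zero. Stable finiteness follows from faithfulness. A faithful densely defined trace extends to $A\otimes{\cal K}$, and for orthogonal nonzero $a,b\in{\rm Ped}(A\otimes{\cal K})_+$ we have $d_\tau(a+b)=d_\tau(a)+d_\tau(b)>d_\tau(a)$, since $d_\tau(b)>0$ and $d_\tau(a)<\infty$; this rules out $a+b\lesssim a$.

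Concretely, fix a finite set ${\cal G}\subset A^{\bf 1}$ and $\delta>0$. I apply ETR1 with ${\cal F}={\cal G}\cup\{e_A\}\cup\{xy,yx: x,y\in{\cal G}\}$, a small $\ep$, and any $a$, using Remark \ref{R1}(ii)--(iii). This gives $e$, $D$, $\theta$, $s=(1-e)^{1/2}$ with $\|[s,x]\|<\ep$, and $d_x\in D^{\bf 1}$ with $\|sxs-\theta(d_x)\|<\ep$ for $x\in{\cal F}$. Let $\pi_1:\tilde\theta(C)\to D$ be the evaluation at $1$ from Definition \ref{Dtheta}, and pick $t\in T(D)$. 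Set
$$
\omega(x)=t\bigl(\pi_1(\Theta(sxs))\bigr),
$$
where $\Theta:\overline{(1-e)A(1-e)}\to\tilde\theta(C)$ is a c.p.c. approximate identity map on the relevant finite set, as in Remark \ref{D1R3}. Extend $t\circ\pi_1$ by Hahn--Banach (or by Arveson extension to a state) to a positive functional $\rho$ of norm $\le 1$ on $\overline{(1-e)A(1-e)}$, and set $\omega(x)=\rho(sxs)$. Then $\omega$ is positive with $\|\omega\|\le1$. Moreover $\omega(e_A)\approx \rho(\theta(d_{e_A}))=t(d_{e_A})\ge\kappa_{e_A}$, so $\omega(e_A)\ge\kappa_{e_A}/2=:\sigma$.

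The main obstacle is approximate traciality, because $x\mapsto sxs$ is not multiplicative. One has $sxys\approx sx s^{-1}\cdots$, which is meaningless, so instead I compare $sxys$ with $(sxs)(sys)=sxs^2ys=sx(1-e)ys$. These differ by $sxeys$, which is not small in norm. The remedy is to apply $\rho$ and exploit that $e$ is "absorbed" via $\theta$. Note $\theta(d_x)\theta(d_y)$ has image $d_xd_y$ under $\pi_1$ (evaluation at $1$ is a homomorphism on $\tilde\theta(C)$, and $\pi_1(\theta(d))=d$). Hence $\rho(sxs\,sys)\approx t(d_xd_y)=t(d_yd_x)\approx\rho(sys\,sxs)$. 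So it remains to control $\rho(sxeys)$ against $\rho(syexs)$. For this I would use that $\rho$ restricted to $\tilde\theta(C)$ factors through evaluation at $1$, where "$s$ acts like $1$". More precisely, choose $\rho$ supported near the spectral value $1$ of $\theta(1_D)$: replace $t\circ\pi_1$ by $\rho(\cdot)=t(\pi_1(\cdot))$ composed with compression by $f_{1-\eta}(\theta(1_D))$. Since $\theta(1_D)\le 1-e$ gives $e\,\theta(1_D)$ "small where $\theta(1_D)$ is near $1$", i.e. $\|e^{1/2}f(\theta(1_D))\|$ is small for $f$ supported near $1$, an estimate of the form $\|e\,g\|\le \sqrt{1-\lambda}$ on the spectral subspace where $\theta(1_D)\ge \lambda$ makes the cross term $\rho(sxeys)$ small. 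Together with the commutator bounds this yields $|\omega(xy)-\omega(yx)|<\delta$ once $\ep,\eta$ are small. Lemma \ref{LLtrace} with the fixed $\sigma=\kappa_{e_A}/2$ then gives the trace, and the argument of the first paragraph completes the proof.
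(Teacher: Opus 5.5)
Your overall architecture matches the paper's: reduce to Lemma \ref{LLtrace}, use the trace $t\circ\pi_1$ on $\widetilde\theta(C_0((0,1])\otimes D)$, extend it positively, and get the lower bound from $\kappa_{e_A}$. Your Hahn--Banach extension $\rho$ is a harmless substitute for the paper's $\lambda\circ\Psi_0$, and your final paragraph on faithfulness and stable finiteness is fine. But the step that is supposed to give approximate traciality has a genuine gap.

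You compress by $s=(1-e)^{1/2}$. This produces the cross term $\rho(sxeys)$, and your control of it rests on ``$\theta(1_D)\le 1-e$''. Property ETR1 gives only $\theta(D)\subset\overline{(1-e)A(1-e)}$, which says nothing of the kind: if $1-e$ is invertible, that hereditary subalgebra is all of $A$. Worse, in the unital case with $e_A=1_A$, positivity gives $\theta(1_D)\ge\theta(d_{e_A})\approx 1-e$. So $\theta(1_D)\lesssim 1-e$ would force $\|1_D-d_{e_A}\|$ to be small, whereas ETR1 only controls $t(d_{e_A})\ge\kappa_{e_A}$. Hence $e$ need not be small on the spectral region where $\theta(1_D)\approx 1$.

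The spectral localization itself adds nothing: any state $\rho$ extending $t\circ\pi_1$ already satisfies $\rho(z)=\rho(gzg)$ for $g=f(\theta(1_D))$ with $0\le f\le 1$, $f(1)=1$. And since $\rho$ is tracial only on $\widetilde\theta(C)$, while $sxeys$ need not be close to that algebra, nothing else controls $\rho(sxeys)-\rho(syexs)$.

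The missing idea is to compress by $s^2=1-e$ rather than by $s$. Set $\omega(x)=\rho((1-e)x(1-e))$. Then
\[
(1-e)xy(1-e)-(sxs)(sys)=s\bigl([s,x]ys+xs[y,s]\bigr)s,
\]
which is small by the commutator estimates from ETR1. Hence $\omega(xy)\approx\rho(\theta(d_x)\theta(d_y))=t(d_xd_y)$, which is symmetric in $x,y$, and no cross term ever appears.

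For the normalization, use $e_A\ge e_A^2$:
\[
\omega(e_A)\ge\omega(e_A^2)\approx t(d_{e_A}^2)\ge t(d_{e_A})^2\ge\kappa_{e_A}^2 .
\]
So Lemma \ref{LLtrace} applies with a lower bound of order $\kappa_{e_A}^2$ in place of $\kappa_{e_A}/2$. This is exactly the paper's route, namely $F(x)=\lambda(\Psi_0(s^2xs^2))$.
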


\begin{proof}
We assume that $A$ satisfies property ETR1 with a strictly positive element $e_A$ such that 
$\|e_A\|=1$ and 
$1\ge \mathfrak{\kappa}_{e_A}>0.$ 

Let 
\beq
\sigma:=\mathfrak{\kappa}_{e_A}.
\eneq

By Lemma \ref{LLtrace}, it suffices to prove the following:

 For  every finite subset ${\cal F}\subset A$ and every $1>\eta>0,$ there is a state $\omega$ on $A$ such that
\beq\label{Trace-L-1}
&&|\omega(xy)-\omega(yx)|<\eta \rforal x, y\in {\cal F},\andeqn\\\label{Trace-L-2}
&&\omega(e_A)\ge {\sigma^2\over{4}}.
\eneq

We may assume that ${\cal F}\subset A^{\bf 1}$ and $x\in {\cal F}$ implies that $x^*\in {\cal F}$ as well as 
$e_A\in {\cal F}.$ 

Choose a number $0<\gamma<{\eta\cdot \sigma^2\over{20}}.$
Put $\dt=\gamma/24.$ 

Set 
$$
{\cal G}={\cal F}\cup\{xy: x, y\in {\cal F}\}.
$$
Now since $A$ has property ETR1, we obtain an element $0\le e\le 1,$ a finite-dimensional \CA\, $D,$ and a
fully normalized 
order zero  \cpc\, $\theta: D\to A.$ With $s:=(1-e)^{1/2}$ and $\tilde \theta: C_0((0,1])\otimes D\to A$ induced by $\theta,$ $C:=\tilde \theta(C_0((0,1])\otimes D),$ $B=\overline{(1-e)A(1-e)},$ 
define a \cpc\, $L: A\to B$ by $L(x)=sxs$ (for $x\in A$). We assume that
\beq
&&\|[e,\, x]\|<\dt\rforal x\in {\cal G};\\
&&\|[s,\, x]\|<\dt \rforal x\in {\cal G};\\\label{Ttrace-9}
&&{\rm dist}(L(x), \theta(D))<\dt\rforal x\in {\cal G},
\eneq
We may also assume that, for all $x, y\in {\cal F},$
\beq\label{Ttrace-4}
\|s^2xys^2-(sxs)(sys)\|<\dt\andeqn 
\|s^2e_As^2-(se_As)^2\|<\dt.
\eneq

Furthermore,  there is $0\le d_e\le 1$ in $D$ such that, with $c_e=\widetilde\theta(\iota\otimes d_e)=\theta(d_e),$ 
\beq\label{Ttrace-58}
\|L(e_A)-c_e\|<\dt\andeqn
(\tau\circ \pi_1)(c_e)\ge \sigma \rforal \tau\in T(D).
\eneq
Fix $\tau\in T(D)$ and  put $\lambda=\tau\circ \pi_1.$ Then $\lambda$ is a tracial state on $C$ and 
\beq\label{Ttrace-13}
\lambda(c_e)\ge \sigma.
\eneq

Let 
\beq
{\cal E}=\{L(x), L(xy), s^2xys^2: x, y\in {\cal F}\}\cup\{L(e_A), L(e_A)^2, s^2e_As^2\}\subset B.
\eneq
Note  that ${\cal G}\subset A^{\bf 1}$ and  ${\cal E}\subset B^{\bf 1}.$ 
By \eqref{Ttrace-9},  for each $x\in {\cal G},$ choose $c_x\in \theta(D),$
such that 
\beq
\|L(x)-c_x\|<\dt.
\eneq
Since $\|c_x\|\le 1+\dt$ for all $x\in {\cal G},$
we have, for all $x,y\in  {\cal G},$ 
\beq
{\rm dist}(L(x)L(y), C) &\le& \|L(x)L(y)-c_xc_y\|\\
     &\le &\|L(x)-c_x\|\|L(y)\|+\|c_x\|\|L(y)-c_y\|\\
     &<&\dt+(1+\dt)\dt<3\dt
\eneq
By  \eqref{Ttrace-4},
\beq
\|s^2xys^2-L(x)L(y)\|<\dt.
\eneq
Hence 
\beq
{\rm dist}(s^2xys^2, C)<\dt+3\dt=4\dt\rforal x, y\in {\cal F}.
\eneq
Similarly,
\beq
{\rm dist}(L(e_A)^2, C)<3\dt,
\eneq
and by (the second part of) \eqref{Ttrace-4},
\beq
{\rm dist}(s^2e_As^2, C)<4\dt.
\eneq
For each $z\in {\cal E},$ choose $c_z\in C$ such that 
\beq
\|z-c_z\|<4\dt<\gamma/2,
\eneq
and put ${\cal E}'=\{c_z: z\in {\cal E}\}\subset C.$

Now since $C$ is amenable, by \cite[Theorem 2.3.13]{LinBook},  we obtain a \cpc\, 
$\Psi_0: B\to C$ such that
\beq
\|\Psi_0(c)-c\|<\gamma/2\rforal c\in {\cal E}'.
\eneq
It follows that, for all $z\in {\cal E},$  
\beq\label{Ttrace-20}
\|\Psi_0(z)-z\|\le \|\Psi_0(z-c_z)\|+\|\Psi_0(c_z)-c_z\|+\|c_z-z\|<8\dt+\gamma/2<\gamma.
\eneq

Now define 
\beq
F(x)=\lambda(\Psi_0(s^2xs^2))\rforal x\in A.
\eneq
So $F$ is a positive linear functional with $\|F\|\le 1.$

Fix $x, y\in {\cal F}.$
Using \eqref{Ttrace-20}, \eqref{Ttrace-4} and \eqref{Ttrace-20} again,
we obtain
\beq\nonumber
|F(xy)-\lambda(\Psi_0(L(x))\Psi_0(L(y)))|&\le &
\|\Psi_0(s^2xys^2)-s^2xys^2\|+\|s^2xys^2-L(x)L(y)\|\\\nonumber
&&+\|L(x)L(y)-\Psi_0(L(x))\Psi_0(L(y))\|\\\nonumber
&<& \gamma+\gamma+\|L(x)\|\|L(y)-\Psi_0(L(y))\|\\\nonumber
&&+\|L(x)-\Psi_0(L(x))\|\|\Psi_0(L(y))\|\\\label{52-26}
&<&4\gamma.
\eneq
Similarly,
\beq
|F(yx)-\lambda(\Psi_0(L(y)\Psi_0(L(x))|<4\gamma.
\eneq
Since $\lambda$ is a trace on $C,$ 
we have, for all $x, y\in {\cal F},$
\beq
\lambda(\Psi_0(L(x))\Psi_0(L(y)))=\lambda(\Psi_0(L(y))\Psi_0(L(x))).
\eneq
Consequently
\beq\label{Ttrace-25}
|F(xy)-F(yx)|<8\gamma\rforal x, y\in {\cal F}.
\eneq

Since $s^2e_As^2\in {\cal E},$  by \eqref{Ttrace-20}, we have 
\beq\label{Ttrace-102}
\|\Psi_0(s^2e_As^2)-s^2e_As^2\|<\gamma.
\eneq
We also have 
\beq\label{Ttrace-103}
\|L(e_A)^2-\Psi_0(L(e_A))^2\|\le 2\|L(e_A)-\Psi_0(L(e_A))\|<2\gamma.
\eneq
Combining   \eqref{Ttrace-102},\eqref{Ttrace-103} and \eqref{Ttrace-4}, we get
\beq
|F(e_A)-\lambda(\Psi_0(L(e_A))^2)|&=&|\lambda(\Psi_0(s^2e_As^2))-\lambda(\Psi_0(L(e_A))^2)|\\
             &\le &\|\Psi_0(s^2e_As^2)-s^2e_As^2\| +\|s^2e_As^2-L(e_A)^2\|\\
             &&+\|L(e_A)^2-\Psi_0(L(e_A))^2\|\\\label{Ttrace-104}
             &<&\gamma+\gamma+2\gamma=4\gamma.
\eneq
By  \eqref{Ttrace-20} and  \eqref{Ttrace-58},  we have 
\beq
\|\Psi_0(L(e_A))-c_e\|\le \|\Psi_0(L(e_A))-L(e_A)\|+\|L(e_A)-c_e\|<\gamma+\dt<7\gamma/6.
\eneq
We then obtain
\beq\label{Ttrace-106}
\|\Psi_0(L(e_A))^2-c_e^2\|<2(7\gamma/6)=7\gamma/3.
\eneq
Thus, from  \eqref{Ttrace-104} and \eqref{Ttrace-106},
\beq\label{Ttrace-29}
F(e_A)>\lambda(\Psi_0(L(e_A))^2)-4\gamma>\lambda(c_e^2)-4\gamma-7\gamma/3.
\eneq
By  the Cauchy-Bunyakovsky-Schwarz inequality (and since   $\|\lambda\|=1$)
\beq
|\lambda(c_e)|^2=|\lambda(c_e \cdot 1)|^2\le \|\lambda\|\lambda(c_e^2)=\lambda(c_e^2).
\eneq
Hence, by \eqref{Ttrace-13}
\beq
\lambda(c_e^2)\ge \lambda(c_e)^2\ge \sigma^2.
\eneq
Therefore, by \eqref{Ttrace-29} (and $\gamma<\eta\cdot \sigma^2/20$)
\beq
F(e_A)>\sigma^2-6\gamma-\gamma/3>\sigma^2/2.
\eneq
Hence 
\beq\label{Ttrace-31}
\|F\|\ge F(e_A)>{\sigma^2\over{2}}.
\eneq
Define $\omega=F/\|F\|.$ Then $\omega$ is a state. From \eqref{Ttrace-25} and \eqref{Ttrace-31},
we have, for all $x,y\in {\cal F},$  
\beq
|\omega(xy)-\omega(yx)|<{16\gamma\over{\sigma^2}}<{{16\eta\cdot \sigma^2\over{20}}\over{\sigma^2}}<\eta.
\eneq
Finally, since $\|F\|\le 1,$
\beq\label{Ttrace-100}
\omega(e_A)={F(e_A)\over{\|F\|}}\ge F(e_A)>{\sigma^2\over{2}}.
\eneq
This proves \eqref{Trace-L-1}  and \eqref{Trace-L-2}. Applying  Lemma \ref{LLtrace} (with $\sigma^2/4$ 
in place of $\sigma$), we obtain a nonzero positive trace on $A.$ Since $A$ is algebraically simple, 
every nonzero trace on $A$ is faithful. 
Since $A$ has a faithful trace, it is stably finite. This completes the proof.
\end{proof}

\begin{cor}\label{R|x|>}
Let $A$ be a non-elementary separable algebraically simple \CA\, with property ETR1. Then 
in Definition \ref{D1},  we may also require that 
\beq
\|(1-e)^{1/2}x(1-e)^{1/2}\|\ge \|x\|-\ep \tforal  x\in {\cal F}.
\eneq 

\end{cor}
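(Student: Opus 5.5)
The plan is to combine Theorem \ref{Ltrace} with Lemma \ref{Pnot=0} and Remark \ref{Rs10}. Theorem \ref{Ltrace} shows that $A$ is stably finite, since it has a faithful trace. So Lemma \ref{Pnot=0} applies: $A$ is non-elementary, algebraically simple and stably finite.

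Fix $\ep>0$, a finite subset ${\cal F}\ni e_A$ and $a\in A_+\setminus\{0\}$. Without loss of generality ${\cal F}\subset A^{\bf 1}$ after scaling; for $x$ with $\|x\|\le \ep$ there is nothing to prove. Apply Lemma \ref{Pnot=0} with $\ep/4$, the finite set ${\cal F}':={\cal F}\cup\{x^*:x\in{\cal F}\}$, and $s=a$. This gives $a'\in \Her(a)_+\setminus\{0\}$ such that every $0\le e\le 1$ with $e\lesssim a'$ satisfies
\[
\|(1-e)x\|\ge \|x\|-\ep/4 \quad\text{for all } x\in{\cal F}'.
\]
Now apply property ETR1 with a small $\dt$ (to be fixed), the set ${\cal F}'$, and $a'$ in place of $a$. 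We obtain $e$, $D$, $\theta$ and $d_e$ satisfying (1)--(3) of Definition \ref{D1}, together with (2') of Remark \ref{R1}. Since $e\lesssim a'\lesssim a$, condition (3) still holds for $a$.

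The key estimate converts $\|(1-e)x\|$ into a bound on the symmetric compression. Put $s=(1-e)^{1/2}$. Using $\|[s,x]\|<\dt$ for $x\in{\cal F}'$, we have $s^2x\approx_{\dt} sxs$. Hence
\[
\|sxs\|\ge \|(1-e)x\|-\dt\ge \|x\|-\ep/4-\dt .
\]
Choosing $\dt<\min\{\ep/4,\ \text{the }\ep\text{ needed for (1),(2)}\}$ gives $\|sxs\|\ge\|x\|-\ep$ for all $x\in{\cal F}$. The remaining requirements of Definition \ref{D1}, including the trace bound on $d_e$, are untouched because only the input data were shrunk.

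The one point needing care is the hypothesis of Lemma \ref{Pnot=0}, which asks for algebraic simplicity and stable finiteness. The first is assumed, and the second is exactly the consequence of Theorem \ref{Ltrace}; separability is used only there. There is no real obstacle beyond the order of quantifiers: $a'$ must be chosen before invoking ETR1, so that it depends only on ${\cal F}$, $\ep$ and $a$. That is how Lemma \ref{Pnot=0} is formulated.
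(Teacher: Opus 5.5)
Your proposal is correct and follows the paper's proof. As there, you get stable finiteness from Theorem \ref{Ltrace}, use Lemma \ref{Pnot=0} to fix the comparison element before invoking ETR1, and then apply the estimate $\|(1-e)x-(1-e)^{1/2}x(1-e)^{1/2}\|\le\|[(1-e)^{1/2},x]\|$ implicit in Remark \ref{Rs10}; the preliminary rescaling of ${\cal F}$ into $A^{\bf 1}$ is unnecessary, since neither Lemma \ref{Pnot=0} nor that commutator estimate uses $\|x\|\le 1$.
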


\begin{proof}
By Theorem \ref{Ltrace}, 
$A$ is stably finite. Then the conclusion follows from Lemma \ref{Pnot=0}  and Remark \ref{Rs10} (using $\ep/2$).
\end{proof}

\section{Hereditary \SCA s}

\begin{lem}\label{L1n}
Let $0<r<1$ and $n\ge 2.$  Then, for any \CA\, $D$ and $d\in D_+^{\bf 1},$
\beq
f_r(\iota \otimes f_r(d))\in \Her(f_r((\iota\otimes d)^{1/n}))
\eneq
in $C:=C_0((0,1])\otimes D.$
\end{lem}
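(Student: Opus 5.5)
Both elements lie in the commutative $C^*$-algebra generated by $\iota\otimes d$ inside $C=C_0((0,1])\otimes D$, so I reduce the statement to a support condition on functions. Put $h=\iota\otimes d\in C_+^{\bf 1}$. Then $h$ is identified with the function $t\mapsto td$, i.e.\ $h(t)=t\,d$ for $t\in(0,1]$. Since $d\in D_+^{\bf 1}$, for each fixed $t$ all three elements $\iota\otimes f_r(d)$ (evaluated at $t$ this is $t f_r(d)$), $f_r(h^{1/n})$ (evaluated at $t$ this is $f_r(t^{1/n}d^{1/n})$) and $f_r(\iota\otimes f_r(d))$ (evaluated at $t$ this is $f_r(tf_r(d))$) lie in $C^*(d)\cong C_0(\mathrm{sp}(d)\setminus\{0\})$. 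Thus everything lives in the commutative algebra $C_0(Y)$ with $Y=((0,1]\times \mathrm{sp}(d))\setminus\{\text{points with } \lambda=0\}$, where we use coordinates $(t,\lambda)$. In that algebra, the first element is the function $a(t,\lambda)=f_r(t f_r(\lambda))$ and the second is $b(t,\lambda)=f_r(t^{1/n}\lambda^{1/n})$.

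In a commutative $C^*$-algebra, an element $a\ge0$ belongs to $\Her(b)$ whenever $a=bab$-type absorption holds. A convenient sufficient condition is $g(b)a=a$ for some continuous $g$ with $g(0)=0$. Since $a\in \overline{bC_0(Y)b}$, this gives $a\in\Her(b)$ inside $C^*(h)$, and hence inside $C$. It therefore suffices to show that on the support of $a$ the value of $b$ is bounded below by a positive constant; one could even show $b=1$ there.

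The key computation goes as follows. Suppose $a(t,\lambda)\neq 0$. Then $tf_r(\lambda)>r/2$. Since $t\le1$, this forces $f_r(\lambda)>r/2>0$, hence $\lambda>r/2$. It also forces $t>r/2$ (because $f_r\le 1$). Consequently $t\lambda>r^2/4$, and so $(t\lambda)^{1/n}>(r^2/4)^{1/n}$. This lower bound is positive, but it need not exceed $r$. I expect this to be the main obstacle: we need $b>0$, not $b=1$, and that is exactly what the cruder criterion requires. So I use the following instead. On the closed set where $a\neq0$ (more precisely on $\overline{\{a>0\}}$, where $t\lambda\ge r^2/4$ by continuity), $b$ is bounded below by $\beta:=f_r((r^2/4)^{1/n})$. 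This $\beta$ is positive provided $(r^2/4)^{1/n}>r/2$, i.e.\ $(r/2)^{2}>(r/2)^n$, which holds when $n\ge2$ and $r<1$ as long as $n>2$. For $n=2$ we have equality, and in that case I refine the estimate using the strict inequalities $t>r/2$ and $f_r(\lambda)>r/2$, together with $\lambda\ge f_r(\lambda)$ wherever the latter is less than $1$ (and $\lambda>r$ when $f_r(\lambda)=1$). This pushes $t\lambda$ strictly above $r^2/4$ on a compact piece, which suffices.

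The conclusion is then standard. Since $a$ vanishes off a closed subset of $Y$ on which $b\ge\beta>0$, the function $a/b$ (set to $0$ off the support of $a$) is continuous and vanishes at infinity. Hence $a=b\cdot(a/b)^{1/2}(a/b)^{1/2}\cdot b^{0}$; more cleanly, $a=b^{1/2}\,(a/b)\,b^{1/2}\in\overline{bCb}=\Her(b)$. Finally, hereditary subalgebras of $C^*(h)$ generated by $b$ are contained in $\Her(b)$ computed in $C$, which gives the statement of Lemma \ref{L1n}.
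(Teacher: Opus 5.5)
There is a gap in the case $n=2$, which you correctly flag as the obstacle but then handle with a false inequality. Your overall strategy is sound: pass to the commutative algebra $C_0((0,1])\otimes C^*(d)\cong C_0(Y)$ and compare supports. For $n\ge 3$ your argument is complete. (A small correction: $\iota\otimes f_r(d)$, i.e.\ $t\mapsto tf_r(d)$, is not of the form $t\mapsto g(td)$, so it does not lie in $C^*(\iota\otimes d)$. The algebra $C_0(Y)$ you switch to is the right one.)

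The false step is the claim that $\lambda\ge f_r(\lambda)$ whenever $f_r(\lambda)<1$. On $(r/2,r)$ one has $f_r(\lambda)=2\lambda/r-1$, which exceeds $\lambda$ as soon as $\lambda>r/(2-r)$. For example, $r=1/2$ and $\lambda=0.45$ give $f_r(\lambda)=0.8$. So your remark that this ``pushes $t\lambda$ strictly above $r^2/4$ on a compact piece'' is unsupported as written. With only $t\ge r/2$ and $\lambda\ge r/2$ on $\overline{\{a>0\}}$, you get $b\ge f_r(r/2)=0$, which is no uniform lower bound at all.

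Either of two fixes closes the gap.
\begin{enumerate}
\item Use the correct inequality $\lambda\ge r f_r(\lambda)$ for $\lambda\in[0,1]$ (Remark \ref{Lrlambda}). On $\overline{\{a>0\}}$ it gives $t\lambda\ge r\,tf_r(\lambda)\ge r^2/2$, hence $(t\lambda)^{1/n}\ge (t\lambda)^{1/2}\ge r/\sqrt2>r/2$. Therefore $b\ge f_r(r/\sqrt2)>0$ there, for every $n\ge 2$ at once, with no case split. Your final step $a=b^{1/2}(a/b)b^{1/2}$ then goes through.
\item Drop the uniform lower bound, which is not needed. In $C_0(Y)$, $\overline{bC_0(Y)b}$ is exactly the set of functions vanishing on $\{b=0\}$, so the pointwise inclusion $\{a>0\}\subset\{b>0\}$ suffices. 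This inclusion already follows from your own strict inequality $t\lambda>r^2/4$ on $\{a>0\}$, since $(t\lambda)^{1/n}\ge(t\lambda)^{1/2}>r/2$.
\end{enumerate}
The paper's proof is exactly the second, support-inclusion argument, using $\lambda\ge rf_r(\lambda)$ to get $t\lambda>r^2/2$.
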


\begin{proof}
Consider the commutative \SCA\, generated by $\iota\otimes 1$ and $1\otimes d.$
At the scalar point $(t, \lambda)\in [0,1]\times {\rm sp}(d),$ set 
$$
y(t, \lambda)=f_r(tf_r(\lambda))\andeqn z_n(t,\lambda)=f_r((t\lambda)^{1/n}).
$$
Recall that (see Remark \ref{Lrlambda}), for $\lambda\in [0,1],$ 
\beq
\lambda \ge r f_r(\lambda).
\eneq
Suppose that $y(t, \lambda)>0.$ Since $f_r(s)>0$ precisely when $s>r/2,$ this implies 
\beq
tf_r(\lambda)>r/2.
\eneq
Consequently,
\beq
t\lambda\ge rtf(\lambda)>{r^2\over{2}}.
\eneq
For $n\ge 2,$
\beq
(t\lambda)^{1/n}\ge (t\lambda)^{1/2}>{r\over{\sqrt{2}}}>{r\over{2}}.
\eneq
It follows that  $f_r((t\lambda)^{1/n})>0.$ Thus
\beq
\{y>0\}\subset \{z_n>0\}.
\eneq
Since $y$ and $z_n$ commute, this support inclusion implies that
\beq
y\in \overline{z_nCz_n}=\Her(z_n).
\eneq
Hence, for all $n\ge 2,$
\beq
f_r(\iota \otimes f_r(d))\in \Her(f_r((\iota\otimes d)^{1/n})).
\eneq
\end{proof}

\begin{lem}\label{Ldaxiao}
Let $A$ be a \CA, let $0<r<1$ be a given positive number,   and let $C\subset A$ be a \SCA.
Suppose that 
\beq
0\le x\le y\le 1 \,\,{\rm in}\,\, A \andeqn 0\le c_x, c_y\le  1 \,\,{\rm in}\,\, C,\\
\|c_x-x\|<r\andeqn \|c_y-y\|<r.
\eneq
Then there is $c_x', c_y'\in C$ such that
\beq
&&0\le c_x'\le c_y\andeqn \|c_x'-x\|<2\sqrt{2r}+r;\\
&&0\le c_x\le c_y'\andeqn \|c_y'-y\|<3r.
\eneq
\end{lem}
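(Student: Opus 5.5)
The plan is to work in the unitization $\widetilde C\subset\widetilde A$ and use only the operator inequalities coming from the hypotheses. Put $r_1=\|c_x-x\|<r$ and $r_2=\|c_y-y\|<r$. Since $x\le y$, we have
\[
c_x\le x+r_1\le y+r_1\le c_y+(r_1+r_2),\qquad c_x-c_y\le r_1+r_2<2r .
\]
Both constructions below rest on these two inequalities.

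\emph{The element $c_y'$.} Set $c_y'=c_y+(c_x-c_y)_+\in C$. Then $c_y'\ge 0$, and
\[
c_y'\ge c_y+(c_x-c_y)=c_x .
\]
The self-adjoint element $h_0=c_x-c_y$ satisfies $h_0\le (r_1+r_2)1$, so $\|(h_0)_+\|\le r_1+r_2$. Hence
\[
\|c_y'-y\|\le\|c_y-y\|+\|(c_x-c_y)_+\|\le r_2+r_1+r_2<3r .
\]

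\emph{The element $c_x'$.} Let $\rho=r_1+r_2<2r$ and $h=c_y+\rho 1\in\widetilde C$, which is invertible. Put
\[
u=c_x^{1/2}h^{-1/2}\in\widetilde C,\qquad V=u^*u,\qquad c_x'=c_y^{1/2}\,V\,c_y^{1/2}.
\]
Then $c_x'\in C$, because $c_y^{1/2}\in C$ and $C$ is an ideal in $\widetilde C$. From $c_x\le h$ we get $V=h^{-1/2}c_xh^{-1/2}\le 1$, so $0\le c_x'\le c_y$. Moreover $c_x=h^{1/2}Vh^{1/2}$, which gives
\[
c_x-c_x'=(h^{1/2}-c_y^{1/2})\,V h^{1/2}+c_y^{1/2}\,V\,(h^{1/2}-c_y^{1/2}).
\]
Two bounds control this difference.

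\begin{itemize}
\item $\|Vh^{1/2}\|\le\|u^*\|\,\|uh^{1/2}\|=\|u\|\,\|c_x^{1/2}\|\le 1$, and likewise $\|c_y^{1/2}V\|\le 1$.
\item $\|h^{1/2}-c_y^{1/2}\|\le\rho^{1/2}$. This follows from the scalar inequality $(s+\rho)^{1/2}-s^{1/2}\le\rho^{1/2}$ by functional calculus, as in the proof of Lemma \ref{Lsqrt}.
\end{itemize}

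Therefore
\[
\|c_x'-x\|\le\|c_x-x\|+\|c_x-c_x'\|<r+2\sqrt{\rho}<r+2\sqrt{2r}.
\]

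The only delicate point is the estimate for $c_x'$. A naive bound on the second factor, namely $\|h^{1/2}\|\le(1+2r)^{1/2}$, slightly overshoots the constant $2\sqrt{2r}$. Regrouping the difference so that $h^{1/2}$ only appears in the combination $uh^{1/2}=c_x^{1/2}$, which has norm at most one, avoids this.
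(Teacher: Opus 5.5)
Your proof is correct and is essentially the paper's: your $c_x'=c_y^{1/2}Vc_y^{1/2}$ is exactly the paper's $sc_xs$ with $s=c_y^{1/2}(c_y+\delta 1)^{-1/2}$. Your two-term decomposition is the paper's $(1-s)c_x+sc_x(1-s)$ rewritten via $(1-s)h^{1/2}=h^{1/2}-c_y^{1/2}$, and your $c_y'=c_y+(c_x-c_y)_+$ is the same as the paper's.

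There is one small gap: in the degenerate case $\rho=r_1+r_2=0$, $h=c_y$ need not be invertible. The paper handles this case separately (then $c_x'=c_x=x$ and $c_y'=c_y=y$ work). Alternatively, you can take any $\rho$ with $r_1+r_2\le\rho<2r$ and $\rho>0$, and all your estimates go through unchanged.
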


\begin{proof}
Note in $\tilde A,$ we have 
\beq\label{L62-613}
c_x\le x+r 1\le y+r 1\le c_y+2r 1.
\eneq
If $c_x=x$ and $c_y=y,$  then the conclusion is immediate. Otherwise 
set $\dt=\|c_x-x\|+\|c_y-y\|>0.$  Note that  $\dt<2r.$ Then 
\beq\label{Ldashao-5}
0\le c_x\le c_y+\dt 1.
\eneq
Put
$$
s=c_y^{1/2}(c_y+\dt 1)^{-1/2}\in C\andeqn c_x'= sc_xs=c_y^{1/2}(c_y+\dt 1)^{-1/2} c_x(c_y+\dt 1)^{-1/2} c_y^{1/2}.
$$
We will show this $c_x'$ works.
Put 
$$
e=(c_y+\dt 1)^{-1/2} c_x(c_y+\dt 1)^{-1/2}.
$$
Since $c_x\le c_y+\dt 1,$ we also have $0\le e\le 1.$
It follows that
\beq
c_x'=c_y^{1/2}ec_y^{1/2}\le c_y.
\eneq
So now we have $c_x'\in C,$ $0\le c_x'\le c_y\le 1.$

Since 
\beq
c_x-sc_xs=(1-s)c_x+sc_x(1-s),
\eneq
Put $v= (c_y+\delta 1)^{-1/2}c_x^{1/2}.$ Then,
by \eqref{Ldashao-5}, 
$\|(c_y+\delta 1)^{-1/2}c_x^{1/2}\|\le 1.$
By functional calculus,
$$\|(1-s)(c_y+\delta 1)^{1/2}\|=\sup_{t\geq 0}(\sqrt{t+\delta}-\sqrt{t})\leq\sqrt{\delta}.$$
Therefore,  
$$\|(1-s)c_x\|=\|(1-s)(c_y+\delta 1)^{1/2}vc_x^{1/2}\| \le \|(1-s)(c_y+\delta 1)^{1/2}\|\leq\sqrt{\delta}.$$
Moreover (as $\|s\|\le 1$),
$$
\|sc_x(1-s)\|\leq \|c_x(1-s)\|=\|(1-s)c_x\|\leq\sqrt{\delta}.
$$
Consequently, 
$$
\|c_x-c_x'\|\leq\|(1-s)c_x\|+\|sc_x(1-s)\|\leq 2\sqrt{\delta}<2\sqrt{2r}.
$$
Therefore,
\[
\|c_x'-x\|
\leq \|c_x'-c_x\|+\|c_x-x\|
<2\sqrt{2r}+r.
\]

Next,  put
\[
c_y'=c_y+(c_x-c_y)_+\in C_+.
\]
By \eqref{L62-613}
\[
c_x-c_y\leq  2r 1,
\]
and hence
\[
\|(c_x-c_y)_+\|\leq 2r.
\]
Moreover,
\[
c_y'-c_x
=(c_x-c_y)_+-(c_x-c_y)
=(c_x-c_y)_-\geq0.
\]
Thus $0\leq c_x\leq c_y'.$ Finally,
\[
\begin{split}
\|c_y'-y\|
&\leq \|c_y-y\|+\|(c_x-c_y)_+\|\\
&\leq \|c_y-y\|+2r<3r.
\end{split}
\]
\end{proof}

\begin{lem}\label{Lfc=c}
Let $D$ be a \CA\, and $0<r<1.$ 
Suppose that $h\in D,$ $d\in D_+^{\bf 1},$   and  $dh=h=hd.$
Then, with $c=g_{2r}\otimes h\in C_0((0,1], D),$ 
\beq
f_r(\iota\otimes d)c=c=cf_r(\iota\otimes d).
\eneq
\end{lem}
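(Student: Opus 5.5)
The plan is to verify the identity pointwise, viewing everything inside the algebra $C_0((0,1],D)$ of functions on $(0,1]$ with values in $D$ (working in $\widetilde D$ if $D$ is non-unital). Under the identification $C_0((0,1])\otimes D\cong C_0((0,1],D)$, the element $\iota\otimes d$ is the function $t\mapsto td$, and $c=g_{2r}\otimes h$ is the function $t\mapsto g_{2r}(t)h$. Continuous functional calculus commutes with point evaluations, so $f_r(\iota\otimes d)$ is the function $t\mapsto f_r(td)$. It therefore suffices to prove, for each fixed $t\in(0,1]$,
\[
f_r(td)\,g_{2r}(t)h=g_{2r}(t)h=g_{2r}(t)h\,f_r(td).
\]

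We split into two cases according to the value of $t$. If $t\le r$, then $g_{2r}(t)=0$, because $g_{2r}$ vanishes on $[0,r]$. Both sides are then zero and there is nothing to prove.

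Now suppose $t>r$, so that $g_{2r}(t)$ may be nonzero. From $dh=h$ and $hd=h$ we get $p(d)h=p(1)h$ and $hp(d)=p(1)h$ for every polynomial $p$ with $p(0)=0$. By Weierstrass approximation on ${\rm sp}(d)\subset[0,1]$, the same holds for every $f\in C([0,1])$ with $f(0)=0$, that is, $f(d)h=f(1)h=hf(d)$. Apply this to $f(\lambda)=f_r(t\lambda)$, which vanishes at $0$. This gives $f_r(td)h=f_r(t)h=hf_r(td)$. Since $t>r$, we have $f_r(t)=1$. Multiplying by the scalar $g_{2r}(t)$ gives the displayed identity at $t$.

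The two cases together establish the identity for every $t\in(0,1]$, which proves the lemma. I expect no real obstacle here. The only points needing care are the identification of $f_r(\iota\otimes d)$ with the pointwise functional calculus $t\mapsto f_r(td)$, and the passage from $dh=h$ to $f(d)h=f(1)h$ for continuous $f$ vanishing at $0$. The key numerical fact is that the support of $g_{2r}$, namely $(r,1]$, lies inside the region $[r,1]$ where $f_r\equiv1$.
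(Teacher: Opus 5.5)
Your proof is correct and follows essentially the same route as the paper. The paper likewise evaluates pointwise, derives $f_r(td)h=f_r(t)h=hf_r(td)$ by approximating $s\mapsto f_r(ts)$ with polynomials, and concludes using $f_r(t)g_{2r}(t)=g_{2r}(t)$ on $[0,1]$. Your case split $t\le r$ versus $t>r$ simply makes that last identity explicit.
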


\begin{proof}
Put $C:=C_0((0,1], D)=C_0((0,1])\otimes D.$
For $F\in C_0((0,1], D),$ we will write $F(1)$ for $\pi_1(F),$ 
where $\pi_1: C\to D$ is the evaluation at $1,$ i.e., $\pi_1(F)=F(1).$ 

For each fixed $t\in [0,1],$ define
\beq
\phi_t(s)=f_r(ts)\rforal s\in [0,1].
\eneq
From $dh=h=hd,$ by induction, 
\beq
d^nh=h=hd^n\,\,\, (n\in \N).
\eneq
Therefore, for all polynomials $p,$ 
\beq
p(d)h=p(1)h
\eneq
Approximating $\phi_t$ uniformly on $[0,1]$ by polynomials,  we obtain
\beq
f_r(td)h=\phi_t(d)h=\phi_t(1)h=f_r(t)h.
\eneq
Similarly 
\beq
hf_r(td)=f_r(t)h.
\eneq
Now $c(t)=g_{2r}(t) h.$ Hence 
\beq
(f_r(\iota\otimes d)c)(t)=f_r(td)g_{2r}(t)h=g_{2r}(t)f_r(t)h.
\eneq
Note that $f_r(t)g_{2r}(t)=g_{2r}(t)$ for all $t\in [0,1].$ Thus
\beq
(f_r(\iota\otimes d)c)(t)=g_{2r}(t)f_r(t)h=g_{2r}(t)h=c(t).
\eneq
In other words
\beq
f_r(\iota \otimes d)c=c.
\eneq
The same argument shows that
\beq
cf_r(\iota\otimes d)=c.
\eneq
\end{proof}

\begin{prop}\label{Phered}
Let $A$ be a non-elementary $\sigma$-unital  algebraically simple \CA\, with property ETR1.
Then every nonzero $\sigma$-unital hereditary \SCA\, of $A$ also has the property ETR1.
\end{prop}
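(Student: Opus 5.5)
Fix a nonzero $\sigma$-unital hereditary \SCA\ $B\subset A$ and choose a strictly positive element $e_B$ of $B$ with $\|e_B\|=1$. Since $A$ is algebraically simple, $B\subset {\rm Ped}(A)$ need not hold, so the first reduction is to the case $B=\Her(b)$ with $b\in {\rm Ped}(A)_+$. Then $e_B$ is full in $A$, and by strong uniform fullness (Proposition \ref{Pfullrank}) we may write $e_A=\sum_{i=1}^N x_i^*e_Bx_i$ with $\|x_i\|\le M$. We also write $e_B\le \lambda\, \mathrm{Her}$-dominated by $e_A$ in the sense that $e_B=\sum_j y_j^*e_Ay_j$ in $B$. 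The constant $\kappa_{e_B}$ will be taken to depend only on $\kappa_{e_A}$, $M$, $N$, much as in the proof of Proposition \ref{Prank}. There, the identity $e_A=\sum v_i^*cv_i$ was pushed through the compression $s(\cdot)s$ to transfer a tracial lower bound to $\pi_1\circ\Psi$.

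Given $\ep$, a finite set ${\cal F}\subset B$ and $a\in B_+\setminus\{0\}$, we apply ETR1 for $A$ with a small $\dt$ and the enlarged set ${\cal F}_1={\cal F}\cup\{e_A,e_B, e_B^{1/n}, f_r(e_B), x_i,x_i^*\}$. This gives $e$, $D$ and $\theta$, and we put $s=(1-e)^{1/2}$. The key object is $sf_r(e_B)s$, which is close to some $\theta(d)$ with $0\le d\le 1$ (Remark \ref{R1}(iv)). In $C=\tilde\theta(C_0((0,1])\otimes D)$ we pass to the corner $\Her(f_r(\tilde\theta(\iota\otimes d)))$, and Lemma \ref{L1n} guarantees that functional calculus of $\iota\otimes f_r(d)$ lands in it. We take $p=\chi$-type spectral truncation of $\pi_1(\tilde\theta(\iota\otimes d))$, namely a projection $q\in D$ obtained from $f_r(d)$ since $D$ is finite dimensional. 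We then set $D'=qDq$ and define the new map $\theta'(x)=\tilde\theta(g_{2r}\otimes x)$ for $x\in D'$. By Lemma \ref{Lfc=c}, $\theta'(D')$ lies in the hereditary subalgebra generated by $f_r(\tilde\theta(\iota\otimes d))$, which is approximately inside $\Her(s f_r(e_B)s)$. A perturbation then places it exactly inside $B$ (replace by $w^*\cdot w$ with $w$ built from $f_{r/2}(e_B)$ commuting approximately with $s$, and use Lemma \ref{Lstable2} to restore order zero). It is fully normalized since $g_{2r}(1)=1$. The new element is $e'=$ a compression of $e$ by $f_{r}(e_B)$, e.g. $e'=f(e_B)^{1/2}ef(e_B)^{1/2}+(\text{small tail } 1_B-f(e_B))$-type correction. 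The tail is absorbed into the Cuntz class of $a$ by the argument of Lemma \ref{lem:absorb-tail}, choosing $r$ small with $(e_B-\text{functional calculus tail})$ below a piece $a_1$ of $a$; this works since $B$ with strictly positive $e_B$ gives $e'\lesssim e\oplus a_1\lesssim a$ after applying ETR1 with $a_0$, where $a_0\oplus a_1\lesssim a$.

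The verifications are as follows. Condition (1) follows because $e$ approximately commutes with $e_B$ and its functions. For condition (2), for $x\in{\cal F}\subset B$ we have $x\approx f(e_B)xf(e_B)$, so $(1-e')^{1/2}x(1-e')^{1/2}\approx s' x s'$ is approximately in $q\,\theta(D)\,q\approx\theta'(D')$. Lemma \ref{Ldaxiao} is used to keep the approximants ordered, so that $d_{e'}\le q$ is positive and dominated.

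The main obstacle is the trace lower bound $t(d_{e'})\ge\kappa_{e_B}$ for all $t\in T(D')$, uniformly in $\ep$, ${\cal F}$ and $a$. To obtain it, we apply $\pi_1$ and each block trace ${\rm tr}_{r_j}$ to the compressed identity $s e_A s\approx\sum_i (sx_i^*s)(se_Bs)(sx_is)$, as in the claim within Proposition \ref{Prank}. This shows that on every block where ${\rm tr}(\pi_1(c_e))\ge\kappa_{e_A}$, the element $\pi_1(\text{approximant of } se_Bs)$ has norm, and hence after the $1/n$-th root trick (Lemma \ref{L1n}) rank, bounded below by a quantity depending on $\kappa_{e_A},M,N$. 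We discard the blocks where $q$ vanishes; their absence is harmless because $D'=qDq$. Within the remaining blocks, we control ${\rm tr}(q\,\pi_1(d_{e_B})q)/{\rm tr}(q)$ from below using $e_B^{1/n}\ge e_B$ together with the domination $e_B\lesssim$ uniformly fullness. I expect this step to require choosing $n$ large and $r$ small with care, and it is here that I would first try to get an explicit bound of the form $\kappa_{e_B}=\kappa_{e_A}^3/(16 NM^2)$.
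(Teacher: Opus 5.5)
Your outline for (1), (3) and the approximation half of (2) broadly matches the paper's architecture. You compress $e$ by an approximate unit $e_b$ of $B$, cut $D$ by a spectral projection $q$ of the approximant $d_b$ of $se_bs$, use $g_{2r}$ with Lemmas \ref{Lfc=c} and \ref{L1n} to place $\tilde\theta(g_{2r}\otimes qDq)$ in $\Her(f_\rho(\theta(d_b)))$, and then conjugate into a corner of $B$. However, the uniform bound $t(d_{e'})\ge\kappa_{e_B}$, which you flag as the main obstacle, is the real content of the proposition, and the route you sketch for it fails.

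\emph{First}, the approximation $se_As\approx\sum_i(sx_i^*s)(se_Bs)(sx_is)$ is false. Since $s^2=1-e$ almost commutes with the $x_i$, the right side is $\approx s^3e_As^3$, and because $s^2$ is not a projection this can be far from $se_As$. The paper (as in Proposition \ref{Prank}) puts $e_A^{1/3}$ into ${\cal F}_1$ and takes $c_0=\theta(d_0)\approx se_A^{1/3}s$. Then $c_0^3\approx s^3e_As^3\approx\sum_i\theta(v_i)^*c'_{e_B}\theta(v_i)$ with $\theta(v_i)\approx sx_is$. The root inequality is used on the $e_A$ side: $e_A\le e_A^{1/3}$ gives $t(d_0)\ge t(d_e)-2\delta$. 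On the $e_B$ side, $e_B\le e_B^{1/n}$ only bounds $d_{e_B}$ from above. Jensen then gives $t(d_0^3)\ge t(d_0)^3$.

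\emph{Second}, a per-block norm bound cannot yield a uniform trace bound. A rank-one piece of $M_{r_j}$ has normalized trace $1/r_j$, and the $r_j$ are unbounded (Proposition \ref{Prank} even lets you force them to be large). Lemma \ref{L1n} is about hereditary subalgebras of the cone, not about rank. The missing ingredient is the linear tracial estimate for the trace $\lambda=t\circ\pi_1$ on $\tilde\theta(C)$:
\[
\lambda(y^*cy)=\lambda(c^{1/2}yy^*c^{1/2})\le\|y\|^2\lambda(c).
\]
This converts $\lambda(c_0^3)\ge t(d_0)^3$ directly into $\lambda(c'_{e_B})\ge\beta$ on every block, with $\beta$ of order $\kappa_{e_A}^3/(N(M+1)^2)$.

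\emph{Third}, this bound must be proved for an approximant dominated by $\theta(d_b)$; the paper uses $c'_{e_B}\le\theta(d_b)$ from Lemma \ref{Ldaxiao}. Then $d'_{e_B}=\pi_1(c'_{e_B})\le d_b$, so $(1-q)d'_{e_B}(1-q)\le r_0(1-q)$ for $q=\chi_{(r_0,1]}(d_b)$, and ${\rm tr}_{r_j}(q_jd'_{e_B,j}q_j)\ge\beta-r_0$. Renormalizing to $q_jM_{r_j}q_j$ only increases this. The paper also checks separately that $D'\ne0$, via $\|d_b\|>\alpha/2$; your sketch does not address this.

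Several smaller points. Drop the ``tail'' in $e'$. Lemma \ref{lem:absorb-tail} needs continuous scale and only gives comparison in the quotient $Q_B$, whereas (3) must hold in $B$; moreover $1_B-f(e_B)\notin B$ when $B$ is non-unital. The tail is also unnecessary. For $e'=e_b^{1/2}ee_b^{1/2}$ with $e_b$ an approximate unit for ${\cal F}$, one has $(1-e')x\approx(1-e)e_b^{1/2}xe_b^{1/2}$ on ${\cal F}$. Applying ETR1 of $A$ with the comparison element $a$ itself gives $e'\lesssim e\lesssim a$, so no splitting $a_0\oplus a_1$ is needed. Your opening reduction is vacuous, since algebraic simplicity means ${\rm Ped}(A)=A$.

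The ordering from Lemma \ref{Ldaxiao} is needed for every $x\in{\cal F}$, not only for $e_B$. After reducing to ${\cal F}\subset B_+^{\bf 1}$, the inequality $sx_bs\le se_bs$ yields approximants $d'_x\le d_b$, whence $\|d'_x-qd'_xq\|<2\sqrt{r_0}$. This is what places $(1-e')^{1/2}x(1-e')^{1/2}$ near $\tilde\theta(g_{2r}\otimes qDq)$.

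Your conjugation step (compress by a function of $e_B$, apply Lemma \ref{Lstable2}, renormalize blockwise) is a workable alternative, provided you check that the image lies in $\overline{(1-e')B(1-e')}$ and not merely in $B$. The paper instead uses \cite[Lemma 3.3]{EGLNs}. Its map $w^*(\cdot)w$ is multiplicative and isometric on $\Her(f_\rho(\theta(d_b)))$ and carries it into $\Her(e_b^{1/2}(1-e)e_b^{1/2})\subset\overline{(1-e')B(1-e')}$. The resulting $\Theta$ is therefore order zero and fully normalized without any renormalization.
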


\begin{proof}
Let $A$ be an algebraically simple \CA\, with property ETR1 associated with a given 
strictly positive element $e_A$ with $\|e_A\|=1$ and $0<\sigma\le 1.$

{\bf 1. Set up constants}

Let $B\subset A$ be a nonzero hereditary \SCA\, with a strictly positive element 
$e_B$ with $\|e_B\|=1.$
Since $A$ is algebraically simple, 
there are $x_1, x_2,...,x_m\in A$ such that
\beq\label{Phere-68}
\sum_{i=1}^m x_i^* e_Bx_i=e_A.
\eneq
Put $R=\max\{1,\|x_i\|: 1\le i\le m\}.$
Choose 
\beq\label{6sigma}
\af={\sigma\over{4m(R+1)^2}}\andeqn \sigma_B=\af^3=({\sigma\over{4m(R+1)^2}})^3.
\eneq

Now we will show that $B$ has property ETR1 associated with $e_B$ and $\sigma_B.$

Fix $\ep\in (0, 1/4)$ and finite subset ${\cal F}\subset B$ and $s\in B_+$ with $\|s\|=1.$
\Wlog,  we may assume that ${\cal F}\subset B_+^{\bf 1}.$ We may further assume that $e_B\in {\cal F}.$

Choose $\eta_b>0$  such that, whenever $0\le h\le 1$ and $\|y\|\le 1$ satisfying 
$\|[h,\, y]\|<\eta_b$ implies that 
\beq\label{Phere-70}
\|[(1-h)^{1/2}, \, y]\|<{\ep\over{16}}.
\eneq
Choose 
\beq
0<\ep_1<\min\{ {\ep\over{100}}, {\eta_b\over{100}}, {\ep \cdot \sigma_B\over{10^5m(R+1)^2}}, {\af^3\over{10^5}}\}.
\eneq
Choose $e_b\in B$ with $0\le e_b\le 1$ such that 
\beq\label{Phered-72}
\|e_bx-x\|<\ep_1/16 \andeqn \|e_b^{1/2} x-x\|<\ep_1/16\rforal x\in {\cal F}. 
\eneq

Put
\beq
{\cal F}_1={\cal F}\cup  \{e_b, e_b^{1/2}, e_b^{1/3}\}\cup \{e_b^{1/2} xe_b^{1/2}: x\in {\cal F}\}
\cup \{x_i,  x_i^*: 1\le i\le m\}\cup \{e_A, e_A^{1/3}\}.
\eneq
Choose $0<r_0<1$ such that
\beq
r_0<\min\{{\ep_1^2\over{1000}}, {\af^3\over{100}}\}\andeqn \rho={r_0\over{8}}^3.
\eneq
Let $\dt_{p}>0$ (in place of  $\dt$) associated with $\ep_1$ (in place of $\ep$) and $\rho$ (in place 
of $\sigma$) as in \cite[Lemma 3.3]{EGLNs}.

Choose 
\beq
0<r_1<\min\{{r_0\over{4}}, {\dt_{p}\over{4}}\}.
\eneq
In particular,
\beq\label{A635}
2\sqrt{2r_1}+r_1<{\varepsilon_1\over 2}.
\eneq
Indeed, this follows from
$r_1<r_0/4$ and $r_0<\varepsilon_1^2/1000$.

Choose  $0<\dt<{r_1\over{16}}.$

With these choices, we have 
\beq\label{Phere-n84}
m(R+1)^2 \ep_1<{\sigma^3\over{16}}\andeqn\\\label{Phere-n85}
(9+6mR+4m(R+1))\dt<{\sigma^3\over{16}}.
\eneq
Therefore
\beq\label{Phere-n86}
(9+6mR+4m(R+1))\dt+m(R+1)^2\ep_1<{\sigma^3\over{8}}.
\eneq

{\bf 2. Applying  ETR1 property of $A$}

Since $A$ has property ETR1 associated with $e_A$ and $\sigma,$
one obtains  $0\le e\le 1$ in $A, $ a finite dimensional \CA\, 
$D$ and a fully normalized  order zero \cpc\, $\theta: D\to \overline{(1-e)A(1-e)}$ such that
(using  Remark \ref{R1} (ii)--(iv))

(i) $\|[e,\, x]\|<\dt\andeqn \|[(1-e)^{1/2},\, x]\|<\dt \rforal x\in {\cal F}_1,$ 

(ii) $(1-e)x, (1-e)^{1/2} x(1-e)^{1/2}\in_{\dt} \theta(D),$ there is $0\le d_e\le 1$ in $D$ such that
   $\|(1-e)^{1/2}e_A(1-e)^{1/2}-c_e\|<\dt,$ where $c_e=\theta(d_e),$ and 
   \beq
   t\circ \pi_1(\tilde \theta(\iota\otimes d_e))\ge \sigma\rforal t\in T(D),
   \eneq
   where $\tilde \theta: C_0((0,1], D)\to \overline{(1-e)A(1-e)}$ is the \hm\, induced by $\theta$, and 
   $\pi_1: \tilde \theta(C_0((0,1], D))\to D$ defined by 
   $\pi_1(\tilde \theta(f))=f(1)$ for all $f\in C=C_0((0,1], D);$ and
   
   (iii) $e\lesssim s.$ 
  
 Here
  $\pi_1: \tilde \theta(C)\to D$ is the evaluation map at 1 (defined in Definition \ref{Dtheta}) satisfying
  \beq\label{Phere-n87}
  \pi_1(\theta(d))=d \,\,\,{\rm (} d\in D{\rm )}.
  \eneq
  Set 
  $$
  p=(1-e)^{1/2}.
  $$
  Retaining the notation $d_e$ and $c_e=\theta(d_e)$ from
condition~(ii) above, 
we have
  \beq\label{Phere-n91}
  \|pe_Ap-c_e\|<\dt\andeqn t(d_e)\ge \sigma\rforal t\in T(D).
  \eneq
  Also there exists $d_0\in D_+^{\bf 1}$ such that $c_0:=\theta(d_0)$ satisfies 
  \beq\label{Phere-n92}
  \|c_0-pe_A^{1/3}p\|<\dt.
  \eneq
 Put $\lambda=t\circ \pi_1$ as a state of $\tilde \theta(C)\subset A.$
 We may extend it to a state of $A,$ still denote it by $\lambda.$
 Then since $pe_Ap\le pe_A^{1/3}p,$ we obtain, for all $t\in T(D),$ 
 \beq
 t(d_0)=\lambda(c_0)&>&\lambda(pe_A^{1/3}p)-\dt\ge \lambda(pe_Ap)-\dt\\\label{Phere-n93}
 &\ge&  \lambda(c_e)-2\dt=t(d_e)-2\dt\ge \sigma-2\dt>\sigma/2.
 \eneq
 
 {\bf 3. First estimates}
  
  By \eqref{Phere-68}, with $p=(1-e)^{1/2},$  we compute that
  \beq\label{Phere-91}
  \|pe_Ap-\sum_{i=1}^m x_i^*pe_Bpx_i\| &=& \|\sum_{i=1}^m (px_i^*e_Bx_ip-x_i^*pe_Bpx_i)\|
 < 2mR\dt<{\sigma\over{16}}.
  \eneq
  Set
  \beq 
  b=e_b^{1/2}ee_b^{1/2}\in B\andeqn x_b=e_b^{1/2}xe_b^{1/2}\rforal x\in {\cal F}.
  \eneq
  By \eqref{Phered-72} and ${\cal F}\subset B_+^{\bf 1},$ 
  \beq\label{Phere-89}
  \|x_b-x\|<{\ep_1\over{4}}\rforal x\in {\cal F}.
  \eneq
   By (i),  for all $x\in {\cal F},$ 
   \beq\nonumber
   \|bx-xb\|&\le & \|e_b^{1/2}ee_b^{1/2}x-ee_b x\|+
  \|ee_bx-ex\|+\|ex-xe\|\\\nonumber
              &&+\|xe-xe_be\|+\|xe_be-xe_b^{1/2}ee_b^{1/2}\|\\\label{Phere-53nn}
              &<& \dt+{\ep_1\over{16}}+\dt+{\ep_1\over{16}}+\dt<\eta_b.
   \eneq
   Applying \eqref{Phere-70}, we obtain 
   \beq\label{Phere-57nn}
   \|[(1-b)^{1/2},\, x]\|<{\ep\over{16}}\rforal x\in {\cal F}.
   \eneq
   Hence  (by also \eqref{Phere-89}, (i) above, and \eqref{Phered-72})
   \beq\label{Pherd-90}
  \hspace{-0.4IN} \|(1-b)^{1/2}x(1-b)^{1/2}-px_bp\| &\le & \|(1-b)^{1/2}x(1-b)^{1/2}-(1-b)x\|\\
   &&+\|(1-b)x-(1-b)x_b\|+\|(1-b)x_b-(1-e)x_b\|\\
   &&+
   \|(1-e)x_b-px_bp\|\\
   &<&  {\ep\over{16}}+{\ep_1\over{4}}+\|e_b^{1/2}ee_b^{1/2} x-ee_b^{1/2}xe_b^{1/2}\|+\dt\\\label{Phere-99}
   &<&{\ep\over{16}}+
   {\ep_1\over{4}}+\dt+{2\ep_1\over{16}}+\dt<\ep/8.
   \eneq
   
   {\bf 4. A hereditary cut}
   
   Set $y_b=e_b^{1/2}(1-e)e_b^{1/2}=e_b^{1/2} p^2e_b^{1/2}\in B.$ Then 
   $$b+y_b=e_b^{1/2}(e+(1-e))e_b^{1/2}=e_b.$$
   Since $0\le e_b\le 1,$ we have
    \beq
   0\le y_b=e_b-b\le 1-b.
   \eneq
   Hence
   \beq
   y_b\in \overline{(1-b)B(1-b)}.
   \eneq
   Choose $0\le d_b\le 1$ in $D$ such that
   \beq\label{Phere-r65}
   \|\theta(d_b)-pe_bp\|<\dt.
   \eneq
   We compute that, by (i), 
   \beq
  pe_bp\approx_\dt e_b^{1/2}pe_b^{1/2}p\approx_\dt e_b^{1/2} p^2 e_b^{1/2}=y_b.
  \eneq
     Hence
     \beq\label{Phere-102}
     \|\theta(d_b)-y_b\|\le \|\theta(d_b)-pe_bp\|+\|pe_bp-y_b\|<\dt+2\dt=3\dt.
     \eneq
     Note that $3\dt<\dt_p.$ 
     Applying \cite[Lemma 3.3]{EGLNs},  we obtain  a partial isometry $w\in  A^{**}$ such that
     \beq
     ww^*f_{\rho}(\theta(d_b))=f_{\rho}(\theta(d_b))ww^*=f_{\rho}(\theta(d_b)),\\\label{Phere-109}
     w^*cw\in \Her(y_b)\subset \overline{(1-b)B(1-b)},\andeqn\\\label{Phere-109+}
     \|w^*cw-c\|<\ep_1\|c\|\rforal c\in \Her(f_{\rho}(\theta(d_b))).
     \eneq
     
     {\bf 5. Construction of $D_B$ and $\Theta$}
     
     Put $g=g_{2r_0}$ (recall   Definition \ref{dfg} ).  Then $g\in C_0((0,1])_+,\, 0\le g \le 1,$  $g(1)=1,$ and 
     \beq\label{LH-69}
     \|g-\iota\|\le 2r_0.
     \eneq

          Define  $D_B=\overline{f_{2r_0}(d_b)Df_{2r_0}(d_b)}.$
     Since $D$ is finite dimensional, so is $D_B.$   Let $q$ be its unit. Then 
 \beq
     0\le f_{2r_0}(d_b)\le q=\chi_{(r_0,1]}(d_b),
     \eneq
     where $\chi_{(r_0,1]}(d_b)$ is the spectral projection associated with $(r_0,1].$ 
        Therefore 
     \beq\label{Phere-112}
     cf_{r_0}(d_b)=c=f_{r_0}(d_b)c\rforal c\in D_B.
     \eneq
     By Lemma \ref{Lfc=c},  for all $c\in D_B,$ 
     \beq
    f_{r_0}(\iota\otimes f_{r_0}(d_b))
    (g_{2r_0}\otimes c)=g_{2r_0}\otimes c=(g_{2r_0}\otimes c) f_{r_0}(\iota\otimes f_{r_0}(d_b)).
     \eneq
     It follows  that
     \beq
     g_{2r_0}\otimes c\in \Her(f_{r_0}(\iota\otimes f_{r_0}(d_b))).
     \eneq
     By Lemma \ref {L1n}, 
     \beq
     f_{r_0}(\iota\otimes f_{r_0}(d_b))\in \Her(f_{r_0}((\iota\otimes d_b)^{1/3})).
     \eneq
     Recall that ($\rho={r_0^3\over{8}}$)
     \beq
     f_\rho(t)f_{r_0}(t^{1/3})=f_{r_0}(t^{1/3}) \,\,\, (t\in [0,1]).
     \eneq
     It follows that
     \beq
     f_\rho(\iota \otimes d_b)f_{r_0}((\iota\otimes d_b)^{1/3})=f_{r_0}((\iota\otimes d_b)^{1/3}).
     \eneq
     Thus 
     \beq
     f_{r_0}((\iota\otimes d_b)^{1/3})\in \Her(f_\rho(\iota\otimes d_b)).
     \eneq
     Hence
     \beq\label{Phere-123}
     \tilde \theta(g_{2r_0}\otimes c)\in \Her(f_\rho(\theta(d_b)))\rforal c\in D_B.
     \eneq
     Define  $\Theta: D_B\to \overline{(1-b)B(1-b)}$ by
     \beq
     \Theta(c)=w^*\tilde \theta(g_{2r_0}\otimes c)w\rforal c\in D_B.
     \eneq
   Since $w^*(\cdot)w$ is multiplicative on $\Her(f_\rho(\theta(d_b))),$  $\Theta$ is an order zero \cpc\, 
   into $\overline{(1-b)B(1-b)}.$ 
   Let $\tilde \Theta: C_0((0,1])\otimes D_B\to \overline{(1-b)B(1-b)}$ be the induced \hm. 
     
     {\bf 6. Approximate ${\cal F}$ by $\Theta(D_B)$}

  %
     As $0\le x\le 1$ for all $x\in {\cal F},$ we have  (recall that $x_b=e_b^{1/2}xe_b^{1/2}$)
     \beq\label{Phere-109}
     0\le px_bp=pe_b^{1/2}xe_b^{1/2}p\le pe_bp.
     \eneq
     For each $x\in{\cal F}$, choose $d_x\in D_+^1$ and put
$c_x=\theta(d_x)$
    such that 
     \beq\label{Phere-110}
     \|c_x-px_bp\|<\dt<r_1.
     \eneq
     Let $c_y=\theta(d_b)$  and $c_x=\theta(d_x)$ with $0\le d_x\le 1.$
     By  \eqref{Phere-r65},
     \beq\label{Phere-111}
     \|pe_bp-c_y\|<\dt<r_1.
     \eneq
     By applying Lemma \ref{Ldaxiao} and \eqref{Phere-109}, \eqref{Phere-110} and \eqref{Phere-111} we obtain $c_x'\in \widetilde\theta(C_+)$ such that
     
     \beq\label{Phered-130}
     0\le c_x'\le \theta(d_b)\andeqn \|c_x'-px_bp\|<2\sqrt{2r_1} +r_1<{\ep_1\over{2}}.
     \eneq
     In particular,   
     \beq
     \|c_x'-\theta(d_x)\|\le \|c_x'-px_bp\|+\|px_bp-\theta(d_x)\|<\ep_1+\dt<2\ep_1.
     \eneq
     Now apply $\pi_1: \widetilde\theta(C)\to D.$ Set $d_x'=\pi_1(c_x').$
     Since $\pi_1$ is positive and $\pi_1(\theta(d_b))=\pi_1(\tilde \theta(\iota\otimes d_b))=d_b,$ by 
     \eqref{Phered-130},
     \beq
     0\le d_x'\le d_b.
     \eneq
     Since $\pi_1(\theta(d_x))=\pi_1(\tilde \theta(\iota\otimes d_x))=d_x,$   we have
     \beq\label{Phere-135}
     \|d_x'-d_x\|=\|\pi_1(c_x')-\pi_1(\tilde \theta(\iota\otimes d_x))\|\le \|c_x'-\theta(d_x)\|<2\ep_1.
     \eneq
     Define 
     \beq\label{Phere-n90}
     a_x:=qd_x'q\in D_B.
     \eneq
         Since $0\le d_x'\le d_b,$ and $q\le \chi_{(r_0,1]}(d_b),$ we have 
         \beq
         \|(1-q)(d_x')^{1/2}\|^2\le \|(1-q)d_b(1-q)\|<r_0.
         \eneq
         Thus
                  \beq\label{Phere-137}
         \|d_x'-a_x\|\le\|d_x'-qd_x'\|+\|qd_x'-qd_x'q\|<2\sqrt{r_0}.
         \eneq
   By \eqref{Phere-135} and \eqref{Phere-137}, we obtain
   \beq
   \|a_x-d_x\|\le \|a_x-d_x'\|+\|d_x'-d_x\|<2\sqrt{r_0}+2\ep_1<3\ep_1.
   \eneq
     Using $\|g_{2r_0}-\iota\|\le 2r_0<\ep_1$ (see \eqref{LH-69}),  we get
     \beq
     \|\tilde \theta(g_{2r_0}\otimes a_x)-\theta(d_x)\|&=&\|\tilde\theta(g_{2r_0}\otimes a_x)-\tilde \theta(\iota\otimes d_x)\|\\
     &\le & \|g_{2r_0}\otimes a_x-\iota\otimes d_x\|\\
     &\le & \|g_{2r_0}\otimes a_x-\iota\otimes a_x\|+\|\iota\otimes a_x-\iota \otimes d_x\|\\
     &<& 2r_0+3\ep_1<4\ep_1.
     \eneq
    Together with \eqref{Phere-110}, this gives 
    \beq\label{Phere-140}
    \|\tilde \theta(g_{2r_0}\otimes a_x)-px_bp\|<4\ep_1+\dt<5\ep_1.
    \eneq 
     Since $a_x\in D_B,$  \eqref{Phere-123} implies that
     \beq
     \tilde \theta(g_{2r_0}\otimes a_x)\in \Her(f_\rho(\theta(d_b))).
     \eneq
     Hence, by \eqref{Phere-109+}, 
     \beq\label{109-92}
     \|w^*\tilde \theta(g\otimes a_x)w-\tilde \theta(g_{2r_0}\otimes a_x)\|<\ep_1.
     \eneq
     By definition, $\Theta(a_x)=w^*\tilde \theta(g_{2r_0}\otimes a_x)w.$ Therefore, by \eqref{Phere-140}
     and \eqref{109-92},
     \beq
     \|\Theta(a_x)-px_bp\|<6\ep_1.
     \eneq
     Moreover, by \eqref{Phere-99}, we obtain
     \beq\nonumber
    &&\hspace{-0.4in} \|(1-b)^{1/2} x(1-b)^{1/2}-\Theta(a_x)\|\\\nonumber
     &\le &\|(1-b)^{1/2}x(1-b)^{1/2}-px_bp\|+\|px_bp-\Theta(a_x)\|\\\label{Phere-99+nn}
     &<&{\ep\over{8}}+6\ep_1<\ep\,\,\,\rforal x\in {\cal F}.
     \eneq
     
     {\bf 7. $D_B\not=0$}
     
     We now show that $D_B\not=\{0\}.$   From \eqref{Phere-n91},  $\|c_e\|\ge \sigma,$  and hence
     \beq
     \|pe_Ap\|>\sigma-\dt.
     \eneq
     Then, by \eqref{Phere-91}, 
     \beq
     \|\sum_{i=1}^m x_i^*pe_Bpx_i\|>\sigma-\dt-{\sigma\over{16}}>{7\sigma\over{8}}.
     \eneq
      Hence
      \beq
      mR^2\|pe_Bp\|>{7\sigma\over{8}}.
      \eneq
     It follows that
     \beq\label{Phere-n154}
     \|pe_Bp\|>{7\sigma\over{8mR^2}}>\af.
     \eneq
     By \eqref{Phered-72}, we have 
     \beq
    \|pe_Bp-pe_be_Be_bp\|<{\ep_1\over{8}}.
     \eneq
     We have (note that $\ep_1<\af/400$)
     \beq
     \|pe_bp\|\ge \|pe_be_Be_bp\|>\af-{\ep_1\over{8}}>2{\af\over{3}}.
     \eneq
     Therefore, applying \eqref{Phere-r65}, we obtain 
     \beq\label{Phere-nn9}
     \|\theta(d_b)\|>{2\af\over{3}}-\dt>{\af\over{2}}.
     \eneq
     It follows that
     \beq\label{Phere-nn11}
     \|d_b\|>{\af\over{2}}.
     \eneq
     Recall that $2r_0<{\af\over{50}}.$ Hence  $f_{2r_0}(d_b)\not=0$ which implies that $D_B\not=0.$

     {\bf 8.  About $e_B$  and $a_B$}
     
     Since $e_B\in {\cal F},$ let $a_{e_B}:=qd_{e_B}'q$ be the element  defined in \eqref{Phere-n90} and set
     $a_B=a_{e_B}.$
   In particular, we have 
     \beq
     \|(1-b)^{1/2}e_B(1-b)^{1/2}-\Theta(a_{e_B})\|<\ep.
     \eneq 
         Moreover,
\[
d_{e_B}'=\pi_1(c_{e_B}'),
\qquad
0\leq c_{e_B}'\leq\theta(d_b).
\]
By \eqref{Phered-130} and \eqref{Phere-89},
\beq\nonumber
\|c_{e_B}'-pe_Bp\|
&\leq
\|c_{e_B}'-pe_b^{1/2}e_Be_b^{1/2}p\|
+\|p(e_b^{1/2}e_Be_b^{1/2}-e_B)p\|\\\label{A6104}
&<
{\varepsilon_1\over 2}
+{\varepsilon_1\over 4}
<\varepsilon_1.
\eneq
    
    Let $t\in T(D).$ Define a trace on $\widetilde \theta(C)$ by
    \beq
    \lambda=t\circ \pi_1.
    \eneq 
    This is a trace on $\widetilde\theta(C).$ Recall that $\pi_1(c_0)=d_0$ (see the line above \eqref{Phere-n92}). 
    We have 
    \beq
    \lambda(c_0^3)=t(d_0^3). 
    \eneq
    Since $0\le d_0\le 1,$   considering the commutative \SCA\, $C^*(d_0),$ by the Jensen inequality, 
    as $s\to s^3$ on $[0,1]$ is convex, we obtain 
    \beq
    t(d_0^3)\ge t(d_0)^3\rforal t\in T(D).
    \eneq
    The choices above imply $2\dt<\sigma/8.$ Hence 
    by \eqref{Phere-n93},   $t(d_0)>\sigma-2\dt>{7\sigma\over{8}}.$ Therefore
    \beq\label{Phere-n167}
    \lambda(c_0^3)=t(d_0^3)\ge t(d_0)^3>({7\sigma\over{8}})^3>{\sigma\over{2}}^3.
    \eneq
 By \eqref{Phere-n92},
    \beq
    \|c_0^3-(pe_A^{1/3}p)^3\| &\le& \|c_0^3-c_0^2(pe_A^{1/3}p)\|
                       +\|c_0^2(pe_A^{1/3}p)-c_0(pe_A^{1/3}p)^2\|\\
                       &&\hspace{0.3in}+\|c_0(pe_A^{1/3}p)^2-(pe_A^{1/3}p)^3\|<3\dt.
    \eneq
    We also have (by (i))
    \beq
    \|(pe_A^{1/3}p)^3-p^3e_Ap\|<6\dt.
    \eneq
    Hence
    \beq\label{Phere-n170}
    \|c_0^3-p^3e_Ap^3\|<9\dt.
    \eneq
    By \eqref{Phere-68},  we have 
    \beq
    p^3e_Ap^3=\sum_{i=1}^m p^3x_i^*e_Bx_ip^3.
    \eneq
    By (i), we have 
    \beq
    \|[p^3,\, x_i]\|<3\dt\andeqn \|[p^3,\, x_i^*]\|<3\dt,\,\,\, 1\le i\le m.
    \eneq
    Therefore
    \beq
    \|p^3x_i^*e_Bx_ip^3-x_i^*p^3e_Bp^3x_i\| &\le & \|p^3x_i^*e_Bx_ip^3-x_i^*p^3e_Bx_ip^3\|\\
                &&+\|x_i^*p^3e_Bx_ip^3-x_i^*p^3e_Bp^3x_i\|\\
                &\le & 3R\dt+3R\dt=6R\dt.
    \eneq
    Thus
    \beq\label{Phere-n174}
    \|p^3e_Ap^3-\sum_{i=1}^m x_i^*p^3 e_Bp^3x_i\|<6mR\dt.
    \eneq
    Next we observe that
    \beq\label{Phere-n175}
    x_i^*p^3e_Bp^3x_i=(p^2x_i)^*(pe_Bp)(p^2x_i).
    \eneq
    For each $i,$ since $x_i\in {\cal F}_1,$ choose $v_i\in D$ such that
    \beq\label{Phere-n178}
    \|\theta(v_i)-px_ip\|<\dt\andeqn \|\theta(v_i)\|\le R+1.
    \eneq
    Since 
    \beq
    \|px_ip-p^2x_i\|\le \|[x_i,\, p]\|<\dt, 
    \eneq
    we obtain 
    \beq\label{Phere-n177}
    \|\theta(v_i)-p^2x_i\|<2\dt.
    \eneq

       From \eqref{A6104}, 
        \eqref{Phere-n177} and \eqref{Phere-n178}, we get
    \beq
   \hspace{-0.5in} \|\theta(v_i)^*c_{e_B}'\theta(v_i)-(p^2x_i)^*(pe_Bp)(p^2x_i)\|&\le&
    \|\theta(v_i)^*c_{e_B}'\theta(v_i)-\theta(v_i)^*c_{e_B}'(p^2x_i)\|\\
    &&+\|\theta(v_i)^*c_{e_B}'(p^2x_i)-
    \theta(v_i)^*pe_B(p^2x_i)\|\\
    &&+\|\theta(v_i)^*pe_B(p^2x_i)-(p^2x_i)^*pe_B(p^2x_i)\|\\
    &<& 2(R+1)\dt+ (R+1)^2\ep_1+2(R+1)\dt.
    \eneq
    Hence
    \beq\label{Phere-n180}
    &&\|\sum_{i=1}^m \theta(v_i)^* c_{e_B}'\theta(v_i)-\sum_{i=1}^m (p^2x_i)^*(pe_Bp)(p^2x_i)\|\\
    &&<m(4(R+1)\dt+(R+1)^2\ep_1).
    \eneq
       By \eqref{Phere-n170}, \eqref{Phere-n174}, \eqref{Phere-n175} and \eqref{Phere-n180},
  \beq\nonumber
  &&\|c_0^3-\sum_{i=1}^m \theta(v_i)^*c_{e_B}'\theta(v_i)\|\\
  &&\le \|c_0^3-p^3e_Ap^3\|+\|p^3e_Ap^3-\sum_{i=1}^m x_i^*p^3e_Bp^3x_i\|\\
  &&+\|\sum_{i=1}^m x_i^*p^3e_Bp^3x_i-\sum_{i=1}^m\theta(v_i)^*c_{e_B}'\theta(v_i)\|\\
 &&<9\dt+6mR\dt+m(4(R+1)\dt+(R+1)^2\ep_1).
  \eneq

    By \eqref{Phere-n86}, we obtain 
    \beq\label{Phere-n182}
    \|c_0^3-\sum_{i=1}^m \theta(v_i)^*c_{e_B}'\theta(v_i)\|<{\sigma^3\over{8}}.
    \eneq
  Applying $\lambda,$ and using \eqref{Phere-n167},
  \beq
  {\sigma^3\over{2}}\le \lambda(c_0^3)<{\sigma^3\over{8}}+\sum_{i=1}^m \lambda(\theta(v_i)^*c_{e_B}'\theta(v_i)).
  \eneq
We also have
\beq
\theta(v_i)\theta(v_i)^*\le (R+1)^2\cdot 1.
\eneq
  Therefore
  \beq\label{Phere-n185}
  \lambda(c_{e_B}'\theta(v_i)\theta(v_i)^*)=\lambda((c_{e_B}')^{1/2}\theta(v_i)\theta(v_i)^*(c_{e_B}')^{1/2})
  \le (R+1)^2 \lambda(c_{e_B}').
  \eneq
    It follows that
    \beq
    {\sigma^3\over{2}}<{\sigma^3\over{8}}+m(R+1)^2\lambda(c_{e_B}').
    \eneq
    Thus
    \beq
    \lambda(c_{e_B}')>{ 2\sigma^3\over{8m(R+1)^2}}.
    \eneq
    Set 
    \beq
    \bt:={\sigma^3\over{4m(R+1)^2}}.
    \eneq
    Since $d_{e_B}'=\pi_1(c_{e_B}'),$ we have 
    \beq\label{Phere-n190}
    t(d_{e_B}')\ge \bt\rforal t\in T(D).
    \eneq
    Let $q=(q_j)_{j=1}^l$ and $d_{e_B}'=(d_{e_B,j}')_{j=1}^l$  be 
 with respect to the decomposition 
    $D=\bigoplus_{j=1}^l M_{r_j}.$   If $q_j\not=0,$
    applying \eqref{Phere-n190} to the normalized trace on $M_{r_j}$ gives 
    \beq\label{Phere-n190a}
    {\rm tr}_{r_j}(d_{e_B,j}')\ge \bt.
    \eneq
Note that
\beq\label{Phere-n182}
(1-q)d_{e_B}'(1-q)\le (1-q)d_b(1-q)\le r_0(1-q).
\eneq
    One computes, by  \eqref{Phere-n190a} and \eqref{Phere-n182}  that
    \begin{align}
 {\rm tr}_{r_j}(q_jd'_{e_B,j}q_j)
 &=
 {\rm tr}_{r_j}(d'_{e_B,j})
 -
 {\rm tr}_{r_j}\bigl((1-q_j)d'_{e_B,j}(1-q_j)\bigr)
 \notag\\
 &\geq \beta-r_0.
\end{align}
   Since $r_0<\sigma^3/100(4m(R+1)^2)<\bt/4,$ we have 
   \beq
   \bt-r_0> {3\bt\over{4} }.
   \eneq
    Now let $\tau_j$ be the normalized trace on $q_jM_{r_j}q_j.$ Then 
    \beq\label{Phere-n195}
    \tau_j(q_jd_{e_B,j}'q_j)&=&{r_j\over{{\rm rank}(q_j)}}{\rm tr}_{r_j}(q_jd_{e_B,j}'q_j)\\
    &\ge& {\rm tr}_{r_j}(q_jd_{e_B,j}'q_j)\ge {3\bt\over{4}}.
    \eneq
    This implies that
    \beq
    \tau(qd_{e_B}'q)\ge {3\bt\over{4}}\rforal \tau\in T(D_B).
    \eneq
    Since $a_B=qd_{e_B}'q,$
    \beq\label{Phere-n197}
    \tau(a_B)\ge {3\bt\over{4}}={3\sigma^3\over{16m(R+1)^2}}\ge \sigma_B \rforal \tau\in T(D_B).
    \eneq
    
    {\bf 9. $\Theta$ is fully normalized}
    
    Since $\|\theta|_{M_{r_j}}\|=1$ for $1\le j\le l,$ and since $g_{2r_0}(1)=1,$  the associated cone \hm\, 
    must have 
    \beq
    \|\tilde\theta(g_{2r_0}\otimes q_j)\|=1
    \eneq
    for every non-zero $q_j:=q {\rm 1}_{M_{r_j}}.$   Moreover, by \eqref{Phere-123},
    $\tilde \theta(g_{2r_0}\otimes q_j)\in \Her(f_\rho(\theta(d_b)),$ and 
    $w^*(\cdot) w$ is isometric on $\Her(f_\rho(\theta(d_b)).$  Hence
    $$
    \|\Theta(q_j)\|=\|w^*\tilde \theta(g_{2r_0}\otimes q_j)w\|=\|\tilde \theta(g_{2r_0}\otimes q_j)\|=1.
    $$
    Thus $$\|\Theta|_{q_jDq_j}\|=1.$$ 
    
    {\bf 10. Cuntz comparison}
    
    Since $b=e_b^{1/2}ee_b^{1/2},$ we have 
    \beq
    b\lesssim e.
    \eneq
    By  condition (iii),   
    \beq
    b\lesssim e\lesssim s.
    \eneq
    Since $B$ is a hereditary \SCA\, and $b, s\in B_+,$ it follows that 
    \beq
    b\lesssim_B s.
    \eneq
    
    {\bf 11. Conclusion}
    With the given strictly positive element $e_B\in B_+$ and 
    $\sigma_B=\af^3=({\sigma\over{4m(R+1)^2}})^3, $
    for any $\ep>0$ and finite subset  ${\cal F}\subset B_+^{\bf 1},$ 
    we have constructed $0\le b\le 1$ in $B$ and a non-zero finite dimensional \CA\, $D_B=qDq,$ 
    and a fully normalized order zero \cpc\, $\Theta: D_B\to {\overline{(1-b)B(1-b)}}$ such that:
    
    (1) $\|[b,\, x]\|<\ep,$ $\|[(1-b)^{1/2},\, x]\|<\ep$ for all $x\in {\cal F};$ (by \eqref{Phere-53nn} and \eqref{Phere-57nn})
    
    (2) $(1-b)^{1/2}x(1-b)^{1/2}\in_\ep \Theta(D_B),$ (by \eqref{Phere-99+nn}),  there exists $a_B\in D_B$ with 
    $0\le a_B\le 1,$ such that
    \beq\nonumber
    &&\|(1-b)^{1/2} e_B(1-b)^{1/2}-\Theta(a_B)\|<\ep\,\,\hspace{0.4in} \andeqn\\
    &&\tau(a_B)\ge \sigma_B \rforal \tau\in T(D_B)\hspace{0.4in}  ({\rm by}\,\,\, \eqref{Phere-n197}); \andeqn
    \eneq
    
   (3) $b\lesssim_B s.$ 
   
   So $B$ has property ETR1 (see also  ($2_a$) of Remark \ref{R1}), and  the proposition then follows.
 \end{proof}
   
   \begin{rem}\label{Rhere}
   In the above proof, 
   when $e_B$ is given, 
$\mathfrak{\kappa}_{e_B}$ may be chosen $(\mathfrak{\kappa}_{e_A}/ 4m(R+1)^2)^3$
(see \eqref{6sigma}). 
Note that $mR$ can be controlled by $SUF(e_B).$ 
   
 The proof does include the case that $B=A.$ In other words, suppose that $A$ has property ETR1 with 
   strictly positive element  $e_A$ ($\|e_A\|=1$) and associated with $\mathfrak{\kappa}_{e_A}$,
   then, for any strictly positive element $e_B$ with $\|e_B\|=1,$ $A$ has ETR1 with 
   strictly positive element $e_B.$ 
   Thus we have the following corollary.
   \end{rem}

    \begin{cor}\label{CeA} 
    Let $A$ be  a $\sigma$-unital algebraically simple \CA.
    If $A$ has property ETR1 for one 
strictly positive element $e_A$ with 
    $\|e_A\|=1,$ then it has property ETR1 for any other strictly positive element $e_A'$ with $\|e_A'\|=1.$
    In other words, Definition \ref{D1} does not depend on the choice of the strictly positive element
    (but $\mathfrak{\kappa}_{e_A'}$ may be different from $\mathfrak{\kappa}_{e_A}$).
     \end{cor}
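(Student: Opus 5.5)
The plan is to take $B=A$ in Proposition \ref{Phered}, as Remark \ref{Rhere} suggests. Suppose $A$ has property ETR1 with respect to $e_A$ and the constant $\kappa_{e_A}$, and let $e_A'$ be another strictly positive element with $\|e_A'\|=1$. Then $A$ is a nonzero $\sigma$-unital hereditary $C^*$-subalgebra of itself, and $e_B:=e_A'$ is strictly positive in it. The only points needing care are the hypotheses of Proposition \ref{Phered} and the shape of the constant it produces.

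First, the hypotheses. $A$ is $\sigma$-unital and algebraically simple by assumption. If $A$ is elementary, then algebraic simplicity forces $A\cong M_n$, and the statement can be checked directly or excluded, as the surrounding text tacitly does. So the main case is $A$ non-elementary.

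Second, I would rerun Step~1 of the proof of Proposition \ref{Phered} with $B=A$ and $e_B=e_A'$. By algebraic simplicity there are $x_1,\dots,x_m\in A$ with $\sum_i x_i^*e_A'x_i=e_A$, as in \eqref{Phere-68}; this uses only that $e_A'\neq 0$. The constant is then set as
\[
\kappa_{e_A'}=\Bigl(\frac{\kappa_{e_A}}{4m(R+1)^2}\Bigr)^3, \qquad R=\max\{1,\|x_i\|\},
\]
following \eqref{6sigma}. Nothing in Steps~2--11 uses $B\neq A$. The hereditary cut of Step~4 produces $b=e_b^{1/2}ee_b^{1/2}$ and $\Theta\colon D_B\to\overline{(1-b)A(1-b)}$. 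The Cuntz comparison $b\lesssim e\lesssim s$ of Step~10 holds in $A$ directly. The trace estimate \eqref{Phere-n197} gives $\tau(a_B)\ge\kappa_{e_A'}$ for all $\tau\in T(D_B)$.

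Hence $A$ has property ETR1 with respect to $e_A'$ and $\kappa_{e_A'}$. In general $\kappa_{e_A'}\neq\kappa_{e_A}$, since it depends on $m$ and $R$, which are essentially controlled by ${\rm SUF}(e_A')$; this is consistent with the parenthetical remark in the statement.

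The only genuine obstacle is checking that the proof of Proposition \ref{Phered} makes no hidden use of $B$ being proper, which a line-by-line reading confirms, and dealing with the elementary case separately.
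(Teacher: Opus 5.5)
Your proposal is correct and is the paper's own argument. Remark~\ref{Rhere} notes that the proof of Proposition~\ref{Phered} goes through verbatim with $B=A$ and $e_B=e_A'$, giving $\kappa_{e_A'}=\bigl(\kappa_{e_A}/4m(R+1)^2\bigr)^3$, and your separate handling of the elementary case is a harmless extra care the paper leaves implicit: there $A\cong M_n$, and one may take $e=0$, $D=A$, $\theta=\mathrm{id}$, $d_e=e_A'$ and $\kappa_{e_A'}=\mathrm{tr}_n(e_A')$.
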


   \section{Essentially tracial rank one}
   
   We are ready to give the following definition:
   
   \begin{df}\label{DD1}
   Let $A$ be a $\sigma$-unital simple \CA.  Suppose that there is $a\in {\rm Ped}(A)_+\setminus \{0\}$
   such that $\Her(a)$ has  property ETR1. Then we say that $A$ has essentially tracial rank at most one
   and write $ETR(A)\le 1.$ 
   \end{df}
   
   \begin{rem}\label{Reetr1}
Later Theorem \ref{TTRE1-1}  shows that separable simple  \CA s
with $ETR(A)\le 1$ can be tracially approximated by one-dimensional cones  without assuming it is algebraically simple.
\end{rem}

   The following is folklore. We record it here for convenience.
   
   \begin{lem}\label{Lped}
Let $A$ be a $\sigma$-unital simple $C^*$-algebra, and let
$a,b\in {\rm Ped}(A)_+\setminus\{0\}$. Then there exist
$n\in{\mathbb N}$ and $x\in A\otimes{\cal K}$
such that
\[
x^*x=a
\]
and
\[
xx^*\in M_n({\rm Her}(b)).
\]
\end{lem}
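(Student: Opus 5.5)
The strategy is to exploit the Pedersen ideal. In a simple algebra it is the minimal dense ideal, so every nonzero element of it generates it algebraically. Concretely, since $A$ is simple and $b\in{\rm Ped}(A)_+\setminus\{0\}$, the algebraic two-sided ideal generated by $b$ is a nonzero ideal contained in ${\rm Ped}(A)$. I would first check that it is dense, hence equal to ${\rm Ped}(A)$ by minimality. Then $a=\sum_{i=1}^n y_i^* b z_i$ for some $y_i,z_i\in A$ (absorbing scalars and unit issues in $\tilde A$ as usual). I would rather use the standard refinement, \cite[Proposition 5.6]{EGLNs} style or Pedersen's result, that for $a\in{\rm Ped}(A)_+$ one can write $a=\sum_{i=1}^n x_i^* b x_i$ with $x_i\in A$. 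This uses that ${\rm Ped}(A)_+$ is generated as a hereditary cone by $\{c: c\le \sum x_i^*bx_i\}$. Replacing $b$ by a suitable function of $b$ if needed, one obtains a representation $a=\sum_{i=1}^n x_i^*b_1 x_i$ with $b_1=f_{\delta}(b)\in{\rm Her}(b)$ or $b_1=b$ itself.

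Given $a=\sum_{i=1}^n x_i^* b x_i$, put $x$ to be the $n\times 1$ column (viewed in $M_n(A)\subset A\otimes\mathcal K$) with entries $b^{1/2}x_i$. Then
\[
x^*x=\sum_{i=1}^n x_i^* b x_i=a,
\]
and $xx^*=(b^{1/2}x_ix_j^*b^{1/2})_{i,j}$, whose entries all lie in $\overline{bAb}={\rm Her}(b)$. Hence $xx^*\in M_n({\rm Her}(b))$, which is the required conclusion.

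The main obstacle is the first step: obtaining an \emph{exact} equality $a=\sum x_i^*bx_i$ rather than an approximate one or a Cuntz subequivalence. I would first try the hereditary-cone description of ${\rm Ped}(A)_+$: the set $K$ of all $c\in A_+$ with $c\le\sum_{i=1}^n x_i^*bx_i$ for some $x_i$ is an invariant hereditary cone. Its span is a dense ideal, because simplicity makes $b$ full. So $K\supseteq{\rm Ped}(A)_+$ by minimality, and $a\le \sum x_i^*bx_i=:c$.

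Then I would use the standard factorization for $a\le c$. Namely $a^{1/2}=uc^{1/2}$ for some $u\in A$, for instance $u=\lim a^{1/2}(c+1/k)^{-1/2}c^{1/4}$ after replacing $c^{1/2}$ by $c^{1/4}$ via $a^{1/2}=u c^{1/4}$. This factorization is exactly where care is needed. It gives $a=u c^{1/2}u^*=\sum (c^{1/4}u^*)^*\cdots$, which is not quite of the right form. The cleaner route is to set $x$ to be the column with entries $b^{1/2}x_i$, so that $c=x_0^*x_0$, and then to use $a\le x_0^*x_0$ to write $a^{1/2}=v|x_0|$ with $v\in A$ (Pedersen's factorization with exponent adjustment). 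Finally, set $x=x_0'v^*$ for the polar-part adjustment. Its range projection stays in the closure of $x_0Ax_0^*\subset M_n({\rm Her}(b))$, and $x^*x=a$.
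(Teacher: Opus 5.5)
You have a workable overall plan: get the inequality $a\le\sum_i x_i^*bx_i$ from minimality of ${\rm Ped}(A)$ via the hereditary cone $K$, then convert it into an exact $x$. But the conversion step fails as you wrote it.

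From $a\le x_0^*x_0=|x_0|^2$, Pedersen's factorization gives $a^{1/2}=v\,(x_0^*x_0)^{\alpha}$ with $v\in A$ only for $0<\alpha<1/2$, i.e.\ $a^{1/2}=v|x_0|^{\beta}$ with $\beta<1$. The case $\beta=1$ (Douglas' lemma) in general only yields $v\in A^{**}$. For example, in $C_0((0,1])$ take $|x_0|=\iota$ and $a^{1/2}=\iota\,\bigl(\frac{1+\sin(1/\iota)}{2}\bigr)^{1/2}$. Then $a\le|x_0|^2$, but $a^{1/2}/\iota$ has no continuous extension at $0$. In addition, $x_0'$ is never defined, and the claim about the range of $x$ is not checked.

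The repair is to take $\alpha=1/4$. Then $a^{1/2}=v|x_0|^{1/2}$ with $v\in A$, so $a=a^{1/2}(a^{1/2})^*=v|x_0|v^*$. Write $x_0=w|x_0|$ in the bidual and put $y:=w|x_0|^{1/2}$. This is a norm limit of elements $x_0h_k(x_0^*x_0)$, so it lies in $M_{n,1}(A)$, with $y^*y=|x_0|$ and $yy^*=(x_0x_0^*)^{1/2}\in M_n({\rm Her}(b))$. Now $x:=yv^*$ satisfies $x^*x=a$ and $xx^*\le\|v\|^2yy^*$, so $xx^*\in M_n({\rm Her}(b))$. Equivalently, $x=wa^{1/2}$ works.

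Two smaller points. First, the exact identity $a=\sum_ix_i^*bx_i$ in your first route is not supplied by what you cite: \cite[Proposition 5.6]{EGLNs} is for algebraically simple algebras, and the cone argument gives only an inequality. Second, in the cone argument you should say why ${\rm span}(K)$ is an ideal (polarization together with $y^*Ky\subset K$). You should also say why a positive element of ${\rm span}(K)$ lies in $K$ (take real parts of the coefficients and use that $K$ is hereditary).

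The paper avoids the inequality-to-equality problem altogether by applying the ideal description to $a^{1/2}$ instead of $a$. It writes $a^{1/2}=\sum_i r_ibs_i=RS$, with the row $R=(r_1b^{1/2},\dots,r_nb^{1/2})$ and the column $S=(b^{1/2}s_1,\dots,b^{1/2}s_n)^T$. It then sets $x=(R^*R)^{1/2}S$, so that $x^*x=S^*R^*RS=a$ exactly and $xx^*\in{\rm Her}(R^*R)\subset M_n({\rm Her}(b))$. No factorization lemma and no passage to $A^{**}$ are needed. Your repaired argument is longer. Its minor advantages are that it uses only $a\in{\rm Ped}(A)_+$ rather than $a^{1/2}\in{\rm Ped}(A)$, and that it produces the intermediate inequality $a\le\sum_ix_i^*bx_i$.
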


\begin{proof}
Since $A$ is simple and $b\neq0$, the algebraic ideal generated by
$b$ is a dense ideal of $A$. Since ${\rm Ped}(A)$ is the minimal
dense ideal of $A$, the algebraic ideal generated by $b$ contains
${\rm Ped}(A)$.

Put $c=a^{1/2}$. Since $c\in{\rm Ped}(A)$, there exist
$n\in{\mathbb N}$ and $r_i,s_i\in A$, $1\leq i\leq n$, such that
\[
c=\sum_{i=1}^n r_i b s_i.
\]
Define
\[
R=
\begin{pmatrix}
r_1b^{1/2}&r_2b^{1/2}&\cdots&r_nb^{1/2}
\end{pmatrix}
\in M_{1,n}(A)\subset A\otimes {\cal K}
\]
and
\[
S=
\begin{pmatrix}
b^{1/2}s_1\\
b^{1/2}s_2\\
\vdots\\
b^{1/2}s_n
\end{pmatrix}
\in M_{n,1}(A)\subset A\otimes {\cal K}.
\]
Then
$
RS=c.
$
Put
$
h=R^*R.
$
The $(i,j)$-entry of $h$ is
\[
b^{1/2}r_i^*r_jb^{1/2}\in{\rm Her}(b).
\]
Therefore
\[
h\in M_n({\rm Her}(b))_+.
\]

Define
\[
x=h^{1/2}S\in M_{n,1}(A).
\]
Then
\[
x^*x=S^*hS=S^*R^*RS=c^*c=a.
\]
Moreover,
\[
xx^*=h^{1/2}SS^*h^{1/2}\in M_n({\rm Her}(b)),
\]
since $M_n({\rm Her}(b))$ is a hereditary $C^*$-subalgebra of
$M_n(A)\subset A\otimes {\cal K}.$

\end{proof}

   \begin{prop}\label{Pperm}
Let $A$ be a non-elementary $\sigma$-unital simple \CA.

\begin{enumerate}

\item If $A$ is algebraically simple and has property ETR1, then, for every $n\in\N,$ 
$M_n(A)$ has property ETR1.  Moreover, if $A$ has ETR1 associated with a strictly positive element
$e_A$ with $\|e_A\|=1$ and $\mathfrak{\kappa}_{eA},$ then 
$M_n(A)$ has ETR1 associated with ${\bar e}=e_A\otimes 1_n$ with $\mathfrak{\kappa}_{\bar e}=\mathfrak{\kappa}_{e_A}.$ 

\item If $ETR(A)\le 1,$ then, for any $a\in {\rm Ped}(A)_+\setminus \{0\},$
$\Her(a)$ has property ETR1. 

\item $ETR(A)\leq 1$ if and only if $ETR(B)\le 1$ for some non-zero $\sigma$-unital hereditary \SCA\, of $A,$ and this is equivalent to  $ETR(B)\le 1$ for all non-zero $\sigma$-unital hereditary \SCA s $B$ of $A.$

\item For every $n\in\mathbb N,$
\[
ETR(A)\leq 1\quad\Longleftrightarrow\quad ETR(M_n(A))\leq 1.
\]
Moreover,
\[
ETR(A)\leq 1\quad\Longleftrightarrow\quad
ETR(A\otimes\mathcal K)\leq 1.
\]
\end{enumerate}

\end{prop}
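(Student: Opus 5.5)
Item (1) is the only part that needs a genuine construction; (2)--(4) follow from it together with Proposition \ref{Phered} and Lemma \ref{Lped}.

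\textbf{Plan for (1).} Write $\bar e=e_A\otimes 1_n$, which is strictly positive in $M_n(A)$ with norm one. Let $\ep>0$, let $\bar{\cal F}\subset M_n(A)$ be finite, and let $\bar a\in M_n(A)_+\setminus\{0\}$.

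First reduce the Cuntz condition to $A$. Since $A$ is simple and non-elementary, Remark \ref{Rstablyfinite} and \cite[Lemma 2.3]{Lincs91} give $a_0\in A_+\setminus\{0\}$ with $a_0\otimes 1_n\lesssim \bar a$. Concretely, take a nonzero element of $\Her(\bar a)\cap (A\otimes e_{11})$ after a unitary rotation, and then take $n$ mutually orthogonal equivalent copies inside it.

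Next let ${\cal F}\subset A$ be the finite set of all matrix entries of elements of $\bar{\cal F}$, together with $e_A$. Apply ETR1 for $A$ with ${\cal F}$, the element $a_0$, and a small $\dt$ chosen to absorb the factor $n^2$ below. This gives $e$, $D$, $\theta$ and $d_e$.

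Now put $\bar e_0=e\otimes 1_n$ and $\bar D=D\otimes M_n$, and define $\bar\theta=\theta\otimes \id_{M_n}$. Then $\bar\theta$ is order zero and c.p.c., and it is fully normalized because its restriction to each summand $M_{r_j}\otimes M_n$ has norm $\|\theta|_{M_{r_j}}\|=1$. Its range lies in $\overline{(1-\bar e_0)M_n(A)(1-\bar e_0)}=M_n(\overline{(1-e)A(1-e)})$.

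The ETR1 conditions are then checked entrywise.
\begin{itemize}
\item Commutators with $\bar{\cal F}$ are within $n^2\dt$.
\item For $x\in\bar{\cal F}$, the element $(1-\bar e_0)x$ lies within $n^2\dt$ of $\bar\theta(\bar D)$.
\item For the strictly positive element, $(1-\bar e_0)^{1/2}\bar e(1-\bar e_0)^{1/2}=((1-e)^{1/2}e_A(1-e)^{1/2})\otimes 1_n$, which is close to $\bar\theta(d_e\otimes 1_n)$.
\item Every tracial state on $\bar D$ has the form $t\otimes{\rm tr}_n$. Hence $\tau(d_e\otimes 1_n)=t(d_e)\ge\kappa_{e_A}$, so $\kappa_{\bar e}=\kappa_{e_A}$.
\item Finally, $\bar e_0=e\otimes 1_n\lesssim a_0\otimes 1_n\lesssim\bar a$.
\end{itemize}

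\textbf{(2).} Suppose $\Her(b)$ has ETR1 for some $b\in{\rm Ped}(A)_+\setminus\{0\}$, and let $a\in{\rm Ped}(A)_+\setminus\{0\}$. The algebra $\Her(b)$ is algebraically simple, since $b$ lies in the Pedersen ideal. By Lemma \ref{Lped} there are $x$ and $N$ with $x^*x=a$ and $xx^*\in M_N(\Her(b))$. Consequently $\Her(a)\cong\Her(xx^*)$, which is a $\sigma$-unital hereditary subalgebra of $M_N(\Her(b))$. By (1), $M_N(\Her(b))$ has ETR1, and it is algebraically simple and non-elementary. Proposition \ref{Phered} then shows that $\Her(xx^*)$, and hence $\Her(a)$, has ETR1.

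\textbf{(3) and (4).} Let $B\subset A$ be a nonzero $\sigma$-unital hereditary subalgebra. Then ${\rm Ped}(B)={\rm Ped}(A)\cap B$ is nonzero, and for $c\in{\rm Ped}(B)_+\setminus\{0\}$ we have $\Her_B(c)=\Her_A(c)$. Therefore $ETR(B)\le1$ holds if and only if some, equivalently by (2) every, such $\Her(c)$ has ETR1; this is exactly $ETR(A)\le 1$. This gives (3).

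For (4), $A$ sits as a hereditary subalgebra (the corner $A\otimes e_{11}$) of both $M_n(A)$ and $A\otimes{\cal K}$. Pedersen elements of $A$ remain Pedersen elements there, and Definition \ref{DD1} only refers to $\Her(a)$ for such elements. So (4) follows from (3) applied inside $M_n(A)$, respectively $A\otimes{\cal K}$.

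\textbf{Main obstacle.} The only real work is in (1): producing $a_0$ with $a_0\otimes 1_n\lesssim\bar a$, and tracking the $n^2$ loss in the norm estimates. Everything else is bookkeeping with Lemma \ref{Lped} and Proposition \ref{Phered}.
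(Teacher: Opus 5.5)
Your proposal is correct and follows the paper's proof step for step. In (1) the paper also tensors the ETR1 data with $M_n$: it takes $e\otimes 1_n$, $\theta\otimes{\rm id}_{M_n}$ and $d_e\otimes 1_n$, checks everything entrywise, and uses that traces on $D\otimes M_n$ are $t\otimes{\rm tr}_n$. Parts (2)--(4) are then obtained exactly as you do, from Lemma \ref{Lped}, part (1), Proposition \ref{Phered}, the identity ${\rm Ped}(B)={\rm Ped}(A)\cap B$, and the corner copies of $A$ in $M_n(A)$ and $A\otimes{\cal K}$.

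The only point to tidy is your ``concrete'' construction of $a_0$. The intersection $\Her(\bar a)\cap(A\otimes e_{11})$ may be $\{0\}$, so argue via Cuntz subequivalence instead. Pick $i$ with $\bar a_{ii}\neq 0$; its compression $\bar a_{ii}\otimes e_{ii}$ is $\precsim\bar a$. Then take $a_0=\phi(\iota\otimes e_{11})$ for a nonzero homomorphism $\phi\colon C_0((0,1])\otimes M_n\to\Her(\bar a_{ii})$ from Lemma \ref{LMn}. This gives $a_0\otimes 1_n\precsim\bar a_{ii}\precsim\bar a$, which is what the paper's appeal to Lemma \ref{LMn} provides.
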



\begin{proof}
For (1), 
suppose that $A$ has property ${\rm ETR}1$ associated with a strictly
positive element $e_A\in A_+$, $\|e_A\|=1$, and a constant
$\mathfrak f_{e_A}>0$. Put
\[
e_{M_n(A)}=e_A\otimes 1_n.
\]
Then $e_{M_n(A)}$ is a strictly positive element of $M_n(A)$ and
$\|e_{M_n(A)}\|=1$.

Let $\varepsilon>0$, let ${\cal F}\subset M_n(A)$ be a finite subset,
and let $0\neq a\in M_n(A)_+$. Write
\[
x=(x_{ij})_{i,j=1}^n
\]
for $x\in{\cal F}$, and put
\[
{\cal G}=\{x_{ij}:x\in{\cal F},\ 1\leq i,j\leq n\}\cup\{e_A\}\subset A.
\]
Choose $\delta>0$ sufficiently small that
\[
n\delta<\varepsilon.
\]
Since $A$ is non-elementary and simple,  by, for example, Lemma \ref{LMn}, 
 there exists
$0\neq b\in A_+$ such that
\[
b\otimes 1_n\precsim a.
\]
Apply property ${\rm ETR}1$ to ${\cal G}$, $\delta$, and $b$. Using Remark \ref{R1} (ii). we obtain
a positive element $e\in A_+^{\bf 1}$, a finite-dimensional $C^*$-algebra
$D$, a fully normalized order-zero c.p.c.\ map
\[
\theta:D\longrightarrow (1-e)A(1-e),
\]
and a positive contraction $d_e\in D$ such that
\beq
\|[e,x_{ij}]\|<\delta
\andeqn
{\rm dist}\big((1-e)^{1/2}x_{ij}(1-e)^{1/2},\theta(D)\big)<\delta
\eneq
for all $x\in{\cal F}$ and $1\leq i,j\leq n$. Moreover,
\[
\left\|
(1-e)^{1/2}e_A(1-e)^{1/2}-\theta(d_e)
\right\|<\delta,
\andeqn 
t(d_e)\geq\mathfrak f_{e_A}
\quad\text{for all }t\in T(D),
\]
and
\[
e\precsim b.
\]

Put
\[
E=e\otimes 1_n,\qquad D_n=M_n(D),
\]
and define
\[
\Theta=\theta\otimes{\rm id}_{M_n}:D_n\longrightarrow M_n(A).
\]
Then $\Theta$ is an order-zero c.p.c.\ map with
\[
\Theta(D_n)\subset \overline{(1-E)M_n(A)(1-E)}.
\]
Since $\theta$ is fully normalized, $\Theta$ is also fully normalized.

For $x=(x_{ij})\in{\cal F}$, we have
\[
[E,x]=([e,x_{ij}])_{i,j=1}^n.
\]
Therefore
\[
\|[E,x]\|\leq n\max_{i,j}\|[e,x_{ij}]\|<n\delta<\varepsilon.
\]
Similarly,
\[
{\rm dist}\big((1-E)^{1/2}x(1-E)^{1/2},\Theta(D_n)\big)
<n\delta<\varepsilon.
\]

Put
$
d_E=d_e\otimes 1_n\in D_n.
$
Then
\[
\begin{split}
&\left\|
(1-E)^{1/2}e_{M_n(A)}(1-E)^{1/2}-\Theta(d_E)
\right\|\\
&\qquad =
\left\|
(1-e)^{1/2}e_A(1-e)^{1/2}-\theta(d_e)
\right\|<\delta<\varepsilon.
\end{split}
\]

Write
$
D=\bigoplus_{j=1}^m M_{r_j}.
$
Then
\[
D_n\cong\bigoplus_{j=1}^m M_{nr_j}.
\]
The normalized trace on $M_{nr_j}$ is the tensor product of the
normalized traces on $M_{r_j}$ and $M_n$. Hence
\[
t(d_E)\geq\mathfrak f_{e_A}
\quad\text{for all }t\in T(D_n).
\]

Finally,
\[
E=e\otimes 1_n\precsim b\otimes 1_n\precsim a.
\]
Thus all the requirements of property ${\rm ETR}1$ are satisfied for
$M_n(A)$, with strictly positive element $e_A\otimes 1_n$ and the
same constant $\mathfrak f_{e_A}$.  The "Moreover" part also follows from the proof above.

(2) Suppose that ${\rm ETR}(A)\le 1.$ 
Choose a nonzero element
$a\in {\rm Ped}(A)_+$ such that ${\rm Her}(a)$ has property ${\rm ETR}1$.

Let $b\in {\rm Ped}(A)_+\setminus \{0\}.$   By Lemma \ref{Lped}, 
there is $x\in A\otimes {\cal K}$ such that, for some $n\in \N,$ 
\beq
x^*x=b\andeqn xx^*\in M_n(\Her(a)).
\eneq
Note that since $a\in {\rm Ped}(A)_+,$ $\Her(a)$ is algebraically simple  and non-elementary.
 By (1), $M_n(\Her(a))$ has property ETR1. It follows from Proposition \ref{Phered} that
 $\Her(xx^*)$ has property ETR1. Since $\Her(b)=\Her(x^*x)$ is isomorphic to $\Her(xx^*),$
 $\Her(b)$ has property ETR1.

For (3),  suppose that  ${\rm ETR}(A)\le 1.$

Let $B\subset A$ be a nonzero $\sigma$-unital hereditary $C^*$-subalgebra.
Choose $0\neq b\in {\rm Ped}(B)_+$.  Then $\Her(b)\subset A$ 
is a $\sigma$-unital hereditary \SCA\, and is algebraically simple. Moreover, $b\in {\rm Ped}(A).$ By (2), 
$\Her(b)$ has property ETR1. Hence ${\rm ETR}(B)\le 1.$ 

Conversely, suppose that ${\rm ETR}(B)\leq 1$ for some nonzero
$\sigma$-unital hereditary $C^*$-subalgebra $B\subset A$. Choose
$0\neq b\in {\rm Ped}(B)_+$ such that ${\rm Her}_B(b)$ has property
${\rm ETR}1$. Since $B$ is hereditary in $A$,
\[
{\rm Ped}(B)={\rm Ped}(A)\cap B
\]
and
\[
{\rm Her}_B(b)={\rm Her}_A(b).
\]
Therefore $b\in {\rm Ped}(A)_+$ and ${\rm Her}_A(b)$ has property
${\rm ETR}1$. It follows directly from Definition \ref{DD1} that
${\rm ETR}(A)\leq 1$.

This proves that ${\rm ETR}(A)\leq 1$ if and only if
${\rm ETR}(B)\leq 1$ for one, and hence for every, nonzero
$\sigma$-unital hereditary $C^*$-subalgebra $B$ of $A$.

(4) For $n\in {\mathbb N}$, let
\[
B_n=\left\{
\begin{pmatrix}
a&0&\cdots&0\\
0&0&\cdots&0\\
\vdots&\vdots&&\vdots\\
0&0&\cdots&0
\end{pmatrix}
:a\in A
\right\}\subset M_n(A).
\]
Then $B_n$ is a nonzero $\sigma$-unital hereditary $C^*$-subalgebra
of $M_n(A)$, and the map
\[
a\mapsto
\begin{pmatrix}
a&0&\cdots&0\\
0&0&\cdots&0\\
\vdots&\vdots&&\vdots\\
0&0&\cdots&0
\end{pmatrix}
\]
is an isomorphism from $A$ onto $B_n$. It follows from (3) that
\[
{\rm ETR}(M_n(A))\leq 1
\quad\Longleftrightarrow\quad
{\rm ETR}(B_n)\leq 1
\quad\Longleftrightarrow\quad
{\rm ETR}(A)\leq 1.
\]

Similarly, let
\[
B
=\{a\otimes e_{11}:a\in A\}\subset A\otimes{\cal K},
\]
where $e_{11}$ is a rank-one projection in ${\cal K}$. Then $B$ is
a nonzero $\sigma$-unital hereditary $C^*$-subalgebra of
$A\otimes{\cal K}$ and is isomorphic to $A$. Hence, by (3),
\[
{\rm ETR}(A\otimes{\cal K})\leq 1
\quad\Longleftrightarrow\quad
{\rm ETR}(B)\leq 1
\quad\Longleftrightarrow\quad
{\rm ETR}(A)\leq 1.
\]
\end{proof}

\section{Affine functions}

\begin{prop}\label{Pquas=trace}
Let $A$ be a $\sigma$-unital simple \CA\, with $ETR(A)\le 1.$
Then every 2-quasitrace is a trace.  In addition, if $A$ is separable and  if $a\in {\rm Ped}(A)_+\setminus \{0\},$ 
then $T(\Her(a))=QT(\Her(a))\not=\emptyset.$
\end{prop}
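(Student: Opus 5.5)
The plan is to show that every 2-quasitrace is additive by testing it against the one-dimensional cone models supplied by property ETR1. We work first in an algebraically simple hereditary \SCA\ $B=\Her(a)$ with $a\in {\rm Ped}(A)_+\setminus\{0\}$. By Proposition \ref{Pperm}(2), $B$ has property ETR1, and so does $M_n(B)$ by Proposition \ref{Pperm}(1). Since a densely defined 2-quasitrace on $A\otimes{\cal K}$ is determined by its values on ${\rm Ped}(A\otimes {\cal K})$, and this is the union of the $M_n(\Her(a))$ for suitable $a$, it suffices to show that every bounded 2-quasitrace $\tau$ on such a $B$ (after normalizing, $\tau\in QT(B)$) is additive on $B_+$. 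That is, we must show $\tau(x+y)=\tau(x)+\tau(y)$ for positive contractions $x,y$.

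Fix $x,y\in B_+^{\bf 1}$ and $\ep>0$, and apply ETR1 to ${\cal F}=\{x,y,x+y,e_B\}$ with a target element $a'$ such that $d_\tau(a')<\ep$ for all $\tau\in QT(B)$. Such an element is produced as in Remark \ref{RCuntznull}(2), via Lemma \ref{LMn} (a non-zero \hm\ from $C_0((0,1])\otimes M_N$ gives an element of small rank in every quasitrace). This yields $e$, $D$, and $\theta$ with $s=(1-e)^{1/2}$. The key decomposition is
\[
z=s z s + (z - szs), \qquad z\in {\cal F}.
\]
The first term lies within $\ep$ of $\theta(D)\subset C^*(\theta(D))$, which is a quotient of $C_0((0,1])\otimes D$ and hence a subhomogeneous, nuclear, in fact type I, algebra. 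On it every 2-quasitrace is a trace (Haagerup's theorem is not needed; for commutative-times-matrix algebras this is elementary, since quasitraces on $C_0(X,M_r)$ are traces). Hence $\tau(sxs+sys)=\tau(sxs)+\tau(sys)$, up to the errors coming from approximating inside $\tau$.

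To control those errors, we use that 2-quasitraces are norm continuous on bounded sets in the following weak sense: $|\tau(b)-\tau(c)|$ is small when $\|b-c\|$ is small. This follows from monotonicity, together with $b\le c+\delta$ on a bounded set, after cutting down by $f_\delta$ of $e_B$ or another unit-like element. The remainder term $z-szs\approx ez$ (using the commutator estimate) is controlled via $\tau(ez)\le\|z\|\,d_\tau(e)\le d_\tau(a')<\ep$, using $e\lesssim a'$. This bound is made precise through the quasitrace inequality $\tau(b+c)\le 2(\tau(b)+\tau(c))$ and the identity $\tau(z)=\tau(z^{1/2}(1-e)z^{1/2})+\tau(z^{1/2}ez^{1/2})$ up to commutator errors. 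The main obstacle is exactly this step: quasitraces are only subadditive up to a constant and are additive only on commuting elements. The plan is therefore to write
\[
z^{1/2}ez^{1/2}+z^{1/2}(1-e)z^{1/2}=z,
\]
in which the two summands almost commute, because $e$ almost commutes with $z^{1/2}$. We then perturb them into a commutative \SCA\ by replacing $z^{1/2}ez^{1/2}$ with $ez$-type elements in $C^*(e,z)$ up to small norm error. On a commutative \SCA, $\tau$ is additive, so $|\tau(z)-\tau(szs)|\le C\ep$. Combining the three estimates for $z=x,y,x+y$ gives additivity up to $C\ep$, and letting $\ep\to0$ shows that $\tau$ is a trace.

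For the second statement, assume $A$ is separable and $a\in {\rm Ped}(A)_+\setminus\{0\}$. Then $\Her(a)$ is separable, algebraically simple, and has ETR1, so Theorem \ref{Ltrace} provides a non-zero trace on it; normalizing gives $T(\Her(a))\neq\emptyset$. Finally, since every quasitrace is a trace by the first part, $QT(\Her(a))=T(\Her(a))$.
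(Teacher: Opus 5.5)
\textbf{Gap: the estimate $|\tau(z)-\tau(szs)|\le C\ep$ in your fourth paragraph.} This is the only step where the 2-quasitrace is evaluated on a sum of non-commuting positive elements, and as written it does not go through.
\begin{itemize}
\item $\tau(ez)$ is not defined, since $ez$ is not self-adjoint.
\item The quasi-triangle inequality $\tau(b+c)\le 2(\tau(b)+\tau(c))$ only gives $\tau(z)\le 2\tau(szs)+2\|z\|\,d_\tau(e)$. The factor $2$ does not disappear as $\ep\to 0$.
\item Your proposed cure, moving $z^{1/2}ez^{1/2}$ and $z^{1/2}(1-e)z^{1/2}$ into a commutative $C^*$-subalgebra, has no justification. $C^*(e,z)$ is not commutative. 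In a general $C^*$-algebra, almost commuting positive elements need not be close to exactly commuting ones; there are index obstructions already in $B(H)$, e.g.\ real and imaginary parts of slowly weighted shifts.
\end{itemize}
So the upper bound $\tau(z)\le \tau(szs)+{\rm small}$ is not established.

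The paper avoids this step altogether by quoting \cite[Proposition 4.6]{FLsigma}, with Corollary \ref{R|x|>} in the background. Your second part, via Theorem \ref{Ltrace}, is the same as the paper's.

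\textbf{Repair: work with $d_\tau$ instead of $\tau$.} The estimate holds for any positive contraction $e$, with no commutator condition.
\begin{itemize}
\item Lower bound. Put $c=z^{1/2}(1-e)z^{1/2}$ and $x=(1-e)^{1/2}z^{1/2}$. Then $c=x^*x$ and $szs=xx^*$, so $\tau(c)=\tau(szs)$. Monotonicity gives $\tau(szs)\le\tau(z)$.
\item Cuntz comparison. One has $\|z-((c-t)_+ +z^{1/2}ez^{1/2})\|\le t$. So \cite[Proposition 2.2]{RrII} gives, for every $\delta>0$,
\[
(z-t-\delta)_+\lesssim (c-t)_+\oplus z^{1/2}ez^{1/2}\lesssim (c-t)_+\oplus e.
\]
This is a sharpened form of Lemma \ref{L82}, without the loss $\nu\mapsto\nu/2$.
\item Integration. $d_\tau$ is additive on orthogonal sums and monotone for $\lesssim$. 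Also $\tau(b)=\int_0^\infty d_\tau((b-t)_+)\,dt$ for every $b\ge 0$, because $\tau$ is linear on $C^*(b)$. Integrate over $t\in[0,\|z\|]$ and let $\delta\to 0$, using $d_\tau(z)\le\|\tau\|<\infty$. This yields
\[
\tau(szs)\le\tau(z)\le\tau(szs)+\|z\|\,d_\tau(e)\le\tau(szs)+\|z\|\,d_\tau(a').
\]
\end{itemize}
With this inequality in place of your fourth paragraph, the rest of your argument goes through and gives a self-contained proof: approximation of $szs$ by $\theta(D)$, traciality of $\tau$ on $C^*(\theta(D))$, and norm continuity of bounded quasitraces. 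The commutator condition of ETR1 is not needed for this part.
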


\begin{proof}
We may assume that $A$ is non-elementary.

Let $a\in {\rm Ped}(A)_+\setminus \{0\}$ and $B=\Her(a).$
Since $B$ is a full hereditary \SCA, it suffices to show every 2-quasitraces are traces.
Then $B$ is algebraically simple and $\sigma$-unital.  By
Proposition \ref{Pperm} (2), $B$ has property ETR1.   
 Since every quasitrace on  a quotient 
 of $C_0((0,1])\otimes M_n$  is a trace, 
\cite[Proposition  4.6]{FLsigma} (see also  Corollary 
\ref{R|x|>})  implies that every 2-quasitrace  on $A$ is a trace. 
For the second part, if $A$ is separable,  we apply
 Theorem \ref{Ltrace} to conclude that $T(\Her(a))\not=\emptyset.$
\end{proof}

The following is standard.  

\begin{lem}\label{L82}
Let $A$ be a $C^*$-algebra, let $b\in A_+$, let $0\leq e\leq 1$, 
and let $\nu>0$. Put
\[
c=b^{1/2}(1-e)b^{1/2}
\quad\text{and}\quad
d=b^{1/2}eb^{1/2}.
\]
Then
\[
[b]\leq [c]\oplus[d]\leq [c]\oplus[e]
\]
in $\Cu(A)$. Moreover,
\[
[(b-\nu)_+]
 \leq
[(c-\nu/2)_+]\oplus[e].
\]
\end{lem}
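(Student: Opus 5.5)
We work in the unitization $\widetilde{A}$ (or in $M(A)$), where $1-e$ makes sense, and argue directly with Cuntz comparison.

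\emph{First inequality.} Since $b^{1/2}(1-e)b^{1/2}+b^{1/2}eb^{1/2}=b$, we have $b=c+d$. For positive elements the sum is dominated by the direct sum, $c+d\lesssim c\oplus d$. This is the standard fact that $x^*x+y^*y\sim xx^*+yy^*$ for a row/column pair, applied with $x=c^{1/2}$ and $y=d^{1/2}$. It gives $[b]\le[c]\oplus[d]$.

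\emph{Second inequality.} Note that
\[
d=b^{1/2}eb^{1/2}=(e^{1/2}b^{1/2})^*(e^{1/2}b^{1/2})\sim e^{1/2}be^{1/2}.
\]
The element $e^{1/2}be^{1/2}$ lies in $\Her(e)$, so $d\lesssim e$. Hence $[c]\oplus[d]\le[c]\oplus[e]$. Both $c$ and $d$ lie in $A$ even though $e$ may be only in the unitization, so all classes are in $\Cu(A)$.

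\emph{The ``moreover'' part.} We use the perturbation result of R{\o}rdam (\cite[Proposition 2.2]{RrII} style): if $x\le y+z$ with all terms positive, then $(x-\nu)_+\lesssim (y-\nu/2)_+\oplus(z-\nu/2)_+$. The more elementary form we need is
\[
(y+z-\nu)_+\lesssim (y-\nu/2)_+\oplus z .
\]
To obtain it, observe that $y\le (y-\nu/2)_+ +\nu/2$, so $y+z-\nu\le (y-\nu/2)_+ + z-\nu/2\le (y-\nu/2)_++z$. Then:
\begin{itemize}
\item Apply the fact that $h\le k$ implies $(h-\epsilon)_+\lesssim k$. More precisely, use the known inequality: if $h\le k+\delta$, then $(h-\delta)_+\lesssim k$, together with $(y+z-\nu)_+\lesssim (y+z-\nu/2 - \nu/2)_+$.
\item Alternatively, write $y+z-\nu\le (y-\nu/2)_++(z-\nu/2)$ and invoke R{\o}rdam's lemma: $(a-\epsilon)_+\lesssim$ anything dominating $a-\epsilon$ in the positive sense.
\end{itemize}
Taking $y=c$ and $z=d$, so that $b=c+d$, this yields
\[
(b-\nu)_+\lesssim (c-\nu/2)_+\oplus d\lesssim (c-\nu/2)_+\oplus e .
\]

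\emph{Main obstacle.} The only delicate point is this last Cuntz estimate for $(c+d-\nu)_+$. The cleanest route is through Kirchberg--R{\o}rdam's lemma: if $\|a-(p+q)\|$ is small or $a\le p+q$, then $(a-\nu)_+\lesssim (p-\nu/2)_+\oplus q$. We would cite it or reprove it by noting that $(p-\nu/2)_+ + q\ge p+q-\nu/2$, then applying \cite[Lemma 2.2]{RrII} to get $(p+q-\nu)_+\lesssim (p-\nu/2)_++q$, and finally using sum${}\lesssim{}$direct sum again.
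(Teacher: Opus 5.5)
Your proposal is correct and is essentially the paper's argument. Both use $b=c+d$, $c+d\lesssim c\oplus d$, and $d\sim e^{1/2}be^{1/2}\in\Her(e)$. For the last part, both compare $b$ with $(c-\nu/2)_++d$ up to $\nu/2$ and then apply R{\o}rdam's perturbation result \cite[Proposition~2.2]{RrII}: the paper phrases this as the norm estimate $\|b-((c-\nu/2)_++d)\|=\|c-(c-\nu/2)_+\|\le\nu/2$, while you use the equivalent order form $b\le (c-\nu/2)_++d+\nu/2$.

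Two presentation points. First, drop the hedging between variants in the last step and state just that single inequality together with $(b-\nu)_+\lesssim(b-\nu/2)_+\lesssim(c-\nu/2)_++d$. Second, the identity you quote for $c+d\lesssim c\oplus d$ is vacuous as written, since with $x=c^{1/2}$ and $y=d^{1/2}$ both sides equal $c+d$. The intended fact is $c+d=v^*v\sim vv^*\le 2(c\oplus d)$ for the column $v$ with entries $c^{1/2},d^{1/2}$.
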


\begin{proof}
Since
$
b=c+d,
$
and $x+y\precsim x\oplus y$ for positive elements $x$ and $y$, we have
\beq
[b]\leq[c]\oplus[d]\le [c]\oplus [e].
\eneq
For the second assertion, put
\[
c_0=(c-\nu/2)_+.
\]
It follows that 
\[
\|b-(c_0+d)\|
=\|c-c_0\|
\leq \nu/2.
\]
By \cite[Proposition 2.2]{RrII}
\beq
(b-\nu)_+\precsim (c-\nu/2)_++d.
\eneq
Thus
\[
[(b-\nu)_+]
\leq
[(b^{1/2}(1-e)b^{1/2}-\nu/2)_+]\oplus[e].
\]
\end{proof}

\begin{prop}\label{LLdiv}
Let $A$ be a non-elementary $\sigma$-unital simple \CA\, with $ETR(A)\le 1.$ Then $A$ has the following property (D):
For every $x,\,  x' \in {\rm Cu}(A)$ with $x'\ll x,$ and every $n\in \N,$ there exists $y\in {\rm Cu}(A)$ such that
\beq
n\widehat{y}\le \widehat{x}\tand \widehat{x'}\le (n+1) \widehat{y}.
\eneq

\end{prop}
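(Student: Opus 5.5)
We would reduce to the situation where $x$ and $x'$ come from a single positive element of a hereditary subalgebra that has property ETR1. Then we would perform the division inside the finite-dimensional algebra $D$, using Proposition \ref{Prank} to make all matrix blocks of $D$ large, and transport the result back through $\theta$.

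\emph{Reduction.} Since $x'\ll x$, we can choose $a\in (A\otimes{\cal K})_+$ with $x=[a]$ and $\delta>0$ such that $x'\le [(a-\delta)_+]$. Passing to $M_k(A)$ and then to a hereditary subalgebra, we may assume that $a\in{\rm Ped}(A)_+$ and work in $B=\Her(a)$. By Proposition \ref{Pperm} (2) and (4), $B$ is algebraically simple, non-elementary, and has property ETR1. Traces on $A$ restrict to traces on $B$, and Cuntz comparisons in $B$ are comparisons in $A$. It therefore suffices to find $y\in \Cu(B)$ with $n\widehat y\le \widehat{[a]}$ and $\widehat{[(a-\delta)_+]}\le (n+1)\widehat y$ on $\widetilde T(B)$; this is the same as checking on the cone generated by $T(B)$, which is nonempty by Proposition \ref{Pquas=trace}. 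We first fix intermediate levels $0<\delta_2<\delta_1<\delta$. Next, by the strict comparison-free construction of Remark \ref{RCuntznull}(2) (only the rank estimate via Lemma \ref{LMn} is used), we choose $s\in B_+\setminus\{0\}$ with $d_\tau(s)$ much smaller than $\inf_\tau d_\tau((a-\delta_2)_+)-\ $(margin). The margin is available because $\widehat{[(a-\delta_2)_+]}$ is bounded below on a compact base of traces of $B$.

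\emph{Applying ETR1.} Apply property ETR1 with ${\cal F}\ni a$, a small $\ep$, the element $s$, and (Proposition \ref{Prank}) all block sizes $r_j\ge N$, with $N\gg n$. This produces $e\lesssim s$ and $\theta$ with $\|(1-e)^{1/2}a(1-e)^{1/2}-\theta(d_a)\|<\ep$, where $0\le d_a\le 1$ by Remark \ref{R1}(iv). In each block $M_{r_j}$, let $E_j$ be the spectral projection of $d_a$ for $(\delta_1,1]$, of rank $k_j$. Choose a subprojection $F_j\le E_j$ of rank $\lceil k_j/(n+1)\rceil$, so that $nF_j\lesssim E_j$ in $M_{r_j}$ (when $k_j\ge$ roughly $n(n+1)$; otherwise $k_j$ is a negligible fraction of $r_j$). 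Also $k_j\le (n+1)\,{\rm rank}(F_j)$. Put $F=\oplus F_j$ and $y=[\theta(F)]$, interpreted via $\tilde\theta(g\otimes F)$ for a suitable cutoff $g$. Order zero maps preserve Cuntz relations of this kind, so $n y\le [\tilde\theta(f\otimes E)]$. The latter is $\lesssim [(\theta(d_a)-\delta_1/2)_+]\lesssim [((1-e)^{1/2}a(1-e)^{1/2}-\delta_1/4)_+]\lesssim [a]$, by \cite[Proposition 2.2]{RrII} and the fact that $(1-e)^{1/2}a(1-e)^{1/2}\sim a^{1/2}(1-e)a^{1/2}\le a$. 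This gives $n\widehat y\le \widehat{x}$.

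\emph{Lower bound, the main obstacle.} By Lemma \ref{L82} with $b=a$,
$$[(a-\delta)_+]\le [(a^{1/2}(1-e)a^{1/2}-\delta/2)_+]\oplus [e]\le [\theta(E)]\oplus[e],$$
using that $\delta_1<\delta/2$ after the $\ep$-perturbation. Hence $\widehat{x'}\le (n+1)\widehat y+\widehat{[e]}$ plus the contribution of small blocks. The hard part is absorbing the error terms $\widehat{[e]}\le d_\tau(s)$ and the small-block defect, which is at most $n(n+1)/N$ times $d_\tau(\theta(1_D))$, into a strict inequality without strict comparison. We would do this by building slack into $y$ before dividing. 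Replace $F_j$ by a subprojection of $E_j$ of rank $\lceil k_j/(n+1)\rceil+m_j$, where $m_j\approx r_j/N'$ with $N'\gg n$. This keeps $n F_j\lesssim E_j$ as long as $k_j$ is large relative to $n(n+1)m_j$, which is guaranteed on blocks where $E_j$ carries a non-negligible trace. Separately, show that $[e]$ is dominated by $(n+1)$ copies of $\theta$ of these extra rank-$m_j$ pieces. For this we compare $e\lesssim s$ with the pieces using the trace bound $t(d_{e_A})\ge\kappa_{e_A}$, which keeps $\theta(1_D)$ tracially large, and choose $s\lesssim$ the image of a minimal-size piece directly in Cuntz order, rather than only tracially, before invoking ETR1. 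Making this last choice of $s$ compatible with the later-produced $\theta$ is the delicate point. We would try first to choose $s$ after a preliminary application of ETR1, and then reapply ETR1 with ${\cal F}$ enlarged by that preliminary data.
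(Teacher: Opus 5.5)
Your reduction and the first two steps are sound in outline. However, the argument stops exactly where it has to close: you never absorb $\widehat{[e]}$, or the rounding and small-block defects, into $(n+1)\widehat y$. The route you propose for doing so is circular. The comparison element $s$ must be fixed before ETR1 is applied, so it cannot be chosen Cuntz-below a piece of the $\theta$ that ETR1 later produces. A preliminary application of ETR1 does not help, because the second application returns a new $\theta$ unrelated to the preliminary pieces. (Also, $t(d_{e_A})\ge\kappa_{e_A}$ is a statement about $T(D)$; it gives no lower bound for $\tau(\theta(1_D))$ with $\tau$ a trace on $A$.) There is a further flaw: if $y=[\tilde\theta(g\otimes F)]$ with a cutoff $g$, then $\widehat y$ can be strictly smaller than $\widehat{[\theta(F)]}$, which breaks your lower bound. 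A smaller point: you cannot simply assume $a\in{\rm Ped}(A)$. Instead, replace $a$ by $b=(a-\mu/2)_+$ and put $b_0=(b-\nu)_+$ with $x'\ll[b_0]$.

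The missing observation is that property (D) concerns rank functions only, so Cuntz domination of $e$ is never needed. Tracial domination suffices, and it follows from your own lower-bound inequality.
\begin{itemize}
\item Choose $s\in\Her(b_0)$ via Lemma~\ref{LMn} with $R_s[s]\le[b_0]$, so that $\widehat{[e]}\le\widehat{[s]}\le\beta\widehat{[b_0]}$ with $\beta=1/(16n)$.
\item Since $[b_0]\le[\theta(1_D)]\oplus[e]$, you get $\widehat{[e]}\le\beta\bigl(\widehat{[\theta(1_D)]}+\widehat{[e]}\bigr)$, hence $\widehat{[e]}\le\frac{\beta}{1-\beta}\widehat{[\theta(1_D)]}<\frac{1}{8n}\widehat{[\theta(1_D)]}$.
\item Divide $1_D$ rather than the spectral projection $E$. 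Take $p_j\in M_{r_j}$ of rank $\lfloor r_j/n\rfloor$ and set $y=[\theta(p)]$.
\item Then $ny\le[\theta(1_D)]\le[b]\le x$, simply because $\theta(D)\subset\Her(b)$; no cutoff is needed.
\item By Proposition~\ref{Prank} you may take $r_j\ge 2n(n+1)$, which gives $(n+1)\lfloor r_j/n\rfloor>(1+\frac{1}{4n})r_j$. Hence $(n+1)\widehat y\ge(1+\frac{1}{4n})\widehat{[\theta(1_D)]}\ge\widehat{[\theta(1_D)]}+\widehat{[e]}\ge\widehat{x'}$.
\end{itemize}
The rounding slack built into this division absorbs $\widehat{[e]}$. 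No small-block defect arises, because every $1_{M_{r_j}}$ you divide has rank at least $2n(n+1)$.
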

\begin{proof}
Let $x,x'\in {\rm Cu}(A).$ Suppose that $x'\ll x$ and $n\in \N.$
If $x'=0,$ choose $y=0.$ 
Thus we may assume that $x'\not=0.$ 

Choose $a\in (A\otimes {\cal K})_+$ such that $x=[a].$ Since $x'\ll [a],$ there exists $\mu>0$ such that
\beq\label{LLdvi-3}
x'\ll [(a-\mu/2)_+].
\eneq
Put   $b=(a-\mu/2)_+$ and $B=\Her(b).$   Note that  $b\not=0$ and
$b\in {\rm Ped}(A\otimes {\cal K})_+.$ By  part (2) of Proposition \ref{Pperm},  $B$ has property ETR1. 
Since $x'\ll [b],$ there is $0<\nu<\mu/2$ such that
\beq\label{LLdvi-3+}
x' \ll [(b-\nu)_+].
\eneq
Put $b_0=(b-\nu)_+.$ 
Then $b_0\not=0.$ Fix $\bt=1/16n.$
Choose  an integer $R_s>1/\bt.$  Since $\Her(b_0)$ is a non-zero and non-elementary simple \CA, 
By Lemma \ref{LMn}, 
there are  $R_s$  many mutually orthogonal nonzero mutually (Cuntz) equivalent  elements.
Thus we may choose  $s\in \Her(b_0)\setminus \{0\}$ such that 
\beq
R_s[s]\le [b_0].
\eneq
 Moreover
 \beq\label{Pdiv-5}
 \widehat{[s]}\le {1\over{R_s}}\widehat{[b_0]}<\bt \widehat{[b_0]}.
 \eneq

 Choose $$R\ge 2n(n+1)\andeqn \dt=\min\{{\mu\over{4}}, {\nu\over{4}}\}.$$
  By Proposition \ref{Prank}, since $B$ has property ETR1 x, we obtain $0\le e\le 1$ in $B,$ a finite-dimensional 
 \CA\, $D=\bigoplus_{j=1}^m M_{r_j}$ with $r_j\ge R,$ and a fully normalized order-zero \cpc\, $\theta: D\to B$ 
 such that, for some $c\in D_+^{\bf 1}$ (see also Remark \ref{R1} (iv)),
 \beq\label{Pdiv-6}
&& \|(1-e)^{1/2}b(1-e)^{1/2}-\theta(c)\|<\dt\andeqn e\lesssim s.
\eneq

Now put $q=(1-e)^{1/2},$  $z=qb^{1/2},$ and $w=qbq.$
Then $zz^*=w$ and $z^*z=b^{1/2}(1-e)b^{1/2}.$ 
Note that
\beq
[b]\le [(b^{1/2}(1-e)b^{1/2}]\oplus [b^{1/2}eb^{1/2}]\le [(b^{1/2}(1-e)b^{1/2}]\oplus [e].
\eneq
By Lemma \ref{L82}, 
\beq
[b_0]=[(b-\nu)_+]\le [(z^*z-{\nu\over{2}})_+]= [(w-{\nu\over{2}})_+]\oplus [e].
\eneq
By \eqref{Pdiv-6} and \cite[Proposition 2.2]{RrII}, 
\beq
(w-{\nu\over{2}})_+\lesssim (\theta(c)-{\nu\over{4}})_+.
\eneq
Since  $0\le c\le 1_D,$
we have
\beq\label{Pdiv-12}
[b_0]\le [(w-{\nu\over{2}})_+]\oplus [e]\le [(\theta(c)-{\nu\over{4}})_+]\oplus [e]\le [\theta(1_D)]\oplus [e].
\eneq
On the other hand,  by \eqref{Pdiv-5}, we have 
\beq
\widehat{[e]}\le \widehat{[s]}<\bt \widehat{[b_0]}\le \bt (\widehat{[\theta(1_D)]}+\widehat{[e]}).
\eneq
It follows that
\beq\label{Pdiv-14}
\widehat{[e]}\le {\bt\over{1-\bt}}\widehat{[\theta(1_D)]}<{1\over{8n}}\widehat{[\theta(1_D)]}.
\eneq
Let 
\beq\label{Pdiv-15}
r_j=q_jn+\eta_j,\,\, \,q_j \in \N\andeqn  \eta_j\in \{0,1,...,n-1\}.
\eneq 
For each $j,$ choose a projection $p_j\in M_{r_j}$
with ${\rm rank}(p_j)=q_j.$ Put
\beq\label{Pdiv-17}
p=\bigoplus_{j=1}^m p_j\in D.
\eneq
Define $y=[\theta(p)]\in {\rm Cu}(A).$  In ${\rm Cu}(D),$  by the choice of $q_j,$ we have 
\beq
n[p]\le [1_D].
\eneq
Hence, by applying $\theta,$ 
\beq
ny\le [\theta(1_D)].
\eneq
Since $b$ is strictly positive for $B$ and $\theta(1_D)\in B,$  $[\theta(1_D)]\le [a]\le x.$ It follows that 
\beq
ny\le x.
\eneq

We compute that (by \eqref{Pdiv-15}), since $r_j\ge R\ge 2n(n+1),$ 
\beq\label{Pdiv-22}
(n+1)q_j>(1+{1\over{4n}})r_j.
\eneq
Let $e_{11}^{(j)}$ be a rank-one projection in $M_{r_j}.$  We have
\beq
\widehat{[\theta(1_{M_{r_j}})]}=r_j\widehat{[\theta(e_{11}^{(j)}]}\andeqn 
\widehat{[\theta(p_j)]}=q_j\widehat{[\theta(e_{11}^{(j)})]}.
\eneq 
Hence, by \eqref{Pdiv-22},
\beq
(n+1)\widehat{[\theta(p_j)]}\ge (1+{1\over{4n}})\widehat{[\theta(1_{M_{r_j}})]}.
\eneq
It follows that (see \eqref{Pdiv-17})
\beq
(n+1)y=(n+1)\sum_{j=1}^m\widehat{[\theta(p_j)]} \ge ((1+{1\over{4n}})\widehat{[\theta(1_{D})]}.
\eneq
By \eqref{LLdvi-3+}, \eqref{Pdiv-12}, \eqref{Pdiv-14}, 
\beq
\widehat{x'} &\le& \widehat{[(b-\nu)_+]}=\widehat{[b_0]}\le \widehat{([\theta(1_D)]\oplus [e])}\\
&\le & (1+{1\over{8n}})\widehat{[\theta(1_D)]}\le (1+{1\over{4n}})\widehat{[\theta(1_D)]}\\
&\le &  (n+1) \widehat{y}.
\eneq
\end{proof}

\begin{prop}\label{Palmunp}
Let $A$ be a $\sigma$-unital simple \CA\, with $ETR(A)\le 1.$ 
If $A$ either is stably finite, or separable, 
then ${\rm Cu}(A)$ is almost unperforated. 
\end{prop}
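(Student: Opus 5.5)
The plan is to deduce almost unperforation from the divisibility property (D) of Proposition \ref{LLdiv}, together with the fact that the relevant (quasi)traces are traces and that a nondegenerate comparison holds. Recall the target: if $x,z\in{\rm Cu}(A)$ and $(k+1)x\le kz$ for some $k\in\N$, then $x\le z$.

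\textbf{Step 1: reduction to strict comparison.} By the standard argument (Rørdam, and in the nonunital setting via \cite{ERS}), almost unperforation of ${\rm Cu}(A)$ for simple $A$ is equivalent to strict comparison of positive elements by lower semicontinuous 2-quasitraces. Here ${\rm Cu}(A)$ is required to be the nonzero-trace case. We first dispose of the case $\widetilde{QT}(A)=\{0\}$.
\begin{itemize}
\item If $A$ is separable, Proposition \ref{Pquas=trace} together with Theorem \ref{Ltrace} gives $T(\Her(a))\neq\emptyset$ for $a\in{\rm Ped}(A)_+\setminus\{0\}$, so $\widetilde{T}(A)\neq\{0\}$.
\item If $A$ is stably finite, the existence of a nonzero quasitrace follows from stable finiteness of a simple algebra (Blackadar--Handelman and Haagerup type arguments). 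Proposition \ref{Pquas=trace} then identifies quasitraces with traces.
\end{itemize}

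\textbf{Step 2: prove strict comparison.} Let $a,b\in (A\otimes{\cal K})_+$ with $d_\tau(a)<d_\tau(b)$ for all nonzero $\tau$, at least for $a$ in the Pedersen ideal. It suffices to show $(a-\ep)_+\lesssim b$ for every $\ep>0$.

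Pass to $B=\Her(b_1)$ for a suitable cutdown $b_1$ of $b$. By Proposition \ref{Pperm}(2), $B$ has ETR1. Apply ETR1 in $\Her(a+b)$ to get $e$ Cuntz-small (below a tiny element $s$ whose rank is controlled via Lemma \ref{LMn}) and an order-zero $\theta:D\to\overline{(1-e)A(1-e)}$. We require $(1-e)^{1/2}a(1-e)^{1/2}$ and $(1-e)^{1/2}b(1-e)^{1/2}$ to be close to $\theta(c_a)$ and $\theta(c_b)$, with the ranks of $D$ large by Proposition \ref{Prank}.

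By Lemma \ref{L82}, $[(a-\ep)_+]\le[(\theta(c_a)-\ep/4)_+]\oplus[e]$. In the cone image $\tilde\theta(C_0((0,1])\otimes D)$, which has dimension at most one and so has comparison determined by ranks at each point, we compare $(\theta(c_a)-\ep/4)_+\oplus e$ against $\theta(c_b)$. The traces on $\tilde\theta(C)$ come from evaluations at points of $X_i$ composed with $\pi_1$-type maps. These restrict to functionals on $A$ which, via Lemma \ref{LLtrace} and an approximate-commutation argument like that in Theorem \ref{Ltrace}, are close to traces of $A$. Compactness of the normalized trace set on $\Her(a+b)$ (after normalizing by a full element) turns the strict inequality into a uniform gap $\ge\sigma$. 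That gap then absorbs the small contributions $e$ and the rank rounding, yielding $(\theta(c_a)-\ep/4)_+\oplus e\lesssim \theta(c_b)$ up to a cut, and hence $(a-\ep)_+\lesssim b$.

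\textbf{Alternative route.} An alternative, which I would try first because it is more algebraic, uses (D) directly. Suppose $(k+1)x\le kz$ and $x'\ll x$. Choose $y$ by (D) with $n\widehat y\le\widehat{z}$ and $\widehat{x'}\le (n+1)\widehat y$ for large $n$. One would then need to upgrade these functional inequalities to ${\rm Cu}$-inequalities, which again requires comparison, so the two routes converge.

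\textbf{Main obstacle.} The main obstacle is transferring tracial inequalities of $A$ to rank inequalities in $\tilde\theta(C)$. The compression $x\mapsto(1-e)^{1/2}x(1-e)^{1/2}$ is not multiplicative, so traces of $\tilde\theta(C)$ are not restrictions of traces on $A$. I would handle this as in Theorem \ref{Ltrace}: use a c.p.c.\ conditional-type map $\Psi_0$ onto $\tilde\theta(C)$ to push traces of $C$ to approximately tracial states of $A$, then apply Lemma \ref{LLtrace} by contradiction over a sequence of tolerances.
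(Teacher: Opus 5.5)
\textbf{Verdict: the proposal has a genuine gap.} Step 2 is the whole content of the proposition, and it does not go through as sketched.

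\emph{Traces of $\tilde\theta(C)$ are not tied to $T(A)$.} Cuntz comparison in $\tilde\theta(C)\cong\bigoplus_i C_0(X_i\setminus\{0\},M_{r_i})$ requires rank inequalities at \emph{every} point $t\in X_i\setminus\{0\}$. So you would need to control all the traces ${\rm tr}_{r_i}\circ{\rm ev}_t$, and nothing in ETR1 relates these to $T(A)$. The $\Psi_0$-device of Theorem \ref{Ltrace} turns $\lambda={\rm tr}\circ\pi_1$ into $F(z)=\lambda(\Psi_0(s^2zs^2))$. This $F$ is approximately tracial only on a prescribed finite set, and it is bounded below only because of the $\kappa_{e_A}$-condition at $t=1$. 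At other points or summands its norm can be arbitrarily small, and renormalizing destroys the estimates from Lemma \ref{LLtrace}.

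\emph{Even at $t=1$ the ranks are out of reach.} The ranks you need are $\lambda$ of spectral cut-offs $g(sas)$ and $g'(sbs)$. Because $x\mapsto sxs$ is not multiplicative, these have no relation to $s^2g(a)s^2$. Moreover $F$ restricted to $\tilde\theta(C)$ is approximately $\lambda((1-e)\,\cdot\,(1-e))$, not $\lambda$. Approximate values of traces on finitely many elements also cannot control the discontinuous functions $d_\tau$.

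\emph{The element $e$ is not in the cone image.} Since $e\notin\tilde\theta(C)$, ``comparing $(\theta(c_a)-\varepsilon/4)_+\oplus e$ with $\theta(c_b)$ in the cone image'' has no meaning. Absorbing $e$ afterwards would need comparison in $A$, which is what you are proving. Your alternative route via (D) is circular in the same way, as you note.

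\emph{A smaller point in Step 1.} Blackadar--Handelman produces quasitraces only in the unital case, so Step 1 needs a separate argument for nonunital stably finite $A$.

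\textbf{The missing idea} is to transfer Cuntz relations rather than tracial data, and to get the non-degeneracy from stable finiteness. The paper proceeds as follows.
\begin{itemize}
\item It passes to $B=\Her(a)$ with $a\in{\rm Ped}(A)_+\setminus\{0\}$. By Proposition \ref{Pperm}(2), $B$ has ETR1.
\item $B$ is stably finite, by hypothesis or by Theorem \ref{Ltrace} when $A$ is separable. This is exactly where the two alternative hypotheses enter.
\item Stable finiteness gives, via Lemma \ref{Pnot=0}, approximations with $\|(1-e)x\|\ge\|x\|-\varepsilon$ on the finite set.
\item Then \cite[Theorem 4.3]{FLsigma} is applied to the class of quotients of $C_0((0,1])\otimes M_n$, whose Cuntz semigroups are almost unperforated. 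This yields that $W(B)$ is almost unperforated.
\item \cite[Remark 2.5]{FLsigma}, together with fullness of $B$, passes this to $\Cu(B)$ and then to $\Cu(A)$.
\end{itemize}
Heuristically, a relation $(k+1)[x]\le k[y]$ is witnessed by finitely many elements. It can therefore be pushed through the approximately central compression by $(1-e)^{1/2}$ into an algebra where almost unperforation holds. The leftover is dominated by $e$, and room for it can be reserved in $\Her(y)$ before ETR1 is applied. No comparison between traces of $\tilde\theta(C)$ and traces of $A$ is ever needed. Strict comparison is then a consequence (Corollary \ref{Cstrictcomp}), not an input.
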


\begin{proof}
Choose $a\in {\rm Ped}(A)_+\setminus\{0\}$ and put $B=\Her(a)$.
By Proposition \ref{Pperm}(2), $B$ has property ETR1.
Since $B$ is a full hereditary subalgebra of $A$, it suffices
to prove that $\Cu(B)$ is almost unperforated. By
\cite[Remark~2.5]{FLsigma}, it is enough to prove that
$W(B)$ is almost unperforated.

If $A$ is stably finite, then $B$ is stably finite. If $A$ is
separable, then $B$ is separable and algebraically simple, and
Theorem \ref{Ltrace} shows that $B$ is stably finite. Thus, in
either case, $B$ is stably finite.

Every \CA\ isomorphic to a quotient of
$C_0((0,1])\otimes M_n$ has almost unperforated Cuntz semigroup.
Therefore, Lemma~\ref{Pnot=0}and
\cite[Theorem~4.3]{FLsigma}, applied to the ETR1
approximations of $B$, show that $W(B)$ is almost
unperforated. Consequently, $\Cu(B)$, and hence $\Cu(A)$, is
almost unperforated.

\end{proof}

\begin{cor}\label{Cstrictcomp}
Let $A$ be a $\sigma$-unital non-elementary simple \CA\, with $ETR(A)\le 1.$ 
Suppose that $A$ is either stably finite  or separable.
Then 

(1)  ${\rm Cu}(A)=V(A)\cup {\rm LAff}(\widetilde{QT}(A));$ 

(2)  ${\rm Cu}(A)$ is almost unperforated and almost divisible;

(3) $A$ has strict comparison;

(4) $A$ has tracial approximate oscillation zero;

(5) $A$ has stable rank one.

\end{cor}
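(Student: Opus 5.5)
The plan is to combine the two regularity statements already established, almost unperforation (Proposition \ref{Palmunp}) and the divisibility property (D) (Proposition \ref{LLdiv}), and then quote the standard structure theory of Cuntz semigroups of simple \CA s. Throughout, $A$ is non-elementary, $\sigma$-unital and simple with $ETR(A)\le 1$, and either stably finite or separable.

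We first treat (2) and (3). Almost unperforation of ${\rm Cu}(A)$ is exactly Proposition \ref{Palmunp}. For almost divisibility we take $x\in{\rm Cu}(A)$, $x'\ll x$ and $n\in\N$, and let $y$ be the element supplied by property (D). It satisfies $n\widehat{y}\le\widehat{x}$ and $\widehat{x'}\le(n+1)\widehat{y}$. Since ${\rm Cu}(A)$ is almost unperforated, the usual comparison principle (strict inequality of ranks implies Cuntz subequivalence) converts these functional inequalities into $ny\le x$ and $x'\le(n+1)y$. Here one must take a little care with the non-strict inequality: we apply (D) to a slightly smaller $x''$ with $x'\ll x''\ll x$ and a larger $n$, so as to obtain strict inequalities.

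Strict comparison, item (3), follows from almost unperforation via R{\o}rdam's theorem. This uses the identification $QT=T$ from Proposition \ref{Pquas=trace} to phrase comparison in terms of traces in $\widetilde{T}(A)$. In the separable case we also need $\widetilde{QT}(A)\neq\{0\}$, which comes from Theorem \ref{Ltrace} applied to $\Her(a)$ with $a\in{\rm Ped}(A)_+\setminus\{0\}$.

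Next, item (1): given strict comparison and almost divisibility for a simple, stably finite $A$, we invoke the known computation of the Cuntz semigroup (e.g.\ as in Elliott--Robert--Santiago or Tikuisis's results, in the form used in \cite{FLsigma} or \cite{EGLNs}). This shows that $\Gamma$ maps the purely non-compact elements onto ${\rm LAff}_+(\widetilde{QT}(A))$, so that ${\rm Cu}(A)=V(A)\sqcup{\rm LAff}_+(\widetilde{QT}(A))$. The step I expect to need the most checking is surjectivity of $\Gamma$ in the nonunital and possibly non-continuous-scale setting. I would reduce to $\Her(a)$ with $a\in{\rm Ped}(A)_+$, which still has ETR1 by Proposition \ref{Pperm}, and then cite the result in \cite{FLosc}/\cite{FLsigma} that strict comparison together with property (D) yields surjectivity.

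Finally, items (4) and (5). For tracial approximate oscillation zero we use \cite{FLosc}: for simple \CA s with strict comparison, surjectivity of $\Gamma$ (equivalently ${\rm Cu}$ of the form in (1)) is equivalent to tracial approximate oscillation zero. Stable rank one then follows from \cite{FLosc} (stable rank one for simple \CA s with strict comparison and tracial approximate oscillation zero). Alternatively, it follows from Thiel's theorem for separable simple \CA s with almost unperforated, almost divisible Cuntz semigroups that are stably finite; stable finiteness holds by hypothesis or by Theorem \ref{Ltrace}. The main obstacle is thus not new estimates but verifying that the hypotheses of these cited theorems (separability or $\sigma$-unitality, non-elementary, existence of traces) are satisfied in both alternative cases.
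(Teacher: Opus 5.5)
Your outline follows the paper's own route. Proposition \ref{Palmunp} gives almost unperforation, hence strict comparison by R{\o}rdam's result. Strict comparison together with property (D) from Proposition \ref{LLdiv} gives almost divisibility and surjectivity of $\Gamma$; for the latter the paper uses the second paragraph of the proof of \cite[Proposition 6.2.1]{Rl}. With $QT=T$ from Proposition \ref{Pquas=trace}, this yields (1), and (4) and (5) are read off together from \cite[Theorem 1.1]{Linsr1}.

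The one step that fails as written is your repair of the non-strict inequalities in (D). The paper passes over this point silently, so it is worth getting right. Take $x=x'=[p]$ for a nonzero projection $p$; this is legitimate since $[p]\ll[p]$. Then $x''=[p]$ is forced. Applying (D) with $N>n$ gives $N\widehat{y}\le\widehat{[p]}$, so $(n+1)\widehat{y}\le\frac{n+1}{N}\widehat{[p]}$. For $N>n+1$ this rules out $[p]\le(n+1)y$, and for $N=n+1$ you only know $\widehat{[p]}\le(n+2)\widehat{y}$. Enlarging $n$ creates slack in the first inequality at the expense of the second.

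The fix is to take a multiple. Apply (D) to $x'\ll x$ with $N=kn+1$ and $k\ge 3$, obtain $z$, and set $y=kz$. Then $n\widehat{y}\le\frac{kn}{kn+1}\widehat{x}$ and $\widehat{x'}\le(kn+2)\widehat{z}=\frac{kn+2}{k(n+1)}(n+1)\widehat{y}$. Both factors are $<1$, so almost unperforation gives $ny\le x$ and $x'\le(n+1)y$. Alternatively, the proof of Proposition \ref{LLdiv} already gives $ny\le[\theta(1_D)]\le x$ in $\Cu(A)$ and $\widehat{x'}\le\frac{1+1/(8n)}{1+1/(4n)}(n+1)\widehat{y}$.

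Two smaller points. Theorem \ref{Ltrace} needs separability, so in the nonseparable stably finite case the existence of nonzero quasitraces must come from stable finiteness itself; the paper cites \cite[Proposition 4.9]{FLsigma} for this. Also, drop the alternative route to (5) via Thiel. His theorem goes from stable rank one to surjectivity of the rank map, not conversely, and it assumes separability.
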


\begin{proof}
By Proposition \ref{Palmunp}, ${\rm Cu}(A)$ is almost unperforated.
Hence, by  \cite[Proposition 3.2]{Ror04JS} (see also \cite[Theorem 2.13]{Linsr1}),
$A$ has strict comparison, proving
(3).  

Note also, by \cite[Proposition 4.9]{FLsigma} (see \cite[Corollary 5.1]{Ror04JS}), 
$A$ has nonzero 2-quasitraces on $\Her(a)$ for every $a\in {\rm Ped}(A)_+\setminus \{0\}.$
It follows that $\widetilde{QT}(A)\not=\{0\}.$

By  (3) and  Proposition \ref{LLdiv}, ${\rm Cu}(A)$ is almost divisible.  Combining this with Proposition \ref{Palmunp}, we obtain (2).



To prove (1),   we first note that ${\rm Cu}(A)$ is almost unperforated,  and by Proposition \ref{Pquas=trace},
every 2-quasitrace is a trace, therefore by the proof of 
\cite[Theorem 6.2]{TT}\footnote{\cite[Theorem 6.2]{TT} assumes that $A$ is exact. But it is only used for the purpose that every 2-quasitrace is a trace.},
it suffices to show that the map $\Gamma$ defined in Definition \ref{DQTGamma} is surjective. 
Note that,  by (3),
 $A$ has strict comparison. 
 By the argument in the second paragraph of the proof of \cite[Proposition 6.2.1]{Rl},
  strict comparison together with property
(D) implies that $\Gamma$ is surjective.  But property (D) follows from
Proposition \ref{LLdiv}.  This proves (1).

Finally,  (4) and (5) follow from \cite[Theorem 1.1]{Linsr1}.

\end{proof}


\section{Tracially Approximately divisible}

\begin{lem}\label{LTM}
Let $A$ be a $\sigma$-unital, algebraically simple,  non-elementary   \CA\, such that $A$ has strict comparison and 
tracial approximate oscillation zero.   Then for any non-zero projection 
$p\in l^\infty(A)/N_{cu}$ and any $n\in \N,$ there is a unital embedding
$\Phi: M_n\to p(l^\infty(A)/N_{cu})p.$
\end{lem}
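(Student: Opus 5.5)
We first lift $p$ to a sequence of positive contractions and reduce to elements of $A$. Since $A$ is algebraically simple with strict comparison, $I_{\overline{QT(A)}^w}=N_{cu}(A)$ by \eqref{A2.17}. Tracial approximate oscillation zero says that every positive element is, in $\|\cdot\|_{2,\overline{QT(A)}^w}$, close to an element with small oscillation. Equivalently, $\Pi_{cu}(\{a_n\})$ can be represented so that the rank functions $\widehat{[a_n]}$ become approximately continuous. Write $p=\Pi_{cu}(\{p_n\})$ with $0\le p_n\le 1$. Since $p$ is a projection, $\{p_n-p_n^2\}\in N_{cu}$. Replacing $p_n$ by $f_{1/2}(p_n)$, and using a cut-down $f_{1/4}(p_n)$ to give the room $f_{1/4}(p_n)f_{1/2}(p_n)=f_{1/2}(p_n)$, we may assume $p$ is represented by $\{p_n\}$ with a "support" sequence $\{p_n'\}$ satisfying $p_n'p_n=p_n$ and $\{p_n'-p_n\}\in N_{cu}$.

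Next we divide each $p_n$ into $n$ approximately equal tracial pieces. Since $p\ne0$, $\limsup_n\|p_n\|_{2,\overline{QT(A)}^w}>0$; after discarding a null subsequence we may treat the indices where $p_n$ is tracially small as zero. For each $n$, use $\Cu(A)=V(A)\sqcup {\rm LAff}_+$ and almost divisibility from Corollary \ref{Cstrictcomp}. Together with oscillation zero, which makes $\widehat{[p_n']}$ almost continuous on the compact set $\overline{QT(A)}^w$, this produces $n$ mutually orthogonal, mutually Cuntz equivalent elements inside $\Her(p_n')$ whose sum has trace approximating $\widehat{p_n}$ up to an error tending to $0$ uniformly.

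Concretely, I would invoke Lemma \ref{LMn}-type constructions through stable rank one (Corollary \ref{Cstrictcomp}(5)). The Cuntz equivalences then become unitary equivalences of hereditary subalgebras, which yields an order-zero map $\phi_n:M_n\to \Her(p_n')$ with $\tau(p_n-\phi_n(1_{M_n}))\to0$ uniformly in $\tau$. We also arrange $\phi_n(1_{M_n})\le p_n'$ and $\|p_n-\phi_n(1)\|_{2}\to0$; to get these, compress by $p_n^{1/2}$ and use strict comparison to absorb the small leftover. Then $\Phi=\Pi_{cu}\circ\{\phi_n\}$ is order zero into $l^\infty(A)/N_{cu}$. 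Since $\{p_n-\phi_n(1)\}$ is trace-null, we get $\Phi(1)=p$, a projection, so $\Phi$ is a $*$-homomorphism (an order-zero c.p.c.\ map sending $1$ to a projection is multiplicative), and it is unital into $p(l^\infty(A)/N_{cu})p$.

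The main obstacle is the uniform tracial control in the division step: we need pieces whose $\tau$-values sum to $\tau(p_n)$ \emph{uniformly} over $\overline{QT(A)}^w$, not just rank-wise. I would first try to handle this by replacing $p_n$ with a finite-spectrum-like approximant $\sum_k t_k\chi_k$ using oscillation zero. Here $\chi_k$ denotes elements with approximately continuous rank functions. We would then divide each $\chi_k$ via property (D) of Proposition \ref{LLdiv} and the surjectivity of $\Gamma$, and absorb the remainder, which is of size $1/n$ relative to $\widehat{[\chi_k]}$, into $N_{cu}$ by letting the division parameter grow with the index.
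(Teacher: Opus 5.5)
Your outer framework is sound. Lifting $p$ and concluding from $\Phi(1_n)=p$ that $\Phi$ is a unital homomorphism both match the paper. Working with $\phi_m$ valued in $\Her(f_{1/2}(p_m))$, where $f_{1/4}(p_m)$ acts as a unit, makes $\Phi(M_n)\subset p\,(l^\infty(A)/N_{cu})\,p$ automatic, which slightly streamlines the paper. The gap is the middle step, which is the entire content of the lemma. For each index $m$ you need an order-zero c.p.c.\ map $\phi_m\colon M_n\to\Her(a_m)$ whose unit absorbs $a_m$ uniformly in the tracial $2$-norm. That is property (TM), and the paper obtains it directly from \cite[Theorem 8.5]{FLosc}: strict comparison plus tracial approximate oscillation zero imply (TM). Then $\|a_m-\phi_m(1_n)a_m\|_{2,\overline{T(A)}^w}<1/m$ gives $p=\Phi(1_n)p=\Phi(1_n)$. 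You instead try to rebuild (TM) by hand and explicitly leave the uniform tracial estimate open, so the argument is incomplete as written.

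Moreover, the sketched construction does not work as stated.
\begin{itemize}
\item Cuntz equivalence, even with stable rank one, transports ranks $d_\tau$, not traces. An order-zero map built from a common seed $c$, moved by partial isometries into your $n$ orthogonal pieces, satisfies $\tau(\phi_m(1_n))=n\,\tau(c)$. This has nothing to do with $\sum_i\tau(b_i)$ unless the seed is tracially close to a projection, i.e.\ $\tau(c)\approx d_\tau(c)$ uniformly, with $n\,d_\tau(c)\approx\tau(a_m)$ uniformly. Producing such a seed is where a continuous rank function (surjectivity of $\Gamma$ plus Dini) is genuinely needed.
\item For the fixed matrix size $n$, property (D) of Proposition \ref{LLdiv} leaves a remainder comparable to $\widehat{x}/(n+1)$, which does not tend to zero. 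Your plan of letting the division parameter grow only works if you divide into $N_m\to\infty$ pieces with $n\mid N_m$ and regroup into $n$ blocks; your text conflates the matrix size $n$ with the sequence index.
\item You use Corollary \ref{Cstrictcomp} and Proposition \ref{LLdiv} for stable rank one, $\Cu(A)=V(A)\sqcup{\rm LAff}_+$ and divisibility. Both assume ${\rm ETR}(A)\le 1$, which is not a hypothesis of this lemma. You would have to derive these facts from strict comparison and oscillation zero via \cite{FLosc} and \cite{Linsr1}.
\end{itemize}
The simplest repair is to cite (TM) as the paper does.
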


\begin{proof}
Let $p\in l^\infty(A)/N_{cu}$ be a non-zero projection. Choose 
\beq\label{LTM-1}
a=\{a_m\}_{m=1}^\infty \in l^\infty(A)_+,\,\,\, 0\le a_m\le 1\andeqn \Pi_{cu}(a)=p.
\eneq
By 
 \cite[Theorem 8.5]{FLosc}, $A$ has property (TM). 
 Since $A$ is
algebraically simple, $A_+=\operatorname{Ped}(A)_+$. Therefore, for
each $m\in\mathbb N$, property (TM), applied to $a_m$, provides an
order-zero c.p.c.\ map
\[
    \varphi_m\colon M_n\longrightarrow \operatorname{Her}(a_m)
\]
such that
\beq\label{LTM-2}
\|a_m-\phi_m(1_n)a_m\|_{_{2, \overline{T(A)}^w}}<{1\over{m}}.
\eneq
If $a_m=0$, we take $\varphi_m=0$.

Define $\Phi: M_n\to l^\infty(A)/N_{cu}$ by
$\Phi(z)=\Pi_{cu}(\{\phi_m(z)\})$ for all $z\in M_n.$
It follows that $\Phi$ is an order-zero \cpc. Let $h_m=\phi_m(1_n).$ 
Since $\phi_m(M_n)\subset \Her(a_m)$ and $p$ is a projection, 
by \cite[Proposition 6.2, (6)]{FLosc}, we have $\Phi(M_n)\subset p(l^\infty(A)/N_{cu})p.$

By \eqref{LTM-2},
\beq
\{a_m-h_ma_m\}\in I_{\overline{T(A)}^w}=N_{cu}(A).
\eneq
Hence
\[
\begin{aligned}
p
 =\Pi_{\rm cu}(\{a_m\})
 =\Pi_{\rm cu}(\{h_ma_m\})
 =\Phi(1_n)p.
\end{aligned}
\]
On the other hand, since
\[
\Phi(M_n)\subset
p\big(\ell^\infty(A)/N_{\rm cu}\big)p,
\]
we have $\Phi(1_n)p=\Phi(1_n)$. Hence $\Phi(1_n)=p$.
Thus $\Phi$ is a unital homomorphism. Since $M_n$ is simple,
$\Phi$ is injective and therefore is a unital embedding.

\end{proof}

\begin{thm}\label{Tdiv}
Let $A$ be a non-elementary  separable   simple \CA\, with $ETR(A)\le 1.$ 
Then $A$ is  tracially approximately  divisible.
\end{thm}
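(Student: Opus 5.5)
We reduce first to an algebraically simple hereditary subalgebra and then verify tracial approximate divisibility through the sequence algebra $Q_A=l^\infty(A)/N_{cu}(A)$. Choose $a\in {\rm Ped}(A)_+\setminus\{0\}$ and put $B=\Her(a)$. By Proposition \ref{Pperm} (2), $B$ has property ETR1. By Theorem \ref{Ltrace}, $B$ is stably finite. By Corollary \ref{Cstrictcomp}, $A$ and $B$ have strict comparison, tracial approximate oscillation zero and stable rank one. Tracial approximate divisibility passes between $A$ and its full hereditary subalgebras, using Brown's theorem $A\otimes{\cal K}\cong B\otimes{\cal K}$ and the fact that the notion is defined via Cuntz-small defects. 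So it suffices to treat $B$, which is $\sigma$-unital, algebraically simple and non-elementary. In that setting we use the formulation in the sequence algebra: for every $n$, we want an order-zero c.p.c.\ map $\Phi\colon M_n\to Q_B\cap \overline{j_B}(B)'$ such that $1_{Q_B}-\Phi(1_n)$ is ``Cuntz small", i.e.\ $\Phi(1_n)\overline{j_B}(b)=\overline{j_B}(b)$ for all $b\in B$ (equivalently, $\overline{j_B}(b)-\Phi(1_n)\overline{j_B}(b)\in N_{cu}$). By \eqref{A2.17}, $N_{cu}(B)=I_{\overline{T(B)}^w}$, so this is the usual $2$-norm tracial divisibility.

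The construction goes through the ETR1 approximations. Fix a dense sequence $\{x_k\}$ in $B^{\bf 1}$ containing $e_B$, and fix a truly Cuntz-null sequence $\{z_m\}$ from Remark \ref{RCuntznull}(2). For each $m$, apply ETR1 (with Proposition \ref{Prank}, taking $r_j\ge mn$) to ${\cal F}_m=\{x_1,\dots,x_m\}$, $\ep=1/m$ and the element $z_m$. This gives $e_m$, $D_m$, $\theta_m$ and $s_m=(1-e_m)^{1/2}$. In each block $M_{r_j}$ of $D_m$, write $r_j=q_jn+\eta_j$ with $0\le\eta_j<n$, and pick a unital copy of $M_n\otimes p_j$ in $M_{r_j}$ whose commutant contains a copy of $M_{q_j}$. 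The point is to choose it so that it approximately commutes with the finitely many elements $d_x\in D_m$ that approximate $s_mxs_m$. This is the hard step, and here I would not use the commutant directly. Instead, the key observation is that in $Q_B$ the elements $\overline{j_B}(x)$ are represented by $\{s_mxs_m\}$ up to a Cuntz-null error. This holds because $e_m\lesssim z_m$, so $\{e_m\}$ is Cuntz null and hence $\Pi_{cu}(\{s_m^2\})=1$. Consequently $\overline{j_B}(x)=\Pi_{cu}(\{\theta_m(d_{x,m})\})$, and the problem reduces to finding matrix units in a finite-dimensional algebra $D_m$ that approximately commute with given elements, up to a tracially small defect. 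This is not possible exactly, since $d_x$ is arbitrary. So the first thing to try is the reverse route: use the order-zero map $\theta_m$ only to control traces, and obtain central divisibility from strict comparison and oscillation zero via Lemma \ref{LTM}.

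Concretely, the plan is as follows. Let $p=\Pi_{cu}(\{\theta_m(1_{D_m})^{1/k}\})$ with $k\to\infty$ diagonally. Equivalently, let $p$ be the image of $\{f_{1/m}(\theta_m(1_{D_m}))\}$; because $\overline{j_B}(e_B)$ is captured by $\theta_m(d_{e})$ with $\tau(d_e)\ge\kappa$, this is a nonzero projection in $Q_B$ acting as a unit on $\overline{j_B}(B)$ modulo $N_{cu}$. Lemma \ref{LTM} gives unital embeddings of $M_n$ into corners of $Q_B$. To obtain centrality, we apply Lemma \ref{LTM} inside the relative commutant. Since $B$ is separable, a Kirchberg-type $\epsilon$-test (diagonal reindexing) upgrades approximate commutation with ${\cal F}_m$ at level $m$ to exact commutation in $Q_B$. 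The remaining task is therefore, for each finite ${\cal F}$ and $\ep$, to produce an order-zero $\phi\colon M_n\to B$ with $\|[\phi(z),x]\|<\ep$ for $x\in{\cal F}$ and $\|x-\phi(1)x\|_{2,\overline{T(B)}^w}<\ep$. We obtain this by combining the (TM) property of \cite[Theorem 8.5]{FLosc} with stable rank one and oscillation zero; that is, we use the known implication from strict comparison, oscillation zero and (TM) to tracial $\mathcal Z$-absorption in the sense of \cite{FLosc}. I expect this centrality step to be the main obstacle. If the cited results do not directly give central (TM), the fallback is to build the required sequences from the ETR1 cones. The $r_j\ge mn$ blocks give tracially large near-divisions of $\theta_m(1_{D_m})$, and approximate centrality then has to be extracted by averaging over unitary conjugates, à la Matui–Sato, using that the tracial face of $\theta_m(D_m)$ is controlled by $\kappa_{e_B}$.
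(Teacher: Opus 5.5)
\textbf{There is a genuine gap.} Your reduction to $B=\Her(a)$ and your target formulation (an order-zero map $M_n\to Q_B$ commuting with $\overline{j_B}(B)$, whose unit acts as a unit on $\overline{j_B}(B)$ modulo $N_{cu}$) are fine. However, the argument never produces the approximately central matrix algebra.

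You correctly observe that matrix units inside $D_m$ cannot be made to commute with the arbitrary $d_x$'s. You then reduce to finding an order-zero $\phi\colon M_n\to B$ with $\|[\phi(z),x]\|<\ep$ and $\|x-\phi(1)x\|_{2,\overline{T(B)}^w}<\ep$. That is tracial approximate divisibility itself, so the reduction is circular: ETR1 has not been used. You then propose to obtain it from strict comparison, oscillation zero and (TM). No such implication is available, and Theorem \ref{Tdiv} assumes no amenability. For instance, $C^*_r(\mathbb F_2)$ has strict comparison and stable rank one, hence oscillation zero and (TM). Yet it is not tracially approximately divisible, because $L(\mathbb F_2)$ is not McDuff.

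Property (TM) only gives non-central order-zero maps. Lemma \ref{LTM} cannot be applied ``inside the relative commutant'' of $\overline{j_B}(B)$, because that commutant is not a corner $pQ_Bp$ of $Q_B$. The Matui--Sato averaging fallback needs nuclearity and is not carried out.

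\textbf{The missing idea} is to make the commutant problem finite dimensional \emph{before} dividing, and to divide projections of $Q_A$ rather than the algebra $D$.
\begin{enumerate}
\item Fix one ETR1 approximation $(e,D,\theta)$. Push the cone $\tilde\theta(C_0((0,1])\otimes D)$ into $Q_A$ by constant sequences.
\item $Q_A$ has real rank zero by \cite[Theorem 6.4]{FLosc}, using oscillation zero. So Lemma \ref{Lfinitesp} replaces this cone image by a finite-dimensional subalgebra $F=\bigoplus_t M_{R_t}\subset Q_A$ with $\operatorname{dist}(\Pi_{cu}((1-e)x),F)<2\eta$ for $x\in\mathcal F$.
\item Apply Lemma \ref{LTM} to a minimal projection $e^{(t)}_{11}$ of each summand of $F$. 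This is a genuine corner of $Q_A$, and it yields unital embeddings $\kappa_t\colon M_n\to e^{(t)}_{11}Q_Ae^{(t)}_{11}$.
\item Spread these by matrix units: $\Theta(z)=\sum_{t,f}e^{(t)}_{f1}\kappa_t(z)e^{(t)}_{1f}$. This is a homomorphism into $F'\cap Q_A$ with $\Theta(1_n)=1_F$. Hence it commutes with $\Pi_{cu}((1-e)x)$ up to $4\eta$, and $1-1_F$ nearly annihilates $\Pi_{cu}((1-e)x)$.
\item Lift the projective cone $C_0((0,1])\otimes M_n$ to $l^\infty(A)$. Then diagonalize over $k$, taking $e_k\lesssim s_k$ for a truly Cuntz-null $\{s_k\}$, so that $\{e_k\}\in N_{cu}$ absorbs the defect. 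This gives (TAD-3).
\end{enumerate}
This is exactly where ETR1 does the work: it replaces $\overline{j_A}(A)$ on the tracially large part by a finite-dimensional algebra, whose relative commutant \emph{is} a sum of corners of $Q_A$.
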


\begin{proof}

Choose $a_0\in\operatorname{Ped}(A)_+\setminus\{0\}$ and put
$
    B=\operatorname{Her}(a_0).
$
By Proposition 7.3(2), $B$ has property ETR1. Moreover, $B$ is
separable, algebraically simple, and non-elementary. 
Suppose that $B$ has property (TAD) (see \cite[Definition 4.1]{FLL}).
Then by (3) of \cite[Remark 4.2]{FLL}, $B\otimes {\cal K}\cong A\otimes {\cal K}$
has property (TAD).  It follows from  (2) of \cite[Remark 4.12]{FLL} (see also \cite[Theorem 5.5]{FLsigma}
and \cite[Theorem 4.11]{FLL}) that $A$ is tracially approximately divisible. 
%
Replacing $A$
by $B$, we may therefore assume, for the remainder of the proof, that
$A$ is algebraically simple.

By  Corollary \ref{Cstrictcomp}, $A$ has strict comparison. 
Then $I_{\overline{T(A)}^w}=N_{cu}(A).$

We will show that $A$ has property (TAD-3) as defined in \cite[Definition 4.8]{FLL}. By \cite[Theorem 4.11]{FLL}, property  (TAD-3) is equivalent to  tracial approximate divisibility.

{\bf Step 1. Claim 1}:     Fix a finite subset ${\cal F}\subset A^{\bf 1},$  $1>\ep>0, \, n\in \N$ and $s\in A_+\setminus \{0\}$ as well as a strictly positive element $e_A$ with $\|e_A\|=1.$ \Wlog, we may assume that $e_A\in {\cal F}.$
There exists $0\le e\le 1,$ $e\lesssim s,$ and  a \hm\, $\Theta: M_n\to Q:=l^\infty(A)/I_{\overline{T(A)}^w}$
with  $p=\Theta(1_n)$ such that, for all $x\in {\cal F}$ and $z\in  M_n^{\bf 1},$
\beq\label{Tdiv-claim1-1}
&&\|[\Theta(z),\, \Pi_{cu}((1-e)x)]\|<\ep,\\\label{Tdiv-claim1-2}
&&\|(1-p)\Pi_{cu}((1-e)x)\|<\ep\andeqn \|\Pi_{cu}((1-e)x)(1-p))\|<\ep.
\eneq
(Norm in $Q$).

To prove Claim 1, let $0<\eta<\ep/20.$ Since $A$ has ETR1, we obtain $0\le e\le 1$ in $A$ with 
$e\lesssim s,$ a finite dimensional \CA\, $D=\bigoplus_{j=1}^L M_{r_j},$ a fully normalized 
order-zero \cpc\, $\theta: D\to A,$ and, for each $x\in {\cal F},$  an element $d_x\in D$ such 
that
\beq
&&(1-e)x\approx_\eta (1-e)^{1/2}x (1-e)^{1/2}\andeqn\\
&&\|(1-e)x-\theta(d_x)\|<\eta.
\eneq

Define $\Theta_0: C_0((0,1])\otimes D\to Q$ by  the constant sequence map
\beq
\Theta_0(c)=\Pi_{cu}(\{\tilde \theta(c), \tilde\theta(c),...\}).
\eneq
Then 
\beq\label{Tdiv-10}
\|\Pi_{cu}((1-e)x)-\Theta_0(\iota\otimes d_x)\|<\eta\rforal x\in {\cal F}.
\eneq

It follows from \cite[Theorem 6.4]{FLosc}
 that $Q$ has real rank zero.
Applying Lemma \ref{Lfinitesp}, 
we obtain a homomorphism
\[
H:C_0((0,1])\otimes D
  \longrightarrow
  {\rm Her}\big(\Theta_0(\iota\otimes1_D)\big)\subset Q
\]
with finite-dimensional range such that
\[
\|H(\iota\otimes d_x)
       -\Theta_0(\iota\otimes d_x)\|<\eta,
\qquad x\in{\cal F}.
\]
Put
\[
B=H\big(C_0((0,1])\otimes D\big)
\quad\text{and}\quad
b_x=H(\iota\otimes d_x),\qquad x\in{\cal F}.
\]
Then $B\subset Q$ is a finite-dimensional $C^*$-subalgebra and
\beq\label{Tdiv-11}
\|\Theta_0(\iota\otimes d_x)-b_x\|<\eta,
\qquad x\in{\cal F}.                                  
\eneq

%

Combining \eqref{Tdiv-10} and \eqref{Tdiv-11}, we get
\beq\label{Tdiv-12}
\|\Pi_{cu}((1-e)x)-b_x\|<2\eta\rforal x\in {\cal F}.
\eneq

Moreover, by  \eqref{Lfinitesp-2},
$H$ may be chosen so that
\[
 \sup_{d\in D^1}
 \left\|
 H(\iota\otimes d)-\Theta_0(\iota\otimes d)
 \right\|<\eta.
\]

Since $\theta$ is fully normalized,
\[
 \|\Theta_0(\iota\otimes 1_D)\|=\|\theta(1_D)\|=1.
\]
It follows that
\[
 \|H(\iota\otimes1_D)\|>1-\eta>0.
\]
Consequently, $B\neq\{0\}$.


Write $B=\bigoplus_{t=1}^m M_{R_t},$ and choose matrix units $\{e_{i,j}^{(t)}: 1\le i,j\le R_t\}$ 
for the $t$-th summand $M_{R_t}$ of $B.$

By  Corollary \ref{Cstrictcomp} and  Lemma \ref{LTM}, for each non-zero minimal projection $e_{1,1}^{(t)}\in B,$ 
there exists a unital embedding $\kappa_t: M_n\to e_{1,1}^{(t)} Qe_{1,1}^{(t)}.$
Define $\Theta: M_n\to Q$ by 
\beq
\Theta(z)=\sum_{t=1}^m \sum_{f=1}^{R_t}e_{f,1}^{(t)} \kappa_t(z)e_{1,f}^{(t)}\rforal z\in M_n.
\eneq
A direct computation shows that $\Theta$ is a \hm.  Moreover
\beq
\Theta(1_n)=\sum_{t=1}^m \sum_{f=1}^{R_t} e_{f,1}^{(t)}e_{1,1}^{(t)}e_{1,f}^{(t)}=1_B.
\eneq
Furthermore, $\Theta(z)$ commutes with every element in $B$ for all $z\in M_n.$ Indeed,
for a matrix unit $e_{u,v}^{(t)}\in B,$
\beq
e_{u,v}^{(t)}\Theta(z)=e_{u,1}^{(t)}\kappa_t(z)e_{1,v}^{(t)}=\Theta(z) e_{u,v}^{(t)}.
\eneq
Hence  $[\Theta(z), b]=0$ for all $z\in M_n$ and $b\in B.$ In particular,
\beq
[\Theta(z), b_x]=0\rforal x\in {\cal F}\andeqn z\in M_n.
\eneq
Therefore, for all $z\in M_n^{\bf 1},$
\beq\label{Tdiv-100}
\|[\Theta(z), \, \Pi_{cu}((1-e)x)]\|\le 2\|\Pi_{cu}((1-e)x)-b_x\|<4\eta<\ep.
\eneq
Since $p=\Theta(1_n)=1_B,$ we have 
\beq
(1-p)b_x=0=b_x(1-p).
\eneq
Thus, by \eqref{Tdiv-12}, 
\beq\label{Tdiv-101}
\|(1-p)\Pi_{cu}((1-e)x)\|\le  \|(1-p)\|\|\Pi_{cu}((1-e)x-b_x)\|<2\eta<\ep.
\eneq
Similarly,
\beq\label{Tdiv-102}
\|\Pi_{cu}((1-e)x)(1-p)\|<\ep.
\eneq
Claim 1 follows.

 {\bf Step 2: Lifting}

Let $\widetilde \Theta: C_0((0,1])\otimes M_n\to Q$ be the associated cone \hm\, determined by
$\widetilde \Theta(\iota \otimes z)=\Theta(z)\rforal z\in M_n.$  Since the cone is projective, there is 
a \hm\, $\widehat{\theta}: C_0((0,1])\otimes M_n\to l^\infty(A)$ such that 
$\Pi_{cu}\circ \widehat{\theta}=\widetilde\Theta.$ We write $\widehat{\theta}=\{\widehat{\theta}_l\}_{l=1}^\infty,$
where $\widehat{\theta}_l: C_0((0,1])\otimes M_n\to A$ is a \hm. Let 
$\theta_l: M_n\to A$ be the order-zero \cpc\, defined by $\theta_l(z)=\widehat{\theta}_l(\iota\otimes z).$
Then 
\beq
\Pi_{cu}(\{\theta_l(z)\}_l)=\Theta(z)\rforal z\in M_n.
\eneq
Let $p_l=\theta_l(1_n).$    By \eqref{Tdiv-claim1-1}, \eqref{Tdiv-claim1-2}, for all $x\in {\cal F}$ and 
$z\in M_n^{\bf 1},$
\beq\label{Tdiv-st2-1}
\|\Pi_{cu}(\{[\theta_l(z), \,(1-e)x]\}_l)\|<\ep,\\
\|\Pi_{cu}(\{(1-p_l)(1-e)x\}_l)\|<\ep\andeqn
\|\Pi_{cu}(\{(1-e)x(1-p_l)\}_l)\|<\ep.
\eneq

{\bf Step 3: Diagonal selection of the order-zero lifts} 

Now choose finite subsets ${\cal F}_1\subset {\cal F}_2\subset\cdots \subset A^{\bf 1}$ such that 
$\cup_{m=1}^\infty {\cal F}_m$ is dense in $A^{\bf 1}.$ Choose a truly Cuntz-null  (bounded) sequence 
$\{s_k\}\in N_{cu}(A).$   
Applying Claim 1 with $\mathcal F=\mathcal F_k$,
$\varepsilon=1/k$, and $s=s_k$, and then applying Step 2,
we obtain $0\le e_k\le 1$ in $A$ with 
$e_k\lesssim s_k$ and a \hm\,  $\Theta_k: M_n\to Q,$  (after lifting) such that 
there are, for each $k,$  order-zero \cpc s 
$\theta_{k,l}: M_n\to A,$ $l=1,2,...,$ 
\beq
\Pi_{cu}(\{\theta_{k,l}(z)\}_l)=\Theta_k(z)\rforal z\in M_n
\eneq
(with $p_{k,l}=\theta_{k,l}(1_n)$), and, for every $x\in\mathcal F_k$ and $1\leq i,j\leq n$, 
\beq\label{99e924}
\|\Pi_{cu}(\{[\theta_{k,l}(e_{i,j}),\, (1-e_k)x]\}_l\|<{1\over{k}},\\\label{99e925}
\|\Pi_{cu}(\{(1-p_{k,l})(1-e_k)x\}_l)\|<{1\over{k}},\andeqn\\\label{99e926}
\|\Pi_{cu}(\{(1-e_k)x(1-p_{k,l})\}_l)\|<{1\over{k}}.
\eneq
Since $e_k\precsim s_k$ for every $k$, and since $\{s_k\}$ is
truly Cuntz-null, the sequence $\{e_k\}$ is Cuntz-null.
Therefore
\begin{equation}
 \{e_k\}\in N_{\mathrm{cu}}(A).
\end{equation}
Put, for each $x\in\mathcal F_k$, $1\leq i,j\leq n$, and
$l\in\mathbb N$,
\beq\label{99e927}
 a_{x,i,j,k,l}
 =
 [\theta_{k,l}(e_{i,j}),(1-e_k)x],
\eneq
and put
\beq\label{99e928}
 b_{x,k,l}=(1-p_{k,l})(1-e_k)x
 \andeqn
 c_{x,k,l}=(1-e_k)x(1-p_{k,l}).
\eneq
Then
\beq\label{99e929}
 \|a_{x,i,j,k,l}\|\leq 2,\qquad
 \|b_{x,k,l}\|\leq 1,\qquad
 \|c_{x,k,l}\|\leq 1.
\eneq

By \eqref{99e924},
for each $x\in\mathcal F_k$ and
$1\leq i,j\leq n$, there exists a sequence
\[
 \{r^a_{x,i,j,k,l}\}_{l=1}^{\infty}\in N_{\mathrm{cu}}(A)
\]
such that
\beq\label{99e30}
 \sup_l
 \|a_{x,i,j,k,l}-r^a_{x,i,j,k,l}\|<\frac1k.
\eneq
Similarly, by \eqref{99e925} and \eqref{99e926},
for each
$x\in\mathcal F_k$, there are sequences
\[
 \{r^b_{x,k,l}\}_{l=1}^{\infty},
 \quad
 \{r^c_{x,k,l}\}_{l=1}^{\infty}
 \in N_{\mathrm{cu}}(A)
\]
such that
\beq\label{99e931}
 \sup_l\|b_{x,k,l}-r^b_{x,k,l}\|<\frac1k
 \quad\text{and}\quad
 \sup_l\|c_{x,k,l}-r^c_{x,k,l}\|<\frac1k.
\eneq

It follows that, for all $x\in\mathcal F_k$, $1\leq i,j\leq n$,
and $k,l\in\mathbb N$,
\beq\label{99e932}
 \|r^a_{x,i,j,k,l}\|\leq3,\qquad
 \|r^b_{x,k,l}\|\leq2,\qquad
 \|r^c_{x,k,l}\|\leq2.
\eneq
Hence
\beq\nonumber
 &&\hspace{-1in}\left\|
 a_{x,i,j,k,l}^*a_{x,i,j,k,l}
 -
 (r^a_{x,i,j,k,l})^*r^a_{x,i,j,k,l}
 \right\|                                               
 \leq
 \|a_{x,i,j,k,l}\|
 \|a_{x,i,j,k,l}-r^a_{x,i,j,k,l}\|                     \\\label{99e933}
 &&\hspace{1.6in}+
 \|r^a_{x,i,j,k,l}\|
 \|a_{x,i,j,k,l}-r^a_{x,i,j,k,l}\|
 <\frac5k .
\eneq
Similarly, for every $x\in\mathcal F_k$,
\beq\label{99e934}
 \left\|
 b_{x,k,l}^*b_{x,k,l}
 -
 (r^b_{x,k,l})^*r^b_{x,k,l}
 \right\|<\frac3k
\,\,\,\andeqn\,
 \left\|
 c_{x,k,l}^*c_{x,k,l}
 -
 (r^c_{x,k,l})^*r^c_{x,k,l}
 \right\|<\frac3k.
\eneq

Since $\mathcal F_k$ and
$\{e_{i,j}:1\leq i,j\leq n\}$ are finite,
\beq\label{99e35}
&& \lim_{l\to\infty}
 \max_{\substack{x\in\mathcal F_k\\1\leq i,j\leq n}}
 \sup_{\tau\in T(A)^w}
 \tau\!\left(
 (r^a_{x,i,j,k,l})^*r^a_{x,i,j,k,l}
 \right)=0.
 \eneq
 Similarly, 
\beq\label{99e936}
 &&\lim_{l\to\infty}
 \max_{x\in\mathcal F_k}
 \sup_{\tau\in T(A)^w}
 \tau\!\left(
 (r^b_{x,k,l})^*r^b_{x,k,l}
 \right)=0,
\andeqn\\\label{99e937}
 &&\lim_{l\to\infty}
 \max_{x\in\mathcal F_k}
 \sup_{\tau\in T(A)^w}
 \tau\!\left(
 (r^c_{x,k,l})^*r^c_{x,k,l}
 \right)=0.
\eneq

Therefore, for each $k$, we may choose $l(k)\in\mathbb N$ such that
\beq\label{99e938}
&& \max_{\substack{x\in\mathcal F_k\\1\leq i,j\leq n}}
\{ \sup_{\tau\in T(A)^w}
 \tau\!\left(
 (r^a_{x,i,j,k,l(k)})^*r^a_{x,i,j,k,l(k)}
 \right)\}<\frac 1k,\\\label{99e938+}                         
&& \max_{\substack{x\in\mathcal F_k\\\alpha=b,c}}
 \{\sup_{\tau\in T(A)^w}
 \tau\!\left(
 (r^\alpha_{x,k,l(k)})^*r^\alpha_{x,k,l(k)}
 \right)
 \}<\frac1k.
\eneq

Thus, for every $x\in\mathcal F_k$ and $1\leq i,j\leq n$,
\beq\label{99e40-}
 \sup_{\tau\in T(A)^w}
 \tau\!\left(
 a_{x,i,j,k,l(k)}^*a_{x,i,j,k,l(k)}
 \right)<\frac6k.
\eneq
Moreover, for every $x\in\mathcal F_k$,
\beq\label{99e939}
 \sup_{\tau\in T(A)^w}
 \tau\!\left(b_{x,k,l(k)}^*b_{x,k,l(k)}\right)<\frac4k
\andeqn
 \sup_{\tau\in T(A)^w}
 \tau\!\left(c_{x,k,l(k)}^*c_{x,k,l(k)}\right)<\frac4k.
\eneq
It follows that, for each fixed $x\in \cup_{m=1}^\infty {\cal F}_m,$ there exists 
$k_0\in \N$ such that $x\in {\cal F}_k$ for all $k\ge k_0.$ By \eqref{99e40-},
\beq\label{Tdiv-201}
\{[\theta_{k,l(k)}(e_{i,j}),\, (1-e_k)x]\}_k=\{a_{x, i,j, k,l(k)}\}_k \in I_{\overline{T(A)}^w}=N_{cu}(A).
\eneq
By linearity, the above also holds with $e_{i,j}$ replaced by any $z\in M_n.$ 
Similarly,
\beq\label{Tdiv-202}
\{(1-p_{k,l(k)})(1-e_k)x\}_k=\{b_{x, k,l(k)}\}_k\in N_{cu}(A),\\\label{Tdiv-203}
\{(1-e_k)x(1-p_{k,l(k)})\}_k=\{c_{x, k,l(k)}\}_k\in N_{cu}(A).
\eneq

Define $\psi_k=\theta_{k,l(k)}: M_n\to  A$  and $p_k=\psi_k(1_n)$ and define
$\Psi: M_n\to Q$ by $\Psi(z)=\Pi_{cu}(\{\psi_k(z)\}_k).$
Since each $\psi_k$ is an order-zero \cpc, so is $\Psi.$  

{\bf Step 4: Conclusion}

Let $x\in \cup_{m=1}^\infty {\cal F}_m.$ 
Then, for each $z\in M_n^{\bf 1},$ we compute 
\beq
\{[\psi_k(z),\,x]\}_k=\{[\psi_k(z),\, (1-e_k)x]\}_k+\{[\psi_k(z),\,e_kx]\}_k
\eneq
By \eqref{Tdiv-201}, the first term is in $N_{cu}(A).$ The second term is in $N_{cu}(A)$ because 
$\{e_k\}\in N_{cu}(A)$ and $N_{cu}(A)$ is a closed ideal. 
Therefore 
\beq
\{[\psi_k(z),\, x]\}_k\in N_{cu}(A).
\eneq
In other words,  for all $x\in \cup_{m=1}^\infty{\cal F}_m,$
\beq
[\Psi(z), \, \overline{j_A}(x)]=0.
\eneq
Since $\cup_{m=1}^\infty {\cal F}_m$ is dense in $A^{\bf 1},$ 
\beq
\Psi(M_n)\subset  {\overline{j_A}}(A)'\cap Q.
\eneq

Next, for $x\in \cup_{m=1}^\infty{\cal F}_m,$  we have 
\beq
\{(1-p_k)x\}_k=\{(1-p_k)(1-e_k)x\}+\{(1-p_k)e_k x\}
\eneq
%
By  \eqref{Tdiv-202}, the first term is in $N_{cu}(A).$  The second term is in $N_{cu}(A)$ because 
$\{e_k\}\in N_{cu}(A).$ 
It follows  that
\beq\label{99e949}
\{(1-p_k)x\}_k\in N_{cu}(A).
\eneq
Similarly, 
\beq\label{99e950}
\{x(1-p_k)\}_k\in N_{cu}(A).
\eneq
By density,  \eqref{99e949} and \eqref{99e950} hold for all $x\in A^{\bf 1}, $    whence for 
all $x\in A.$ 

Equivalently,
\beq
1-\Psi(1_n)\in \overline{j_A}(A)^\perp.
\eneq
Consequently  $\Psi$ induces a unital order-zero \cpc\, 
$$
\bar\Psi: M_n\to (Q\cap \overline{j_A}(A)')/\overline{j_A}(A)^\perp.
$$
A unital order-zero \cpc\, is a \hm. In other words, $\bar \Psi$ is a unital \hm.
So $A$ has (TAD-3). 
\end{proof}

\begin{thm}\label{MMCC}
Let $A$ be a non-elementary separable   amenable simple \CA\, with $ETR(A)\le 1.$ 
Then $A$ is ${\cal Z}$-stable.
\end{thm}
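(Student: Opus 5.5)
We will deduce $\mathcal Z$-stability from the regularity already established, using the known route "strict comparison plus tracial approximate divisibility implies $\mathcal Z$-stability". First we reduce to the algebraically simple case. Choose $a_0\in {\rm Ped}(A)_+\setminus\{0\}$ and put $B=\Her(a_0)$. By Proposition \ref{Pperm}(2), $B$ has property ETR1. Moreover $B$ is separable, amenable, simple, non-elementary and algebraically simple. By Brown's theorem $B\otimes{\cal K}\cong A\otimes{\cal K}$. Since $\mathcal Z$-stability passes between a separable $C^*$-algebra and its full hereditary subalgebras (Toms--Winter), it suffices to prove that $B$ is $\mathcal Z$-stable. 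We therefore assume that $A$ itself is algebraically simple.

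Next we collect the inputs. By Theorem \ref{Ltrace}, $A$ is stably finite and $T(A)\neq\emptyset$. By Corollary \ref{Cstrictcomp}, $A$ has strict comparison, stable rank one and tracial approximate oscillation zero, and ${\rm Cu}(A)$ is almost unperforated and almost divisible. By Theorem \ref{Tdiv}, $A$ is tracially approximately divisible in the sense of \cite{FLL}, via property (TAD-3). In particular, for every $n$ there is a unital homomorphism $M_n\to (Q_A\cap\overline{j_A}(A)')/\overline{j_A}(A)^\perp$.

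The main step is to pass from this central-sequence information to $\mathcal Z$-stability. I would invoke the theorem of \cite{FLL} (or \cite{FLsigma}) stating that a separable amenable simple $C^*$-algebra with strict comparison and tracial approximate divisibility is $\mathcal Z$-stable. The underlying argument runs as follows. Tracial approximate divisibility gives unital copies of $M_n$ in the trace-kernel central sequence algebra. Under strict comparison, $N_{cu}(A)=I_{\overline{T(A)}^w}$, so the trace-kernel quotient and the Cuntz-null quotient coincide. Hence these copies give uniform property $\Gamma$. Amenability lets us apply the Castillejos--Evington--Tikuisis--White--Winter result: strict comparison together with uniform property $\Gamma$ implies $\mathcal Z$-stability, in the $\sigma$-unital, possibly nonunital, form due to Castillejos--Evington, or as repackaged in \cite{FLL}.

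The delicate point is the nonunital setting. Here $T(A)$ may fail to be compact, and uniform property $\Gamma$ must be formulated using $\overline{T(A)}^w$ or after passing to a hereditary subalgebra with continuous scale. I would handle this as follows. Choose $a_0$ so that $\Her(a_0)$ has continuous scale, which is possible by strict comparison and \cite{FLosc}. Then the tracial state space is compact, the sequence-algebra formalism of Lemma \ref{Lsequnececuntz} applies, and the (TAD-3) copies of $M_n$ are genuinely unital modulo $\overline{j_A}(A)^\perp$. With this, the cited CETWW-type theorem applies directly.
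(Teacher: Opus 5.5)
Your argument is correct in outline, but its last step differs from the paper's. The paper never passes through uniform property $\Gamma$. It takes tracial approximate divisibility from Theorem~\ref{Tdiv} and verifies directly, in a footnote, that $A$ is tracially ${\cal Z}$-absorbing in the sense of \cite[Definition 3.1]{CLZ}. That verification uses an approximate unit $e$ and a cone map $C_0((0,1])\otimes M_n\to A$. The paper then quotes \cite[Theorem 4.1]{HO} and \cite[Theorem 5.7]{CLZ}, which already cover the non-unital case. So at that step it needs neither strict comparison, nor a continuous-scale corner, nor the Brown/Toms--Winter transfer.

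You instead do the following:
\begin{enumerate}
\item pass to a hereditary subalgebra with continuous scale, available from Corollary~\ref{Cstrictcomp}(1) and Remark~\ref{Rcs} as in the proof of Theorem~\ref{TmainZ};
\item identify $Q_A$ with the uniform trace-kernel quotient via $N_{cu}(A)=I_A$;
\item convert the (TAD-3) matrix copies into uniform property $\Gamma$;
\item invoke ``strict comparison $+$ uniform $\Gamma$ $\Rightarrow$ ${\cal Z}$-stable'' for nuclear algebras, with strict comparison from Corollary~\ref{Cstrictcomp}.
\end{enumerate}
This uses more of the paper's own output and trades Hirshberg--Orovitz/\cite{CLZ} for the CETWW machinery. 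You must cite that theorem in its non-unital form for compact $T(A)$, which is exactly what your continuous-scale reduction is for.

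Two points need tightening.

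First, the paper does not rely on any theorem in \cite{FLL} or \cite{FLsigma} of the form ``amenable $+$ strict comparison $+$ TAD $\Rightarrow$ ${\cal Z}$-stable''. Unless you can pin such a statement down, the uniform-$\Gamma$ argument is what carries your proof.

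Second, (TAD-3) only gives matrix copies that are unital modulo $\overline{j_A}(A)^\perp$. To get genuine projections $p_1,\dots,p_n\in A^\omega\cap A'$ summing to $1$, you need $\overline{j_A}(A)^\perp=0$. This does hold under continuous scale:
\begin{itemize}
\item $1_{Q_A}=\Pi_{cu}(\{e_n\})$ and $\sup_{\tau\in T(A)}\tau(1-e_m)\to 0$;
\item so any $b$ annihilating $\overline{j_A}(A)$ satisfies $\|b\|_2\le\|b\|\,\|\Pi_{cu}(\{e_n\})-\overline{j_A}(e_m)\|_2\to 0$;
\item the uniform $2$-norm is faithful on $Q_A$, hence $b=0$.
\end{itemize}
With that in place, the diagonal matrix units are equivalent via partial isometries commuting with $A$. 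This gives $\tau(ap_i)=\tau(a)/n$ for every limit trace $\tau$, which is uniform property $\Gamma$.
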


\begin{proof}
By  Theorem \ref{Tdiv}, $A$ is tracially approximately divisible (equivalently $A$ has property (TAD-2) 
in \cite{FLL}).  Hence $A$ satisfies the definition \cite[Definition 3.1]{CLZ}\footnote{
 Let ${\cal F}\subset A^{\bf 1}$ be a finite subset, let $
 \ep>0,$ nonzero elements $a, b\in A_+^{\bf 1}.$ 
 There exist $e\in A_+^{\bf 1}$ with 
 $ex\approx_{\ep/4} x\approx_{\ep/16} xe$ for all $x\in {\cal F}\cup \{b\}. $ 
 Since $A$ has (TAD-2), for any finite subset ${\cal G}\subset M_n,$ there is a \hm\, 
 $\phi: C_0((0,1])\otimes M_n\to A$ such that
 (1) $\|[\phi(\iota\otimes g), x]\|<{\ep/16}$ for all $x\in {\cal F}\cup\{b\},$ $g\in {\cal G},$ and 
 (2) $(e-e^{1/2} \phi(\iota \otimes 1_n)e^{1/2}-(\ep/16))_+\lesssim a.$
 
 But, with $h:=\phi(\iota\otimes 1_n),$ we have  $b(1-h)b\approx_{\ep/8} be(1-h)eb.$
 Hence 
 \beq\nonumber
 (b(1-h)b-\ep)_+&\lesssim &(be(1-h)eb-\ep/8)_+\sim
 ((1-h^{1/2}eb^2e(1-h)-\ep/8)\\\nonumber
 &\lesssim&
 ((1-h^{1/2}e^2(1-h^{1/2}-\ep/16)_+]\sim 
 (e(1-h)e-\ep/16)_+\lesssim a.
 \eneq}.
 Since  $A$ is also  amenable, by \cite[Theorem 4.1]{HO} and also  \cite[Theorem 5.7]{CLZ},
 $A$ is ${\cal Z}$-stable.
\end{proof}
\section{${\cal Z}$-stable \CA s}
 In this section, we prove that every separable amenable simple ${\cal Z}$-stable \CA\, has essential tracial rank at most one.

Let $A$ be a separable \CA\, with compact $T(A).$ 
Denote by
\beq
I_{A}=\{\{a_n\}\in \prod_n A: \lim_{n\to\infty} \sup_{t\in  T(A)}\tau(a_n^*a_n)=0\}.
\eneq
Denote by $Q_A=\prod_n A/I_A$ and  let
$q_A: \prod_n A\to Q_A$ be the quotient map.

Given a sequence $\{\tau_n\}_{n=1}^\infty$ in $T(A)$ and a free
ultrafilter $\omega$ on $\mathbb N$, define
\[
 \tau_{\{\tau_n\},\omega}
   \big(q_A(\{a_n\})\big)
   :=\lim_{n\to\omega}\tau_n(a_n).
\]
We denote by $T_\infty(A)$ the set of all traces on $Q_A$ obtained
in this way, where both $\{\tau_n\}$ and $\omega$ are allowed to vary.

Let $j_A: A\to \prod_n A$ by $j_A(a)=(a,a,...,a,...)$ for all $a\in A.$

There is an affine continuous surjective map $J_T: T(Q_A)\to T(A)$ 
defined by
\beq
J_T(\tau)(a)=\tau(q_A\circ j_A(a))\rforal  a\in A\andeqn \tau\in T(Q_A).
\eneq

Fix a free ultrafilter $\omega$ on $\N.$
Let $\rho_\tau\in T(Q_A)$ be defined by
\beq\label{71}
\rho_\tau(q_A(\{a_n\}))=\lim_{n\to\omega}(J_T(\tau)(a_n))\rforal \{a_n\}\in \prod_n A.
\eneq
Then 
\beq\label{72}
J_T(\tau)=J_T(\rho_\tau). 
\eneq

Suppose that  $B$ is an amenable  separable \CA\, with compact $T(B)$ and $h: 
B\to Q_A$ is a \hm\, such that the induced affine \hm\, $h_\sharp: T(Q_A)\to T(B)$ 
has the property:
\beq
h_\sharp=\Lambda\circ J_T,
\eneq
where $\Lambda: T(A)\to T(B)$ is an affine continuous surjective \hm. 
Therefore, for any $\tau\in T(Q_A),$
\beq
h_{\sharp}(\tau)(b)=\Lambda(J_T(\tau))(b)\rforal b\in B.
\eneq
It follows from \eqref{72} that 
\beq\label{74}
h_\sharp(\rho_\tau)(b)=\Lambda(J_T(\rho_\tau))(b)=\Lambda(J_T(\tau))(b)=h_\sharp(\tau)(b)
\rforal b.
\eneq

Let $L=\{L_n\}: B\to \prod_n A$ be  a \cpc\, such that $h=q_A\circ L.$ 
Hence, by \eqref{74},
\beq
h_\sharp(\tau)(b)=h_\sharp(\rho_\tau)(b)=
\rho_\tau(h(b))=\rho_\tau(q_A(\{L_n(b)\}))=\lim_{n\to\omega}J_T(\tau)(L_n(b))
\eneq
for all  $b\in B.$

In what follows (in this section as well as later sections) we will also use  the following 
fact without warning:
If $A$ is a non-elementary algebraically simple \CA\, with strict comparison,  then
\beq\label{A2.17a}
 I_{\overline{QT(A)}^w}=N_{cu}(A)
 \eneq
(see Definition \ref{DI-S}).  Moreover, when $A$ is amenable,  in addition, we also have 
\beq\label{A2.17b}
I_{\overline{T(A)}^w}=N_{cu}(A)
\eneq
as well as when, further assuming $T(A)$ is compact,
\beq\label{A2.17c}
I_A=N_{cu}(A).
\eneq

\begin{lem}\label{Luniform7}
Let $A$ and $B$ be  separable \CA s  with compact $T(A)$  and $T(B),$ respectively.
Suppose that $\Lambda: T(A)\to T(B)$ is an affine continuous surjective map.  
Suppose that there is a \hm\, $h: B\to Q_A$ such that 
$h_\sharp: T(Q_A)\to T(B)$ induced by $h$ factors through $T(A)$ and 
$h_\sharp=\Lambda\circ J_T.$

Suppose that $L=\{L_n\}: B\to \prod_n A$ is a c.p.c.  lift, i.e, 
$\Pi\circ L=h,$ where $\Pi: \prod_n A\to Q_A$ is the quotient map. 

Then, for any free ultrafilter $\omega,$    if   $\tau_{\{\tau_n\}}(q_A(\{a_n\})=\lim_{n\to\omega}\tau_n(a_n),$
then, for every $b\in B,$
\beq\label{10-1010}
\lim_{n\to\omega}|\tau_n(L_n(b))-h_\sharp(\tau_{\{\tau_n\}})(b)|=0.
\eneq

Moreover, 
\beq\label{Luniform7-1}
\lim_{n\to\infty}\sup_{t\in T(A)} |t(L_n(b))-\Lambda(t)(b)|=0\tforal b\in B.
\eneq
\end{lem}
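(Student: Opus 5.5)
The plan is to prove \eqref{10-1010} directly from the definitions, and then obtain \eqref{Luniform7-1} by contradiction, choosing a bad sequence of traces and reducing to \eqref{10-1010}.

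\textbf{The limit along $\omega$.} Fix $b\in B$, a sequence $\{\tau_n\}\subset T(A)$ and a free ultrafilter $\omega$, and put $\tau:=\tau_{\{\tau_n\},\omega}$. Since $h=q_A\circ L$, we have $h(b)=q_A(\{L_n(b)\})$. By the definition of $\tau_{\{\tau_n\},\omega}$,
\[
h_\sharp(\tau)(b)=\tau(h(b))=\lim_{n\to\omega}\tau_n(L_n(b)).
\]
This expression is well defined, because elements of $I_A$ have $\tau_n(a_n)\to0$ by Cauchy--Schwarz. Since $h_\sharp(\tau)(b)$ is a constant, this is exactly \eqref{10-1010}. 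It should also be noted that $\tau$ is a genuine tracial state on $Q_A$, since it is a limit of tracial states applied to representatives.

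\textbf{The uniform statement.} Suppose \eqref{Luniform7-1} fails for some $b$. Then there are $\eta>0$, an infinite set $S\subset\N$ and $t_n\in T(A)$ for $n\in S$ with
\[
|t_n(L_n(b))-\Lambda(t_n)(b)|\ge\eta .
\]
Extend $\{t_n\}$ arbitrarily off $S$ and choose a free ultrafilter $\omega$ containing $S$. Let $\tau=\tau_{\{t_n\},\omega}\in T(Q_A)$. The key step is to identify
\[
J_T(\tau)=\lim_{n\to\omega}t_n=:t_\omega ,
\]
where the limit is weak$^*$ in the compact set $T(A)$. This holds because $J_T(\tau)(a)=\tau(q_A(j_A(a)))=\lim_\omega t_n(a)$ for every $a\in A$. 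Hence
\[
h_\sharp(\tau)=\Lambda(J_T(\tau))=\Lambda(t_\omega).
\]
Continuity of $\Lambda$ then gives $\lim_{n\to\omega}\Lambda(t_n)(b)=\Lambda(t_\omega)(b)$, since $t\mapsto\Lambda(t)(b)$ is continuous on $T(A)$. Combining this with \eqref{10-1010},
\[
\lim_{n\to\omega}\bigl(t_n(L_n(b))-\Lambda(t_n)(b)\bigr)=0 .
\]
This contradicts the inequality on $S\in\omega$.

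\textbf{Main obstacle.} The only delicate point is the identification $J_T(\tau_{\{t_n\},\omega})=\lim_\omega t_n$, together with the use of continuity of $\Lambda$ to pass $\Lambda$ through the ultralimit. Both are needed so that the trace-wise factorization $h_\sharp=\Lambda\circ J_T$ can be applied to the specific ultralimit trace. Compactness of $T(A)$ guarantees that the limit $t_\omega$ exists in $T(A)$, so no mass is lost.
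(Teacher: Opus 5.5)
Your proof is correct and follows the paper's argument essentially step for step. The first part is read off from $h=q_A\circ L$ and the definition of the ultralimit trace; the uniform statement is then obtained by contradiction, by choosing bad traces along an infinite set, taking a free ultrafilter containing that set, identifying $J_T(\tau_{\{t_n\},\omega})=\lim_{n\to\omega} t_n$ in the compact set $T(A)$, and using the continuity of $t\mapsto\Lambda(t)(b)$.
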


\begin{proof}
For any sequence $\{\tau_n\}$ in $T(A),$ let 
\beq
\tau_{\{\tau_n\}}(q_A(\{a_n\}))=\lim_{n\to\omega}\tau_n(a_n)\rforal \{a_n\}\in \prod_n A.
\eneq
Fix $b\in B.$  Then 
\beq
h_\sharp(\tau_{\{\tau_n\}})(b)=\tau_{\{\tau_n\}}(h(b))=\tau_{\{\tau_n\}}(q_A(\{L_n(b)\}))=\lim_{n\to\omega}\tau_n(L_n(b)).
\eneq
Hence 
\beq
\lim_{n\to\omega}|\tau_n(L_n(b))-h_\sharp(\tau_{\{\tau_n\}})(b)|=0.
\eneq
This proves the first part of the lemma.

Suppose that \eqref{Luniform7-1}
does not hold. Then there are
$\varepsilon_0>0$ and a strictly increasing sequence $\{n_k\}$
such that
\[
\sup_{t\in T(A)}
 \left|t(L_{n_k}(b))-\Lambda(t)(b)\right|
 \geq \varepsilon_0
\]
for every $k$. Choose a free ultrafilter $\omega$ containing
\[
Y=\{n_k:k\in\mathbb N\}.
\]

For each $n\in Y,$ choose $\tau_n\in T(A)$ such that
\beq
|\tau_n(L_n(b))-\Lambda(\tau_n)(b)|\ge \ep_0.
\eneq
For $n\not\in Y,$ choose $\tau_n\in T(A)$ arbitrarily.

Since $T(A)$ is compact, $\tau_0=\lim_{n\to\omega}\tau_n$ exists in $T(A).$
Let $\tau=\tau_{\{\tau_n\}}\in T(Q_A).$ 
By the definition of $J_T,$ $J_T(\tau)=\tau_0.$  Since $h_\sharp=\Lambda\circ J_T.$ We have 
\beq
h_\sharp(\tau)=\Lambda(\tau_0)
\eneq
On the other hand, by the first part of the lemma, 
\beq
\lim_{n\to\omega}\tau_n(L_n(b))=h_\sharp(\tau)(b)=\Lambda(\tau_0)(b).
\eneq
Since $\Lambda$ is continuous, the function $t\mapsto \Lambda(t)(b)$ is continuous on $T(A).$ 
Therefore ($\tau_0=\lim_{n\to\omega}\tau_n$) 
\beq
\lim_{n\to\omega}\Lambda(\tau_n)(b)=\Lambda(\tau_0)(b).
\eneq
Hence 
\beq
\lim_{n\to\omega}|\tau_n(L_n(b))-\Lambda(\tau_n)(b)|=0
\eneq
This contradicts the choice of $\tau_n.$ The lemma follows.
\end{proof}

\begin{lem}\label{Leasylift}
Let $A$ and $B$ be separable \CA s with compact $T(A)$ and $T(B).$  
Suppose that the weak*-closure of convex hull of $T_\infty(A)$ is $T(Q_A)$
and $\Lambda_B: T(A)\to T(B)$ is an affine continuous map.
Suppose that $h:B\to Q_A$ is a \hm\,  with a c.p.c. lift $L=\{L_n\}: B\to \prod_nA$
such that
\beq\label{Leasylift-1}
\lim_{n\to \infty}\sup_{t\in T(A)} |t(L_n(b))-\Lambda_B(t)(b)|=0.
\eneq
Then $h_\sharp=\Lambda_B\circ J_{T,A}.$
\end{lem}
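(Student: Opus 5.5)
We have to show that $h_\sharp(\tau)=\Lambda_B(J_{T,A}(\tau))$ for every $\tau\in T(Q_A)$. Both sides are affine in $\tau$. Both are also continuous in $\tau$ for the weak$^*$ topology:
\begin{itemize}
\item $\tau\mapsto h_\sharp(\tau)(b)=\tau(h(b))$ is evaluation at the fixed element $h(b)\in Q_A$;
\item $J_{T,A}$ is continuous and affine, and $\Lambda_B$ is continuous and affine.
\end{itemize}
By hypothesis the closed convex hull of $T_\infty(A)$ is all of $T(Q_A)$. So it suffices to check the identity on $T_\infty(A)$, then extend by affinity to convex combinations and by continuity to weak$^*$ limits. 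Fix $b\in B$ throughout.

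\textbf{Step 1 (the identity on $T_\infty(A)$).} Let $\tau=\tau_{\{\tau_n\},\omega}\in T_\infty(A)$. Since $h=q_A\circ L$,
\[
h_\sharp(\tau)(b)=\tau(q_A(\{L_n(b)\}))=\lim_{n\to\omega}\tau_n(L_n(b)).
\]
By \eqref{Leasylift-1}, $|\tau_n(L_n(b))-\Lambda_B(\tau_n)(b)|\to 0$ as $n\to\infty$, hence also along $\omega$. Therefore
\[
h_\sharp(\tau)(b)=\lim_{n\to\omega}\Lambda_B(\tau_n)(b).
\]

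\textbf{Step 2 (identifying $J_{T,A}(\tau)$).} Since $T(A)$ is compact, $\tau_0:=\lim_{n\to\omega}\tau_n$ exists in $T(A)$. For $a\in A$,
\[
J_{T,A}(\tau)(a)=\tau(q_A(j_A(a)))=\lim_{n\to\omega}\tau_n(a)=\tau_0(a),
\]
so $J_{T,A}(\tau)=\tau_0$. Because $t\mapsto\Lambda_B(t)(b)$ is continuous on $T(A)$,
\[
\lim_{n\to\omega}\Lambda_B(\tau_n)(b)=\Lambda_B(\tau_0)(b)=\Lambda_B(J_{T,A}(\tau))(b).
\]
Combined with Step 1, this gives the identity on $T_\infty(A)$.

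\textbf{Step 3 (extension to $T(Q_A)$).} Both sides are affine, so the identity holds on the convex hull of $T_\infty(A)$. Both sides are weak$^*$ continuous, so it holds on the closure, which is $T(Q_A)$. As $b$ was arbitrary, $h_\sharp=\Lambda_B\circ J_{T,A}$.

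The only delicate point is justifying continuity of $h_\sharp$ and $J_{T,A}$ in the weak$^*$ topology on $T(Q_A)$. Both are given by evaluation at fixed elements of $Q_A$ (namely $h(b)$ and $q_A(j_A(a))$), so this is immediate. The rest is routine.
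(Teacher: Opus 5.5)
Your proof is correct and follows essentially the same route as the paper. Both arguments verify the identity on each ultralimit trace $\tau_{\{\tau_n\},\omega}\in T_\infty(A)$, using the uniform estimate together with continuity of $\Lambda_B$ at $J_{T,A}(\tau)=\lim_{n\to\omega}\tau_n$, and then extend to $T(Q_A)=\overline{\mathrm{co}}^{w^*}(T_\infty(A))$ by affinity and weak$^*$ continuity; you simply spell out this last extension step, which the paper leaves implicit.
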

\begin{proof}
Consider  affine weak*-continuous maps
\beq
h_\sharp, \Lambda_B\circ J_{T,A}: T(Q_A)\to T(B).
\eneq
It suffices to show that $h_\sharp(\tau)=\Lambda_B\circ T_{J,A}(\tau)$  for every ultralimit trace 
$\tau=\tau_{\{t_n\}, \omega},$ where $t_n\in T(A)$ and $\omega$ is an ultrafilter. 

Fix such a trace $\tau.$ 
For each $b\in B,$ since $L=\{L_n\}$ is a lift of $h,$ 
\beq\label{Leasylift-5}
\tau(h(b))=\lim_{n\to\omega}  \tau_n(L_n(b)).
\eneq
By applying \eqref{Leasylift-1},
\beq
\lim_{n\to\omega} |t_n(L_n(b))-\Lambda_B(t_n)(b)|=0,
\eneq
and therefore
\beq
\tau(h(b))=\lim_{n\to\omega}\Lambda_B(t_n)(b).
\eneq
Let $\rho:=J_{T,A}(\tau)\in T(A).$ Then for every $a\in A,$
\beq
\rho(a)=J_{T,A}(\tau)(a)=\lim_{n\to\omega}t_n(a).
\eneq
Thus $t_n\to_{\omega}  \rho$ in $T(A).$  Since $\Lambda_B$ is continuous,
\beq\label{Leasylift-8}
\lim_{n\to\omega} \Lambda_B(t_n)(b)=\Lambda_B(\rho)(b)\rforal b\in B.
\eneq
Combining \eqref{Leasylift-5} and \eqref{Leasylift-8},  we obtain 
\beq
\tau(h(b))=\Lambda_B(\rho)(b)=\Lambda_B(J_{T,A}(\tau))(b)\rforal b\in B.
\eneq
Hence 
\beq
h_\sharp(\tau)=\Lambda_B\circ J_{T,A}(\tau)
\eneq
for all $\tau\in T_\infty(A).$  Consequently,   
\beq
h_\sharp=\Lambda_B\circ J_{T,A}.
\eneq
\end{proof}


\begin{lem}\label{Lidealsep}
Let $A$ be a separable \CA\, with non-empty compact $T(A).$ 
Let $S\subset l^\infty(A)$ be a separable \SCA\, and $J=S\cap I_A.$
For each $n\in \N,$ let 
$$
{\cal X}_n\subset J,\,\,\, {\cal Y}_n\subset S
$$
be finite subsets, let 
 $\af_n>0,$ $\bt_n>0$ be numbers, and let $N_n\in \N.$
 
Then there exists strictly increasing integers
$$
s(1)<s(2)<\cdots\,\,\,\andeqn m(1)<m(2)<\cdots,
$$
and positive contractions 
\beq
e_n=f_{s(n),m(n)}\in A,
\eneq
where $f_s=(f_{s,m})_{k=1}^\infty\in J$ is a quasicentral approximate identity for $J$ relative to $S,$
such that $m(n)\ge N_n$  and
\beq\label{Lidealsep-0}
\|e_n\|_{_{2, T(A)}}<\af_n,\,\, \, \|(1-e_n)^{1/2}x_{m(n)}(1-e_n)^{1/2}\|<\af_n,\,\,\,x=\{x_k\}\in {\cal X}_n,\\\label{Lidealsep-01}
 \|[e_n,\, g_{m(n)}]\|<\bt_n \andeqn \|[(1-e_n)^{1/2},\, g_{m(n)}]\|<\bt_n,\,\,\, g=\{g_n\}\in {\cal Y}_n.
\eneq
In particular, if $\af_n\to 0,$ then 
\beq
\{e_n\}\in I_A.
\eneq
\end{lem}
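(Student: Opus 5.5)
We build the $e_n$ by induction on $n$ from a quasicentral approximate identity. Since $S$ is separable and $J=S\cap I_A$ is a closed two-sided ideal of $S$, there is an increasing approximate identity $\{f_s\}_{s\ge1}$ of $J$ consisting of positive contractions that is quasicentral relative to $S$. Write $f_s=(f_{s,k})_{k=1}^\infty$. Then
\[
\|f_sx-x\|\to0 \ (x\in J), \qquad \|[f_s,g]\|\to0 \ (g\in S),
\]
and also $\|[f_s^{1/2},g]\|\to0$. Replacing $f_s$ by a suitable function of it (or invoking the standard fact that quasicentrality passes to continuous functions) gives $\|[(1-f_s)^{1/2},g]\|\to0$ for $g\in S$. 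All norms here are sup-norms over $k$ in $l^\infty(A)$.

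At stage $n$, with $s(n-1)$ and $m(n-1)$ already chosen, pick $s(n)>s(n-1)$ large enough that the following hold for all $x\in{\cal X}_n$ and $g\in{\cal Y}_n$:
\[
\|(1-f_{s(n)})^{1/2}x(1-f_{s(n)})^{1/2}\|<\af_n, \qquad \|[f_{s(n)},g]\|<\bt_n, \qquad \|[(1-f_{s(n)})^{1/2},g]\|<\bt_n.
\]
The first inequality uses
\[
\|(1-f)^{1/2}x(1-f)^{1/2}\|\le\|(1-f)^{1/2}x\|\le\|(1-f)x\|^{1/2}\|x\|^{1/2},
\]
together with $\|(1-f)x\|\le\|x-fx\|\to0$ for $x\in J$, which takes care of ${\cal X}_n$ after squaring the tolerance. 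Because these estimates are sup-norms in $l^\infty(A)$, they hold at every coordinate $k$, in particular at whichever $m(n)$ we pick next.

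Next, since $f_{s(n)}\in J\subset I_A$, we have $\lim_k\sup_{t\in T(A)}t(f_{s(n),k}^2)=0$. Hence we can choose $m(n)>\max(m(n-1),N_n)$ with $\|f_{s(n),m(n)}\|_{2,T(A)}<\af_n$. Setting $e_n=f_{s(n),m(n)}$, the $k=m(n)$ coordinates of the stage-$n$ estimates give \eqref{Lidealsep-0} and \eqref{Lidealsep-01}. Note that $(1-e_n)^{1/2}$ is the $m(n)$-th coordinate of $(1-f_{s(n)})^{1/2}$, because functional calculus acts coordinatewise in $l^\infty(A)$.

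For the final claim, if $\af_n\to0$ then $\sup_t t(e_n^2)<\af_n^2\to0$, so $\{e_n\}\in I_A$.

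The only real subtlety is the commutator estimate with the square roots $(1-f_s)^{1/2}$. It follows from quasicentrality by approximating $t\mapsto(1-t)^{1/2}$ uniformly on $[0,1]$ by polynomials, since commutators with polynomials in $f_s$ tend to zero. Everything else is bookkeeping: order the choices so that $s(n)$ depends only on the finite data at stage $n$, and $m(n)$ on $s(n)$.
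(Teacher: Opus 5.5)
Your proposal is correct and matches the paper's proof step by step. You use a sequential quasicentral approximate identity of $J$ relative to $S$, choose $s(n)$ inductively from the finitely many norm conditions in $l^\infty(A)$, take $m(n)>\max\{N_n,m(n-1)\}$ using $f_{s(n)}\in I_A$, and then read off the $m(n)$-th coordinates. Your explicit requirement $\|[(1-f_{s(n)})^{1/2},g]\|<\beta_n$, justified by polynomial approximation, is a slightly more careful treatment of the square-root commutator that the paper covers only with the phrase ``continuous functional calculus.''
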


\begin{proof}
Choose a sequential quasicentral approximate identity 
\beq
f_1, f_2,...,f_n,.... \in J
\eneq
for $J$ relative to $S,$ with $f_s=\{f_{s,k}\}_{k=1}^\infty.$
Thus (in $l^\infty(A)$)
\beq\label{Lidealsep-10}
\|(1-f_s)x\|\to 0 \rforal x\in J\andeqn \|[f_s, g]\|\to 0 \rforal g\in S.
\eneq
We construct $s(n)$ and $m(n)$ inductively.

{\it Stage $n=1.$}

Because ${\cal X}_1$ is finite,  by \eqref{Lidealsep-10}, we obtain an integer $s(1)$ such that
\beq
&&\|(1-f_{s(1)})^{1/2}x(1-f_{s(1)})^{1/2}\|<\af_1\rforal x\in {\cal X}_1\andeqn\\
&&\|[f_{s(1)}, \, g]\|<\bt_1\rforal g\in {\cal Y}_1.
\eneq
Since $f_{s(1)}\in I_A,$
\beq
\lim_{k\to\infty}\|f_{s(1),k}\|_{_{2, T(A)}}=0.
\eneq
Choose $m(1)\ge N_1$ such that
\beq
\|f_{s(1), m(1)}\|_{_{2, T(A)}}<\af_1.
\eneq
Set $e_1=f_{s(1), m(1)}.$  We obtain \eqref{Lidealsep-0} and \eqref{Lidealsep-01} for $n=1.$ 

 {\it Inductive stage}
 
 Assume $s(1), s(2),...,s(n-1)$ and $m(1), m(2), ...,m(n-1)$ have been chosen.
 
 Since ${\cal X}_n$ is finite, choose  $s(n)>s(n-1)$ so large that
 \beq\label{Lidealsep-12}
 &&\|(1-f_{s(n)})^{1/2}x(1-f_{s(n)})^{1/2}\|<\af_n\rforal x\in {\cal X}_n\andeqn\\\label{Lidealsep-13}
 && \|[f_{s(n)}, \, g]\|<\bt_n \rforal g\in {\cal Y}_n.
 \eneq
 Since $f_{s(n)}\in I_A,$ 
 \beq
 \lim_{k\to\infty} \|f_{s(n),k}\|_{_{2, T(A)}}=0.
 \eneq
 Choose $m(n)>\max\{N_n, m(n-1)\}$ such that 
 \beq\label{Lidealsep-14}
 \|f_{s(n),m(n)}\|_{_{2, T(A)}}<\af_n.
 \eneq
 Define 
 \beq
 e_n=f_{s(n),m(n)}.
 \eneq
 By \eqref{Lidealsep-12} and \eqref{Lidealsep-13} (and continuous functional calculus), we obtain  
 \beq
 \|(1-e_n)^{1/2}x_{m(n)}(1-e_n)^{1/2}\|&<&\af_n\rforal x\in {\cal X}_n \andeqn\\
 \|[e_n,\, g_{m(n)}]\|&<&\bt_n\rforal g=\{g_k\}\in {\cal Y}_n.
 \eneq
 Moreover, by \eqref{Lidealsep-14},
 \beq
 \|e_n\|_{_{2, T(A)}}<\af_n.
 \eneq
 This completes the induction.
\end{proof}

\begin{lem}\label{Lher} 
Let $A$ be a \CA\, and $D\subset A$ be a \SCA. Suppose that
$$
0\le a\le b\le 1, \,\, 0\le e, c\le 1, \,\,\, e, c\in D,
$$
with
$$
\|b-e\|<\dt\tand \|a-c\|<\dt.
$$
Then there exists $c'\in \overline{eDe},\, 0\le c'\le 1$ such that
\beq
\|a-c'\|< 2\sqrt{2\dt}+\dt.
\eneq

\end{lem}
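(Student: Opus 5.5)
The plan is to reduce directly to the first half of Lemma \ref{Ldaxiao}, applied with $C=D$, $x=a$, $y=b$, $c_x=c$, $c_y=e$ and $r=\delta$, and then to check that the element it produces actually lies in $\overline{eDe}$.

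First I will record what Lemma \ref{Ldaxiao} gives. The hypotheses $0\le a\le b\le 1$, $0\le c,e\le 1$ in $D$, $\|c-a\|<\delta$ and $\|e-b\|<\delta$ are exactly those of Lemma \ref{Ldaxiao}. Hence there is $c'\in D$ with
\[
0\le c'\le e\le 1 \tand \|c'-a\|<2\sqrt{2\delta}+\delta.
\]
From the proof of Lemma \ref{Ldaxiao}, $c'$ has an explicit form. Put $\delta'=\|c-a\|+\|e-b\|$. If $\delta'>0$, then
\[
c'=s\,c\,s,\qquad s=e^{1/2}(e+\delta' 1)^{-1/2}.
\]
If $\delta'=0$, then $c'=c=a$.

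The only remaining point, and the one needing a small extra argument, is membership $c'\in\overline{eDe}$. I will handle the two cases separately.

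\emph{Case $\delta'>0$.} Write $s=e^{1/2}g(e)$, where $g(t)=(t+\delta')^{-1/2}$ is continuous on $[0,1]$. Then $g(e)$ lies in the unitization $\widetilde D$ and commutes with $e$. This gives
\[
c'=e^{1/2}\bigl(g(e)\,c\,g(e)\bigr)e^{1/2}\in e^{1/2}De^{1/2},
\]
since $g(e)\,c\,g(e)\in D$ ($D$ is an ideal in $\widetilde D$). Next, $e^{1/2}$ is a norm limit of elements $e\,p(e)$ with $p$ polynomial. Therefore $e^{1/2}De^{1/2}\subset\overline{eDe}$, and so $c'\in\overline{eDe}$.

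\emph{Case $\delta'=0$.} Here $c'=a=c\in D$ and $0\le a\le b=e$. An element of $D_+$ dominated by $e$ lies in the hereditary subalgebra of $D$ generated by $e$. That subalgebra equals $\overline{eDe}$, so $c'\in\overline{eDe}$ in this case as well.

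In both cases $0\le c'\le e\le 1$ and $\|a-c'\|<2\sqrt{2\delta}+\delta$, which is the claim.

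I expect no real obstacle. The only care needed is the hereditary-membership bookkeeping just done: the functional-calculus factor $g(e)$ lives in $\widetilde D$ rather than $D$, and this is absorbed because $c\in D$.
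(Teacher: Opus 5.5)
Your proof is correct, but it follows a different route from the paper.

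\textbf{The paper's route.} The paper does not invoke Lemma \ref{Ldaxiao}. It takes the cutoff $s=\bar f_\delta(e)$, which lies in $C^*(e)\subset\overline{eDe}$, and observes that $\|(1-s)e(1-s)\|\le\delta$. It then uses only the domination $a\le b\le e+\delta$ to get $\|(1-s)a(1-s)\|<2\delta$, hence $\|a-sas\|\le2\sqrt{2\delta}$. Finally it sets $c'=scs$, which lies in $\overline{eDe}$ because $s$ does, and satisfies $\|a-c'\|<2\sqrt{2\delta}+\delta$.

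\textbf{Your route.} You borrow the cutoff $e^{1/2}(e+\delta')^{-1/2}$ from Lemma \ref{Ldaxiao}. Its key input is instead the domination $c\le e+\delta'$ between the two approximants.

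\textbf{What each buys.} Your approach gives the stronger conclusion $0\le c'\le e$, not merely $c'\le s^2\le1$ as in the paper. It also makes membership in $\overline{eDe}$ automatic. The statement of Lemma \ref{Ldaxiao}, with $C=D$, already gives $c'\in D$ with $0\le c'\le e$. The heredity argument you use in the case $\delta'=0$ then applies verbatim in all cases. So neither the case split nor the explicit formula for $c'$ taken from inside that proof is needed. The paper's argument is self-contained and puts $c'$ in $\overline{eDe}$ by construction rather than by heredity. Both routes give the same constant $2\sqrt{2\delta}+\delta$.

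\textbf{A small formality.} Lemma \ref{Ldaxiao} is stated for $0<r<1$. This costs nothing: if $\delta\ge1$, take $c'=0$, since $\|a\|\le1<2\sqrt2+1$. In any case, the proof of Lemma \ref{Ldaxiao} never uses $r<1$.
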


\begin{proof}
Choose ${\bar f}_\dt$ as in \eqref{dfg} and put  $s={\bar f}_\dt(e).$ Since ${\bar f}_\dt(0)=0,$
\beq
s\in C^*(e)\cap \overline{eAe}\subset \overline{eDe}.
\eneq
Moreover
\beq\label{Lher-5}
\|(1-s)e(1-s)\|=\sup_{t\in [0,1]} t(1-{\bar f}_\dt(t))^2\le \dt.
\eneq
Since $\|b-e\|<\dt,$
$$
b\le e+\dt \cdot 1_{\tilde A}.
$$
Therefore, using $a\le b,$
\beq
(1-s)a(1-s)\le (1-s)b(1-s)\le (1-s)e(1-s)+\dt(1-s)^2.
\eneq
Combining  \eqref{Lher-5},  we obtain 
\beq
\|(1-s)a(1-s)\|<2\dt.
\eneq
Hence
\beq
\|(1-s)a^{1/2}\|^2=\|(1-s)a(1-s)\|<2\dt, 
\eneq
and consequently
\beq
\|(1-s)a\|<\sqrt{2\dt}\andeqn \|sa(1-s)\|<\sqrt{2\dt}.
\eneq
On the other hand, we have
\beq
a-sas=(1-s)a+sa(1-s).
\eneq
Thus
\beq
\|a-sas\|\le 2\sqrt{2\dt}.
\eneq
Now choose  $c'=scs.$ Then 
\beq
c'\in \overline{eDe} \andeqn 0\le c'\le s^2\le 1.
\eneq
Finally, 
\beq
\|a-c'\|\le \|a-sas\|+\|s(a-c)s\|<2\sqrt{2\dt}+\dt.
\eneq
\end{proof}

\begin{lem}\label{Lher2}
Let $A$ be a \CA \, and let $e_A\in A$ be strictly positive with $\|e_A\|=1.$ Let ${\cal F}\subset A$ be a finite subset of the unit ball of $A,$ and let $\ep>0.$ 
Then there are finite subset ${\cal G}\subset A$  and $\dt>0,$ with $e_A\in {\cal G}$ such that
the following holds.

Suppose that $D\subset B$ are \CA s  and   $H: A\to B$ is a \cpc. Suppose that, for every $x\in {\cal G},$ there is $d(x)\in D$ satisfying 
\beq
\|d(x)-H(x)\|<\dt.
\eneq
Put $d=d(e_A),$ and suppose that $0\le d\le 1.$ Let $D_1=\overline{dDd}.$

Then, for every $f\in {\cal F},$ there exists $c(f)\in D_1$ such that
\beq
\|H(f)-c(f)\|<\ep.
\eneq
\end{lem}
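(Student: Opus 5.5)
The idea is to reduce the statement to a single fixed algebraic identity inside $A$. Every $f\in{\cal F}$ is approximately of the form $e_A^{1/2}\,y\,e_A^{1/2}$, and $H$ then transports this product structure approximately into $D$, where sandwiching by $d=d(e_A)$ lands us in $D_1=\overline{dDd}$.

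\textbf{Step 1: approximate ${\cal F}$ by elements $e_A g e_A$.} Since $e_A$ is strictly positive, $e_A^{1/k}$ is an approximate unit for $A$. Fix $k$ large and let $u=e_A^{1/k}$, chosen so that $\|uf u-f\|<\ep/4$ for all $f\in{\cal F}$. Because $u$ is a norm limit of elements of the form $e_A p(e_A)$ with $p$ a polynomial, we can write
\[
f\approx_{\ep/2} e_A\, g_f\, e_A, \qquad g_f=p(e_A)\,f\,p(e_A).
\]
We then take ${\cal G}=\{e_A\}\cup\{g_f:f\in{\cal F}\}$, together with whatever auxiliary products are needed.

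\textbf{Step 2: compare $H(e_A g e_A)$ with $H(e_A)H(g)H(e_A)$.} This is the main obstacle, since $H$ is only a c.p.c.\ map and is not multiplicative. I would try to avoid needing multiplicativity by working directly with compressions. Replace $d$ by ${\bar f}_\eta(d)$, or use $d^{1/2}$, and use positivity: $H(e_A)\approx_\delta d$, and $H$ is contractive. Let $s={\bar f}_\eta(d)\in D_1$. The estimate of Lemma \ref{Lher} shows that $(1-s)H(e_A)(1-s)$ is small, and hence that $(1-s)H(e_A)^{1/2}$ is small.

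What we actually need is that $H(f)$ itself is nearly supported on $d$. For this, use the Kadison--Schwarz-type inequality for the c.p.c.\ map $H$:
\[
H(e_A g e_A)^*H(e_A g e_A)\le \|g\|^2\, H(e_A^2)\quad\text{(up to the factor $\|g\|^2$)},
\]
so that $H(e_Age_A)H(e_Age_A)^*\le\|g\|^2H(e_A^2)\le\|g\|^2H(e_A)$. In the same way, using the Stinespring form $H=V^*\pi(\cdot)V$, we get $\|(1-s)H(e_Age_A)\|\le\|g\|\,\|(1-s)V^*\pi(e_A)V(1-s)\|^{1/2}$. This quantity is small by the preceding paragraph, and the same holds on the right.

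\textbf{Step 3: conclude.} From Step 2 we obtain $H(f)\approx s H(f) s$, with an error controlled by $\|g_f\|$ times a power of $\delta$ (plus $\ep/2$ coming from Step 1). Since $s\in D_1$, $d(g_f')$-approximants exist, and $\|s\|\le 1$, we can take
\[
c(f)=s\, d(e_Ag_fe_A)\, s\in \overline{dDd}=D_1.
\]
For this we also include $e_Ag_fe_A$ in ${\cal G}$, so that $\|s(H(e_Ag_fe_A)-d(\cdot))s\|<\delta$. Finally, choose $\eta$ and then $\delta$ small relative to $\ep/\max_f\|g_f\|$, so that the total error is less than $\ep$. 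The key point to verify carefully is that the bound $\|(1-s)H(e_A)(1-s)\|\le 2\delta$ transfers through the Stinespring factorization to $H(e_Age_A)$ with only a square-root loss.
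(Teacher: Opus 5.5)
Your argument is correct, but it takes a different route from the paper's.

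The paper first writes each $f$ as a combination of four positive contractions $a_{f,j}$. It cuts these down to $x_{f,j}=L^{-1}h(e_A)a_{f,j}h(e_A)$, so that $0\le x_{f,j}\le e_A$. It then uses only positivity of $H$ to get $0\le H(x_{f,j})\le H(e_A)\le 1$. The order-theoretic Lemma \ref{Lher} produces $y_{f,j}\in\overline{dDd}$ close to $H(x_{f,j})$, and $c(f)$ is reassembled with the factor $L$.

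You instead approximate $f$ directly by $y=e_Ag_fe_A$. In place of the order domination $H(x)\le H(e_A)$ you use the Kadison--Schwarz domination
$H(y)H(y)^*,\ H(y)^*H(y)\le\|g_f\|^2H(e_A^2)\le\|g_f\|^2H(e_A)$.
With $s={\bar f}_\eta(d)$ and $\|(1-s)H(e_A)(1-s)\|\le\|(1-s)d(1-s)\|+\delta\le\eta+\delta$, this gives
$\|(1-s)H(y)\|,\ \|H(y)(1-s)\|\le\|g_f\|(\eta+\delta)^{1/2}$.
Your compression $c(f)=s\,d(y)\,s$ is exactly the compression $c'=scs$ used inside Lemma \ref{Lher}. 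The norm $\|g_f\|$ plays the role of the paper's constant $L$, and the square-root loss is the same in both arguments.

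Each route has an advantage. Yours treats non-self-adjoint $f$ in one step: there is no four-fold positive decomposition, and no need to replace the approximants $d(x)$ by positive contractions before applying Lemma \ref{Lher}. It does, however, use $2$-positivity of $H$ (via Kadison--Schwarz or Stinespring). The paper's argument goes through verbatim for any positive contractive $H$.

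Two points need care in your version, and you already handle both. First, ${\cal G}$ must contain $e_Ag_fe_A$ itself, not just $g_f$. Second, $\delta$ and $\eta$ must be chosen after $\max_f\|g_f\|$ has been fixed, which is legitimate because $g_f$ depends only on ${\cal F}$, $\ep$ and $e_A$.
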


\begin{proof}
For each $f\in {\cal F},$ write
\beq\label{Lher2-4}
f=a_{f,1}-a_{f,2}+i(a_{f,3}-a_{f,4}),
\eneq
where $0\le a_{f,j}\le 1,$ $j=1, 2, 3, 4.$  

Since $e_A$ is a strictly positive element, choose $h\in C_0((0,1])_+$ with $\|h\|=1$ such that
\beq\label{Lher2-5}
\|a_{f,j}-h(e_A)a_{f,j}h(e_A)\|<\ep/8\rforal f\in {\cal F},\,\,1\le j\le 4.
\eneq
We may assume that $h$ vanishes on a neighborhood of $0,$ 
say $h(t)=0$ for all $t\in [0, \sigma]$ for some $0<\sigma<1/2.$ 
Define $g(t)=0$ if $t\in [0, \sigma];$ $g(t)=h^2(t)/t$ if $t\in (\sigma, 1].$ So 
$g\in C_0((0,1]).$
Hence
there is a constant
$L\ge 1$ such that
\beq
h(t)^2\le L t\rforal t\in [0,1].
\eneq
Consequently,
\beq
h(e_A)^2\le L e_A.
\eneq
For $f\in {\cal F}$ and $1\le j\le 4,$ put
\beq\label{Lher2-9}
x_{f,j}=L^{-1} h(e_A)a_{f,j}h(e_A).
\eneq
Then
\beq\label{Lher2-10}
0\le x_{f,j}\le L^{-1} h(e_A)^2\le  e_A\le 1
\eneq
Choose a number $\eta>0$ such that 
\beq
4L\eta<\ep/2.
\eneq
Choose $\dt_0>0$ such that
\beq
2\sqrt{2\dt_0}+\dt_0<\eta.
\eneq

Set 
\beq
{\cal G}=\{e_A\}\cup \{x_{f,j}: f\in {\cal F}, 1\le j\le 4\}.
\eneq
Choose $0<\dt<\dt_0/2.$   

Now suppose that  $D\subset B,$ $H: A\to B$ and $d(x)\in D$ satisfy the hypotheses.
In particular, there are, for each $f\in {\cal F}$ and $1\le j\le 4,$  $d(x_{f,j})\in D$ such that 
\beq\label{Lher2-11}
\|d(x_{f,j})-H(x_{f,j})\|<\dt.
\eneq
Since $H$ is positive,  by \eqref{Lher2-10},
\beq\label{Lher2-11+}
0\le H(x_{f,j})\le H(e_A)\le 1.
\eneq
There are $0\le c_{f,j}\le 1$ in $D$ such that
\beq\label{Lher2-11++}
\|c_{f,j}-H(x_{f,j})\|<2\dt<\dt_0\rforal f\in {\cal F}, 1\le j\le 4 \andeqn \|d-H(e_A)\|<\dt<\dt_0.
\eneq
Now applying Lemma \ref{Lher} to 
$$
a=H(x_{f,j}), \,\, b=H(e_A),\,\, c=c_{f,j}\andeqn e=d,
$$
(with \eqref{Lher2-11+} and  \eqref{Lher2-11++}), we obtain $y_{f,j}\in D_1=\overline{dDd}$ such that
\beq
0\le y_{f,j}\le 1\andeqn \|H(x_{f,j})-y_{f,j}\|<\eta
\eneq
Recall that $h(e_A)a_{f,j}h(e_A)=L x_{f,j}$  (see \eqref{Lher2-9}). Therefore 
$H(h(e_A)a_{f,j}h(e_A))=L H(x_{f,j}),$ whence
\beq\label{Lher2-12}
\|H(h(e_A)a_{f,j}h(e_A))-Ly_{f,j}\|<L\eta.
\eneq
Define 
\beq
c(f)=L(y_{f,1}-y_{f,2})+i L( y_{f,3}-y_{f,4}).
\eneq
Then 
\beq
c(f)\in D_1.
\eneq
Using \eqref{Lher2-4}, \eqref{Lher2-12} and \eqref{Lher2-5}, we have
\beq\nonumber
\|H(f)-c(f)\| &\le &\sum_{j=1}^4 \|H(a_{f,j})-H(Lx_{f,j})\|+\sum_{j=1}^4\|H(h(e_A)a_{f,j}h(e_A))-Ly_{f,j}\|\\
&\le & \sum_{j=1}^4 \|a_{f,j}-h(e_A)a_{f,j}h(e_A)\|+4L \eta\\
&<& 4 {\ep\over{8}}+4 L \eta={\ep\over{2}}+{\ep\over{2}}=\ep.
\eneq

\end{proof}

We omit the proof of the following easy fact.

\begin{lem}\label{LtraceL}
Let $E$ be a unital AF-algebra and let $0\le y\le 1$ be an element  in $E.$ Suppose that
\beq
\inf_{t\in T(E)}t(y)>r.
\eneq
Then, for every $\rho>0,$ there are a finite-dimensional \SCA\, $C\subset E$ and $c\in C_+$ with 
$\|c\|\le 1$ such that
\beq
\|c-y\|<\rho\tand \inf_{s\in T(C)} s(c)>r.
\eneq
\end{lem}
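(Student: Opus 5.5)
The plan is to use the structure of a unital AF-algebra: $E=\overline{\bigcup_n E_n}$ for an increasing sequence of finite-dimensional \SCA s $E_n$ that all contain $1_E$. Put $\sigma=\inf_{t\in T(E)}t(y)-r>0$. Fix $\rho>0$; we may assume $\rho<\sigma/2$.

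\textbf{Step 1 (approximate $y$ inside some $E_n$).} Choose $n$ and $y'\in E_n$ with $\|y-y'\|$ small. Replacing $y'$ by $(y'+y'^*)/2$, and then by $\min(\max(y',0),1)$ via functional calculus inside $E_n$, gives $c\in (E_n)_+$ with $\|c\|\le 1$. These modifications at most double the distance, so we may arrange $\|c-y\|<\rho/2$, and in particular $\|c-y\|<\sigma/2$.

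\textbf{Step 2 (traces of $E$ versus traces of $E_n$).} This is the main point. The estimate $t(c)>r+\sigma/2$ holds for $t\in T(E)$, but we need it for every $s\in T(E_n)$. Not every trace on $E_n$ extends to a trace on $E$, so we cannot take $C=E_n$ naively. The fix is to choose $m\ge n$ large enough that every tracial state of $E_m$, restricted to $E_n$, is close to the restriction of a trace of $E$.

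To justify this we argue by contradiction. Suppose that for each $m\ge n$ there is $s_m\in T(E_m)$ with $s_m(c)\le r+\sigma/4$. Extend each $s_m$ to a state on $E$. A weak$^*$ cluster point $s$ of these states satisfies $s(xz)=s(zx)$ for $x,z\in\bigcup_k E_k$. Indeed, for fixed $x,z\in E_k$, the extension of $s_m$ is tracial on $E_k$ once $m\ge k$. By density $s$ is a tracial state of $E$. Since $c\in E_n$, we also have $s(c)\le r+\sigma/4$, which contradicts $s(c)\ge t$-bound $\inf_{t\in T(E)}t(c)>r+\sigma/2$.

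\textbf{Step 3 (conclusion).} Take $C=E_m$ for such an $m$, regarding $c\in E_n\subset E_m$. Then $\|c-y\|<\rho$, $\|c\|\le 1$, and $\inf_{s\in T(C)}s(c)>r$. The minimum is attained, since $T(C)$ is compact, so the infimum is strict.
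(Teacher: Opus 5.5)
Your proof is correct. There is nothing to compare it against, because the paper omits the proof and calls the lemma an easy fact. Your argument supplies the missing content, and you correctly locate the only real issue in Step 2. A tracial state of $E_n$ need not extend to $E$: take the filtration $\mathbb{C}^2\subset M_2\subset M_4\subset\cdots$ of $M_{2^\infty}$ with $y=e_{11}$, where the trace $(\lambda,\mu)\mapsto\mu$ on $\mathbb{C}^2$ kills $y$. Your weak$^*$-compactness argument, showing that the restrictions of $T(E_m)$ to $E_n$ eventually stay in any open neighbourhood of the restrictions of $T(E)$, is exactly what handles this.

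Two small points are worth making explicit.

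\begin{itemize}
\item In Step 1, the factor $2$ should be justified spectrally, not by Lipschitz continuity of $g(t)=\min(\max(t,0),1)$, since Lipschitz continuity does not give operator-norm estimates. If $\|y'-y\|<\delta$ with $y'$ self-adjoint, then $\mathrm{sp}(y')\subset[-\delta,1+\delta]$. Hence $\|g(y')-y'\|\le\delta$ and $\|g(y')-y\|<2\delta$.
\item The assumption $1_E\in E_n$ holds automatically for large $n$. If $a\in E_n$ with $\|1_E-a\|<1$, then $1_E-1_{E_n}=(1_E-1_{E_n})(1_E-a)$ is a projection of norm less than $1$, hence $0$. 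This is what lets you extend each $s_m$ to a state of $E$.
\end{itemize}
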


\begin{lem}\label{LAFt}
Let $B$ be a separable simple AF-algebra with non-empty compact $T(B).$ Suppose that 
$0\le b\le 1$ in $B$ and 
\beq
\tau(b)\ge 1-\ep\tforal \tau\in T(B),
\eneq
where $0<\ep<1/2.$

Then, for every $\eta>0$  and  every $\dt>0,$ there are a finite dimensional \SCA\, $C\subset B$ and $c\in C_+$ with 
$\|c\l \le 1$ such that
\beq
\|b-c\|<\eta\tand \inf_{s\in T(C)} s(c)\ge 1-\ep-\dt.
\eneq
\end{lem}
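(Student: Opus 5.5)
We plan to reduce Lemma \ref{LAFt} to Lemma \ref{LtraceL} by passing to a unital corner. Since $B$ is a separable simple AF-algebra, write $B=\overline{\bigcup_k B_k}$ with $B_k$ finite dimensional and increasing. Fix $\eta,\dt>0$; we may assume $\eta<1$. First choose a projection $p\in B$ such that $\|b-pbp\|<\eta/3$ (possible since $B$ has an approximate unit of projections). Put $E=pBp$, a unital simple AF-algebra, and $y=pbp\in E$ with $0\le y\le 1$.

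The key step is to control $\inf_{t\in T(E)} t(y)$. Each $t\in T(E)$ is the restriction, renormalized by $\tau(p)$, of a unique densely defined trace on $B$; equivalently, $t=\tau|_E/\tau(p)$ for some $\tau$ in the cone of traces. Since $T(B)$ is compact (so $B$ has continuous scale, and $\tau\mapsto\tau(p)$ and $\tau\mapsto\tau(b)$ are continuous on $T(B)$), we may, by enlarging $p$ within a standard increasing sequence of projections $p_n$ forming an approximate unit, arrange that $\inf_{\tau\in T(B)}\tau(p)>1-\dt/4$; here compactness of $T(B)$ and Dini's theorem applied to the increasing continuous functions $\tau\mapsto\tau(p_n)\nearrow 1$ give uniform convergence. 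Then every tracial state of $E$ has the form $t=\tau|_E/\tau(p)$ with $\tau\in T(B)$ (up to scaling, which is forced since $p$ is full), and
\[
t(y)=\frac{\tau(pbp)}{\tau(p)}\ge \tau(pbp)\ge \tau(b)-\tau(b^{1/2}(1-p)b^{1/2})\ge 1-\ep-\tau(1-p),
\]
where $\tau(1-p)$ means $1-\tau(p)<\dt/4$ in the sense of the continuous scale. Hence $\inf_{t\in T(E)}t(y)>1-\ep-\dt/2$. The main obstacle is making this identification of $T(E)$ with a rescaled $T(B)$ rigorous in the non-unital setting, together with the uniformity in $\tau$; this is exactly where compactness of $T(B)$ enters, and we expect to handle it using Remark \ref{Rcs} or a direct Dini argument.

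Finally, apply Lemma \ref{LtraceL} to $E$, $y$, $r=1-\ep-\dt$ and $\rho=\eta/3$: we obtain a finite-dimensional $C^*$-subalgebra $C\subset E\subset B$ and $c\in C_+$ with $\|c\|\le1$, $\|c-y\|<\eta/3$, and $\inf_{s\in T(C)}s(c)>1-\ep-\dt$. Then $\|b-c\|\le\|b-pbp\|+\|y-c\|<\eta$, which completes the proof.
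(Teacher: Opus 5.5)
Your proposal is correct and follows the paper's proof essentially step for step. Both arguments use Dini's theorem on the compact set $T(B)$ to pick a projection $p$ from an increasing approximate unit with $\inf_{\tau}\tau(p)$ close to $1$ and $\|b-pbp\|$ small. Both then use fullness of $p$ to write each tracial state of $pBp$ as $\tau|_{pBp}/\tau(p)$, apply the estimate $\tau(pbp)\ge \tau(b)-(1-\tau(p))$, and finish with Lemma \ref{LtraceL}. The paper likewise takes the identification of $T(pBp)$ with rescaled traces in $T(B)$ for granted. So the point you flag (the extended trace is bounded because $B$ has continuous scale, hence is algebraically simple) is a detail shared with the published argument, not a gap specific to yours.
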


\begin{proof}
Let $\{p_n\}$ be an approximate identity for $B$ consisting of projections. Since 
$T(B)$ is compact, and $\tau(p_n)\ne 1$ for all $\tau\in T(B),$ the Dini Theorem gives 
\beq\label{LAFt-3}
\sup_{\tau\in T(B)} (1-\tau(p_n))\to 0.
\eneq
Choose $0<\sigma<\min\{\varepsilon,\delta\}.$
 Using \eqref{LAFt-3}, one may choose a projection $p$ such that
\beq\label{LAFt-4}
\inf_{\tau\in T(B)} \tau(p)>1-\sigma\andeqn \|b-pbp\|<\eta/2.
\eneq
Since  $B$ is simple and $p\not=0,$ $p$ is full.  Hence every $s\in T(pBp)$ has the form
\beq\label{LAFt-5}
s(x)={\tau(x)\over{\tau(p)}}\rforal x\in pBp,
\eneq
where $\tau\in T(B)$ for some $\tau\in T(B).$ 

Put $y=pbp\in pBp.$  Then for any $\tau\in T(B),$ we estimate that
\beq
\tau(b)-\tau(pbp)=\tau((1-p)b(1-p))\le \tau(1-p)=1-\tau(p).
\eneq
Therefore, by \eqref{LAFt-4}, for all $\tau\in T(B),$ 
\beq
\tau(pbp)\ge \tau(b)-(1-\tau(p))>1-\ep-\sigma.
\eneq
Applying   \eqref{LAFt-5},
\beq
s(y)={\tau(pbp)\over{\tau(p)}}\ge \tau(pbp)>1-\ep-\sigma.
\eneq
Hence
\beq
\inf_{s\in T(pBp)}s(y)>1-\ep-\sigma.
\eneq
Recall that $pBp$ is a unital AF-algebra with unit $p.$  Applying Lemma \ref{LtraceL}
to $E=pBp,$ $y=pbp$ and $r=1-\ep-\sigma,$
we obtain  a unital finite dimensional \SCA\, $C\subset pBp\subset B$ and $c\in C$ with $0\le c\le 1$ such that
\beq\label{LAFt-10}
\|c-pbp\|<\eta/2\andeqn \inf_{s\in T(C)} s(c)\ge 1-\ep-\sigma.
\eneq
Finally, by \eqref{LAFt-4}  and \eqref{LAFt-10},
\beq
\|b-c\|\le \|b-pbp\|+\|pbp-c\|<\eta.
\eneq
\end{proof}

\begin{lem}\label{LmainZ}
Let $A$ and  $C$ be separable amenable ${\cal Z}$-stable \CA s with nonempty compact tracial state spaces, and let $B$ be a separable simple AF-algebra with nonempty compact $T(B).$
Fix any $0<\ep<1/4$ and 
any strictly positive element $e_C$ of $C$ with $\|e_C\|=1$ and $\inf_{t\in T(C)}t(e_C)>1-\ep.$

Suppose 
\beq
\Lambda_B: T(B)\to T(C)\tand \Lambda_A: T(A)\to T(B)
\eneq
are affine homeomorphisms and 
\beq
h: C\to Q_B\tand \phi: B\to Q_A
\eneq
are \hm s satisfying 
\beq\label{LmainZ-03}
h_\sharp=\Lambda_B\circ J_B\andeqn \phi_\sharp=\Lambda_A\circ J_A.
\eneq
Let 
$$
H=\{H_n\}: C\to l^\infty(B)\andeqn \Phi=\{\Phi_n\}: B\to l^\infty(A)
$$
be \cpc s  which lift $h$ and $\phi,$ respectively.
Suppose that 
\beq
{\cal F}_1\subset {\cal F}_2\subset \cdots \subset C
\eneq
is an increasing sequence of finite subsets of the unit ball of $C$ with dense union.

Then there are strictly increasing sequences $l(n)$ and $m(n)$ such that
\beq\label{LmainZ-0-2}
\psi(a)=q_A(\{\Phi_{m(n)}(H_{l(n)}(a))\}_{n=1}^\infty)
\eneq
defines a \hm\, $\psi: C\to Q_A$ with 
\beq\label{LmainZ-0-3}
\psi_\sharp=\Lambda_B\circ \Lambda_A\circ J_A.
\eneq
Moreover, there are finite dimensional \SCA s $D_n\subset B,$ $n\in \N,$ and 
positive contractions $e_n\in A$ satisfying 
\beq\label{LmainZ-0-4}
\{e_n\}\in I_A,
\eneq
and 
a sequence of  order zero \cpc s
\beq
\Theta_n: D_n\to A
\eneq
such that  $\|\Theta|_{D_{n,j}}\|=1$ (for each summand $D_{n,j}$ of $D_n$), for all $a\in {\cal F}_n,$ 
\beq\label{LmainZ-0-5}
&&\hspace{-0.3in}\|[e_n,\, \Phi_{m(n)}(H_{l(n)}(a))]\|<{1\over{n}} \tand \|[(1-e_n)^{1/2},\, \Phi_{m(n)}(H_{l(n)}(a))]\|<{1\over{n}},\tand\\\label{LmainZ-0-6}
&&{\rm dist}((1-e_n)^{1/2}\Phi_{m(n)}(H_{l(n)}(a))(1-e_n)^{1/2},\Theta_n(D_n))<{3\over{n}},
\eneq
 and there is  a strictly positive element $0\le d_{c,n}\le 1$ in $D_n$  such that
\beq\label{LmainZ-0-10}
&&\|(1-e_n)^{1/2} \Phi_{m(n)}(H_{l(n)}(e_C))(1-e_n)^{1/2} -\Theta_n(d_{c,n})\|<{3\over{n}}\tand\\\label{LmainZ-0-11}
&&\tau(d_{c,n})>1-2\ep\,\,\,
%
\tforal \tau\in T(D_n).
\eneq
%
%
Furthermore, suppose that $A$, $B,$  and $C$ are unital, that
$e_C=1_C,$  that the homomorphisms $h$ and $\varphi$ are unital,
and that the lifts $H=\{H_n\}$ and $\Phi=\{\Phi_n\}$ are u.c.p.
Then the algebras $D_n$ may be chosen to be unital subalgebras of
$B$, with $1_{D_n}=1_B$, and $d_{c,n}$ may be chosen to be
$1_{D_n}$ for every $n.$

\end{lem}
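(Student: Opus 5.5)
The plan is a diagonal selection in two stages. First I pick $l(n)$ so that $H_{l(n)}$ is nearly multiplicative in trace norm on ${\cal F}_n$ and has nearly correct traces, and I approximate $H_{l(n)}({\cal F}_n)$ and $H_{l(n)}(e_C)$ inside a finite-dimensional $D_n\subset B$. Then I pick $m(n)$ so that $\Phi_{m(n)}$ behaves well on the finite set ${\cal D}_n\subset D_n^{\bf 1}$ coming out of the first stage. The element $e_n$ is supplied by Lemma \ref{Lidealsep}, so that $(1-e_n)^{1/2}$ cuts away the multiplicativity defects of $\Phi$, which are small only in trace norm, and turns them into defects that are small in norm.

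\emph{Traces and the homomorphism $\psi$.} Since $h$ is a homomorphism and $I_B=N_{cu}(B)$, Lemma \ref{Luniform7} gives
\[
\sup_{t\in T(B)}|t(H_l(c))-\Lambda_B(t)(c)|\to 0 \qquad (c\in C),
\]
and $\|H_l(ab)-H_l(a)H_l(b)\|_{2,T(B)}\to0$. The same holds for $\Phi_m$ with $\Lambda_A$ and $T(A)$. Given $l(n)$, I choose $m(n)$ large enough that $\Phi_{m(n)}$ is $1/n$-multiplicative in $\|\cdot\|_{2,T(A)}$ on the finite set $H_{l(n)}({\cal F}_n)\cup H_{l(n)}({\cal F}_n)H_{l(n)}({\cal F}_n)$ and on ${\cal D}_n$, and $1/n$-trace-preserving there. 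Then $\psi$ defined by \eqref{LmainZ-0-2} is multiplicative modulo $I_A$, using that $\Phi$ is contractive in trace norm and the Cauchy--Schwarz estimate $\|\Phi(x)\|_2\le\|x\|_2$ for c.p.c.\ maps. The uniform trace estimate
\[
\sup_{t\in T(A)}|t(\psi_n(c))-\Lambda_B\circ\Lambda_A(t)(c)|\to0
\]
holds as well, and Lemma \ref{Leasylift} then yields \eqref{LmainZ-0-3}. I will take the hypothesis of that lemma about $T_\infty$ as standard (it holds by Ozawa's result for $I_A$ with $A$ amenable, or by the ultrapower description). This also gives $\psi_n(e_C)$ trace $>1-\ep$ up to $1/n$.

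\emph{Finite-dimensional $D_n$.} Since $B$ is AF, I choose a finite-dimensional $D_n'\subset B$ that $\eta_n$-contains $H_{l(n)}({\cal F}_n\cup\{e_C\})$. By Lemma \ref{LAFt}, since $\inf_{\tau\in T(B)}\tau(H_{l(n)}(e_C))>1-\ep-o(1)$, the approximant $d'$ of $H_{l(n)}(e_C)$ can be taken with $\inf_{T(D_n')}s(d')\ge 1-\ep-\delta$. Then I put $D_n=\overline{d'D_n'd'}$, so that $d_{c,n}:=d'$ is strictly positive in $D_n$. Lemma \ref{Lher2} ensures that $H_{l(n)}({\cal F}_n)$ is still approximated inside $D_n$. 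Passing from $T(D_n')$ to $T(D_n)$ only raises normalized traces on summands, so the bound $\tau(d_{c,n})>1-2\ep$ in \eqref{LmainZ-0-11} survives. In the unital case I take $D_n'$ unital with $1_B$ approximating $H(1_C)=1_B$, and $d_{c,n}=1_{D_n}$.

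\emph{Cutting by $e_n$ and producing $\Theta_n$; this is the main obstacle.} Let $S$ be the separable C*-subalgebra of $l^\infty(A)$ generated by all sequences $\{\Phi_k(x)\}_k$, where $x$ ranges over a countable dense set containing all ${\cal D}_n$ and $H_l({\cal F}_n)$. Let ${\cal X}_n$ consist of the defects $\{\Phi_k(xy)-\Phi_k(x)\Phi_k(y)\}_k$ for matrix units $x,y$ of $D_n$. These are not all in $I_A$ exactly, since $\Phi|_{D_n}$ is a homomorphism only modulo $I_A$. I therefore refine the choice so that, along the selected indices, they are; in practice I reorder the choices: fix $D_n$, then take $J$-elements equal to the defects on the tail, and let Lemma \ref{Lidealsep} choose $m(n)\ge N_n$ together with $e_n$. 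Lemma \ref{Lidealsep} then gives $\{e_n\}\in I_A$, the commutator estimates \eqref{LmainZ-0-5}, and norm-small cut defects. Consequently $x\mapsto(1-e_n)^{1/2}\Phi_{m(n)}(x)(1-e_n)^{1/2}$ is norm-approximately order zero on ${\cal D}_n$, and Lemma \ref{Cclose2} (with $B=A$) perturbs it to an order-zero c.p.c.\ map $\Theta_n$. This yields \eqref{LmainZ-0-6} and \eqref{LmainZ-0-10} within $3/n$.

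Full normalization is the delicate point. Each summand $D_{n,j}$ carries trace mass of order $t(\Phi(1_{D_{n,j}}))>0$, and $\|e_n\|_2$ is small, so $\|\Theta_n(1_{D_{n,j}})\|$ is close to $1$. First I discard summands of negligible trace, absorbing them into the error and keeping \eqref{LmainZ-0-11}. Then I renormalize by replacing $\Theta_n$ on each summand by $\pi(\cdot)\,g(\Theta_n(1))$, where $g$ is a functional calculus rescaling, using the supporting homomorphism as in Lemma \ref{Lproduct}; this costs only an error of order $1-\|\Theta_n(1_{D_{n,j}})\|$.
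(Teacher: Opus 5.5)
Your proposal is correct in outline and is essentially the paper's proof: the same choice of $l(n)$, the corner $D_n=\overline{d'D_n'd'}$ via Lemmas~\ref{LAFt} and~\ref{Lher2}, Lemma~\ref{Lidealsep} to convert trace-small defects into norm-small ones after cutting by $(1-e_n)^{1/2}$, a spectral-projection argument for $\|\Theta_n(1_{D_{n,j}})\|\approx 1$, and a final rescaling; the only variation is that you put the multiplicativity defects of $\Phi|_{D_n}$ into $J$ and apply Lemma~\ref{Cclose2} to $(1-e_n)^{1/2}\Phi_{m(n)}(\cdot)(1-e_n)^{1/2}$ (which also needs commutators of $e_n$ with $\Phi_{m(n)}$ of a generating set of $D_n$ in ${\cal Y}_n$), whereas the paper lifts $\phi|_{D_n}$ through the projective cone to exact order-zero maps $\gamma_{n,k}(\iota\otimes\cdot)$, puts $\{\Phi_k(d)-\gamma_{n,k}(\iota\otimes d)\}_k$ into $J$, and applies Lemma~\ref{Lstable2}, which is an equivalent mechanism. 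Minor fixes: those defects already lie in $I_A$ exactly, since $\phi$ is a homomorphism; the bound $\|\Phi(x)\|_2\le\|x\|_2$ holds only asymptotically, so $w_n^*w_n$ with $w_n=H_{l(n)}(ab)-H_{l(n)}(a)H_{l(n)}(b)$ must be among the elements on which $\Phi_{m(n)}$ is $1/n$-trace-preserving (Step~5 of the paper's proof); and you should drop the discarding of ``trace-negligible'' summands, because trace-small is not norm-small, so this would break \eqref{LmainZ-0-6}, and it is never needed, since every summand has trace at least $\mu_n=\min_j\inf_{\tau\in T(B)}\tau(1_{D_{n,j}})>0$, $\|e_n\|_{2,T(A)}$ is chosen small relative to $\mu_n$ only after $D_n$ is fixed, and $\{\Phi_k(1_{D_{n,j}})\}_k$ is a projection modulo $I_A$.
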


\begin{proof}
Let ${\cal F}_1\subset {\cal F}_2\subset \cdots $ be a sequence of self-adjoint finite subsets of the unit ball 
of $C$ with dense union (in the unit ball of $C$). We assume  that 
\beq
e_C\in {\cal F}_1\andeqn \{ab: a, b\in  {\cal F}_n\}\subset {\cal F}_{n+1}.
\eneq
Put $\ep_n=\ep/100n,$ $n\in \N.$

{\bf 1. The choice of $l(n).$}

For each $n,$ define 
\beq
{\cal G}_n={\cal F}_n\cup \{ab: a,b\in {\cal F}_n\}.
\eneq
Since $H$ lifts the \hm\, $h,$ for each pair $a, b\in C,$
$$
\{H_k(ab)-H_k(a)H_k(b)\}_k\in I_B.
$$
Choose strictly increasing integers (applying Lemma \ref{Luniform7} for \eqref{LmainZ-6})
$$
l(1)<l(2)<\cdots 
$$
such that, for all $a, b\in {\cal F}_n,$
\beq\label{LmainZ-5}
&&\|H_{l(n)}(ab)-H_{l(n)}(a)H_{l(n)}(b)\|_{_{2, T(B)}}<\ep_n\andeqn\\
\label{LmainZ-6}
&&\sup_{\tau\in T(B)}|\tau(H_{l(n)}(a))-\Lambda_B(\tau)(a)|<\ep_n.
\eneq
Note that
\beq
\Lambda_B(\tau)(e_C)>1-\ep\rforal \tau\in T(B).
\eneq
Hence 
\beq
\tau(H_{l(n)}(e_C))>1-\ep-\ep/16\rforal \tau\in T(B).
\eneq
{\bf 2. Finite dimensional \SCA s $D_n$ and choice of $d_n(x)$}

For each $n\in \N,$  choose $\dt_n>0$ (as $\dt$) and a finite subset $\tilde{\cal G}_n\subset C$
(as ${\cal G}_n$) associated with $\ep_n$ (as $\ep$) and ${\cal G}_n$ (as ${\cal F}$) given by Lemma 
\ref{Lher2}.

Since $B$ is AF, one may choose inductively  increasing finite dimensional \SCA s
$$
D_1'\subset D_2'\subset \cdots \subset B
$$
such that, for all $a\in \tilde{\cal G}_n,$
$$
{\rm dist}(H_{l(n)}(a),\, D_n')<\dt_n.
$$

Let us first consider the last "Moreover" part of the lemma. 
Suppose first that the additional unital hypotheses in the statement
hold.  Choose each $D_n'$ to be unital, with
$1_{D_n'}=1_B.$  Since $e_C=1_C$ and $H_{l(n)}$ is unital, put
\[
D_n=D_n'
\quad\text{and}\quad
d_n=d_n(e_C)=1_{D_n}.
\]
In particular,
\[
s(d_n)=1
\quad\text{for every }s\in T(D_n).
\]

In general case, choose  $d_n(e_C)\in D_n'$ such that
$$
\|H_{l(n)}(e_C)-d_n(e_C)\|<\dt_n.
$$
\Wlog, we  may assume that there is, for each $n\in\N,$ $d_n=d_n(e_C)\in D_n'$ with $0\le d_n\le 1.$
Put $D_n =\overline{d_nD_n'd_n},$ $n\in \N.$
By applying Lemma \ref{LAFt} with $17\ep/16$ (in place of $\ep$),  we may assume that
\beq\label{LmainZ-n7}
\inf_{s\in T(D_n)} s(d_n)>1-{18\ep\over{16}}=1-{9\ep\over{8}}.
\eneq
%
Thus \eqref{LmainZ-n7} holds in both cases, and in the additional
unital case one has the stronger equality
\[
d_n=1_{D_n}.
\]

By applying Lemma \ref{Lher2}, we may choose 
$d_n(a)\in D_n$ with
\beq\label{LmainZ-9}
\|H_{l(n)}(a)-d_n(a)\|<\ep_n\rforal a\in {\cal G}_n.
\eneq
Note that $\|d_n(a)\|\le 1+1/n,$ $n\in \N.$
For $a, b\in {\cal F}_n,$ 
\beq\nonumber
\|d_n(ab)-d_n(a)d_n(b)\|_{_{2, T(B)}}&\le& \|d_n(ab)-H_{l(n)}(ab)\|+\|H_{l(n)}(ab)-H_{l(n)}(a)H_{l(n)}(b)\|_{_{2, T(B)}}\\\label{LmainZ-10}
      &&+\|H_{l(n)}(a)H_{l(n)}(b)-d_n(a)d_n(b)\|_{_{2, T(B)}} .
      \eneq
      Note that  (by also \eqref{LmainZ-9})
      \beq\nonumber
      &&\hspace{-0.4in}\|H_{l(n)}(a)H_{l(n)}(b)-d_n(a)d_n(b)\|_{_{2, T(B)}}\\
      &&\le \|H_{l(n)}(a)-d_n(a)\|\|H_{l(n)}(b)\|+\|d_n(a)\|\|H_{l(n)}(b)-d_n(b)\|\\
      &&<{1\over{n}}+(1+{1\over{n}}){1\over{n}}={2\over{n}}+{1\over{n^2}}.\label{LmainZ-12}
      \eneq
      Put 
      \beq
      w_n(a,b)=d_n(ab)-d_n(a)d_n(b).
      \eneq
      Then, by \eqref{LmainZ-5}, \eqref{LmainZ-9}, \eqref{LmainZ-10} and \eqref{LmainZ-12},
      \beq\label{LmainZ-s2-20}
      \max_{a,b\in {\cal F}_n}\|w_n(a,b)\|_{_{2, T(B)}}<2\ep_n+{2\over{n}}+{1\over{n^2}}\to 0.
      \eneq

   Write 
   \beq
   D_n=\bigoplus_{i=1}^{r(n)} D_{n,i},
   \eneq
   where each $D_{n,i}$ is a full matrix algebra, and write $p_{n,i}=1_{D_{n,i}}.$ 

{\bf 3. Cone lifts  associated with $D_n$}

For each $n\in \N,$ define  a \hm\,  ${\widehat{\phi_n}}: C_0((0,1])\otimes D_n\to Q_A$ by
\beq
\widehat{\phi_n}(\iota\otimes d)=\phi(d)\rforal d\in D_n.
\eneq
Since the cone $C_0((0,1])\otimes D_n$ is projective, choose a \hm\, 
\beq
\tilde \phi_n: C_0((0,1])\otimes D_n\to l^\infty(A)
\eneq
such that
\beq
q_A\circ \tilde \phi_n=\widehat{\phi_n},\,\,\,\,\, n\in \N.
\eneq
Write 
\beq
\tilde \phi_n(x)=\{\gamma_{n,k}(x)\}_{k=1}^\infty.
\eneq
Here, for each $n, k\in \N,$
\beq
\gamma_{n,k}: C_0((0,1])\otimes D_n\to A
\eneq
is a \hm\, and,  for all $a\in {\cal F}_n,$
\beq\label{LmainZ-16}
q_A(\{\gamma_{n,k}(\iota\otimes d_n(a))\})=q_A\circ \tilde \phi_n(\iota\otimes d_n(a))=\widehat{\phi}(\iota\otimes d_n(a))
=\phi(d_n(a)).
\eneq
For each $a\in {\cal F}_n,$ define
\beq
Y_{n,a}=\{\Phi_k(d_n(a))-\gamma_{n,k}(\iota\otimes d_n(a))\}_{k=1}^\infty.
\eneq
Note that (see \eqref{LmainZ-16}), for $a\in {\cal F}_n,$
\beq
q_A(\{\Phi_k(d_n(a))\}_k)=\phi(d_n(a))=q_A(\{\gamma_{n,k}(\iota\otimes d_n(a))\}_k).
\eneq
Hence, for all $a\in {\cal F}_n,$   $q_A(Y_{n,a})=0.$ In other words
\beq\label{LmainZ-s3-5}
Y_{n,a}\in I_A\rforal a\in {\cal F}_n
\eneq

{\bf 4.   Trace  on $r_{n,i,k}$}

\vspace{0.1in}

Choose, for each $n\in \N,$ a finite subset  
$$S_n=\{d_n(a): a\in {\cal F}_n\}
\cup\{p_{n,1},p_{n,2},...,p_{n,r(n)}\}\subset D_n
$$ and 
$0<\eta_n<\ep_n.$ 

Choose a finite subset ${\cal E}_n\subset D_n$ (as  ${\cal E}$) and $\dt_n$ (as $\dt$)
associated with $\eta_n$ (in place of $\ep$) and $S_n$ (in place of ${\cal F}$) given by Lemma \ref{Lstable2}.



 For each summand unit $p_{n,i},$ put
 $$
 z_{n,i,k}=\gamma_{n,k}(\iota\otimes p_{n,i})\in A_+.
 $$
 Since 
 \beq
 q_A(\{z_{n,i,k}\}_k)=\phi(p_{n,i})
 \eneq
and $\phi(p_{n,i})$ is a projection in $Q_A,$ we have 
\beq\label{LmainZ-n13}
\{z_{n,i,k}-z_{n,i,k}^2\}_k\in I_A.
\eneq
Since $B$ is simple, $p_{n,i}\not=0$  and $T(B)$ is compact, we have
\beq
\mu_n=\min_{1\le i\le r(n)} \inf_{\tau\in T(B)} \tau(p_{n,i})>0.
\eneq
By  \eqref{LmainZ-03}, for every $t\in T(Q_A),$
\beq
t(\phi(p_{n,i}))=\Lambda_A(J_A(t))(p_{n,i})\ge \mu_n.
\eneq
Choose  a number $0<\af_n=\min\{1/2n, \ep_n\}.$  Then $\af_n\to 0.$
Fix $n,$   choose an integer $N_n'$ such that for all $k\ge N_n',$
\beq\label{LmainZ-n16}
\inf_{t\in T(A)}t(z_{n,i,k})\ge {\mu_n\over{2}}, \,\, 1\le i\le r(n).
\eneq

Now, by \eqref{LmainZ-n13},  choose $N_n''\ge N_n'$ such that, for all $k\ge N_n''$ and $1\le i\le r(n);$ 
\beq\label{LmainZ-n17a}
\|z_{n,i,k}-z_{n,i,k}^2\|_{_{2, T(A)}}<{\af_n\mu_n\over{64}}.
\eneq
For $k\ge N_n'',$ define 
the  spectral projection 
$$
r_{n,i,k}=\chi_{[1-\af_n/8, 1]}(z_{n,i,k})\in A^{**}.
$$
Let 
$q_{n,i,k}=1-r_{n,i,k}.$
On the spectrum of $q_{n,i,k}z_{n,i,k}q_{n,i,k},$ we have 
\beq
0\le t\le 1-{\af_n\over{8}}.
\eneq
Therefore, for $t\in {\rm sp} (q_{n,i,k}z_{n,i,k}q_{n,i,k}),$ 
\beq
t-t^2=t(1-t)\ge {\af_n\over{8}}t.
\eneq
By functional calculus, 
\beq
q_{n,i,k}(z_{n,i,k}-z_{n,i,k}^2)q_{n,i,k}\ge {\af_n\over{8}}q_{n,i,k}z_{n,i,k}q_{n,i,k}.
\eneq 
Let $\tau\in T(A),$ extended normally to $A^{**}.$ Then 
\beq
{\af_n\over{8}}\tau(q_{n,i,k}z_{n,i,k}q_{n,i,k})\le \tau(z_{n,i,k}-z_{n,i,k}^2).
\eneq
Since $0\le z_{n,i,k}-z_{n,i,k}^2\le 1,$  by Cauchy-Bounyakowski- Schwartz inequality,
\beq
\tau(z_{n,i,k}-z_{n,i,k}^2)\le \|z_{n,i,k}-z_{n,i,k}^2\|_{_{2, T(A)}}.
\eneq
Applying \eqref{LmainZ-n17a}, we obtain 
\beq\label{LmainZ-n17c}
\tau(q_{n,i,k}z_{n,i,k}q_{n,i,k})<{8\over{\af_n}}\cdot \tau(z_{n,i,k}-z_{n,i,k}^2)\le {8\over{\af_n}}\cdot {\af_n\mu_n\over{64}}={\mu_n\over{8}}.
\eneq
Now, since $r_{n,i,k}$ commutes with $z_{n,i,k},$ we have
$$
\tau(z_{n,i,k})=\tau(r_{n,i,k}z_{n,i,k}r_{n,i,k})+\tau(q_{n,i,k}z_{n,i,k}q_{n,i,k}),
$$
Therefore, by  \eqref{LmainZ-n16} and \eqref{LmainZ-n17c}, for all $k\ge N_n'',$ 
\beq\nonumber
\tau(r_{n,i,k}) &\ge &\tau(r_{n,i,k}z_{n,i,k}r_{n,i,k})=\tau(z_{n,i,k})-\tau(q_{n,i,k}z_{n,i,k}q_{n,i,k})\\\label{LmainZ-n17}
&>&{\mu_n\over{2}}-{\mu_n\over{8}}={3\mu_n\over{8}}.
\eneq

{\bf 5. Integer $N_n$ and application of Lemma \ref{Luniform7}}

Recall now that $l(n), D_n, d_n(x),w_n(a,b), \gamma_{n,k},$ ${\cal E}_n,$ and $\dt_n$ have been fixed (for each $n\in \N$). 

For each fixed $n,$ 
one obtains an integer $N_n$ such that, whenever $k\ge N_n\ge N_n'',$ the following three conditions hold.

First, for all $a, b\in {\cal F}_n,$ 
\beq\label{LmainZ-s5-1}
\|\Phi_k(d_n(a)d_n(b))-\Phi_k(d_n(a))\Phi_k(d_n(b))\|_{_{2, T(A)}}<{1\over{n}}
\eneq
(this because $\{\Phi_k(xy)-\Phi_k(x)\Phi_k(y)\}_k\in I_A$).

Second (applying Lemma \ref{Luniform7}), for all $a, b\in {\cal F}_n,$
\beq\label{LmainZ-s5-10}
\sup_{t\in T(A)}|t(\Phi_k(w_n(a,b)^*w_n(a,b)))-\Lambda_A(t)(w_n(a,b)^*w_n(a,b))|<{1\over{n}}.
\eneq
Third (also by  Lemma \ref{Luniform7}),  for all $a\in {\cal F}_n,$
\beq\label{LmainZ-s5-3}
\sup_{t\in T(A)}|t(\Phi_k(H_{l(n)}(a)))-\Lambda_A(t)(H_{l(n)}(a))|<\min\{{1\over{n}}, \ep/32\}.
\eneq
Note that 
\beq
\Lambda_A(t)(H_{l(n)}(e_C))>1-\ep-\ep/16\rforal t\in T(A).
\eneq
It follows that 
\beq
\inf_{t\in T(A)}t(\Phi_k(H_{l(n)}(e_C)))>1-\ep-3\ep/32.
\eneq
Similarly,
\beq
\inf_{t\in T(A)}t(\Phi_k(d_n))>1-\ep-3\ep/32.
\eneq
{\bf 6.   Selection of $\{m(n)\}$}

For $n\in \N,$ $a\in {\cal F}_n$ and $x\in {\cal E}_n,$ define
\beq
U_{n,a}&=&\{\Phi_k(H_{l(n)}(a))\}_k\in l^\infty(A),\andeqn\\
V_{n,x}&=&\{\gamma_{n,k}(\iota\otimes x)\}_k\in l^\infty(A).
\eneq
Let $S\subset l^\infty(A)$ be the separable \CA\, generated by elements 
$$
\{Y_{n,a}, U_{n,a}, V_{n,x}: a\in {\cal F}_n, x\in {\cal E}_n\},\,\,\, n\in \N.
$$
Put $J=S\cap I_A.$  For each $n\in \N,$ define 
\beq
&&{\cal X}_n=\{Y_{n,a}: a\in {\cal F}_n\}\subset J,\andeqn\\ 
&&{\cal Y}_n=\{U_{n,a}: a\in{\cal F}_n\}\cup \{V_{n,x}: x\in {\cal E}_n\}\subset S.
\eneq
Put 
\[
 \rho_n:=\min\{\ep_n,\, {\af_n\mu_n\over{100}}\}\andeqn \bt_n=\min\{\ep_n, \dt_n\}
\]
Applying Lemma \ref{Lidealsep} with  $\rho_n$ (in place of $\af_n$ in that lemma), with 
$\bt_n,$ and with the integers $N_n$ chosen 
in stage 5.  So we obtain strictly increasing sequences $s(n),$ $m(n),$ and positive contractions
$e_n=f_{s(n), m(n)}$ such that
\beq\label{LmainZ-s6-3}
&&m(n)\ge N_n, \,\,\, \|e_n\|_{_{2, T(A)}}<\rho_n,
\\\label{LmainZ-s6-4}
&&\hspace{-0.7in}\|(1-e_n)^{1/2}[\Phi_{m(n)}(d_n(a))-\gamma_{n,m(n)}(\iota\otimes d_n(a))](1-e_n)^{1/2}\|<
\rho_n\rforal a\in {\cal F}_n,\\
&&  {\rm{( recall}}\,\,\eqref{LmainZ-s3-5}{\rm )}\\\label{LmainZ-s6-5}
&&\hspace{-0.7in}\|[e_n,\,\Phi_{m(n)}(H_{l(n)}(a))]\|<\ep_n\andeqn \|[(1-e_n)^{1/2}, \, \Phi_{m(n)}(H_{l(n)}(a))]\|<\rho_n
\\
&&\rforal a\in {\cal F}_n,\andeqn\\\label{LmanZ-s6-8}
&&\hspace{-0.7in}\|[e_n,\, \gamma_{n, m(n)}(\iota\otimes x)]\|<\dt_n,\,\,\|[(1-e_n)^{1/2},\, \gamma_{n,m(n)}(\iota\otimes x)]\|<\dt_n \rforal x\in {\cal E}_n.
\eneq
By \eqref{LmainZ-s6-3}, we also have 
\beq
\{e_n\}\in I_A.
\eneq
This shows \eqref{LmainZ-0-4}. 

Define $\Gamma_n=\gamma_{n,m(n)}: C_0((0,1])\otimes D_n\to A.$ 
We have
\beq\label{LmainZ-n27}
\|[(1-e_n)^{1/2},\, \Gamma_n(\iota\otimes x)]\|<\dt_n \rforal x\in {\cal E}_n.
\eneq

{\bf 7.  Norm preserving}

Fix $n$ and $i.$ Let
$$z:=z_{n,i,m(n)}=\Gamma_n(\iota\otimes p_{n,i})\andeqn r:=\chi_{[1-\af_n/8,1]}(z)\in A^{**}.$$
Hence (also by \eqref{LmainZ-n17})
\beq\label{LmainZ-n17n}
z\ge (1-{\af_n\over{8}})r\andeqn \tau(r)>{\mu_n\over{4}}\rforal \tau\in T(A).
\eneq
We claim that 
\beq\label{LmainZ-n30}
\|(1-e_n)^{1/2}r(1-e_n)^{1/2}\|>1-{\af_n\over{2}}.
\eneq
If not, since $r$ is a projection,
\beq
\|r(1-e_n)r\|=\|(1-e_n)^{1/2}r(1-e_n)^{1/2}\|\le 1-{\af_n\over{2}}.
\eneq
Then
\beq
0\le r(1-e_n)r\le (1-{\af_n\over{2}})r.
\eneq
Therefore
\beq
re_nr=r-r(1-e_n)r\ge {\af_n\over{2}}r.
\eneq
Then, by \eqref{LmainZ-n17n}, for any $\tau\in T(A),$ 
\beq
{\af_n \mu_n\over{8}}<{\af_n\over{2}}\tau(r)\le \tau(re_nr)=\tau(e_n^{1/2}re_n^{1/2})\le \tau(e_n)\le \|e_n\|_{_{2, T(A)}}<\rho_n<{\af_n \mu_n\over{100}}.
\eneq
This contradiction  proves the claim.

From \eqref{LmainZ-n17n} and   the claim,  we estimate that
\beq\nonumber
\|(1-e_n)^{1/2}z(1-e_n)^{1/2}\| &\ge& (1-{\af_n\over{8}})\|(1-e_n)^{1/2}r(1-e_n)^{1/2}\|\\\label{LmainZ-n31}
&>& (1-{\af_n\over{8}})(1-{\af_n\over{2}})>1-\af_n.
\eneq
Thus
\beq\label{LmainZ-n32}
\|(1-e_n)^{1/2} \Gamma_n(\iota\otimes p_{n,i})(1-e_n)^{1/2}\|\ge 1-\af_n,\,\,1\le i\le r(n).
\eneq

{\bf 8.   Construction of $\Theta$}

Set 
\beq
s_n:=1-e_n,\,\,n\in \N.
\eneq
By \eqref{LmanZ-s6-8} and \eqref{LmainZ-n27}
\beq
\|[s_n,\, \Gamma_n(\iota\otimes x)]\|<\dt_n\andeqn \|[s_n^{1/2},\, \Gamma_n(\iota\otimes x)]\|<\dt_n\rforal x\in {\cal E}_n.
\eneq
By the choice of ${\cal E}_n$ and $\dt_n,$ applying Lemma \ref{Lstable2}, 
we obtain an order zero map $\Theta_n^{(0)}: D_n\to \overline{(1-e_n) A(1-e_n)}$  such that 
\beq\label{LmainZ-s9-3}
\|\Theta_n^{(0)}(d)-s_n^{1/2}\Gamma_n(\iota\otimes d)s_n^{1/2}\|<\eta_n<{1\over{n}}\rforal d\in {\cal E}_n.
\eneq
For each summand $D_{n,i},$
$$
|\Theta_n^{(0)}|_{D_{n,i}}\|=\|\Theta_n^{(0)}(p_{n,i})\|.
$$
By \eqref{LmainZ-n32} and \eqref{LmainZ-s9-3},  we get
\beq\label{LmainZ-n34}
\|\Theta_n^{(0)}(p_{n,i})\|>1-\af_n-\ep_n.
\eneq
Set $\lambda_{n,i}=\|\Theta_n^{(0)}(p_{n,i})\|.$
Then $0\le \lambda_{n,i}\le 1$ and 
\beq
\lim_{n\to\infty}\min_{1\le i\le r(n)}\lambda_{n,i}=1.
\eneq
Define  
\beq\label{LmainZ-n36}
\Theta_n|_{D_{n,i}}=\lambda_{n,i}^{-1}\, \Theta_n^{(0)}|_{D_{n,i}}.
\eneq
Then $\Theta_n: D_n\to A$ is an order zero \cpc, and $\|\Theta_n|_{D_{n,i}}\|=1,$ $1\le i\le r(n).$

For each $d\in D_n^{\bf 1},$
\beq\label{LmainZ-n38}
\|\Theta_n(d)-\Theta_n^{(0)}(d)\|\le \max_{1\le i\le r(n)}(1-\lambda_{n,i}).
\eneq
Therefore, for all $d\in {\cal S}_n,$
\beq\label{LmainZ-n39}
\|\Theta_n(d)-s_n^{1/2}\Gamma_n(\iota\otimes d)s_n^{1/2}\|<{1\over{n}}.
\eneq

{\bf 9. $\psi$  a \hm}

Define $\Psi_n=\Phi_{m(n)}\circ H_{l(n)}: C\to A$ and  $\psi:
C\to Q_A$ by $\psi(y) =q_A(\{\Psi_n(y)\}_n)$
for all $y\in C.$

Let $a, b\in \cup_{k=1}^\infty {\cal F}_k.$ We may assume that,
for sufficiently large $n,$  $a, b\in {\cal F}_n.$ Then, by \eqref{LmainZ-9},
\beq\label{LmainZ-s7-1}
\|\Psi_n(a)-\Phi_{m(n)}(d_n(a))\|<\ep_n \andeqn \|\Psi_n(ab)-\Phi_{m(n)}(d_n(ab))\|<\ep_n.
\eneq
%

Recall $w_n(a, b)=d_n(ab)-d_n(a)d_n(b)$ for $a, b\in {\cal F}_n.$ By Schwarz inequality (see, e.g., \cite[Cor. 4.1.3]{BK1}),
\beq
\Phi_{m(n)}(w_n(a,b))^*\Phi_{m(n)}(w_n(a,b))\le \Phi_{m(n)}(w_n(a, b)^*w_n(a,b)).
\eneq
Since $m(n)\ge N_n,$ \eqref{LmainZ-s5-10} gives
\beq
\|\Phi_{m(n)}(w_n(a,b))\|_{_{2, T(A)}}^2&\le& \sup_{t\in T(A)}t(\Phi_{m(n)}(w_n(a,b)^*w_n(a,b)))\\
   &\le & \sup_{t\in T(A)}\Lambda_A(t)(w_n(a, b)^*w_n(a,b))+{1\over{n}}\\
   &\le &\|w_n(a, b)\|_{_{2, T(B)}}^2+{1\over{n}}.
\eneq
Then, by \eqref{LmainZ-s2-20},
\beq\label{LmainZ-s7-5}
\|\Phi_{m(n)}(w_n(a,b))\|_{_{2, T(A)}}\to 0\,\,\, {\rm(as}\,\, n\to \infty{\rm )}.
\eneq
Also, since $m(n)\ge N_n,$  by \eqref{LmainZ-s5-1}, one has 
\beq\label{LmainZ-s7-6}
\|\Phi_{m(n)}(d_n(a)d_n(b))-\Phi_{m(n)}(d_n(a))\Phi_{m(n)}(d_n(b))\|_{_{2, T(A)}}<{1\over{n}}
\eneq
Since 
\beq
d_n(ab)=w_n(a,b)+d_n(a)d_n(b),
\eneq
combining \eqref{LmainZ-s7-5} and \eqref{LmainZ-s7-6}, one obtains 
\beq
\|\Phi_{m(n)}(d_n(ab))-\Phi_{m(n)}(d_n(a))\Phi_{m(n)}(d_n(b))\|_{_{2, T(A)}}\to 0.
\eneq
Applying \eqref{LmainZ-s7-1}, one obtains, for all $a, b\in \cup_{k=1}^\infty {\cal F}_k,$
\beq\label{LmainZ-s7-10}
\lim_{n\to\infty}\|\Psi_n(ab)-\Psi_n(a)\Psi_n(b)\|_{_{2, T(A)}}=0.
\eneq
Since $\cup_{k=1}^\infty {\cal F}_k$ is dense in the unit ball of $C,$ we conclude that
\eqref{LmainZ-s7-10} holds for all $a, b\in C.$ Since each  $\Psi_n$ is \cpc, it follows that
$\psi=q_A(\{\Psi_n\})$ is a \hm.  This also proves \eqref{LmainZ-0-2}.

{\bf 10. The trace map induced by $\psi$}

For each $a\in {\cal F}_n,$ since $m(n)\ge N_n,$ by \eqref{LmainZ-s5-3}, we have
that
\beq
\sup_{t\in T(A)}|t(\Psi_n(a))-\Lambda_A(t)(H_{l(n)}(a))|<\ep_n.
\eneq
From \eqref{LmainZ-6},  since $\Lambda_A(t)\in T(B),$ we have that
\beq
\sup_{t\in T(A)}|\Lambda_A(t)(H_{l(n)}(a))-\Lambda_B(\Lambda_A(t))(a)|<\ep_n\rforal a\in {\cal F}_n.
\eneq
It follows that
\beq
\sup_{t\in T(A)} |t(\Psi_n(a))-\Lambda_B(\Lambda_A(t))(a)|<2\ep_n\rforal a\in {\cal F}_n.
\eneq 
Hence, by Lemma \ref{Leasylift},  every $\tau\in T_\infty(A)$ satisfies 
\beq
\tau(\psi(a))=\Lambda_B\circ \Lambda_A\circ J_A(\tau)(a)\rforal a\in \cup_{k=1}^\infty {\cal F}_k.
\eneq
It follows that $\psi_\sharp$  and $\Lambda_B\circ \Lambda_A\circ J_A$ agree on $T_\infty(A).$ 
By \cite[Proposition 2.5]{CETW} (together with \cite[Theorem 1.4]{CETW}),
\[
{\overline{co}}^{w^*}(T_\infty(A))=T(Q_A). 
\]
Therefore
\beq
\psi_\sharp=\Lambda_B\circ \Lambda_A\circ J_A.
\eneq
This proves  \eqref{LmainZ-0-3}.

{\bf 11. The last adjustment}
%

By \eqref{LmainZ-9},
\beq\label{LmainZ-s9-4}
\|\Psi_n(a)-\Phi_{m(n)}(d_n(a))\|<\ep_n.
\eneq
Then, by \eqref{LmainZ-s6-4}, we have 
\beq\label{LmainZ-s9-5}
\|s_n^{1/2}(\Phi_{m(n)}(d_n(a))-\Gamma_n(\iota\otimes d_n(a)))s_n^{1/2}\|<\ep_n\rforal a\in {\cal F}_n.
\eneq
Combining  \eqref{LmainZ-s9-4}, \eqref{LmainZ-s9-5}, and \eqref{LmainZ-n39},
 we estimate that, for all $a\in {\cal F}_n,$
\beq\nonumber
\hspace{-0.6in}\|s_n^{1/2}\Psi_n(a)s_n^{1/2}-\Theta_n(d_n(a))\| &\le & \|s_n^{1/2}(\Psi_n(a)-\Phi_{m(n)}(d_n(a)))s_n^{1/2}\|\\\nonumber
       &&+\|s_n^{1/2}(\Phi_{m(n)}(d_n(a))-\Gamma_n(\iota\otimes d_n(a)))s_n^{1/2}\|\\\nonumber
      && \hspace{0.2in}+\|s_n^{1/2}\Gamma_n(\iota\otimes d_n(a))s_n^{1/2}-\Theta_n(d_n(a))\|\\\label{LmainZ-n43}
       &<&\ep_n+\ep_n+{1\over{n}}.
\eneq
Therefore, for all $a\in {\cal F}_n,$
\beq
{\rm dist}((1-e_n)^{1/2}\Psi_n(a)(1-e_n)^{1/2},\, \Theta_n( D_n))<{3\over{n}}.
\eneq
Together with \eqref{LmainZ-s6-5},  this provides \eqref{LmainZ-0-5} and \eqref{LmainZ-0-6} in  the lemma.

Finally, since $\{e_n\}_n\in I_A,$
we also have 
\beq
q_A(\{(1-e_n)^{1/2}\Psi_n(c)(1-e_n)^{1/2}\}_n)=q_A(\{\Psi_n(c)\})=\psi(c)\rforal c\in C.
\eneq

Define 
\beq
d_{c,n}=d_n(e_C)\in D_n.
\eneq
Then $0\le d_{c,n}\le 1.$ 
Hence, by \eqref{LmainZ-n7},
\beq
s(d_{c,n})=s(d_n(e_C))>1-{9\ep\over{8}}\rforal s\in T(D_n).
\eneq
Applying \eqref{LmainZ-n43} with $a=e_C,$ we conclude that
\beq
\|(1-e_n)^{1/2}\Psi_n(e_C)(1-e_n)^{1/2}-\Theta_n(d_{c,n})\|<{3\over{n}}.
\eneq
Thus $d_{c,n}$ has the desired properties (\eqref{LmainZ-0-10} and \eqref{LmainZ-0-11}).
In the additional unital case, the choice made in Stage~2 gives
\[
d_{c,n}=d_n(e_C)=1_{D_n}.
\]
Consequently,
\[
\Theta_n(1_{D_n}-d_{c,n})=0.
\]
This proves the additional assertion.
This completes the proof  of the lemma.

%
\end{proof}

\begin{thm}\label{TmainZ}
Let $A$ be a separable amenable finite simple ${\cal Z}$-stable \CA.
Then $ETR(A)\le 1.$     

 Moreover, if \(A\) is unital, then \(A\) has property ETR1
associated with \(1_A\) and the constant
\(\kappa_{1_A}=1\). In this case, the distinguished element \(d_e\)
in Definition~3.3 may always be chosen to be \(1_D\).
\end{thm}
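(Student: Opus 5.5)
The plan is to realize the diagonal embedding $\overline{j_A}$ of (a suitable corner of) $A$, up to unitary equivalence, as a composite $A\to Q_B\to Q_A$ that factors through a simple AF-algebra $B$. Lemma \ref{LmainZ} then produces the required cones.

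\textbf{Step 1 (reduction).} By Definition \ref{DD1} and Proposition \ref{Pperm}(3), it suffices to find $a\in{\rm Ped}(A)_+\setminus\{0\}$ such that $A_0:=\Her(a)$ has property ETR1. Since $A$ is simple, finite and $\mathcal Z$-stable, it has strict comparison and stable rank one. I would choose $a$ with $\widehat{[a]}$ continuous on $\widetilde{QT}(A)$; such elements exist by the description of ${\rm Cu}(A)$ for $\mathcal Z$-stable algebras. By Remark \ref{Rcs}, $A_0$ then has continuous scale and compact $T(A_0)$. Moreover $A_0$ is separable, amenable, algebraically simple and $\mathcal Z$-stable, and $I_{A_0}=N_{cu}(A_0)$ by \eqref{A2.17c}. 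In the unital case I would take $A_0=A$ and $e_{A}=1_A$. Otherwise, fix $0<\ep<1/4$ and a strictly positive $e_{A_0}$ with $\inf_{t\in T(A_0)}t(e_{A_0})>1-\ep$, for instance $f_\dt$ of a strictly positive element with $\dt$ small; this is possible by continuous scale and Dini's theorem.

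\textbf{Step 2 (the AF model and the two maps).} Since $T(A_0)$ is a metrizable Choquet simplex, there is a simple unital AF-algebra $B$ and an affine homeomorphism $\Lambda_A\colon T(A_0)\to T(B)$; let $\Lambda_B=\Lambda_A^{-1}$. Apply Lemma \ref{LmainZ} with $C=A=A_0$. It requires two homomorphisms.
\begin{itemize}
\item $\phi\colon B\to Q_{A_0}$ with $\phi_\sharp=\Lambda_A\circ J_{A_0}$. I would build this inductively along the finite-dimensional stages of $B$, using that $Q_{A_0}$ has real rank zero, strict comparison and divisible projections (Lemma \ref{LTM} and \cite[Theorem 6.4]{FLosc}). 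Projections of prescribed trace values then exist and are unique up to equivalence.
\item $h\colon A_0\to Q_B$ with $h_\sharp=\Lambda_B\circ J_B$. Since $A_0$ is nuclear and $Q_B$ is the uniform tracial sequence algebra of a $\mathcal Z$-stable algebra, I would invoke the existence theorem of \cite{CETWW} (\cite{CETW}) for maps from nuclear algebras into such reduced products with prescribed traces. This step needs no UCT.
\end{itemize}
Take c.p.c. lifts $H$ and $\Phi$. Lemma \ref{LmainZ} then gives $\psi\colon A_0\to Q_{A_0}$ with $\psi_\sharp=J_{A_0}$, together with $e_n\in A_0$ satisfying $\{e_n\}\in I_{A_0}$, finite-dimensional $D_n$, fully normalized order-zero maps $\Theta_n$, and elements $d_{c,n}$ with $\tau(d_{c,n})>1-2\ep$. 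In the unital case these may be taken with $d_{c,n}=1_{D_n}$.

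\textbf{Step 3 (uniqueness and pull-back).} Both $\psi$ and $\overline{j_{A_0}}$ are homomorphisms from a nuclear algebra into $Q_{A_0}$ that induce the same map $J_{A_0}$ on traces. By the uniqueness theorem of \cite{CETWW} (property $\Gamma$ holds for $\mathcal Z$-stable $A_0$), they are unitarily equivalent, after unitizing in the nonunital case. I would lift the unitary to a sequence $u_n$ and replace $e_n,\Theta_n$ by $u_n^*e_nu_n$ and ${\rm Ad}\,u_n^*\circ\Theta_n$. Then $\Psi_n(x)$ becomes $x$ up to an error that is small in $\|\cdot\|_{2}$ uniformly over traces. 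I expect the main obstacle here: converting this $L^2$-closeness back into the norm estimates (1) and (2) of Definition \ref{D1}.

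My first attempt would be to absorb the discrepancy $x-u_n^*\Psi_n(x)u_n$ into the trace-small part. This discrepancy lies in $I_{A_0}$, so Lemma \ref{Lidealsep} supplies a quasicentral approximate unit $e_n'$ of the separable ideal it generates. I would then replace $e_n$ by an element dominating both $e_n$ and $e_n'$, arranged compatibly via Lemma \ref{Lidealsep} applied to the larger ideal before selecting $m(n)$. Since $I_{A_0}=N_{cu}(A_0)$, for large $n$ we get $(e_n-\eta)_+\lesssim a$ for any prescribed nonzero $a$. Proposition \ref{Pertep} then yields ETR1 with $\kappa_{e_{A_0}}=1-2\ep$, and with $\kappa=1$ and $d_e=1_D$ in the unital case.
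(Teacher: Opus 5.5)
Your architecture matches the paper's. You pass to a continuous-scale corner $A_0$ and choose a simple unital AF algebra $B$ with $T(B)\cong T(A_0)$. You obtain $h\colon A_0\to Q_B$ and $\phi\colon B\to Q_{A_0}$ with the prescribed trace maps; the paper takes both from \cite[Theorem 2.6]{CETW} rather than building $\phi$ by hand. You then apply Lemma \ref{LmainZ}, conjugate by a unitary from the uniqueness theorem \cite[Theorem 2.2]{CETW}, and finish with Proposition \ref{Pertep}. The gap is in the step you yourself flag as the main obstacle. As written it does not work.

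\textbf{The proposed order is circular.} The discrepancy sequence $\{u_n^*\Psi_n(x)u_n-x\}_n$ and the unitary $u$ exist only after $\{l(n)\}$ and $\{m(n)\}$, and hence $\psi$, have been fixed, because $u$ comes from the uniqueness theorem applied to $\psi$. So you cannot put this sequence into the ideal to which Lemma \ref{Lidealsep} is applied ``before selecting $m(n)$''. Instead you need a second application of Lemma \ref{Lidealsep}, made after $\psi$ and $u_n=\exp(\pi i h_n)$ are fixed and passing to a subsequence. This produces a new trace-small sequence $g_n$, distinct from $\bar e_n=u_n^*e_nu_n$.

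\textbf{The combined element creates obligations your sketch omits.} The paper makes ``an element dominating both'' concrete by setting $1-e_n'=c_n:=(1-g_n)^{1/2}(1-\bar e_n)(1-g_n)^{1/2}$. Three things then have to be done.
\begin{enumerate}
\item The maps $\bar\Theta_n={\rm Ad}\,u_n^*\circ\Theta_n$ do not land in $\overline{c_nA_0c_n}$. So the separable algebra fed to the second application of Lemma \ref{Lidealsep} must also contain the sequences $\{\bar\Theta_n(x)\}_n$, for $x$ in the finite sets $\mathcal E_n$ of Lemma \ref{Lstable2}. This makes $g_n$ almost commute with them.
\item Lemma \ref{Lstable2} then rebuilds an order-zero map $L_n\colon D_n\to\overline{c_nA_0c_n}$ close to $(1-g_n)^{1/2}\bar\Theta_n(\cdot)(1-g_n)^{1/2}$.
\item $L_n$ must be renormalized to be fully normalized. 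This needs $\|(1-g_n)^{1/2}\bar\Theta_n(p_{n,i})(1-g_n)^{1/2}\|\to1$. The paper proves it by a trace argument. The spectral projections $\chi_{[1-\sigma_n/8,1]}(\bar\Theta_n(p_{n,i}))$ have trace at least some $\nu_n>0$, by simplicity and compactness of $T(A_0)$. The sequence $g_n$ is chosen with $\|g_n\|_{2,T(A_0)}<\sigma_n\nu_n/100$.
\end{enumerate}
Without these steps you get neither $\theta(D)\subset\overline{(1-e)A(1-e)}$ nor full normalization. This is where the real work after Lemma \ref{LmainZ} lies.

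\textbf{Minor points.}
\begin{itemize}
\item $f_\delta(e)$ is not strictly positive; you want $\bar f_{\delta}(e)$.
\item $Q_{A_0}$ is already unital because $A_0$ has continuous scale, so no unitization is needed.
\item To lift $u$ to unitaries $u_n\in\widetilde{A_0}$, write $u=\exp(\pi i b)$ using \cite[Proposition 2.1]{CETW} and lift $b$.
\end{itemize}
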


\begin{proof}
Since $A$ is assumed to  be finite and ${\cal Z}$-stable, the rank map 
from ${\rm Cu}(A)$ to ${\rm LAff}_0(\tilde T(A))$ is surjective.
Choose $a\in A\otimes {\cal K}$ such that $\widehat{[a]}$ is continuous.
Then $A_0=\Her(a)$ has continuous scale. By \cite[Theorem 5.3]{EGLNs}, $T(A_0)$ is compact.
Then $T(A_0)$ is a metrizable separable Choquet simplex. 
Note that $A_0$ is also ${\cal Z}$-stable (see \cite[Corollary 3.1]{TW07}). By Definition \ref{DD1},
it suffices to show that $A_0$ has property ETR1.

Fix \(0<\eta<1/2\). This number will remain fixed throughout the
proof and will determine the constant
\[
        \kappa_{e_A}=1-\eta.
\]

 Choose a unital infinite dimensional simple AF-algebra $B$ with $T(B)=T(A_0)$ (see \cite[Theorem 3.10]{Btrace}).
Let $\Lambda_A: T(A_0)\to T(B)$ be the identification (an affine homeomorphism) and let
$\Lambda_B=\Lambda_A^{-1}: T(B)\to T(A_0).$
Let $e\in A_0$ be a strictly positive element with $\|e\|=1.$  Then $d_\tau(e)=1$ for every $\tau\in T(A_0).$
Since 
\beq
1=d_\tau(e)=\lim_{n\to\infty} \tau(\bar{f}_{1/n}(e)),
\eneq
by the Dini Theorem, there is $m_0\in \N$ such that, for all $m\ge m_0,$ 
\beq
\tau({\bar f}_{1/m}(e))>1-\eta/2\rforal \tau\in T(A_0).
\eneq
Note that $\|{\bar f}_{1/m}(e)\|=1.$  Fix $m\ge m_0,$ and put 
$e_A={\bar f}_{1/m}(e)$ and put $\kappa=1-\eta.$ 

Now let $0<\ep<1/4,$  let ${\cal F}\subset A_0^{\bf 1}$  be a finite subset 
containing $e_A$ and a nonzero 
element $a_0\in (A_0)_+^{\bf 1}.$   

We note that $B$ is also ${\cal Z}$-stable (see, for example,  \cite[Theorem 5(a)]{JS}). 
Then, by \cite[Theorem 2.6]{CETW}\footnote{as corrected in the erratum--M${\rm \ddot{u}}$nster J. Math., {\bf 16} (2023), 489--490.}, there are  \hm s $h: A_0\to Q_B$ 
and $\phi: B\to Q_{A_0}$ 
such that  (recall that $\Lambda_B=\Lambda_A^{-1}$)
\beq
h_\sharp=\Lambda_B\circ J_T^B\andeqn \phi_\sharp=\Lambda_{A}\circ J_T^{A_0}.
\eneq

Let ${\cal F}_1={\cal F}\subset {\cal F}_2\subset \cdots \subset A_0^{\bf 1}$ such that 
$\cup_{n=1}^\infty {\cal F}_n$ is dense in unit ball of $A_0.$  We may also assume that
${\cal F}\cup \{e_A\}\subset  {\cal F}_1.$

Apply Lemma 10.8 with
\[
        A=C=A_0,\qquad e_C=e_A,\qquad
        \Lambda_B=\Lambda_A^{-1},
\]
and with \(\eta/2\) in place of the parameter \(\varepsilon\)
occurring in that lemma. We obtain strictly increasing sequences
\(\{l(n)\}\) and \(\{m(n)\}\), a homomorphism
\(\psi\colon A_0\to Q_{A_0}\), finite-dimensional
\(C^*\)-subalgebras \(D_n\subset B\), positive contractions
\(e_n\in A_0\) with \(\{e_n\}\in I_{A_0}\), fully normalized
order-zero c.p.c.\ maps
\[
        \Theta_n\colon D_n\longrightarrow A_0,
\]
and positive contractions \(d_{c,n}\in D_n\) satisfying the
conclusions of Lemma 10.8. In particular,
\beq\label{T109-0-10}
 \left\|
 (1-e_n)^{1/2}\Psi_n(e_A)(1-e_n)^{1/2}
 -\Theta_n(d_{c,n})
 \right\|<\frac{3}{n},
\eneq
where
$
        \Psi_n=\Phi_{m(n)}\circ H_{l(n)}
        \colon A_0\longrightarrow A_0,
$
and
\beq\label{T109-011}
        t(d_{c,n})>1-\eta
        \qquad\text{for all }t\in T(D_n).
\eneq

By \eqref{LmainZ-0-3}, 
\beq\label{n10201}
 \psi_\#
 =\Lambda_B\circ\Lambda_A\circ J_T^{A_0}
 =J_T^{A_0}
 =(j_{A_0})_\#.
\eneq
It follows from \cite[Theorem 2.2]{CETW} that 
there exists a unitary $u\in Q_{A_0}$ such that
\beq
u^*\psi(a)u=j_{A_0}(a)\rforal a\in A_0.
\eneq
(We note that since $A_0$ has continuous scale, $Q_{A_0}$ is unital (see \cite[Proposition 2.20]{FLosc} and \cite[Corollary 4.7]{FLL})).
By taking $S=\C\cdot 1_{Q_{A_0}},$ and applying \cite[Proposition 2.1]{CETW}, 
we may write $u=\exp(\pi i b)$ for some $b\in (Q_{A_0})_{s.a.}$ with $\|b\|\le 1.$ 
Let $h_n\in (A_0)_{s.a.}$ with $\|h_n\|\le 1$ such that 
$q_A(\{h_n\})=b.$ Put $u_n=\exp(\pi i h_n),\,\,n\in \N.$ Then $u_n\in U(\tilde A_0).$ 

We will keep the notation used in the (proof of) Lemma \ref{LmainZ}.

For $a\in \cup_{n=1}{\cal F}_n,$ we have 
\beq
Y(a)=\{u_n^*\Psi_n(a)u_n-a\}_n\in I_{A_0}.
\eneq
Let
\[
 \bar e_n=u_n^*e_nu_n,\qquad
 q_n=1-\bar e_n=u_n^*(1-e_n)u_n,
\]
and define
\[
        \bar\Theta_n(d)=u_n^*\Theta_n(d)u_n,
        \qquad d\in D_n.
\]
Then the conjugated forms of \eqref{LmainZ-0-5} -- \eqref{LmainZ-0-11}
hold for
\(\bar e_n\), \(\bar\Theta_n\), and
\(a\mapsto u_n^*\Psi_n(a)u_n\). Moreover,
\[
        \{\bar e_n\}\in I_{A_0}.
\]

Write $D_n=\bigoplus_{i=1}^{r(n)} D_{n,i}, \,\,\, p_{n,i}=1_{D_{n,i}}.$ 

Set $a_{n,i}:={\bar \Theta}_n(p_{n,i}).$ Then since $\Theta_n$ are fully normalized, $\|a_{n,i}\|=1.$
Choose $\sigma_n=1/(n+1).$ 
Choose \(h_n\in C_0((0,1])_+\) with \(0\leq h_n\leq1\) such that
\[
h_n(t)=0
\quad\text{for }0\leq t\leq 1-\frac{\sigma_n}{8},
\qquad
h_n(1)=1.
\]

Let 
\beq\label{Tm-13}
R_{n,i}=\chi_{[1-\sigma_n/8, 1]}(a_{n,i})\in A_0^{**}.
\eneq
Since $\|a_{n,i}\|=1,$ $h_n(a_{n,i})\not=0$ which has support in $[1-\sigma_n/8, 1]$  (see Definition \ref{dfg} for $g_{\xi}$). Since $A_0$ is simple, $\tau(h_n(a_{n,i}))>0$ for all $\tau\in T(A_0).$
Since $T(A_0)$ is compact, 
we have
\beq
\nu_n:=\min_{1\le i\le r(n)}\min_{\tau\in T(A_0)} \tau(h_n(a_{n,i}))>0.
\eneq
Hence
\beq\label{Tm-15}
\tau(R_{n,i})\ge \tau(h_n(a_{n,i}))\ge \nu_n,\, \, 1\le i\le r(n).
\eneq

For each $a\in {\cal F}_n,$ choose $d_n(a)\in D_n$ so that (by  \eqref{LmainZ-0-6} with conjugating $u_n$)
\beq\label{Tm-15}
\max_{\{a\in {\cal F}_n}\|q_n^{1/2}u_n^*\Psi_n(a)u_nq_n^{1/2}-{\bar \Theta}_n(d_n(a))\|<{3\over{n}}\to 0.
\eneq
We set $d_n(e_A)=d_{c,n}.$   

For  the additional unital assertion, we take $A_0=A$ and
$e_A=1_A,$  and use the additional conclusion of Lemma \ref{LmainZ}.
Consequently,
\beq\label{e10-2add}
d_n(e_A)=d_{c,n}=1_{D_n}.
\eneq

Put
\beq
S_n=\{d_n(a): a\in {\cal F}_n\}\cup \{p_{n,1},...,p_{n,r(n)}\}.
\eneq
For each $n\in \N,$   let ${\cal E}_n\subset D_n^{\bf 1},$ a generating set (as ${\cal E}$) and $\dt_n>0$ (as $\dt$)
for $1/(10n)$ (in place of $\ep$)  and ${\cal S}_n$  (in place of ${\cal F}$) in Lemma \ref{Lstable2}.

Choose $0<\eta_n<\dt_n$ sufficiently small such that
\beq\label{Tm-17}
0\le g\le 1,\,\,\|z\|\le 1,\,\,\, \|[g,\, z]\|<\eta_n \implies 
\|[(1-g)^{1/2},\, z]\|<\min\{\dt_n,\,{1\over{100n}}\}.
\eneq
Write 
$$
{\cal E}_n=\{x_{n,1}, ..., x_{n,s(n)}\}.
$$

For each $j\in \N,$ define  $Z_j=\{Z_{j,n}\}_n\in l^\infty(A_0)$ by
\beq
Z_{j,n}=\begin{cases} {\bar \Theta}_n(x_{n,j}),\,\,\, j\le s(n),\\
                                       0 , \,\,\, j>s(n).\end{cases}.
\eneq
Also write
$$
X(a)=\{X_n(a):=u_n^*\Psi(a)u_n\}.
$$
Let $S'\subset l^\infty(A_0)$ be the separable \CA\, generated by
\beq
\{X(a), Y(a) : a\in \cup_n {\cal F}_n\}  \andeqn \{Z_j,\,\, j\in \N\}, 
\eneq
and put $J'=S'\cap I_{A_0}.$ Recall that $Y(a)\in J'$  for every $a\in \cup_{m=1}^\infty{\cal F}_m.$


Applying Lemma \ref{Lidealsep},   using a 
 quasi-central approximate identity for the separable ideal
$J',$  after passing to a subsequence and relabelling, 
we  obtain 
\beq\label{Tm-20}
g_n\in A_0, \,\,\, 0\le g_n\le 1,\,\, \{g_n\}\in I_{A_0}, \andeqn \|g_n\|_{_{2, T(A_0)}}<{\sigma_n\cdot \nu_n\over{100}}
\eneq
such that, for each $a\in {\cal F}_n,$
\beq\label{Tm-22}
&&\|(1-g_n)^{1/2}(X_n(a)-a)\|<{1\over{n}},\andeqn \|(X_n(a)-a)(1-g_n)^{1/2}\|<{1\over{n}},\\\label{Tm-22+1}
&&\|[g_n,\, X_n(a)]\|<\eta_n, \andeqn \\\label{Tm-22+2}
&&\|[g_n,\, \bar \Theta_n(z)]<\eta_n\rforal z\in {\cal E}_n.
\eneq
Put $p_n=(1-g_n)^{1/2},$ $n\in \N.$    By \eqref{Tm-17}, \eqref{Tm-22+1} and \eqref{Tm-22+2} we have 
\beq\label{Tm-25}
&&\|[p_n,\, X_n(a)]\|<{1\over{n}} \rforal a\in {\cal F}_n \andeqn\\\label{Tm-26}
&&\|[p_n,\, \bar \Theta_n(z)]\|<{1\over{n}}\rforal z\in {\cal E}_n.
\eneq
Also
\beq
\|[(1-g_n),\, \bar \Theta_n(z)]\|=\|[g_n\, \bar \Theta_n(z)]\|<\dt_n\rforal z\in {\cal E}_n.
\eneq

Define $c_n:=p_nq_np_n=(1-g_n)^{1/2}(1-{\bar e_n})(1-g_n)^{1/2}$ and put 
$e_n'=1-c_n,$ $n\in \N.$  Since
\beq
e_n'=g_n+p_n{\bar e_n}p_n=g_n+(1-g_n)^{1/2} u_n^*e_nu (1-g_n)^{1/2},
\eneq
and both $\{g_n\}$ and $\{{\bar e}_n\}$ are in $I_{A_0},$
\beq\label{Tm-31}
\{e_n'\}\in I_{A_0}.
\eneq
For fixed $a\in \cup_k {\cal F}_k,$ 
\beq
\hspace{-0.6in}\|[c_n,\, X_n(a)]\|&=&\|p_nq_np_nX_n(a)-p_nq_nX_n(a)p_n\|\\\label{Tm-32-0}
&&+\|
(p_nq_nX_n(a)-p_nX_n(a)q_n)p_n\|+\|(p_nX_n(a)-X_n(a)p_n)q_np_n\|.
\eneq
Hence, for fixed $a\in \cup_k {\cal F}_k,$
\beq\label{Tm-32}
\|[c_n, X_n(a)]\|\le 2\|[p_n, \, X_n(a)]\|+\|[q_n,\,X_n(a)]\|<{3\over{n}}.
\eneq
Put  $$y_n(a)=X_n(a)-a=u_n^*\Phi_{m(n)}(H_{l(n)}(a))u_n-a.$$
Then, for all $a\in {\cal F}_n,$ by \eqref{Tm-22}, 
\beq
\|c_ny_n(a)\|\le \|(1-g_n)^{1/2}y_n(a)\|<{1\over{n}}\andeqn \|y_n(a)c_n\|\le \|y_n(a)p_n\|<{1\over{n}}.
\eneq
Hence, for all $a\in  {\cal F}_n,$ 
\beq\label{Tm-35}
\|[c_n,\, a]\|\le \|[c_n,\, X_n(a)]\|+\|c_ny_n(a)\|+\|y_n(a)c_n\|<{5\over{n}}.
\eneq
Since $e_n'=1-c_n,$ 
\beq
\|[e_n',\, a]\|<{5\over{n}}\rforal a\in {\cal F}_n.
\eneq
By \eqref{Tm-35}, for fixed $k,$ 
\beq\label{Tm-36} 
\max_{a\in {\cal F}_k}\|[c_n^{1/2}, \, a]\|\to 0. 
\eneq

For $a\in {\cal F}_n,$ we have
\beq\nonumber
\|c_n^{1/2}ac_n^{1/2}-p_n{\bar \Theta}_n(d_n(a))p_n\|&\le &\|c_n^{1/2} ac_n^{1/2}-c_na\|\\\nonumber
  &&+\|c_n(a-X_n(a))\|+\|c_nX_n(a)-p_nq_nX_n(a)p_n\|\\\nonumber
   &&+\|p_nq_nX_n(a)p_n-p_nq_n^{1/2}X_n(a)q_n^{1/2}p_n\|\\
   &&+\|p_n(q_n^{1/2}X_n(a)q_n^{1/2}-{\bar \Theta}_n(d_n(a)))p_n\|.
\eneq
Therefore, by \eqref{Tm-36}, \eqref{Tm-22}, \eqref{Tm-25}, the conjugated form of  \eqref{LmainZ-0-5}, 
and \eqref{Tm-15},  
 we obtain 
\beq\label{Tm-38}
\max_{a\in {\cal F}_k}\|c_n^{1/2} ac_n^{1/2}-p_n{\bar \Theta}_n(d_n(a))p_n\|\to 0.
\eneq
By \eqref{Tm-25}, \eqref{Tm-26}, and \eqref{Tm-38},  Lemma \ref{Lstable2} gives an order-zero \cpc\, 
$L_n: D_n\to  \overline{c_nA_0c_n}$ such that
\beq\label{Tm-39}
\|L_n(d)-p_n{\bar \Theta}_n(d)p_n\|<{1\over{n}}\,\,\,\rforal d\in S_n.
\eneq
Consequently, for each fixed $k,$ 
\beq\label{Tm-40}
\max_{a\in {\cal F}_k}\|(1-e_n')^{1/2}a(1-e_n')^{1/2}-L_n(d_n(a))\|\to 0.
\eneq
In particular, 
\beq\label{Tm-42}
\|(1-e_n')^{1/2} e_A(1-e_n')^{1/2}-L_n(d_{c,n})\|\to 0.
\eneq

It remains to normalize $L_n.$

We claim that 
\beq\label{Tm-43}
\|p_na_{n,i}p_n\|> 1-{\sigma_n\over{2}} \,\,\, (1\le i\le r(n)).
\eneq
Suppose that, for some $i,$ 
$\|p_na_{n,i}p_n\|\le 1-{\sigma_n\over{2}}.$  Since  (see \eqref{Tm-13})
$$
a_{n,i}\ge (1-{\sigma_n\over{8}})R_{n,i},
$$
we obtain
\beq
\|p_nR_{n,i}p_n\|\le {1-\sigma_n/2\over{1-\sigma_n/8}}.
\eneq
Since $R_{n,i}$ is a projection, 
$$
\|p_n R_{n,i}p_n\|=\|R_{n,i}p_n^2R_{n,i}\|=\|R_{n,i}(1-g_n)R_{n,i}\|.
$$
It follows that
\beq
R_{n,i}(1-g_n)R_{n,i}\le {1-\sigma_n/2\over{1-\sigma_n/8}} R_{n,i},
\eneq
Hence
\beq
R_{n,i}g_nR_{n,i}\ge (1-{1-\sigma_n/2\over{1-\sigma_n/8}} )R_{n,i}={3\sigma_n\over{8-\sigma_n}} R_{n,i}>{3\sigma_n\over{8}}R_{n,i}.
\eneq
By \eqref{Tm-15}, for $\tau\in T(A_0),$
\beq
\tau(g_{n})\ge \tau(R_{n,i}g_nR_{n,i})>{3\sigma_n \nu_n\over{8}}.
\eneq
Then, by Cauchy-Bunyakowski-Schwarz inequality and \eqref{Tm-20},
\beq
{3\sigma_n\nu_n\over{8}}<\tau(g_n)\le \|g_n\|_{_{2, T(A_0)}}<{\sigma_n\nu_n\over{100}},
\eneq
a contradiction. This proves the claim and \eqref{Tm-43}. 

Since $p_{n,i}\in {\cal S}_n,$  $a_{n,i}=\bar \Theta_n(p_{n,i}),$
by \eqref{Tm-39} and \eqref{Tm-43}, we obtain
\beq\label{Tm-47}
\bt_{n,i}:=\|L_n(p_{n,i})\|>1-{\sigma_n\over{2}}-{1\over{n}}.
\eneq
Hence, when $n\to\infty,$ 
\beq\label{Tm-48}
\min_{1\le i\le r(n)} \bt_{n,i}\to 1 
\eneq
For all sufficiently large $n,$ define
\beq
\Theta_n'|_{D_{n,i}}=\bt_{n,i}^{-1}L_n|_{D_{n,i}}.
\eneq
Then $\Theta_n': D_n\to A_0$ is an order zero \cpc\, with 
\beq\label{Tm-50}
\|\Theta'_n|_{D_{n,i}}\|=1.
\eneq
Moreover,
\beq\nonumber
&&\Theta_n'(D_n)=L_n(D_n),\andeqn\\\label{Tm-52}
&& \|\Theta_n'-L_n\|\le \max_{1\leq i\leq r(n)}(1-\beta_{n,i})
 =
1-\min_{1\le i\le r(n)}\bt_{n,i}\to 0.
\eneq
Hence, by \eqref{Tm-40} and \eqref{Tm-52}, for every fixed $k,$
\beq\label{Tm-53}
\lim_{n\to\infty}\max_{a\in {\cal F}_k}{\rm dist}((1-e_n')^{1/2}a(1-e_n')^{1/2},\, \Theta_n'(D_n))=0.
\eneq
In particular, by \eqref{Tm-52} (see  also \eqref{Tm-42}),
\beq\label{Tm-54}
\lim_{n\to\infty}\|(1-e_n')^{1/2}e_A(1-e_n')^{1/2}-\Theta_n'(d_{c,n})\|=0.
\eneq
Recall that  (by \eqref{T109-011})
\beq\label{Tm-55}
t(d_{c,n})>1-\eta=\kappa_{e_A} \rforal t\in T(D_n). 
\eneq

Finally, we have $0\le e_n' \le 1,$  $c_n=(1-e_n'),$ and 
a fully normalized order-zero \cpc\, $\Theta_n': D_n\to  \overline{c_nA_0c_n}$ (see \eqref{Tm-50}) and 
an element $0\le d_{c,n}\le 1$ in $D_n$ satisfying 
 \eqref{Tm-35};  \eqref{Tm-53}, \eqref{Tm-54} and \eqref{Tm-55}; and  $\{e_n'\}\in I_{A_0}$ (by \eqref{Tm-31}).  In particular, $\{e_n'\}$ is a Cuntz-null sequence.  Hence, for any $a_0\not=0,$ and $\ep>0,$ 
 \beq\label{e10248}
 (e_n'-\ep)_+\lesssim a_0.
 \eneq
 So, also by Proposition \ref{Pertep}, $A_0$ has property ETR1  (we also use Remark \ref{R1}) by choosing $n$ such that $8/n<\ep$ and 
 $(e_n'-\ep)_+\lesssim a_0.$ 
 Hence $ETR(A)\le 1.$

Suppose now that \(A\) is unital. We use the unital choices made
above, with
\[
A_0=A,\qquad e_A=1_A,\qquad \kappa_{1_A}=1.
\]
As 
recorded  in \eqref{e10-2add},
 the distinguished
element in \(D_n\) has already been chosen to be \(1_{D_n}\). The
subsequent construction of \(L_n\), and the normalization of \(L_n\)
to \(\Theta_n'\), change only the order-zero map; neither the algebra
\(D_n\) nor its distinguished element is changed. Therefore
\eqref{Tm-53},
with \(e_A=1_A\), gives
\beq\label{e10-2a-2}
\lim_{n\to\infty}
\|(1-e_n')-\Theta_n'(1_{D_n})\|=0.
\eneq
Moreover,
\[
t(1_{D_n})=1
\,\,\, \text{for every }\,\,\, t\in T(D_n).
\]
Thus, for $0<\ep<1/4,$ ${\cal F}\subset A_1$ containing $1_A,$ 
nonzero comparison
element \(a_0\), choose \(n\) sufficiently large that \eqref{Tm-35}, \eqref{Tm-53},
and  \eqref{e10-2a-2},
and that \eqref{e10248}
gives
\[
(e_n'-\varepsilon)_+\lesssim a_0.
\]
Then
$
e_n',\, D_n,\, \Theta_n',\, d_e=1_{D_n}
$
satisfy all the requirements of property ETR1 associated with \(1_A\)
and the constant \(\kappa_{1_A}=1\). This proves the 
``Moreover" 
assertion.

\end{proof}

\section{ The  constant ${\mathfrak{\kappa}_{e_A}}$}

The reader might feel the constant $\kappa_{e_A}$ is somewhat mysterious. 
Theorem \ref{TmainZ} actually shows that, when $A$ is amenable, has continuous scale and satisfies
$ETR(A)\le 1,$ the constant $\kappa_{e_A}$ can be chosen arbitrarily close to 1  provided that   $e_A$ is chosen
accordingly.   We show in this section that this is always true without amenability. 
However, this is certainly not the case when $A$ does not have 
continuous scale. It is the oscillation of $\widehat{[e_A]}$ prevents  $\kappa_{e_A}$ being  close to 1.

\begin{lem}
\label{lem11.1}
Let \(A\subset B\) be \(C^*\)-algebras, let \(x,y\in A_+^{\bf 1}\), and
let \(0<\sigma\leq 1\). For every \(\rho>0\), there exist a finite
subset \(\mathcal G\subset A^{\bf 1}\), a number \(\delta>0\), and a
number
\[
0<\rho_0<
\min\left\{
\frac{\rho}{8},\frac{\sigma}{8},\frac{1}{8}
\right\}
\]
satisfying the following property.

Suppose that \(e\in B_+^1\),
\[
D=\bigoplus_{j=1}^m M_{r_j},
\tand
\theta\colon D\longrightarrow  \overline{(1-e)B(1-e)}
\]
is a fully normalized order zero c.p.c.\ map. Put
$
s=(1-e)^{1/2}.
$
Suppose that there are positive contractions \(d_x,d_y\in D\)
such that
\beq
&&\|[e,z]\|<\delta
\quad\text{and}\quad
\|[s,z]\|<\delta \rforal z\in\mathcal G,
\label{e11.1}
\\
&&\operatorname{dist}(szs,\theta(D))<\delta \tforal z\in\mathcal G,
\\\label{e11.2}
&&\left\|
s^2z_1z_2s^2-(sz_1s)(sz_2s)
\right\|<\delta \tforal 
z_1,z_2\in\mathcal G,\\
\label{e11.3}
&&\|\theta(d_x)-sxs\|<\delta
\quad\text{and}\quad
\|\theta(d_y)-sys\|<\delta.
\label{e11.4}
\eneq

Then, for every \(j\in\{1,\ldots,m\}\) satisfying
\begin{equation}
\operatorname{tr}_{r_j}\bigl((d_x)_j^2\bigr)\geq \sigma
\quad\text{and}\quad
\operatorname{tr}_{r_j}\bigl((d_y)_j^2\bigr)>1-\rho_0,
\label{e11.5}
\end{equation}
there is a tracial state \(\tau_j\in T(A)\) such that
\begin{equation}
\left|
\operatorname{tr}_{r_j}\bigl((d_x)_j^2\bigr)
-\tau_j(x^2)
\right|<\rho.
\label{e11.6}
\end{equation}
\end{lem}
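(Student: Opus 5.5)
The plan follows the proof of Theorem \ref{Ltrace}. There, the evaluation trace on the cone was pulled back through the compression $z\mapsto s^2zs^2$ to an approximately tracial functional on $A$, and Lemma \ref{LLtrace} then produced a genuine trace. The new ingredient is the hypothesis $\operatorname{tr}_{r_j}((d_y)_j^2)>1-\rho_0$. It forces the pulled-back functional to have norm close to $1$, so normalizing it changes values by only a small amount.

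First I fix the constants and the set. Take $\rho_0<\min\{\rho/8,\sigma/8,1/8\}$ small. Apply Lemma \ref{LLtrace} to the finite set $\{x^2,y^2\}$, with tolerance $\rho/4$ and with $e_A$ replaced by $y^2$ and $\sigma$ by $1/2$; this gives a finite set $\mathcal G_0$ and a number $\delta_0$. I work inside $A$, since that is where the trace has to live. Then set $\mathcal G=\mathcal G_0\cup\{x,y\}\cup\{z_1z_2:z_1,z_2\in\mathcal G_0\}$ and choose $\delta$ much smaller than $\delta_0$ and $\rho_0$.

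Next I define the functional. Let $C=\tilde\theta(C_0((0,1])\otimes D)$ and $\lambda_j=\operatorname{tr}_{r_j}\circ\pi_{1,j}$, which is a tracial state on $C$. Since $C$ is amenable, choose a c.p.c. map $\Psi_0:\overline{(1-e)B(1-e)}\to C$ that is almost the identity on the finitely many elements $szs$, $s^2z_1z_2s^2$ and their approximants in $C$, exactly as in the proof of Theorem \ref{Ltrace}. Set $F(z)=\lambda_j(\Psi_0(s^2zs^2))$ for $z\in A$. This is a positive functional of norm at most $1$.

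The estimates go as follows. Using \eqref{e11.2} and the fact that $\lambda_j$ is a trace, the computation from Theorem \ref{Ltrace} gives $|F(z_1z_2)-F(z_2z_1)|\le O(\delta)$ for $z_1,z_2\in\mathcal G_0$. The same manipulation gives $F(x^2)\approx\lambda_j(\theta(d_x)^2)=\operatorname{tr}_{r_j}((d_x)_j^2)$, and likewise $F(y^2)\approx \operatorname{tr}_{r_j}((d_y)_j^2)>1-\rho_0$, each up to $O(\delta)$. Here one uses $\pi_1(\theta(d))=d$, that $\pi_1$ is multiplicative, and \eqref{e11.4}. Consequently $\|F\|\ge F(y^2)>1-\rho_0-O(\delta)$. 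Put $\omega=F|_A/\|F|_A\|$. Then $\omega$ is a state on $A$ that is $O(\delta)$-tracial on $\mathcal G_0$, satisfies $\omega(y^2)\ge1/2$, and $|\omega(x^2)-F(x^2)|\le (1-\|F\|)/\|F\|<2\rho_0$.

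The last step, and the main obstacle, is obtaining a tracial \emph{state}. Lemma \ref{LLtrace} gives a trace $\tau$ on $A$ with $\|\tau\|\le1$ and $|\tau(x^2)-\omega(x^2)|<\rho/4$, but it gives no lower bound on $\|\tau\|$ close to $1$. To handle this I also put $y^2$ into $\mathcal F$, so that $\tau(y^2)>1-\rho_0-\rho/4-O(\delta)$. This forces $\|\tau\|>1-\rho/2$, and then $\tau_j=\tau/\|\tau\|$ changes $\tau(x^2)$ by less than $\rho/2$. A second worry is whether $A$ is unital; if it is, I would use the second part of Lemma \ref{LLtrace} instead. Collecting the errors gives $|\operatorname{tr}_{r_j}((d_x)_j^2)-\tau_j(x^2)|<\rho/4+2\rho_0+\rho/2+O(\delta)<\rho$.

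The hypothesis $\operatorname{tr}_{r_j}((d_x)_j^2)\ge\sigma$ serves only to keep $F(x^2)$ away from $0$; it may be needed when rescaling, but the argument above does not otherwise use it.
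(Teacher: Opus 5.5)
Your proposal is correct and is essentially the paper's argument. You pull back the tracial state $\operatorname{tr}_{r_j}\circ\pi_{1,j}$ through an amenability-supplied c.p.c.\ map onto the cone image and the compression $z\mapsto s^2zs^2$, use the $y^2$-estimate to keep the norm near $1$ both before and after applying Lemma \ref{LLtrace} (with $y^2\in\mathcal F$), and then renormalize.

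The one difference is the reference element in Lemma \ref{LLtrace}. The paper uses $x^2$ with lower bound $\sigma/2$, and that is the only place the hypothesis $\operatorname{tr}_{r_j}((d_x)_j^2)\ge\sigma$ enters. Your choice of $y^2$ with bound $1/2$ works equally well and shows that this hypothesis is in fact dispensable.
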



\begin{proof}
Put
\[
 \varepsilon_1=
 \min\left\{\frac{\rho}{8},\frac{\sigma}{8},\frac18\right\}
 \quad\text{and}\quad
 \rho_0=\frac{\varepsilon_1}{2}.
\]
Apply Lemma~5.1 with the finite set
$
 \{x^2,y^2\}
$  
(as ${\cal F}$),
with \(x^2\) (as $e_A$), with
\(\sigma/2\) (as $\sigma$),  and with \(\varepsilon_1\) (as $\ep$).
Let
\(\mathcal G_0\subset A^1\) (as ${\cal G}$) and \(\delta_0>0\) (as $\dt$)  be supplied by
Lemma~\ref{LLtrace}.

Define
\[
\mathcal G
:=
\mathcal G_0\cup\mathcal G_0^*\cup\{x,y\}
\cup
\left\{
z_1z_2:
z_1,z_2\in
\mathcal G_0\cup\mathcal G_0^*\cup\{x,y\}
\right\}.
\]
Choose
\[
  0<\delta<
  \min\left\{\frac{\delta_0}{112},
              \frac{\varepsilon_1}{12}\right\}.
\]
Then \(\mathcal G\subset A^{\bf 1}\) is finite and self-adjoint. In
particular, \(\mathcal G\) contains \(x,y,x^2,y^2\), as well as
\(z_1z_2\) 
for all \(z_1,z_2\in\mathcal G_0\). 
We now assume that the assumptions of the lemma hold for these choice of $\rho_0,$ ${\cal G}$ and $\dt.$

Let
\[
 \widetilde\theta:C_0((0,1])\otimes D\longrightarrow B
\]
be the homomorphism associated with \(\theta\), and put
\[
 C=\widetilde\theta(C_0((0,1])\otimes D).
\]
Since \(\theta\) is fully normalized on every summand, point
evaluation at \(1\) induces a surjective homomorphism
\[
 \pi_{1,j}:C\longrightarrow M_{r_j},
 \qquad j=1,\ldots,m.
\]
Define
\[
 \lambda_j={\rm tr}_{r_j}\circ\pi_{1,j}.
\]
Then \(\lambda_j\) is a tracial state on \(C\).

Put \(L(z)=szs\), and consider the finite subset
\[
 \mathcal W
 =
 \left\{
 L(z),\ s^{2}z_{1}z_{2}s^{2},\ L(z_{1})L(z_{2})
 :z,z_{1},z_{2}\in\mathcal G
 \right\}
 \subseteq \overline{(1-e)B(1-e)}.
\]
Notice that, since  \(x,y\in\mathcal G\), the set
\(\mathcal W\) contains, in particular,
\[
 s^{2}x^{2}s^{2},\quad L(x)^{2},\quad
 s^{2}y^{2}s^{2},\quad L(y)^{2}.
\]
By \((\mathrm{e11.2})\) and \((\mathrm{e11.3})\), every
\(w\in\mathcal W\) is within \(4\delta\) of \(C\). For each
\(w\in\mathcal W\), choose \(c_w\in C\) such that
\[
 \|w-c_w\|<4\delta.
\]
Since \(C\) is amenable, by \cite[Theorem 2.3.13]{LinBook}, there is a completely positive contractive
map
$
  \Psi\colon B\longrightarrow C
$
such that 
\[
  \|\Psi(c_w)-c_w\|<\delta\rforal w\in {\cal W}.
\]
Consequently,
\beq
  \|\Psi(w)-w\|<8\delta+\delta=9\delta\rforal w\in {\cal W}.
  \label{e11.7}
\eneq
For \(j=1,\ldots,m\), define
\[
 F_j(a)=\lambda_j\bigl(\Psi(s^2as^2)\bigr),
 \qquad a\in A.
\]
Then \(F_j\) is a positive linear functional and
\[
 \|F_j\|\leq1.
\]
Fix \(z_1,z_2\in\mathcal G_0.\)
By \((\mathrm{e11.3})\)
\[
  \|s^2z_1z_2s^2-sz_1ssz_2s\|<\delta.
\]
Since \(\Psi\) is contractive, it follows that
\[
 \bigl\|
 \Psi(s^2z_1z_2s^2)-\Psi(sz_1ssz_2s)
 \bigr\|<\delta.
\]
Since $sz_1s, sz_2s, sz_1ssz_2s
\in\mathcal W,$ \((\mathrm{e11.7})\)
gives
\[
 \|\Psi(sz_1ssz_2s)-sz_1ssz_2s\|<9\delta
\andeqn
 \|\Psi(L(z_i))-L(z_i)\|<9\delta,\qquad i=1,2.
\]
As \(sz_is\) and \(\Psi(sz_is)\) are contractions, we have
\[
\begin{aligned}
 \|sz_1ssz_2s-\Psi(sz_1s)\Psi(sz_2s)\|
 &\leq
 \|(sz_1s-\Psi(sz_1s))sz_2s\|  \\
 &\quad+
 \|\Psi(sz_1s)(sz_2s-\Psi(sz_2s))\| 
 <18\delta.
\end{aligned}
\]
Consequently,
\beq\nonumber
 &&\hspace{-0.7in}|
 F_j(z_1z_2)
 -\lambda_j\bigl(\Psi(sz_1s)\Psi(sz_2s)\bigr)
 |                                                   
 \leq
 \bigl\|
 \Psi(s^2z_1z_2s^2)-\Psi(sz_1ssz_2s)
 \bigr\|                                                   \\\nonumber
 &&\hspace{1.2in}+
 \|\Psi(sz_1ssz_2s)-sz_1ssz_2s\|                                  
 +
 \|sz_1ssz_2s-\Psi(sz_1s)\Psi(sz_2s)\|                             \\
 &&\hspace{1.4in}<\delta+9\delta+18\delta
 =28\delta.
\eneq
Similarly,
\[
 \left|
 F_j(z_2z_1)
 -\lambda_j\bigl(\Psi(sz_2s)\Psi(sz_1s)\bigr)
 \right|
 <28\delta.
\]
Since \(\lambda_j\) is a trace on \(C\),
\[
 \lambda_j\bigl(\Psi(sz_1s)\Psi(sz_2s)\bigr)
 =
 \lambda_j\bigl(\Psi(sz_2s)\Psi(sz_1s)\bigr).
\]
It follows that
\beq
 \left|F_j(z_1z_2)-F_j(z_2z_1)\right|
 <56\delta
 <\frac{\delta_0}{2}.
 \label{e11.8}
\eneq

We next compare \(F_j(x^2)\) with
\(\operatorname{tr}_{r_j}((d_x)_j^2)\). By
\((\mathrm{e11.3})\), applied with \(z_1=z_2=x\),
\[
 \|s^2x^2s^2-L(x)^2\|<\delta.
\]
Hence, since \(\Psi\) is contractive,
\[
 \bigl\|
 \Psi(s^2x^2s^2)-\Psi(L(x)^2)
 \bigr\|<\delta.
\]
Since \(L(x)^2\in\mathcal W\), \((\mathrm{e11.7})\) gives
\[
 \|\Psi(L(x)^2)-L(x)^2\|<9\delta.
\]
On the other hand, \((\mathrm{e11.4})\) gives
\[
 \|L(x)-\theta(d_x)\|<\delta.
\]
Since \(L(x)\) and \(\theta(d_x)\) are contractions,
\[
\begin{aligned}
 \|L(x)^2-\theta(d_x)^2\|
 &\leq
 \|L(x)(L(x)-\theta(d_x))\| 
 +
 \|(L(x)-\theta(d_x))\theta(d_x)\| 
 <2\delta.
\end{aligned}
\]
Therefore,
\[
\begin{aligned}
 \|\Psi(s^2x^2s^2)-\theta(d_x)^2\|
 &\leq
 \bigl\|
 \Psi(s^2x^2s^2)-\Psi(L(x)^2)
 \bigr\|                                                
 +
 \|\Psi(L(x)^2)-L(x)^2\|                               \\
 &\quad+
 \|L(x)^2-\theta(d_x)^2\|                              \\
 &<\delta+9\delta+2\delta
 =12\delta.
\end{aligned}
\]
Since
\[
 \lambda_j\bigl(\theta(d_x)^2\bigr)
 =
 \operatorname{tr}_{r_j}\bigl((d_x)_j^2\bigr),
\]
we obtain
\beq\label{e11.9}
 \left|
 F_j(x^2)
 -{\rm tr}_{r_j}\bigl((d_x)_j^2\bigr)
 \right|\le \|\Psi(s^2x^2s^2)-\theta(d_x)^2\|
 <12\delta
 <\varepsilon_1.
\eneq
The same argument, with \(y\) in place of \(x\), gives
\beq
 \left|
 F_j(y^2)
 -{\rm tr}_{r_j}\bigl((d_y)_j^2\bigr)
 \right|
 <12\delta
 <\varepsilon_1.
 \label{e11.10}
\eneq

Put
$
 \alpha_j=\|F_j\|.
$
By \eqref{e11.5} and \eqref{e11.10},
\[
 \alpha_j
 \geq F_j(y^2)
 >1-\rho_0-\varepsilon_1
 =1-\frac32\varepsilon_1
 >1-2\varepsilon_1.
\]
Define
\[
 \omega_j=\frac{F_j}{\alpha_j}.
\]
Then \(\omega_j\) is a state. By \eqref{e11.8},
\[
 |\omega_j(z_1z_2)-\omega_j(z_2z_1)|<{\dt_0\over{2(1-1/4)}}
 <\delta_0\rforal z_1,z_2\in\mathcal G_0.
\]
Moreover, by \eqref{e11.5} and \eqref{e11.9},
\[
 \begin{aligned}
 \omega_j(x^2)
 &\geq F_j(x^2)
 >{\rm tr}_{r_j}((d_x)_j^2)-\varepsilon_1
 \geq\sigma-\varepsilon_1
 >\frac{\sigma}{2}.
 \end{aligned}
\]

Lemma~\ref{LLtrace} therefore gives a positive trace \(\sigma_j\) on \(A\)
such that
\beq\label{eq:tw11}
&& \|\sigma_j\|\leq1,\qquad
 \sigma_j(x^2)\geq\frac{\sigma}{2},
\andeqn\\
\label{eq:tw12}
&& |\sigma_j(x^2)-\omega_j(x^2)|<\varepsilon_1,
 \qquad
 |\sigma_j(y^2)-\omega_j(y^2)|<\varepsilon_1.
\eneq
At this point, we do not assume that \(\sigma_j\) is a tracial
state.

Since \(\alpha_j\leq1\), \eqref{e11.10} gives
\[
 \begin{aligned}
 \omega_j(y^2)
 &=\frac{F_j(y^2)}{\alpha_j}
 \geq F_j(y^2)
 >1-\rho_0-\varepsilon_1
 >1-2\varepsilon_1.
 \end{aligned}
\]
It follows from \eqref{eq:tw12} that
\begin{equation}\label{eq:tw13}
 \|\sigma_j\|
 \geq\sigma_j(y^2)
 >1-3\varepsilon_1.
\end{equation}
In particular, \(\sigma_j\ne0\). Define
\[
 \tau_j=\frac{\sigma_j}{\|\sigma_j\|}\in T(A).
\]

Using \eqref{eq:tw12} and \eqref{eq:tw13}, we obtain
\[
 \begin{aligned}
 |\tau_j(x^2)-\omega_j(x^2)|
 &\leq
 \left|
 \frac{\sigma_j(x^2)}{\|\sigma_j\|}
 -\sigma_j(x^2)
 \right|
 +|\sigma_j(x^2)-\omega_j(x^2)|\\
 &\leq
 1-\|\sigma_j\|+\varepsilon_1
 <4\varepsilon_1.
 \end{aligned}
\]
Finally, using \eqref{e11.9}, the preceding estimate, and
\(1-\alpha_j<2\varepsilon_1\), we have
\[
 \begin{aligned}
 \left|
 {\rm tr}_{r_j}((d_x)_j^2)-\tau_j(x^2)
 \right|
 &\leq
 \left|
 {\rm tr}_{r_j}((d_x)_j^2)-F_j(x^2)
 \right| +
 \left|
 \alpha_j\omega_j(x^2)-\tau_j(x^2)
 \right|\\
 &\leq
 \left|
 {\rm tr}_{r_j}((d_x)_j^2)-F_j(x^2)
 \right|
 +
 \alpha_j|\omega_j(x^2)-\tau_j(x^2)|
 +(1-\alpha_j)\tau_j(x^2)\\
 &<\varepsilon_1+4\varepsilon_1+2\varepsilon_1
 =7\varepsilon_1<\rho.
 \end{aligned}
\]
This proves the lemma.
\end{proof}

%
%

\begin{lem}\label{cor-constant-lift}
Let $A$ be a separable simple non-elementary $C^*$-algebra with
continuous scale and strict comparison. Fix a strictly positive
contraction $e_A\in A$ and a number $0<\kappa\leq1$.

Suppose that, for every $\delta>0$, every finite subset
${\cal F}\subset A^{\bf 1}$ containing $e_A$, and every nonzero
$a\in A_+$, there are a positive contraction $q\in Q_A$, a
finite-dimensional $C^*$-algebra
\[
 D=\bigoplus_{j=1}^mM_{r_j},
\]
a fully normalized order-zero c.p.c.\ map
\[
 \Theta_A:D\longrightarrow A,
\]
and contractions $d_x\in D$, $x\in{\cal F}$, such that the element
$d_{e_A}$ corresponding to $e_A$ belongs to $D_+^{\bf 1}$ and, regarding
$\Theta_A$ as the constant-sequence map
$\overline{j_A}\circ\Theta_A:D\to Q_A$, one has
\beq\label{cor-constant-lift.1}
&& \overline{j_A}(\Theta_A(D))
 \subset
 \overline{(1-q)Q_A(1-q)},\\
\label{L117-2}
 &&\|[q,\overline{j_A}(x)]\|<\delta\tforal  x\in{\cal F},
\\\label{L117-3}
 &&\|\overline{j_A}(\Theta_A(d_x))
 -(1-q)^{1/2}\overline{j_A}(x)(1-q)^{1/2}\|<\delta\tforal x\in{\cal F},
\\\label{L117-4}
&& q\lesssim \overline{j_A}(a),
\tand\\\label{L117-5}
&& t(d_{e_A})\geq\kappa,
 \qquad t\in T(D).
\eneq
Then $A$ has property ${\rm ETR}1$ associated with $e_A$ and the
constant $\kappa$.
\end{lem}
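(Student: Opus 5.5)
The plan is to pass from the quotient $Q_A=l^\infty(A)/N_{cu}(A)$ back to $A$ by the device used in the proof of Theorem \ref{TmainZ}. We lift $q$ to a sequence, absorb all Cuntz-null errors into a second element $g_n$ built from a quasicentral approximate identity, and then verify the conditions of Proposition \ref{Pertep} for a single large index $n$. Since $A$ is non-elementary, simple, has strict comparison and continuous scale, $T(A)$ is compact (Remark \ref{Rcs}), $Q_A$ is unital, and $N_{cu}(A)=I_{\overline{T(A)}^w}$ by \eqref{A2.17}. We use this identification throughout.

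Fix $\ep>0$, a finite ${\cal F}\subset A^{\bf 1}$ containing $e_A$, and $a\in A_+\setminus\{0\}$. Choose nonzero $a_0,a_1$ with $a_0\oplus a_1\lesssim a$. Apply the hypothesis with $\delta$ small, with ${\cal F}$ enlarged by products and adjoints, and with $a_0$ in place of $a$. This gives $q$, $D$, $\Theta_A$ and $d_x$. Lift $q$ to a sequence of positive contractions $\{q_n\}$. Conditions \eqref{L117-2}, \eqref{L117-3} and \eqref{cor-constant-lift.1} then say the following: the sequences
\[
\{[q_n,x]\},\qquad \{\Theta_A(d_x)-(1-q_n)^{1/2}x(1-q_n)^{1/2}\},\qquad \{q_n^{1/2}\Theta_A(d)\}
\]
are each within $\delta$ (or within $0$ for the last one) of sequences $Y$ lying in $N_{cu}(A)$. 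Let $S$ be the separable $C^*$-subalgebra of $l^\infty(A)$ generated by these sequences, by $\{q_n\}$, by the constant sequences $j_A(x)$, and by $j_A(\Theta_A(z))$ for $z$ in a finite set ${\cal E}\subset D$ chosen for Lemma \ref{Lstable2}. Apply Lemma \ref{Lidealsep} to $J=S\cap I_A$. After reindexing, this gives positive contractions $g_n$ with $\{g_n\}\in I_A$ and $\|g_n\|_{2,T(A)}$ as small as we like. Moreover, $(1-g_n)^{1/2}$ almost kills the error sequences in norm and almost commutes with the generators.

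Put $p_n=(1-g_n)^{1/2}$, $c_n=p_n(1-q_n)p_n$ and $e_n'=1-c_n=g_n+p_nq_np_n$. Repeating the estimates \eqref{Tm-32}--\eqref{Tm-40}, we get $\|[e_n',x]\|<\ep$ for $x\in{\cal F}$, and
\[
c_n^{1/2}xc_n^{1/2}\approx p_n\Theta_A(d_x)p_n
\]
in norm for large $n$. Lemma \ref{Lstable2} then yields an order-zero c.p.c.\ map $L_n\colon D\to\overline{c_nAc_n}$ with $L_n(d_x)\approx c_n^{1/2}xc_n^{1/2}$. The element $d_{e_A}$ is unchanged, so $t(d_{e_A})\ge\kappa$ persists.

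To make $L_n$ fully normalized, we reproduce the claim around \eqref{Tm-43}. Let $R$ be the spectral projection of $\Theta_A(p_j)$ near $1$; its trace is bounded below uniformly by compactness of $T(A)$ and simplicity. Hence $\|p_n\Theta_A(p_j)p_n\|$ is close to $1$, because $\|g_n\|_{2,T(A)}$ is small. Since $\Theta_A(D)$ is approximately orthogonal to $q$, also $\|L_n(p_j)\|\to1$, and rescaling on each summand as in \eqref{Tm-48}--\eqref{Tm-52} costs an arbitrarily small error.

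For the comparison condition, note that $(e_n'-\ep)_+\lesssim (g_n-\ep/2)_+\oplus(q_n-\ep/2)_+$. The first summand is $\lesssim a_1$ for large $n$, since $\{g_n\}$ is Cuntz-null. For the second, $q\lesssim\overline{j_A}(a_0)$ in $Q_A$ should give $(q_n-\ep/2)_+\lesssim a_0$ for all large $n$, up to a Cuntz-null correction that is again absorbed into $a_1$. Hence $(e_n'-\ep)_+\lesssim a$, and Proposition \ref{Pertep} gives ETR1 with $e_A$ and $\kappa$.

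I expect the main obstacle to be this last transfer. Cuntz subequivalence in $Q_A$ is witnessed by $x\in Q_A$ with $q\approx x^*x$ and $xx^*\in\Her(\overline{j_A}(a_0))$, and only modulo $N_{cu}$. Turning this into an honest subequivalence $(q_n-\ep)_+\lesssim a$ in $A$ for a single $n$ requires lifting $x$, using \cite[Proposition 2.2]{RrII}, and absorbing the Cuntz-null remainder through the spare element $a_1$, in the spirit of Lemma \ref{lem:absorb-tail}. This must be done simultaneously with the choice of $n$ made in the quasicentral step. I would try first to build the lift of $x$ into $S$, so that the same reindexing of Lemma \ref{Lidealsep} controls it as well.
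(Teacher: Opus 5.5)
Your architecture is the paper's. You lift $q$, put the error sequences into a separable $S\subset l^\infty(A)$, cut by a quasicentral $g$ from Lemma~\ref{Lidealsep} with $\|g\|_{2,T(A)}$ small, and take $e_0=g+(1-g)^{1/2}q_m(1-g)^{1/2}$. You then apply Lemma~\ref{Lstable2} and renormalize using spectral projections of $\Theta_A(1_{M_{r_j}})$. The comparison step you flag as the main obstacle is resolved in the paper exactly as you suggest. The lift $\{w_n\}$ of a witness for $q\lesssim\overline{j_A}(a_0)$ is put into $S$, with $w_n^*a_0w_n-q_n=u_n+r_n$, $\sup_n\|u_n\|<\eta$ and $\{r_n\}\in I_A$. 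After the cut this gives $(e_0-\gamma)_+\lesssim (g-\mu)_+\oplus a_0$. Then $(g-\mu)_+\lesssim a_1$ follows from $d_\tau((g-\mu)_+)\le \tau(g^2)/\mu^2$ and strict comparison.

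The real gap is in how you use \eqref{cor-constant-lift.1}. You assert that $\{q_n^{1/2}\Theta_A(d)\}\in N_{cu}(A)$, i.e.\ that $q$ is orthogonal to $\overline{j_A}(\Theta_A(D))$. This does not follow. $q$ is only a positive contraction, and lying in $\overline{(1-q)Q_A(1-q)}$ does not mean being killed by $q$ (take $q=\frac12 1_{Q_A}$). In the application in Proposition~\ref{prop:continuous-scale-improvement} one even has $1_{Q_A}-q_{\mathrm g}\ge 2\gamma 1_{Q_A}$. There \eqref{cor-constant-lift.1} carries no information, and $q_{\mathrm g}$ is in general not orthogonal to $\Theta^{\mathrm g}(D^{\mathrm g})$.

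You rely on this orthogonality twice:
\begin{itemize}
\item to verify the hypothesis of Lemma~\ref{Lstable2} that $p_n\Theta_A(f)p_n$ lies near $\overline{c_nAc_n}$ for every $f$ in the dense finite set ${\cal E}\subset D^{\bf 1}$ (your estimate $c_n^{1/2}xc_n^{1/2}\approx p_n\Theta_A(d_x)p_n$ only covers $f=d_x$);
\item to pass from $\|p_n\Theta_A(p_j)p_n\|\approx1$ to $\|L_n(p_j)\|\approx 1$.
\end{itemize}

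What \eqref{cor-constant-lift.1} actually gives is that $(1-q)^{1/N}$ is an approximate unit for $\overline{j_A}(\Theta_A(D))$. The paper fixes $N$ so that $(1-q)^{1/N}\overline{j_A}(\Theta_A(d))$ and $\overline{j_A}(\Theta_A(d))(1-q)^{1/N}$ are within a small $\zeta$ of $\overline{j_A}(\Theta_A(d))$ for $d$ in the relevant finite subset of $D$. It puts the corresponding error sequences into $S$ and compresses by $z=(1-q_m)^{1/N}(1-g)^{1/2}$ instead of $(1-g)^{1/2}$. Then $z^*\Theta_A(d)z\in\overline{c_0Ac_0}$ holds exactly, and $z$ almost commutes with $\Theta_A({\cal E})$. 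The same approximate-unit estimate compares $\|z^*\Theta_A(p_j)z\|$ with $\|(1-g)^{1/2}\Theta_A(p_j)(1-g)^{1/2}\|$ in the normalization step.

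A smaller point: $\delta$ is fixed before the cut, so the errors do not tend to $0$ with $n$. You get a lower bound of the form $1-\alpha/2-O(\eta)$ for $\|L(p_j)\|$ rather than $\|L_n(p_j)\|\to1$, and all constants must be chosen in advance, as in the paper.
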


\begin{proof}

\medskip
\noindent\textbf{1. Choice of constants and quotient data.}

Fix $\epsilon>0$, a finite subset ${\cal F}\subset A^{\bf 1}$ containing
$e_A$, and a nonzero element $a\in A_+$. Enlarging ${\cal F}$ if
necessary, we may assume that ${\cal F}$ is self-adjoint and that
$0<\epsilon<1$.

Choose mutually orthogonal nonzero positive elements
$a_0,a_1\in\Her(a)$ such that
\[
 a_0\oplus a_1\lesssim a.
\]
Since $a_1\neq0$, simplicity of $A$ and compactness of $T(A)$ give
\beq\label{11.7.7}
 \sigma
 :=
 \inf_{\tau\in T(A)}d_\tau(a_1)>0.
\eneq

Choose $0<\alpha<1$ and $\mu,\eta>0$ sufficiently small that
\beq\label{11.7.8}
 \frac{\alpha}{2}+20\eta<\frac{\epsilon}{2}
\andeqn
 8\eta+2\mu
 <
 \min\left\{
 \frac{\epsilon}{4},
 \left(\frac{\epsilon}{4}\right)^2
 \right\}.
\eneq
Choose $\delta_{\rm fc}>0$ such that, whenever $0\leq b\leq1$ and
$\|x\|\leq1$,
\beq\label{11.7.10}
 \|[b,x]\|<\delta_{\rm fc}
 \quad\Longrightarrow\quad
 \|[b^{1/2},x]\|<\eta.
\eneq
Put
\[
 \delta_0
 =
 \min\left\{\eta,\frac{\delta_{\rm fc}}{8}\right\}.
\]

Apply the hypothesis with $\delta_0$ (as $\dt$), ${\cal F}$, and $a_0$ (as $a$). We
obtain a positive contraction $q\in Q_A$, a finite-dimensional
$C^*$-algebra
\[
 D=\bigoplus_{j=1}^mM_{r_j},
\]
a fully normalized order-zero c.p.c.\ map
$
 \Theta_A:D\longrightarrow A,
$
and contractions $d_x\in D$, $x\in{\cal F}$, satisfying \eqref{cor-constant-lift.1}-- \eqref{L117-5},
 where $\Theta_A$ is viewed as a
constant-sequence map into $Q_A$.

Write
\[
 p_j=1_{M_{r_j}}
 \quad\text{and}\quad
 b_j=\Theta_A(p_j),
 \qquad j=1,\ldots,m.
\]
Since $\Theta_A$ is fully normalized,
\[
 \|b_j\|=1,
 \qquad j=1,\ldots,m.
\]
Put
\beq\label{117Rj}
 R_j=\chi_{[1-\alpha/8,\,1]}(b_j)\in A^{**}.
\eneq
Let $g_\alpha$ be a nonzero positive continuous function supported
in $[1-\alpha/8,1]$, $g_\af(1)=1$ and  with $0\leq g_\alpha\leq1$. Since $A$ is
simple and $T(A)$ is compact,
\[
 \nu
 =
 \min_{1\leq j\leq m}
 \inf_{\tau\in T(A)}
 \tau(g_\alpha(b_j))
 >0.
\]
Consequently,
\beq\label{11.7.13}
 \tau(R_j)\geq\nu,
 \qquad
 \tau\in T(A),\quad j=1,\ldots,m.
\eneq

Let
\[
 S_D
 =
 \{d_x:x\in{\cal F}\}
 \cup
 \{p_1,\ldots,p_m\}.
\]
Let ${\cal E}\subset D^{\bf 1}$ and $\rho>0$ (as $\dt$) be given by Lemma \ref{Lstable2}
for $S_D$ (in place of ${\cal F}$) and $\eta$ (in place of $\ep$).
%
 Choose
\[
 0<\zeta<\min\left\{\eta,\frac{\rho}{16}\right\}.
\]

\medskip
\noindent\textbf{2. Lifting the quotient data.}

Choose a positive-contraction lift
\[
 {\boldsymbol q}=\{q_n\}_{n=1}^\infty
 \in\prod_{n=1}^\infty A
\]
of $q$, and put $c=1-q$. Since
$
 \{c^{1/N}\}_{N=1}^\infty
$
is an approximate identity for $\overline{cQ_Ac}$ and
\[
 \overline{j_A}(\Theta_A(D))\subset\overline{cQ_Ac},
\]
there is $N\in\mathbb N$ such that
\[
 \|c^{1/N}\overline{j_A}(\Theta_A(d))-\overline{j_A}(\Theta_A(d))\|<\zeta
\andeqn
 \|\overline{j_A}(\Theta_A(d))c^{1/N}-\overline{j_A}(\Theta_A(d))\|<\zeta
\]
for every $d\in {\cal E}\cup S_D$. Put
\[
 s_n=(1-q_n)^{1/2},
 \qquad
 h_n=(1-q_n)^{1/N}.
\]

For $x\in{\cal F}$, write
\beq\label{11.7.17}
 &&[q_n,x]=c_{x,n}^{(1)}+z_{x,n}^{(1)},
 \qquad
 \sup_n\|c_{x,n}^{(1)}\|<\delta_0,   \{z_{x,n}^{(1)}\}\in I_A,\,\,{\rm and},\\\label{11.7.18}
 &&s_nxs_n-\Theta_A(d_x)
 =
 c_{x,n}^{(2)}+z_{x,n}^{(2)}\in A,
 \qquad
 \sup_n\|c_{x,n}^{(2)}\|<\delta_0, \andeqn \{z_{x,n}^{(2)}\}\in I_A.
\eneq
By the choice of $\delta_{\rm fc}$,
\[
 \|[(1-q)^{1/2},\overline{j_A}(x)]\|<\eta.
\]
Hence we may also write
\beq\label{11.7.19}
 [s_n,x]
 =
 c_{x,n}^{(3)}+z_{x,n}^{(3)},
 \qquad
 \sup_n\|c_{x,n}^{(3)}\|<\eta\andeqn \{z_{x,n}^{(3)}\}\in I_A.
\eneq

For $d\in E\cup S_D$, write
\beq\label{11.7.20}
 h_n\Theta_A(d)-\Theta_A(d)
 =
 c_{d,n}^{(4)}+z_{d,n}^{(4)}\in A,
 \qquad
 \sup_n\|c_{d,n}^{(4)}\|<\zeta\andeqn \{z_{d,n}^{(4)}\}\in I_A,
\eneq
and
\beq\label{11.7.21}
 \Theta_A(d)h_n-\Theta_A(d)
 =
 c_{d,n}^{(5)}+z_{d,n}^{(5)}\in A,
 \qquad
 \sup_n\|c_{d,n}^{(5)}\|<\zeta\andeqn \{z_{d,n}^{(5)}\}\in I_A.
\eneq

Since $q\lesssim \overline{j_A}(a_0)$, there is $w\in Q_A$ such that
\[
 \|w^*\overline{j_A}(a_0)w-q\|<\eta.
\]
Choose a bounded lift
\[
 {\boldsymbol w}=\{w_n\}\in\prod_{n=1}^\infty A
\]
of $w$, and write
\beq\label{11.7.23}
 w_n^*a_0w_n-q_n=u_n+r_n,
 \qquad
 \sup_n\|u_n\|<\eta,
 \qquad
 {\boldsymbol r}=\{r_n\}\in I_A.
\eneq

For convenience, write
\[
 {\boldsymbol c}_x^{(i)}=\{c_{x,n}^{(i)}\},
 \qquad
 {\boldsymbol z}_x^{(i)}=\{z_{x,n}^{(i)}\},
\andeqn
 {\boldsymbol c}_d^{(i)}=\{c_{d,n}^{(i)}\},
 \qquad
 {\boldsymbol z}_d^{(i)}=\{z_{d,n}^{(i)}\}.
\]

\medskip
\noindent\textbf{3. Quasi-diagonality.}

Let $S\subset l^\infty(A)$ be the separable $C^*$-algebra generated
by
$
 {\boldsymbol q}, {\boldsymbol w},
$
the constant sequences from ${\cal F}$ and $\Theta_A(D)$, and the
$A$-valued sequences
\[
 {\boldsymbol c}_x^{(i)},\quad
 {\boldsymbol z}_x^{(i)},\quad
 {\boldsymbol c}_d^{(j)},\quad
 {\boldsymbol z}_d^{(j)},\quad
 \{u_n\},\quad {\boldsymbol r},
\]
where $x\in{\cal F}$, $i=1,2,3$, $d\in E\cup S_D$, and $j=4,5$.
Put
\[
 J=S\cap I_A.
\]
(The multiplier-valued sequences $\{s_n\}$ and $\{h_n\}$ are not
included among the generators of $S$.)

Let $X\subset J$ contain
\[
 {\boldsymbol z}_x^{(i)},
 \qquad
 x\in{\cal F},\quad i=1,2,3,\andeqn
{\boldsymbol r}. 
\]For each
\[
 {\boldsymbol z}\in
 \left\{
 {\boldsymbol z}_d^{(4)},
 {\boldsymbol z}_d^{(5)}:
 d\in E\cup S_D
 \right\},
\]
also include in $X$ the three elements
$
 {\boldsymbol z},
 \,
 {\boldsymbol z}^*{\boldsymbol z},
 \,
 {\boldsymbol z}{\boldsymbol z}^*.
$
Let $Y\subset S$ consist of the constant sequences determined by
\[
 {\cal F}\cup\{\Theta_A(d):d\in E\}.
\]

Choose
\[
 0<\beta<
 \min\left\{
 \delta_0,\eta,\zeta^2,
 \frac{\alpha\nu}{100},
 \frac{\mu\sqrt{\sigma}}{4}
 \right\}.
\]
For $k\in\mathbb N$, put
\[
 X_k=X,\qquad Y_k=Y,\qquad N_k=1,
\]
\[
 \beta_k=\min\left\{\beta,\frac1k\right\},
 \qquad
 \epsilon_k
 =
 \min\left\{
 \frac{\delta_0}{2},
 \frac{\rho}{8},
 \frac1k
 \right\}.
\]
Apply Lemma \ref{Lidealsep} with
\[
 \alpha_k=\beta_k
\]
and commutator tolerance $\epsilon_k$. We obtain increasing
coordinates $m(k)$ and positive contractions $g_k\in A$. Fix a
sufficiently large $k$, and put
\[
 m=m(k),\qquad
 g=g_k,\qquad
 v=(1-g)^{1/2},
\]
\[
 f=q_m,\qquad
 s=(1-f)^{1/2},\qquad
 h=(1-f)^{1/N}.
\]

Lemma \ref{Lidealsep} gives
\beq\label{11.7.25}
 \|g\|_{2,T(A)}
 <
 \min\left\{
 \frac{\alpha\nu}{100},
 \frac{\mu\sqrt{\sigma}}{4}
 \right\}.
\eneq
Using \eqref{11.7.17}-- \eqref{11.7.19},
we obtain, for $x\in{\cal F}$,
\beq\label{11.7.26}
 \|v[f,x]v\|<2\delta_0,
 \qquad
 \|v(sxs-\Theta_A(d_x))v\|<2\eta,
\eneq
and
\beq\label{11.7.27}
 \|v[(1-f)^{1/2},x]v\|<2\eta,
 \qquad
 \|[v,x]\|<\delta_0.
\eneq

If ${\boldsymbol z}$ is one of
${\boldsymbol z}_d^{(4)}$ or
${\boldsymbol z}_d^{(5)}$, then Lemma \ref{Lidealsep} gives
\[
 \|z_m v\|^2
 =
 \|v z_m^*z_m v\|<\beta_k
\]
and
\[
 \|v z_m\|^2
 =
 \|v z_mz_m^*v\|<\beta_k.
\]
Since $\beta_k<\zeta^2$,  we also have, for all $d\in E\cup S_D,$
(see \eqref{11.7.20}--%
\eqref{11.7.21}) 
\beq\label{11.7.28}
 &\|v(h\Theta_A(d)-\Theta_A(d))\|<2\zeta, 
 \|(h\Theta_A(d)-\Theta_A(d))v\|<2\zeta,\\\label{11.7.28a}
 &\|v(\Theta_A(d)h-\Theta_A(d))\|<2\zeta,
 \|(\Theta_A(d)h-\Theta_A(d))v\|<2\zeta.
\eneq
 Moreover,
\beq\label{11.7.29}
 \|[v,\Theta_A(d)]\|<\frac{\rho}{4},
 \qquad d\in E,
\eneq
and, by \eqref{11.7.23},
\beq\label{11.7.30}
 \|v(f-w_m^*a_0w_m)v\|<2\eta.
\eneq

\medskip
\noindent\textbf{4. Construction of the complementary corner.}

Define
\beq\label{11.7.31}
 c_0=v(1-f)v\andeqn 
 e_0=g+vfv.
\eneq
Then $e_0\in A_+^{\bf 1}$ and, in $\widetilde A$,
\[
 1-e_0=c_0.
\]

For $x\in{\cal F}$,
\beq\label{11.7.33}
 \|[c_0,x]\|
 \leq
 2\|[v,x]\|+\|v[f,x]v\|
 <4\delta_0
 <\delta_{\rm fc}.
\eneq
Therefore, for all $x\in{\cal F},$ 
\beq\label{11.7.34}
 \|[c_0^{1/2},x]\|<\eta.
\eneq
Since $e_0=1-c_0$,
\beq\label{11.7.35}
 \|[e_0,x]\|<4\delta_0,
 \qquad x\in{\cal F}.
\eneq
For $x\in{\cal F}$,  by \eqref{11.7.34}, 
\eqref{11.7.27},   
\eqref{11.7.26}, 
\eqref{11.7.28} and \eqref{11.7.28a}, 
\beq\nonumber
\hspace{-0.4in}\|c_0^{1/2}xc_0^{1/2}
       -vh\Theta_A(d_x)hv\|
&\leq &
 \|c_0^{1/2}xc_0^{1/2}-c_0x\|\\\nonumber
&&+
 \|v(1-f)[v,x]\|
+
 \|v(1-f)^{1/2}[(1-f)^{1/2},x]v\|\\\nonumber
&&\hspace{0.2in}+
 \|v(sxs-\Theta_A(d_x))v\|
+
 \|v(\Theta_A(d_x)-h\Theta_A(d_x)h)v\|\\\label{11.7.36}
&<&\eta+\delta_0+2\eta+2\eta+4\zeta
 <10\eta.
\eneq

Put
\[
 z=hv=(1-f)^{1/N}(1-g)^{1/2}\in\widetilde A.
\]
Then
\[
 z^*\Theta_A(d)z=vh\Theta_A(d)hv.
\]
Since
\[
 h\Theta_A(d)h\in\overline{(1-f)A(1-f)}
\andeqn
 \overline{
 v\,\overline{(1-f)A(1-f)}\,v
 }
 =
 \overline{c_0Ac_0},
\]
we have
\beq\label{11.7.37}
 z^*\Theta_A(d)z\in\overline{c_0Ac_0}\rforal  d\in D.
\eneq
For $d\in {\cal E}$,  by \eqref{11.7.29} and \eqref{11.7.28}, 
\[
 \|[z,\Theta_A(d)]\|
 \leq
 \|h[v,\Theta_A(d)]\|
 +
 \|[h,\Theta_A(d)]v\|
 <
 \frac{\rho}{4}+4\zeta
 <\rho.
\]
Similarly,
\[
 \|[z^*,\Theta_A(d)]\|<\rho\rforal  d\in E.
\]

\medskip
\noindent\textbf{5. Perturbation to a normalized order-zero map.}

Applying Lemma \ref{Lstable2} in $\widetilde A$, with
$
 B=\overline{c_0Ac_0},
$
we obtain an order-zero c.p.c.\ map
\[
 L^{(0)}:D\longrightarrow\overline{c_0Ac_0}
\]
such that
\beq\label{11.7.40}
 \|L^{(0)}(d)-vh\Theta_A(d)hv\|<\eta\rforal  d\in S_D.
\eneq

We next normalize $L^{(0)}$. We claim that
\beq\label{11.7.41}
 \|vb_jv\|>1-\frac{\alpha}{2},
 \qquad j=1,\ldots,m.
\eneq
Indeed, otherwise, for some $j$,
\[
 \|vb_jv\|\leq1-\frac{\alpha}{2}.
\]
Since (see \eqref{117Rj})
\[
 b_j\geq\left(1-\frac{\alpha}{8}\right)R_j,
\]
we obtain
\[
 \|R_j(1-g)R_j\|
 =
 \|vR_jv\|\leq {\|vb_jv\|\over{1-\af/8}}
 \leq
 \frac{1-\alpha/2}{1-\alpha/8}.
\]
It follows that
\[
 R_jgR_j=R_j-R_j(1-g)R_j
 >
 \frac{3\alpha}{8}R_j.
\]
Therefore,  by \eqref{11.7.13},
\[
 \tau(g)> \tau(R_jgR_j)\ge  \frac{3\alpha}{8}\tau(R_j)>
 \frac{3\alpha\nu}{8}\rforal  \tau\in T(A),
\]
contradicting
\[
 \tau(g)
 \leq
 \|g\|_{2,T(A)}
 <
 \frac{\alpha\nu}{100}.
\]
This proves  the claim  and \eqref{11.7.41}.

By \eqref{11.7.28},
\beq\label{11.7.42}
 \|vhb_jhv-vb_jv\|<4\zeta.
\eneq
Put
\[
 \lambda_j=\|L^{(0)}(p_j)\|.
\]
It follows from \eqref{11.7.40}, \eqref{11.7.42}, and \eqref{11.7.41} that 
\beq
 \lambda_j&=&\|L^{(0)}(p_j)\|\ge \|vh\Theta_A(p_j)hv\|-\eta
 =\|vhb_jhv\|-\eta\\
 &\ge & \|vb_jv\|-4\zeta-\eta>
 1-\frac{\alpha}{2}-5\eta.
\eneq
Define
\[
 L|_{M_{r_j}}
 =
 \lambda_j^{-1}L^{(0)}|_{M_{r_j}},
 \qquad j=1,\ldots,m.
\]
Then
\[
 L:D\longrightarrow\overline{c_0Ac_0}
\]
is a fully normalized order-zero c.p.c.\ map, and
\beq\label{11.7.45}
 \|L-L^{(0)}\|
 <
 \frac{\alpha}{2}+5\eta.
\eneq
Combining \eqref{11.7.45}, 
\eqref{11.7.40}, \eqref{11.7.36}, and \eqref{11.7.8}, we obtain for all $x\in{\cal F},$
\beq\nonumber
 \|L(d_x)-(1-e_0)^{1/2}x(1-e_0)^{1/2}\| &\le& \|L^{(0)}(d_x)-(1-e_0)^{1/2}x(1-e_0)^{1/2}\| + \frac{\alpha}{2}+5\eta\\\nonumber
 &<& \|  vh\Theta_A(d_x)hv-c_0^{1/2}xc_0^{1/2}\|+\frac{\alpha}{2}+5\eta \\
 &<&10\eta+\frac{\alpha}{2}+5\eta<\frac{\epsilon}{2}.
\eneq

\medskip
\noindent\textbf{6. Cuntz comparison and verification of property ETR1.}

Put
\[
 y=w_mv.
\]
By \eqref{11.7.31} and \eqref{11.7.30},
\[
 \|e_0-(g+v^*w_m^*a_0w_mv)\|=\|g+vfv-g-vw_m^*a_0w_mv\|=\|v(f-w_m^*a_0w_m)v\|<2\eta.
\]
Choose
\[
 2\eta+2\mu<\gamma<8\eta+2\mu.
\]
By \cite[Proposition 2.2]{RrII},
\[
 (e_0-\gamma)_+
 \lesssim
 (g+y^*a_0y-2\mu)_+\\
 \lesssim
 (g-\mu)_+\oplus y^*a_0y
 \lesssim
 (g-\mu)_+\oplus a_0.
\]
Let $p=\chi_{(\mu,1]}(g)\in A^{**}.$ Therefore (applying normal extension of $\tau$),
for $\tau\in T(A),$  by \eqref{11.7.25},
\beq
d_\tau((g-\mu)_+)=\tau(p)\le {\tau(g^2)\over{\mu^2}} <{\|g\|_{_{2, T(A)}}^2\over{\mu^2}}<({\mu \sqrt{\sigma}\over{4}})^2/\mu^2=
 \frac{\sigma}{16}.
\eneq

 By  \eqref{11.7.7}, since A has strict comparison, we obtain
$
 (g-\mu)_+\lesssim a_1.
$
Therefore,
\[
 (e_0-\gamma)_+
 \lesssim
 a_0\oplus a_1
 \lesssim a.
\]
Put
$
 e=(e_0-\gamma)_+.
$
Then
\beq\label{11.7.52}
 e\lesssim a\andeqn
 \|e-e_0\|\leq\gamma.
\eneq
Since $e\leq e_0$, we have 
\[
 \overline{(1-e_0)A(1-e_0)}
 \subset
 \overline{(1-e)A(1-e)}.
\]
Thus $L$ may be regarded as a fully normalized order-zero c.p.c.\
map
\[
 L:D\longrightarrow\overline{(1-e)A(1-e)}.
\]
For $x\in{\cal F}$,  by \eqref{11.7.52},
\beq\label{11.7-100f}
 \|[e,x]\|
 \leq
 \|[e_0,x]\|+2\gamma
 <\epsilon.
\eneq
Moreover,
\[
 \|(1-e)^{1/2}-(1-e_0)^{1/2}\|
 \leq
 \sqrt{\gamma}.
\]
Consequently,
\beq\label{11.7.53}
 \|L(d_x)-(1-e)^{1/2}x(1-e)^{1/2}\|
 <
 \frac{\epsilon}{2}+2\sqrt{\gamma}
 <\epsilon.
\eneq

Finally, neither $D$ nor $d_{e_A}$ has been changed. Hence
\[
 t(d_{e_A})\geq\kappa,
 \qquad t\in T(D).
\]
Therefore,  together with \eqref{11.7-100f}, \eqref{11.7.53}, $A$ has property ${\rm ETR}1$ associated with $e_A$ and
the constant $\kappa$.
\end{proof}





\begin{prop}\label{prop:continuous-scale-improvement}
 For every $\eta>0$, there exists
$\eta_0>0$ with the following property. 
Let $A$ be a separable simple $C^*$-algebra with continuous
scale and ${\rm ETR}(A)\leq 1$.
If
$e_A\in A_+^1$ is any strictly positive contraction satisfying
\[
\tau(e_A)>1-\eta_0,\qquad \tau\in T(A),
\]
then $A$ has property ETR1 associated with $e_A$, and, in the
definition of property ETR1, the element $d_{e_A}$ may always be
chosen so that
\[
t(d_{e_A})>1-\eta,\qquad t\in T(D).
\]
\end{prop}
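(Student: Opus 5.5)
The plan is to reduce to Lemma \ref{cor-constant-lift}: after an application of property ETR1, we replace $D$ by the direct sum of its ``good'' summands and absorb the remaining summands into the element $q\in Q_A$. We may assume $A$ is non-elementary. By Proposition \ref{Pperm}, Corollary \ref{CeA} and Corollary \ref{Cstrictcomp}, $A$ has property ETR1 for every strictly positive contraction, with some constant $\kappa_0>0$. Moreover $A$ has strict comparison, stable rank one and $T(A)=QT(A)$, and continuous scale makes $T(A)$ compact, $Q_A$ unital and $I_A=N_{cu}(A)$. Given $\eta$, choose $\eta_0$ and auxiliary constants $\rho,\rho_0$ so small that the trace estimates below close up; these constants must not depend on $A$.

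\emph{Step 1.} Fix $e_A$ with $\tau(e_A)>1-\eta_0$ on $T(A)$ and put $x=y=e_A^{1/2}$. Apply Lemma \ref{lem11.1} with $\sigma=\kappa_0/2$ and $\rho$ to get $\mathcal G,\delta,\rho_0$. Then apply ETR1, as enhanced in Remark \ref{R1} and in the computations used in the proof of Theorem \ref{Ltrace}, to $\mathcal F\cup\mathcal G\cup\{e_A,e_A^{1/2}\}$ with a tolerance far below $\delta$. This yields $e$, $D=\bigoplus_j M_{r_j}$, $\theta$ and $d_x=d_{e_A^{1/2}}$. The almost multiplicativity (e11.2) gives $\theta(d_x)^2\approx s e_A s$, so $d_x^2$ can serve as $d_{e_A}$ up to a small error.

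\emph{Step 2.} Split the indices into $G=\{j:\operatorname{tr}_{r_j}((d_x)_j^2)>1-\rho_0\}$ and its complement $G^c$. For $j\in G$, Lemma \ref{lem11.1} supplies $\tau_j\in T(A)$ with $\operatorname{tr}_{r_j}((d_x)_j^2)\approx\tau_j(e_A)>1-\eta_0$. Hence $t(d_{e_A})>1-\eta$ on $T(\bigoplus_{j\in G}M_{r_j})$ once $\eta_0+\rho+{}$error $<\eta$. The hypothesis of Lemma \ref{lem11.1} is circular here, since it requires $\operatorname{tr}((d_y)_j^2)>1-\rho_0$. So on $G^c$ I would use only the lower bound $\kappa_0$ coming from ETR1 to control the Cuntz size of the bad part instead.

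\emph{Step 3 (main obstacle).} We must show that $b:=\theta(1_{D_{G^c}})$ can be moved into the small corner. Concretely, we need $q:=\overline{j_A}(e)+(\text{the part carried by }\theta(1_{D_{G^c}}))$ to satisfy $q\lesssim\overline{j_A}(a)$ in $Q_A$, and at the same time $\overline{j_A}(\theta(D_G))\subset\overline{(1-q)Q_A(1-q)}$. The idea is a trace count. For every $\tau\in T(A)$,
\[
\tau(e_A)>1-\eta_0,
\]
while on $G^c$ the element $se_As$ is approximated by $\theta(d_{e_A})$ whose normalized trace falls short of $1$ by at least $\rho_0/2$. Summing, $\tau(\theta(1_{D_{G^c}}))\lesssim\eta_0/\rho_0+{}$(trace of $e$), which is small. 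To make this uniform, I would rerun Step 1 along a sequence with tolerance $\to0$ and Cuntz-null comparison elements. This passes to $Q_A$, where $I_A=N_{cu}(A)$ turns ``uniformly small trace'' into ``Cuntz small'', and Lemma \ref{lem:absorb-tail} absorbs the tail. The delicate point is that $\eta_0/\rho_0$ must be small, where $\rho_0$ is itself determined by $\rho,\sigma$ with $\sigma$ depending on $\kappa_0$, and hence on $A$. If this dependence cannot be avoided, I would instead replace $\sigma$ by the bound $1-\eta_0$ obtained from the trace of $x^2$ in $Q_A$, and iterate.

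\emph{Step 4.} With $q$, $D_G$ and $\theta|_{D_G}$ in hand, the hypotheses \eqref{cor-constant-lift.1}--\eqref{L117-5} of Lemma \ref{cor-constant-lift} hold with $\kappa=1-\eta$; the commutators and the approximations are inherited from Step 1. Lemma \ref{cor-constant-lift} then gives property ETR1 with $d_{e_A}$ satisfying $t(d_{e_A})>1-\eta$.
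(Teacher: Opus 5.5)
Your Step 3 does not go through, and the obstruction is not only the $A$-dependence of $\rho_0$ that you flag.

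Because you test the summands with $x=y=e_A^{1/2}$, the only global input is $\tau(e_A)>1-\eta_0$. The trace count then controls only a weighted average of the defects. With $\mu_{\tau,j}=\tau(\theta(1_{M_{r_j}})^2)$ one gets $\rho_0\sum_{j\in G^c}\mu_{\tau,j}\lesssim 1-\tau(\theta(d_x)^2)\lesssim \eta_0+2\tau(e)$, so the bad weight is bounded only by roughly $\eta_0/\rho_0$.

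That is a fixed positive number determined by $\eta$ and $e_A$. It does not decrease when the tolerance or the comparison element shrinks. So rerunning along a sequence produces bad parts that are not in $I_A=N_{cu}(A)$, and nothing puts them below $\overline{j_A}(a)$ when $\sup_\tau d_\tau(a)\ll\eta_0/\rho_0$. Lemma \ref{lem:absorb-tail} only adds Cuntz classes; it makes nothing small.

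Making $\rho_0$ independent of $A$ does not help, because an averaged bound cannot distinguish few summands with large defect from many summands with small defect. For instance, if $\tau(e_A)\approx 1-\eta_0/2$ for all $\tau\in T(A)$, the summands carrying the weight all have $\operatorname{tr}_{r_j}((d_x)_j^2)\approx 1-\eta_0/2$. Any threshold $\rho_0<\eta_0/2$ then puts almost everything into $G^c$. For a larger threshold the smallness of $G^c$ is true, but it does not follow from averaging. It needs a per-summand identification of the trace each summand sees. Lemma \ref{lem11.1} provides this only under a hypothesis on a second element $y$, which, as you noticed, is circular when $y=x$. Your fallback of replacing $\sigma$ by $1-\eta_0$ presupposes exactly that per-summand bound.

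The missing idea, which is how the paper argues, is to decouple the detection of bad summands from $e_A$:
\begin{enumerate}
\item Using continuous scale, take a positive contraction $h$ from an approximate identity with $\tau(h^2)>1-\beta$ on $T(A)$.
\item Apply Lemma \ref{lem11.1} with $x=e_A$, $y=h$, $\sigma=\kappa^2$ and tolerance $\rho=\eta/16$.
\item Apply ETR1 with a comparison element $r_0$ satisfying $\sup_\tau d_\tau(r_0)<\beta$.
\item Call $j$ bad if $\operatorname{tr}_{r_j}((d_h)_j^2)\le 1-\rho_0$.
\end{enumerate}
Since $\tau(\theta_0(d_h)^2)>1-6\beta$, the bad weight is $<6\beta/\rho_0$. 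Because $\beta$ is chosen after $\rho_0$ (hence after $\kappa$) and after $a$, strict comparison gives $(H_{\mathrm b}-\gamma)_+\lesssim a_2$, so the $A$-dependence of $\rho_0$ is harmless. On each good summand, Lemma \ref{lem11.1} produces an honest $\tau_j\in T(A)$ with $|\operatorname{tr}_{r_j}((d_{e_A})_j^2)-\tau_j(e_A^2)|<\rho$. Hence $\operatorname{tr}_{r_j}((d_{e_A})_j^2)>1-2\eta_0-\rho$, an error independent of $A$.

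Step 4 is also too quick: after discarding summands, the approximations are not inherited. You need $q$ with $(1-q)^{1/2}\overline{j_A}(x)(1-q)^{1/2}\approx \overline{j_A}(\theta(\widehat d_x))$ for some $\widehat d_x\in D_G$. But $\theta(1_{D_G})$ need not be dominated by, or commute with, $1-e$. The paper handles this before invoking Lemma \ref{cor-constant-lift}, using:
\begin{enumerate}
\item a near-unit $e_0$ with $1_{Q_A}-\overline{j_A}(e_0)\lesssim\overline{j_A}(a_1)$;
\item the element $b_{\mathrm g}=\Theta^{\mathrm g}(f_\lambda(d^{(0)}_{e_0}))$ and the rescaling by $R_\lambda$, which give $\Theta^{\mathrm g}(d'_{x,\mathrm g})\approx b_{\mathrm g}^{1/2}\overline{j_A}(x)b_{\mathrm g}^{1/2}$;
\item $q_{\mathrm g}=(1_{Q_A}-b_{\mathrm g}-2\gamma)_+$, shown to satisfy $q_{\mathrm g}\lesssim\overline{j_A}(r_0)\oplus\overline{j_A}(a_1)\oplus\overline{j_A}(a_2)$.
\end{enumerate}
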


\begin{proof}
\medskip
\noindent\textbf{1. Initial reductions and choice of constants.}
\smallskip

It is enough to consider \(0<\eta<1\).  Put
$
   \eta_0=\frac{\eta}{32}
  $ and $
   \rho=\frac{\eta}{16}.
$

Let \(e_A\in A_+^1\) be strictly positive and suppose that
\beq
   \tau(e_A)>1-\eta_0,
   \qquad \tau\in T(A).
  \label{e11.55}
\eneq
Since \(A\) has continuous scale, it is algebraically simple (\cite[Theorem 3.3]{Lincs91}).
Since $ETR(A)\le 1,$ Proposition \ref {Pperm} (2), applied to a strictly positive element, shows that $A$ has property ETR1. 
By Corollary~6.6, \(A\) has property ETR1 associated with the prescribed strictly positive element \(e_A\)
and a constant
$
   \kappa=\kappa_{e_A}>0.
$

By Proposition~\ref{Pquas=trace}, every normalized \(2\)-quasitrace on \(A\)
is a trace.  Thus
$
   QT(A)=T(A).
$
Moreover, Corollary~\ref{Cstrictcomp} (3) shows that \(A\) has strict comparison.

Fix \(\varepsilon>0\), a finite self-adjoint subset
\(\mathcal F\subset A^{\bf 1}\) containing $e_A,$ and
\(a\in A_+\setminus\{0\}\).  Choose mutually orthogonal nonzero
positive elements
$
   a^{(0)},a_1,a_2\in\operatorname{Her}(a)
$
such that
\[
   a^{(0)}\oplus a_1\oplus a_2\lesssim a.
\]
Put
\beq
   \omega=\inf_{\tau\in T(A)}d_\tau(a_2)>0.
   \label{e11.57}
\eneq
Put
\beq
   \sigma=\kappa^2,\qquad
   \rho_0=
   \frac12\min\left\{
      \frac{\rho}{8},\frac{\sigma}{8},\frac18
   \right\}\andeqn\\
 \label{e11.58}
   0<\lambda<
   \min\left\{\frac18,\frac{\eta}{64},
                    \frac{\kappa}{16}\right\},
   \qquad
   C_\lambda=\frac1{1-2\lambda}.
  \label{e11.59}
\eneq
Define
\[
 f_\lambda(t)=
 \begin{cases}
  0, &0\leq t\leq\lambda,\\[0.3em]
  \dfrac{t-\lambda}{\lambda},
      &\lambda<t<2\lambda,\\[0.6em]
  1, &2\lambda\leq t\leq1,
 \end{cases}
\]
and
\[
   Q_\lambda(0)=0,\qquad
   Q_\lambda(t)=\frac{f_\lambda(t)}{t}\quad(t>0),
   \qquad
   R_\lambda=Q_\lambda^{1/2}.
\]
Then
\beq
 \|Q_\lambda\|\leq\frac1\lambda,\qquad
 Q_\lambda(t)t=f_\lambda(t),\qquad
 1-f_\lambda(t)\leq C_\lambda(1-t).
\label{e11.60}
\eneq

Choose
\beq
   0<\gamma<
   \min\left\{\frac18,\frac{\varepsilon^2}{512}\right\}
\andeqn
 0<\beta<
 \min\left\{
   \frac{\rho_0}{12},
   \frac{\rho_0\gamma^2\omega}{12},
   \frac18
 \right\}.
\label{e11.61}
\eneq
\medskip
\noindent\textbf{2. Auxiliary comparison elements and the large
cut-down \(e_0\).}
\smallskip

By the construction in the proof of Remark~2.11(2), applied inside
\(\operatorname{Her}(a^{(0)})\), there is a sequence 
$
   r_n\in\operatorname{Her}(a^{(0)})_+^{\bf 1}\setminus\{0\}
$
such that
\[
   \lim_{n\to\infty}
   \sup_{\tau\in T(A)}d_\tau(r_n)=0.
\]
Choose \(n\) such that
$ \sup_{\tau\in T(A)}d_\tau(r_n)<\beta,$ 
 and
put \(r_0=r_n\).  Then 
\beq
   \sup_{\tau\in T(A)}d_\tau(r_0)<\beta\andeqn r_0\lesssim a^{(0)}.
   \label{e11.62}
\eneq
In particular,
\[
   r_0\oplus a_1\oplus a_2\lesssim a.
\]

Choose a positive contraction \(h\) from an approximate identity of
\(A\) such that
\beq
   \tau(h^2)>1-\beta\rforal 
    \tau\in T(A).
\label{e11.63}
\eneq
Apply Lemma~\ref{lem11.1} with
$x=e_A$ and $y=h,$
with lower bound \(\sigma=\kappa^2\) and tolerance \(\rho\).  Let
\(\mathcal G\subset A^1\) and \(\delta_L>0\) be supplied by the
lemma. 

Put
\[
 \mathcal G_1={\cal G}\cup{\cal G}^*\cup \{e_A, h\}\cup \{z_1z_2:  z_1, z_2\in  {\cal G}\cup{\cal G}^*\cup\{e_A, h\}\}.
 \]

%

Define
\[
   g_\gamma(t)=(1-t-2\gamma)_+.
\]
Choose \(\chi>0\) such that
\[
   \|[b,z]\|<\chi
   \quad\Longrightarrow\quad
   \|[g_\gamma(b),z]\|<\varepsilon
\]
for contractions \(b,z\).  Choose \(\nu_Q>0\) such that
\[
   \|[d,z]\|<\nu_Q
   \quad\Longrightarrow\quad
   \|[Q_\lambda(d),z]\|<\frac{\chi}{2}
\]
whenever \(0\leq d\leq1\) and \(\|z\|\leq1\).

Choose \(\mu,\zeta>0\) satisfying
\[
 \mu,\zeta<
 \min\left\{
   \frac{\varepsilon\lambda}{256},
   \frac{\chi\lambda}{64},
   \frac{\nu_Q}{8},
   \frac{\delta_L}{8}
 \right\}.
\]
Choose \(\nu_{\mathrm{mix}}>0\) such that
$ \|[d,w]\|<\nu_{\mathrm{mix}}$
implies
\beq\label{vmix}
 \left\|
   \bigl((1-d)^{1/2}w(1-d)^{1/2}\bigr)^{1/2}
   -(1-d)^{1/2}w^{1/2}
 \right\|<\mu
\eneq
for positive contractions \(d,w\).

Let $v_n'=f_{2^{-n}}(e_A)$ and $v_n={\bar f}_{2^{-n}}(e_A)$ (see  Definition \ref{dfg}).
Then 
\beq\nonumber
v_n'v_{n+1}'=v_n',\,\,\, v_n'\le v_n\andeqn e_A\le v_n.
\eneq
Since $A$ has continuous scale, we may also assume that, for $n\ge n_0,$
\beq\nonumber
\tau(v_n)>1-\zeta\rforal \tau\in T(A).
\eneq
%
By Lemma~\ref{Lsequnececuntz},
there is \(n_1\in \N\)  such that
\[
   1_{Q_A}-j_A(v_n)\le j_{Q_A}-j_A(v_n')\lesssim j_A(a_1),
   \qquad n\geq n_1.
\]
Since $\{v_n\}$ is an approximate identity for $A,$ there  is $n_2\ge n_1$ 
such that, whenever \(n\geq n_2\) and
\(x\in\mathcal F\cup{\mathcal G}_1\),
\[
 \max\left\{
   \|(1-v_n)x\|,\ \|x(1-v_n)\|,\
   \|(1-v_n^{1/2})x\|,\
   \|x(1-v_n^{1/2})\|
 \right\}<\zeta.
\]
Put \(e_0=v_{\max\{n_0,n_1,n_2\}}\).  Then
\beq
 &&e_A\leq e_0,\,\,\, \tau(e_0)>1-\zeta\rforal \tau\in T(A),\\\label{e11.64a}
 &&\max_{x\in\mathcal F\cup\mathcal G}
 \left\{
   \|(1-e_0)x\|,\ \|x(1-e_0)\|,\
   \|(1-e_0^{1/2})x\|,\
   \|x(1-e_0^{1/2})\|
 \right\}<\zeta,\\
 &&\andeqn 1_{Q_A}-j_A(e_0)\lesssim j_A(a_1).
\label{e11.64}
\eneq

For \(\xi>0\), put
\beq\label{e11.64+}
   \Delta(\xi)=2\sqrt{2\xi}+2\xi,
   \qquad
   \Xi(\xi)=\Delta(\xi)+\xi.
\eneq
Choose \(\xi>0\) such that
\[
 \xi<
 \min\left\{
   \beta,\frac{\delta_L}{4},
   \frac{\gamma}{2C_\lambda}, \zeta,
   \nu_{\mathrm{mix}}
 \right\},
\]
and
\beq
 \begin{split}
 &\Delta(\xi)<
   \min\left\{\frac{\eta}{16},\frac{\kappa}{4}\right\},\,\,\,
 \Xi(\xi)+2\sqrt{\xi}+2\zeta+2\mu
   <\frac{\varepsilon\lambda}{8},\\
 &\Xi(\xi)+\zeta<\frac{\chi\lambda}{8},\,\,\,\,\,\,\,\hspace{0.3in}
 2\zeta+6\Xi(\xi)<\nu_Q.
 \end{split}
\label{e11.65}
\eneq
\medskip
\noindent\textbf{3. Application of ETR1 and decomposition into matrix
summands.}
\smallskip

Put
\[
   \mathcal H=\mathcal F\cup\mathcal G_1\cup\{e_0\}.
\]
Using Remark \ref{R1} (ii) and (iii), apply property ETR1 with \(\xi/8\) (in place of $\ep$), finite set
\(\mathcal H\) (in place of ${\cal F}$), and comparison element \(r_0\) (in place of $a$).  We obtain 
$e\in A_+^{\bf 1},$  a finite-dimensional \(C^*\)-algebra
$
   D^{(0)}=\bigoplus_{j=1}^mM_{r_j},
$
and a fully normalized order-zero c.p.c.\ map
\[
   \theta_0:D^{(0)}\longrightarrow  \overline{(1-e)A(1-e)}
\]
such that
\beq\label{e117-1/2}
   e\lesssim r_0,\,\,\,\,  \|[e,\, x]\|<\frac{\xi}{8} \andeqn
   \|[(1-e)^{1/2},\,x]\|<\frac{\xi}{8},
  \rforal x\in\mathcal H.
\eneq
Moreover,  for \(x\in\mathcal H\), we obtain a
contraction \(d_x^{(0)}\in {D^{(0)}}\) such that  (see (iii) of Remark \ref{R1})
\beq\label{e11.66}
   \|\theta_0(d_x^{(0)}) - (1-e)^{1/2}x(1-e)^{1/2}\|<\xi.
\eneq
We 
may assume that, by Remark \ref{R1} (iv), if $x\in {\cal H}_+,$   $d_x=d_x^{(0)}\ge 0.$   Furthermore,
\beq\label{e11.67}
   t(d_{e_A}^{(0)})\geq\kappa,
   \rforal  t\in T(D^{(0)}).
\eneq

Put \(s=(1-e)^{1/2}\).

For \(1\leq j\leq m\), let
$
 \theta_{0,j}
 =
 \theta_0|_{M_{r_j}},
$
where \(M_{r_j}\) is identified with the \(j\)-th summand of
\(D^{(0)}\).  Let
\[
 h_j=\theta_{0,j}(1_{M_{r_j}}),
\andeqn
 \pi_j:M_{r_j}\longrightarrow A^{**}
\]
be the supporting homomorphism of \(\theta_{0,j}\).  Thus
\[
 \theta_{0,j}(x)
 =
 \pi_j(x)h_j
 =
 h_j\pi_j(x)\rforal  x\in M_{r_j}.
\]
By \eqref{e11.65} and \eqref{e117-1/2},
\[
   \|[s,z]\|
   <
   \min\left\{\zeta,\frac{\delta_L}{4}\right\},
   \rforal  z\in\mathcal G.
\]
For \(z_1,z_2\in\mathcal G\),
\beq
\begin{aligned}
 \|s^2z_1z_2s^2-(sz_1s)(sz_2s)\|
 &=&
 \left\|
   s\bigl(sz_1z_2s-z_1s^2z_2\bigr)s
 \right\|\\
 &\leq &
 \|[s,z_1]\|+\|[s,z_2]\|
 <\delta_L.
\end{aligned}
\label{e11.68}
\eneq
\medskip
\noindent\textbf{4. Passage to \(Q_A\) and separation of good and bad
summands.}
\smallskip

We regard \(A\) as \(j_A(A)\subset Q_A\).  Put
\[
   q_0=j_A(e),
   \qquad
   s=(1_{Q_A}-q_0)^{1/2},
\]
and regard
\[
   j_A\circ\theta_0:
   D^{(0)}
   \longrightarrow
   \operatorname{Her}_{Q_A}(1_{Q_A}-q_0)
\]
as the order-zero map occurring in Lemma~\ref{lem11.1}.
With these
identifications, the preceding estimates are precisely \eqref{e11.1}--\eqref{e11.4}.

Let
\[
 I_b=
 \left\{
 j:
 \operatorname{tr}_{r_j}
 \bigl((d_h^{(0)})_j^2\bigr)
 \leq1-\rho_0
 \right\},
 \qquad
 I_g=\{1,\ldots,m\}\setminus I_b.
\]
Put
\beq
   z_{\mathrm b}
   =\bigoplus_{j\in I_b}1_{M_{r_j}}\andeqn
   H_{\mathrm b}=\theta_0(z_{\mathrm b}).
 \label{e11.69}
\eneq

Fix \(\tau\in T(A)\).  Since \(e\lesssim r_0\),
\[
   \tau(e)\leq d_\tau(e)\leq d_\tau(r_0)<\beta.
\]
Since \(0\leq h\leq1\),
\[
\begin{aligned}
 \tau(shs)
 &=\tau(h(1-e))
 =\tau(h)-\tau(he)\\
 &\geq\tau(h^2)-\tau(e)
 >1-2\beta.
\end{aligned}
\]
Hence
\beq
   \tau((shs)^2)
   \geq\tau(shs)^2
   >(1-2\beta)^2
   >1-4\beta.
   \label{e11.70}
\eneq
By \eqref{e11.66},
\[
   \|\theta_0(d_h^{(0)})-shs\|<\xi.
\]
Both elements are positive contractions, so
\[
   \|\theta_0(d_h^{(0)})^2-(shs)^2\|<2\xi.
\]
Therefore,
\beq
   \tau(\theta_0(d_h^{(0)})^2)
   >1-4\beta-2\xi
   >1-6\beta.
 \label{e11.71}
\eneq

For \(j=1,\ldots,m\), put
\[
   \mu_{\tau,j}
   =\tau\bigl(\theta_0(1_{M_{r_j}})^2\bigr).
\]
Then
\[
   \sum_{j=1}^m\mu_{\tau,j}
   =\tau\bigl(\theta_0(1_{D^{(0)}})^2\bigr)
   \leq1.
\]
Since  
   $\theta_0(x)=h_j\pi_j(x)$ 
on the \(j\)-th summand, the functional
\[
   x\longmapsto\tau(h_j^2\pi_j(x))
\]
is a trace on \(M_{r_j}\).  Thus
\[
 \tau(\theta_0(d_h^{(0)})^2)
 =
 \sum_{j=1}^m
 \mu_{\tau,j}
 \operatorname{tr}_{r_j}
 \bigl((d_h^{(0)})_j^2\bigr).
\]
Consequently,
\[
\begin{aligned}
 \tau(\theta_0(d_h^{(0)})^2)
 &\leq
 \sum_{j\in I_g}\mu_{\tau,j}
 +(1-\rho_0)\sum_{j\in I_b}\mu_{\tau,j}
 \leq
 1-\rho_0\sum_{j\in I_b}\mu_{\tau,j}.
\end{aligned}
\]
Together with \eqref{e11.71}, this gives
\[
   \sum_{j\in I_b}\mu_{\tau,j}
   <\frac{6\beta}{\rho_0}.
\]
Since the images of distinct summands are orthogonal,
\[
   H_{\mathrm b}^2
   =
   \sum_{j\in I_b}
   \theta_0(1_{M_{r_j}})^2.
\]
Therefore,
\beq
   \tau(H_{\mathrm b}^2)
   <\frac{6\beta}{\rho_0}\rforal \tau\in T(A).
   \label{e11.72}
\eneq

The set \(\mathcal I_g\) is nonempty.  Otherwise,
\[
   \tau(\theta_0(d_h^{(0)})^2)
   \leq1-\rho_0,
\]
contradicting \eqref{e11.71}, since \(6\beta<\rho_0\).

For every \(j\),
\[
 \operatorname{tr}_{r_j}
 \bigl((d_{e_A}^{(0)})_j^2\bigr)
 \geq
 \operatorname{tr}_{r_j}
 \bigl((d_{e_A}^{(0)})_j\bigr)^2
 \geq\kappa^2=\sigma.
\]
If \(j\in I_g\), Lemma~\ref{lem11.1}
gives \(\tau_j\in T(A)\)
such that
\[
 \left|
 \operatorname{tr}_{r_j}
 \bigl((d_{e_A}^{(0)})_j^2\bigr)
 -\tau_j(e_A^2)
 \right|<\rho.
\]
By \eqref{e11.55},
\[
   \tau_j(e_A^2)
   \geq\tau_j(e_A)^2
   >(1-\eta_0)^2
   >1-2\eta_0.
\]
Hence
\beq\label{e11.73}
 \operatorname{tr}_{r_j}
 \bigl((d_{e_A}^{(0)})_j^2\bigr)
 >
 1-2\eta_0-\rho\rforal  j\in I_g.
\eneq

\medskip
\noindent\textbf{5. Spectral cut-down of the good summands.}
\smallskip

Let
\[
 C_{\theta_0}
 =
 \widetilde\theta_0
 \bigl(C_0((0,1])\otimes D^{(0)}\bigr).
\]
Since \(e_A\leq e_0\),
\[
   se_As\leq se_0s.
\]
Hence, together with \eqref{e11.66},
applying  Lemma~\ref{Ldaxiao}
in \(C_{\theta_0}\),  
we obtain \(c\in C_{\theta_0}\) such that 
\[
   0\leq c\leq\theta_0(d_{e_0}^{(0)})
\andeqn
   \|c-se_As\|<2\sqrt{2\xi}+\xi.
\]
Let \(\pi_1:C_{\theta_0}\to D^{(0)}\) be evaluation at \(1\), and
put
$
   \widetilde d_{e_A}^{(0)}=\pi_1(c).
$
Then  (recall \eqref{e11.64+})
\beq\nonumber
&&   0\leq\widetilde d_{e_A}^{(0)}
   \leq d_{e_0}^{(0)},
\,\,\,  \|\widetilde d_{e_A}^{(0)}
   -d_{e_A}^{(0)}\|<\Delta(\xi),\\
   &&  \|({\widetilde d_{e_A}}^{(0)})^2
   -(d_{e_A}^{(0)})^2\|<2\Delta(\xi),
\andeqn
   \|\theta_0(\widetilde d_{e_A}^{(0)})-se_As\|
   <\Xi(\xi).
  \label{e11.74}
\eneq
For every \(1\leq j\leq m\), by \eqref{e11.67}
and
\eqref{e11.74},
\[
 \begin{aligned}
 \operatorname{tr}_{r_j}
 \bigl((\widetilde d_{e_A}^{(0)})_j\bigr)
 &\geq
 \operatorname{tr}_{r_j}
 \bigl((d_{e_A}^{(0)})_j\bigr)
 -\Delta(\xi)
 \geq \kappa-\Delta(\xi).
 \end{aligned}
\]
If \(j\in I_{\mathrm g}\), then, by \eqref{e11.74} and \eqref{e11.73},
\[
 \begin{aligned}
 \operatorname{tr}_{r_j}
 \bigl((\widetilde d_{e_A}^{(0)})_j\bigr)
 &\geq
 \operatorname{tr}_{r_j}
 \bigl((d_{e_A}^{(0)})_j\bigr)
 -\Delta(\xi)\\
 &\geq
 \operatorname{tr}_{r_j}
 \bigl((d_{e_A}^{(0)})_j^2\bigr)
 -\Delta(\xi)
 >
 1-2\eta_0-\rho-2\Delta(\xi).
 \end{aligned}
\]

For \(j=1,\ldots,m\), put
\beq
 &&p_j=
  1_{(\lambda,1]}
  \bigl((d_{e_0}^{(0)})_j\bigr),\,\,\,
 Q_j=
  Q_\lambda\bigl((d_{e_0}^{(0)})_j\bigr),\\
 &&R_j=Q_j^{1/2},\,\,\,
 u_j=
  f_\lambda\bigl((d_{e_0}^{(0)})_j\bigr).
\eneq
Note that
\beq\label{e1174+}
p_jR_j=R_jp_j=R_j.
\eneq
Set
\beq
   Q=\bigoplus_{j=1}^mQ_j,\qquad
   R=\bigoplus_{j=1}^mR_j,\qquad
   u=\bigoplus_{j=1}^mu_j.
   \label{e11.75}
\eneq
For \(x\in\mathcal F\), put
\[
 a_x=
 \begin{cases}
 d_x^{(0)},&x\neq e_A,\\
 \widetilde d_{e_A}^{(0)},&x=e_A.
 \end{cases}
\]
Then $0\le a_{e_A}=\widetilde{d_{e_A}^{(0)}}\le d_{e_0}^{(0)}.$
Let
\[
   P_j=
   1_{[2\lambda,1]}
   \bigl((d_{e_0}^{(0)})_j\bigr).
\]
If \(0\leq d\leq(d_{e_0}^{(0)})_j\), then
\[
\begin{aligned}
 \operatorname{tr}_{r_j}(R_jdR_j)
 &=\operatorname{tr}_{r_j}(Q_jd)
 \geq\operatorname{tr}_{r_j}(P_jd)\\
 &=\operatorname{tr}_{r_j}(d)
   -\operatorname{tr}_{r_j}((1-P_j)d).
\end{aligned}
\]
Moreover,
\[
\begin{aligned}
 \operatorname{tr}_{r_j}((1-P_j)d)
 &=
 \operatorname{tr}_{r_j}((1-P_j)d(1-P_j))
 \leq2\lambda.
\end{aligned}
\]
Hence
\beq
   \operatorname{tr}_{r_j}(R_jdR_j)
   \geq\operatorname{tr}_{r_j}(d)-2\lambda.
  \label{e11.76}
\eneq
Applying \eqref{e11.76} to
\(d=(\widetilde d_{e_A}^{(0)})_j\), we obtain, for every \(j\),
\[
 \begin{aligned}
 \operatorname{tr}_{r_j}
 \bigl(R_j(\widetilde d_{e_A}^{(0)})_jR_j\bigr)
 &\geq
 \operatorname{tr}_{r_j}
 \bigl((\widetilde d_{e_A}^{(0)})_j\bigr)-2\lambda\\
 &\geq
 \kappa-\Delta(\xi)-2\lambda
 >\frac{\kappa}{2}.
 \end{aligned}
\]
Since \(p_jR_j=R_j\), this implies \(p_j\neq0\).
If \(j\in I_g\), then, by \eqref{e11.73}, \eqref{e11.74} and \eqref{e11.76},
\beq
 \operatorname{tr}_{r_j}
 (R_j(\widetilde d_{e_A}^{(0)})_jR_j)
 >
 1-2\eta_0-\rho-2\Delta(\xi)-2\lambda
 >
 1-\eta.
\label{e11.78}
\eneq
Let ${\rm tr}_{p_jM_{r_j}p_j}$ be the normalized trace on 
 \(p_jM_{r_j}p_j\). Then
 \beq\label{e11.78a}
 {\rm tr}_{p_jM_{r_j}p_j}(R_j(\widetilde d_{e_A}^{(0)})_jR_j)=
 \frac{
 \operatorname{tr}_{r_j}
 \bigl(R_j(\widetilde d_{e_A}^{(0)})_jR_j\bigr)
 }{
 \operatorname{tr}_{r_j}(p_j)
 }>1-\eta.
 \eneq

Put (recall \eqref{e11.75})
\beq
 &&  z_{\mathrm g}
   =\bigoplus_{j\in I_g}1_{M_{r_j}},
   \qquad
   z_{\mathrm b}=1-z_{\mathrm g},\\
&& D^{\mathrm g}
   =\bigoplus_{j\in I_g}p_jM_{r_j}p_j,\\
&& Q_{\mathrm g}=z_{\mathrm g}Q,\,\,\,
 R_{\mathrm g}=z_{\mathrm g}R,\,\,\,
 u_{\mathrm g}=z_{\mathrm g}u,\,\,\,
 u_{\mathrm b}=z_{\mathrm b}u=(1-z_{\mathrm g})u.
\label{e11.79}
\eneq
Let
\[
   \Theta_A^{\mathrm g}
   =\theta_0|_{D^{\mathrm g}},
   \qquad
   \Theta^{\mathrm g}
   =j_A\circ\Theta_A^{\mathrm g}.
\]
Since $\theta_0$ is fully normalized  and $p_j\not=0,$ 
\(\Theta_A^{\mathrm g}={\theta_0|_{D_g}}\) is fully normalized.

Put
\beq\label{AAA}
   b_{\mathrm g}=\Theta^{\mathrm g}(u_{\mathrm g}),
   \qquad
   b_{\mathrm b}=j_A(\theta_0(u_{\mathrm b})),
   \qquad
   b=b_{\mathrm g}+b_{\mathrm b}.
\eneq
Note that
\beq\label{BBB}
b=b_{\mathrm g}+b_{\mathrm b}=j_A(\theta_0(u_{\mathrm g}))+j_A(\theta_0(u_{\mathrm b}))=j_A(\theta_0(f_\lambda(d_{e_0}^{(0)}))). 
\eneq

\medskip
\noindent\textbf{6. Approximation and commutator estimates on the good
part.}
\smallskip

For \(x\in\mathcal F\), put
\[
   d'_{x,\mathrm g}=R_{\mathrm g}a_xR_{\mathrm g}.
\]
Let \(\pi_\theta\) be the supporting homomorphism.  Put
\beq
   E_{\mathrm{app}}
   =
   \frac{\Xi(\xi)+2\sqrt{\xi}+2\zeta+2\mu}{\lambda}
   <\frac{\varepsilon}{8}.
   \label{e11.80}
\eneq
By  \eqref{vmix} and \eqref{e11.64a} and the choices of \(\nu_{\mathrm{mix}}\), \(e_0\), and \(\xi\),  we obtain
\beq\nonumber
 &&  \|(sj_A(e_0)s)^{1/2}-sj_A(e_0)^{1/2}\|<\mu,\\
 &&  \|sj_A(x)s-(sj_A(e_0)s)^{1/2}j_A(x)(sj_A(e_0)s)^{1/2}\|<2\zeta+2\mu,\andeqn\\
&&\| j_A(\theta_0(d_{e_0}^{(0)}))^{1/2}-(sj_A(e_0)s)^{1/2}
   \|<\sqrt{\xi}.
\eneq
By the definition of \(a_x\), \eqref{e11.66}, and \eqref{e11.74},
\[
 \bigl\|
 j_A(\theta_0(a_x))-sj_A(x)s
 \bigr\|<\Xi(\xi),
 \qquad x\in\mathcal F.
\]
Combining these estimates, we obtain
\[
 \begin{aligned}
 &\bigl\|
 j_A(\theta_0(a_x))
 -j_A(\theta_0(d_{e_0}^{(0)}))^{1/2}
  j_A(x)
  j_A(\theta_0(d_{e_0}^{(0)}))^{1/2}
 \bigr\|\\
 &\hspace{25mm}
 <\Xi(\xi)+2\sqrt{\xi}+2\zeta+2\mu.
 \end{aligned}
\]

Moreover, 
\beq
 \begin{split}
 \Theta^{\mathrm g}(d'_{x,\mathrm g})
 =
 \pi_\theta(R_{\mathrm g})
 j_A(\theta_0(a_x))
 \pi_\theta(R_{\mathrm g})\andeqn 
 b_{\mathrm g}^{1/2}
 =
 \pi_\theta(R_{\mathrm g})
 j_A(\theta_0(d_{e_0}^{(0)}))^{1/2}.
 \end{split}
\label{e11.81}
\eneq

Since
\[
   \|\pi_\theta(R_{\mathrm g})\|^2\leq\frac1\lambda,
\]
we also obtain

\beq
 \left\|
 \Theta^{\mathrm g}(d'_{x,\mathrm g})
 -
 b_{\mathrm g}^{1/2}j_A(x)b_{\mathrm g}^{1/2}
 \right\|=\| \pi_\theta(R_{\mathrm g})
 j_A(\theta_0(a_x))
 \pi_\theta(R_{\mathrm g})-b_{\mathrm g}^{1/2}j_A(x)b_{\mathrm g}^{1/2}\|\\
 \le ({1\over{\lambda}})\|\bigl\|
 j_A(\theta_0(a_x))
 -j_A(\theta_0(d_{e_0}^{(0)}))^{1/2}
  j_A(x)
  j_A(\theta_0(d_{e_0}^{(0)}))^{1/2}
 \bigr\|
 <E_{\mathrm{app}}.
 \label{e11.82}
\eneq
By \eqref{e11.64a}
and \eqref{e117-1/2},
\[
 \begin{aligned}
 \|e_0(1-e)x-x(1-e)\|
 &\leq
 \|[e_0,\,(1-e)]\|+\|e_0x-x\|+\|[(1-e),\,x]\|
 <
 \frac{\xi}{8}+\zeta+\frac{\xi}{8},
 \end{aligned}
\]
and
\[
 \|x(1-e)e_0-x(1-e)\|
 \leq
 \|[(1-e),e_0]\|+\|xe_0-x\|
 <
 \zeta+\frac{\xi}{8}.
\]
Therefore,
\beq
 \begin{aligned}
 \|[se_0s,\,sxs]\|
 &=
 \|s(e_0(1-e)x-x(1-e)e_0)s\|
 <
 2\zeta+\frac{3\xi}{8}.
 \end{aligned}\label{e11.94a}
\eneq
By\eqref{e11.66},\eqref{e11.94a},
\eqref{e11.74}, and the definition of \(a_x\), we estimate 

\beq\nonumber
 &&\hspace{-0.4in}\|\theta_0(d_{e_0}^{(0)})\theta_0(a_x)
 -\theta_0(a_x)\theta_0(d_{e_0}^{(0)})\|
 =
\| \bigl(\theta_0(d_{e_0}^{(0)})-se_0s\bigr)\theta_0(a_x)\|
 +
\| se_0s\bigl(\theta_0(a_x)-sxs\bigr)\|\\\nonumber
&&\hspace{1.8in}+
 \|\bigl(se_0s\,sxs-sxs\,se_0s\bigr)\|
 +
\| \bigl(sxs-\theta_0(a_x)\bigr)se_0s\|\\\nonumber
 &&\hspace{2.2in}+
 \|\theta_0(a_x)
 \bigl(se_0s-\theta_0(d_{e_0}^{(0)})\bigr)\|\\\nonumber
 &&\hspace{1.8in}
  \leq 2\bigl\|\theta_0(d_{e_0}^{(0)})-se_0s\bigr\|
 \\\nonumber
 &&\hspace{2in}+ \|se_0s\,sxs-sxs\,se_0s\|
 +
 2\bigl\|\theta_0(a_x)-sxs\bigr\|\\\nonumber
 &&\hspace{1.8in}< 2\xi+
 (2\zeta+\frac{3\xi}{8})+2\Xi(\xi)<
 2\zeta+6\Xi(\xi)
 <\nu_Q.
\eneq
Since \(\theta_0\) is fully normalized, by Lemma \ref{Lproduct},
 we have
\beq \label{e11.83}
 \begin{aligned}
 \|[d_{e_0}^{(0)},a_x]\|
 &=
 \|[\theta_0(d_{e_0}^{(0)}),\theta_0(a_x)]\|
 <\nu_Q.
 \end{aligned}                                            
\eneq

Since \(z_{\mathrm g}\) is central in \(D^{(0)}\),
\[
   \|[Q_{\mathrm g},a_x]\|<\frac{\chi}{2}.
\]
Also,
\[
 \|
 j_A(\theta_0(d_{e_0}^{(0)}))j_A(x)
 -j_A(\theta_0(a_x))
 \|
 <2\Xi(\xi)+2\zeta,
\]
and, analogously,
\[
 \bigl\|
 j_A(x)j_A(\theta_0(d_{e_0}^{(0)}))
 -j_A(\theta_0(a_x))
 \bigr\|
 <2\Xi(\xi)+2\zeta.
\]

Since
\[
 b_{\mathrm g}
 =
 \pi_\theta(Q_{\mathrm g})
 j_A(\theta_0(d_{e_0}^{(0)}))
 =
 j_A(\theta_0(d_{e_0}^{(0)}))
 \pi_\theta(Q_{\mathrm g}),
\]
we have
\beq
 \begin{aligned}
{ [b_{\mathrm g},j_A(x)]}
 ={}&
 \pi_\theta(Q_{\mathrm g})
 \bigl(
 j_A(\theta_0(d_{e_0}^{(0)}))j_A(x)
 -j_A(\theta_0(a_x))
 \bigr)\\
 &+
 [\pi_\theta(Q_{\mathrm g}),j_A(\theta_0(a_x))]\\
 &+
 \bigl(
 j_A(\theta_0(a_x))
 -j_A(x)j_A(\theta_0(d_{e_0}^{(0)}))
 \bigr)
 \pi_\theta(Q_{\mathrm g}).
\end{aligned}
\eneq
Therefore,  by \eqref{e11.65},
\beq
   \|[b_{\mathrm g},j_A(x)]\|
   <
   \frac{4(\Xi(\xi)+\zeta)}{\lambda}
   +\frac{\chi}{2}
   <\chi.
 \label{e11.84}
\eneq

\medskip
\noindent\textbf{7. Construction of \(q_{\mathrm g}\) and Cuntz
comparison.}
\smallskip

%
Define
\beq\nonumber
   q_{\mathrm g}
   =(1_{Q_A}-b_{\mathrm g}-2\gamma)_+,
   \qquad
   c_{\mathrm g}=1_{Q_A}-q_{\mathrm g}.
\eneq
Then
\beq\label{e11.84a}
   \|[q_{\mathrm g},j_A(x)]\|<\varepsilon.
\eneq
Moreover,
\[
   0\leq c_{\mathrm g}-b_{\mathrm g}\leq2\gamma,
   \qquad
   \|c_{\mathrm g}^{1/2}-b_{\mathrm g}^{1/2}\|
   \leq\sqrt{2\gamma}.
\]
Thus
\beq
 \left\|
 \Theta^{\mathrm g}(d'_{x,\mathrm g})
 -
 c_{\mathrm g}^{1/2}j_A(x)c_{\mathrm g}^{1/2}
 \right\|
 <
 E_{\mathrm{app}}+2\sqrt{2\gamma}.
\label{e11.85}
\eneq

Let
\[
   q_0=j_A(e),\qquad c=1_{Q_A}-q_0,
\andeqn
   q_1=1_{Q_A}-c^{1/2}j_A(e_0)c^{1/2}.
\]
Then
\[
   q_1
   =
   q_0+c^{1/2}(1_{Q_A}-j_A(e_0))c^{1/2},
\]
and hence
\beq
   q_1\lesssim j_A(r_0)\oplus j_A(a_1).
   \label{e11.86}
\eneq

Put
\[
   q:=(1_{Q_A}-b-\gamma)_+.
\]

Recall \eqref{AAA}.  By \eqref{BBB}, 
 \eqref{e11.60}, the 
positivity of \(j_A\circ\theta_0\), together with \(C_\lambda\geq1\),
we have
\[
\begin{aligned}
1_{Q_A}-b
&=1_{Q_A}-j_A(\theta_0(1_{D^{(0)}}))
  +j_A\!\left(
    \theta_0\!\left(
      1_{D^{(0)}}-f_\lambda(d_{e_0}^{(0)})
    \right)
  \right)\\
&\leq
  1_{Q_A}-j_A(\theta_0(1_{D^{(0)}}))
  +C_\lambda j_A\!\left(
    \theta_0\!\left(
      1_{D^{(0)}}-d_{e_0}^{(0)}
    \right)
  \right)\\
&\leq
  C_\lambda\left(
    1_{Q_A}-j_A(\theta_0(1_{D^{(0)}}))
  \right)
  +C_\lambda j_A\!\left(
    \theta_0\!\left(
      1_{D^{(0)}}-d_{e_0}^{(0)}
    \right)
  \right)\\
&=
C_\lambda\left(
1_{Q_A}-j_A\bigl(\theta_0(d_{e_0}^{(0)})\bigr)
\right).
\end{aligned}
\]
Therefore,
\[
q=(1_{Q_A}-b-\gamma)_+
 \lesssim (C_\lambda \left(
1_{Q_A}-j_A\bigl(\theta_0(d_{e_0}^{(0)})\bigr)
\right)-\gamma)_+
 \sim
 \left(\left(
1_{Q_A}-j_A\bigl(\theta_0(d_{e_0}^{(0)})\bigr)
\right)-\frac{\gamma}{C_\lambda}\right)_+.
\]
On the other hand,  by \eqref{e11.66},
\[
\begin{aligned}
\|(1_{Q_A}-j_A\bigl(\theta_0(d_{e_0}^{(0)})\bigr))-q_1\|
&=
\left\|
j_A\bigl(\theta_0(d_{e_0}^{(0)})\bigr)
-c^{1/2}j_A(e_0)c^{1/2}
\right\|  
<\xi
 <\frac{\gamma}{2C_\lambda}.
\end{aligned}
\]
Hence, by \cite[Proposition 2.2]{RrII},
\[
\left((1_{Q_A}-j_A\bigl(\theta_0(d_{e_0}^{(0)})\bigr))-\frac{\gamma}{C_\lambda}\right)_+
\lesssim
\left(q_1-\frac{\gamma}{2C_\lambda}\right)_+
\lesssim q_1.
\]
Consequently,
\beq\label{e11.87}
q\lesssim q_1.
\eneq
By Proposition~\ref{Pquas=trace}, \(QT(A)=T(A)\).  Hence, for every normalized
\(2\)-quasitrace \(\tau\),
\[
\begin{aligned}
 d_\tau((H_{\mathrm b}-\gamma)_+)
 &\leq
 \frac{\tau(H_{\mathrm b}^2)}{\gamma^2}
 <
 \frac{6\beta}{\rho_0\gamma^2}
 <\omega
 \leq d_\tau(a_2).
\end{aligned}
\]
Strict comparison gives
\[
   (H_{\mathrm b}-\gamma)_+\lesssim a_2.
\]
Since \(u_{\mathrm b}\leq z_{\mathrm b}\) (see \eqref{e11.79}),
\[
   b_{\mathrm b}\leq j_A(H_{\mathrm b}),
\]
and therefore
\beq
   (b_{\mathrm b}-\gamma)_+
   \lesssim
   j_A((H_{\mathrm b}-\gamma)_+)
   \lesssim j_A(a_2).
   \label{e11.88}
\eneq

Finally,
\[
   1_{Q_A}-b_{\mathrm g}-2\gamma
   =
   (1_{Q_A}-b-\gamma)+(b_{\mathrm b}-\gamma).
\]
Using
\[
   (x+y)_+\lesssim x_+\oplus y_+
\]
for self-adjoint \(x,y\), we obtain
\beq
\begin{aligned}
 q_{\mathrm g}=(1_{Q_A}-b_{\mathrm g}-2\gamma)_+
 &\lesssim
 q\oplus(b_{\mathrm b}-\gamma)_+
 \lesssim
 j_A(r_0)\oplus j_A(a_1)\oplus j_A(a_2)
 \lesssim j_A(a).
\end{aligned}
\label{e11.89}
\eneq

\medskip
\noindent\textbf{8. Normalization and verification of Lemma~11.2.}
\smallskip

Since \(u_{\mathrm g}\) is strictly positive in \(D^{\mathrm g}\),
\[
   \Theta^{\mathrm g}(D^{\mathrm g})
   \subset
   \operatorname{Her}_{Q_A}(b_{\mathrm g})
   \subset
   \operatorname{Her}_{Q_A}(c_{\mathrm g}).
\]
For \(x\neq e_A\), put
\[
   \widehat d_x
   =
   \frac{d'_{x,\mathrm g}}
        {\max\{1,\|d'_{x,\mathrm g}\|\}}\andeqn
   \widehat d_{e_A}=d'_{e_A,\mathrm g}.
\]
In particular, $\widehat{d_{e_A}}=d_{e_A,g}'.$

Recall that $Rd_{e_0}^{(0)}R=u=f_\lambda(d_{e_0}^{(0)})\le 1.$
Since \(a_{e_A}=\widetilde d_{e_A}^{(0)}\), \eqref{e11.79} and \eqref{e11.75}
give
\[
 \begin{aligned}
 0
 \leq d'_{e_A,\mathrm g}
 &=
 R_{\mathrm g}
 \widetilde d_{e_A}^{(0)}
 R_{\mathrm g}
 \leq
 R_{\mathrm g}d_{e_0}^{(0)}R_{\mathrm g}
 =u_{\mathrm g}
 \leq1.
 \end{aligned}
\]

Since \(\Theta_A^{\mathrm g}\) is fully normalized, it is isometric.
By \eqref{e11.82}
\[
 \begin{aligned}
 \|d'_{x,\mathrm g}\|
 &=
 \|\Theta^{\mathrm g}(d'_{x,\mathrm g})\|
 \leq
 \|b_{\mathrm g}^{1/2}j_A(x)b_{\mathrm g}^{1/2}\|
 +E_{\mathrm{app}}
 \leq1+E_{\mathrm{app}}.
 \end{aligned}
\]
Suppose first that \(x\neq e_A\). If
\(\|d'_{x,\mathrm g}\|\leq1\), then
$
 \widehat d_x=d'_{x,\mathrm g}.
$
If \(\|d'_{x,\mathrm g}\|>1\), then
\[
 \begin{aligned}
 \|\widehat d_x-d'_{x,\mathrm g}\|
 &=
 \left\|
 \frac{d'_{x,\mathrm g}}{\|d'_{x,\mathrm g}\|}
 -d'_{x,\mathrm g}
 \right\|
 =
 \|d'_{x,\mathrm g}\|-1
 \leq E_{\mathrm{app}}.
 \end{aligned}
\]
Thus, in all  cases,  for $x\in {\cal F},$ 
\[
 \|\widehat d_x-d'_{x,\mathrm g}\|
 \leq E_{\mathrm{app}}.
\]
%
Consequently, by \eqref{e11.85}, for all $x\in {\cal F},$ 
\beq
 \begin{aligned}
 &\hspace{-0.2in}\bigl\|
 \Theta^{\mathrm g}(\widehat d_x)
 -c_{\mathrm g}^{1/2}j_A(x)c_{\mathrm g}^{1/2}
 \bigr\|\\
 &\leq
 \bigl\|
 \Theta^{\mathrm g}(\widehat d_x-d'_{x,\mathrm g})
 \bigr\|
 +
 \bigl\|
 \Theta^{\mathrm g}(d'_{x,\mathrm g})
 -c_{\mathrm g}^{1/2}j_A(x)c_{\mathrm g}^{1/2}
 \bigr\|\\
 &<
 E_{\mathrm{app}}
 +
 \bigl(E_{\mathrm{app}}+2\sqrt{2\gamma}\bigr)
 =
 2E_{\mathrm{app}}+2\sqrt{2\gamma}
 <\varepsilon.
 \end{aligned}   \label{e11.103}                                             
\eneq

For every \(j\in I_g\), \eqref{e11.78} gives
\[
   \operatorname{tr}_{p_jM_{r_j}p_j}
   ((\widehat d_{e_A})_j)>1-\eta.
\]
Every tracial state on \(D^{\mathrm g}\) is a convex combination of
these normalized matrix traces.  Therefore,
\beq
   t(\widehat d_{e_A})>1-\eta,
   \qquad t\in T(D^{\mathrm g}).
  \label{e11.91}
\eneq
We apply Lemma~\ref{cor-constant-lift}
with
\[
 q=q_{\mathrm g},\qquad
 D=D_{\mathrm g},\qquad
 \Theta_A=\Theta_A^{\mathrm g},\qquad
 d_x=\widehat d_x\quad (x\in\mathcal F),
 \qquad
 \kappa=1-\eta,
\]
and with
\[
 \Theta^{\mathrm g}
 =
 j_A\circ\Theta_A^{\mathrm g}
 \colon D_{\mathrm g}\longrightarrow Q_A.
\]
The algebra \(D_{\mathrm g}\) is finite-dimensional, and
\(\Theta_A^{\mathrm g}\) is a fully normalized order-zero c.p.c.\ map.
By their definitions, the elements \(\widehat d_x\),
\(x\in\mathcal F\), are contractions, and, by \eqref{e11.74},
\[
 \widehat d_{e_A}
 =
 d'_{e_A,\mathrm g}
 =
 R_{\mathrm g}\widetilde d_{e_A}^{(0)}R_{\mathrm g}
\]
is a positive contraction.

Since
$
 q_{\mathrm g}
 =
 (1_{Q_A}-b_{\mathrm g}-2\gamma)_+
 \leq
 (1-2\gamma)1_{Q_A},
$
we have
\[
 c_{\mathrm g}
 =
 1_{Q_A}-q_{\mathrm g}
 \geq
 2\gamma1_{Q_A}.
\]
Thus \(c_{\mathrm g}\) is invertible in \(Q_A\), and consequently
\[
 \operatorname{Her}_{Q_A}(c_{\mathrm g})
 =
 Q_A
 =
 c_{\mathrm g}Q_Ac_{\mathrm g}.
\]

Therefore
\[
 \Theta^{\mathrm g}(D_{\mathrm g})
 \subset
 \operatorname{Her}_{Q_A}(b_{\mathrm g})
 \subset
 \operatorname{Her}_{Q_A}(1_{Q_A}-q_g)
 =
 c_{\mathrm g}Q_Ac_{\mathrm g}
\]
verifies \eqref{cor-constant-lift.1}.
By \eqref{e11.84a} and the choice of
\(\chi\),
\[
 \|[q_{\mathrm g},j_A(x)]\|<\varepsilon\rforal x\in\mathcal F,
\]
which verifies \eqref{L117-2}.
Inequality \eqref{e11.103}
gives
\[
 \bigl\|
 \Theta^{\mathrm g}(\widehat d_x)
 -(1_{Q_A}-q_{\mathrm g})^{1/2}
  j_A(x)
  (1_{Q_A}-q_{\mathrm g})^{1/2}
 \bigr\|
 <\varepsilon\rforal x\in\mathcal F,
\]
and therefore verifies \eqref{L117-3}.
Relation~\eqref{e11.89} gives
\[
 q_{\mathrm g}\lesssim j_A(a),
\]
which verifies \eqref{L117-4}.
Finally, \eqref{e11.91} gives
\[
 t(\widehat d_{e_A})>1-\eta=\kappa,
 \qquad t\in T(D_{\mathrm g}),
\]
and hence verifies \eqref{L117-5}.

Therefore, Lemma~\ref{cor-constant-lift}
shows that \(A\) has property ETR1
associated with \(e_A\) and the constant \(1-\eta\).
This proves the proposition.
\end{proof}
%
%

\begin{rem}\label{Rkappa}
Let $A$ be a separable  simple \CA\, with a strictly positive element 
$e_0\in {\rm Ped}(A)$ with $0\le e_0\le 1.$     
 Suppose that $T(A)\neq\emptyset$, and let $S$ be
a compact convex subset of $\widetilde T(A)$ such that $0\not\in S$ and 
\[
    \mathbb R_+\cdot S=\widetilde T(A).
\]
 Suppose that $A$ has property ETR1.   We claim that $A$ has property ETR1 associated with a strictly positive element $e_A$  with $\|e_A\|=1$  and 
 with a constant $\kappa_{e_A}$ depending only on   the discontinuity of $d_\tau(e_A).$ 
 Recall that $\widehat{[e_0]}(s)=d_s(e_0)$ ($s\in S$).

Fix $0<\eta<1/2$. By \cite[Lemma~7.2]{EGLNs}, there exist
$0<\delta _0<\eta/2$ and $e_a\in A_+^1$ such that
$\tau\mapsto d_\tau(e_a)$ is continuous on $S$ and
\beq
 f_{\eta/2}(e_0)\leq e_a\leq f_{\delta _0}(e_0)
 \leq f_{\delta _0/2}(e_0)
 \leq \bar f_{\delta _0/2}(e_0).
 \label{e11.109}
\eneq
Put
\[
    e_A:=\bar f_{\delta _0/2}(e_0).
\]
Then $e_A$ is a strictly positive element of $A$ and $\|e_A\|=1$.
Since $e_0\in {\rm Ped}(A)_+,$ $\widehat{[e_0]}$ is bounded. 

In what follows write $f\gg g$ (and $g\ll f$) when $\inf_{s\in S}\{(f-g)(s)\}>0.$ 
Choose $f\in\operatorname{Aff}(S)$ such that
\beq
 d_\tau(e_0)=d_\tau(e_A)<f(\tau)\rforal \tau\in S.
 \label{e11.110}
\eneq
Replacing $f$ by $f+c$ for some $c>0,$ we may 
assume 
that 
\beq\label{e11.110a}
\inf_{\tau\in S} \{f(\tau)-d_\tau(e_0)\}\ge c>0.
\eneq
So we may choose $f\in\Aff(S)$ such that $f\gg \widehat{[e_0]}.$
Since \(e_a\neq 0\) and \(A\) is simple,
and $\widehat{[e_a]}$ is continuous,
\[
m_a:=\min_{\tau\in S}d_\tau(e_a)>0.
\]
Since $s\to (f(s)-s(f_\eta(e_0))$  is continuous, it is bounded on $S.$ 
Therefore one may choose \(n\in\mathbb N\) such that  
\beq
 n d_\tau(e_a)>f(\tau)-\tau(f_\eta(e_0))\rforal \tau\in S.
 \label{e11.111}
\eneq
Since $f_\eta(e_0)\leq f_{\eta/2}(e_0)\leq e_a$, we have
$\tau(f_\eta(e_0))\leq d_\tau(e_a)$. Therefore,
\beq
 (n+1)d_\tau(e_a)>f(\tau)\rforal \tau\in S.
 \label{e11.112}
\eneq

By Corollary \ref{Cstrictcomp}, there exists
$e\in\operatorname{Ped}(A\otimes\mathcal K)_+^1$ such that
\[
    d_\tau(e)=f(\tau)\rforal  \tau\in S.
\]
Since $f\gg \widehat{[e_0]}$ and $d_\tau(e)=\lim_{k\to\infty} \tau(f_{1/k}(e)),$
by Dini's Theorem, 
we may choose $0<\delta<1$ such that
\beq
 d_\tau(e_A)<\tau(f_\delta(e))
 \leq d_\tau(e)=f(\tau)\rforal  \tau\in S.
 \label{e11.113}
\eneq
Strict comparison gives
\beq
    e_A\precsim f_\delta(e).
    \label{e11.114}
\eneq
Using stable rank one and identifying $A$ with a Cuntz-equivalent
hereditary subalgebra of $A\otimes\mathcal K$, we may assume that
\[
 e_A\in\operatorname{Her}(f_\delta(e))
 \subset\operatorname{Her}(f_{\delta/2}(e)).
\]

By \cite[Lemma~7.2]{EGLNs}, there exist
$e_1\in\operatorname{Her}(e)_+^1$ and $0<\eta _1<\delta/2$ such that
$\tau\mapsto d_\tau(e_1)$ is continuous and
\beq
 f_{\delta/2}(e)\leq e_1
 \leq f_{\eta _1}(e)
 \leq f_{\eta _1/2}(e).
 \label{e11.115}
\eneq
Put
\[
    B:=\operatorname{Her}_{A\otimes\mathcal K}(e_1).
\]
Since $e_A\in\operatorname{Her}(f_{\delta/2}(e))\subset B$, and
$A=\operatorname{Her}_{A\otimes\mathcal K}(e_A)$, we have $A\subset B$.
Moreover, $B$ has continuous scale because
$\tau\mapsto d_\tau(e_1)$ is continuous. Thus, for any $0<\sigma<1$,
Proposition~11.3 gives a strictly positive element $e_B\in B_+$, with
$\|e_B\|=1$, such that $B$ has property ETR1 associated with $e_B$ and a
constant
\[
    \kappa_{e_B}>1-\sigma.
\]

Put
\[
 c_1=\operatorname{diag}(e_a,\ldots,e_a)\andeqn
 c_2=\operatorname{diag}(e_A,\ldots,e_A),
\]
where each diagonal has $n+1$ entries. By \eqref{e11.112} and strict
comparison, there exists a column
\[
    x\in M_{n+1,1}(A\otimes\mathcal K)
\]
such that
\beq
 x^*x=f_{\eta _1/4}(e)
 \andeqn
 xx^*\in\operatorname{Her}(c_1).
 \label{e11.116}
\eneq
The choice of $e_A$ and \eqref{e11.109} imply
\[
    e_Ae_a=e_a=e_ae_A.
\]
Consequently, $c_1c_2=c_2c_1=c_1$, and hence
\[
    c_2xx^*=xx^*c_2=xx^*.
    \label{e11.117}
\]

Apply Lemma \ref{lem:diagonal-contractive-factorization}, with cutoff $\eta _1/16$. There are 
$
    y_1,\ldots,y_{n+1}\in A\otimes\mathcal K
$ with $\|y_i\|\le 1,$ $1\le i\le n+1,$
such that
\[
    e_Ay_i=y_i,\qquad 1\leq i\leq n+1,
\]
and
\beq
 f_{\eta _1/16}(x^*x)
 =\sum_{i=1}^{n+1}y_i^*y_i
 =\sum_{i=1}^{n+1}y_i^*e_Ay_i.
 \label{e11.118}
\eneq
We also have
\beq
 f_{\eta _1/16}(x^*x)
 =f_{\eta _1/16}\bigl(f_{\eta _1/4}(e)\bigr)
 \geq f_{\eta _1/2}(e).
 \label{e11.119}
\eneq
It follows that
\beq
 f_{\eta _1/2}(e)
 \leq\sum_{i=1}^{n+1}y_i^*e_Ay_i
 \leq 1.
 \label{e11.120}
\eneq

Because $e_1\leq f_{\eta _1}(e)$,  we obtain
\[
 e_Bf_{\eta _1/2}(e)=e_B
 =f_{\eta _1/2}(e)e_B.
\]
Put
\[
    w_i:=y_i e_B^{1/2},\qquad 1\leq i\leq n+1.
\]
Since $y_i=e_Ay_i$ and $e_A,e_B\in B$, we have $w_i\in B$, and
$\|w_i\|\leq1$. Equations \eqref{e11.118}--\eqref{e11.120} now give
\[
 \sum_{i=1}^{n+1}w_i^*e_Aw_i=e_B,
 \qquad
 \|w_i\|\leq1,\quad 1\leq i\leq n+1.
\]
The first assertion of Remark \ref{Rhere} therefore shows that $A$ has ETR1
associated with $e_A$, with
\[
 \kappa_{e_A}
 \geq
 \left(\frac{1-\sigma}{4(n+1)^2}\right)^3\rforal 
  0<\sigma<1/2.
\]
It is important to note that  the estimate of $\kappa_{e_A}$  above only depends 
on $d_\tau(e_0).$

Finally, take $S=\overline{T(A)}^{\,w}.$ Since $A$ is $\sigma$-unital and algebraically simple,
$A={\rm Ped}(A).$ Therefore, by \cite[Lemma 4.5]{EGLNs}, $0\not\in\overline{T(A)}^w.$ 
Define
\beq\nonumber
 \bar\omega:=
 \inf_{\widehat{[e_0]}\ll f\in\operatorname{Aff}(S)}
 \left\{
 \lim_{\eta\downarrow0}
 \sup_{\tau\in S}
 \frac{f(\tau)-\tau(f_\eta(e_0))}
      {\tau(f_\eta(e_0))}
 \right\}=
 \inf_{\widehat{[e_0]}\ll f\in\operatorname{Aff}(S)}
 \left\{
 \lim_{\eta\downarrow0}
 \sup_{\tau\in S}
 \frac{f(\tau)}
      {\tau(f_\eta(e_0))}-1
 \right\}.
\eneq
Since $A$ is simple and $0\not\in S,$ 
$\tau\mapsto \tau(f_\eta(e_0))$  is  positive continuous on the compact subset $S.$ 
 One then checks that 
$\bar\omega<\infty.$ 
Given any integer $n>\bar\omega$, first choose $f\gg \widehat{[e_0]}$ for which the
expression inside the braces is smaller than $n$, and then choose
$\eta>0$ sufficiently small. Since
\[
    d_\tau(e_a)\geq\tau(f_\eta(e_0)),
\]
condition \eqref{e11.111} follows. Thus any integer $n>\bar\omega$ may be
used in the preceding construction; in particular,
\[
    n=\lfloor\bar\omega\rfloor+1
\]
is admissible. If $\tau\mapsto d_\tau(e_0)$ is continuous on $S$, then
$\bar\omega=0$, and one may take $n=1$.

We expect that the argument can be refined to give the sharper
estimate
\beq\nonumber
   \kappa_{e_A}\geq
   \frac{1-\sigma}{\max\{1,\lceil\overline\omega\rceil\}}
\eneq
for any $0<\sigma<1.$
Since this sharper estimate is not needed below, we do not pursue it
here.
%
Please also see Theorem \ref{TTRE1-1}.
\end{rem}
\section{Approximations}

This section collects approximation and permanence consequences of
the preceding results.  Lemma \ref{Lappid}  produces approximate identities consisting of elements
with zero oscillation.  Theorem \ref{Tinductive} establishes permanence of the
condition \(\operatorname{ETR}\leq 1\) under simple inductive limits.
  Proposition \ref{Asefimprov} and
Theorem \ref{TTRE1-1} give approximations in which the distinguished element
has uniformly large trace.  Proposition \ref{L112} transfers these approximations
to the quotient \(Q_A\).

\begin{lem}\label{Lappid}
Let $A$ be a $\sigma$-unital simple \CA\, with strict comparison and the canonical map
$\Gamma$ is surjective. 
Then $A$ contains an approximate identity  $\{e_n\}$ with $e_n\in {\rm Ped}(A)$ and 
$\omega(e_n)=0.$ 
\end{lem}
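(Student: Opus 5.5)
The plan is to pass to the stabilization, build an increasing sequence of positive elements of ${\rm Ped}(A)$ whose rank functions are continuous, and then pull these back into $A$ by the comparison theory. First, fix a strictly positive element $e_A\in A_+$ with $\|e_A\|=1$, and put $a_k=f_{1/k}(e_A)\in {\rm Ped}(A)_+$. Then $\{a_k\}$ is an approximate identity with $a_{k+1}a_k=a_k$, and $\widehat{[a_k]}$ is lower semicontinuous and increasing. The goal is to insert, between consecutive $a_k$'s, elements with continuous rank function. This is the content of \cite[Lemma~7.2]{EGLNs}, already used in Remark \ref{Rkappa}. I would apply it (it needs strict comparison and surjectivity of $\Gamma$, which are exactly our hypotheses) to obtain $b_k\in {\rm Her}(e_A)_+^{\bf 1}=A_+^{\bf 1}$ with
\[
f_{1/(2k)}(e_A)\le b_k\le f_{\delta_k}(e_A)
\]
for some $0<\delta_k<1/(2k)$, and with $\tau\mapsto d_\tau(b_k)$ continuous on $\widetilde{T}(A)$ (equivalently on a compact base $S$). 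Each such $b_k$ lies in ${\rm Ped}(A)$, since it is dominated by a function of $e_A$ vanishing near $0$, and satisfies $\omega(b_k)=0$ in the sense of Remark \ref{Rcs}.

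Second, I will arrange the $b_k$ to form an approximate identity. The inequality $f_{1/(2k)}(e_A)\le b_k$ gives $\|b_kx-x\|\to0$ for every $x$ in the dense subalgebra $\bigcup_k{\rm Her}(f_{1/k}(e_A))$. Since $\|b_k\|\le1$, this extends by density to all of $A$. If monotonicity or the relation $e_{n+1}e_n=e_n$ is wanted, I would pass to a subsequence with $\delta_{k_{n+1}}$ chosen so that $b_{k_{n+1}}$ is $1$ on the support of $b_{k_n}$. This is possible because $b_{k_n}\le f_{\delta_{k_n}}(e_A)$, so I may choose $k_{n+1}$ with $1/(2k_{n+1})<\delta_{k_n}/2$.

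The main obstacle is the first step: producing, inside ${\rm Her}(e_A)$ rather than merely in $A\otimes{\cal K}$, an element squeezed between two functional calculi of $e_A$ whose rank function is continuous. If \cite[Lemma~7.2]{EGLNs} is not available in exactly this form, I would construct $b_k$ by hand. The function $\widehat{[f_{1/k}(e_A)]}$ is lower semicontinuous, and it dominates the continuous function $\widehat{f_{1/(2k)}(e_A)}$ strictly on $S$. I would choose $g\in \Aff_+(S)$ strictly between $\tau\mapsto\tau(f_{1/(2k)}(e_A))$ and $\widehat{[f_{1/(4k)}(e_A)]}$, using Dini's theorem and compactness of $S$. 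Surjectivity of $\Gamma$ then gives $c\in (A\otimes{\cal K})_+$ with $\widehat{[c]}=g$. Strict comparison, together with stable rank one from Corollary \ref{Cstrictcomp}, lets me conjugate $c$ into ${\rm Her}(f_{1/(4k)}(e_A))$. The delicate part is then to modify $c$, without changing its rank function, so that it actually dominates $f_{1/(2k)}(e_A)$ as an operator and not merely in $\mathrm{Cu}$. I expect this operator-level domination to be the hardest step.
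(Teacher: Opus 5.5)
Your main route is the paper's argument. You use \cite[Lemma~7.2]{EGLNs} to squeeze an element with continuous rank function between two functional calculi of a strictly positive element, note that it lies in ${\rm Ped}(A)$ by domination, and get the approximate-identity property from the lower bound, exactly as the paper does with $\|x(1-e_n')\|\le\|x(1-f_{1/2^n}(a))x^*\|^{1/2}$.

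The one difference is in how the lemma is invoked. The paper does not apply it to the strictly positive element of $A$ directly. It works inside the algebraically simple hereditary subalgebra $B=\Her(f_{\ep/8}(a))$, so that continuity on the compact set $\overline{T(B)}^w$, which omits $0$, genuinely yields $\omega(e)=0$. Before doing so, it checks that $\Gamma$ is surjective for $B$ via tracial approximate oscillation zero (\cite[Theorem~1.1]{Linsr1}, \cite[Proposition~5.4 and Theorem~7.11]{FLosc}). It also disposes separately of the case where $0$ is isolated in ${\rm sp}(f_{\ep/8}(a))$, where $f_\delta(f_{\ep/8}(a))$ is already a projection. You should justify your use of the lemma this way rather than asserting that its hypotheses are exactly those of the statement.

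With that in place, your by-hand fallback is unnecessary. As written it also takes stable rank one from Corollary~\ref{Cstrictcomp}, which presupposes ${\rm ETR}(A)\le 1$ and so is not available under the present hypotheses.
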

\begin{proof}
It follows from \cite[Theorem 1.1]{Linsr1} that $A$  has tracial approximate oscillation zero. 

Fix a strictly positive element $a$ with $\|a\|=1.$ 

Claim:  For any $\ep>0,$ there is $e\in A_+^{\bf 1}\cap {\rm Ped}(A)$ such that 
\beq
f_\ep(a)\le e\le f_{\ep/16}(a).
\eneq

Let $b=f_{\ep/8}(a).$  If $0$ is an isolated point in ${\rm sp}(b),$ then 
for some $0<\dt<\ep/16,$  $e=f_{\dt}(b)$ is a projection. Hence $\omega(e)=0,$
and $0\le f_\ep(a)\le e \le f_{\ep/16}(a).$ We are done. So we assume that 
$0$ is a limit point of ${\rm sp}(b).$

Put $B=\Her(b).$ Then $B$ also has tracial approximate  oscillation zero by \cite[Proposition 5.4]{FLosc}.  It follows from \cite[Theorem 7.11]{FLosc} that  
$\Gamma$ is surjective.  Applying \cite[Lemma 7.2]{EGLNs}, 
there is $\dt>0$ and an element $e\in B_+$ 
such that
\beq
0\le f_\ep(a)\le f_{\ep/4}(b)\le e\le f_\dt(b)\le f_{\ep/16}(a)
\eneq
and $d_\tau(e)$ is continuous on $\overline{T(A)}^w.$ Hence 
$\omega(e)=0.$  Since $f_{\ep/16}(a)\in {\rm Ped}(A),$  $e\in {\rm Ped}(A).$

This proves the claim.

By the claim, for each $n\in \N,$ we obtain 
$e_n'\in A_+^{\bf 1}\cap {\rm Ped}(A)$ such that 
\beq
&&\omega(e_n')=0\andeqn\\
&& 0\le f_{1/2^n}(a)\le e_n'\le f_{1/2^{n+4}}(a).
\eneq

Let $x\in A.$ Then 
\beq
\|(x(1-e_n')\|=\|x(1-e_n')^2x^*\|^{1/2}\le \|x(1-f_{1/2^n}(a))x^*\|^{1/2}\to 0.
\eneq
It follows that 
$\{e_n'\}$   forms an approximate identity for $A.$
%
\end{proof}

\begin{thm}\label{Tinductive}
Let $A$ be a non-elementary separable  simple \CA\, which is an inductive limit 
of separable  simple \CA s $A_n$ satisfying $ETR(A_n)\le 1.$ 
Then $ETR(A)\le 1.$
\end{thm}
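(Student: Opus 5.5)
We reduce to a hereditary subalgebra with continuous scale coming from some $A_n$, and then transfer ETR1 approximations from $A_n$ to $A$ along the connecting maps. Let $\phi_{n}\colon A_n\to A$ be the canonical maps. We may assume the connecting maps are injective; otherwise replace $A_n$ by its image, which is simple and a quotient of $A_n$, hence isomorphic to $A_n$. Then $A=\overline{\bigcup_n \phi_n(A_n)}$. By Definition \ref{DD1} and Proposition \ref{Pperm} (3), it suffices to find one nonzero $\sigma$-unital hereditary subalgebra of $A$ with property ETR1.

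We fix $a\in {\rm Ped}(A_1)_+\setminus\{0\}$ and put $B_n=\Her_{A_n}(\phi_{1,n}(a))$ and $B=\Her_A(\phi_1(a))$. Then $B=\overline{\bigcup_n \phi_n(B_n)}$. Each $B_n$ is algebraically simple and, by Proposition \ref{Pperm} (2), has property ETR1. The difficulty is that the constants $\kappa_{e_{B_n}}$ may degenerate as $n\to\infty$, while Definition \ref{D1} requires one fixed constant for a fixed strictly positive element $e_B$ of $B$. To handle this we use Remark \ref{Rkappa} together with Proposition \ref{prop:continuous-scale-improvement}. These show that, up to the loss controlled in Remark \ref{Rhere} by ${\rm SUF}$ data, the constant depends only on how well $e_B$ is expressed through an element of continuous rank. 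Concretely, we fix $e_B=\phi_1(e_1)$ with $e_1$ a strictly positive contraction of $B_1$; then $\phi_{1,n}(e_1)$ is strictly positive in $B_n$ for every $n$.

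The key step is a uniform bound: there is $\kappa>0$ such that each $B_n$ has ETR1 associated with $\phi_{1,n}(e_1)$ and the constant $\kappa$. The route is as follows. Choose $e_a\in B_1$ with continuous $\widehat{[e_a]}$, and choose $f_{\eta}(e_1)\le e_a$ as in Remark \ref{Rkappa}. Set $C_n=\Her(\phi_{1,n}(e_a))$, which has continuous scale; by Corollary \ref{Cstrictcomp} and Lemma \ref{Lappid} this is available. Proposition \ref{prop:continuous-scale-improvement} gives $C_n$ the constant $1-\sigma$ for any strictly positive $e_{C_n}$ with trace close to $1$. To return to $\phi_{1,n}(e_1)$, we use the fixed relation $\sum_{i=1}^{N} w_i^*e_1w_i=e_{B_1}$, obtained from Lemma \ref{lem:diagonal-contractive-factorization} in $B_1$ as in Remark \ref{Rkappa}. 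Pushed forward by $\phi_{1,n}$, this relation has the same $N$ and the same norms, so Remark \ref{Rhere} gives a constant $(\,(1-\sigma)/4N(N+1)^2)^3$ independent of $n$.

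This is the main obstacle. Ranks of $\phi_{1,n}(e_1)$ over $T(B_n)$ could oscillate differently for different $n$, which would change the integer $N$ in Remark \ref{Rkappa}. What we would try first is to fix $N$ using relations already present in $B_1$: the cut-downs $f_\eta(e_1)\le e_a$, and $\phi_{1,n}(e_1)$ Cuntz-below a finite sum of copies of $e_a$ via explicit partial isometries in $M_N(B_1)$. These are preserved by homomorphisms, so only the continuity of scale of $C_n$ must be rechecked at each stage. If that check fails, a fallback is to choose $e_a$ adaptively in each $A_n$ while keeping $N$ bounded through ${\rm SUF}$ estimates inherited from $B_1$.

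With the uniform $\kappa$ in hand, we verify ETR1 for $B$ directly. Given $\ep$, a finite ${\cal F}\subset B$ containing $e_B$, and $b\in B_+\setminus\{0\}$, first approximate ${\cal F}$ within $\ep/4$ by $\phi_n({\cal F}_n)$ with ${\cal F}_n\subset B_n$ containing $\phi_{1,n}(e_1)$. Next choose $b_n\in (B_n)_+\setminus\{0\}$ with $\phi_n(b_n)\lesssim b$, for instance $b_n$ a lift of $(b-\|b\|/2)_+$ approximated in $\phi_n(B_n)$, using \cite[Proposition 2.2]{RrII}. Apply ETR1 in $B_n$ with tolerance $\ep/4$ to obtain $e$, $D$, $\theta$ and $d_e$. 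Then $\phi_n(e)$, $D$, $\phi_n\circ\theta$ and the same $d_e$ work in $B$. Here $\phi_n$ is injective, hence isometric, so full normalization is preserved, and $\phi_n$ maps $\overline{(1-e)B_n(1-e)}$ into $\overline{(1-\phi_n(e))B(1-\phi_n(e))}$. Finally, the inequality $t(d_e)\ge\kappa$ is unchanged. Hence $ETR(A)\le1$.
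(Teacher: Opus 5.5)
Your reduction (injective connecting maps, $B_n=\Her(\phi_{1,n}(a))$, $B=\overline{\bigcup_n\phi_n(B_n)}$) and your final transfer step are fine, and they coincide with the end of the paper's proof. The step everything hinges on, however, is a constant $\kappa$ independent of $n$, and that step is not established. You flag it as the main obstacle and only describe what you ``would try first'' plus a fallback. The route as written also does not work. Your $C_n=\Her(\phi_{1,n}(e_a))$ is a hereditary subalgebra of $B_n$, whereas Proposition \ref{Phered} and Remark \ref{Rhere} only pass an ETR1 constant from an algebra \emph{down} to its hereditary subalgebras. They do so through a relation $\sum_i x_i^*e_Bx_i=e_A$ with the $x_i$ in the larger algebra. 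In Remark \ref{Rkappa} the continuous-scale algebra \emph{contains} the given one: it is $\Her_{A\otimes{\cal K}}(e_1)$, built from an affine $f\gg\widehat{[e_0]}$ using strict comparison and surjectivity of $\Gamma$. The factorization from Lemma \ref{lem:diagonal-contractive-factorization} takes place in that superalgebra, not in $B_1$. So the bound $\bigl((1-\sigma)/4N(N+1)^2\bigr)^3$ for $B_n$ does not follow from what you wrote. To rescue your route you would have to construct that superalgebra at stage one inside $A_1\otimes{\cal K}$ and push it forward along $\phi_{1,n}$, or pass through $M_N(C_n)$ using Proposition \ref{Pperm}(1) with stage-one partial isometries. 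Your worry about oscillation is a red herring. Since $d_\tau(\phi_{1,n}(x))=d_{\tau\circ\phi_{1,n}}(x)$ and $\tau\mapsto\tau\circ\phi_{1,n}$ is continuous, continuity of rank is inherited automatically, and the ``check'' cannot fail.

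The idea you are missing is to use your freedom in choosing the stage-one element. The paper takes $e_1\in{\rm Ped}(A_1)_+$ with $\omega(e_1)=0$, using Lemma \ref{Lappid} with Corollary \ref{Cstrictcomp}, or a finite-rank projection if $A_1$ is elementary. By the identity above and Remark \ref{Rcs}, every $B_n=\Her(\phi_{1,n}(e_1))$ then has continuous scale. Take $\eta_0$ from Proposition \ref{prop:continuous-scale-improvement} for $\eta=1/2$, and pick a strictly positive contraction $b_1\in B_1$ with $\tau(b_1)>1-\eta_0$ on the compact $T(B_1)$. For $\tau\in T(B_n)$, $\tau\circ\phi_{1,n}$ is a tracial state of $B_1$, because $\phi_{1,n}$ carries an approximate unit of $B_1$ to one of $B_n$. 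Hence $\tau(\phi_{1,n}(b_1))>1-\eta_0$, and Proposition \ref{prop:continuous-scale-improvement} gives every $B_n$ property ETR1 with $\phi_{1,n}(b_1)$ and the same constant $1/2$. No transfer through Remark \ref{Rkappa} or Remark \ref{Rhere} is needed. Note also that Proposition \ref{Pperm}(2), Corollary \ref{Cstrictcomp} and Proposition \ref{prop:continuous-scale-improvement} assume non-elementary algebras. The case where all $A_n$ are elementary (e.g.\ $A$ a UHF algebra) must therefore be handled separately, as the paper does: there $B_n$ is finite dimensional and one takes $e=0$, $\theta={\rm id}_{B_n}$.
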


\begin{proof}
Write $A=\lim_{n\to\infty}(A_n, \phi_n),$ where each $A_n$ is simple and $ETR(A_n)\le 1$ 
and $\phi_n: A_n\to A_{n+1}$ is a \hm.
\Wlog, we may assume that each $\phi_n$ is non-zero, and hence 
$\varphi_{n,\infty}:A_n\to A$ is nonzero for every $n$. Since
$A_n$ is simple, each $\phi_n$  and $\varphi_{n,\infty}$ is injective.

Choose
$
e_1\in {\rm Ped}(A_1)_+^1\setminus\{0\}
$
such that $\omega(e_1)=0.$  If $A_1$ is non-elementary, this follows from 
Lemma \ref{Lappid} and  Corollary \ref{Cstrictcomp}.  If $A_1$ is elementary, choose $e_1$ to be 
a non-zero finite-rank projection.
Put
\[
e_n=\varphi_{1,n}(e_1)\andeqn
e=\varphi_{1,\infty}(e_1),
\andeqn\\
B_n={\rm Her}(e_n),\,\,\, B={\rm Her}(e).
\]
Define $\psi_n={\phi_n}|_{B_n}: B_n\to B_{n+1}.$ Note $\psi_n$ is injective.
Put
\[
B=\varinjlim(B_n,\psi_n).
\]
By Proposition \ref{Pperm},
\[
{\rm ETR}(B_n)\leq 1 \rforal n\geq 1.
\]
Moreover, $\omega(e_n)=0$. Indeed, for every lower semicontinuous
quasitrace $\tau$ on $B_n$,
\[
d_\tau(e_n)=d_{\tau\circ\psi_{1,n}}(e_1).
\]

Consequently, by Remark \ref{Rcs}, each $B_n$ has continuous scale.

Fix $\eta=1/2.$ Let $0<\eta_0<1$  be given by 
Proposition \ref{prop:continuous-scale-improvement} associated with $\eta=1/2.$  We may assume 
that $\eta_0<1/2.$ 

Since $B_1$ has continuous scale, $T(B_1)$ is compact (see
Remark \ref{Rcs}).
Therefore, there is a strictly positive contraction
$b_1\in B_1$ such that
\[
 \tau(b_1)>1-\eta_0,\qquad \tau\in T(B_1).
\]
If $B_1$ is elementary, take $b_1=e_1$. Otherwise, for example,
one may take $b_1=\overline f_{1/m}(e_1)$ (recall Definition \ref{dfg})  for a sufficiently large
$m$. Put
\[
 b_n=\psi_{1,n}(b_1)\quad\text{and}\quad
 b=\psi_{1,\infty}(b_1).
\]
Since $\psi_{1,n}$ is injective, $b_n$ is a strictly positive
contraction of $B_n$. If $\tau\in T(B_n)$, then
$\tau\circ\psi_{1,n}\in T(B_1)$, and hence
\[
 \tau(b_n)>1-\eta_0.
\]
Suppose first that $B_n$ is elementary. Then $B_n$ is unital with
unit $e_n$, and hence is finite-dimensional. Since $B_1$ embeds
into $B_n$, $B_1$ is also elementary; consequently, $b_1=e_1$ and
$b_n=e_n$. In this case, $B_n$ has property ETR1 associated with
$b_n$ and the constant $1/2:$  in the defining approximation, take
\[
 q_n=0,\qquad D=B_n,\qquad
 \theta_n=\operatorname{id}_{B_n},\qquad d_{b_n}=e_n.
\]
If $B_n$ is non-elementary, Proposition \ref{prop:continuous-scale-improvement},  applied with the
prescribed strictly positive element $b_n$, shows that $B_n$ has
property ETR1 associated with $b_n$ and the constant $1/2$. Thus,
in either case, $B_n$ has property ETR1 associated with $b_n$, with $\mathfrak \kappa_{b_n}=\frac12,$
a constant independent of $n$.

We now prove that $B$ has property ETR1 associated with $b$, with
constant $1/2$. Let $\epsilon>0$, let ${\cal F}\subset B$ be finite,
and let
\[
a\in B_+\setminus\{0\}.
\]
We may assume that $b\in{\cal F}$ and $0\not\in {\cal F}.$
Set $\af:=\inf\{\|x\|: x\in {\cal F}\}>0.$  Choose
\[
0<\delta<\min\{\epsilon/4,\|a\|/4, \af/4\}.
\]
By the density of
$
\bigcup_{n=1}^{\infty}\psi_{n,\infty}(B_n)
$
in $B$, after increasing $n$ we can find $x_n\in B_n$, for
$x\in{\cal F}$, such that
\[
\|x-\psi_{n,\infty}(x_n)\|<\delta.
\]
For $x=b$, take $x_n=b_n$. We may also choose $a_n\in(B_n)_+$ such
that
\[
\|a-\psi_{n,\infty}(a_n)\|<\delta.
\]
Put
$
c_n=(a_n-\delta)_+.
$
Then 
 $c_n\neq0$.   By \cite[Proposition 2.2]{RrII},
\[
\psi_{n,\infty}(c_n)\lesssim a.
\]

Apply property ETR1 of $B_n$, associated with $b_n$  and constant
$1/2,$ to the finite set
\[
{\cal F}_n=\{x_n:x\in{\cal F}\}
\]
(as ${\cal F}$), 
$\delta$ (as $\ep$)  and the element $c_n$ (as $a$). Using Remark~3.3, we
obtain a positive contraction $q_n\in B_n$, a finite-dimensional
$C^*$-algebra $D$, and a fully normalized order-zero c.p.c.\ map
\[
\theta_n:D\longrightarrow
\overline{(1-q_n)B_n(1-q_n)}
\]
such that
\[
\|[q_n,x_n]\|<\delta\andeqn 
{\rm dist}\big(
(1-q_n)^{1/2}x_n(1-q_n)^{1/2},\theta_n(D)
\big)<\delta\rforal  x\in{\cal F}.
\]
Furthermore, there is $d_b\in D_+^1$ such that
\[
\left\|
(1-q_n)^{1/2}b_n(1-q_n)^{1/2}-\theta_n(d_b)
\right\|<\delta
\]
and
\[
t(d_b)>\frac12\rforal  t\in T(D),
\]
while
\[
q_n\lesssim c_n.
\]

Set
\[
q=\psi_{n,\infty}(q_n),\qquad
\theta=\psi_{n,\infty}\circ\theta_n.
\]
Hence
\[
\theta:D\longrightarrow
\overline{(1-q)B(1-q)}.
\]
Since $\psi_{n,\infty}$ is injective, $\theta$ is again a fully
normalized order-zero c.p.c.\ map.

For $x\in{\cal F}$, we have
\[
\begin{split}
\|[q,x]\|
&\leq
 \|[q,\psi_{n,\infty}(x_n)]\|
 +2\|x-\psi_{n,\infty}(x_n)\|  
<3\delta<\epsilon.
\end{split}
\]
Also,
\[
\begin{split}
&{\rm dist}\big(
(1-q)^{1/2}x(1-q)^{1/2},\theta(D)
\big) \\
&\quad\leq
\|x-\psi_{n,\infty}(x_n)\|
+
{\rm dist}\big(
(1-q)^{1/2}\psi_{n,\infty}(x_n)(1-q)^{1/2},
\theta(D)
\big) 
\,<2\delta<\epsilon.
\end{split}
\]
Since $b=\psi_{n,\infty}(b_n)$,
\[
\left\|
(1-q)^{1/2}b(1-q)^{1/2}-\theta(d_b)
\right\|<\delta<\epsilon,
\]
and
\[
t(d_b)>\frac12,\qquad t\in T(D).
\]
Finally,
\[
q\lesssim\psi_{n,\infty}(c_n)\lesssim a.
\]
Thus $B$ has property ETR1 associated with $b$, with constant
$\mathfrak \kappa_b=1/2$.  Since $B$ is a hereditary \SCA\, of $A,$  by Proposition \ref{Pperm} (3),
$ETR(A)\le 1.$ 
\end{proof}

The following is a simplification of property ETR1 when $A$ is unital.

\begin{prop}\label{Asefimprov}
Let $A$ be a unital separable simple \CA. 
Then 
%
$A$ has property ETR1 if and only if 
 the following holds:

$\bullet$ For any  $\ep>0$ and $1>\eta>0,$  any finite set
$\mathcal F\subset A$, any nonzero $a_0\in A_+$,

$\bullet$ there exist
$0\le e\le 1,$  

$\bullet$ a finite dimensional \CA\, 
$D=\bigoplus_{j=1}^m D_j,\quad D_j\cong M_{r_j},$

$\bullet$ a fully normalized  order-zero  \cpc\,
$
\theta:D\to \overline{(1-e)A(1-e)},
$

$\bullet$ and 
$
d_e
\in D_+^{\bf 1}
$
such that

\beq\nonumber
\hspace{-0.8in}&&(1) \hspace{0.8in}
\|[e,x]\|<\ep 
\tforal x\in {\cal F},\\
%
\hspace{-1.4in} &&(2) \hspace{0.8in}
{\rm dist}((1-e)^{1/2}x(1-e)^{1/2}, \theta(D))<\ep\tforal x\in {\cal F},\\
&&\hspace{0.8in}\|(1-e)-\theta(d_e)\|<\ep, \,\,\,
t(d_e)\ge 1-\eta\tforal t\in T(D),\tand\\
\hspace{-0.8in}&& (3)\hspace{0.8in}
e\precsim a_0.
\eneq	
\end{prop}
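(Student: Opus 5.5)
The plan is to take the distinguished strictly positive element to be $e_A=1_A$ in both directions. The point is that
\[
(1-e)^{1/2}1_A(1-e)^{1/2}=1-e,
\]
so the second half of (2) in the proposition is literally the nondegeneracy condition \eqref{D1-(2)} of Definition \ref{D1} with $e_A=1_A$. The only real content is that the constant can be pushed to $1-\eta$ for every prescribed $\eta$. Note that $A$ is unital and simple, hence algebraically simple, $\sigma$-unital, and $A={\rm Ped}(A)$.

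\emph{``If'' direction.} Assume the displayed condition holds, and fix, say, $\eta=1/2$. Given $\ep$, a finite set ${\cal F}$ containing $1_A$, and $a_0\ne 0$, the condition supplies $e$, $D$, $\theta$ and $d_e$.
\begin{itemize}
\item Condition (1) is (1) of Definition \ref{D1}, and (3) is (3) there.
\item By (2), $\|\theta(d_e)-(1-e)^{1/2}1_A(1-e)^{1/2}\|<\ep$ and $t(d_e)\ge 1/2$ for all $t\in T(D)$.
\item The approximation $(1-e)^{1/2}x(1-e)^{1/2}\in_\ep\theta(D)$ is form ($2_a$), which Remark \ref{R1}(ii) allows in place of $(1-e)x\in_\ep\theta(D)$. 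One shrinks $\ep$ slightly and uses (1) to pass from $[e,x]$ to $[(1-e)^{1/2},x]$.
\end{itemize}
Hence $A$ has property ETR1 associated with $e_A=1_A$ and $\kappa_{1_A}=1/2$.

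\emph{``Only if'' direction.} If $A$ is elementary, then $A=M_r$. Take $e=0$, $D=A$, $\theta=\id$ and $d_e=1_D$; conditions (1)--(3) are immediate.

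Assume now that $A$ is non-elementary with property ETR1. Since $1_A\in{\rm Ped}(A)$ and $\Her(1_A)=A$, Definition \ref{DD1} gives $ETR(A)\le 1$. Because $\widehat{[1_A]}\equiv 1$ is continuous, $A$ has continuous scale by Remark \ref{Rcs}; equivalently, $T(A)$ is compact. Thus $A$ satisfies the hypotheses of Proposition \ref{prop:continuous-scale-improvement}.

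Given $\eta\in(0,1)$, let $\eta_0$ be the number that proposition provides. The strictly positive contraction $e_A=1_A$ satisfies $\tau(1_A)=1>1-\eta_0$ for every $\tau\in T(A)$. The proposition therefore yields ETR1 associated with $1_A$, with $d_{1_A}$ chosen so that $t(d_{1_A})>1-\eta$ on $T(D)$.

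Now apply this ETR1 property to ${\cal F}\cup\{1_A\}$. Use Remark \ref{R1}(ii) to obtain the compressed form ${\rm dist}((1-e)^{1/2}x(1-e)^{1/2},\theta(D))<\ep$, and Remark \ref{R1}(iv) to take $d_e\in D_+^{\bf 1}$. Together with $(1-e)^{1/2}1_A(1-e)^{1/2}=1-e$, this gives (1), (2) and (3) with $d_e=d_{1_A}$.

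\emph{Main difficulty.} The only step with substance is the improvement of the constant to $1-\eta$. That is exactly what Proposition \ref{prop:continuous-scale-improvement} provides, so the work here reduces to checking its hypotheses: separability, continuous scale, $ETR(A)\le 1$, and that $1_A$ is an admissible strictly positive contraction. The elementary case, which that proposition does not cover, is handled by hand as above. Alternatively, in the amenable $\mathcal Z$-stable case one could cite the unital part of Theorem \ref{TmainZ}, but that is not needed here.
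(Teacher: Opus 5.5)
Your proposal is correct and follows the paper's proof. The ``if'' direction is read off from Definition \ref{D1} with $e_A=1_A$, using $(1-e)^{1/2}1_A(1-e)^{1/2}=1-e$. The ``only if'' direction applies Proposition \ref{prop:continuous-scale-improvement} to the prescribed strictly positive contraction $1_A$, which satisfies $\tau(1_A)=1>1-\eta_0$. Your explicit handling of the elementary case just fills in what the paper covers with ``without loss of generality''.
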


\begin{proof}
Since $A$ is unital, by taking $e_A=1_A,$ the ``if" part follows from the definition immediately. 

It remains to show the ``only if" part.

\Wlog, we may assume that $A$ is non-elementary.
Fix $\ep>0,$ $1>\eta>0,$ a finite subset ${\cal F}\subset A$ and 
$a_0\in A_+\setminus \{0\}.$ 

Let ${\cal F}_1={\cal F}\cup \{1_A\}.$ 

Let \(0<\eta_0<1\) be supplied by Proposition~11.3 for
\(\eta\).  Since \(A\) is unital, the strictly positive
contraction
$
   e_A=1_A
$
satisfies
$
   \tau(e_A)=1>1-\eta_0\rforal
   \tau\in T(A).
$
Apply Proposition \ref{prop:continuous-scale-improvement} with this prescribed element \(e_A\), the
finite set \(\mathcal F_1\), the tolerance \(\varepsilon_1\), and the
comparison element \(a_0\).

 The application of  Proposition \ref{prop:continuous-scale-improvement} described above gives
 the following objects and estimates.
 
 $\bullet$ there exist
$0\le e\le 1,$  

$\bullet$ a finite dimensional \CA\, 
$D=\bigoplus_{j=1}^m D_j,\quad D_j\cong M_{r_j},$

$\bullet$ a fully normalized  order-zero  \cpc\,
$
\theta:D\to \overline{(1-e)A(1-e)},
$

$\bullet$ and 
$
d_e
\in D_+^{\bf 1}
$
such that

\beq\nonumber
\hspace{-0.8in}&&(1) \hspace{0.8in}
\|[e,x]\|<\ep
\tforal x\in {\cal F}_1,\\\nonumber
%
\hspace{-1.4in} &&(2') \hspace{0.8in}
{\rm dist}((1-e)^{1/2}x(1-e)^{1/2}, \theta(D))<\ep\tforal x\in {\cal F}_1,\\\nonumber
&&\hspace{0.8in}\|(1-e)^{1/2}e_A(1-e)^{1/2}-\theta(d_e)\|<\ep, \,\,\,
t(d_e)\ge 1-\eta\tforal t\in T(D),\tand\\
\hspace{-0.8in}&& (3)\hspace{0.8in}
e\precsim a_0.
\eneq	
Since $e_A=1_A$, the second estimate in (2') gives
\[
 \|(1-e)-\theta(d_e)\|<\varepsilon.
\]
Moreover,
\[
 t(d_e)>1-\eta
 \qquad\text{for all }t\in T(D).
\]
Thus $d_e$ itself satisfies (e12.7), and the proposition follows.
%
%
%
\end{proof}

Note that the following statement shows that one can always make $\kappa$ close to 
one if we make a slight modification of Definition \ref{D1}. Moreover, 
the following result also shows that a separable non-elementary simple \CA\, $A$ with 
$ETR(A)\le 1$ can be tracially approximated by one-dimensional cones without assuming 
it is algebraically simple. 

\begin{thm}\label{TTRE1-1}
Let $A$ be a separable non-elementary simple \CA\, with $ETR(A)\le 1.$
Then, for any $\ep>0,$  any $0<\eta<1,$ any finite subset ${\cal F}\subset A,$ and any
$a\in A_+\setminus \{0\},$ there exists a strictly positive element
$e_A\in A$ with $\|e_A\|=1,$  an element $e\in A_+^{\bf 1},$  a finite dimensional \CA\, $D,$ 
and a fully normalized order zero \cpc\, $\theta:   D\to \overline{(1-e)A(1-e)}$ such that

(1) $\|[x,\, e]\|<\ep,$ $\|[x,\, (1-e)^{1/2}]\|<\ep\tforal x\in {\cal F};$

(2)  \hspace{-0.05in} 
$\bullet$\hspace{0.1in}  ${\rm dist}((1-e)^{1/2}x(1-e)^{1/2}, \theta(D))<\ep\tforal  x\in {\cal F},$ and  there is 
$d_{e_A}\in D_+^{\bf 1}$ such that 
\beq
&&\hspace{-2.3in}\bullet\hspace{0.3in}\|(1-e)^{1/2} e_A(1-e)^{1/2}-\theta(d_{e_A})\|<\ep,\\
&&\hspace{-2.3in}\bullet \hspace{0.3in} \tau(d_{e_A})>1-\eta\rforal \tau\in T(D);\,\,\,\,\,\,\,\, \,\,\,\,\tand
 \eneq
 
 (3) $e\lesssim a.$
\end{thm}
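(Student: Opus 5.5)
The plan is to reduce to a hereditary subalgebra with continuous scale and then apply Proposition \ref{prop:continuous-scale-improvement}, transporting the approximation back to $A$. The point is that $e_A$ is allowed to depend on $\ep,\eta,{\cal F},a$, so we can build it to "almost" live in a continuous-scale corner.

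\textbf{Step 1: the continuous-scale corner.} By Corollary \ref{Cstrictcomp}, $A$ has strict comparison, stable rank one and $\Gamma$ surjective. By Lemma \ref{Lappid} there is an approximate identity $\{e_n\}\subset {\rm Ped}(A)$ with $\omega(e_n)=0$. Given ${\cal F}$ and $\ep$, choose $n$ so that $e_n x\approx x\approx xe_n$ for all $x\in{\cal F}$, with error much smaller than $\ep$. Put $B=\Her(e_n)$. Then $B$ has continuous scale by Remark \ref{Rcs} and is algebraically simple. By Proposition \ref{Pperm}(2), $B$ has ETR1.

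\textbf{Step 2: approximate in $B$.} Choose a strictly positive contraction $e_B\in B$ with $\tau(e_B)>1-\eta_0$ for all $\tau\in T(B)$, where $\eta_0$ is given by Proposition \ref{prop:continuous-scale-improvement} for $\eta$. For instance, take $e_B=\bar f_{1/m}(e_n)$ with $m$ large, using Dini and compactness of $T(B)$. Compress: ${\cal F}_B=\{e_n^{1/2}xe_n^{1/2}:x\in{\cal F}\}\cup\{e_B\}$. Pick $a'\in B_+\setminus\{0\}$ with $a'\lesssim a$, which is possible by simplicity. Proposition \ref{prop:continuous-scale-improvement} then yields $e'\in B_+^{\bf 1}$, $D$, a fully normalized order-zero $\theta:D\to\overline{(1-e')B(1-e')}$ and $d\in D_+^{\bf 1}$ with $t(d)>1-\eta$. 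These satisfy the commutator and distance estimates, $\|(1-e')^{1/2}e_B(1-e')^{1/2}-\theta(d)\|$ small, and $e'\lesssim a'$.

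\textbf{Step 3: transport to $A$.} This is the main obstacle, since $(1-e')$ computed in $\tilde A$ is not a good cut-down outside $B$.

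The element $e_A$ must be strictly positive in $A$ with norm one, yet have compression close to $\theta(d)$. I will take $e_A=e_B+\delta\, k$, where $k$ is a strictly positive contraction of $A$, $\delta$ is tiny, and we renormalize. Since $\|e_B\|=1$, the norm changes by at most $\delta$, and the perturbation costs only $O(\delta)$ in the estimate for $d_{e_A}=d$.

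The harder point is the element $e$. Elements of $A$ far from $B$ have compressions that need not lie near $\theta(D)$, but ${\cal F}$ is essentially inside $B$ by Step 1. So I will take $e=e'$ itself, viewed in $A$. Then for $x\in{\cal F}$ we have $(1-e')^{1/2}x(1-e')^{1/2}\approx(1-e')^{1/2}e_n^{1/2}xe_n^{1/2}(1-e')^{1/2}$. This works because $x\approx e_n^{1/2}xe_n^{1/2}$, and it gives the distance estimate in (2).

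Commutators $[e',x]$ are handled the same way, using $[e',e_n^{1/2}xe_n^{1/2}]$ small. The square root version follows by the usual functional-calculus continuity, as in Remark \ref{R1}(ii). The comparison $e'\lesssim a'\lesssim a$ holds in $A$, and $\overline{(1-e')B(1-e')}\subset\overline{(1-e')A(1-e')}$.

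The remaining delicate estimate is $(1-e')^{1/2}(\delta k)(1-e')^{1/2}$, which is $O(\delta)$ in norm and hence harmless. The full error budget should therefore close once $\delta$ and the Step 1 tolerance are chosen before invoking Proposition \ref{prop:continuous-scale-improvement}.
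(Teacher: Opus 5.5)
Your proposal is correct and follows essentially the same route as the paper. You pass to $B=\Her(h_n)$ for an approximate-identity element $h_n\in{\rm Ped}(A)$ with $\omega(h_n)=0$ (Lemma \ref{Lappid}), apply Proposition \ref{prop:continuous-scale-improvement} to $B$ with an $e_B$ of trace close to $1$, keep $e$ and $\theta$ unchanged in $A$, and set $e_A=(e_B+\delta e_A')/\|e_B+\delta e_A'\|$. The only cosmetic differences are that the paper compresses by $h_nxh_n$ and uses $h_nah_n$ as the comparison element, whereas you use $e_n^{1/2}xe_n^{1/2}$ and an abstract nonzero $a'\in B_+$ with $a'\lesssim a$.
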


\begin{proof}
By Corollary~\ref{Cstrictcomp}, $A$ has strict comparison and
the canonical map $\Gamma$ is surjective.
By Lemma \ref{Lappid}, we obtain an 
approximate identity $\{h_n\}$ consisting of positive contractions
such that
\[
\omega(h_n)=0\andeqn h_n \in {\rm Ped}(A)\rforal 
 n\in\mathbb N.
\]

Fix $\epsilon>0$, $1>\eta>0$, a finite subset
${\cal F}\subset A$, and $a\in A_+\setminus\{0\}$. Choose
$
0<\delta<\frac{\epsilon}{4}.
$
Since $\{h_n\}$ is an approximate identity, for some sufficiently
large $n$,
\beq\label{12-12.1}
\|x-h_nxh_n\|<\delta\rforal x\in{\cal F}\andeqn 
a_n:=h_nah_n\ne 0.
\eneq
Put
\[
B={\rm Her}(h_n).
\]
Then $B$ is a separable simple $\sigma$-unital hereditary
$C^*$-subalgebra of $A$. Since $\omega(h_n)=0$, equivalently  $d_\tau(h_n)$
is continuous on $\widetilde{QT}(A)=\widetilde{T}(A),$ by Remark \ref{Rcs},
 $B$ has continuous scale.

By (2) of Proposition~\ref{Pperm},  $B$ has property ETR1.
For $x\in{\cal F}$, let
\[
x_n=h_nxh_n\in B,
\andeqn
{\cal G}=\{x_n:x\in{\cal F}\}.
\]
For the fixed $0<\eta<1,$ let $\eta_0$ be given by Proposition ~\ref{prop:continuous-scale-improvement}. Since $B$ has continuous scale, $T(B)$ is compact.  Hence one may choose strictly 
positive element $e_B$  of $B$  with $\|e_B\|=1$ such that
$\tau(e_B)>1-\eta_0$ for all $\tau\in T(B).$  
Apply Proposition~\ref{prop:continuous-scale-improvement} to $B$ (using also Remark~\ref{R1}),  there exists 
 a positive contraction $e\in B$, a
finite-dimensional $C^*$-algebra $D$, a fully normalized order-zero
c.p.c.\ map
\[
\theta:D\longrightarrow
\overline{(1-e)B(1-e)},
\]
and a positive contraction $d_B\in D$ such that, with
$
s=(1-e)^{1/2},
$
we have
\beq\label{12-12.5}
&&\|[e,y]\|<\delta \andeqn
\|[s,y]\|<\delta,
\rforal y\in{\cal G}\cup \{e_B\}
\\\label{12-12.6}
&&{\rm dist}(sys,\theta(D))<\delta,
\qquad y\in{\cal G}\cup \{e_B\},
\\\label{12-12.7}
&&\|se_Bs-\theta(d_B)\|<\delta,\\
\label{12-12.8}
&&t(d_B)>1-\eta,
\qquad t\in T(D),\,\,\andeqn\\\label{12-12.9}
&&e\precsim_B a_n.
\eneq
Choose a strictly positive element $e_A'\in A$ with $\|e_A'\|\le 1.$
Put 
$
e_A={e_B+\dt e_A'\over{\|e_B+\dt e_A'\|}}.
$
Then $e_A$ is a strictly positive element with $\|e_A\|=1.$  We compute that
\beq\label{12-12.13-}
\|e_A-e_B\|<2\dt.
\eneq

We regard $\theta$ as a map into
$\overline{(1-e)A(1-e)}$ and put
$
d_{e_A}=d_B.
$
For $x\in{\cal F}$, \eqref{12-12.1} and \eqref{12-12.5}
give
\beq\label{12-12.13}
\|[e,x]\|
\leq
\|[e,h_nxh_n]\|+2\|x-h_nx h_n\|
<3\delta<\epsilon
\eneq
and, similarly,
\beq\label{12-12.14}
\|[s,x]\|<3\delta<\epsilon.
\eneq
Therefore (1) follows.

Also, by \eqref{12-12.1} and \eqref{12-12.6},
\beq\label{12-12.15}
{\rm dist}(sxs,\theta(D))
\leq
\|s(x-x_n)s\|
+
{\rm dist}(sh_nxh_ns,\theta(D))
<2\delta<\epsilon.
\eneq
By \eqref{12-12.13-} and  \eqref{12-12.7},
\beq\label{12-12.17}
\|se_As-\theta(d_{e_A})\|
&\leq
\|s(e_A-e_B)s\|+\|se_Bs-\theta(d_B)\| 
<2\delta+\delta
<\epsilon.
\eneq
Equation \eqref{12-12.8} gives
\beq\label{12-12.19-}
t(d_{e_A})>1-\eta\rforal 
 t\in T(D).
\eneq
Thus (2) follows from \eqref{12-12.15}, \eqref{12-12.17}   and \eqref{12-12.19-}.

Finally, put $z=a^{1/2}h_n$. Then
\[
z^*z=h_nah_n=a_n
\andeqn
zz^*=a^{1/2}h_n^2a^{1/2}\in{\rm Her}(a).
\]
Consequently,
\[
a_n\precsim_A a.
\]
Together with \eqref{12-12.9}, this gives
\beq\label{12-12.19}
e\precsim_B a_n\precsim_A a.
\eneq
Thus  (3) also follows.
\end{proof}

\begin{prop}\label{L112}
Let $A$ be a  non-elementary separable algebraically simple $C^*$-algebra with
${\rm ETR}(A)\leq1.$  Fix a strictly positive element $e_A\in A$ with $\|e_A\|=1$ and  a constant $\kappa_{e_A}>0$  such that $A$ has property ETR1 associated with $e_A$ and $\kappa_{e_A}.$  Let
\[
\overline{j_A}:A\longrightarrow Q_A
\]
denote the constant-sequence embedding.

For every $\epsilon>0$, every finite subset ${\cal F}\subset A$
containing $e_A$, and every $a\in A_+\setminus\{0\}$, there exist a
positive contraction $e\in Q_A$, a finite-dimensional $C^*$-algebra
\[
D=\bigoplus_{j=1}^m M_{r_j},
\]
and a homomorphism
\[
H:C_0((0,1])\otimes D
\longrightarrow
\overline{(1-e)Q_A(1-e)}
\]
with finite dimensional range 
satisfying the following:
\beq\label{12f-11.1}
&&\|[e,\overline{j_A}(x)]\|<\epsilon
\quad\text{for all }x\in{\cal F};\\\label{12f-11.2}
%
&&\hspace{-0.3in}{\rm dist}\big(
(1-e)^{1/2}\overline{j_A}(x)(1-e)^{1/2},
H(C_0((0,1])\otimes D)
\big)<\epsilon
\quad\text{for all }x\in{\cal F};
\tand\\\label{12f-11.3}
&&e\precsim \overline{j_A}(a).
\eneq
Moreover, there is a positive contraction $d_e\in D$ such that
\beq\label{12f-11.4}
\left\|
H(\iota\otimes d_e)
-(1-e)^{1/2}\overline{j_A}(e_A)(1-e)^{1/2}
\right\|<\epsilon
\tand\\\label{12f-11.5}
t(d_e)\geq\mathfrak f_{e_A}
\quad\text{for all }t\in T(D),
\eneq
where $\iota(t)=t$ for $t\in(0,1]$.
\end{prop}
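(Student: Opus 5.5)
The plan is to apply property ETR1 in $A$ first, push the data into $Q_A$ by constant sequences, and then use real rank zero of $Q_A$ to replace the cone homomorphism by one with finite-dimensional range. This is the pattern already used in Step 1 of the proof of Theorem \ref{Tdiv}.

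\textbf{Step 1: apply ETR1 in $A$.} Fix $\epsilon>0$, ${\cal F}\ni e_A$ and $a\neq 0$. We may assume ${\cal F}\subset A^{\bf 1}$. Apply property ETR1 (with Remark \ref{R1} (ii)--(iv)) with tolerance $\epsilon/4$ and comparison element $a$. This gives $e_0\in A_+^{\bf 1}$ with $e_0\lesssim a$, a finite-dimensional $D$, and a fully normalized order-zero map $\theta:D\to\overline{(1-e_0)A(1-e_0)}$ with the following properties:
\begin{itemize}
\item $\|[e_0,x]\|<\epsilon/4$ for all $x\in{\cal F}$;
\item ${\rm dist}((1-e_0)^{1/2}x(1-e_0)^{1/2},\theta(D))<\epsilon/4$ for all $x\in{\cal F}$;
\item there is $d_e\in D_+^{\bf 1}$ with $\|\theta(d_e)-(1-e_0)^{1/2}e_A(1-e_0)^{1/2}\|<\epsilon/4$ and $t(d_e)\ge\kappa_{e_A}$ for all $t\in T(D)$.
\end{itemize}

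\textbf{Step 2: pass to $Q_A$.} Put $e=\overline{j_A}(e_0)$, a positive contraction in $Q_A$, and let $\Theta_0=\overline{j_A}\circ\tilde\theta:C_0((0,1])\otimes D\to Q_A$. Since $\overline{j_A}$ is a homomorphism, it commutes with functional calculus, so:
\begin{itemize}
\item $(1-e)^{1/2}=\overline{j_A}((1-e_0)^{1/2})$ in the unitization;
\item $\Theta_0$ lands in $\overline{(1-e)Q_A(1-e)}$;
\item all norm estimates from Step 1 survive, because $\overline{j_A}$ is contractive;
\item $e\lesssim\overline{j_A}(a)$, because Cuntz subequivalence is preserved by homomorphisms.
\end{itemize}
This already gives \eqref{12f-11.1} and \eqref{12f-11.3}.

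\textbf{Step 3: make the range finite dimensional.} By Corollary \ref{Cstrictcomp}, $A$ has strict comparison and tracial approximate oscillation zero. Then \cite[Theorem 6.4]{FLosc}, as used in the proof of Theorem \ref{Tdiv}, shows that $Q_A$ has real rank zero. Hereditary subalgebras of real rank zero algebras have real rank zero, so $\Her(\Theta_0(\iota\otimes 1_D))\subset\overline{(1-e)Q_A(1-e)}$ has real rank zero. Apply Lemma \ref{Lfinitesp} to $\Theta_0$ with tolerance $\epsilon/4$. It yields a homomorphism $H:C_0((0,1])\otimes D\to\Her(\Theta_0(\iota\otimes1_D))$ with finite-dimensional range and
$$\sup_{x\in D^{\bf 1}}\|H(\iota\otimes x)-\Theta_0(\iota\otimes x)\|<\epsilon/4.$$

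\textbf{Step 4: verify the estimates.} For $x\in{\cal F}$, pick $d_x\in D^{\bf 1}$ from Remark \ref{R1}(iii). Then
$$\|(1-e)^{1/2}\overline{j_A}(x)(1-e)^{1/2}-H(\iota\otimes d_x)\|<\epsilon/2,$$
which gives \eqref{12f-11.2}. The same $d_e$ satisfies \eqref{12f-11.4} with error below $\epsilon/2$. Since $D$ and $d_e$ are unchanged, \eqref{12f-11.5} holds.

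\textbf{Main obstacle.} The only nontrivial point is real rank zero of $Q_A$ in the algebraically simple, possibly non-unital setting. The paper cites this in the proof of Theorem \ref{Tdiv}, so it is available here. A second check is that Lemma \ref{Lfinitesp} needs real rank zero only of the hereditary subalgebra containing $\Theta_0(C)$, which holds as noted in Step 3. The remaining verifications are routine triangle inequalities.
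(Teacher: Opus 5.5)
Your proposal is correct and follows essentially the same route as the paper. You apply ETR1 in $A$, push the data into $Q_A$ via $\overline{j_A}$ with $e=\overline{j_A}(e_0)$, use real rank zero of $Q_A$ (from Corollary \ref{Cstrictcomp} and \cite[Theorem 6.4]{FLosc}) together with Lemma \ref{Lfinitesp} to get a homomorphism with finite-dimensional range, and finish with triangle inequalities. The only cosmetic difference is that the paper uses the finite-set estimate \eqref{Lfinitesp-1} on $\{\iota\otimes d_x: x\in{\cal F}\}$ instead of the uniform estimate \eqref{Lfinitesp-2}. That avoids your normalization ${\cal F}\subset A^{\bf 1}$, which otherwise requires scaling the tolerance by $\max_{x\in{\cal F}}\|x\|$.
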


\begin{proof}
Fix $\epsilon>0$, a finite subset ${\cal F}\subset A$ containing
$e_A$, and $a\in A_+\setminus\{0\}$. Choose
$
0<\delta<\frac{\epsilon}{3}.
$
Apply property ETR1, associated with $e_A$ and the constant
$\mathfrak f_{e_A}$, to ${\cal F}$, $\delta$, and $a$. Using also
Remark~\ref{R1}, we obtain a positive contraction $e_0\in A$, a
finite-dimensional $C^*$-algebra
\[
D=\bigoplus_{j=1}^{m}M_{r_j},
\]
and a fully normalized order-zero c.p.c.\ map
\[
\theta:D\longrightarrow
\overline{(1-e_0)A(1-e_0)}
\]
such that
\beq\nonumber
&&\|[e_0,x]\|<\delta,\qquad x\in{\cal F},\\\nonumber
&&{\rm dist}\big(
(1-e_0)^{1/2}x(1-e_0)^{1/2},\theta(D)
\big)<\delta\rforal x\in{\cal F},
\andeqn\\
&&e_0\lesssim a.
\eneq
Moreover, there is a positive contraction $d_e\in D$ such that
\beq\nonumber
&&\left\|
\theta(d_e)-(1-e_0)^{1/2}e_A(1-e_0)^{1/2}
\right\|<\delta
\andeqn\\
&&t(d_e)\geq\mathfrak f_{e_A}\rforal  t\in T(D).
\eneq

For each $x\in{\cal F}$, choose $d_x\in D$ such that
\[
\left\|
\theta(d_x)-(1-e_0)^{1/2}x(1-e_0)^{1/2}
\right\|<\delta.
\]
For $x=e_A$, we use the distinguished element $d_e$.

Let
\[
\widetilde\theta:C_0((0,1])\otimes D\longrightarrow A
\]
be the homomorphism associated with $\theta$, so that
\[
\widetilde\theta(\iota\otimes d)=\theta(d),
\qquad d\in D.
\]
Put
$
e=\overline{j_A}(e_0)
$
and define
\[
\Phi=\overline{j_A}\circ\widetilde\theta:
C_0((0,1])\otimes D\longrightarrow Q_A.
\]
Then
\[
\Phi\big(C_0((0,1])\otimes D\big)
 \subset\overline{(1-e)Q_A(1-e)}.
\]

By Corollary~\ref{Cstrictcomp} and  \eqref {A2.17},
the trace-kernel ideal defining
$Q_A$ agrees with $N_{\rm cu}(A).$ It follows from
\cite[Theorem~6.4]{FLosc} that $Q_A$ has real rank zero. Applying 
Lemma \ref{Lfinitesp},
we obtain a homomorphism
\[
H:C_0((0,1])\otimes D
 \longrightarrow
 {\rm Her}\big(\Phi(\iota\otimes1_D)\big)
 \subset\overline{(1-e)Q_A(1-e)}
\]
with finite-dimensional range such that
\[
\|H(\iota\otimes d_x)-\Phi(\iota\otimes d_x)\|
 <\delta,\qquad x\in{\cal F}.
\]

Since $e=\overline{j_A}(e_0)$, we have
\[
\|[e,\overline{j_A}(x)]\|<\delta<\epsilon\rforal 
x\in{\cal F},
\andeqn
e\lesssim \overline{j_A}(a).
\]
For $x\in{\cal F}$,
\[
\begin{split}
&\left\|
H(\iota\otimes d_x)
 -(1-e)^{1/2}\overline{j_A}(x)(1-e)^{1/2}
\right\|                                      \\
&\quad\leq
 \|H(\iota\otimes d_x)-\Phi(\iota\otimes d_x)\|\\
&\qquad+
 \left\|
 \overline{j_A}(\theta(d_x))
 -(1-e)^{1/2}\overline{j_A}(x)(1-e)^{1/2}
 \right\|                                      
<2\delta<\epsilon.
\end{split}
\]
This proves \eqref{12f-11.1}-- \eqref{12f-11.3}.
Taking $x=e_A$ gives
\beq\nonumber
&&\left\|
H(\iota\otimes d_e)
 -(1-e)^{1/2}\overline{j_A}(e_A)(1-e)^{1/2}
\right\|<2\delta<\epsilon\\\nonumber
&&\andeqn t(d_e)\geq\mathfrak f_{e_A}\rforal  t\in T(D).
\eneq
\end{proof}

The following gives a weaker tracial approximation for a separable simple \CA\, with $ETR(A)\le 1.$

\begin{cor}\label{Papproxl2}
Let $A$ be a separable non-elementary simple \CA\, with $ETR(A)\le 1$ and let $c\in A_+^{\bf 1}\cap {\rm Ped}(A)\setminus \{0\}.$
Let $S=\{ \tau\in \widetilde{T}(A): \tau(c)=1\}.$ 
Then
there exist finite-dimensional $C^*$-algebras $D_n$ and order-zero
c.p.c.\ maps $\theta_n\colon D_n\to A$ such that, for every
$x\in A$, there are elements $d_{x,n}\in D_n$, $n\in\mathbb N$,
satisfying
\[
 \|d_{x,n}\|\leq\|x\|
\]
and, for every finite subset $\mathcal F\subset A$,
\[
 \lim_{n\to\infty}
 \sup_{x\in\mathcal F}
 \|x-\theta_n(d_{x,n})\|_{2,S}=0.
\] 

\end{cor}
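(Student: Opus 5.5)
The plan is to reduce to a hereditary subalgebra with continuous scale and compact tracial state space, and then apply the ETR1 approximation there. The small bad part $e$ will then be absorbed in the uniform $2$-norm over $S$, because $e$ can be made Cuntz small.

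\textbf{Reduction.} By Lemma \ref{Lappid} and Corollary \ref{Cstrictcomp}, there is an approximate identity $\{h_k\}\subset{\rm Ped}(A)_+^{\bf 1}$ with $\omega(h_k)=0$. Fix an increasing sequence of finite sets $\mathcal F_n$ with dense union in $A$.

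\textbf{Step 1 (choice of $e_n,\theta_n$ at stage $n$).} Choose $k$ with $\|x-h_kxh_k\|<1/n$ for $x\in\mathcal F_n$, and put $B={\rm Her}(h_k)$. This $B$ has continuous scale and property ETR1 by Proposition \ref{Pperm}(2). Since $B$ is algebraically simple with strict comparison, a truly Cuntz-null sequence exists in $B$ (Remark \ref{RCuntznull}). Pick $a_n\in B_+\setminus\{0\}$ with
\[
\sup_{\tau\in S}d_\tau(a_n)<1/n^2 .
\]
This is possible because $S$ is compact and $\tau\mapsto d_\tau(\cdot)$ can be pushed uniformly small on $S$, by the same construction as in Remark \ref{RCuntznull}(2). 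Apply ETR1 in $B$ with $\varepsilon=1/n$, the set $\{h_kxh_k:x\in\mathcal F_n\}$, and $a_n$. We get $e_n\lesssim a_n$, a finite-dimensional algebra $D_n$, and $\theta_n$ with
\[
\|(1-e_n)^{1/2}h_kxh_k(1-e_n)^{1/2}-\theta_n(d)\|<1/n
\]
for some $d\in D_n$.

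\textbf{Step 2 (norm control on $d_{x,n}$).} Using Remark \ref{R1}(iii) and the isometry of fully normalized maps, rescale so that $\|d\|\le\|x\|$, at the cost of an extra $O(1/n)$. For $x\notin\mathcal F_n$, just set $d_{x,n}=0$. Only eventual behavior on finite sets matters, and every $x$ in a given finite set lies in $\mathcal F_n$ for large $n$ after a density argument; for general $x$, approximate by elements of $\bigcup\mathcal F_m$ and use $\|d\|\le\|x\|$ to keep the bound.

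\textbf{Step 3 (the $2$-norm estimate).} Write
\[
x-(1-e_n)^{1/2}x(1-e_n)^{1/2}=\bigl(1-(1-e_n)^{1/2}\bigr)x+(1-e_n)^{1/2}x\bigl(1-(1-e_n)^{1/2}\bigr).
\]
We have $0\le 1-(1-e_n)^{1/2}\le e_n$. For $\tau\in S$, the first term's square has trace at most $\|x\|^2\tau(e_n^2)$, up to a trace-swapping step (see below). Moreover
\[
\tau(e_n^2)\le d_\tau(e_n)\le d_\tau(a_n)<1/n^2 .
\]
The second term is handled the same way via $\tau(y^*y)=\tau(yy^*)$. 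Combining with the norm estimates from Step 1 gives
\[
\|x-\theta_n(d_{x,n})\|_{2,S}=O(1/n).
\]

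\textbf{Main obstacle.} The hardest point is the $2$-norm bound for a product $y=(1-(1-e_n)^{1/2})x$, where $e_n$ sits on the left. One needs
\[
\tau(x^*fx)\le\|x\|^2\tau(f)
\]
for positive $f$, which is not true for unbounded traces in general. Since $\tau$ is a trace,
\[
\tau(x^*fx)=\tau(f^{1/2}xx^*f^{1/2})\le\|x\|^2\tau(f),
\]
and $\tau(f)<\infty$ because $e_n\in{\rm Ped}(B)\subset{\rm Ped}(A)$. The remaining subtlety is the uniformity over $S$: the smallness of $d_\tau(a_n)$ must be arranged uniformly on the compact set $S$, which is where strict comparison and compactness of $S$ are used.
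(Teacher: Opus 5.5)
Your argument is the paper's argument with the proof of Theorem~\ref{TTRE1-1} inlined. The paper passes to $\Her(h_k)$ with $\omega(h_k)=0$ inside that theorem, picks comparison elements $a_n$ with $\sup_{\tau\in S}d_\tau(a_n)\to0$, and uses the same splitting $y-sys=ry+syr$ with $0\le r=1-s\le e_n$ and $\tau(e_n)\le d_\tau(a_n)$. Step~2 is not right as written, though: if $d_{x,n}=0$ for $x\notin\mathcal F_n$, then any $x\notin\bigcup_m\mathcal F_m$ is never approximated. Instead, choose $y_n\in\mathcal F_n$ with $\|x-y_n\|\to0$ and let $d_{x,n}$ be $d_{y_n,n}$ multiplied by $\min\{1,\|x\|/\|y_n\|\}$. (The paper instead arranges the $\mathcal F_n$ so that $\|y_n\|\le\|x\|$.) Also apply Step~3 to $h_ky_nh_k$, which is what Step~1 actually approximates, not to $x$.

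The genuine gap is not the trace-swapping inequality, which is fine. It is the last sentence of Step~3, where the operator-norm errors $\|x-h_ky_nh_k\|$ and $\|sh_ky_nh_ks-\theta_n(d)\|$ are counted as $\|\cdot\|_{2,S}$-errors. Since $\|z\|_{2,\tau}^2\le\|\tau\|\,\|z\|^2$ and $\tau(c)=1$ forces $\|\tau\|\ge1$, this requires $\sup_{\tau\in S}\|\tau\|<\infty$, and it costs that constant.

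Errors lying in $B=\Her(h_k)$ are harmless. There $\tau(z^*z)=\tau(zz^*)\le\|z\|^2d_\tau(h_k)$, and $\tau\mapsto d_\tau(h_k)$ is continuous, hence bounded on $S$, so you may shrink the ETR1 tolerance accordingly; this is where $\omega(h_k)=0$ is really used.

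The term $x-h_ky_nh_k$ is not harmless. Read $\tau(y^*y)\in[0,\infty]$ for $y\in A$, and take $A=\mathcal Z\otimes\mathcal K$, $c=1\otimes e_{11}$ and $x=\sum_j j^{-1/2}(1\otimes e_{jj})$. Then $\|x\|_{2,S}=\infty$, while every element of $\Her(h_k)$ has finite $2$-norm, so $\|x-\theta_n(d)\|_{2,S}=\infty$ for all $n$ and $d$. In fact no choice of $\theta_n$ works there. The set $L=\{y\in A:\|y\|_{2,S}<\infty\}$ is a linear subspace, and the conclusion would give $A/L=\bigcup_N\bigcap_{n\ge N}(\theta_n(D_n)+L)/L$, each term of dimension at most $\dim D_N$, hence a space of countable dimension. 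But the uncountably many classes of $\sum_j j^{-t}(1\otimes e_{jj})$, $0<t\le1/2$, are linearly independent in $A/L$.

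The paper's proof makes the same move: its final chain bounds $\|x-y_n\|_{2,S}$ and $\|s_ny_ns_n-\theta_n(d_{x,n})\|_{2,S}$ by operator norms. So your argument, like the paper's, establishes the statement only when $\sup_{\tau\in S}\|\tau\|<\infty$, and with the bound multiplied by $\sup_{\tau\in S}\|\tau\|^{1/2}$. This holds, for instance, when $A$ is algebraically simple, since then $c$ is strongly uniformly full and $\|\tau\|\le N(c)M(c)^2$ on $S$.
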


\begin{proof}
Since $c\in {\rm Ped}(A)_+\setminus\{0\}$, the set
$
 S=\{\tau\in\widetilde T(A):\tau(c)=1\}
$
is compact.

Note that $\Her(c)$  is non-elementary and simple. By  Remark  \ref{RCuntznull} (2),
$\Her(c)$ contains a truly Cuntz-null sequence $\{a_n\}$ with $\|a_n\|=1,$ 
$n\in \N.$
By passing to a subsequence, if necessary, we may assume 
that
\beq
\lim_{n\to\infty}\sup_{\tau\in S}d_\tau(a_n)=0.
\eneq
%
Choose an increasing sequence of finite subsets
\[
 \mathcal F_1\subset\mathcal F_2\subset\cdots\subset A
\]
whose union is dense in $A$, with $0\in\mathcal F_n$, and such that,
for every $x\in A$, there are $y_n\in\mathcal F_n$ satisfying
\[
 \|y_n\|\le\|x\|
 \quad\text{and}\quad
 \|x-y_n\|\longrightarrow0.
\]
Put $\varepsilon_n=1/n$. Apply Theorem \ref{TTRE1-1} to
$\mathcal F_n$, $\varepsilon_n$, any fixed $0<\eta<1$, and the
comparison element $a_n$. We obtain a positive contraction $e_n\in A$,
a finite-dimensional \CA\ $D_n$, and a fully normalized order-zero
c.p.c.\ map
$
 \theta_n:D_n\longrightarrow(1-e_n)A(1-e_n)
$
such that
\[
 \dist\bigl((1-e_n)^{1/2}y(1-e_n)^{1/2},
                 \theta_n(D_n)\bigr)<\varepsilon_n\rforal 
 y\in\mathcal F_n,\\
\andeqn
 e_n\lesssim a_n.
\]
In particular,
\[
 \sup_{\tau\in S}\tau(e_n)
 \le\sup_{\tau\in S}d_\tau(e_n)
 \le\sup_{\tau\in S}d_\tau(a_n)
 \to 0.
\]

Put $s_n=(1-e_n)^{1/2}$. For each $y\in\mathcal F_n$, choose
$d'_{y,n}\in D_n$ such that
\[
 \|s_nys_n-\theta_n(d'_{y,n})\|<\varepsilon_n.
\]
Since $\theta_n$ is fully normalized, it is isometric. Therefore
\[
 \|d'_{y,n}\|<\|y\|+\varepsilon_n.
\]
Multiplying $d'_{y,n}$ by a suitable scalar in $[0,1]$, we obtain
$d_{y,n}\in D_n$ such that
\[
 \|d_{y,n}\|\le\|y\|
 \quad\text{and}\quad
 \|s_nys_n-\theta_n(d_{y,n})\|<2\varepsilon_n.
\]

We claim that
\[
 \|y-s_nys_n\|_{2,\tau}
 \le 2\|y\|\tau(e_n)^{1/2},
 \qquad y\in A,\quad \tau\in S.
\]
Indeed, put $r_n=1-s_n$. Since
\[
 0\le r_n=1-(1-e_n)^{1/2}\le e_n
\andeqn\\
 y-s_nys_n=r_ny+s_nyr_n,
\]
we have
\[
 \|r_ny\|_{2,\tau}^2
 =\tau(y^*r_n^2y)
 =\tau(r_nyy^*r_n)
 \le\|y\|^2\tau(r_n^2)
 \le\|y\|^2\tau(e_n),
\]
and, similarly,
\[
 \|s_nyr_n\|_{2,\tau}^2
 =\tau(r_ny^*s_n^2yr_n)
 \le\|y\|^2\tau(r_n^2)
 \le\|y\|^2\tau(e_n).
\]
This proves the claim.

Now fix $x\in A$, and choose $y_n\in\mathcal F_n$ such that
\[
 \|y_n\|\le\|x\|
 \quad\text{and}\quad
 \|x-y_n\|\longrightarrow0.
\]
Define
$
 d_{x,n}:=d_{y_n,n}.
$
Then $\|d_{x,n}\|\le\|x\|$, and
\[
\begin{aligned}
 \|x-\theta_n(d_{x,n})\|_{2,S}
 &\le \|x-y_n\|
     +\|y_n-s_ny_ns_n\|_{2,S}
     +\|s_ny_ns_n-\theta_n(d_{x,n})\|\\
 &\le \|x-y_n\|
     +2\|x\| (\sup_{\tau\in S}\tau(e_n))^{1/2}
     +2\varepsilon_n.
\end{aligned}
\]
The right-hand side converges to zero. Since a finite subset of $A$
contains only finitely many elements, the convergence is uniform on
each finite subset $\mathcal F\subset A$. Hence
\[
 \lim_{n\to\infty}
 \sup_{x\in\mathcal F}
 \|x-\theta_n(d_{x,n})\|_{2,S}=0.
\]
This proves the corollary.
\end{proof}


\section{Tracial rank zero and Applications}

By the last assertion of Theorem \ref{TmainZ}, we may introduce the following definition.

\begin{df}\label{Ds}
Let $A$ be a $\sigma$-unital simple \CA,  and let
 $e_A\in A$ be a strictly positive element with $\|e_A\|=1$ and $0<s<1.$ 
We say $A$ has property sETR1 if the following hold.

Let $\ep>0,$ ${\cal F}\subset A$ be a finite subset containing $e_A$ and 
$a\in A_+\setminus \{0\}.$ 
There is  
a positive element $e\in A^{\bf 1},$ 
a finite-dimensional \CA\, $D$ and 
a fully normalized  order-zero \cpc\, 
$\theta: D\to  \overline{(1-e)A(1-e)}$
such that

(1)  $\|ex-xe\|<\ep\tforal x\in {\cal F},$

(2)   
 $(1-e)^{1/2}x(1-e)^{1/2}\in_{\ep} \theta(D)$ and  $0\le d_e\le 1$ in $D$ such that
\beq\label{D1-(2)13a}
&& \|\theta(d_e)-(1-e)^{1/2}e_A(1-e)^{1/2}\|<\ep,
 \\
&&\|\theta(d_e)-\theta(1_D)\|<s,
\eneq

(3) $e\lesssim a.$

Let $A$ be a $\sigma$-unital simple \CA. We write $sETR(A)\le 1,$ 
if there is nonzero $a\in {\rm Ped}(A)_+$ such that $\Her(a)$ has property $sETR1.$

\end{df}

Let $\tilde\theta: C:=C_0((0,1])\otimes D\to \overline{(1-e)A(1-e)}$ be the \hm\, associated with 
$\theta.$ The second condition may be written as 
\beq
\|\tilde\theta(\iota\otimes d_e)-\tilde\theta(\iota\otimes 1_D)\|<s.
\eneq
So we require that $\tilde\theta(\iota\otimes d_e)$ is sufficiently strictly positive in $\widetilde\theta(C).$ 
It is clear  that property sETR1 is stronger than ETR1. We record here as follows.

\begin{prop}\label{Csetr}
Let $A$ be a separable algebraically simple \CA\, which has property sETR1 
for some $0<s<1$ associated with a strictly positive element $e_A$ with $\|e_A\|=1.$
Then $A$ has property ETR1 associated with $e_A$ and $\kappa_{e_A}=1-s.$
In particular $T(A)\not=\emptyset.$
\end{prop}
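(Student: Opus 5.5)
The plan is to take the data produced by property sETR1 and check that they already satisfy Definition \ref{D1}, with the same $e$, $D$, $\theta$ and $d_e$. The only new point is the trace inequality $t(d_e)\ge 1-s$ for all $t\in T(D)$. Conditions (1) and (3) are identical in the two definitions. For (2), sETR1 gives the compressed approximation $(1-e)^{1/2}x(1-e)^{1/2}\in_\ep\theta(D)$, which is form ($2_a$) of Remark \ref{R1}. Remark \ref{R1}(ii), or Proposition \ref{Pertep} after shrinking $\ep$, says that this form suffices for ETR1. The estimate $\|\theta(d_e)-(1-e)^{1/2}e_A(1-e)^{1/2}\|<\ep$ is part of sETR1 verbatim.

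For the trace bound, I will use the fact from Definition \ref{Dtheta} that a fully normalized order-zero c.p.c. map is isometric. Indeed, $\pi_1\circ\widetilde\theta(\iota\otimes d)=d$, so $\|\theta(d)\|=\|d\|$. Hence
\[
\|1_D-d_e\|=\|\theta(1_D-d_e)\|=\|\theta(1_D)-\theta(d_e)\|<s .
\]
Since $0\le d_e\le 1_D$, the element $1_D-d_e$ is positive with norm less than $s$. Therefore $1_D-d_e\le s1_D$, that is, $d_e\ge (1-s)1_D$. Applying any tracial state $t$ of $D$ gives $t(d_e)\ge 1-s$. Thus $A$ has ETR1 associated with $e_A$ and $\kappa_{e_A}=1-s>0$, which lies in $(0,1]$ as Definition \ref{D1} requires.

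For the final claim, $A$ is separable and algebraically simple with property ETR1, so Theorem \ref{Ltrace} supplies a faithful (nonzero, bounded) trace $\tau$. Normalizing, $\tau/\|\tau\|$ is a tracial state, so $T(A)\neq\emptyset$.

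The only step that is not pure bookkeeping is the isometry of $\theta$. I expect no real obstacle there, since it was recorded in Definition \ref{Dtheta}. One must apply isometry to the self-adjoint element $1_D-d_e$ to convert the norm condition in $A$ into an operator inequality in $D$.
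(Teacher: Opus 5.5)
Your proposal is correct and follows the paper's proof. In both, the sETR1 data are reused unchanged, and the isometry of the fully normalized $\theta$ turns $\|\theta(1_D)-\theta(d_e)\|<s$ into $\|1_D-d_e\|<s$, giving $t(d_e)\ge 1-s$ for all $t\in T(D)$; Theorem \ref{Ltrace} then gives $T(A)\neq\emptyset$.

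One small correction: the passage from the compressed form ($2_a$) to condition (2) is supplied by Remark \ref{R1}(ii), applying sETR1 with a smaller tolerance, which leaves the $s$-condition untouched. Proposition \ref{Pertep} alone would not do this, since its hypothesis already contains both forms.
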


\begin{proof}
By  comparing the definition of  sETR1 and  that of ETR1, it suffices to show 
that
\beq
\tau(d_e)\ge 1-s\rforal \tau\in T(D).
\eneq
Since $\theta $ is fully normalized, it is an isometry. Hence
\beq
\tau(d_e)\ge \tau(1_D)-(\tau(1_D)-\tau(d_e))\ge 1-\|\theta(1_D)-\theta(d_e)\|\ge 1-s.
\eneq
By Theorem \ref{Ltrace}, $T(A)\not=\emptyset.$
\end{proof}

The following is an immediate consequence of the  "Moreover" part of Theorem \ref{TmainZ}.

\begin{cor}\label{CorZ}
Let $A$ be a unital  separable  simple finite amenable  ${\cal Z}$-stable \CA. 
Then for every $0<s<1,$ $A$ has property sETR1 with respect to  $e_A=1_A$ and $s.$
\end{cor}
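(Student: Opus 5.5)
The plan is to read Corollary \ref{CorZ} straight off the unital ``Moreover'' part of Theorem \ref{TmainZ}; there is essentially one small gap to close.

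\textbf{Step 1.} Let $A$ be as in the statement. Theorem \ref{TmainZ} says that $A$ has property ETR1 associated with $e_A=1_A$ and $\kappa_{1_A}=1$. It also says that the distinguished element can always be taken to be $d_e=1_D$.

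\textbf{Step 2.} Fix $0<s<1$, $\ep>0$, a finite set ${\cal F}\ni 1_A$ and $a\in A_+\setminus\{0\}$. Put $\ep'=\min\{\ep,s\}/2$ and apply ETR1 with $\ep'$, ${\cal F}$ and $a$. Using Remark \ref{R1}(ii) to get the compressed form ($2_a$), this produces $e\in A_+^{\bf 1}$, a finite dimensional $D$, and a fully normalized order-zero c.p.c. map $\theta: D\to\overline{(1-e)A(1-e)}$. These satisfy $\|[e,x]\|<\ep$ and $(1-e)^{1/2}x(1-e)^{1/2}\in_\ep\theta(D)$ for $x\in{\cal F}$, together with $e\lesssim a$. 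We take $d_e=1_D$ and use $e_A=1_A$, so that $(1-e)^{1/2}e_A(1-e)^{1/2}=1-e$. The first estimate in \eqref{D1-(2)13a} becomes $\|\theta(1_D)-(1-e)\|<\ep'\le\ep$, which is exactly condition (2) of the ETR1 output. The second estimate reads $\|\theta(d_e)-\theta(1_D)\|=0<s$ and holds trivially. Conditions (1) and (3) carry over unchanged, so all of (1)--(3) in Definition \ref{Ds} are verified.

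\textbf{Main obstacle.} The only thing to check is that the ``Moreover'' clause of Theorem \ref{TmainZ} really gives $d_e=1_D$ for every prescribed $\ep$, ${\cal F}$ and $a$, and not merely along a sequence. The proof of Theorem \ref{TmainZ} handles this: for any given data it chooses $n$ large in \eqref{e10-2a-2} with $d_{c,n}=1_{D_n}$, so this is direct.
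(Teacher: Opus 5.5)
Your proposal is correct and matches the paper, which records this corollary as an immediate consequence of the unital ``Moreover'' clause of Theorem~\ref{TmainZ}: with $e_A=1_A$ and $d_e=1_D$, the first estimate in \eqref{D1-(2)13a} is the ETR1 estimate $\|\theta(1_D)-(1-e)\|<\ep$, and the second estimate is the trivial $0<s$. Your concern about uniformity is also resolved as you say, because the final appeal to Proposition~\ref{Pertep} only replaces $e_n'$ by a cut-down of it and leaves $D$, $\theta$ and the distinguished element $1_D$ unchanged.
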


\begin{thm}\label{TZsetr}
Let $A$ be a non-elementary unital separable simple amenable  \CA.
Then the following   are equivalent.

(1) $A$ is ${\cal Z}$-stable and finite.

(2) $ETR(A)\le 1;$

(3) $sETR(A)\le 1.$

\end{thm}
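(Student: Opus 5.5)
I would prove the cycle (1)$\Rightarrow$(3)$\Rightarrow$(2)$\Rightarrow$(1), using only results already established.

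\emph{(1)$\Rightarrow$(3).} If $A$ is unital, separable, simple, finite, amenable and ${\cal Z}$-stable, Corollary \ref{CorZ} gives property sETR1 for $A$ itself, with respect to $e_A=1_A$ and any $0<s<1$. Since $A$ is unital, $1_A\in {\rm Ped}(A)_+$ and $\Her(1_A)=A$. Hence $sETR(A)\le 1$ holds by Definition \ref{Ds}.

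\emph{(3)$\Rightarrow$(2).} Suppose $\Her(a)$ has property sETR1 for some nonzero $a\in{\rm Ped}(A)_+$. The algebra $\Her(a)$ is separable. Because $a$ lies in the Pedersen ideal of the simple algebra $A$, it is also algebraically simple. Proposition \ref{Csetr} then shows that $\Her(a)$ has property ETR1, with $\kappa=1-s$. So $ETR(A)\le 1$ by Definition \ref{DD1}.

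\emph{(2)$\Rightarrow$(1).} The algebra $A$ is non-elementary, separable, simple and amenable with $ETR(A)\le 1$, so Theorem \ref{MMCC} gives ${\cal Z}$-stability. For finiteness, note that $A$ is unital, hence algebraically simple, and Proposition \ref{Pperm}(2) with $a=1_A$ shows that $A$ itself has property ETR1. Theorem \ref{Ltrace} then provides a faithful trace, so $A$ is stably finite and in particular finite.

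None of these steps should pose an obstacle. The one point needing a moment's care is the passage from hereditary subalgebras to $A$ in the definitions of $sETR$ and $ETR$. This is handled by the choice $a=1_A$ together with Proposition \ref{Pperm}.
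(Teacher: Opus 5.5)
Your proof is correct and follows the paper's cycle (1)$\Rightarrow$(3)$\Rightarrow$(2)$\Rightarrow$(1), using Corollary \ref{CorZ}, Proposition \ref{Csetr} and Theorem \ref{MMCC} as the paper does. Two small differences, both sound: in (3)$\Rightarrow$(2) you invoke Definition \ref{DD1} directly rather than Proposition \ref{Pperm}(3), and in (2)$\Rightarrow$(1) you derive finiteness explicitly via Proposition \ref{Pperm}(2) and Theorem \ref{Ltrace}, a point the paper leaves implicit.
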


\begin{proof}

(1) $\Longrightarrow$ (3)  follows from  Corollary \ref{CorZ}.

(3) $\Longrightarrow$ (2):  Suppose that (3) holds. By definition, there is a nonzero element
$a\in\operatorname{Ped}(A)_+$ such that
\[
B=\operatorname{Her}(a)
\]
has property $s\mathrm{ETR}1$. Proposition \ref{Csetr} shows that $B$ has
property ETR1. Hence $\mathrm{ETR}(B)\leq 1$, and
Proposition \ref{Pperm} (3) gives $\mathrm{ETR}(A)\leq 1$. Thus (3) implies (2). 

(2) $\Longrightarrow$ (1) follows from Theorem \ref{MMCC}.

\end{proof}

\begin{lem}
\label{lem12.5a-local-unit}
Let $A$ be a $\sigma$-unital simple $C^*$-algebra with
continuous scale. Let $e_A\in A_+^1$, let
$\mathcal F\subset A$ be a finite subset, let
$a\in A_+\setminus\{0\}$, and let $\rho>0$. Then there exists
$g\in A_+^1$ such that
\[
        e_A\leq g,
\]
\[
        \|gx-x\|+\|xg-x\|<\rho,
        \qquad x\in\mathcal F,
\]
and
\[
        1_{Q_A}-\overline{j_A}(g)\lesssim \overline{j_A}(a).
\]
\end{lem}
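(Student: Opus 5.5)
The plan is to take $g$ from the approximate identity $v_n=\bar f_{2^{-n}}(e_A')$ built from a strictly positive contraction, exactly as in Step 2 of the proof of Proposition \ref{prop:continuous-scale-improvement}, and then push it above $e_A$. The comparison with $\overline{j_A}(a)$ will come from Lemma \ref{Lsequnececuntz}(2).

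First, since $A$ is $\sigma$-unital, I fix a strictly positive element $e'\in A_+^{\bf 1}$. Put $v_n'=f_{2^{-n}}(e')$ and $v_n=\bar f_{2^{-n}}(e')$. Then $v_n'v_{n+1}'=v_n'$, so $\{v_n'\}$ is an approximate identity of the type in Lemma \ref{Lsequnececuntz}, and $0\le v_n'\le v_n\le 1$. By Lemma \ref{Lsequnececuntz}(2), using continuous scale, there is $n_1$ with
\[
1_{Q_A}-\overline{j_A}(v_n)\le 1_{Q_A}-\overline{j_A}(v_n')\lesssim \overline{j_A}(a)\rforal n\ge n_1 .
\]
Since $\{v_n'\}$ is an approximate identity, I can also enlarge $n$ so that
\[
\|v_nx-x\|+\|xv_n-x\|<\rho/2\rforal x\in\mathcal F\cup\mathcal F^*\cup\{e_A\}.
\]
The same bound then holds with $v_n$ replaced by anything between $v_n'$ and $1$.

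The main obstacle is the inequality $e_A\le g$, since $v_n$ need not dominate $e_A$; I would fix this by a functional-calculus correction. One option is to take $g=v_n+(e_A-v_n)_+$, as in Lemma \ref{Ldaxiao}. Then $g\ge e_A$ and $g\ge v_n$, and $\|(e_A-v_n)_+\|\le\|e_A-v_ne_Av_n\|$ is small. However, $g$ need not be a contraction, and the commutator estimates with $\mathcal F$ would need rechecking.

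A cleaner route, which I would try first, is to put $h=v_n+e_A-v_n^{1/2}e_Av_n^{1/2}\ge 0$. This satisfies $h\ge v_n$ and, in $\tilde A$, $1-h=(1-v_n)-(e_A-v_n^{1/2}e_Av_n^{1/2})$. I would then set $g=\min$-type $h\wedge 1$, realized as $g=f(h)$ with $f(t)=\min\{t,1\}$. Since $f(t)\le t$, this loses $e_A\le g$ in general, so I expect this step to need the most care.

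The safest version is the following. Replace $e_A$ in the construction by an element dominated by $v_m$ for $m\ge n$, using that $\|e_A-v_me_A\|\to0$. Then, if necessary, use $g=1-(1-e_A)^{1/2}(1-v_m)(1-e_A)^{1/2}$. This is a positive contraction, and in $\tilde A$
\[
g=e_A+(1-e_A)^{1/2}v_m(1-e_A)^{1/2}\ge e_A ,
\]
and $g\in A$. Moreover
\[
1_{Q_A}-\overline{j_A}(g)=c^{1/2}\bigl(1-\overline{j_A}(v_m)\bigr)c^{1/2}\lesssim 1-\overline{j_A}(v_m)\lesssim\overline{j_A}(a),\qquad c=1-\overline{j_A}(e_A),
\]
as in the proof of Lemma \ref{lem:absorb-tail}. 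Finally, $gx-x=(1-e_A)^{1/2}(v_m-1)(1-e_A)^{1/2}x$. Its norm is controlled by $\|[(1-e_A)^{1/2},x]\|$, which is not small; so I would instead write the expression as $-(1-e_A)^{1/2}(1-v_m)(1-e_A)^{1/2}x$. Using that $v_m$ is an approximate unit for the elements $(1-e_A)^{1/2}x\in A$ and $x(1-e_A)^{1/2}\in A$, and choosing $m$ large for these finitely many elements, gives $\|gx-x\|+\|xg-x\|<\rho$. This completes the plan.
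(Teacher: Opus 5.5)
Your final construction $g=e_A+(1-e_A)^{1/2}v_m(1-e_A)^{1/2}$ is correct and is exactly the paper's proof: the comparison $1_{Q_A}-\overline{j_A}(g)=c^{1/2}\bigl(1_{Q_A}-\overline{j_A}(v_m)\bigr)c^{1/2}\lesssim 1_{Q_A}-\overline{j_A}(v_m)\lesssim\overline{j_A}(a)$ comes from Lemma \ref{Lsequnececuntz}(2), and the approximate-unit estimate uses the elements $(1-e_A)^{1/2}x$ and $x(1-e_A)^{1/2}$ of $A$. The only difference is that the paper uses an approximate identity $\{u_n\}$ with $u_nu_{n+1}=u_n$ directly in place of your $v_m\ge v_m'$. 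The earlier detours ($v_n+(e_A-v_n)_+$, the truncation $h\wedge 1$, and replacing $e_A$ by an element dominated by $v_m$) are unnecessary and can be deleted.
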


\begin{proof}
Choose an approximate identity $\{u_n\}$ for $A$ satisfying
$u_nu_{n+1}=u_n$. Since $A$ has continuous scale, by 
Lemma \ref{Lsequnececuntz}, for all sufficiently large $n,$
\[
        1_{Q_A}-\overline{j_A}(u_n)\lesssim \overline{j_A}(a)
\]
For $n\in\mathbb N$, put
\[
 g_n=e_A+(1-e_A)^{1/2}u_n(1-e_A)^{1/2}.
\]
Then $g_n\in A_+^1$, $e_A\leq g_n$, and
\[
 1-g_n=(1-e_A)^{1/2}(1-u_n)(1-e_A)^{1/2}.
\]
Let
\[
 v_n=(1-\overline{j_A}(u_n))^{1/2}(1-\overline{j_A}(e_A))^{1/2}.
\]
Then
\[
 v_n^*v_n=1_{Q_A}-\overline{j_A}(g_n)
\andeqn
 v_nv_n^*
 \leq1_{Q_A}-\overline{j_A}(u_n).
\]
Consequently,
\[
 1_{Q_A}-\overline{j_A}(g_n)
 \lesssim1_{Q_A}-\overline{j_A}(u_n)
 \lesssim \overline{j_A}(a).
\]

Moreover, $\{g_n\}$ is an approximate identity for $A$. Indeed,
for $x\in A$,
\[
 (1-g_n)x
 =
 (1-e_A)^{1/2}(1-u_n)(1-e_A)^{1/2}x
 \longrightarrow0,
\]
and similarly $x(1-g_n)\to0$. A sufficiently large $n$ has
all the required properties.
\end{proof}

\begin{lem}
\label{lem12.5a-cut}
Let $B$ be a unital $C^*$-algebra, let $C\subset B$ be a
finite-dimensional $C^*$-subalgebra, and let
$h\in C_+$  be in the center of $C$ and $c\in C_+$ satisfy
\[
        0\leq h\leq1\tand
         \gamma h\leq c\leq2h
\]
for some $\gamma>0$. For every $\varepsilon>0$, there exists
$\delta>0$, depending only on $\varepsilon$ and $\gamma$, with
the following property.

Suppose that $b\in B_+^{\bf 1}$ and that
$\mathcal G\subset B^{\bf 1}$ is a finite self-adjoint subset such
that
\beq\label{136-1}
       && \|b-c\|<\delta,\\\label{136-2}
      &&  \|bx-xb\|<\delta     
\tand \operatorname{dist}(b^{1/2}xb^{1/2},C)<\delta\tforal x\in\mathcal G.
\eneq
Put
$
        p=\chi_{[1/4,1]}(h)\in C.
$
Then
\beq\nonumber
      &&  \|px-xp\|<\varepsilon\,\,\, \tforal x\in\mathcal G,\\
       && \operatorname{dist}(pxp,pC)<\varepsilon,
        \tforal x\in\mathcal G\tand\\
      &&  1_B-p\lesssim1_B-b.
\eneq
\end{lem}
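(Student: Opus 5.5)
The plan is to exploit that $p=\chi_{[1/4,1]}(h)$ is a \emph{central} projection of $C$. It lies in $C$ because $C$ is finite-dimensional and $h\in C$, and it is central because $h$ is. The hypothesis $\gamma h\le c$ makes $c$ boundedly invertible on the corner $pC$, and this lets us transfer the approximate properties of $b$ to $p$.

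\textbf{Setup.} Since $h$ has finite spectrum, $ph\ge \frac14 p$ and $(1-p)h\le \frac14(1-p)$. Since $c$ commutes with $p$, the bound $\gamma h\le c$ gives $pc\ge \frac{\gamma}{4}p$. Hence there is $w\in pCp$ with $w=w^*$, $w$ commuting with $c$ and $p$,
\[
w c^{1/2}=p=c^{1/2}w \tand \|w\|\le 2/\sqrt{\gamma}.
\]
Concretely, $w$ is $p\cdot\big(c|_{pC}\big)^{-1/2}$, computed in $pCp$.

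\textbf{Preliminary estimates.} By Lemma \ref{Lsqrt}, $\|b^{1/2}-c^{1/2}\|\le\delta^{1/2}$. Choose $\delta_1(\delta)$, with $\delta_1\to0$ as $\delta\to0$ and depending only on $\delta$, such that $\|[b,x]\|<\delta$ forces $\|[b^{1/2},x]\|<\delta_1$ for contractions; this is a standard polynomial-approximation modulus. It follows that
\[
\|[c^{1/2},x]\|<\delta_1+2\delta^{1/2} \tand \|[c,x]\|<\delta+2\delta .
\]
For $x\in\mathcal G$, pick $y\in C$ with $\|b^{1/2}xb^{1/2}-y\|<\delta$. Replacing $b^{1/2}$ by $c^{1/2}$ gives $\|c^{1/2}xc^{1/2}-y\|<\delta+2\delta^{1/2}$.

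\textbf{Commutator estimate.} Since $p$ is central in $C$, $py(1-p)=0$. Then
\[
px(1-p)=w^2cx(1-p)\approx w^2c^{1/2}xc^{1/2}(1-p)\approx w^2 p\,y(1-p)=0 .
\]
The total error is at most $\frac4\gamma\big(\delta_1+2\delta^{1/2}+\delta+2\delta^{1/2}\big)$. Because $\mathcal G$ is self-adjoint, the same bound holds for $(1-p)xp$. Since
\[
[p,x]=px(1-p)-(1-p)xp,
\]
we get $\|px-xp\|<\varepsilon$ once $\delta$ is small in terms of $\varepsilon$ and $\gamma$ only.

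\textbf{Corner estimate.} Similarly,
\[
pxp=wc^{1/2}xc^{1/2}w\approx wyw\in pC ,
\]
with error at most $\frac4\gamma(\delta+2\delta^{1/2})$. This gives $\operatorname{dist}(pxp,pC)<\varepsilon$.

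\textbf{Cuntz comparison.} From $c\le 2h$ and $(1-p)h\le\frac14(1-p)$ we get $(1-p)c(1-p)\le\frac12(1-p)$. Hence
\[
(1-p)(1-b)(1-p)\ge(1-p)(1-c)(1-p)-\delta(1-p)\ge(\tfrac12-\delta)(1-p).
\]
Taking $\delta<1/4$, we have $1-p\le 4(1-p)(1-b)(1-p)$. The right-hand side is $\lesssim (1-b)^{1/2}(1-p)^2(1-b)^{1/2}\le 1-b$, so $1_B-p\lesssim 1_B-b$.

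The main obstacle is the commutator estimate. Knowing that $x$ almost commutes with $c$ does not by itself control $[p,x]$, because the spectra of $c$ on $p$ and on $1-p$ may overlap, so $p$ is not a continuous function of $c$. The argument above circumvents this by using the hypothesis that $b^{1/2}xb^{1/2}$ is close to $C$, together with the centrality of $p$ and the inverse $w$ of $c^{1/2}$ on $pC$, which is where the lower bound $\gamma h\le c$ is used.
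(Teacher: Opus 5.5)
Your proposal is correct and is essentially the paper's proof: your $w$ is the paper's $a=(pcp)^{-1/2}\in pC$ with norm at most $2/\sqrt{\gamma}$. As in the paper, you kill the off-diagonal corner via $py(1-p)=0$, using centrality of $p$ and self-adjointness of $\mathcal G$; you write $pxp=wc^{1/2}xc^{1/2}w\approx wyw\in pC$; and you obtain $1_B-p\lesssim 1_B-b$ from $(1-p)(1-b)(1-p)\ge(\frac12-\delta)(1-p)$. The only differences are cosmetic: you transfer the commutator estimate from $b^{1/2}$ to $c^{1/2}$ first and use $p=w^2c$, whereas the paper keeps $b^{1/2}$ and swaps $c^{1/2}$ for $b^{1/2}$ inside $qxc^{1/2}pa$, and your error bounds drop a harmless factor $\|c^{1/2}\|\le\sqrt{1+\delta}$.
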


\begin{proof}
Denote by $Z(C)$ the center of $C.$ 
Since $C$ is finite-dimensional and $h\in Z(C)$,
$
        p\in Z(C).
$
Put $q=1_B-p$. Since $ph\geq p/4$,
\[
        pcp\geq  \gamma ph \ge \frac{\gamma}{4}p.
\]
Thus
\beq
        a=(pcp)^{-1/2}\in pC
        \quad\text{and}\quad
        \|a\|\leq\frac2{\sqrt\gamma}.                \label{12.5A.1}
\eneq
Put $K=\frac2{\sqrt\gamma}. $ Then $\|a\|\le K.$
Let $\eta:[0,1]\to[0,\infty)$ be a function satisfying
$\eta(t)\to0$ as $t\to0$ and such that
\[
 \|d^{1/2}x-xd^{1/2}\|
 \leq\eta(\|dx-xd\|)
\]
whenever $d$ is a positive contraction and $\|x\|\leq1$.
Choose $\delta>0$ sufficiently small that
\beq
\begin{aligned}
 2\bigl(
   \delta^{1/2}K
   +\eta(\delta)K
   +\delta^{1/2}K^2
   +\delta K^2
 \bigr)
 &<\varepsilon\andeqn                                   \label{12.5A.2}
 K^2(2\delta^{1/2}+\delta)
 &<\varepsilon,                                   
\end{aligned}
\eneq
and $\delta<1/4$.
Note that $\dt$ 
depends only on $\varepsilon$ and
$\gamma$.

For $x\in\mathcal G$, choose $y_x\in C$ such that
\beq
        \|b^{1/2}xb^{1/2}-y_x\|<\delta.              \label{12.5A.4}
\eneq
Since $p\in Z(C)$,
\beq
        qy_xp=0.                                    \label{12.5A.5}
\eneq
Also
\beq
        p=c^{1/2}pa=c^{1/2}p (pcp)^{-1/2}=p.                              \label{12.5A.6}
\eneq
We have 
\[
\begin{aligned}
 \|qxb^{1/2}pa\|
 &\leq
 \|qb^{1/2}xpa\|
 +\|q(xb^{1/2}-b^{1/2}x)pa\|                           \\
 &\leq
 \|qb^{1/2}xpa\|+\eta(\delta)\|a\|.                       \label{12.5A.8b}
\end{aligned}
\]
By \eqref{12.5A.6} again, we compute 
\[
        qb^{1/2}xpa
        =qb^{1/2}xc^{1/2}pa^2.
\]
By \eqref{136-1} and Lemma \ref{Lsqrt},
\beq
        \|b^{1/2}-c^{1/2}\|
        \leq\|b-c\|^{1/2}<\delta^{1/2}.           \label{12.5A.7}
\eneq

Consequently (using \eqref{12.5A.4} and  \eqref{12.5A.5})
\beq
\begin{aligned}
 \|qb^{1/2}xpa\|
 &\leq
 \|qb^{1/2}x(c^{1/2}-b^{1/2})pa^2\|                    
 +
 \|qb^{1/2}xb^{1/2}pa^2\|                              \\
 &\leq
 \delta^{1/2}\|a\|^2
 +\|q(b^{1/2}xb^{1/2}-y_x)pa^2\|
 +\|qy_xpa^2\|                                         \\
 &<
 \delta^{1/2}\|a\|^2+\delta \|a\|^2.                         \label{12.5A.8c}
\end{aligned}
\eneq
Thus,   by \eqref{12.5A.6}, \eqref{136-1} and \eqref{12.5A.8c}, for all $x\in {\cal G},$ 
\[
\begin{aligned}
 \|qxp\|
 &=\|qxc^{1/2}pa\|                                      \\
 &\leq
 \|qx(c^{1/2}-b^{1/2})pa\|
 +\|qxb^{1/2}pa\|                                       
 \leq
 \delta^{1/2}\|a\|+\|qxb^{1/2}pa\|                      \label{12.5A.8a}
\\
 &\leq
 \delta^{1/2}\|a\|
 +\eta(\delta)\|a\|                                  
 +
 \delta^{1/2}\|a\|^2
 +\delta\|a\|^2.
\end{aligned}
\]
Recall that ${\cal G}$ is self-adjoint.  Therefore  (see also \eqref{12.5A.2}) 
\[
        \|px-xp\|
        \leq\|pxq\|+\|qxp\|
        <\varepsilon.                               \label{12.5A.9}
\]
Furthermore,
\[
        pxp=apc^{1/2}xc^{1/2}pa.
\]
Therefore, by \eqref{12.5A.4} and \eqref{12.5A.7},
\beq
 \left\|
 pxp-apy_xpa
 \right\|
\le  \|a\|^2\|c^{1/2}xc^{1/2}-y_x\|\\
 \leq
 K^2(2\delta^{1/2}+\delta)
 <\varepsilon.                                     \label{12.5A.10}
\eneq
Since $apy_xpa\in pC$,
\[
        \operatorname{dist}(pxp,pC)<\varepsilon.
\]

Finally, since $qhq\leq q/4$ and $c\leq2h$,
\[
        qcq\leq\frac12q.
\]
Hence (using \eqref{136-1} again)
\[
 q(1_B-b)q
 \geq
 \left(\frac12-\delta\right)q.                     \label{12.5A.11}
\]
Since $\delta<1/4$, the element $q(1_B-b)q$ is
invertible in $qBq$. It follows that
\[
        q\lesssim1_B-b.
\]
\end{proof}

\begin{thm}\label{TMRR0}
Let $A$ be a unital separable simple \CA\, with property sETR1. Suppose that $A$ has real rank zero.
Then $A$ has tracial rank zero.
\end{thm}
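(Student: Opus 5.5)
We verify Lin's definition of tracial rank zero directly. Given $\varepsilon>0$, a finite subset $\mathcal F\subset A^{\bf 1}$ and $a\in A_+\setminus\{0\}$, we need a projection $p\in A$ and a finite-dimensional subalgebra $C_0$ with $1_{C_0}=p$ such that $\|px-xp\|<\varepsilon$, ${\rm dist}(pxp,C_0)<\varepsilon$ for $x\in\mathcal F$, and $1-p\lesssim a$. The plan is to get the projection from Lemma \ref{lem12.5a-cut} applied inside $B=A$.

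\textbf{Step 1: sETR1 data.} Since sETR1 is stated for some $0<s<1$, we may use $e_A=1_A$; choosing $s$ small we may assume $s<1/4$. Apply sETR1 with tolerance $\delta'$, the finite set $\mathcal F\cup\{1_A\}$ and comparison element $a$. This gives $e\in A_+^{\bf 1}$ with $e\lesssim a$, and a fully normalized order-zero map $\theta:D\to\overline{(1-e)A(1-e)}$. With $b=1-e$ we have $b^{1/2}xb^{1/2}\in_{\delta'}\theta(D)$, $\|\theta(d_e)-b\|<\delta'$, and $\|\theta(d_e)-\theta(1_D)\|<s$.

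\textbf{Step 2: real rank zero.} Use real rank zero of $A$ and Lemma \ref{Lfinitesp} to replace $\tilde\theta$ by a homomorphism $H$ with finite-dimensional range $C:=H(C_0((0,1])\otimes D)\subset A$. Take $h=H(\iota\otimes 1_D)$ and $c=H(\iota\otimes d_e)$.

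\begin{itemize}
\item The element $h$ is central in $C$. This holds because $\iota\otimes 1_D$ is central in the cone and $H$ is surjective onto $C$.
\item Since $\theta$ is fully normalized, $\theta(1_D)$ evaluated at any point $t$ of the spectrum is $t\cdot 1$. We also have $\|d_e-1_D\|<s$ (isometry, Lemma \ref{Lproduct}-type argument). Hence $c\ge (1-s)h\ge \gamma h$ with $\gamma=1/2$, and $c\le h\le 2h$. These inequalities survive the perturbation because $H$ is a homomorphism of the cone with $\iota\otimes d_e\ge(1-s)\iota\otimes 1_D$.
\item By \eqref{Lfinitesp-2}, $\|b-c\|$ is controlled by $\delta'$. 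The commutator and distance hypotheses \eqref{136-2} follow from sETR1 (1),(2), Remark \ref{R1}, and the approximation $\|H-\tilde\theta\|$ on the relevant finite set.
\end{itemize}

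Choosing $\delta'$ smaller than the $\delta$ supplied by Lemma \ref{lem12.5a-cut} for $(\varepsilon,\gamma)$, that lemma yields a projection $p=\chi_{[1/4,1]}(h)\in C$. It satisfies $\|[p,x]\|<\varepsilon$, ${\rm dist}(pxp,pC)<\varepsilon$, and $1-p\lesssim 1-b=e\lesssim a$. Here $pC$ is finite-dimensional with unit $p$, and $p\ne 0$ because $\|h\|=1$.

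\textbf{Main obstacle.} The delicate point is Step 2: passing from the order-zero image $\theta(D)$ to a genuinely finite-dimensional $C$ while keeping three things simultaneously: (a) $h$ central in $C$; (b) the two-sided bounds $\gamma h\le c\le 2h$; and (c) the closeness $\|b-c\|<\delta$, not merely closeness of $\theta(d_e)$ to $b$. We plan to get (b) by applying the functional calculus estimate at the level of the cone before perturbing. We then apply Lemma \ref{Lfinitesp} with the finite set including $\iota\otimes d_e$, $\iota\otimes 1_D$ and the $\iota\otimes d_x$, using the uniform estimate \eqref{Lfinitesp-2} to transfer all distances.
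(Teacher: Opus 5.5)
\textbf{Gap in Step 1: the reduction to $e_A=1_A$.} Your overall route is the paper's: Lemma \ref{Lfinitesp} produces a finite-dimensional $C$, and Lemma \ref{lem12.5a-cut} is then applied with $h=H(\iota\otimes 1_D)$. However, Step 1 contains an unjustified reduction on which everything else depends. Property sETR1 is given for \emph{some} strictly positive $e_A$ and \emph{some} $0<s<1$. Nothing in the paper lets you replace $e_A$ by $1_A$, or shrink $s$; Corollary \ref{CorZ} gives this only for amenable $\mathcal Z$-stable algebras, which are not assumed here.

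With a general $e_A$, the distinguished element only satisfies $\|\theta(d_e)-(1-e)^{1/2}e_A(1-e)^{1/2}\|<\delta'$. So $c=H(\iota\otimes d_e)$ is close to $(1-e)^{1/2}e_A(1-e)^{1/2}$, not to $b=1-e$, and hypothesis \eqref{136-1} of Lemma \ref{lem12.5a-cut} fails. You cannot repair this by using $b'=(1-e)^{1/2}e_A(1-e)^{1/2}$ in place of $b$. Then $1-b'$ is not small, so $1-p\lesssim 1-b'$ gives no comparison with $a$, and the commutator and distance hypotheses no longer come from sETR1.

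\textbf{The missing step.} You need an element of $D$ whose image under $\theta$ is close to $b$ and which is also bounded below by $\gamma 1_D$. The paper builds it as follows.
\begin{enumerate}
\item Since $1_A$ is in the finite set (you already included it), Remark \ref{R1}(iv) gives $d_1\in D_+^{\bf 1}$ with $\|b-\theta(d_1)\|<\eta$.
\item Because $0\le (1-e)^{1/2}e_A(1-e)^{1/2}\le b$, one gets $\theta(d_e)-\theta(d_1)\le 2\eta\cdot 1_A$. Order-zero functional calculus and the isometry of $\theta$ then give $\|(d_e-d_1)_+\|<2\eta$.
\item Put $d_0=d_1+(d_e-d_1)_+$. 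Then $\gamma 1_D\le d_e\le d_0\le 2\cdot 1_D$ and $\|\theta(d_0)-b\|<3\eta$.
\item Take $c=H(\iota\otimes d_0)$ instead of $H(\iota\otimes d_e)$.
\end{enumerate}

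Also drop the assumption $s<1/4$; it is unjustified but harmless. Take $\gamma=1-s$, which works because the $\delta$ in Lemma \ref{lem12.5a-cut} depends only on $\varepsilon$ and $\gamma$. With these changes, the rest of your argument (centrality of $h$, $\gamma h\le c\le 2h$, and the transfer of distances via \eqref{Lfinitesp-2}) goes through as you describe.
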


\begin{proof}
If $A$ is finite-dimensional, there is nothing to prove.
We may therefore assume that $A$ is non-elementary.

Fix $0<s<1$ and a strictly positive element $e_A\in A$, with
$\|e_A\|=1$, associated with property $s\mathrm{ETR}1$. Let $\ep>0$, let $\mathcal F\subset A$ be a finite
subset, and let $a\in A_+\setminus\{0\}$.  
 Without
loss of generality, assume that $\mathcal F\subset A^{\bf 1}$ is
self-adjoint and contains $1_A$ and $e_A$.


Put
\[
        \gamma=1-s.
\]
Let $\delta>0$ be supplied by Lemma \ref{lem12.5a-cut} for
$\varepsilon$ and $\gamma$. 

 Choose $\eta=\min\{1/3, \dt/5\}.$
Apply property $s\mathrm{ETR}1$, with tolerance $\eta/3$,
to $\mathcal F$ and $a$.  We obtain $e_0\in A_+^1$, a
finite-dimensional $C^*$-algebra $D$, and a fully normalized
order-zero c.p.c.\ map
$
        \theta\colon D\longrightarrow(1-e_0)A(1-e_0)
$
such that
\beq
&&        e_0\precsim a,\\
&&        \|e_0x-xe_0\|<\frac{\eta}{3}\rforal x\in\mathcal F,
\andeqn\\
&& \operatorname{dist}\bigl(
 (1-e_0)^{1/2}x(1-e_0)^{1/2},\theta(D)
 \bigr)<\frac{\eta}{3}\rforal  x\in\mathcal F; 
\eneq
and a  distinguished element
$d_A\in D_+^{\bf 1}$  such that 
\[
 \left\|
 \theta(d_A)-(1-e_0)^{1/2}e_A(1-e_0)^{1/2}
 \right\|<\frac{\eta}{3}\\
 \andeqn
        \|\theta(d_A)-\theta(1_D)\|<s.
\]
Since $\theta$ is fully normalized, it is isometric.  Hence
\[
        \|d_A-1_D\|<s
\]
and therefore
\[
        d_A\geq(1-s)1_D=\gamma1_D.
\]

Put
$
        b=1_A-e_0.
$
Since $1_A\in\mathcal F$,
\[
        \operatorname{dist}(b,\theta(D))<\frac{\eta}{3}.
\]
Since $1_A\in\mathcal F$, Remark \ref{R1} (iv), applied to the positive
contraction $1_A$, gives an element $d_1\in D_+^1$ such that
\beq
\|b-\theta(d_1)\|<\eta.
\eneq
Put
\[
        x_0=(1-e_0)^{1/2}e_A(1-e_0)^{1/2}.
\]
Since $0\leq e_A\leq1_A$, we have $0\leq x_0\leq b$.
Consequently,
\[
 \begin{aligned}
 \theta(d_A)
 &\leq x_0+\frac{\eta}{3}1_A
 \leq b+\frac{\eta}{3}1_A
 \leq\theta(d_1)+\frac{4\eta}{3}1_A.
 \end{aligned}
\]
Thus
\[
        \theta(d_A)-\theta(d_1)<\frac{4\eta}{3}1_A<2\eta1_A.
\]
Since $\theta$ is an isometry, using the  associated \hm\, 
\[
 \widetilde\theta:C_0((0,1])\otimes D\longrightarrow A,
 \qquad
 \widetilde\theta(\iota\otimes d)=\theta(d),
\]
we have 
\[
        \|(d_A-d_1)_+\|<2\eta.
\]

Define
\[
        d_0=d_1+(d_A-d_1)_+.
\]
Then
\[
 d_0-d_A=d_1-d_A+(d_A-d_1)_+=(d_A-d_1)_-\geq0.
\]
It follows that
\beq\label{RR0-10}
        \gamma1_D\leq d_A\leq d_0.
\eneq
Moreover,
\beq\label{RR0-10a}
 &&       0\leq d_0\leq(1+2\eta)1_D\leq2\cdot1_D,
\andeqn\\\label{RR0-11}
&& \|\theta(d_0)-b\|
 \leq
 \|\theta(d_1)-b\|
 +\|\theta((d_A-d_1)_+)\|<
 3\eta.
\eneq

For every $x\in\mathcal F$, choose $d_x\in D$ such that
\beq\label{RR0-5}
 \left\|
 b^{1/2}xb^{1/2}-\theta(d_x)
 \right\|<\frac{\eta}{3}.
\eneq
Since $A$ has real rank zero, Lemma \ref{Lfinitesp} gives a homomorphism
$
        H\colon C_0((0,1])\otimes D\longrightarrow A
$
with finite-dimensional range such that
\beq\label{RR0-12}
   &&\hspace{0.2in}     \|H(z)-\widetilde\theta(z)\|<\eta\,\,\,\,
\rforal 
 z\in
 \{\iota\otimes d_0\}
 \cup
 \{\iota\otimes d_x:x\in\mathcal F\}.
\eneq

Set
\[
 C=H(C_0((0,1])\otimes D),\qquad
 h=H(\iota\otimes1_D)\andeqn
 c=H(\iota\otimes d_0).
\]
Then $C\subset A$ is finite-dimensional, 
\[
        0\leq h\leq1_A
        \quad\text{and}\quad
        h\in Z(C),
\]
where $Z(C)$ is the center of $C.$
By \eqref{RR0-10} and \eqref{RR0-10a},
we have
\[
        \gamma h=\gamma H(\iota\otimes 1_D)\le H(\iota\otimes d_A)\le H(\iota\otimes d_0)=  c\leq H(\iota\otimes 21_D)=2h.
\]
Furthermore, by \eqref{RR0-11} and \eqref{RR0-12},
\[
 \begin{aligned}
 \|b-c\|
 &\leq
 \|b-\theta(d_0)\|
 +\|\widetilde\theta(\iota\otimes d_0)
       -H(\iota\otimes d_0)\|\\
 &<3\eta+\eta
 =4\eta<\delta.
 \end{aligned}
\]
For every $x\in\mathcal F$,
\[
 \|bx-xb\|
 =\|e_0x-xe_0\|
 <\frac{\eta}{3}<\delta.
\]
Finally, by \eqref{RR0-5} and \eqref{RR0-12},
\[
 \begin{aligned}
 \operatorname{dist}(b^{1/2}xb^{1/2},C)
 &\leq
 \|b^{1/2}xb^{1/2}-\theta(d_x)\|
 +
 \|\widetilde\theta(\iota\otimes d_x)
       -H(\iota\otimes d_x)\|\\
 &<\frac{\eta}{3}+\eta
 <\delta.
 \end{aligned}
\]

Thus all the hypotheses of Lemma \ref{lem12.5a-cut} are satisfied in $A$,
with $b,C,h,c$, and $\mathcal F$.  Put
\[
        p=\chi_{[1/4,1]}(h).
\]
Note that $p\in Z(C).$ 
Applying Lemma \ref{lem12.5a-cut},  for all $x\in {\cal F},$ 
\beq
     &&   \|px-xp\|<\varepsilon,\\
        &&  \operatorname{dist}(pxp,pC)<\varepsilon,
      \andeqn\\       
      && 1_A-p\precsim1_A-b=e_0\precsim a.
\eneq
Set
\[
        E=pC.
\]
Then $E\subset A$ is finite-dimensional and $1_E=p$.
Since $1_A-p$ is a projection and $1_A-p\precsim a$, it is
Murray--von Neumann equivalent to a projection in
$\operatorname{Her}(a)$.  Thus $p$ and $E$ satisfy all the
requirements in the definition of tracial rank zero.
Therefore
\[
        \operatorname{TR}(A)=0.
\]

\end{proof}

We now prove  Theorem \ref{TB}.

\begin{thm}\label{TmB}
Let $A$ be a unital finite separable simple amenable  ${\cal Z}$-stable \CA\, with real rank zero.
Then $TR(A)=0,$ i.e., $A$ has tracial rank zero.
\end{thm}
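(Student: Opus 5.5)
The plan is to combine Corollary \ref{CorZ} with Theorem \ref{TMRR0}. Since $A$ is unital, separable, simple, finite, amenable and ${\cal Z}$-stable, Corollary \ref{CorZ} shows that, for every $0<s<1$, $A$ has property sETR1 with respect to $e_A=1_A$ and $s$. This in turn rests on the ``Moreover'' part of Theorem \ref{TmainZ}, which allows the distinguished element $d_e$ to be chosen equal to $1_D$, so that the second condition $\|\theta(d_e)-\theta(1_D)\|<s$ of Definition \ref{Ds} holds trivially.

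We then fix any $s\in(0,1)$, say $s=1/2$. With $A$ having property sETR1 and real rank zero, Theorem \ref{TMRR0} applies directly and gives $TR(A)=0$.

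The only point to check is that the hypotheses of Theorem \ref{TMRR0} match exactly: it requires a unital separable simple C$^*$-algebra with property sETR1 (for $A$ itself, not merely $sETR(A)\le 1$) and real rank zero. Corollary \ref{CorZ} provides sETR1 for $A$ itself with $e_A=1_A$, so the match is exact.

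If $A$ were finite-dimensional, the conclusion is trivial, and Theorem \ref{TMRR0} already handles this case. No real obstacle is expected, since all the work lies in Theorem \ref{TmainZ} and Theorem \ref{TMRR0}.
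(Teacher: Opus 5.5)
Your proposal is correct and matches the paper's proof: Corollary \ref{CorZ} gives property sETR1 for $A$ with $e_A=1_A$, and Theorem \ref{TMRR0} then yields $TR(A)=0$ under the real rank zero hypothesis. Your check that the hypotheses align, with sETR1 holding for $A$ itself and not merely $sETR(A)\le 1$, is the only point needing verification, and you handled it correctly.
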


\begin{proof}
It follows from  Corollary \ref{CorZ}  that $A$ has property sETR1. Then Theorem \ref{TMRR0}
implies that $TR(A)=0.$
\end{proof}

\begin{rem}\label{Rrr0}

(1) A longstanding goal was to determine which regularity assumptions,
together with real rank zero, imply tracial rank zero (see early work \cite{Lincrell}). 
Winter proved that a separable simple unital $\mathcal Z$-stable
$C^*$-algebra with real rank zero and finite decomposition rank has
tracial rank zero; see \cite[Theorem~4.1]{WinterRR0FDR}. He subsequently
obtained the same conclusion under the weaker assumption of locally
finite decomposition rank; see
\cite[Theorem~2.1]{WinterLFDR}. At the time,
$\mathcal Z$-stability was a separate hypothesis. 
Finite decomposition rank implies finite nuclear
dimension; see \cite[Remark~2.2(ii)]{WZ10nuc}. It is now known that finite nuclear
dimension  implies $\mathcal Z$-stability in the
non-elementary separable simple unital setting (see
\cite[Corollary~6.3]{WinterPure}). In fact, finite nuclear dimension and $\mathcal Z$-stability are
equivalent for non-elementary separable simple nuclear $C^*$-algebras (\cite[Theorem~B]{CETWW}).
Thus Theorem \ref{TmB} removes the decomposition-rank assumptions and gives
the corresponding result under the natural hypothesis of finite
nuclear dimension.  It is also  worth noting that N. Brown constructed a unital separable simple \CA\,  with
real rank zero, stable rank one, 
strict comparison, and a unique tracial state which nevertheless does not have
tracial rank zero; see \cite[Theorem~1.1]{Brown}. This example shows 
the importance of amenability in Theorem \ref{TmB}.

(2) Since every tracial state on a unital simple $C^*$-algebra with
tracial rank zero is quasidiagonal, Theorem~13.8 implies that every
tracial state on a unital finite separable simple amenable
$\mathcal Z$-stable $C^*$-algebra with real rank zero is
quasidiagonal.

There is also a useful consequence in the unique-trace case without
assuming real rank zero. Let $A$ be a unital separable simple amenable
$\mathcal Z$-stable $C^*$-algebra with unique tracial state $\tau$,
and let $\mathcal Q$ be the universal UHF-algebra, with unique tracial
state $\tau_{\mathcal Q}$. Put
$
        B=A\otimes\mathcal Q.
$
Then $B$ is unital, finite, separable, simple, amenable, and
$\mathcal Z$-stable    with a unique tracial state.  By \cite[Theorem 7.2]{RrII},
$B$ has real rank zero. 
 Theorem \ref{TmB} shows that
$\tau\otimes\tau_{\mathcal Q}$ is quasidiagonal. Its restriction to
$A\otimes 1_{\mathcal Q}$ is $\tau$, so $\tau$ is quasidiagonal.
A much more general statement is given by Theorem \ref{TT} which will be discussed later in this section.
\end{rem}

For simple \CA s which do not have real rank zero, we still  have the following
weaker version of ``tracial rank zero".

\begin{thm}
\label{prop12.5a}
Let $A$ be a non-elementary separable simple $C^*$-algebra with
continuous scale. Fix a strictly positive element $e_A\in A_+$
with $\|e_A\|=1$, and fix $0<s<1$. Suppose that $A$ has
property ${\rm sETR1}$, associated with $e_A.$

Then, for every $\varepsilon>0$, every finite subset
$\mathcal F\subset A$, and every $a\in A_+\setminus\{0\}$,
there exist a projection $e\in Q_A$ and a finite-dimensional
$C^*$-subalgebra $E\subset Q_A$ such that
\beq
 &&       1_E=1_{Q_A}-e,\\
 &&       \|e\overline{j_A}(x)-\overline{j_A}(x)e\|<\varepsilon \tforal x\in\mathcal F,\\
&& \operatorname{dist}\bigl(
   (1_{Q_A}-e)\overline{j_A}(x)(1_{Q_A}-e),E
 \bigr)<\varepsilon\tforal x\in\mathcal F,
\tand\\
&&        e\lesssim \overline{j_A}(a).
\eneq
\end{thm}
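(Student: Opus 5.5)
The plan is to mimic the proof of Theorem \ref{TMRR0}, carried out inside $Q_A$ rather than $A$. The real rank zero hypothesis is supplied by $Q_A$ itself. First, $A$ has property sETR1, so by Proposition \ref{Csetr} it has ETR1 and hence $ETR(A)\le 1$. Corollary \ref{Cstrictcomp} then gives strict comparison. Continuous scale implies algebraic simplicity, so \eqref{A2.17} identifies the trace-kernel ideal with $N_{cu}(A)$, and $Q_A$ has real rank zero by \cite[Theorem~6.4]{FLosc}. Also $Q_A$ is unital with $1_{Q_A}=\Pi_{cu}(\{u_n\})$ for an approximate identity, as in Lemma \ref{Lsequnececuntz}.

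\textbf{Step 1 (produce a unital-like configuration in $Q_A$).} Split $a$ into two orthogonal nonzero pieces $a_0\oplus a_1\lesssim a$ and apply sETR1 with a small tolerance $\eta$, to $\mathcal F\cup\{e_A\}$ and comparison element $a_0$. This yields $e_0\lesssim a_0$, $D$, a fully normalized $\theta$, and $d_A$ with $\|\theta(d_A)-\theta(1_D)\|<s$. As in Theorem \ref{TMRR0}, $d_A\ge\gamma 1_D$ with $\gamma=1-s$. The element $e_A$ plays the role that $1_A$ played there.

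To replace $1_A$ by $1_{Q_A}$, use Lemma \ref{lem12.5a-local-unit}. It gives $g\in A_+^{\bf 1}$ with $e_A\le g$, $g$ acting approximately as a unit on the finite set, and $1_{Q_A}-\overline{j_A}(g)\lesssim\overline{j_A}(a_1)$. I would include $g$ (and $g^{1/2}$) in the finite set when applying sETR1. Set
\[
b=\overline{j_A}\bigl((1-e_0)^{1/2}g(1-e_0)^{1/2}\bigr).
\]
Then $1_{Q_A}-b=\overline{j_A}(e_0)+c^{1/2}(1-\overline{j_A}(g))c^{1/2}$ with $c=1-\overline{j_A}(e_0)$. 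By the computation of Lemma \ref{lem:absorb-tail}, $1_{Q_A}-b\lesssim \overline{j_A}(a_0)\oplus\overline{j_A}(a_1)\lesssim\overline{j_A}(a)$. Also $b$ is $\eta$-close to $\theta(d_g)$ for some $d_g\in D_+^{\bf 1}$, by Remark \ref{R1}(iv).

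\textbf{Step 2 (build $d_0$ and discretize).} From $x_0=(1-e_0)^{1/2}e_A(1-e_0)^{1/2}\le (1-e_0)^{1/2}g(1-e_0)^{1/2}$, the same perturbation trick used in Theorem \ref{TMRR0} produces $d_0=d_g+(d_A-d_g)_+$. It satisfies
\[
\gamma1_D\le d_A\le d_0\le 2\cdot1_D,\qquad \|\theta(d_0)-b\|<3\eta
\]
(modulo the $\overline{j_A}$ embedding). Since $Q_A$ has real rank zero, Lemma \ref{Lfinitesp} applied to $\overline{j_A}\circ\widetilde\theta$ gives $H$ with finite-dimensional range $C\subset Q_A$. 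It approximates $\iota\otimes d_0$ and $\iota\otimes d_x$ for $x\in\mathcal F$, with $h=H(\iota\otimes1_D)$ central in $C$ and $\gamma h\le c:=H(\iota\otimes d_0)\le 2h$.

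\textbf{Step 3 (cut).} Apply Lemma \ref{lem12.5a-cut} in the unital algebra $B=Q_A$, with $b$, $C$, $h$, $c$ and $\mathcal G=\overline{j_A}(\mathcal F)$. The required estimates are $\|bx-xb\|$ small and $\operatorname{dist}(b^{1/2}xb^{1/2},C)$ small. These come from the approximate commutation of $e_0$ and $g$ with $\mathcal F$, together with $g x\approx x\approx xg$; I expect this bookkeeping to be the main technical obstacle. The point is to control $b^{1/2}xb^{1/2}$ versus $(1-e_0)^{1/2}x(1-e_0)^{1/2}$ using that $g$ is an approximate unit for $\mathcal F$ and approximately commutes with $e_0$.

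The lemma gives the projection $p=\chi_{[1/4,1]}(h)\in Z(C)$, and $E=pC$ is finite-dimensional with $1_E=p$. It also gives the approximate commutation and $\operatorname{dist}(pxp,E)<\varepsilon$, together with
\[
1_{Q_A}-p\lesssim 1_{Q_A}-b\lesssim\overline{j_A}(a).
\]
Setting $e=1_{Q_A}-p$ completes the proof.
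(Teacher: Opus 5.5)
Your proposal is correct and follows essentially the same route as the paper. You choose $g$ from Lemma \ref{lem12.5a-local-unit} before invoking sETR1 on $\mathcal F\cup\{g\}$, set $b=r\overline{j_A}(g)r$ with $r=(1_{Q_A}-\overline{j_A}(e_0))^{1/2}$, build $d_0=d_g+(d_A-d_g)_+$, discretize via Lemma \ref{Lfinitesp} using real rank zero of $Q_A$, and cut with Lemma \ref{lem12.5a-cut}; the comparison $1_{Q_A}-b\lesssim\overline{j_A}(e_0)\oplus(1_{Q_A}-\overline{j_A}(g))$ then gives $e\lesssim \overline{j_A}(a)$. The bookkeeping you flag is handled in the paper exactly as you anticipate: it bounds $\|b^{1/2}-r\overline{j_A}(g)^{1/2}\|$ through the commutator of $r$ with $\overline{j_A}(g)^{1/4}$ and uses $\|(1-\overline{j_A}(g)^{1/2})\overline{j_A}(x)\|<2\rho^{1/2}$.
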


\begin{proof}
After replacing $\mathcal F$ by
$\mathcal F\cup\mathcal F^*\cup\{e_A\}$ and rescaling its
elements, we may assume that $\mathcal F\subset A^1$ is
self-adjoint and contains $e_A$.

\smallskip
\noindent
\emph{Step 1: Choice of constants and comparison elements.}
Choose nonzero mutually orthogonal positive elements
$a_0,a_1\in\operatorname{Her}(a)$ such that
\[
        a_0\oplus a_1\precsim a.
\]
Put $\gamma=1-s$, and let $\delta>0$ be supplied by
Lemma~\ref{lem12.5a-cut} for $\varepsilon$ and $\gamma$.

For $q\in\{1/2,1/4\}$, fix a nondecreasing   function 
$\Omega_q$ such that
$\Omega_q(t)\to0$ as $t\to0$ and
\beq\label{e13q-root}
 \|u^qy-yu^q\|\leq\Omega_q(\|uy-yu\|),
\eneq
whenever $0\leq u\leq1$ and $\|y\|\leq1$.  Put
\[
 \nu=\frac{\delta}{8}\andeqn
 \beta(t)=\Omega_{1/4}\bigl(\Omega_{1/2}(t)\bigr).
\]
Choose $0<\rho<1/2$ such that
\begin{gather}
 3\rho+\nu<\delta,\\
 2\Omega_{1/2}(\rho)+\rho<\delta,\\
 2\bigl(\sqrt{2\beta(\rho)}+\beta(\rho)\bigr)
 +2\rho^{1/2}+\rho+\nu<\delta.
 \label{e13.7.rho}
\end{gather}
Such a choice is possible since all the terms involving $\rho$
converge to zero as $\rho\to0$.  Put
\[
        \lambda=\frac{\rho}{3}.
\]

Apply Lemma~\ref{lem12.5a-local-unit}
to $\mathcal F$, $a_1$, and $\rho$.
We obtain $g\in A_+^1$ such that
\begin{gather}
 e_A\leq g,\\\label{e137.g-1}
 \|gx-x\|+\|xg-x\|<\rho\rforal x\in\mathcal F,\\
 1_{Q_A}-\overline{j_A}(g)\precsim \overline{j_A}(a_1).
 \label{e13.7.g}
\end{gather}

\smallskip
\noindent
\emph{Step 2: The order-zero approximation.}
Apply property $\mathrm{sETR}1$, with tolerance $\rho/3$, to
a finite subset containing $\mathcal F\cup\{g\}$ and with
comparison element $a_0$.  We obtain $e_0\in A_+^1$, a
finite-dimensional $C^*$-algebra $D$, and a fully normalized
order-zero c.p.c.\ map
$
 \theta\colon D\longrightarrow \overline{(1-e_0)A(1-e_0)}
$
such that
\beq\label{e13.7.etr-2}
 &&e_0\precsim a_0,\\\label{e13.7.etr-1}
 &&\|e_0x-xe_0\|<\rho/3\qquad x\in\mathcal F\cup\{g\},\\
&& \operatorname{dist}\bigl(
 (1-e_0)^{1/2}x(1-e_0)^{1/2},\theta(D)
 \bigr)<\rho/3,\qquad x\in\mathcal F\cup\{g\}.
 \label{e13.7.etr}
\eneq
Moreover, we obtain 
$d_A\in D_+^{\bf 1}$  such that 
\beq\label{13.7.1-1}
&& \left\|
 \theta(d_A)-(1-e_0)^{1/2}e_A(1-e_0)^{1/2}
 \right\|<\rho/3\\
 &&\andeqn
 \|\theta(d_A)-\theta(1_D)\|<s.
\eneq
Since $\theta$ is isometric,
\beq
 \|d_A-1_D\|<s\andeqn
 d_A\geq(1-s)1_D=\gamma1_D.                              \label{13.7.1}
\eneq

Put
\[
        y_0=(1-e_0)^{1/2}g(1-e_0)^{1/2}.
\]
Then, as in Remark \ref{R1} (iv),  we obtain an element $d_g\in D_+^{\bf 1}$
such that
%
\beq\label{13.7.2}
 \|y_0-\theta(d_g)\|<\rho.
 \eneq
%
Put
\[
        x_0=(1-e_0)^{1/2}e_A(1-e_0)^{1/2}.
\]
Since $e_A\leq g$, we have $0\leq x_0\leq y_0$.  Hence, by \eqref{13.7.1-1} and \eqref{13.7.2},
\[
 \theta(d_A)-\theta(d_g)\le 
 x_0-y_0+
 \left(\frac{\rho}{3}+\rho\right)1_{\widetilde A}
 \leq(\rho/3+\rho)1_{\widetilde A}
 <2\rho\,1_{\widetilde A}.
\]
Order-zero functional calculus and the isometry of $\theta$
therefore give
\beq
        \|(d_A-d_g)_+\|<2\rho.                            \label{13.7.3}
\eneq
Define
\[
        d_0=d_g+(d_A-d_g)_+.
\]
Then
\[
 d_0-d_A=d_g-d_A+(d_A-d_g)_+=(d_A-d_g)_-\geq0.
\]
It follows that  (see \eqref{13.7.1})
\beq
 \gamma1_D\leq  d_A\le d_0\leq(1+2\rho)1_D\leq2\,1_D,             \label{13.7.4}
\eneq
and, by \eqref{13.7.2} and \eqref{13.7.3},
\beq
 \|\theta(d_0)-y_0\|<3\rho.                              \label{13.7.5}
\eneq

\smallskip
\noindent
\emph{Step 3: A finite-dimensional subalgebra of $Q_A$.}
Put
\[
 r=(1_{Q_A}-\overline{j_A}(e_0))^{1/2}\andeqn
 b=r\overline{j_A}(g)r=(1_{Q_A}-\overline{j_A}(e_0))^{1/2}\overline{j_A}(g)(1_{Q_A}-\overline{j_A}(e_0))^{1/2}.
\]
Let
\[
 \widetilde\theta\colon C_0((0,1])\otimes D\longrightarrow Q_A
\]
be the homomorphism associated with $\overline{j_A}\circ\theta$, so that
\[
 \widetilde\theta(\iota\otimes d)=\overline{j_A}(\theta(d))\rforal 
  d\in D.
\]
Since $A$ has continuous scale, it is algebraically simple by  \cite[Theorem 3.3]{Lincs91}.
By  Proposition \ref{Csetr},   $A$ has property ETR1.   So by Theorem  \ref{Ltrace}, $T(A)\not=\emptyset.$ 
 It follows from  Corollary \ref{Cstrictcomp} that $A$ has tracial approximate oscillation zero.
 Then, by  \cite[Theorem 6.4]{FLosc}, $Q_A$ has real rank zero.
For each $x\in\mathcal F$, choose $d_x\in D$ such that
\beq
        \|r\overline{j_A}(x)r-\overline{j_A}(\theta(d_x))\|<\rho/3.               \label{13.7.6}
\eneq
By Lemma~\ref{Lfinitesp},
there is a homomorphism
\[
 H\colon C_0((0,1])\otimes D\longrightarrow Q_A
\]
with finite-dimensional range such that
\beq
 \|H(z)-\widetilde\theta(z)\|<\dt/8                        \label{13.7.7}
\eneq
for every
$
 z\in
 \{\iota\otimes d_0\}
 \cup\{\iota\otimes d_x:x\in\mathcal F\}.
$

Set
\[
 C=H(C_0((0,1])\otimes D),\,\,
 h=H(\iota\otimes1_D)\andeqn
 c=H(\iota\otimes d_0).
\]
Since $\iota\otimes1_D$ is central,
$
        h\in Z(C)_+.
$
By \eqref{13.7.4},
\beq
        \gamma h\leq c\leq2h.                             \label{13.7.8}
\eneq
By \eqref{13.7.7}, \eqref{13.7.5}  and the choice of $\rho$,
\beq
 \|b-c\|&=&\|r\overline{j_A}(g)r-H(\iota\otimes d_0)\|\le \|r\overline{j_A}(g)r-\overline{j_A}(\theta(d_0))\| +\dt/8\\
 &&=\|(1-e_0)^{1/2}g(1-e_0)^{1/2}-\theta(d_0)\|+\dt/8\\
 &&<3\rho+\dt/8<\delta.                                \label{13.7.9}
\eneq

For $x\in\mathcal F$, by \eqref{e13.7.etr-1}, functional calculus and by \eqref{e13q-root},
\[
 \|r\overline{j_A}(x)-\overline{j_A}(x)r\|
 \leq\Omega_{1/2}(\rho/3)
 \leq\Omega_{1/2}(\rho).
\]
Also, by \eqref{e137.g-1},
\[
 \|\overline{j_A}(g)\overline{j_A}(x)-\overline{j_A}(x)\overline{j_A}(g)\|<\rho.
\]
Consequently,
\beq
 \|b\overline{j_A}(x)-\overline{j_A}(x)b\|
 \leq2\Omega_{1/2}(\rho)+\rho<\delta.                     \label{13.7.10}
\eneq
Put
\[
        \zeta=\|r\overline{j_A}(g)^{1/4}-\overline{j_A}(g)^{1/4}r\|.
\]
Since $\|e_0g-ge_0\|<\rho/3$,  by \eqref{e13q-root},
we have 
\beq
 \zeta &\le &\Omega_{1/4}(\|(1-e_0)^{1/2}g-g(1-e_0)^{1/2}\|)\\
 &\leq&\Omega_{1/4}\bigl(\Omega_{1/2}(\|(1-e_0)g-g(1-e_0)\|)\bigr)\\
 &=&\Omega_{1/4}\bigl(\Omega_{1/2}(\|e_0g-ge_0\|)\bigr)\le \Omega_{1/4}\bigl(\Omega_{1/2}(\rho/3)\bigr)\\
 &&\leq\beta(\rho).                                         \label{13.7.11}
\eneq
Let
\[
 z=r\overline{j_A}(g)^{1/2},
 \qquad
 z_0=\overline{j_A}(g)^{1/4}r\overline{j_A}(g)^{1/4}.
\]
Then $zz^*=b$, $z_0\geq0$, and $\|z-z_0\|\leq\zeta$.
Therefore,
\beq
\|b-z_0^2\|&\leq &2\zeta
\andeqn\\
 \|b^{1/2}-r\overline{j_A}(g)^{1/2}\|&\le& \|b^{1/2}-z_0\|+\|z_0-r\overline{j_A}(g)^{1/2}\|\\
 &\leq&\sqrt{2\zeta}+\zeta.                                 \label{13.7.12}
\eneq

By \eqref{e137.g-1} and $(1-g^{1/2})^2\leq1-g$,
applied also to $x^*$,
\beq
 \|(1-\overline{j_A}(g)^{1/2})\overline{j_A}(x)\|
 +\|\overline{j_A}(x)(1-\overline{j_A}(g)^{1/2})\|
 <2\rho^{1/2}.                                            \label{13.7.13}
\eneq
It follows from \eqref{13.7.12} and \eqref{13.7.13} that
\beq
 \left\|
 b^{1/2}\overline{j_A}(x)b^{1/2}-r\overline{j_A}(x)r
 \right\|
 \leq
 2\bigl(\sqrt{2\zeta}+\zeta\bigr)+2\rho^{1/2}.             \label{13.7.14}
\eneq
On the other hand, \eqref{13.7.6} and \eqref{13.7.7} give
\[
 \operatorname{dist}(r\overline{j_A}(x)r,C)<\rho/3+\dt/8.
\]
Thus, by  \eqref{13.7.14}, \eqref{13.7.11},  and
\eqref{e13.7.rho},
\begin{align}
 \operatorname{dist}
 \bigl(b^{1/2}\overline{j_A}(x)b^{1/2},C\bigr)
 &<
 2\bigl(\sqrt{2\zeta}+\zeta\bigr)
 +2\rho^{1/2}+\rho/3+\dt/8\notag\\
 &\leq
 2\bigl(\sqrt{2\bt(\rho)}+\beta(\rho)\bigr)
 +2\rho^{1/2}+\rho/3+\dt/8
 <\delta.                                                 \label{13.7.15}
\end{align}

\smallskip
\noindent
\emph{Step 4: Cutting out the projection.}
By \eqref{13.7.8}--\eqref{13.7.10} and \eqref{13.7.15}, all the hypotheses
of Lemma \ref{lem12.5a-cut}
are satisfied in $Q_A$, with
$C,h,c,b$, and $\overline{j_A}(\mathcal F)$.  Put (recall that $C$ has finite dimension)
\[
        p=\chi_{[1/4,1]}(h)\in C.
\]
Lemma \ref{lem12.5a-cut}
gives
\beq
 &&\|p\overline{j_A}(x)-\overline{j_A}(x)p\|<\varepsilon\rforal x\in\mathcal F,\\
&& \operatorname{dist}(p\overline{j_A}(x)p,pC)<\varepsilon\rforal  x\in\mathcal F,\\
&&\andeqn
       1_{Q_A}-p\precsim1_{Q_A}-b.                       \label{13.7.16}
\eneq

Set
\[
        e=1_{Q_A}-p,\qquad E=pC.
\]
Then $e$ is a projection, $E$ is finite-dimensional, and
\[
        1_E=p=1_{Q_A}-e.
\]
Moreover, for every $x\in\mathcal F$,
\[
 \|e\overline{j_A}(x)-\overline{j_A}(x)e\|<\varepsilon
\]
and
\[
 \operatorname{dist}\bigl(
 (1_{Q_A}-e)\overline{j_A}(x)(1_{Q_A}-e),E
 \bigr)<\varepsilon.
\]

Finally,  by applying Lemma \ref{lem:absorb-tail}
(to 
$
 q=\overline{j_A}(e_0)$ and $1_{Q_A}-q=r^2$), we obtain 
\[
 1_{Q_A}-b
 \precsim \overline{j_A}(e_0)\oplus(1_{Q_A}-\overline{j_A}(g)).
\]
Using \eqref{13.7.16}, \eqref{e13.7.g},  and \eqref{e13.7.etr-2},
we obtain
\[
 \begin{aligned}
 e
 &\precsim1_{Q_A}-b
 \precsim \overline{j_A}(e_0)\oplus(1_{Q_A}-\overline{j_A}(g))\\
 &\precsim \overline{j_A}(a_1)\oplus \overline{j_A}(a_0)
\precsim \overline{j_A}(a).
 \end{aligned}
\]
This completes the proof.
\end{proof}

\begin{cor}\label{cor13.12}
Let $A$ be a non-elementary separable simple $C^*$-algebra with
continuous scale and with property sETR1.
Then every tracial state on $A$ is quasidiagonal. Consequently,
$A$  is quasidiagonal.
\end{cor}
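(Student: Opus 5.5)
Fix $\tau\in T(A)$; the aim is to show that $\tau$ is quasidiagonal, i.e.\ that there are c.p.c.\ maps $\phi_k\colon A\to M_{n_k}$ which are asymptotically multiplicative in norm and satisfy $\mathrm{tr}_{n_k}\circ\phi_k\to\tau$ pointwise. The approach is to use Theorem \ref{prop12.5a}, which yields, in $Q_A$, projections $1_{Q_A}-e$ that almost commute with $\overline{j_A}(\mathcal F)$, compress $\overline{j_A}(\mathcal F)$ almost into a finite-dimensional $E$, and have $e\lesssim \overline{j_A}(a)$. Take $\mathcal F_k$ increasing with dense union, $\varepsilon_k\to 0$, and $a=a_k$ with $\sup_{t\in T(A)}d_t(a_k)\to 0$; such $a_k$ exist by Remark \ref{RCuntznull}(2), since $A$ has strict comparison by Proposition \ref{Csetr} and Corollary \ref{Cstrictcomp}. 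This gives $p_k=1-e_k$ and $E_k\subset Q_A$. Let $\Pi_k\colon p_kQ_Ap_k\to E_k$ be a conditional expectation, and put $\psi_k(x)=\Pi_k(p_k\overline{j_A}(x)p_k)$, a c.p.c.\ map $A\to E_k$. It is approximately multiplicative on $\mathcal F_k$ up to $O(\varepsilon_k)$, because $p_k$ almost commutes with $\mathcal F_k$ and the compressions lie within $\varepsilon_k$ of $E_k$.

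Next, control the traces. Since $A$ has continuous scale, $T(A)$ is compact. Extend $\tau$ to the limit trace $\tilde\tau=\tau_\omega$ on $Q_A$, given by $\tilde\tau(\Pi(\{x_n\}))=\lim_\omega \tau(x_n)$; this is well defined because $N_{cu}=I_{\overline{T(A)}^w}$ by \eqref{A2.17}. Then $\tilde\tau(e_k)\le \sup_t d_t(a_k)\to 0$, so $|\tilde\tau(\psi_k(x))-\tau(x)|\le 2\|x\|\tilde\tau(e_k)^{1/2}+\varepsilon_k$ by the Cauchy--Schwarz estimate of Corollary \ref{Papproxl2}. Restrict $\tilde\tau$ to $E_k=\bigoplus_i M_{r_i}$; it is $\sum_i\lambda_i\,\mathrm{tr}_{r_i}$ with $\sum_i\lambda_i=\tilde\tau(p_k)\to 1$. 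Approximate the weights $\lambda_i$ by rationals and embed $E_k$ into $M_{n_k}$ by a unital (or nearly unital) homomorphism $\sigma_k$ with multiplicities chosen so that $\mathrm{tr}_{n_k}\circ\sigma_k\approx \tilde\tau|_{E_k}/\tilde\tau(p_k)$. Then $\phi_k=\sigma_k\circ\psi_k$ is asymptotically multiplicative with $\mathrm{tr}\circ\phi_k\to\tau$. By the Arveson extension theorem the maps $\psi_k$ are c.p.c.\ into $E_k$ with no lifting issue, since $E_k$ sits inside $Q_A$ and $\phi_k$ maps $A$ to a matrix algebra directly. This shows that $\tau$ is quasidiagonal.

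Finally, since $A$ is simple and $T(A)\neq\emptyset$, any $\tau$ is faithful. Taking a direct sum over $k$ of the $\phi_k$ gives an asymptotically multiplicative, asymptotically isometric family: isometry follows from $\tau$-faithfulness together with norm lower semicontinuity, using $\|\phi_k(x)\|^2\gtrsim$ the trace of powers of $x^*x$, the standard argument that a faithful quasidiagonal trace makes $A$ quasidiagonal. Hence $A$ is quasidiagonal.

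The main obstacle is the trace bookkeeping: one must check that $\tilde\tau$ is genuinely tracial on $Q_A$, which holds because $I_{T(A)}$ is the kernel, and that $\tilde\tau(p_k)\to1$ is uniform enough to make the rational approximation of the weights harmless.
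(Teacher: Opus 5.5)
Your proposal is correct and follows essentially the same route as the paper. It applies Theorem~\ref{prop12.5a} with an $a$ of small trace, uses the constant-sequence limit trace $\overline{\tau}_\omega$ on $Q_A$, takes a conditional expectation onto the finite-dimensional algebra, embeds that algebra into matrices with multiplicities matching the trace weights, and then uses faithfulness of $\tau$ to get quasidiagonality of $A$.

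The differences are minor. The paper passes to the GNS von Neumann algebra so that it can use a trace-preserving expectation; this makes the trace identity exact and all the weights positive. Your norm-one projection in $p_kQ_Ap_k$ works equally well, because $\|\psi_k(x)-p_k\overline{j_A}(x)p_k\|<2\varepsilon_k$ on $\mathcal F_k$. In the last step no direct sum over $k$ is needed. The estimate $\|\phi_k(x^*x)\|^m\gtrsim \mathrm{tr}(\phi_k((x^*x)^m))\to\tau((x^*x)^m)$, together with $\tau((x^*x)^m)^{1/m}\to\|x^*x\|$ from faithfulness, already gives $\liminf_k\|\phi_k(x)\|\ge\|x\|$.
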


\begin{proof}
By Proposition \ref{Csetr},
$T(A)\neq\emptyset.$  Fix $\tau\in T(A).$

By Corollary \ref{Cstrictcomp}, $A$ has strict comparison. Hence, by  \eqref{A2.17c},
\[
   N_{\mathrm{cu}}(A)=I_A,
\]
so the present quotient \(Q_A\) agrees with the quotient introduced
at the beginning of Section~10. Fix a free ultrafilter \(\omega\) on
\(\mathbb N\). Taking the constant sequence \(\tau_n=\tau\) in the
definition of \(T_\infty(A)\), we obtain a trace
\(\overline{\tau}_\omega\in T_\infty(A)\subset T(Q_A)\) given by
\beq
 \overline{\tau}_\omega
 \bigl(\Pi_{\mathrm{cu}}((x_n))\bigr)
 :=
 \lim_{n\to\omega}\tau(x_n).
 \label{e13.72}
\eneq
In particular,
\beq
 \overline{\tau}_\omega(\overline{j_A}(x))=\tau(x),
 \qquad x\in A.
 \label{e13.73}
 \eneq

Let \(\varepsilon>0\), and let \(\mathcal F\subset A_1\) be a
finite subset. Replacing \(\mathcal F\) by
\(\mathcal F\cup\mathcal F^*\), we may assume that
\(\mathcal F\) is self-adjoint. Choose \(0<\delta,\eta<1/2\)
such that
\beq
 \frac{2\delta}{1-\delta}<\frac{\varepsilon}{3}
 \quad\text{and}\quad
 \eta<\frac{\varepsilon}{18}.
 \label{e13.76}
\eneq
Choose $0\neq a\in A_+$ such that (see Remark \ref{RCuntznull})
\beq
 d_\tau(a)<\delta.
 \label{e13.77}
\eneq

Apply Theorem  \ref{prop12.5a}
to \(\mathcal F\), \(a\), and \(\eta\).
We obtain a projection \(e\in Q_A\) and a finite-dimensional
\SCA\, $E\subset Q_A$ such that, with
$
 p:=1_{Q_A}-e=1_E,
$
we have
\beq \label{e13.78}
&& \|p\overline{j_A}(x)-\overline{j_A}(x)p\|<\eta,
 \qquad x\in\mathcal F,
\\\label{e13.79}
&& \operatorname{dist}(p\overline{j_A}(x)p,E)<\eta,
 \qquad x\in\mathcal F,\andeqn\\\label{e13.80}
 &&e\lesssim \overline{j_A}(a).
\eneq
Since \(e\) is a projection,  \eqref{e13.73},
\eqref{e13.77}, and \eqref{e13.80} give
\beq
 \overline{\tau}_\omega(e)
 =
 d_{\overline{\tau}_\omega}(e)
 \leq
 d_{\overline{\tau}_\omega}(\overline{j_A}(a))
 =
 d_\tau(a)
 <\delta.
 \label{e13.81}
\eneq
In particular,
\[
 \alpha:=\overline{\tau}_\omega(p)>1-\delta.
\]

Define a tracial state on \(pQ_Ap\) by
\[
 \tau_p(z):=
 \frac{\overline{\tau}_\omega(z)}{\alpha}\rforal z\in pQ_Ap.
\]
Let
$
 \pi_{\tau_p}:pQ_Ap\longrightarrow B(H_{\tau_p})
$
be the GNS representation associated with the state $\tau_p,$ and put
\[
 M_\tau:=\pi_{\tau_p}(pQ_Ap)'',
 \qquad
 N:=\pi_{\tau_p}(E).
\]
Then \(N\) is a finite-dimensional von Neumann subalgebra of
\(M_\tau\), with the same identity. Hence there is a
trace-preserving conditional expectation
\[
 \mathbb E_N:M_\tau\longrightarrow N.
\]
Define
$
 \psi:A\longrightarrow N$ by 
 $\psi(x):=
 \mathbb E_N\bigl(\pi_{\tau_p}(p\overline{j_A}(x)p)\bigr)$ for all $x\in A.$
This is a \cpc.

For \(x\in\mathcal F\), put
\[
 u_x:=\pi_{\tau_p}(p\overline{j_A}(x)p).
\]
By \eqref{e13.79},
\[
 \operatorname{dist}(u_x,N)<\eta.
\]
For \(x,y\in\mathcal F\), we have
\[
 u_{xy}-u_xu_y
 =
 \pi_{\tau_p}\bigl(p\overline{j_A}(x)(1-p)\overline{j_A}(y)p\bigr).
\]
Since
\[
 (1-p)\overline{j_A}(y)p
 =
 (1-p)(\overline{j_A}(y)p-p\overline{j_A}(y)),
\]
applying \eqref{e13.78}, we obtain
\beq
 \|u_{xy}-u_xu_y\|<\eta.
 \label{e13.84}
\eneq
Choose \(z_x,z_y\in N\) such that
\[
 \|u_x-z_x\|<\eta
 \andeqn
 \|u_y-z_y\|<\eta.
\]
Since \(\mathbb E_N\) is contractive and fixes \(N\),
\beq
&& \|\psi(x)\psi(y)-z_xz_y\|
 <2\eta+\eta^2,\\
&& \|\mathbb E_N(u_xu_y)-z_xz_y\|=\|\mathbb E_N(u_xu_y-z_xz_y)\|
 <2\eta+\eta^2.
\eneq
Together with \eqref{e13.84}, these give
\beq
 \|\psi(xy)-\psi(x)\psi(y)\|&=&\|\mathbb E_N(u_{xy})-\psi(x)\psi(y)\|\\
 &<& \eta+\|\mathbb E_N(u_xu_y)-\psi(x)\psi(y)\|\\
 &<&\eta+\|\mathbb E_N(u_xu_y)-z_xz_y\|+\|z_xz_y-\psi(x)\psi(y)\|\\
 &<&\eta+4\eta+2\eta^2
 <6\eta
 <\frac{\varepsilon}{3}.
 \label{e13.85}
\eneq

Let \(t_N\) denote the restriction to \(N\) of the normal
tracial state on \(M_\tau\). Since \(\mathbb E_N\) is
trace-preserving,  
\[
 t_N(\psi(x))
 =
 \frac{\overline{\tau}_\omega(p\overline{j_A}(x)p)}{\alpha}\rforal x\in A.
 \label{e13.86}
\]
Put
$
 \beta:=\overline{\tau}_\omega(e),
$
so that $\alpha=1-\beta.$  Since $\overline{\tau}_\omega$ is a trace, we have 
\[
 \overline{\tau}_\omega(p\overline{j_A}(x)e)
 =
 \overline{\tau}_\omega(e\overline{j_A}(x)p)=0.
\]
Consequently,
\[
 \tau(x)
 =
 \overline{\tau}_\omega(p\overline{j_A}(x)p)
 +
 \overline{\tau}_\omega(e\overline{j_A}(x)e).
\]
Since \(\|x\|\leq1\),
\[
 \left|
 \overline{\tau}_\omega(e\overline{j_A}(x)e)
 \right|
 \leq\beta.
\]
It follows that
\beq \label{e13.87}
 \begin{aligned}
 \left|t_N(\psi(x))-\tau(x)\right|
 &=
 \left|
 \frac{\overline{\tau}_\omega(p\overline{j_A}(x)p)}{1-\beta}
 -\tau(x)
 \right|\\\label{e13.87}
 &=
 \frac{\left|
 \beta\tau(x)-\overline{\tau}_\omega(e\overline{j_A}(x)e)
 \right|}{1-\beta}\\
 &\leq
 \frac{2\beta}{1-\beta}
 <
 \frac{2\delta}{1-\delta}
 <\frac{\varepsilon}{3}.
 \end{aligned}
\eneq

Since \(N\) is finite-dimensional, the tracial state \(t_N\)
can be approximated on \(\psi(\mathcal F)\) by normalized
matrix traces. Thus there are \(k\in\mathbb N\) and a unital
homomorphism
$
 \rho:N\longrightarrow M_k
$
such that
\beq
 \left|
 \operatorname{tr}_k(\rho(z))-t_N(z)
 \right|
 <\frac{\varepsilon}{3}\rforal z\in\psi(\mathcal F).
 \label{e13.88}
\eneq
Set
\[
 \Phi:=\rho\circ\psi:A\longrightarrow M_k.
\]
By \eqref{e13.85},
\[
 \|\Phi(xy)-\Phi(x)\Phi(y)\|<\varepsilon\rforal x,y\in\mathcal F,
\]
and, by \eqref{e13.87} and \eqref{e13.88},
\[
 \left|
 \operatorname{tr}_k(\Phi(x))-\tau(x)
 \right|
 <\ep\rforal x\in\mathcal F.
\]
Therefore \(\tau\) is quasidiagonal.

Since \(\tau\in T(A)\) was arbitrary, every tracial state on
\(A\) is quasidiagonal. Finally, $T(A)\not=\emptyset,$ and
every tracial state on the simple algebra \(A\) is faithful.
Therefore \(A\) is quasidiagonal; see
\cite[Proposition~1.4, footnote~3, and
Remark~1.5(iii)]{TWW}.
\end{proof}

{\bf Proof of Theorem \ref{TT}}:

\begin{proof}
If $T(A)=\emptyset,$ the first assertion is vacuous. Suppose that
$T(A)\ne\emptyset.$ Since $A$ is simple, every tracial state on
$A$ is faithful; in particular, $A$ is finite.

Put
$
 B=A\otimes\mathcal Z.
$
Then $B$ is a unital, separable, simple, finite, amenable,
$\mathcal Z$-stable \CA. By Corollary \ref{CorZ}, $B$ has property
$s\mathrm{ETR}1$. Hence Corollary \ref{cor13.12} shows that every tracial
state on $B$ is quasidiagonal.

Let $\tau\in T(A)$, and let $\tau_{\mathcal Z}$ be the unique
tracial state on $\mathcal Z$. Then
$\tau\otimes\tau_{\mathcal Z}$ is a quasidiagonal tracial state on
$B$. Its restriction along the embedding
\[
 j:A\longrightarrow B,\qquad j(a)=a\otimes1_{\mathcal Z},
\]
is $\tau$. Since the restriction of a quasidiagonal trace to a
\SCA\, is quasidiagonal, $\tau$ is quasidiagonal. Thus
every tracial state on $A$ is quasidiagonal.

Finally, since $T(A)\ne\emptyset,$  $A$ has a faithful
quasidiagonal tracial state. It follows that $A$ is quasidiagonal.

\end{proof}

\begin{rem}\label{Rprojection}
Let $A$ be a separable  finite simple amenable \CA\, which is not stably projectionless.
Let $p\in A\otimes {\cal K}$ be a non-zero projection and $B=p(A\otimes {\cal K})p.$ 
Then $B$ is a unital finite separable simple amenable \CA.  
By Theorem \ref{TT},  every trace on $B$ is quasidiagonal.  This actually implies that every 
trace of $A$ is quasidiagonal and $A$ is a quasidiagonal \CA. 
\end{rem}

{\bf Proof of Corollary \ref{CMC}}:

\begin{proof}
The equivalence of (1) and (2) follows from Theorem \ref{TZsetr}.
Clearly, finite decomposition rank implies finite nuclear dimension (\cite[Remark 2.2 (ii)]{WZ10nuc}),
so (4) implies (3). Conversely,  by \cite[Corollary 6.3]{WinterPure}, finite nuclear dimension implies
$\mathcal Z$-stability for a non-elementary, separable, simple,
nuclear \CA; hence (3) implies (2).

Suppose that (2) holds. By Theorem \ref{TT}, every tracial state on $A$
is quasidiagonal. Therefore \cite[Theorem~B]{CETWW} gives
\[
 \operatorname{dr}(A)\leq1.
\]
The same theorem also gives
\[
 \dim_{\mathrm{nuc}}(A)\leq1.
\]
Thus (2) implies both (3) and (4), and all four conditions are
equivalent.

Finally, nuclear dimension zero and decomposition rank zero are
each equivalent to $A$ being AF in the present setting. Therefore,
if $A$ is AF, then
\[
 \dim_{\mathrm{nuc}}(A)=\operatorname{dr}(A)=0.
\]
If $A$ is not AF, neither invariant can be zero; since both are at
most one, they are both equal to one.
\end{proof}

\section{Appendix}

The following is a very special case of Theorem \ref{TmainZ}.
We present  a direct proof that the Jiang--Su algebra
$\mathcal Z$ has property ETR1 associated with $1_{\mathcal Z}$ and
constant $\kappa_1=1$.  This serves as an example. The argument was initially suggested by
ChatGPT-5.6-sol and was subsequently revised  and  checked  by the
authors.   In fact, it  proves the slightly stronger statement that, for every
prescribed integer \(N\geq 1\), the finite-dimensional algebra \(D\) in  Definition \ref{D1}
may be chosen so that every irreducible representation of \(D\) has
dimension at least \(N\).  Of course once  one shows that ${\cal Z}$ has property ETR1, 
this follows from Proposition \ref{Prank}.  
We include it for reference and convenience.

\begin{prop}\label{Apz}
The Jiang--Su algebra $\Zjs$ has property  ETR1 (with $e_A=1$ and $\kappa=1$).
\end{prop}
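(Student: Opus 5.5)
The plan is to work directly with the standard inductive limit model of $\mathcal Z=\varinjlim(Z_{p_n,q_n},\phi_n)$. Here each $Z_{p,q}=\{f\in C([0,1],M_p\otimes M_q): f(0)\in M_p\otimes 1_q,\ f(1)\in 1_p\otimes M_q\}$ is a prime dimension drop algebra with $p,q$ coprime. The connecting maps $\phi_n$ are unital, injective, and have eigenvalue maps spreading over $[0,1]$. Since property ETR1 with $e_A=1$ only involves finitely many elements at tolerance $\ep$, I will first approximate $\mathcal F$ within $\ep/4$ by the image of a finite set $\mathcal F_n\subset Z_{p_n,q_n}$. I then map forward to a stage $m\ge n$ where $p_m,q_m\ge N$ and the images of $\mathcal F_n$ are almost constant on intervals of length $\dt$. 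This equicontinuity is uniform after finitely many steps because each $\phi_{n,m}(f)$ is a direct sum of unitary conjugates of $f\circ\lambda_i$, and the maps $\lambda_i$ shrink oscillation in the usual Jiang--Su construction. The comparison element $a\ne 0$ is treated through the unique trace $\tau$ of $\mathcal Z$: strict comparison in $\mathcal Z$ reduces $e\lesssim a$ to showing $d_\tau(e)<d_\tau(a)$.

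The element $e$ will be a function in $Z_{p_m,q_m}$ concentrated near the two endpoints. Fix a small $\sigma>0$. Let $e=\chi\cdot 1$, where $\chi\in C([0,1])$ satisfies $0\le\chi\le1$, $\chi=1$ on $[0,\sigma]\cup[1-\sigma,1]$, and $\chi=0$ on $[2\sigma,1-2\sigma]$. Then $e$ is central in $Z_{p_m,q_m}$, so (1) holds up to the $\ep/4$ approximation error. Moreover, $d_\tau(e)\le\mu([0,2\sigma]\cup[1-2\sigma,1])$, where $\mu$ is the measure on $[0,1]$ induced by $\tau$ restricted to the stage-$m$ algebra. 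Since $\tau$ restricted to each stage is given by a diffuse measure (Lebesgue, in the standard construction), taking $\sigma$ small gives $d_\tau(e)<d_\tau(a)$. Hence $e\lesssim a$, which is (3).

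For (2), on the middle interval $[\sigma,1-\sigma]$ the algebra is the full $C([\sigma,1-\sigma],M_{p_mq_m})$. Partition this interval into pieces $I_1,\dots,I_k$ of length less than $\dt$ with points $t_i\in I_i$. Choose positive functions $\phi_i$ supported near $I_i$ whose squares form a partition of unity subordinate to the cover, summing to $1-e$ on the middle range. Define $D=\bigoplus_{i=1}^k M_{p_mq_m}$ and $\theta(d_1,\dots,d_k)=\sum_i \phi_i\, d_i$ (constant matrices times scalar functions). This $\theta$ fails to be order zero when the supports of the $\phi_i$ overlap. To repair this, I will use every other interval: take two interleaved families with disjoint supports, and absorb the overlap regions into $e$ as well. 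Each overlap region still has small total $\mu$-measure if the number of intervals is fixed and the overlaps are thin, so $e$ remains Cuntz-small. With disjointly supported $\phi_i$ satisfying $\|\phi_i\|=1$, the map $\theta$ is fully normalized and order zero, and $\theta(D)\subset\overline{(1-e)A(1-e)}$ once we arrange $\sum\phi_i\le (1-e)$ with equality off the thin overlap regions. Then $(1-e)^{1/2}x(1-e)^{1/2}\approx\sum_i\phi_i\,x(t_i)$ by the near-constancy of $x$ on each $I_i$. Taking $d_e=1_D$ gives $\theta(1_D)=\sum\phi_i$, which equals $(1-e)^{1/2}1(1-e)^{1/2}=1-e$ up to $\ep$. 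This yields $t(d_e)=1$ and hence $\kappa=1$. Every summand $M_{p_mq_m}$ has rank at least $N$.

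The main obstacle is reconciling exactness $\theta(1_D)\approx 1-e$ with orthogonality of the pieces. Near the transitions between consecutive supports, $1-e$ cannot equal a sum of disjoint bumps of height one. My fix is to enlarge $e$ to be $1$ on the transition zones, giving $e=1-\sum_i\phi_i$ exactly, with $\phi_i$ disjointly supported bumps equal to $1$ on most of $I_i$. The price is that $e$ no longer has small support unless the transition zones are short. This is compatible: choose the zones of length $\dt'\ll\dt/k$, so $\mu(\mathrm{supp}\,e)\le 4\sigma+2k\dt'$, which can be made less than $d_\tau(a)$. Commutation of $e$ with $\mathcal F$ then fails to be exact, because $e$ is no longer central in the matrix sense. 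However, $e$ is a scalar function times $1$, so it is still central, and (1) survives. I expect the bookkeeping of $\dt,\dt',\sigma$ against the modulus of continuity of $\phi_{n,m}(\mathcal F_n)$ to be the only delicate point.
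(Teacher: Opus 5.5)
\textbf{Assessment.} Your construction of $\theta$ is sound and is essentially the paper's. You take a central scalar $e$ and disjointly supported scalar bumps $\phi_i$ with $\|\phi_i\|=1$ and $\sum_i\phi_i=1-e$ exactly. Then $D=\bigoplus_i M_{p_mq_m}$, $\theta(d)=\sum_i\phi_i d_i$ and $d_e=1_D$, so $\theta(1_D)=(1-e)^{1/2}1(1-e)^{1/2}$ and $\kappa=1$. You also do not need any oscillation-shrinking property of the connecting maps, since a finite subset of $C([0,1],M_{pq})$ is automatically equicontinuous.

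\textbf{The gap is in (3).} Your bound $\mu(\operatorname{supp} e)\le 4\sigma+2k\delta'$ treats $\mu$ as Lebesgue measure, and that is not available. The measure induced on a building block by the unique trace of $\mathcal Z$ need not be diffuse. In Jiang and Su's own construction it is not diffuse, because their connecting maps use constant eigenvalue functions $\xi_j\equiv 1/2$. These contribute a term $c\,\delta_{1/2}$ with $c>0$ to $\mu$, and composing with the other eigenvalue maps creates further atoms. So your claim that $\mu$ is Lebesgue in the standard model is incorrect. If one of your transition points is an atom, then $d_\tau(e)\ge\mu(\{\text{that point}\})$ however thin the zone, and $d_\tau(e)<d_\tau(a)$ fails once $a$ is small. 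Likewise, your $e$ equals $1$ at $t=0$ and $t=1$, so you also need $\mu(\{0\})=\mu(\{1\})=0$, and you give no argument for this.

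\textbf{The missing idea} is the device the paper uses. Since $\mu$ has at most countably many atoms, choose the interior cut points $r_j$ with $\mu(\{r_j\})=0$. By continuity of $\mu$ from above, there are then neighbourhoods $V_j$ of the $r_j$ with $\mu(\bigcup_j V_j)<\eta<d_\tau(a)$. An $e$ supported in $\bigcup_jV_j$ then satisfies $e\lesssim a$ by strict comparison.

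To avoid any hypothesis at the endpoints, the paper keeps $e=0$ near $0$ and $1$ and uses the endpoint fibres as summands of $D$. It takes $D=M_p\oplus M_{pq}^{\oplus(m-1)}\oplus M_q$, with $\theta_0(b)=h_0(b\otimes 1_q)$ and $\theta_m(b)=h_m(1_p\otimes b)$; these respect the boundary conditions of $Z_{p,q}$. With these two changes your argument goes through for any unital copy of $Z_{p,q}$ in $\mathcal Z$, with no further information about $\mu$ needed.
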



\begin{proof}
Choose $e_{{\cal Z}}=1_{\cal Z}.$
Fix $\varepsilon>0$, a finite set $\F\subset \Zjs$, a nonzero positive element
$a\in \Zjs$, and an integer $N\geq 1$.

Let $\tau$ be the unique tracial state of $\Zjs$. Since $\Zjs$ is simple and
$a\neq 0$, we have $d_\tau(a)>0$. Choose $\eta>0$ so small that
$
\eta<\varepsilon/3
\andeqn
\eta<d_\tau(a).
$

Write $\Zjs$ as an inductive limit of prime dimension-drop algebras. Passing
sufficiently far out in the system, we may regard
\[
B=Z_{p,q}
\]
as a unital subalgebra of $\Zjs$, with $p,q\geq N$, $(p,q)=1,$ such that for every
$x\in\F$ there is $y_x\in B$ satisfying $\|x-y_x\|<\eta$.

Recall that
\[
Z_{p,q}
=
\left\{
f\in C([0,1],M_p\otimes M_q):
f(0)\in M_p\otimes 1_q,\ 
f(1)\in 1_p\otimes M_q
\right\}.
\]

The restriction of $\tau$ to $B$ has the form
\[
\tau(f)=\int_0^1 {\rm tr}_{pq}(f(t))\,d\mu(t)
\]
for a Borel probability measure $\mu$ on $[0,1]$.

Since  the finite family $\{y_x:x\in\F\}$ is equi-continuous, choose
$\delta>0$ such that
\[
|s-t|<\delta
\quad\Longrightarrow\quad
\|y_x(s)-y_x(t)\|<\eta
\]
for every $x\in\F$.
Choose partition points
\[
0=r_0<r_1<\cdots<r_m<r_{m+1}=1
\]
such that $\max_{1\le i\le m+1}\{|r_i-r_{i-1}|\}<\dt.$
 Since $\mu$ has at most countably many atoms,
we may also arrange that $\mu(\{r_j\})=0$ for $j=1,2,...,m.$.
Let 
\[
d:=\min_{0\le i\le m} |r_{i+1}-r_i|>0.
\]
Since $\mu(\{r_j\})=0$ for $1\leq j\leq m$, choose open intervals
$V_1,\ldots,V_m$ such that
\[
   r_j\in V_j,
   \qquad
   \overline{V_j}
   \subset
   \left(r_j-\frac d8,r_j+\frac d8\right),
   \qquad 1\leq j\leq m,
\]
and
\[
   \mu\left(\bigcup_{j=1}^m V_j\right)<\eta.
\]
The sets $\overline{V_j}$ are then pairwise disjoint.

For $0\leq i\leq m$, put
\[
   J_i:=
   \left(r_i+\frac d8,r_{i+1}-\frac d8\right).
\]
Since $r_{i+1}-r_i\geq d$, we have
\[
   |J_i|
   =r_{i+1}-r_i-\frac d4
   \geq\frac d4>0.
\]
Moreover,
\[
   \overline{J_i}\cap\bigcup_{j=1}^m\overline{V_j}
   =\emptyset,
   \qquad 0\leq i\leq m.
\]
For each $j$, choose a continuous function $e_j:[0,1]\to[0,1]$ such that
\[
e_j(r_j)=1\andeqn
\supp(e_j)\subset V_j.
\]

Set
\[
e=\sum_{j=1}^m e_j,
\]
viewed as the scalar element $t\mapsto e(t)1_{pq}$ of $B$.
Since the supports are pairwise disjoint, $0\leq e\leq 1$. Moreover, $e$ is
central in $B$, and $e(r_j)=1$ for every $j$.

For $j=0,\ldots,m$, define
$h_j\in C([0,1])_+$ by
\[
h_j(t)=
\begin{cases}
1-e(t),& t\in[r_j,r_{j+1}],\\
0,& t\notin[r_j,r_{j+1}].
\end{cases}
\]
Because $(1-e)(r_j)=0$ for $1\le j\le m,$
 each $h_j$ is
continuous. Also,
\[
h_i h_j=0\quad(i\neq j)\andeqn
\sum_{j=0}^m h_j=1-e.
\]
Note that $h_k(t)=1$ for $t\in J_k,$ $0\le k\le m.$
Hence $\|h_j\|=1$ for every $j$.

Now define
\[
D=M_p\oplus M_{pq}^{\oplus(m-1)}\oplus M_q.
\]
Every matrix summand of $D$ has size at least $N$.

Define maps on the summands of $D$ to $B=Z_{p,q}$  as  follows:
\beq\nonumber
&&\theta_0(b)(t)=h_0(t)(b\otimes 1_q),\qquad\,\,\, b\in M_p,\\\nonumber
&&\theta_j(b)(t)=h_j(t)b,\qquad \hspace{0.5in}b\in M_{pq},\ 1\leq j\leq m-1,
\andeqn\\
&&\theta_m(b)(t)=h_m(t)(1_p\otimes b),\qquad b\in M_q.
\eneq

Each $\theta_j$ is c.p.c.\ and order zero, 
because it is a
$*$-homomorphism multiplied by a commuting positive contraction. 
Since the
functions $h_j$ are pairwise orthogonal, the ranges of the $\theta_j$ are
mutually orthogonal. Therefore
\[
\theta=\bigoplus_{j=0}^m\theta_j:D\longrightarrow B\subset\Zjs
\]
is c.p.c.\ and order zero.
Furthermore,
\[
\theta(1_D)=\sum_{j=0}^m h_j=1-e.
\]
Hence $\theta(D)\subset \Her(1-e)$. Moreover, on every summand $D_j$,
\[
\|\theta|_{D_j}\|=\|h_j\|=1.
\]
Thus $\theta$ is fully normalized.

Let
\[
\widetilde\theta:
C_0((0,1])\otimes D\longrightarrow\Zjs
\]
be the cone homomorphism associated with $\theta$.

Choose one point $s_j\in[r_j,r_{j+1}]$ for $1\le j\le m-1,$ and set $s_m=1.$ Define
$
L_0:B\longrightarrow D
$
by
\[
L_0(f)=\bigl(f(0),f(s_1),\ldots,f(s_{m-1}),f(1)\bigr),
\]
using the natural identifications of the endpoint fibers with $M_p$ and
$M_q$.
This is a unital $*$-homomorphism. Since $D$ is finite dimensional, it is an
injective operator system. Therefore $L_0$ extends to a u.c.p.\ map
\[
L:\Zjs\longrightarrow D.
\]

Set $\Psi=\theta\circ L$. Then $\Psi$ is c.p.c., and its range is contained in
$
\theta(D)
$

We now verify the approximation. Fix $x\in\F$. On the first interval
$[0,r_1]$,
\beq
\theta \circ L_0(y_x)(t)=h_0(t)y_x(0)
\andeqn
(1-e(t))y_x(t)=h_0(t)y_x(t).
\eneq
Therefore, since $|t|<\dt,$ 
\beq
\|\theta \circ L_0(y_x)(t)-(1-e(t))y_x(t)\|=
\|h_0(t)y_x(0)-h_0(t)y_x(t)\|<\eta.
\eneq
Similarly, for $t\in [r_j, r_{j+1}],$ $j=1,2,...,m,$
\[
\|\theta\circ L_0(y_x)(t)-(1-e(t))y_x(t)\|=\|h_j(t)y_x(s_j)-h_j(t)y_x(t)\|<\eta.
\]
%
Hence
\[
\|\theta\circ L_0(y_x)-(1-e)y_x\|<\eta.
\]

Since $e$ is central in $B$,
\[
(1-e)y_x=(1-e)^{1/2}y_x(1-e)^{1/2}.
\]
Therefore (using $\|x-y_x\|<\eta$)
\beq
\begin{aligned}
\|\Psi(x)-(1-e)^{1/2}x(1-e)^{1/2}\|
&\leq \|\theta \circ L(x-y_x)\| 
+\|\theta  \circ L_0(y_x)-(1-e)y_x\| \\\label{Z-2a}
&\quad\hspace{0.3in}+\|(1-e)^{1/2}(y_x-x)(1-e)^{1/2}\| \\
&<3\eta<\varepsilon.
\end{aligned}
\eneq

Next, since $e$ and $(1-e)^{1/2}$ are central in $B$, they commute with
$y_x$. Hence
\beq\label{Z-1}
\|[e,x]\|\leq 2\|x-y_x\|<2\eta<\varepsilon,
\eneq
and similarly
\[
\|[(1-e)^{1/2},x]\|<\varepsilon.
\]

It remains to show that $e\precsim a$. The support of $e$ is contained in
$\bigcup_jV_j$. Hence
\[
d_\tau(e)=\mu(\{t:e(t)>0\})
\leq \mu\left(\bigcup_jV_j\right)<\eta.
\]
Since $\eta<d_\tau(a)$, we have $d_\tau(e)<d_\tau(a)$. The Jiang--Su algebra
has strict comparison, so 
\beq\label{Z-3}
e\precsim a.
\eneq

Finally, 
define
$
\widehat c_e=\iota\otimes 1_D
\in C_0((0,1])\otimes D.
$
Then
\[
\widetilde\theta(\widehat c_e)=\theta(1_D)=1-e.
\]
Therefore
\[
(1-e)^{1/2}1_\Zjs(1-e)^{1/2}
=
1-e
=
\widetilde\theta(\widehat c_e).
\]
Also,
$
\pi_1(\widehat c_e)=1_D.
$
Hence every tracial state $t\in T(D)$ satisfies
\beq\label{Z-2b}
t\bigl(\pi_1(\widehat c_e)\bigr)=1.
\eneq
%
Thus,  recall that $\theta: D\to \overline{(1-e){\cal Z}(1-e)}$ is fully normalized, by \eqref{Z-1}, \eqref{Z-2a}, \eqref{Z-2b} and \eqref{Z-3}, all the requirements of
ETR1 are satisfied, and $\Zjs$ has property ETR1.
\end{proof}

\noindent
hlin@uoregon.edu and hxlin@simis.cn

\end{document}